\setlist[enumerate]{font={\normalfont}}
\setlist[enumerate,1]{label={(\roman*)}}
\definecolor{DarkBlue}{RGB}{9,53,122}
\newcommand\nc{\newcommand}
\DeclareMathOperator
\nc\JPW[1]{{\color{blue} \sf JW: [#1]}}
\nc\YS[1]{{\color{magenta} \sf
    $\diamondsuit\diamondsuit\diamondsuit$ YS: [#1]}}
\newtheorem{thm}[subsubsection]{Theorem}
\newtheorem{lem}[subsubsection]{Lemma}
\newtheorem{prop}[subsubsection]{Proposition}
\newtheorem{prop-const}[subsubsection]{Proposition-Construction}
\newtheorem{cor}[subsubsection]{Corollary}
\newtheorem{conj}[subsubsection]{Conjecture}
\theoremstyle{remark}
\newtheorem{rem}[subsubsection]{Remark}
\theoremstyle{definition}
\newtheorem{defn}[subsubsection]{Definition}
\newtheorem{eg}[subsubsection]{Example}
\nop\Aut{Aut}
\nop\Hom{Hom}
\nop\Ext{Ext}
\nop\Tor{Tor}
\nop\Maps{Maps}
\nop\Spec{Spec}
\nop\Coh{Coh}
\nop\QCoh{QCoh}
\nop\IndCoh{IndCoh}
\nop\Rep{Rep}
\nop\Pic{Pic}
\nop\Bun{Bun}
\nop\LocSys{LocSys}
\nop\Sat{Sat}
\nop\Hecke{Hecke}
\nop\Sym{Sym}
\nop\Div{Div}
\nop\Loc{Loc}
\nop\Eis{Eis}
\nop\coker{coker}
\nop\Four{Four}
\nop\codim{codim}
\nop\Lie{Lie}
\nop\act{act}
\nop\pr{pr}
\nop\Prim{Prim}
\nop\len{len}
\nc\on{\operatorname}
\nc\PGL{\mathrm{PGL}}
\nc\SL{\mathrm{SL}}
\nc\GL{\mathrm{GL}}
\nc\End{\mathrm{End}}
\nc\uHom{\underline{\mathrm{Hom}}}
\nc\Gr{\mathrm{Gr}}
\nc\IC{\mathrm{IC}}
\nc\pt{\mathsf{pt}}
\nc\diag{\mathrm{diag}}
\nc\Ran{\mathrm{Ran}}
\nc\vareps{\varepsilon}
\nc\ch{\check}
\nc\mbb[1]{\mathbb{#1}}
\nc\Cal[1]{\mathcal{#1}}
\nc\Scr[1]{\EuScript{#1}}
\nc\mf[1]{\mathfrak{#1}}
\nc\msf[1]{\mathsf{#1}}
\nc\onto{\twoheadrightarrow}
\nc\can{\mathrm{can}}
\nc\eff{\mathrm{eff}}
\nc\aff{\mathrm{aff}}
\nc\disj{\mathrm{disj}}
\nc\pos{\mathrm{pos}}
\nc\gen{\mathrm{gen}}
\nc\red{\mathrm{red}}
\nc\glob{\mathrm{glob}}
\nc\flag{\mathrm{flag}}
\nc\oo[1]{\mathring{#1}}
\nc\abs[1]{\lvert#1\rvert}
\nc\brac[1]{\langle#1\rangle}
\nc\tbrac[1]{[\![#1]\!]}
\nc\lbrac[1]{(\!(#1)\!)}
\nc\smallfrac[2]{{\textstyle \frac{#1}{#2}}}
\nc\mss{\mathrm{ss}}
\nc\olsf[1]{{\ol{\msf #1}}}
\nc\barY{{\ol{\Scr Y}}}
\nc\barM{{\ol{\Scr M}}}
\nc\sm{-}
\nc\bbD{\mbb D}
\nc\bbQ{\mbb Q}
\nc\bbN{\mbb N}
\nc\bbZ{\mbb Z}
\nc\bbA{\mbb A}
\nc\bbP{\mbb P}
\nc\rmP{\mathrm P}
\nc\rmH{\mathrm H}
\nc\rmD{\mathrm D}
\nc\mfU{\mathfrak U}
\nc\mfn{\mathfrak n}
\nc\sY{\Scr Y}
\nc\sZ{\Scr Z}
\nc\sM{\Scr M}
\nc\sA{\Scr A}
\nc\sF{\Scr F}
\nc\sG{\Scr G}
\nc\sH{\Scr H}
\nc\sP{\Scr P}
\nc\mB{\mathfrak B}
\nc*{\sslash}{/\!\!/}
\nc\Fr{\mathrm{Fr}}
\nc{\tr}{tr}
\DeclareMathOperator*{\ttimes}{\tilde\times}
\DeclareMathOperator*{\ot}{\otimes}
\DeclareMathOperator*{\xt}{\times}
\DeclareMathOperator*{\bt}{\boxtimes}
\DeclareMathOperator*{\hot}{\wh\otimes}
\DeclareMathOperator*{\colim}{colim}
\numberwithin{equation}{section}
\title{Intersection complexes and unramified $L$-factors}
\author{Yiannis Sakellaridis}
\author{Jonathan Wang}
\begin{document} 

\begin{abstract}
%
Let $X$ be an affine spherical variety, possibly singular, and $\msf L^+X$ its arc space. The intersection complex of $\msf L^+X$, or rather of its finite-dimensional formal models, is conjectured to be related to special values of local unramified $L$-functions. Such relationships were previously established in \cite{BFGM} for the affine closure of the quotient of a reductive group by the unipotent radical of a parabolic, and in \cite{BNS} for toric varieties and $L$-monoids. In this paper, we compute this intersection complex for the large class of those spherical $G$-varieties whose dual group is equal to $\check G$, and the stalks of its nearby cycles on the horospherical degeneration of $X$. We formulate the answer in terms of a Kashiwara crystal, which conjecturally corresponds to a finite-dimensional $\check G$-representation determined by the set of $B$-invariant valuations on $X$. 
We prove the latter conjecture in many cases. Under the sheaf--function dictionary,
our calculations give a formula for the Plancherel density of the IC function
of $\msf L^+X$ as a ratio of local $L$-values for a large class of spherical varieties.
\end{abstract}

\maketitle

\tableofcontents

\section{Introduction}

\subsection{Arc spaces and their IC functions}

Let $\mathbb F$ be a finite field, $G$ a connected reductive group over $\mathbb F$, and $X$ an affine spherical $G$-variety. The formal arc space $\msf L^+X$ is the infinite-dimensional scheme that represents the functor $R\mapsto X(R\tbrac t)$, and it is singular, if $X$ is. However, its singularities at ``generic'' $\mathbb F$-points (namely, arcs $\Spec \mathbb F\tbrac t \to X$ which generically lie in the smooth locus $X^{\text{sm}}$) are of finite type, according to the theorem of Grinberg--Kazhdan and Drinfeld, and this allows one to define an \emph{IC function} \cite{BNS}, that is, the function that should correspond under Frobenius trace to the ``intersection complex'' of $\msf L^+X$. This is a function $\Phi_0$ on $X(\mathfrak o)\cap X^{\text{sm}}(F)$ (where $\mathfrak o=\mathbb F\tbrac t$ and $F = \mathbb F\lbrac t$), and it was conjectured in \cite{SaRS} (before the rigorous definition of this function was available) that it is related to special values of $L$-functions. Such a relation was established in \cite{BNS} for toric varieties and certain group embeddings, termed $L$-monoids, generalizing the local unramified Godement--Jacquet theory. One goal of the present paper is to prove such relations, for a different and broad class of spherical varieties. 

In order to compute the IC function, we need to work with finite-dimensional global models of the arc space, or rather, the arc space over the algebraic closure $k$ of $\mathbb F$; from now on, we  base change to $k$,  without changing notation. However, for motivational purposes, let us here pretend that $\msf L^+X$ were already finite-dimensional, and the intersection complex on it were defined as a $\ol\bbQ_\ell$-valued constructible (derived) sheaf, for some prime $\ell$ different from the characteristic of $\mathbb F$. We would normalize such a sheaf to be constant in degree zero over the arc space of the smooth locus $\msf L^+X^{\text{sm}}$, and we would normalize the intersection complex of a substratum $S\subset \msf L^+X$ of codimension $d$ to be $\ol\bbQ_\ell(\frac{d}{2})[d]$ on its smooth locus, where $\ol\bbQ_\ell(\frac{1}{2})$ denotes a chosen square root of the cyclotomic Tate twist. 

The IC function is computed via some version of a ``Satake transform'' for the spherical variety --- we will recover the original function from its Satake transform in Corollary \ref{cor:asymptotics}. Let $B\supset N$ be a Borel subgroup of $G$, and its unipotent radical, and consider the invariant-theoretic quotient $X\sslash N = \Spec k[X]^N$, which is an affine embedding of a quotient $T_X$ of the Cartan $T=B/N$. In this paper, we restrict ourselves to varieties where $B$ acts freely on an open subset $X^\circ$ of $X$, so let us already make this assumption for notational simplicity. Then $T_X=T$, and we fix a base point to identify $T$ as the open orbit in $X\sslash N$. In fact, our assumption on $X$ is stronger, requiring that the (Gaitsgory--Nadler) \emph{dual group} of $X$, $\check G_X$, is equal to the Langlands dual group of $G$ --- this condition is equivalent\footnote{To be precise, we are referring to the modification of the Gaitsgory--Nadler dual group described in \cite{SV}, because the Gaitsgory--Nadler dual group would be $\check G$ even if the stabilizers were \emph{normalizers} of tori. This small distinction is important, and such cases (for example, $\mathrm{O}_n\backslash \GL_n$) are \emph{not} expected to be directly related to $L$-functions, and not included in the present paper.} to the following:
\begin{equation}
\label{equation-typeT}
\parbox{0.8\linewidth}{$B$ acts freely on $X^\circ$, and for every simple root $\alpha$, if $P_\alpha$ is the parabolic generated by $B$ and the root space of $-\alpha$, the stabilizer of a point in the open $P_\alpha$-orbit $X^\circ P_\alpha$ is a torus (necessarily one-dimensional).}
\end{equation}

The pushforward map $\pi: X\to X\sslash N$ induces a map between arc spaces. Under our assumptions, the ``generic'' $\msf L^+T$-orbits on the arc space of $X\sslash N$, that is, those corresponding to arcs whose generic fiber lands in the open $T$-orbit, are naturally parametrized by a strictly convex (i.e., not containing non-trivial subgroups) submonoid $\mathfrak c_X\subset \check\Lambda$ of the cocharacter group of $T$, with $\check\lambda\in\mathfrak c_X$ corresponding to the image $t^{\ch\lambda}:=\check\lambda(t)$ of a uniformizer (acting on a fixed base point of $X\sslash N$). The pushforward $\pi_! \IC_{\msf L^+X}$ of the IC sheaf is $\msf L^+T$-equivariant, and under Frobenius trace translates to a $T(\mathfrak o)$-invariant function on $X\sslash N (\mathfrak o) \cap T(F)$, which will be denoted by $\pi_! \Phi_0$. Explicitly, 
\[ \pi_! \Phi_0 (a) = \int_{N(F)} \Phi_0(an) dn,\] 
is a Radon transform on the spherical variety, i.e., the integral of the IC function $\Phi_0$ over \emph{generic horocycles}, that is, over the fibers of the map $X(\mathfrak o)\to X\sslash N(\mathfrak o)$, where the Haar measure on $N(F)$ is so that $dn(N(\mathfrak o))=1$. 

\smallskip

This integral is really a finite sum, hence makes sense over $\ol\bbQ_\ell$, but let us for simplicity choose an isomorphism $\ol\bbQ_\ell \simeq \mathbb C$, such that the geometric Frobenius morphism $\Fr$ acts on the chosen half-Tate twist $\ol\bbQ_\ell(\frac{1}{2})$ by $q^\frac{1}{2}$. 
Our results and conjectures are best expressed under the assumption that $X$ carries a $G$-eigen-volume form;  we will assume this for the rest of the introduction. The absolute value of the volume form is a $G(F)$-eigenmeasure on the open orbit $X^\bullet(F)$, whose  eigencharacter we will denote by $\eta$. (We will not impose this assumption in the rest of the paper, but see Remark \ref{rem:length}.) Then, one should consider the following normalized form of the integral above, analogous to the standard normalization of the Satake isomorphism:
\begin{equation}
(\eta\delta)^\frac{1}{2}(a) \pi_! \Phi_0 (a) = (\eta\delta)^\frac{1}{2}(a)\int_{N(F)} \Phi_0(an) dn,
\end{equation}
where $\delta = |e^{2\rho_G}|$ is the modular character of the Borel subgroup.\footnote{We use additive notation for the character group $\ch\Lambda$ of $\ch T$, so $e^{\ch\nu}$ will denote the actual morphism $\ch T\to \mathbb G_m$ corresponding to $\ch\nu\in \ch\Lambda$.}

Ideally, we would like to prove a conjecture such as the following. To formulate it, recall that $T(\mathfrak o)$-orbits on $T(F)$ are parametrized by Galois-fixed (that is, $\Fr$-fixed) elements of $\ch\Lambda$.

\begin{conj}
\label{conjecture-pushforward-IC}
There is a symplectic representation $\rho_X$ of the $L$-group, $\rho_X: {^LG}=\check G \rtimes \left<\Fr\right> \to \GL(V_X)$, and a ${^LT}$-stable polarization $V_X=V_X^+\oplus V_X^-$, such that the multiset $\mB^+$ of $\check T$-weights of $V_X^+$ belongs to $\mathfrak c_X$, and the pushforward of the IC function satisfies:
 \begin{equation}
 \label{equation-pushforward-IC}
  (\eta\delta)^\frac{1}{2} \cdot \pi_! \Phi_0 = \left(\tr_{\check T}(\Fr, \Sym^\bullet (\check{\mathfrak n}(1)))\right)^{-1} \cdot \tr_{\check T}(\Fr, \Sym^\bullet (V_X^+)).
 \end{equation}
 Here, for a representation $V$ of ${^LT}=\check T\rtimes\left<\Fr\right>$, the expression $\tr_{\check T}(\Fr,V)$ denotes the function on $\ch\Lambda^{\Fr}$ whose value on $\ch\lambda$ is equal to the trace of geometric Frobenius $\Fr$ on the $(\ch T, \ch\lambda)$-eigenspace of $V$. 
\end{conj}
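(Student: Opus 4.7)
The plan is to compute $\pi_!\Phi_0$ by geometric means and match it term-by-term with the claimed formula, using the horospherical degeneration of $X$ as the primary tool. The essential observation is that the $\check T$-graded generating function on the left-hand side is controlled by the nearby cycles of $\IC_{\msf L^+ X}$ along the degeneration to the horospherical variety $X_h$.

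First, one works on finite-dimensional global models of $\msf L^+X$, stratifying by the cocharacter $\check\lambda \in \mathfrak c_X^{\Fr}$ labelling the corresponding $\msf L^+T$-orbit in $X\sslash N(\mathfrak o)$. By $T(\mathfrak o)$-equivariance, $\pi_!\Phi_0$ is determined by its value on each $\check\lambda$, which is the Frobenius trace on the stalk of $\pi_! \IC_{\msf L^+X}$ over a representative arc $t^{\ch\lambda}$. This reduces the claim to a stalk computation compatible with the recovery of the IC function from its Satake transform announced in Corollary~\ref{cor:asymptotics}.

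Next, degenerate $X$ to its horospherical variety $X_h$. On the horospherical side, the quotient $X_h \to X_h \sslash N$ is a trivial $N$-bundle over the generic locus, and a direct calculation parallel to the Macdonald--Kato formula for the spherical function on the affine Grassmannian identifies $\pi_{h,!}\Phi_0^h$, up to the $(\eta\delta)^{1/2}$ normalization, with the denominator $\tr_{\check T}(\Fr,\Sym^\bullet(\check{\mathfrak n}(1)))^{-1}$. The difference between $\pi_!\Phi_0$ and $\pi_{h,!}\Phi_0^h$ is then measured by the nearby cycles $\Psi(\IC_{\msf L^+ X})$ along the degeneration. These nearby cycles are what the paper computes geometrically; they carry a natural $\check T$-grading and are organized into a Kashiwara crystal $\mB$ whose weight-multiplicity generating function on $\mathfrak c_X$ is the desired numerator.

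The main obstacle, and the reason this remains a conjecture in general, is the final step: promoting the combinatorial crystal $\mB^+$ to the weight basis of a genuine ${^L}G$-representation $V_X^+$, together with a symplectic pairing between $V_X^+$ and $V_X^-$. At the level of $\check T$-characters the identity~\eqref{equation-pushforward-IC} holds by construction once the crystal structure is known to be that of a $\check G$-crystal, but there is no general recipe for recognizing when an abstract affine-Grassmannian-type crystal lifts to a representation. The paper handles this in a broad but incomplete range of cases, controlled by the type-$T$ condition~\eqref{equation-typeT}; the existence of the symplectic structure should then follow in those cases from the $G$-eigen-volume form on $X$, which imposes a parity symmetry of $\mB$ around zero and pairs $V_X^+$ with its dual.
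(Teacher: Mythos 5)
Your proposal correctly identifies the structure of the problem: this is a genuine conjecture, the reduction to finite-type models, the role of the crystal $\mB$, and the irreducible bottleneck --- upgrading the combinatorial $\check T$-crystal to the crystal basis of an actual $\check G$-representation (Conjecture~\ref{conjecture-crystal}, which implies the statement). That last point is indeed what keeps this a conjecture, and the paper only closes it in the minuscule case (Corollary~\ref{cor:minuscule}).

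However, the computational route you describe is not the one the paper takes, and it is also misattributed in one place. The paper's proof of the $\check T$-level identity (Theorems~\ref{theorem-affine-closure}, \ref{theorem-general}) does not pass through nearby cycles or the horospherical degeneration at all. Instead, the backbone is the properness and stratified semi-smallness of the compactified Zastava map $\bar\pi: \barY\to\Scr A$ (Theorem~\ref{theorem-semismall}), combined with the decomposition theorem and the graded factorization property. This produces the Euler-product format of Proposition~\ref{prop:format}, and the crystal $\mB^+$ is then read off from irreducible components of critical dimension in central fibers $\msf Y^{\ch\lambda}$, which are subschemes of semi-infinite orbits $\msf S^{\ch\lambda}$ in the affine Grassmannian --- not from nearby cycles. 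The nearby cycles of $\IC$ along the degeneration to $X_\emptyset$ are a \emph{consequence} of this computation (Theorem~\ref{thm:nearby}) rather than its engine, and the compatibility $\pi_*\cong\pi_{\emptyset,*}\Psi$ (Theorem~\ref{thm:PsiRadon}) is what lets the paper \emph{deduce} the asymptotics formula from the pushforward, not the other way around. Your inversion of the logic is not fatal --- $\pi_*\cong\pi_{\emptyset,*}\Psi$ would allow a route through nearby cycles if one could compute $\Psi(\IC_\sY)$ by independent means --- but the proposal as written gives no independent handle on $\Psi$.

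The second inaccuracy concerns the factor $\tr_{\check T}(\Fr,\Sym^\bullet(\check{\mathfrak n}(1)))^{-1}$. You attribute it to a Macdonald--Kato-type computation of $\pi_{h,!}\Phi_0^h$ on the horospherical side, but in the paper this factor arises from the comparison between the non-compactified Zastava $\sY$ and the compactified Zastava $\barY$ (Corollary~\ref{cor:pibarY}, based on Theorem~\ref{thm:BFGM}). It is the ``naive vs.\ compactified Eisenstein'' correction, living entirely on the $X$-side, not a feature of the horospherical Radon transform applied to $\Phi_0^h$. (What the horospherical Radon transform of $1_{X_\emptyset^\bullet(\mathfrak o)}$ actually produces is $\delta^{-1/2}\tr(\Fr,\Sym^\bullet(\check{\mathfrak n}))\cdot\tr(\Fr,\Sym^\bullet(\check{\mathfrak n}(1)))^{-1}$, a ratio of two factors --- this enters in Corollary~\ref{cor:asymptotics}, but for a different purpose.) If you intend to push the nearby-cycles route, you would still need a separate mechanism to account for this Zastava-vs.-compactified-Zastava discrepancy; the horospherical degeneration alone does not produce it.
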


From the point of view of number theory, the spherical varieties satisfying our assumption $\check G_X=\check G$ give, in some sense, the most interesting periods, because they are associated to $L$-values at the center of the critical strip. Indeed, one should always be able to choose the eigencharacter $\eta$ such that the Frobenius morphism acts on $V_X^+$ by permuting elements of a basis and scaling by $q^\frac{1}{2}$. 
For example, when $G$ is split and the colors (see below) of $X$ are all defined over $\mathbb F$, this permutation action should be trivial, and \eqref{equation-pushforward-IC} should read:
\begin{equation}\label{equation-pushforward-IC-expl}
(\eta\delta)^\frac{1}{2} \pi_! \Phi_0 = \frac{\prod_{\check\alpha\in\check\Phi^+} (1-q^{-1} e^{\check\alpha})}{\prod_{\check\lambda \in \mB^+} (1-q^{-\frac{1}{2}} e^{\check\lambda})},  
\end{equation}
where $\mB^+$ is the multiset of weights, as in the conjecture, and $e^{\ch\lambda}$ denotes the characteristic function of the $T(\mf o)$-orbit of $t^{\ch\lambda}$.

We will explain the relationship of this conjecture to various conjectures of arithmetic and geometric origin below. We do not quite prove the conjecture in all cases, but we determine the weights of $\rho_X$ 
(in terms of $X$) 
and in the cases where $\rho_X$ is minuscule, we prove the conjecture (Corollary~\ref{cor:minuscule}). 
It is helpful to distinguish between the special case when $X = \overline{H\backslash G}^\aff$ is the affine closure of a homogeneous quasiaffine variety, and the general case. In the special case, let us also assume, at first, that 
the monoid $\mf c_X$ is freely generated with a basis $\ch\nu_1,\dotsc, \ch\nu_r$, so 
$X\sslash N$ may be identified with $\mbb A^r$. This condition can always be achieved by passing to an abelian cover of $H\backslash G$ and taking its affine closure, see \S \ref{sect:freemonoid}. In that case, the generators $\ch\nu_i$ are the valuations associated to the \emph{colors}, that is, the prime $B$-stable divisors of $H\backslash G$. 

\begin{thm}[See \S \ref{sect:finitefields}]
\label{theorem-affine-closure}
 Assume that $X = \overline{H\backslash G}^\aff$ satisfies the conditions above ($T_X=T$, $\check G_X=\check G$ and $\mf c_X \cong \mbb N^r$ is free). Then there is a $(\check T\rtimes\left<\Fr\right>)$-representation $V^+_X$ satisfying:
 \begin{enumerate}
  \item the $\check T$-weights of $V^+_X$ belong to $\mathfrak c_X\sm 0$;
  \item the set of weights of $V^+_X \oplus (V^+_X)^*$ (without multiplicities) equals the set of weights of a $\ch G$-representation $\rho_X$;
  \item the dimensions of the weight spaces of $V^+_X \oplus (V^+_X)^*$ are invariant under the Weyl group $W$ of $G$;
  \item the weight spaces in $V^+_X$ of the basis elements $\check\nu_1,\dots,\check\nu_r$ of $\mathfrak c_X$ have multiplicity one,
  \item under the Frobenius action, we have 
  \begin{equation}\label{e:Frobaction}
   V^+_X= \bigoplus_{\mB^+} \ol\bbQ_\ell (\smallfrac{1}{2}),
\end{equation}
 for some permutation action of $\Fr$ on the multiset $\mB^+$ of weights, compatible with its action on $\ch T$,    
\end{enumerate}
 such that the pushforward $\pi_! \Phi_0$ of the IC function\footnote{Under the assumptions of the theorem, the restriction of the eigencharacter $\eta$ to the colors $\ch\nu_i$ is uniquely determined, see Remark \ref{rem:length}.} satisfies the formula \eqref{equation-pushforward-IC} above.
\end{thm}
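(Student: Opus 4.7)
The plan is to compute the pushforward $\pi_!\IC_{\msf L^+X}$ by degenerating $X$ to its horospherical model $X_0$ and exploiting the simpler geometry of the latter. Recall that the horospherical degeneration is a flat $G$-equivariant family whose generic fiber is $X$ and whose special fiber $X_0$ is the affine horospherical variety sharing the Cartan datum of $X$. Since $X\sslash N = X_0\sslash N \cong \mbb A^r$ by the hypothesis that $\mf c_X$ is freely generated, the map $\pi$ is compatible with the degeneration, and the nearby cycles functor commutes with proper pushforward. It therefore suffices to analyze the nearby cycles $\Psi\,\IC_{\msf L^+X}$ on $\msf L^+X_0$ and integrate along the fibers of $\msf L^+N$.

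First I would introduce finite-dimensional formal models of $\msf L^+X$ and $\msf L^+X_0$, either via Drinfeld--Grinberg--Kazhdan or, preferably for the factorization arguments, via global Beilinson--Drinfeld-type models analogous to those used in \cite{BFGM,BNS}. The $\msf L^+T$-orbits on $\msf L^+(X\sslash N) = \msf L^+\mbb A^r$ are indexed by cocharacters in $\mf c_X$, and factorization reduces the problem to computing the stalks of $\pi_!\IC$ at single-defect points corresponding to each $\check\lambda \in \mf c_X \sm 0$.

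Second, I would define $V_X^+$ directly from these stalks: declare the $\check\lambda$-weight space $V_X^+[\check\lambda]$ to be the IC stalk cohomology at the corresponding single-defect point, Tate-twisted so that it lies in cohomological degree zero (this shift accounts for the factor of $q^{-\frac{1}{2}}$ in \eqref{equation-pushforward-IC-expl}). Property (i) is then immediate from the stratification. Property (iv) follows from identifying the basis cocharacters $\check\nu_i$ with the valuations along the color divisors and verifying that the corresponding single-defect points admit smooth transversal slices of codimension one in $\msf L^+X$. The Frobenius action \eqref{e:Frobaction} in (v) is read off from the Galois action on the irreducible components of the relevant orbit closures.

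The main obstacle will be properties (ii) and (iii): showing that the weight multiplicities are Weyl-group invariant and that $V_X^+ \oplus (V_X^+)^*$ realizes the weights of a genuine $\check G$-representation. Here the Kashiwara crystal alluded to in the abstract enters the picture: the data of $B$-invariant valuations on $X$ canonically produces a crystal $\Cal B_X$, and I would identify the multiplicities extracted from the IC stalks with those of $\Cal B_X$. The relation of $\Cal B_X$ to a $\check G$-representation, proved in many cases in the body of the paper, then yields (ii) and (iii). Once (i)--(v) are in hand, the formula \eqref{equation-pushforward-IC} follows from the sheaf--function dictionary: factorization expresses $\pi_!\IC_{\msf L^+X}$ as a shifted symmetric algebra on the IC stalks, whose Frobenius trace produces $\tr_{\check T}(\Fr,\Sym^\bullet V_X^+)$ in the numerator, while the denominator $\Sym^\bullet(\check{\mf n}(1))$ emerges from the toric IC function on $X\sslash N$ computed in \cite{BNS}, combined with the modular twist $(\eta\delta)^\frac{1}{2}$.
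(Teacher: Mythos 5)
Your proposal takes a genuinely different route from the paper, and unfortunately the chosen route has some gaps that would need to be filled.

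The paper does \emph{not} use nearby cycles to prove this theorem; nearby cycles appear in \S\ref{sect:nearby} as a \emph{corollary} of the Zastava computations, not as a tool for them. The actual mechanism is: introduce the \emph{compactified} Zastava space $\barY$ (since $\pi:\sY\to\sA$ is not proper), prove that $\bar\pi:\barY\to\sA$ is proper and \emph{stratified semi-small} (Theorem~\ref{thm:semismall}), apply the decomposition theorem to deduce $\bar\pi_!\IC_{\barY}$ is a semisimple perverse sheaf, and then use factorization to force the Euler-product shape (Proposition~\ref{prop:format}). The weight spaces $V_{X,\ch\lambda}$ are identified as the top compactly-supported cohomology of the central fibers $\msf Y^{\ch\lambda}$, and it is precisely perversity and the semi-smallness inequality that single out the top degree; without that, the statement that ``the stalk at a single-defect point lies in a single cohomological degree after a Tate twist'' is not available, so your definition of $V_X^+$ is not yet well-posed.

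Two more concrete problems. First, you invoke ``nearby cycles commutes with proper pushforward,'' but $\pi$ is not proper; the commutation $\Psi\circ\pi_!\cong\pi_{\emptyset!}\circ\Psi$ does hold (Theorem~\ref{thm:PsiRadon}) but is proved by a contraction-principle argument, not by properness. Even granting the commutation, you would still need to compute $\Psi\IC$ on the horospherical side, and the paper's description of that object (Theorem~\ref{thm:nearby}) is itself \emph{in terms of} $\bar\pi_!\IC_{\barY}$, so the reduction is circular as stated. Second, the denominator $\tr_{\ch T}(\Fr,\Sym^\bullet(\ch{\mf n}(1)))$ does not ``emerge from the toric IC function on $X\sslash N$'': under the hypothesis $\mf c_X\cong\mbb N^r$ the toric variety $X\sslash N=\mbb A^r$ is smooth, so its arc-space IC is constant and contributes nothing. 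That factor is accounted for by the difference $\pi_!\IC_{\sY}$ versus $\bar\pi_!\IC_{\barY}$, i.e., by the $\mfU^\vee(\ch{\mfn}_C)$-twist on the strata of $\barY\setminus\sY$ (Corollary~\ref{cor:pibarY}); it is the quotient between ``naive'' and ``compactified'' Eisenstein series, not a toric contribution.

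Your use of the crystal $\Cal B_X$ for properties (ii)--(iii) is aligned with the paper (Theorem~\ref{theorem-crystal}), but the input to that crystal structure---the identification of $\mB^+_X$ with components of critical dimension of central fibers---again relies on the semi-smallness dimension bounds (Proposition~\ref{prop:centralstrata}, Proposition~\ref{prop:Vbasis}), which are absent from your argument. In short: the compactified Zastava space and the semi-smallness theorem are the load-bearing ideas, and they are missing.
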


In fact, we show more: we endow the multiset $\mB := \mB^+ \sqcup (-\mB^+)$ of weights of $V_X$ with the structure of a \emph{Kashiwara crystal}, see Theorem \ref{theorem-crystal} below, and show that, if Conjecture~\ref{conjecture-pushforward-IC} holds (equivalently, if the crystal is the one corresponding to the crystal basis of a finite-dimensional $\ch G$-module),
then $\rho_X$ must be the direct sum of the irreducible $\ch G$-modules with highest weights 
in $\ch\Lambda^+ \cap W \{ \ch\nu_1,\dotsc,\ch\nu_r \}$, each with multiplicity one 
(see Remark~\ref{rem:conj}).
In other words, the highest weights of $\rho_X$ are the dominant coweights that are Weyl translates
of the basis elements $\ch\nu_1,\dotsc,\ch\nu_r$. 

\smallskip

As we already mentioned, Theorem~\ref{theorem-affine-closure} implies Conjecture~\ref{conjecture-pushforward-IC} when $\rho_X$ is minuscule. 
In particular, when $H\bs G$ is itself affine (equivalently, $H$ is reductive: see \cite{Lun73,Richardson}),
we observe that $\rho_X$ is always minuscule (Corollary~\ref{cor:Hreductive}). In this case
Conjecture~\ref{conjecture-pushforward-IC} was previously proved by \cite[Theorem 7.2.1]{SaSph}, under some additional assumptions,
and Theorem~\ref{theorem-affine-closure} gives a geometric interpretation of this result.

For an example when $H\bs G$ is not affine:

\begin{eg}\label{example-nfold}
Let $X^\bullet=$ the quotient of $\SL_2^n$ by the unipotent subgroup 
\[ H_0= \left\{ \left.\left(\begin{array}{cc} 1 \\ x_1 & 1\end{array}\right) \times \left(\begin{array}{cc} 1 \\ x_2 & 1\end{array}\right) \times \cdots \times\left(\begin{array}{cc} 1 \\ x_n & 1\end{array}\right) \right| x_1+ x_2 + \dots+ x_n = 0\right\},\]
under the action of $G = $ the quotient of $\mathbb G_m \times \SL_2^n$ by the diagonal copy of $\mu_2$, where $a\in \mathbb G_m$ acts as left multiplication by $\begin{pmatrix} a^{-1} \\ & a\end{pmatrix}$. 
Let $X$ be the affine closure of $X^\bullet$. Denoting by $\check m$ the identity cocharacter of $\mathbb G_m$, the monoid $\mathfrak c_X$ is freely generated by the coweights
\[ \frac{\check\alpha_1+\check\alpha_2+\dots+\check\alpha_n - \check m}{2}\]
and 
\[ \frac{-\check\alpha_1- \dots - \check\alpha_{i-1} + \check\alpha_i - \dots -\check\alpha_n + \check m}{2}, \,\, i =1,\dots, n,\]
see Remark \ref{rem:valuation}. These are minuscule weights of $\check G= \GL_2\times_{\det} \GL_2 \times_{\det} \cdots \times_{\det}\GL_2$, and in that case Conjecture \ref{conjecture-pushforward-IC} holds, confirming an expectation of \cite[\S 4.5]{SaRS}.  
\end{eg}

For the general case, $X$ contains an open $G$-orbit $X^\bullet = H\backslash G$, which we will assume to satisfy the conditions of Theorem \ref{theorem-affine-closure}, except perhaps for the freeness of $\mathfrak c_{X^\bullet}$. In that case, the free monoid $\mathbb N^{\mathcal D}$, where $\mathcal D$ denotes the set of colors, maps through the valuation map to $\mathfrak c_{X^\bullet}$, and $\mathfrak c_X$ is generated by its image and a minimal set $\Cal D^G_{\mathrm{sat}}(X)=\{\check \theta_1, \dots,\check\theta_d\}$ of distinct \emph{antidominant} elements of the cocharacter group $\check\Lambda$ of $T$. For each $\check\theta_i$, we let $V^{\check\theta_i}$ be the irreducible module of $\check G$ with lowest weight $\check\theta_i$, and assume that the eigencharacter $\eta$ of the $G$-eigenmeasure on $X(F)$ is of the form $|e^{\mathfrak h}|$ for some algebraic character $\mathfrak h \in \Lambda^W$. 

\begin{thm}[See \S \ref{sect:finitefields}]
\label{theorem-general}
In the setting above, if $V_{X^\bullet}^{+}$ denotes the $\check T$-representation for $\overline{X^\bullet}^\aff$ described in Theorem \ref{theorem-affine-closure},\footnote{See \S \ref{sect:freemonoid} for a reduction to the case where $\mathfrak c_{X^\bullet}$ is free.} then the pushforward $  (\eta\delta)^\frac{1}{2}\cdot \pi_! \Phi_0 $ of the IC function for $\msf L^+X$ is given by \eqref{equation-pushforward-IC} with 
\begin{equation}
 V^+_X = V_{X^\bullet}^{+} \oplus \bigoplus_i V^{\check\theta_i}\left(\frac{\left<\mathfrak h + 2\rho_G,\ch\theta_i\right>}{2}\right).
\end{equation}
\end{thm}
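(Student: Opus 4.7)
The plan is to derive Theorem~\ref{theorem-general} from Theorem~\ref{theorem-affine-closure} by stratifying $X$ according to its $G$-orbits and assembling the contributions to the Radon integral $\pi_! \Phi_0 (a) = \int_{N(F)} \Phi_0(an)\,dn$ stratum by stratum. By assumption, $X$ shares its open $G$-orbit $X^\bullet = H\backslash G$ with $\overline{X^\bullet}^\aff$, but contains in addition the $G$-stable boundary divisors $D_{\check\theta_i}$ indexed by $\check\theta_i \in \Cal D^G_{\mathrm{sat}}(X)$. Correspondingly, $\mathfrak c_X$ is generated by the image of $\mathfrak c_{X^\bullet}$ together with the $\check\theta_i$, so for each $\check\lambda \in \mathfrak c_X$ fixed by $\Fr$ the value $(\pi_!\Phi_0)(\check\lambda)$ splits as a sum over decompositions $\check\lambda = \check\mu + \sum_i n_i \check\theta_i$, with $\check\mu$ coming from the open stratum and $n_i \ge 0$ recording the order of contact of the generic arc with $D_{\check\theta_i}$.

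For the open stratum, after the reduction of \S\ref{sect:freemonoid} to the case of a free monoid, Theorem~\ref{theorem-affine-closure} applied to $\overline{X^\bullet}^\aff$ identifies the $X^\bullet$-contribution with the factor $\tr_{\check T}(\Fr,\Sym^\bullet V_{X^\bullet}^+)$ inside the formula of Conjecture~\ref{conjecture-pushforward-IC}. For each boundary divisor $D_{\check\theta_i}$, its contribution is computed via a local model: a transversal slice to the arc space of $D_{\check\theta_i}$ in $\msf L^+X$ can be identified, using a Grinberg--Kazhdan--Drinfeld style formal model, with a transversal slice in the affine Grassmannian $\Gr_{\check G}$ at the coweight $\check\theta_i$. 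Under the sheaf--function dictionary and geometric Satake (applicable because $\check G_X = \check G$), the stalk of the IC complex along the $n$-th order infinitesimal neighborhood of $D_{\check\theta_i}$ contributes the $\check T$-character of $\Sym^n V^{\check\theta_i}$; summing over $n \ge 0$ yields the factor $\tr_{\check T}(\Fr,\Sym^\bullet V^{\check\theta_i})$. The normalization $(\eta\delta)^{\frac{1}{2}}$ supplies the Tate twist: $\delta^{\frac{1}{2}} = |e^{\rho_G}|$ contributes $q^{-\langle\rho_G,\check\theta_i\rangle}$ and $\eta^{\frac{1}{2}} = |e^{\mathfrak h}|^{\frac{1}{2}}$ contributes $q^{-\langle \mathfrak h,\check\theta_i\rangle/2}$, together producing the half-Tate twist $\bigl(\frac{\langle \mathfrak h + 2\rho_G,\check\theta_i\rangle}{2}\bigr)$ on $V^{\check\theta_i}$.

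Different boundary divisors interact generically independently and their contributions enter the Radon integral as a convolution product on $\check\Lambda$, which on the representation side corresponds to the direct sum decomposition
\[
V_X^+ = V_{X^\bullet}^+ \oplus \bigoplus_i V^{\check\theta_i}\left(\frac{\langle \mathfrak h + 2\rho_G,\check\theta_i\rangle}{2}\right).
\]
I expect the main obstacle to be the slice identification of the second step: rigorously realizing $\msf L^+X$ transversally to $D_{\check\theta_i}$ as a slice in $\Gr_{\check G}$ at the coweight $\check\theta_i$, with the correct equivariance and Frobenius action to invoke geometric Satake with exactly the stated twist. This local analysis depends on the crystal structure developed in the body of the paper and on the hypothesis $\check G_X = \check G$, which guarantees that the normal slice has the single-parameter spherical type associated to $\check\theta_i$ rather than a higher-rank degeneration.
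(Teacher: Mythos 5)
Your high-level shape — decomposing $V_X^+$ into an $X^\bullet$-piece plus boundary pieces from the $\ch\theta_i$, then matching the Tate twist via $(\eta\delta)^{1/2}$ — does agree with the theorem's conclusion, and the twist arithmetic at the end is correct. But the central identification you need, that a ``transversal slice to the arc space of $D_{\ch\theta_i}$ in $\msf L^+X$'' is ``a transversal slice in the affine Grassmannian at $\ch\theta_i$,'' is not something you can take as given: it is precisely the technical content of Theorem~\ref{thm:comp}, Theorem~\ref{thm:heckeIC} and Proposition~\ref{prop:Vbasis}, which together occupy \S\ref{sect:Heckeact}--\ref{sect:centralfiber}. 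The paper warns explicitly that the $L$-monoid style local argument (stratum-by-stratum, with the Hecke algebra acting by convolution on slices) cannot be used here because $X$ has no monoid structure; what replaces it is the construction of a global Hecke action $\act_\sM : \barM_X^0 \ttimes \ol\Gr^{\ch\Theta}_{G,C^{\ch\Theta}} \to \sM_X$ together with the fact that it is proper and \emph{birational} onto the stratum closure. Without that birationality and the semi-smallness of $\bar\pi$ (Theorem~\ref{thm:semismall}), there is no canonical direct-sum decomposition of $\bar\pi_!\IC_{\barY}$ and therefore no ``stratum-by-stratum'' evaluation of the Radon integral.

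A second issue is where the symmetric powers come from. You attribute $\Sym^n V^{\ch\theta_i}$ to ``the $n$-th order infinitesimal neighborhood of $D_{\ch\theta_i}$'', but in the paper's argument the symmetric power structure arises from the graded factorization property of the Zastava model, i.e.\ from $n$ colliding degeneracy points on the curve (Proposition~\ref{prop:format}), not from deeper contact of a single arc with a boundary divisor; and the $\ch\theta_i$ range over $\Cal D^G_{\mathrm{sat}}(X)$, which can be strictly larger than the set of valuations of $G$-stable prime divisors (see the discussion just before Corollary~\ref{cor:comp}), so the identification of each $\ch\theta_i$ with an actual boundary divisor $D_{\ch\theta_i}$ of $X$ does not always hold. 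Finally, the MV-cycle contribution $V^{\ch\theta_i}$ is extracted only \emph{after} the crucial dimension estimates for central fibers (Proposition~\ref{prop:centralstrata} and Lemma~\ref{lem:centralfibertheta}, proved by the hyperplane slicing of Proposition~\ref{prop:intersections}), which your proposal does not supply. As written, the proof has a gap in exactly the two places the paper devotes most of its effort to: making the slice-to-Grassmannian comparison precise, and establishing the dimension bounds needed to know which components of the central fiber actually contribute.
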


Notice that the set $\Cal D^G_{\mathrm{sat}}(X)$ is stable under the Frobenius morphism. The action of Frobenius on the sum of $V^{\check\theta_i}$'s is the one obtained by identifying the crystal basis of this space with a set of subvarieties of the affine Grassmannian (see Section \ref{sect:crystal}), and considering the Frobenius action on those.

The reader should compare the passage from Theorem \ref{theorem-affine-closure} to Theorem \ref{theorem-general} to the passage from a reductive group $G$ to an $L$-monoid $X$ in \cite[Theorem 4.1]{BNS, BNS-erratum}. While the result is similar, however, the straightforward proof of \cite{BNS} uses the monoid structure on $X$ in a crucial way, and cannot be used here. 

\medskip

In \S \ref{subsection-introduction-global} below we will describe the sheaf-theoretic statements of these theorems, in the setting of appropriate finite type models, the \emph{Zastava spaces} for $X$ and $X\sslash N$. Before we do that, let us relate the results above to conjectures in number theory and geometry.

\subsection{Asymptotics and $L$-functions}

The Radon transform $\pi_! \Phi_0$ of the IC function (also known as ``basic function'') under the map $X\to X\sslash N$ admits various interpretations in terms of harmonic analysis, and, in particular,  allows us to compute the Plancherel density of the basic function,
\begin{equation}\label{Plancherel} \Vert \Phi_0 \Vert^2 = \int_{\ch T^1/W} \Omega(\chi) d\chi,
\end{equation}
that is, the decomposition of its norm in the space $L^2(X^\bullet(F))$ (with respect to the fixed eigenmeasure) in terms of seminorms $\Vert \bullet \Vert_\chi$ that factor through eigenquotients for the action of the unramified Hecke algebra. The variable $\chi\in \ch T^1$ here denotes an unramified unitary character of $T(F)$ (an element of the maximal compact subgroup of the complex dual Cartan), which modulo the action of $W$ represents the Satake parameter of such an eigenquotient, and $\Omega(\chi) = \Vert \Phi_0\Vert^2_\chi$. 

The passage from $\pi_! \Phi_0$ to the Plancherel formula \eqref{Plancherel} is achieved through the theory of \emph{asymptotics}, which is of geometric interest because of its relation to \emph{nearby cycles}. For an affine spherical variety $X$ over a field $k$ in characteristic zero, one can define its \emph{horospherical degeneration} (or asymptotic cone) $X_\emptyset$, by passing to the associated graded of the coordinate ring $k[X]$ as a $G$-module. A similar degeneration exists under some assumptions in positive characteristic, see \S\ref{sect:degeneration}. Under our current assumptions, its open $G$-orbit $X_\emptyset^\bullet$ is isomorphic to $N^-\backslash G$, where we use $N^-$ to denote the unipotent radical of a Borel $B^-$ opposite to $B$ --- an expository choice 
without mathematical significance, which we will use to identify the abstract Cartan $T=B/N$ as an automorphism group of $X_\emptyset$ by identifying it with the torus $B^-\cap B$ (the latter acting ``on the left'' on $N^-\backslash G$). 

The theory of asymptotics states that there is a canonical morphism 
\[ e_\emptyset^*: C^\infty(X^\bullet(F)) \to C^\infty(X_\emptyset^\bullet(F))\] 
which describes the behavior of any function ``at infinity'', see \cite[\S 5]{SV}. Of interest to us is that the spaces $X\sslash N$ and $X_\emptyset\sslash N$ are \emph{canonically} identified, and the corresponding pushforwards $\pi_!$ and $\pi_{\emptyset !}$ satisfy
\begin{equation}\label{Radon-asymptotics} \pi_! = \pi_{\emptyset !} \circ e_\emptyset^*
\end{equation}
 (for appropriate functions, in order to ensure convergence), see 
\cite[Proposition 5.4.6]{SV}. 

Inverting the Radon transform, \eqref{Radon-asymptotics} leads to a calculation of the asymptotics of the basic function. This, in turn, leads to a calculation of the basic function itself, as a function on $X^\bullet(F)/G(\mf o)$. To express both, we note that there is a natural parametrization of the coset space $X_\emptyset^\bullet(F)/G(\mf o)$ by the Frobenius-stable elements of the coweight lattice $\ch\Lambda$ (simply assigning $\ch\theta \in \ch\Lambda^\Fr$ to the orbit of $N^-t^{\ch\theta}$), and of the coset space $X^\bullet(F)/G(\mf o)$ by the Frobenius-stable elements of its antidominant submonoid $\ch\Lambda^-$ --- this is explained in Theorem \ref{thm:Gorbits}, in the geometric setting, from which we will borrow the notation $x_0 t^{\ch\theta}$ to represent the orbit corresponding to $\ch\theta\in \ch\Lambda^{-,\Fr}$. Then we have:

\begin{cor}[See \S \ref{sect:asymptotics}] \label{cor:asymptotics}
In the setting of Theorem \ref{theorem-general}, we have 
 \begin{equation}
 \label{equation-asymptotics-IC}
(\eta\delta)^\frac{1}{2}(a)  e_\emptyset^* \Phi_0 (N^- a G(\mf o)) = \tr_{\check T}(\Fr, \Sym^\bullet (\check{\mathfrak n}))^{-1} \cdot \tr_{\check T}(\Fr, \Sym^\bullet (V_X^+)) ,
\end{equation}
and 
\begin{equation}
 \label{equation-basicfunction}
(\eta\delta)^\frac{1}{2}(a)  \Phi_0 (x_0 a G(\mf o)) = \left. \tr_{\check T}(\Fr, \Sym^\bullet (\check{\mathfrak n}))^{-1} \cdot \tr_{\check T}(\Fr, \Sym^\bullet (V_X^+))\right|_{\ch\Lambda^{-,\Fr}},
\end{equation}
where we use the notation $\tr_{\ch T}(\dotsb)$, as in Conjecture \ref{conjecture-pushforward-IC}, to represent a $T(\mf o)$-invariant function on $T(F)$. 
\end{cor}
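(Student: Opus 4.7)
The plan is to combine Theorem~\ref{theorem-general} with the asymptotics factorization \eqref{Radon-asymptotics} of the Radon transform through $e_\emptyset^*$, and then invert the horospherical pushforward $\pi_{\emptyset !}$ via a Gindikin--Karpelevich-type calculation; the basic-function formula \eqref{equation-basicfunction} will then follow from \eqref{equation-asymptotics-IC} by restriction to the antidominant cone.

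First, applying \eqref{Radon-asymptotics} to rewrite the left-hand side of \eqref{equation-pushforward-IC} as $(\eta\delta)^{1/2}\cdot \pi_{\emptyset !}(e_\emptyset^*\Phi_0)$, Theorem~\ref{theorem-general} yields
\[
(\eta\delta)^{1/2}\cdot \pi_{\emptyset !}(e_\emptyset^*\Phi_0) = \tr_{\check T}(\Fr, \Sym^\bullet(\check{\mathfrak n}(1)))^{-1}\cdot \tr_{\check T}(\Fr, \Sym^\bullet V_X^+).
\]
The function $e_\emptyset^*\Phi_0$ is both $G(\mf o)$-invariant and $N^-(F)$-invariant on $X_\emptyset^\bullet(F)=N^-(F)\backslash G(F)$, hence determined by its values on the $\ch\Lambda^{\Fr}$-indexed cosets $N^- t^{\ch\theta} G(\mf o)$. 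The operator $\pi_{\emptyset !}$ acts on such functions by the Iwasawa integral $f \mapsto \bigl(t^{\ch\mu}\mapsto \int_{N(F)} f(N^- t^{\ch\mu}n)\,dn\bigr)$; by the standard Gindikin--Karpelevich calculation it corresponds, on the Mellin side, to multiplication by $\prod_{\ch\alpha\in\ch\Phi^+}(1-q^{-1}e^{\ch\alpha})/(1-e^{\ch\alpha})$, which is exactly the ratio $\tr_{\check T}(\Fr,\Sym^\bullet(\ch{\mathfrak n}(1)))^{-1}/\tr_{\check T}(\Fr,\Sym^\bullet \ch{\mathfrak n})^{-1}$. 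Dividing out produces \eqref{equation-asymptotics-IC}.

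For \eqref{equation-basicfunction}, I would invoke the sharpness of the asymptotics morphism in the antidominant direction: under the parametrization of $G(\mf o)$-orbits from Theorem~\ref{thm:Gorbits}, the coset $x_0 t^{\ch\theta} G(\mf o)$ with $\ch\theta \in \ch\Lambda^{-,\Fr}$ corresponds to $N^- t^{\ch\theta} G(\mf o)$ on the horospherical side, and for antidominant $\ch\theta$ one has the pointwise equality $\Phi_0(x_0 t^{\ch\theta} G(\mf o)) = e_\emptyset^*\Phi_0(N^- t^{\ch\theta} G(\mf o))$. Restricting \eqref{equation-asymptotics-IC} to $\ch\Lambda^{-,\Fr}$ then gives \eqref{equation-basicfunction}.

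The hard part will be justifying this last pointwise equality on all of $\ch\Lambda^{-,\Fr}$, rather than merely sufficiently deep in the antidominant cone, which is all the analytic theory of \cite{SV} directly provides for Schwartz functions. Since $\Phi_0$ is not compactly supported, the cleanest route should be geometric: using the nearby-cycles computation on $X_\emptyset$ that is among the main results of the paper, one shows directly that the stalk of $\IC_{\msf L^+X}$ at an arc representing $x_0 t^{\ch\theta}$ with $\ch\theta \in \ch\Lambda^-$ coincides with the nearby-cycle stalk at the corresponding point of $X_\emptyset$. The Gindikin--Karpelevich inversion in the second step is, by contrast, a routine calculation.
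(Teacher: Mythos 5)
Your first part is essentially the paper's argument: apply the factorization $\pi_! = \pi_{\emptyset!}\circ e_\emptyset^*$ from \eqref{Radon-asymptotics} and invert the horospherical Radon transform via Gindikin--Karpelevich. The paper phrases the inversion by observing that $\pi_{\emptyset!}$ sends $1_{X_\emptyset^\bullet(\mf o)}$ to $\delta^{-1/2}\tr_{\ch T}(\Fr,\Sym^\bullet\ch{\mathfrak n})\cdot\tr_{\ch T}(\Fr,\Sym^\bullet\ch{\mathfrak n}(1))^{-1}$ and is $T(F)$-equivariant; this is the same Gindikin--Karpelevich input you use, so \eqref{equation-asymptotics-IC} comes out identically in both accounts.

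For \eqref{equation-basicfunction} you correctly put your finger on the delicate point: one needs the pointwise identity $\Phi_0(x_0 a G(\mf o)) = e_\emptyset^*\Phi_0(N^-a G(\mf o))$ for \emph{every} antidominant $a$, not just asymptotically, and the general theory of \cite{SV} as stated does not immediately give this for a function that is not compactly supported. Where your proposal diverges is in the remedy: you sketch a geometric route via the nearby-cycles computation, which is plausible in spirit but not developed, and would require relating stalks of $\IC_{\msf L^+X}$ to stalks of the nearby cycles on $\msf L^+X_\emptyset$ at specific points parametrized by $\ch\Lambda^-$ — substantial extra work. The paper instead appeals directly to \cite[Corollary 5.5]{SaSatake}, which establishes precisely that any $G(\mf o)$-invariant function on $X^\bullet(F)$ agrees with its asymptotic on all of $\ch\Lambda^{-,\Fr}$ (this result is analytic and more elementary than the geometric contraction argument, and is stated for arbitrary $G(\mf o)$-invariant functions, so the lack of compact support is irrelevant). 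So the gap is not conceptual — you identified the right obstruction — but your sketch leaves the crucial step unproven when a direct citation is available and is what the paper uses.
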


For example, in the setting of  \eqref{equation-pushforward-IC-expl}, the right hand side reads
\[\frac{\prod_{\check\alpha\in\check\Phi^+} (1- e^{\check\alpha})}{\prod_{\check\lambda \in \mB^+} (1-q^{-\frac{1}{2}} e^{\check\lambda})}.\]

Finally, the Plancherel decomposition of $\Phi_0$ coincides\footnote{See the proof of Proposition \ref{prop:localzeta}.} up to the factor $|W|^{-1}$ with that of $e_\emptyset^*\Phi_0$, which by Mellin transform on $X_\emptyset^\bullet$ (with respect to the action of $T$) gives:
\begin{equation}
 \label{equation-Plancherel-IC}
\Vert \Phi_0\Vert^2 = \int_{\hat T/W} \frac{\prod_{\check\alpha\in\check\Phi} (1- e^{\check\alpha}(\chi))}{\prod_{\check\lambda\in S} (1-q^{-\frac{1}{2}} e^{\check\lambda}(\chi))} d\chi = \int_{\hat T/W} L(\chi, V_X, \frac{1}{2}) \frac{d\chi}{L(\chi, \check{\mathfrak g}/\check{\mathfrak t},0)}.
\end{equation}

Here $\mB = \mB^+ \sqcup (-\mB^+)$, and  $L(\chi, V_X, 0)$ denotes a local unramified $L$-factor, while the density $\frac{d\chi}{L(\chi, \check{\mathfrak g}/\check{\mathfrak t},0)}$ is the unramified Plancherel measure for $G$. 

The importance of \eqref{equation-Plancherel-IC} for arithmetic is that, according to the generalized Ichino--Ikeda conjecture of \cite{SV}, the quotient of the Plancherel density of $\Phi_0$ by the Plancherel measure of $G$ is related to the local Euler factor of the ``$X$-period integral'' of automorphic forms. A provable case of such an Euler factorization is related to Example \ref{example-nfold}:

\begin{eg}\label{example-nfold-global}
 Let $G$, $X$ be as in Example \ref{example-nfold}, over the function field $\Bbbk = \mathbb F(C)$ of a curve $C$, and let $\Phi = \prod_v \Phi_v$ be a smooth, factorizable function of moderate growth on the adelic points of the open $G$-orbit $X^\bullet$, such that the support of $\Phi_v$ has compact closure in $X(F_v)$, and for almost every $v$ the function $\Phi_v$ is equal to the IC function of $X$. Let $\Theta_\Phi(g) = \sum_{\gamma\in X^\bullet(\Bbbk)} \Phi(\gamma g)$ be the corresponding theta series, a function on the adelic points of $G$. 
 
 Let $\phi$ be a cusp form, belonging to a cuspidal automorphic representation $\pi =  \bigotimes_v' \pi_v$, and let $W_\phi(g) = \prod_v W_{\phi, v}(g_v) = \int_{N^-(\Bbbk)\backslash N^-(\mbb A)} \phi(ng) \psi(n) dn$ be the Whittaker function of $\phi$ (where $N^-$ is the lower triangular subgroup), with a chosen Euler factorization with $W_{\phi,v}(1)=1$ for almost all $v$. It can be proven by the usual unfolding argument that the pairing
 \[ \int_{G(\Bbbk)\backslash G(\mbb A)} \phi(g) \Theta_\Phi(g) dg\]
 is convergent if the central character of $\phi$ is ``large'' enough (i.e., its restriction to $\mathbb G_m$ has absolute value $|\bullet|^s$ for some $s \gg 0$), and equal to the Euler product of zeta integrals 
 \[ \int_{H_0\backslash G (\Bbbk_v)} W_{\phi,v}(g_v) \Phi_v(g_v) dg_v.\]
 
 Our Theorem \ref{theorem-affine-closure} implies that almost every Euler factor is equal to the local unramified $L$-factor $L(\pi_v, \otimes, 1 - \frac{n}{2})$, where $\otimes$ is the tensor product representation of $\check G$.  We explain this in \S \ref{sect:nfold-explanation}. 
\end{eg}

\medskip

Such global applications are beyond the main focus of this paper. Of more immediate interest here is the relation of the asymptotics map $e_\emptyset^*$ to nearby cycles: Since $X_\emptyset$ is obtained by degenerating the coordinate ring of $X$, there is an associated $\mathbb G_m\times G$-equivariant Rees family $\mf X\to \mathbb A^1$ (depending, really, on the choice of a strictly dominant cocharacter into $T$), whose general fiber is isomorphic to $X$, and whose special fiber is isomorphic to $X_\emptyset$. This also induces a family of arc spaces, or loop spaces $\msf L_{\mbb A^1}\mf X\to \mathbb A^1$ where $\msf L_{\mbb A^1}\mf X$ denotes the family of \emph{fiberwise} loop spaces, not the loop space of $\mf X$. In the context of an appropriate sheaf theory, to be denoted by $D$, this would give rise to a nearby cycles map:
\[ \Psi: D(\msf LX) \to D(\msf LX_\emptyset),\]
whose Frobenius trace is expected to recover the asymptotics map $e_\emptyset^*$. 

After replacing $\msf L^+X, \msf L^+ X_\emptyset, \msf L^+(X\sslash N)$ by finite type models $\sY, \sY_\emptyset, \sA$, respectively (see \S\ref{subsection-introduction-global} below), we prove: 

\begin{thm}[see Theorem~\ref{thm:PsiRadon}]
\label{thm:nearby-asymptotics}
Let $X$ be an affine spherical variety such that $B$ acts freely on $X^\circ$. 
Then the following triangle of functors commutes up to natural isomorphism:
\[ \begin{tikzcd} 
\rmD^b_c(\sY) \ar[r, "\Psi"] \ar[rd,swap, "\pi_!"] & \rmD^b_c(\sY_\emptyset) \ar[d, "\pi_{\emptyset !}"]\\
& \rmD^b_c(\sA) 
\end{tikzcd}
\]
\end{thm}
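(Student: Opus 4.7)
The plan is to exhibit both $\pi_!$ and $\pi_{\emptyset!} \circ \Psi$ as fiberwise restrictions of a single pushforward in a one-parameter family over $\bbA^1$, and then invoke the compatibility of nearby cycles with proper pushforward. Concretely, the Rees construction underlying the horospherical degeneration yields a $\mbb{G}_m$-equivariant flat family $\mf{X} \to \bbA^1$ with general fiber $X$ and special fiber $X_\emptyset$; passing to fiberwise finite-type models of arc spaces produces a family $\mathcal{Y} \to \bbA^1$ whose restriction to $\mbb{G}_m$ is canonically $\sY \times \mbb{G}_m$ and whose fiber over $0$ is $\sY_\emptyset$. The crucial geometric input is that the analogous Rees family for $X \sslash N$ is \emph{trivial}, $\mathcal{A} \cong \sA \times \bbA^1$, since the toric variety $X \sslash N$ coincides with its own horospherical degeneration. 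Consequently, $\pi$ and $\pi_\emptyset$ assemble into a single morphism $\boldsymbol{\pi}\colon \mathcal{Y} \to \sA \times \bbA^1$ over $\bbA^1$, restricting to $\pi \times \mathrm{id}$ over $\mbb{G}_m$ and to $\pi_\emptyset$ at the origin.

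Granted this setup, the theorem follows from a formal computation. For $\mathcal{F} \in \rmD^b_c(\sY)$, the nearby cycles $\Psi \mathcal{F}$ is by definition the nearby cycles at $0$ of $\mathcal{F} \boxtimes \ol{\bbQ}_\ell \in \rmD^b_c(\mathcal{Y}|_{\mbb{G}_m})$; assuming the commutation $\Psi_{\sA} \circ \boldsymbol{\pi}_! \cong \boldsymbol{\pi}_{0!} \circ \Psi_{\mathcal{Y}}$, one computes
\[
\pi_{\emptyset!} \Psi \mathcal{F} \cong \Psi_{\sA}\bigl( \boldsymbol{\pi}_! (\mathcal{F} \boxtimes \ol{\bbQ}_\ell) \bigr) \cong \Psi_{\sA}\bigl( (\pi_! \mathcal{F}) \boxtimes \ol{\bbQ}_\ell \bigr) \cong \pi_! \mathcal{F},
\]
where the second isomorphism is proper base change for $\pi \times \mathrm{id}_{\mbb{G}_m}$ and the third is the triviality of nearby cycles for a constant family.

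The main obstacle is precisely the commutation of $\Psi$ with $\boldsymbol{\pi}_!$. This is a formal consequence of proper base change when $\boldsymbol{\pi}$ is proper, as then $\boldsymbol{\pi}_! = \boldsymbol{\pi}_*$ commutes with both $j_*$ and $i^*$. In the Zastava picture one expects $\pi\colon \sY \to \sA$ to be proper -- the relevant finite-type models are cut out of proper moduli of sections of an $X/B$-bundle over the curve, and the projection onto the divisor space defining $\pi$ inherits properness from these compactifications -- and the same to hold for the family map $\boldsymbol{\pi}$ uniformly over $\bbA^1$. Verifying this properness, in particular at the horospherical fiber $t = 0$, and then invoking the standard compatibility of nearby cycles with proper pushforward, completes the proof.
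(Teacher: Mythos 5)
Your setup is correct up to a point: assembling $\pi$ and $\pi_\emptyset$ into a single map $\boldsymbol\pi\colon\sY_{\mf X}\to\sA\times\bbA^1$ over $\bbA^1$, and using that the Rees family for $X\sslash N$ is trivial, is exactly what the paper does. However, your proof hinges on the assertion that $\pi\colon\sY\to\sA$ is proper, and this is simply false. The paper states this explicitly in the introduction (``The reason we do not compute the pushforward in the DG category \dots\ is related to the fact that the map $\pi$ is not proper''), and Example~\ref{eg:YHecke} exhibits it concretely: for $X=\mathbb G_m\bs\GL_2$, one has $\sY=\Sym C\mathbin{\oo\xt}\Sym C$ (divisors with disjoint supports) sitting as a non-closed open subscheme of $\sA=\Sym C\times\Sym C$, and $\pi$ is this open embedding. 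The whole apparatus of the compactified Zastava space $\barY$ and the map $\bar\pi$ (Proposition~\ref{prop:piproper}) exists precisely because $\pi$ fails to be proper. Your claim that the finite-type models ``inherit properness from these compactifications'' reverses the actual situation: the models themselves are not proper over $\sA$, which is why a separate compactification had to be introduced.

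Without properness the key step of your argument collapses. Nearby cycles commutes with $f_*$ for $f$ proper (where $f_*=f_!$), but does \emph{not} commute with $f_!$ in general. A minimal counterexample: take $S=\bbA^1_t$, $Y=\bbA^2_{x,t}$, $X=Y\setminus\{(0,0)\}$, and $f$ the open immersion, all over $S$. Both $X$ and $Y$ are smooth over $S$, so $\Psi_X(\ol\bbQ_\ell)$ and $\Psi_Y(\ol\bbQ_\ell)$ are constant on the special fibers; but $f_{0,!}\Psi_X(\ol\bbQ_\ell)$ has stalk $0$ at the origin of $Y_0=\bbA^1_x$, whereas $\Psi_Y(f_{\eta,!}\ol\bbQ_\ell)$ (with $f_\eta$ an isomorphism) has stalk $\ol\bbQ_\ell$ there. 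So the first isomorphism in your displayed chain genuinely fails for non-proper maps, and the argument cannot be rescued by any base-change formalism alone.

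What the paper uses instead of properness is the \emph{contraction principle}. Lemma~\ref{lem:contraction} constructs a $\mbb G_m$-action on $\mf X$ (via $a\mapsto(\ch\varrho(a^{-1}),a)$) that extends to an $\bbA^1$-action contracting $\mf X$ to the section $s\colon X\sslash N\into\mf X$ landing in the zero fiber $X_\emptyset$. This induces a contracting $\bbA^1$-action on $\sY_{\mf X}$ whose limit map is $\mf s\circ\pi_{\mf X}$, and the contraction principle (a variant of Braden's hyperbolic-localization theorem) then yields $\pi_{\mf X,*}\cong\mf s^*$ as functors, with no properness assumption on $\pi$. The proof of Theorem~\ref{thm:PsiRadon} combines this with Beilinson's colimit description of unipotent nearby cycles to get $\mf s^{\ch\lambda,*}_\emptyset\Psi\cong\pi_{\emptyset,*}\Psi\cong\pi_*$, and the $!$-version you want follows by Verdier duality (using that unipotent nearby cycles commutes with duality). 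The missing ingredient in your argument is therefore not a technical verification of properness — it is the contracting $\mbb G_m$-action and the hyperbolic-localization machinery that replaces properness entirely.
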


The function-theoretic asymptotics map $e_\emptyset^*$ satisfies the same 
commutative triangle \eqref{Radon-asymptotics}, which suggests that nearby cycles $\Psi$
is in a suitable sense the geometrization of the asymptotics map.
This resembles an analogous result \cite[Corollary 6.2]{BFO} in the setting 
of character sheaves.
In the case where $X$ is a reductive group, the nearby cycles of the IC complex 
of (finite type models of) $\msf L^+X$ 
has been computed by S.~Schieder \cite{Sch-SL2,Sch-G,Sch}. 

In Theorem~\ref{thm:nearby}, we compute the intersection complex of the global model $\sM_X$ of the arc space, for the spherical varieties in \eqref{equation-typeT}, at the level of Grothendieck groups. We do this by relating the nearby cycles complex to $\pi_!\IC_\sY$, in a way that corresponds to the known relation \eqref{Radon-asymptotics} between asymptotics and Radon transforms. 
\medskip

Finally, we explain how a formula like \eqref{equation-Plancherel-IC} relates to recent conjectures of Ben-Zvi--Sakellaridis--Venkatesh \cite{BZSV}: According to those conjectures, the formula should follow by applying Frobenius traces on the \emph{endomorphism ring} $\End(\IC_{\msf L^+X})$, where the endomorphism ring is taken in the derived sense, in the dg-category of derived constructible sheaves on $\msf LX/\msf L^+G$. (The proper definition of the intersection complex on the arc space remains conjectural, in the singular setting.) This conjecture identifies (ignoring cohomological grading) 
\begin{equation}\label{equation-BZSV}
  \End( \IC_{\msf L^+X_k} ) \simeq \ol\bbQ_\ell [V_X]^{\check G}
\end{equation}
for some symplectic representation $V_X$ as above. We hope that our results can be related to this conjecture in both directions: Namely, that by relating our results to such endomorphism rings, one can upgrade the $\ch T$-structure of Theorem \ref{theorem-affine-closure} to a $\ch G$-structure, proving Conjecture \ref{conjecture-pushforward-IC}; and, vice versa, one can make progress towards the conjecture of \cite{BZSV} by utilizing our results.

The conjectures of \cite{BZSV}, and the problems addressed by the present paper, can be formulated for more general affine spherical varieties, without the assumption that $\ch G_X = \ch G$. As already mentioned, from the point of view of number theory, the latter case is perhaps the most interesting one, as it corresponds to central values of $L$-functions. In the general case, the vector space $V_X$ appearing in the conjectural relation \eqref{equation-BZSV} needs to be replaced by a Hamiltonian manifold living over the quotient $\ch G_X\backslash \ch G$.

\subsection{Zastava spaces and the main theorems in terms of sheaves}
\label{subsection-introduction-global}

From now on, we work over the algebraic closure $k$ of the finite field $\mathbb F$, or over an algebraically closed field $k$ in characteristic zero. When $X$ is defined over a finite field $\mathbb F$, we will keep track of Weil structures on our sheaves, which will always have the form of half-integral Tate twists, where, as mentioned, $(\frac{1}{2})$ denotes a fixed square root of the cyclotomic twist. The intersection complex of a $d$-dimensional scheme over $k$ will be understood to have stalks $\ol\bbQ_\ell (\frac{d}{2})[d]$ over the smooth locus. 

In order to replace the arc space by a model of finite type, we fix a smooth projective curve $C$ over $k$ (or $\mathbb F$). For an algebraic stack $\Scr X$ and an open substack $\Scr X^\circ \subset \Scr X$,
we will use 
\[ \Maps_\gen(C, \Scr X \supset \Scr X^\circ) \]
to denote the prestack that assigns to a test scheme $S$
the groupoid of maps $C\times S \to \Scr X$
such that the open locus of points sent to $\Scr X^\circ$ 
maps surjectively to $S$. 
Equivalently, these are the maps such that for every geometric point $\bar s \to S$, 
the restricted map $C\times \bar s \to \Scr X$ generically lands in $\Scr X^\circ$.
Since $C$ is smooth, $\Maps_\gen(C, \Scr X \supset \Scr X^\circ)$ is 
an open substack of the prestack $\Maps(C,\Scr X)$. 

\smallskip

Given an affine spherical $G$-variety $X$ with open $G$-orbit $X^\bullet$ and open $B$-orbit $X^\circ$, we consider the following two models for the arc space of $X$:
\begin{itemize}
 \item the Artin stack 
\[ \sM= \sM_{X} = \Maps_\gen(C, X/ G \supset X^\bullet /G), \]
that we will simply refer to as ``the global model'';
 \item the stack
\[ \Scr Y=   \Scr Y_{X} = \Maps_\gen(C, X/B \supset X^\circ/B)  \]
that we will refer to as ``the Zastava model''. In our setting ($X^\circ\cong B$), this turns out to be a scheme. Such a model is often referred to as the ``local model'' for reasons that have to do with factorization structures, but since this can create confusion with the genuinely local arc space, we will avoid such terminology. 
\end{itemize}
For a discussion of why these are indeed formal models of the arc space (in the formal neighborhoods of suitable points), see Theorem~\ref{thm:DGK}, Lemma~\ref{lem:localglobalyoga}.
Note that the choice of a Borel subgroup is immaterial, since $X/B$ can also be written as $(X\times \mathcal B)/G$, where $\mathcal B$ is the flag variety, with $X^\circ/B = (X\times \mathcal B)^\bullet/G$, where $(X\times \mathcal B)^\bullet$ is the open 
orbit under the diagonal $G$-action.  

We will also let $\Scr A$ denote the analog of these models for the toric variety $X\sslash N$, that is,
\[    \Scr A = \Maps_\gen(C, (X\sslash N)/T \supset (X^\circ\sslash N)/T),  \]
and notice that under our assumptions $X^\circ\sslash N\simeq T$. Fixing such an identification, for every $\chi\in \mathfrak c_X^\vee$ 
(the subset of $\Lambda_X$, the character group of $T$, of those elements that are $\ge 0$ on $\mathfrak c_X$), the corresponding map $X\sslash N\to \mathbb G_a$ gives rise to a morphism $\Scr A \to \Maps_\gen(C, \mathbb G_a/\mathbb G_m \supset \mathbb G_m/\mathbb G_m) = \Sym C$, the scheme of effective divisors on the curve. Thus, $\Scr A$ can be thought of as the scheme of $\mathfrak c_X$-valued divisors. This scheme is well understood \cite{BNS}: its normalization is a disjoint union of partially symmetrized powers $C^{\mf P}$ of the curve, indexed by formal $\mathbb N$-linear combinations $\mf P\in \Sym^\infty(\Prim(\mathfrak c_X))$ of the primitive elements of $\mathfrak c_X$ (see \S\ref{def:partition}).

\smallskip

The sheaf-theoretic analog of Theorems \ref{theorem-affine-closure} and \ref{theorem-general} is a statement about the pushforward of the IC sheaf under 
\[ \pi: \Scr Y \to \Scr A.\]
We will only compute $\pi_! \IC_{\Scr Y}$ in the Grothendieck group of sheaves on $\Scr A$ 
(see Corollary~\ref{cor:pibarY}), which is enough to determine the trace of Frobenius on stalks. 
The reason we do not compute the pushforward in the DG category (although this can be done in principle)
is related to the fact that the map $\pi$ is not proper. 
However, one can compactify $\pi$ by considering\footnote{One makes the usual modification to the definition when $[G,G]$ is not simply connected.} the \emph{compactified Zastava space} 
\[ \barY =   \Maps_\gen(C, (\overline{X/N})/T \supset X^\circ/B),  \]
where $\overline{X/N}$ stands for the stack $(X\times \overline{N\backslash G})/G$, where $\overline{N\backslash G}= \Spec k[N\backslash G]$ is the affine closure of $N\backslash G$. 

The difference between $\barY$ and $\Scr Y$ will account for the factor of $\prod_{\check\alpha\in\check\Phi^+} (1-q^{-1} e^{\check\alpha})$ in \eqref{equation-pushforward-IC}. The number theory-minded reader will recognize in this factor, in the case of $G=\SL_2$, the Euler factor of the quotient between Eisenstein series obtained by summing over integral points of $N\backslash \SL_2$, versus integral points of $\overline{N\backslash \SL_2}=\mathbb A^2$. More generally, this is the factor that relates the ``naive'' and ``compactified'' Eisenstein series of \cite{BG}, \cite{BFGM}.

\smallskip

In Proposition \ref{prop:piproper} and Theorem \ref{thm:semismall} we prove:
\begin{thm}
\label{theorem-semismall}
 The map $\bar\pi: \overline{\Scr Y}\to \Scr A$ is proper and stratified semi-small.
\end{thm}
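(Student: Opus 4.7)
The plan is to establish the two assertions of the theorem independently, following the pattern of arguments developed in \cite{BFGM} for the group case, adapted to the spherical setting.

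For properness, I would apply the valuative criterion. Given a DVR $R$ with fraction field $K$, a map $\Spec R \to \Scr A$, and a lift of the generic fiber $\Spec K \to \barY$, the goal is to extend the lift uniquely to $\Spec R$. The problem is local on the curve $C$, so one reduces to a formal disc around a single point $c \in C$. Unpacking definitions, the data over $\Spec R\tbrac t$ consists of a $T$-bundle together with a section of the associated $(X\sslash N)$-bundle generically landing in $T$, while the lift over $\Spec K\tbrac t$ provides a refinement to $(\overline{X/N})/T$ generically landing in $X^\circ/B$. Since $\overline{X/N}=(X\times\overline{N\backslash G})/G$ is presented as a GIT quotient of an affine scheme by a reductive group, and the induced map to the affine target $X\sslash N$ is a good quotient, the extension exists by the valuative criterion applied at the level of affine schemes once the ambient $G$-bundle is fixed; the residual ambiguity is killed by the condition that the image section in $(X\sslash N)/T$ is already prescribed.

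For stratified semi-smallness, I would exploit the factorization structure on $\Scr A$. Its normalization decomposes as a disjoint union of partially symmetrized curve powers $C^\mathfrak P$ indexed by partitions $\mathfrak P \in \Sym^\infty(\Prim(\mathfrak c_X))$, and this induces the stratification on $\Scr A$. Over a stratum corresponding to a divisor $\sum_i n_i \ch\nu_i \cdot c_i$ with distinct support $c_i \in C$ and multiplicities in $\Prim(\mathfrak c_X)$, the fiber of $\bar\pi$ factorizes as a product over the support points. Thus the dimension analysis reduces to the ``one-point'' case: computing $\dim \bar\pi^{-1}(\ch\nu\cdot c)$ for $\ch\nu \in \mathfrak c_X$. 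Such a fiber is a locally closed subvariety of the affine Grassmannian $\Gr_G$, identifiable with the intersection of a closed $\msf L^+G$-orbit with a semi-infinite orbit (reflecting the compactification by $\overline{N\backslash G}$), further restricted by the condition that the section generically lies in $X^\circ/B$. The dimension of this intersection is bounded above by $\langle \rho_G, \ch\nu\rangle$, by appeal to the Mirkovi\'c--Vilonen dimension formula, and this upper bound equals exactly half the codimension of the corresponding stratum in $\Scr A$, yielding the semi-small bound.

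The main obstacle I anticipate is in carrying out the single-point fiber computation tightly. Identifying the fiber correctly inside $\Gr_G$ requires a careful description of how $B$-reductions on a punctured disc interact with the open $B$-orbit $X^\circ$, and how the compactification $\overline{N\backslash G}$ modifies this picture. In particular, one must verify not only the Mirkovi\'c--Vilonen upper bound but also that the codimension of the $\mathfrak P$-stratum of $\Scr A$ is precisely at least twice this dimension; this is where the assumption $\check G_X = \check G$, i.e.\ condition \eqref{equation-typeT}, enters essentially, ensuring that the primitive generators of $\mathfrak c_X$ are compatible with the colors and that the combinatorics on the Zastava side matches those on the divisor side as in the group case.
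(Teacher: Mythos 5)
The paper's route is to exhibit a closed embedding $\barY \hookrightarrow \Gr_{G,\Sym C}\times_{\Sym C}\Scr A$ over $\Scr A$ (Lemma~\ref{lem:barYGr}) and conclude from the ind-properness of $\Gr_{G,\Sym C}$ over $\Sym C$; the mechanism making the embedding closed is Lemma~\ref{lem:extendsection}, which controls extensions of sections into affine schemes. Your valuative-criterion approach is morally a paraphrase of this: the statement that the section extends uniquely ``at the level of affine schemes once the ambient $G$-bundle is fixed'' is exactly the affine extension lemma, and the reason you may fix the $G$-bundle over a DVR is exactly the ind-properness of $\Gr_G$. You are not wrong, but you have suppressed the two genuine inputs, and writing them out would bring you to essentially the paper's argument.

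\textbf{On stratified semi-smallness.} Here your sketch has a real gap, and it is exactly the crux of the theorem. You bound $\dim\bar\pi^{-1}(\ch\nu\cdot c)$ by $\langle\rho_G,\ch\nu\rangle$ ``by appeal to the Mirkovi\'c--Vilonen dimension formula'' and then assert this equals (or is bounded by) half the relevant stratum codimension. Neither half of this works:
\begin{enumerate}
\item The MV formula bounds the dimension of $\olsf S^{\ch\nu}\cap\ol\Gr_G^{\ch\theta}$ as a whole, but the central fiber $\olsf Y^{\ch\nu}$ is a proper \emph{subscheme} of the semi-infinite orbit closure, cut out by the arc-space condition for $X$ (Lemma~\ref{lem:Y=ScapGr}). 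Treating it as an MV cycle discards the constraint that does all the work.
\item Even granting the bound, the comparison is to the wrong quantity. Semi-smallness requires $\dim(\bar\pi^{-1}(a)\cap S) \le \tfrac12(\dim S - \dim S')$ for strata $S$ of $\barY^{\ch\lambda}$ and $S'$ of $\Scr A^{\ch\lambda}$ (and $S$ varies over connected components of the $({\ch\nu},\ch\Theta)$-defect/valuation strata, not merely over $\Scr A$-strata). When $\mf c_X$ is free, the open Zastava stratum $S=\sY^{\ch\lambda,0}$ has $\dim S = \len(\ch\lambda)$ (Lemma~\ref{lem:oYconnected}) and the diagonal $S'=C$ has $\dim S'=1$, so the required bound is $\tfrac12(\len(\ch\lambda)-1)$. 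Taking $\ch\lambda=\ch\alpha$ a simple coroot already shows the mismatch: $\len(\ch\alpha)=2$ (because $\ch\alpha = \ch\nu_{D_\alpha^+}+\ch\nu_{D_\alpha^-}$), so the required bound is $\tfrac12$, i.e.\ fiber dimension $\le 0$; the MV bound $\langle\rho_G,\ch\alpha\rangle=1$ is strictly weaker and would not give semi-smallness.
\end{enumerate}
The paper's Proposition~\ref{prop:centralstrata} proves the sharp inequality $\dim(\msf Y^{\ch\lambda}\cap\mathscr Y)\le\tfrac12(\dim\mathscr Y-1)$, and its proof is where the type~$T$ assumption enters \emph{operationally}, not just as a remark: one first applies the MV hyperplane-slicing argument (Proposition~\ref{prop:intersections}) to descend from $\ch\lambda$ to some $\ch\lambda'\le\ch\lambda$ with $\olsf Y\cap\msf S^{\ch\lambda'}$ zero-dimensional, so that $\dim\msf b\le\langle\rho_G,\ch\lambda-\ch\lambda'\rangle$; then one decomposes $\ch\lambda-\ch\lambda'=\sum n_\alpha(\ch\nu_{D_\alpha^+}+\ch\nu_{D_\alpha^-})$ — each simple coroot contributes \emph{two} colors — and uses graded factorization together with Lemma~\ref{lem:Yraise} to conclude $\dim\mathscr Y \ge 1 + 2\langle\rho_G,\ch\lambda-\ch\lambda'\rangle$. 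It is this factor of two, coming from the color structure of type~$T$ roots, that upgrades the MV bound to the semi-small bound. The remaining edge case $\ch\lambda'=0$ needs the separate observation (Lemma~\ref{lem:Ycolor}) that $\msf Y^{\ch\alpha,0}$ is empty. None of this is in your sketch; ``this is where the assumption enters essentially'' names the gap but does not close it. If you want to repair the argument, you should replace the MV bound by the two-step slicing-and-doubling argument above and verify the inequality stratum-by-stratum against the full defect stratification of $\barY^{\ch\lambda}$, not merely the partition stratification of $\Scr A^{\ch\lambda}$.
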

This is one of the key technical results of this paper, because it allows us to get our hands on the pushforward of the intersection complex, without having a description of the complex itself. 
The assumption that $\ch G_X=\ch G$ is critical for the theorem: the analogous statement 
for the usual Finkelberg--Mirkovi\'c Zastava space (\cite{FM, BFGM}) is far from true. 

The condition of being stratified semi-small is a condition on ``smallness'' of fibers, relative to a fixed stratification which, in this case, is the natural stratification of $\Scr A$ by strata of the form
\[ \iota^{\mf P}: \oo C^{\mf P} \into \Scr A, \] 
where $\mathfrak c_X$-valued divisors take a fixed set of values. (Here, $\oo C^{\mf P}$ denotes the open ``disjoint'' locus in a certain product of symmetric powers of the curve, corresponding to divisors of the form $\sum_{\check\lambda \in\mathfrak c_X} \sum_{i=1}^{N_{\check\lambda}} (v_i) \check\mu$ with all $v_i \in \abs C$ distinct; we will denote by $\bar\iota^{\mf P}$ the natural compactification.)

\smallskip

By the decomposition theorem, stratified semi-smallness ensures that $\bar\pi_! \IC_{\barY}$ is a direct sum of irreducible \emph{perverse} sheaves. By a factorization property of $\barY$, this easily implies an expression for $\bar\pi_! \IC_{\barY}$ of the form
\begin{equation} \label{e:format1-intro}
 \bar\pi_!(\IC_{\barY}) \cong 
\bigoplus_{\mf P} 
\Bigl( \bigotimes_{\ch\lambda} 
\Sym^{N_{\ch\lambda}}(V_{X,\ch\lambda}) \Bigr) \ot 
\bar\iota^{\mf P}_!( \IC_{C^{\mf P}} ),
\end{equation}
(see Proposition \ref{prop:format}), where $V_{X,\ch\lambda}$ are the contributions of \emph{diagonal strata} $\iota^{[\check\lambda]}: C \into \Scr A$, corresponding to divisors supported at one point. Moreover, perversity and an estimate of dimensions imply that these contributions all come from the \emph{top degree} cohomology of those fibers of the map $\Scr Y\to \Scr A$ which (over diagonal strata) are of ``maximal possible dimension'' in terms of the semi-smallness  inequality. (We will discuss this dimension calculus in detail below.) These fibers, over points on the diagonal stratum $\iota^{[\check\lambda]}(C)$ are called the \emph{central fibers}, and denoted by $\msf Y^{\check\lambda}$, with the point not appearing explicitly in the notation.

\medskip

Therefore, to complete the calculation and prove Conjecture \ref{conjecture-pushforward-IC}, we would need to count, for every $\check\lambda \in \mf c_X$, the irreducible components of the central fiber $\msf Y^{\check\lambda}$ which achieve the maximal dimension, and show that their cardinality is equal to the dimension of the $\check\lambda$-weight space in $V_X^+$, which is ``half'' of a symplectic $\ch G$-representation $\rho_X$. 
Note that $\mf c_X$ is strictly convex, and the weights of $\rho_X$ will have the property that 
they belong to either $\mf c_X\sm 0$ or $-\mf c_X \sm 0$.  Thus, the monoid $\mf c_X$ determines
which ``half'' of $\rho_X$ to consider.

\smallskip

There is a reduction from the general case of Theorem \ref{theorem-general} to the special case of Theorem \ref{theorem-affine-closure}, that is, when $X=\overline{X^\bullet}^\aff$ is the affine closure of its open orbit (Theorem \ref{thm:heckeIC}), and the monoid $\mathfrak c_X$ is free. This reduction uses the action of the Hecke algebra, and resembles the calculation of the IC sheaf of reductive $L$-monoids in \cite{BNS}, but is much harder. This reduction will be discussed in \S \ref{subsection-reduction-affineclosure} below.

Hence for now let us focus on the case $X=\overline{X^\bullet}^\aff$, assuming that $\mathfrak c_X$ is free. In that case, the maximal possible dimension for the central fiber $\msf Y^{\check\lambda}$ will be called the \emph{critical dimension}, and is equal to 
\[ \frac{1}{2} (\len(\check\lambda)-1),\]
where $\len(\check\lambda) := \sum_i m_i$, for $\check\lambda\in \mathfrak c_X$ written uniquely
as $\sum_i m_i \ch\nu_i$ in terms of our basis of $\mathfrak c_X$. 
Conjecturally, the set of irreducible components of $\msf Y^{\ch\lambda}$ of critical dimension,
ranging over all $\ch\lambda \in \mf c_X$, should correspond to the subset
of the crystal basis 
of $\rho_X$ with weights in $\mf c_X$.

We do not go quite as far in general, but we show that these irreducible components give rise to the aforementioned \emph{crystal}, in the sense of Kashiwara \cite{Kas93}, over the Langlands dual Lie algebra $\ch{\mathfrak g}$.
Namely, let $\mB_{X^\bullet}^+$ denote the set of irreducible components of 
the central fibers of critical dimension, so $\mB_{X^\bullet}^+$ corresponds to a basis 
of $V_{X^\bullet}^+ := \bigoplus_{\mf c_X} V_{X^\bullet,\ch\lambda}$. 
Formally define $\mB_{X^\bullet}^-$ to be the ``negatives'' of $\mB_{X^\bullet}^+$, so 
$\mB_{X^\bullet}^-$ corresponds to a basis of the dual space $(V_{X^\bullet}^+)^*$. 
Let $\mf B_{X^\bullet} = \mf B_{X^\bullet}^+ \sqcup \mf B_{X^\bullet}^-$. We prove in 
Theorems~\ref{thm:crystal} and \ref{thm:crystal-properties}:

\begin{thm} 
\label{theorem-crystal}
Let $X=\overline{X^\bullet}^\aff$ satisfy the assumptions of Theorem \ref{theorem-affine-closure}. The set $\mB_{X^\bullet}$ has the structure of a semi-normal, self-dual crystal over 
$\ch{\mf g}$ such that the weights have the properties described in Theorem~\ref{theorem-affine-closure}. 
\end{thm}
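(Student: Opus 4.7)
The plan is to realize the crystal structure on $\mB_{X^\bullet}$ by reducing, at each simple root $\alpha_i$ of $G$, to a rank-one $\mathfrak{sl}_2$-situation governed by the Levi $M_i$ of the minimal parabolic $P_i$. The weight function is tautological: $\mathrm{wt}(Z) = \ch\lambda$ for $Z\in\mB^+_{X^\bullet,\ch\lambda}$, and $\mathrm{wt}(-Z) = -\ch\lambda$ for the formal negative $-Z\in\mB^-_{X^\bullet,\ch\lambda}$. The involution $Z\leftrightarrow -Z$ exchanges $\mB^+$ and $\mB^-$; we set up the crystal operators so that it is intertwined with the Kashiwara duality that swaps $\tilde e_i$ and $\tilde f_i$ and negates weights, so that self-duality is built into the construction.

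To define the Kashiwara operators at $\alpha_i$, I would use hypothesis \eqref{equation-typeT}: the stabilizer in $M_i$ of a point of $X^\circ P_i$ is a one-dimensional torus, so that the $M_i$-action on an appropriate transversal slice of $X$ reproduces the rank-one model (essentially the $\SL_2$-Zastava studied in \cite{BFGM} and subsequent works), whose central fibers carry a standard $\mathfrak{sl}_2$-crystal structure on their critical-dimension components. Using the factorization structure of $\Scr Y\to\Scr A$, this rank-one picture governs the passage between $\msf Y^{\ch\lambda}$ and $\msf Y^{\ch\lambda\mp\ch\alpha_i}$ as one adds or removes a copy of $\ch\alpha_i$ in the divisor at a single point, and defines
\[
\tilde f_i\colon \mB^+_{X^\bullet,\ch\lambda}\to \mB^+_{X^\bullet,\ch\lambda-\ch\alpha_i}\sqcup\{0\}, \qquad \tilde e_i\colon \mB^+_{X^\bullet,\ch\lambda}\to \mB^+_{X^\bullet,\ch\lambda+\ch\alpha_i}\sqcup\{0\},
\]
by taking the generic (critical-dimensional) irreducible component of the appropriate image or preimage, extended to $\mB^-$ by the duality involution. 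Setting $\varepsilon_i(b)=\max\{n\ge 0:\tilde e_i^n b\neq 0\}$ and $\varphi_i(b)=\varepsilon_i(b)+\langle\alpha_i,\mathrm{wt}(b)\rangle$ secures semi-normality.

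The Kashiwara axioms then divide into three tasks: (a) critical dimension is preserved under $\tilde e_i,\tilde f_i$; (b) the rank-one inverse relation $\tilde f_i\tilde e_i b=b$ when $\tilde e_i b\neq 0$, together with the correct weight shifts; and (c) the semi-normality identities, which are automatic from the definitions in (b). Items (b) and (c) reduce essentially formally to the rank-one $\mathfrak{sl}_2$-model of Step 2.

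The main obstacle is (a): one must show that the geometric correspondence between $\msf Y^{\ch\lambda}$ and $\msf Y^{\ch\lambda\pm\ch\alpha_i}$ sends components of critical dimension $\tfrac{1}{2}(\len(\ch\lambda)-1)$ to components of the correspondingly shifted critical dimension. This is a non-trivial dimension estimate inside the full higher-rank Zastava, and must rely on the stratified semi-smallness of Theorem~\ref{theorem-semismall} together with \eqref{equation-typeT} to rule out contributions from non-generic loci that could otherwise break the dimension bound. Concretely, one needs to verify that any ``non-generic'' pieces of the $M_i$-orbits on the central fibers live in strata of strictly smaller dimension, so that the rank-one correspondence sees only the expected critical components. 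Once this dimension-count is in place, the remaining verifications are essentially formal and the crystal structure, together with self-duality and semi-normality, follows.
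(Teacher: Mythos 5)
Your high-level plan---reduce to rank one via the Levi $M_i$ of the minimal parabolic $P_i$---is indeed the paper's strategy (\S\ref{sect:crystal-reduction}--\ref{sect:crystal-ef}), but the outline has two genuine gaps.

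The decisive subtlety you miss is that the GIT quotient $X_{M_i}:=X\sslash N_{P_i}$ is typically a \emph{non-canonical} affine embedding of $X_{M_i}^\bullet$, even when $X=X^\can$. Consequently, under $q\colon\msf Y^{\ch\lambda}_X\to\msf Y^{\ch\lambda}_{X_{M_i}}$, a critical-dimensional component either lands in the open stratum---where one is in the small $\mbb G_m\backslash\PGL_2$-picture, i.e.\ the four-element crystal on $\{\pm\ch\nu_i^+,\pm\ch\nu_i^-\}$---or lands in a deeper stratum $\msf Y^{\ch\lambda,\ch\theta}_{X_{M_i}}$, $\ch\theta\ne0$, where the image is a Mirkovi\'c--Vilonen cycle in $\msf S^{\ch\lambda}_{M_i}\cap\ol\Gr^{\ch\theta}_{M_i}$ and the $\ch{\mf m}_i$-crystal structure is the one from \cite{BGcrys}. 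Your ``transversal slice'' picture captures only the first case, and ``taking the generic irreducible component of the image or preimage'' is not a well-defined operation; what the paper actually does is prove the canonical decomposition of Proposition~\ref{prop:crys-decomp},
\[
\mB_{X^\bullet,\ch\lambda} = \mB_{X_{M_i}^\bullet,\ch\lambda} \sqcup
\bigsqcup_{\ch\theta \in \mf c_{X_{M_i}}^-\sm 0} \mB^{P_i}_{X,\ch\theta} \xt \mB_{\ch{\mf m}_i, \ch\lambda}^{\ch\theta},
\]
and then in \S\ref{sect:crystal-ef} it defines $\tilde e_i,\tilde f_i$ by declaring this to be a decomposition into normal $\ch{\mf m}_i$-crystals, with the multiplicity spaces $\mB^{P_i}_{X,\ch\theta}$ treated as abstract sets. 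Without the MV-cycle piece, your operators are simply undefined on most of $\mB_{X^\bullet}$.

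Second, to form the products $\mB^{P_i}_{X,\ch\theta}\times\mB_{\ch{\mf m}_i}^{\ch\theta}$ one must canonically identify irreducible components of fibers of $q$ over different base points in the same $\msf L^+M_i$-orbit, and these fibers are in general reducible. This matching is Lemma~\ref{lem:XPfiber}, and it hinges on the \emph{connectedness} of the stabilizer of $t^{\ch\theta}$ in $\msf L H_{M_i}$ (Corollary~\ref{cor:stab-conn}), which in turn uses the connectedness of the satellite subgroups $H_I$ (Lemma~\ref{lem:H_I-conn}). Your proposal does not address this, and without it there is no well-defined way to move a component from one fiber to another, so the operator would not send a single basis element to a single basis element. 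A smaller point: your obstacle (a) does need a dimension estimate, but Theorem~\ref{theorem-semismall} is not the engine (that theorem is itself a consequence of the same estimates); the argument goes through Propositions~\ref{prop:intersections}, \ref{prop:centralstrata} and Lemma~\ref{lem:centralfibertheta}.
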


We conjecture:

\begin{conj} 
\label{conjecture-crystal}
The crystal $\mB_{X^\bullet}$ is isomorphic to the unique crystal basis 
of a finite-dimensional $\ch{G}$-module $V_X$. 
\end{conj}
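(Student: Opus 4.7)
The plan is to promote the purely combinatorial $\ch{\mf g}$-crystal structure on $\mB_{X^\bullet}$, established in Theorem~\ref{theorem-crystal}, to the crystal basis of an honest finite-dimensional $\ch G$-module $V_X$. I would proceed in two stages: first, characterize which crystals arise as crystal bases of finite-dimensional $\ch G$-modules; second, verify that $\mB_{X^\bullet}$ satisfies this characterization.

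For the first stage, I would invoke Stembridge's axiomatic characterization in simply-laced types (and the analogues of Kashiwara--Nakashima/Joseph in general), reducing the problem to a finite list of local conditions in rank-two subcrystals. Semi-normality and self-duality, granted by Theorem~\ref{theorem-crystal}, already pin down much of the structure, so what remains is compatibility of the crystal operators $\tilde e_\alpha, \tilde f_\alpha$ for pairs of simple roots. Alternatively, the problem can be attacked by matching the formal character of $\mB_{X^\bullet}$ with the expected direct sum of irreducibles $\bigoplus_i V^{w\ch\nu_i}$ from Remark~\ref{rem:conj}, then showing that any crystal with this character and the listed properties must coincide with the crystal basis.

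For the second stage, I would exploit geometric Satake and the theory of Mirkovi\'c--Vilonen (MV) cycles: irreducible components of intersections $S^{\ch\mu}_+ \cap S^{\ch\nu}_-$ of opposite semi-infinite orbits in $\Gr_G$ furnish bases of weight spaces of irreducible $\ch G$-modules and carry a canonical crystal structure after Braverman--Finkelberg--Gaitsgory. Using the hypothesis $\ch G_X = \ch G$, I would attempt to construct a natural correspondence between top-dimensional irreducible components of the central fibers $\msf Y^{\ch\lambda}$ and MV cycles — presumably via a map from Zastava data into $\Gr_G$, or by comparing the factorization structures on both sides. If a functorial match can be exhibited, then the crystal operators on both sides, defined through $\SL_2$-degenerations along each simple root, should automatically coincide, identifying $\mB_{X^\bullet}$ with the union of MV bases of the $V^{w\ch\nu_i}$.

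The hard part is precisely this geometric comparison. For general $X$ the central fibers admit no obvious $\Gr_G$-valued map, and the dimension count defining ``critical'' components is intrinsic to $X$ and must be reconciled with the MV dimension formula; in the minuscule case this is automatic, explaining why Corollary~\ref{cor:minuscule} already settles the conjecture there. A structural alternative would be to first establish the Ben-Zvi--Sakellaridis--Venkatesh isomorphism \eqref{equation-BZSV}, which would produce the module $V_X$ directly from $\End(\IC_{\msf L^+X_k})$, reducing the conjecture to matching its crystal basis against $\mB_{X^\bullet}$; as a last resort, one could verify the conjecture case by case along the classification of affine spherical varieties satisfying $\ch G_X = \ch G$.
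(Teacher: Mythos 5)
The statement you are asked to prove is Conjecture~\ref{conjecture-crystal} (i.e., Conjecture~\ref{conj:crystal}), and the paper itself does \emph{not} prove it: the authors explicitly state that ``checking the Weyl/Serre relations, even in a few cases, `by hand' does not seem to be easy, and we do not have a conceptual proof of them; therefore, we refrain from attempting such a verification.'' What the paper does establish is (a) the semi-normal, self-dual crystal structure (Theorem~\ref{thm:crystal}), (b) the weight and lowering-operator properties of Theorem~\ref{thm:crystal-properties}, (c) a reduction of normality to Levi subalgebras of semisimple rank $2$ via Proposition~\ref{prop:normal2} and Proposition~\ref{prop:crys-decomp} (Remark~\ref{rem:conj2}), and (d) a verification in the minuscule case (Corollary~\ref{cor:minuscule}). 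Your ``stage 1'' reduction via Stembridge/Kashiwara--Nakashima--style rank-two criteria is precisely the paper's Remark~\ref{rem:conj2}, but you do not carry out the rank-two verification any more than the paper does, so this stage remains a plan, not a proof.

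Your ``stage 2'' strategy contains a substantive misconception. You propose to match the top-dimensional irreducible components of the central fibers $\msf Y^{\ch\lambda}$ with Mirkovi\'c--Vilonen cycles, and indeed the paper does exhibit an embedding $\msf Y^{\ch\lambda}\hookrightarrow \msf S^{\ch\lambda}\subset \Gr_G$ (Lemma~\ref{lem:Y=ScapGr}). However, for a general $X^\bullet$ the relevant components are intersections $\msf S^{\ch\lambda}\cap \msf L^+X/\msf L^+G$, not intersections $\msf S^{\ch\lambda}\cap \ol\Gr_G^{\ch\theta}$: they are \emph{not} MV cycles. The only components of $\mB_X$ that genuinely are MV cycles are those attached to $\ch\theta\in\Cal D^G_{\mathrm{sat}}(X)$ (Proposition~\ref{prop:Vbasis}(ii) and the $\mB^{\ch\theta}_{\ch{\mf g}}$ summands in Lemma~\ref{lem:reduction-Xbullet}), and these are \emph{excluded} from $\mB_{X^\bullet}$, which is exactly the part for which normality is unknown. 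So the ``functorial match with MV cycles'' does not exist in the generality you want, and this is the real obstruction, not a routine dimension bookkeeping that happens to collapse in the minuscule case. Since the remaining fallbacks you mention (establishing \eqref{equation-BZSV}, or case-by-case enumeration) are themselves open or unimplemented, your proposal does not close the gap.
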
 
This would imply Conjecture \ref{conjecture-pushforward-IC}. 

\medskip

Theorem~\ref{theorem-crystal} endows $V_X^+ \oplus (V_X^+)^*$, as we have defined it, 
with an action of an $\SL_2$-triple corresponding to every simple root of $\ch G$. 
These actions imply that the dimensions of the weight spaces 
are invariant under the Weyl group of $G$, which provides a kind of 
``functional equation'' for $\pi_!\Phi_0$. This functional equation can be seen as a geometric analog of the functional equation of the Casselman--Shalika method \cite{C, CS, SaSph}.
The content of 
Conjecture~\ref{conjecture-crystal} is to show that these $\SL_2$-triples satisfy 
the Weyl relations: $[e_\alpha, f_\beta]=0$ for simple roots $\alpha\ne \beta$.
(The Weyl relations imply the Serre relations by \cite[Corollary 4.3.2]{Chriss-Ginzburg}.)

\smallskip

The construction of the action of the $\SL_2$ corresponding to a simple root $\alpha$ of $G$ 
goes as follows: 
we factor $X \to X \sslash N$ through $X \to X\sslash N_{P_\alpha} \to X\sslash N$, where $P_\alpha$ is the 
sub-minimal parabolic corresponding to $\alpha$ and $N_{P_\alpha}$ is its unipotent radical. 
Then the GIT quotient $X_\alpha:= X\sslash N_{P_\alpha}$ is a spherical variety for the Levi factor $M_\alpha$. 
But now $X_\alpha$ is (usually) larger than the affine closure of its homogeneous part $X_\alpha^\bullet$. 
The irreducible components of $\msf Y_X$ of critical dimension (i.e., elements of $\mB_{X^\bullet}^+$) 
will either go to irreducible components 
\begin{enumerate}
\item of $\msf Y_{X_\alpha^\bullet}$ of critical dimension or 
\item of $\msf Y_{X_\alpha} \sm \msf Y_{X_\alpha^\bullet}$ (not necessarily of critical dimension). 
\end{enumerate}
While the fibers of $\msf Y_X \to \msf Y_{X_\alpha}$ are not necessarily irreducible, we
show that the irreducible components of different relevant fibers can be canonically identified. 
Then we define the $\SL_2$-action by analyzing the two cases above in the base $\msf Y_{X_\alpha}$. 

Under our assumptions, $X_\alpha^\bullet$ is a torus torsor over $\mbb G_m \bs \PGL_2$ and case (i) is an easy
calculation. Meanwhile our study of non-canonical affine embeddings using Hecke actions 
shows that in case (ii) we always get a Mirkovi\'c--Vilonen cycle (i.e., irreducible
component of the intersection of a semi-infinite orbit with a $\msf L^+G$-orbit in the affine Grassmannian). 
The crystal structure on these cycles was constructed by \cite{BGcrys}.

To check the Weyl/Serre relations, one can similarly reduce to a spherical variety $X^\bullet_{\alpha,\beta}$ for a Levi of semisimple rank two. There are only a handful such varieties (up to center) satisfying our assumptions --- a small subset of the spherical (wonderful) varieties of rank two classified by Wasserman \cite{Wasserman}.

However, checking the Weyl/Serre relations, even in a few cases, ``by hand'' does not seem to be easy, and we do not have a conceptual proof of them; therefore, we refrain from attempting such a verification. 

The remainder of the introduction will be devoted to describing the two most important elements in the proofs of the theorems above.

\subsection{Reduction to canonical affine closure}
\label{subsection-reduction-affineclosure}

We give an overview of how to reduce the case of an arbitrary affine $X$ with 
$X^\bullet=H\bs G$ to the 
canonical affine closure $X^\can = \ol{H\bs G}^\aff$.  

There is a canonical map $X^\can \to X$, which induces an inclusion 
$X^\can(\mf o) \cap X^\bullet(F) \subset X(\mf o)\cap X^\bullet(F)$ of 
$G(\mf o)$-stable spaces.
Of course, all points of $X^\bullet(F)$ are $G(F)$-translates of points in $X^\can(\mf o)\cap X^\bullet(F)$. 
It is a fact that if $\ch\theta \in \ch\Lambda^-$ is \emph{antidominant} 
and belongs to the monoid $\mf c_X$, then 
the action of the double coset $G(\mf o) t^{\ch\theta} G(\mf o)$ preserves $X(\mf o)\cap X^\bullet(F)$. 
The idea for what follows is that we can obtain 
$X(\mf o)\cap X^\bullet(F)$ by acting on $X^\can(\mf o)\cap X^\bullet(F)$ by $G(\mf o) t^{\ch\theta} G(\mf o)$
for $\ch\theta \in \mf c_X^- := \ch\Lambda^- \cap \mf c_X$.  

The Zastava model $\sY$ lives over $\Bun_B$ and does not carry a Hecke action. 
Thus, to model the $G(F)$-action on $X(\mf o)\cap X^\bullet(F)$ we must use the global model
$\sM=\sM_X$, which lives over $\Bun_G$. 
The canonical map $\sM_{X^\can} \to \sM_X$ is a closed embedding. 
For $\ch\theta \in \mf c_X^-\sm 0$, 
 let $\sH^{\ch\theta}_{G,C}$ denote the Hecke stack 
over $\Bun_G \times C$ with fibers isomorphic to $\ol\Gr_G^{\ch\theta}$, the closure
of the $\msf L^+G$-orbit in the affine Grassmannian corresponding to $\ch\theta$.  
In reality, we need a symmetrized (multi-point) version of the Hecke stack, but we only describe
the case where there is one point on the curve in this introduction for simplicity. 
There is a well-defined map 
\begin{equation}
 \label{e:Heckeaction}
\sM_{X^\can} \xt_{\Bun_G} \sH_{G,C}^{\ch\theta} \to \sM_X 
\end{equation}
modeling the action of Hecke operators, and we show (Theorem~\ref{thm:comp}) that this map is birational onto its image. 
If we allow multiple points above, then the images of the corresponding Hecke actions, with $\ch\theta$ varying, 
stratify $\sM_X$. 

Under the assumptions of the previous subsection, we show that 
$\sM_{X^\can}$ is irreducible (Corollary~\ref{cor:can-comp}), so the study 
of $\IC_{\sM_X}$ reduces to the study of the Hecke action on $\IC_{\sM_{X^\can}}$ and 
the determination of which of the strata above form irreducible components of $\sM_X$. 
For the latter, we need to understand the closure relations among the different strata (Proposition~\ref{prop:closure-rel}). When $\mf c_{X^\bullet}=\mbb N^r$, the stratum corresponding to $\ch\theta$ 
is contained in the closure of the stratum corresponding to $\ch\theta'$ if and only if $\ch\theta-\ch\theta' \in \mf c_{X^\bullet}$ (more generally, the closure relations are determined by the colors of $X$).

\subsection{Semi-infinite orbits and dimension estimates}
\label{subsection-dimensions}

There is another way to understand the central fibers $\msf Y^{\check\lambda}$: as subsets of the affine Grassmannian of $G$. 
Let us fix the point $v \in C$ that we take central fibers with respect to. 
Then a $k$-point of $\msf Y^{\ch\lambda}$ is a map $C \to X/B$ such that $C\sm v$ is sent to $X^\circ/B=\pt$. 
Restricting to the completed local ring $\mf o_v$ at $v$ gives a map 
$\msf Y^{\ch\lambda} \to \msf L X^\circ / \msf L^+B$. 
If we fix a base point $x_0\in X^\circ(k)$ to identify $X^\circ \cong B$, we get a map 
$\msf Y^{\ch\lambda} \to \Gr_B$ and this turns out to be a closed embedding. 
The reduced image of the components of $\Gr_B$ in $\Gr_G$ are the
semi-infinite orbits $\msf S^{\ch\lambda}(k) = N(F) t^{\ch\lambda} G(\mf o)/G(\mf o)$. 
After passing to reduced schemes we get identifications 
$\msf Y^{\ch\lambda}_\red = (\msf S^{\ch\lambda} \xt_{\msf L X/ \msf L^+G} \msf L^+ X / \msf L^+G)_\red$ 
and $\olsf Y^{\ch\lambda}_\red = (\olsf S^{\ch\lambda} \xt_{\msf L X /\msf L^+G} \msf L^+ X / \msf L^+ G)_\red$ (see Lemma \ref{lem:Y=ScapGr}). 

\smallskip

Semi-infinite orbits have an important meaning for the geometric Satake equivalence \cite{MV}: the fundamental classes of the irreducible components of the intersection $\msf S^{\check\lambda} \cap \overline\Gr_G^{\check\theta}$, the \emph{Mirkovi\'c--Vilonen cycles}, are in bijection with the ``canonical basis'' for the $\check\lambda$-eigenspace of the irreducible $\ch G$-module of lowest weight $\check\theta$. 

\smallskip

Our analysis of the central fibers $\msf Y^{\ch\lambda}$ is founded upon the following argument from
\cite[\S 3]{MV}. 
The boundary $\olsf S^{\check\lambda}\sm \msf S^{\check\lambda} = \cup_{\check\nu<\check\lambda} \msf S^{\check\nu}$ is a hyperplane section for some projective embedding of $\Gr_G$.
Hence any closed subscheme of $\Gr_G$ which intersects $\olsf S^{\check\lambda}$, also intersects its boundary in codimension one (unless already contained in the boundary). By inductively ``cutting'' by these hyperplanes,
we prove:

\begin{thm}
\label{theorem-slicing}
Let $X = X^\can$ be as in Theorem \ref{theorem-affine-closure}. 
 Let $\msf b$ be an irreducible component of the central fiber $\msf Y^{\check\lambda}$. Then
 \begin{itemize}
  \item $\dim \msf b \le \frac{1}{2} (\len(\check\lambda)-1)$, 
  \item for a basis element $\ch\nu_i$ of $\mf c_X$ (corresponding to a color), $\msf Y^{\ch\nu_i} = \pt$,
  \item the inequality is an equality only if there is a sequence $\alpha_1,\dots,\alpha_d$ of simple roots (with repetitions) such that $\olsf b \cap \msf S^{\ch\lambda-\check\alpha_1-\cdots-\check\alpha_j}$ is of dimension $\dim \msf b - j$ (hence, also of critical dimension), and $\check\lambda - \sum_{i=1}^d \check\alpha_i = \check\nu$ for a color $\check\nu$.
 \end{itemize}
\end{thm}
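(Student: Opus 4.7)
The plan is to follow the Mirkovi\'c--Vilonen hyperplane-section argument \cite[\S 3]{MV}, adapted to our setting via the identification $\msf Y^{\ch\lambda}_\red = (\msf S^{\ch\lambda} \xt_{\msf LX/\msf L^+G} \msf L^+X/\msf L^+G)_\red \into \msf S^{\ch\lambda}$ (Lemma~\ref{lem:Y=ScapGr}), and to induct on $\len(\ch\lambda)$. Throughout, I would work inside $\Gr_G$ with a projective embedding for which the boundary $\olsf S^{\ch\lambda}\sm\msf S^{\ch\lambda} = \bigcup_{\ch\nu<\ch\lambda}\msf S^{\ch\nu}$ is a hyperplane section, so that any irreducible closed subset of $\olsf S^{\ch\lambda}$ not contained in the boundary meets the boundary in codimension exactly one. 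The crucial numerical input is that, under hypothesis \eqref{equation-typeT}, every simple coroot $\ch\alpha$ is a sum of exactly two colors, hence $\len(\ch\alpha)=2$ in the free basis $\ch\nu_1,\dotsc,\ch\nu_r$ of $\mf c_X$. Since every positive coroot is either simple or a sum of at least two simple coroots, $\len(\ch\lambda)-\len(\ch\nu)\ge 2$ whenever $\ch\nu<\ch\lambda$ in the standard order on $\ch\Lambda$.

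I would first handle the base case $\len(\ch\lambda)=1$, i.e.\ $\ch\lambda=\ch\nu_i$ for a color, and show directly that $\msf Y^{\ch\nu_i}=\pt$: a map $\Spec\mf o_v \to X/B$ sending the generic point to $X^\circ/B = \pt$ with color-valuation equal to the primitive $\ch\nu_i$ at $v$ has no nontrivial moduli, and gluing back to $C$ gives a unique point. This simultaneously establishes the second bullet and the bound $\dim\msf b \le \tfrac{1}{2}(\len(\ch\lambda)-1)=0$ in the base case.

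For the inductive step, fix $\ch\lambda$ with $\len(\ch\lambda)>1$ and let $\msf b$ be an irreducible component of $\msf Y^{\ch\lambda}$. Since $\msf b\subset\msf S^{\ch\lambda}$ is not contained in the boundary, the hyperplane section property forces $\olsf b \cap \bigcup_{\ch\nu<\ch\lambda}\msf S^{\ch\nu}$ to be of pure codimension one in $\olsf b$; pick an irreducible component $\msf b'$ of this intersection, which lies in $\msf Y^{\ch\nu}$ for some $\ch\nu<\ch\lambda$. By the induction hypothesis,
\[ \dim \msf b \;=\; \dim \msf b' + 1 \;\le\; \tfrac{1}{2}(\len(\ch\nu)-1) + 1 \;\le\; \tfrac{1}{2}(\len(\ch\lambda)-1), \]
the last step using $\len(\ch\lambda)-\len(\ch\nu)\ge 2$. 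Equality throughout forces $\ch\lambda-\ch\nu=\ch\alpha_1$ for a single simple coroot, with $\msf b'$ of critical dimension in $\msf Y^{\ch\lambda-\ch\alpha_1}$. Iterating the argument on $\msf b'$ produces the desired sequence $\alpha_1,\dotsc,\alpha_d$ of simple roots with $\olsf b \cap \msf S^{\ch\lambda-\ch\alpha_1-\cdots-\ch\alpha_j}$ of critical dimension for each $j$; the process terminates at a zero-dimensional component, which by the base case lies over a color $\ch\nu$.

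The main obstacle is verifying the length-two assertion $\len(\ch\alpha)=2$ for simple coroots: this is precisely where the Gaitsgory--Nadler / type-T hypothesis \eqref{equation-typeT} enters the geometry, and it is the structural reason why the critical dimension $\tfrac{1}{2}(\len(\ch\lambda)-1)$ is half-integral. A secondary technical point is that the hyperplane-section dimension drop must survive restriction to $\msf L^+X/\msf L^+G$, which amounts to checking that no irreducible component of $\msf Y^{\ch\lambda}$ is ``accidentally'' contained in a lower $\msf S^{\ch\nu}$ after pullback; this follows from the definition of the central fiber, which stratifies cleanly by semi-infinite orbits.
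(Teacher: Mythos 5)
Your overall strategy matches the paper's: Mirkovi\'c--Vilonen hyperplane slicing of semi-infinite orbits (Proposition~\ref{prop:intersections}) combined with the length-two identity $\ch\nu_\alpha^+ + \ch\nu_\alpha^- = \ch\alpha$ forced by the type-$T$ hypothesis. Your one-step-at-a-time induction on $\len(\ch\lambda)$ is a perfectly legitimate repackaging of the paper's direct descent (carried out all at once in Proposition~\ref{prop:intersections}, then converted into the half-integral bound in Proposition~\ref{prop:centralstrata}). However, there is a genuine gap at the base of your induction.

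The problem is the case $\ch\nu = 0$ in the inductive step. After slicing $\olsf b$ against $\partial\olsf S^{\ch\lambda}$ you obtain $\msf b' \subset \msf Y^{\ch\nu}$ with $\dim\msf b' = \dim\msf b - 1$ and invoke the inductive bound on $\msf Y^{\ch\nu}$. But $\msf Y^{\ch\nu}$ is nonempty for all $\ch\nu\in\mf c_X$ \emph{including} $\ch\nu=0$, where $\msf Y^0=\pt$, and the would-be inductive inequality $\dim\msf b' \le \tfrac{1}{2}(\len(0)-1)=-\tfrac{1}{2}$ is simply false. You assert that "the process terminates at a zero-dimensional component, which by the base case lies over a color $\ch\nu$," but nothing in your argument rules out termination at $\msf Y^0$. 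Without that, the chain only gives $\dim\msf b \le \tfrac{1}{2}\len(\ch\lambda)$; since $\dim\msf b$ is an integer, this loses exactly one unit precisely when $\len(\ch\lambda)$ is even --- which is the case whenever $\ch\lambda\in\ch\Lambda^\pos_G$, i.e., exactly the case where $\ch\nu=0$ can actually occur.

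The paper closes this gap with Lemma~\ref{lem:Ycolor}: since $\sY^{D_\alpha^+ + D_\alpha^-}_{X^\bullet}\cong C^{(1)}\oo\xt C^{(1)}$ has empty fiber over the diagonal of $\Scr A$, the central-fiber stratum $\msf Y^{\ch\alpha,0}$ is \emph{empty} for every simple coroot $\ch\alpha$. If the chain were tight and terminated at $0$, the penultimate slice would be a one-dimensional irreducible subvariety of $\msf Y^{\ch\alpha_d,0}$ containing $\msf Y^{0,0}=\pt$ in its closure, contradicting this emptiness; this is the last paragraph of the proof of Proposition~\ref{prop:centralstrata}. That argument (or some replacement for it) is an essential ingredient and must appear in your write-up. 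Your "secondary technical point" --- that the hyperplane cut lands inside the central fiber --- is correctly dispatched by Lemma~\ref{lem:Y=ScapGr}, so that part is fine; the real missing piece is the $\ch\nu=0$ exclusion.
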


The operation of hyperplane ``cutting'' can almost be thought of as the lowering operator 
for some $\SL_2$-triple; unfortunately it is not quite precise enough, see Proposition~\ref{prop:f-hyper}. 

\medskip

If $X \ne X^\can$, then we also show that if $\msf b$ is an irreducible component of 
$\msf Y^{\ch\lambda}_X$ of critical dimension that is not contained in $\msf Y^{\ch\lambda}_{X^\can}$, then
$\ch\lambda$ must be a weight of $V^{\ch\theta_i}$ for one of the $\ch\theta_i$ appearing in 
Theorem~\ref{theorem-general}, and $\msf b$ is birational to a Mirkovi\'c--Vilonen cycle in 
$\msf S^{\ch\lambda} \cap \Gr_G^{\ch\theta_i}$. The latter correspondence comes from the Hecke action \eqref{e:Heckeaction}.

\medskip

Let us comment on how the above relates to Theorems \ref{theorem-semismall} and \ref{theorem-general}. 
Under our assumption that $\mf c_{X^\bullet} = \mbb N^r$, the space $\sY^{\ch\lambda}_{X^\can}$ 
is irreducible. 
Then the dimension estimate in Theorem~\ref{theorem-slicing}, together with a factorization property 
of $\sY_X$, implies the ``stratified semi-smallness'' condition. 

The irreducible components of $\msf Y_{X^\can}$ of critical dimension 
correspond to a basis of $V^+_{X^\bullet}$. The other irreducible components of $\msf Y_X \sm \msf Y_{X^\can}$
of critical dimension correspond to Mirkovi\'c--Vilonen cycles in $\Gr_G^{\ch\theta_i}$, and
these provide a basis for $V^{\ch\theta_i}$ in Theorem~\ref{theorem-general} by geometric Satake. 

\subsection{Organization of the paper}
In \S\ref{section-spherical} we briskly review the salient combinatorics
of spherical varieties and the classification of $G(\mf o)$-orbits of the loop
space of $X^\bullet$. In \S\ref{sect:models} we introduce the
global and Zastava models for the arc space of $X$ and their stratifications, explain why they are indeed models,
and prove some foundational properties. In \S\ref{sect:compactification}, we
introduce the compactification of the Zastava model and define the
central fibers of (compactified and non-compactified) Zastava models. Then we 
perform the comparison between $\pi_!\IC_{\sY}$ and $\bar\pi_! \IC_{\barY}$ 
that accounts for the ``numerator'' in the Euler factor \eqref{equation-pushforward-IC-expl}. 

Sections \ref{sect:Heckeact} and \ref{sect:centralfiber} are the 
technical heart of this paper. In \S\ref{sect:Heckeact} we establish 
the closure relations for the global model $\sM_X$ and determine its irreducible
components. This involves a study of the $G$-Hecke action on the global model,
which also reduces the problem to the canonical affine closure, as explained 
earlier. In \S\ref{sect:centralfiber}, we analyze the geometry of the
central fiber and prove the crucial dimension estimates using the
Mirkovi\'c--Vilonen boundary hyperplanes of semi-infinite orbits. 
This allows us to prove Theorem~\ref{theorem-semismall}. 

In \S\ref{sect:crystal}, we prove the aforementioned results on crystals. 
In \S\ref{sect:nearby}, we combine the results of the preceding sections to 
compute the nearby cycles of the IC complex on the global model using a 
well-known contraction principle. Here we establish that 
the nearby cycles functor does indeed correspond to the asymptotics map under 
the sheaf--function dictionary. 

In Appendix \ref{appendix:A}, we collect various technical results 
concerning the stratification of the global model, some of which use the
notion of generic-Hecke modification from \cite{GN}.

\subsection{Index of notation}\  

In general, we will use calligraphic 
letters $\Cal D, \Cal V$ to denote standard combinatorial objects associated
to spherical varieties in the literature, script letters $\Scr M, \Scr Y, \Scr F$ for
algebraic stacks and sheaves, sans serif letters $\msf Y, \msf S$ for (ind-)schemes 
that are subspaces of certain loop spaces with respect to a fixed point $v\in \abs C$. The following table contains most of the notation used in this paper, except for notation defined and used locally.

\begin{longtable}{>{\raggedleft}p{.18\textwidth} p{.78\textwidth} }
$k$ & an algebraically closed field. The characteristic of $k$ can be zero or positive, but in the latter case we will impose some restrictions on our spherical varieties (see \S \ref{assumptions-finitefield}), to ensure that their geometry is similar to that in characteristic zero. \\
{$\mathbb F$, $\Fr$} & {At some points in this paper, $k$ is the algebraic closure of a finite field $\mathbb F$, and then $\Fr$ denotes the geometric Frobenius morphism}. \\
$\pt$ & $\Spec k$. \\ 
{$C$} & {a connected smooth projective curve over $k$.} \\ 
{$\Sym C,\, C^{(n)}$} & {the scheme of effective divisors on (=symmetric powers of) $C$, and the component of divisors of degree $n$.} \\
{$\oo C^n, \oo C^{(n)}$} & {the open subsets of distinct $n$-tuples of points, resp.\ multiplicity free divisors of degree $n$, on the curve.} \\
{$\oo \prod$, $\oo\times$} & {for schemes living over any partially symmetrized powers of the curve, the restriction of their Cartesian product over the multiplicity-free locus.} \\
{$\Bbbk = k(C)$} & {the field of rational functions on $C$.}  \\
{$\abs C = C(k)$} & the set of closed points of $C$. \\
{$\mf o_v$} & {for $v\in \abs C$, it denotes the completion of the local ring at $v$.} \\
{$F_v$} & {the fraction field of $\mf o_v$.
By choosing a local coordinate $t$ we have a non-canonical isomorphism 
$\mf o_v \cong k\tbrac t =: \mf o$ and $F_v \cong k\lbrac t =: F$. We sometimes implicitly
make this identification when the choice of local coordinate is irrelevant.} \\
{$\mbb N$} & {the monoid of non-negative integers.} \\
& \\ 
{$G$} & {a connected reductive group over $k$.} \\
{$T$} & {the (abstract) Cartan of $G$, i.e., the reductive quotient of any Borel subgroup. We sometimes fix a splitting $T\hookrightarrow B\hookrightarrow G$ of the abstract Cartan into a Borel subgroup.} \\
{$\Cal B$} & {the flag variety of Borel subgroups of $G$.} \\
{$W$} & {the (abstract) Weyl group of $G$.} \\
{$s_\alpha \in W$} & {for a simple root $\alpha$, the corresponding reflection.} \\
{$\check \Lambda_G$ ($\Lambda_G$)} & {the coweight (resp.~weight) lattice of $T$. The index $G$ will often be omitted.} \\
{$\check\Lambda^+_G$ ($\Lambda_G^+,\, \ch\Lambda^-_G$)} & 
The monoid of dominant coweights (resp., dominant weights, antidominant coweights). \\
{$\ch\Lambda^\pos_G$ ($\Lambda^\pos_G \subset \Lambda_G$)} & 
The monoid generated by the non-negative \emph{integral} span of the positive coroots
(resp.~roots) in $\ch\Lambda_G$.  \\
{$\check\Delta_G$ ($\Delta_G$)} & {the set of simple coroots (resp.~roots)
of $G$.} \\
{$2\ch\rho_G \in \ch\Lambda_G$ ($2\rho_G \in \Lambda_G$)} & {the sum of 
the positive coroots (roots) of $G$} \\
{$\ch\lambda \ge \ch\mu$} & {For $\ch\lambda,\ch\mu \in \ch\Lambda_G$, this means that $\ch\lambda - \ch\mu \in \ch\Lambda^\pos_G$.} \\
{$\ch G$} & {the Langlands dual group of $G$ over $\ol\bbQ_\ell$, i.e., $\ch G$ is
the connected reductive group where the weights, roots of $\ch G$ equal the 
coweights, coroots of $G$, etc.} \\
{$V^{\ch\lambda}$} & {for $\ch\lambda \in \ch\Lambda_G$ either dominant or antidominant, this denotes
the irreducible $\ch G$-module over $\ol\bbQ_\ell$
with highest (resp.~lowest) weight $\ch\lambda$.
Similarly, $V^\lambda$ denotes the irreducible $G$-module over $k$ for $\lambda\in \Lambda_G^+$.}
\\
& \\
{variety} & {will mean a reduced, finite type $k$-scheme (not necessarily irreducible).}  \\
{$\Scr Z/H$} & {for a stack $\Scr Z$ with an action of an algebraic group $H$, this will denote the quotient stack.} \\
{$Z\sslash H$} & {for an affine variety $Z$ over $k$, and a group $H$ acting on it, the invariant-theoretic quotient $\Spec k[Z]^H$.} \\ 
{$\Scr X \xt^G \Scr Y$} & 
if $\Scr X, \Scr Y$ are stacks with (right) $G$-actions, we will use  this 
to denote the stack quotient $(\Scr X \xt \Scr Y)/G$ by the diagonal action. \\
{$\msf L^+ X,\, \msf L X$} & {the formal arc and loop spaces of a scheme $X$ (see \S \ref{sect:formalloop}).} 
\\
& \\
{$\rmD^b_c(\sZ)$} & {for an algebraic stack $\sZ$, this is the derived category of 
bounded constructible $\ol\bbQ_\ell$-complexes on $\sZ$.} \\
{`sheaf'} & {means a complex of sheaves. 
All functors between sheaves are derived functors.} \\
{$\rmP(\sZ) \subset \rmD^b_c(\sZ)$} & {when $\sZ$ is locally of finite type over $k$, this is the abelian category of perverse sheaves.}\\
{$^p\rmD^{\le 0}, {^p}\rmD^{\ge 0}$} & 
the subcategories with respect
to the perverse $t$-structure. \\
{$\IC_{\Scr Z}$} & {the direct sum of the intersection cohomology complexes
of all irreducible components of $\Scr Z$. When working over a finite field, we will normalize this sheaf to be pure of weight zero.} 
\\
&\\
{$X$} & {an affine $G$-spherical variety over $k$. (See \S \ref{sect:spherical} for notions pertaining to spherical varieties.)} \\
{$X^\circ$} & {the open $B$-orbit, for a fixed choice of Borel subgroup $B$.} \\
{$x_0 \in X^\circ(k)$} & {a fixed base point.} \\
{$H$} & {the stabilizer of $x_0$.} \\
{$X^\bullet$} & {the open $G$-orbit $H\bs G $.} \\
{$X^\can$} & {the ``canonical'' affine embedding $\Spec k[X^\bullet]$ of $X^\bullet$.} \\
{$T_X$} & {the (abstract) Cartan of $X$, that is, the quotient by which the abstact Cartan of $G$ acts on $X^\circ\sslash N$, where $N$ is the unipotent radical of $B$.} \\
{$\Lambda_X, \ch \Lambda_X$} & {the character and cocharacter groups of $T_X$. Our assumptions on $X$ will identify $\ch\Lambda_X$ with $\ch\Lambda_G$, so it will often just be denoted by $\ch\Lambda$.} \\
{$\check G_X$} & {the dual group of $X$; it has a canonical maximal torus isomorphic to the dual of $T_X$.} \\
{$\Cal V$} & {the cone of invariant valuations of $X$; equivalently, the antidominant chamber of the dual group of $X$.} \\
{$\mathfrak c_X^\vee \subset \Lambda_X$ ($\mathfrak c_X\subset \ch\Lambda_X$)} & 
{the monoid of weights of $T_X$ on $k[X\sslash N]$ (resp., its dual monoid).} \\
{$\Cal C_0=\Cal C_0(X)\subset \mathfrak t_X$} & 
{the cone spanned by $\mathfrak c_X$, inside of the vector space spanned by $\ch\Lambda_X$.} \\
{$\mathfrak c_X^-$} & {the intersection of $\mathfrak c_X$ with the cone $\Cal V$ of invariant valuations.} \\
{$\mathcal D(X)$} & {the set of irreducible $B$-stable divisors in $X$.} \\
{$\mathcal D$} & {the set of colors, i.e., irreducible $B$-stable divisors which are not $G$-stable; equivalently, this can be identified with $\mathcal D(X^\bullet)$.} \\
{$\mathcal D(\alpha)$} & {for a simple root $\alpha$, the set of colors $D$ of $X^\bullet$ such that $D P_\alpha \supset X^\circ$, 
where $P_\alpha$ is the parabolic generated by $B$ and the root space $\mathfrak g_{-\alpha}$.}\\
{$\varrho_X(D)=\ch\nu_D$}&{for $D\in \mathcal D(X)$, the associated $B$-invariant valuation, restricted to the group of nonzero $B$-eigenfunctions: $\check \nu_D: k(X)^{(B)} \to \mathbb Z$, and understood as  a functional on the character group $\Lambda_X = k(X)^{(B)}/k^\times$.}\\
{$\mathfrak c_X^{\Cal D} \subset\ch\Lambda_X$} & 
the monoid generated by the $\ch\nu_D,\, D\in \Cal D$.  \\
{$\ch\lambda\preceq \ch\mu$} & {for $\ch\lambda,\ch\mu\in \ch\Lambda_X$, this means that 
$\ch\mu-\ch\lambda \in \mathfrak c_X^{\Cal D}$.} \\
{$\Cal D^G_{\mathrm{sat}}(X)$} & {the set of those primitive (=indecomposable) elements in $\mathfrak c_X^-$ that
are minimal with respect to the $\preceq$ partial order.}
\\
&\\
{$\Bun_G, \Bun_B$} & {the moduli stack of $G$-bundles, resp.\ $B$-bundles, on $C$.} \\
{$\sM_{X}$} & the ``global model'' of generic maps from a curve to $X/G$; it lives over $\Bun_G$. The restriction of such a map, defined over $k$, to the formal neighborhood of a point $v\in |C|$ gives rise to a well-defined ``valuation'', that is, an element of $(X^\bullet(F_v)\cap X(\mf o_v))/G(\mf o_v)$. 
\\
& See Section \ref{sect:models} for the various models of the arc space. For any model, when $X$ is understood, the index will be omitted. \\
{$\sY_X$} & {the ``Zastava model'' of generic maps from a curve to $X/B$; it lives over $\Bun_B$. The restriction of such a map, defined over $k$, to the formal neighborhood of a point $v\in |C|$ gives rise to a well-defined element of $(X^\circ(F_v)\cap X(\mf o_v))/B(\mf o_v)$} \\
{$\ol\Bun_B$} & {Drinfeld's compactification of $\Bun_B$, see \S\ref{sect:basicproperties-comp}.}
\\
{$\barY_X$}& {the compactified Zastava model (Section \ref{sect:compactification}).}
\\
{$\Scr A$} & {the global/Zastava model for the $T_X$-space $X\sslash N$.}
\\
{$\pi,\bar\pi$} & the natural maps $\pi:\sY_X\to\Scr A$, $\bar\pi:\barY_X\to\Scr A$ (extending $\pi$). 
\\
$X^\bullet(F)_{G:\ch\theta}$, $\msf L^{\ch\theta}X$ & 
{the $G(\mf o)$-orbit on $X^\bullet(F)$ parametrized by $\ch\theta\in \Cal V \cap \ch\Lambda_X$ (Theorem \ref{thm:Gorbits}), and the corresponding stratum of the loop space. When $\ch\theta \in \mathfrak c_X^-$, these belong to $X(\mf o)$, resp.\ the arc space $\msf L^+X$.}
\\
{$\Sym^\infty(\msf S)$} & 
{the set of multisets in elements of a set $\msf S$; equivalently, the free monoid $\bigoplus_{\msf S}\mathbb N$ in the elements of $\msf S$.}
\\
{$C^{\mf P}, \oo C^{\mf P}$} & 
{for a multiset $\mf P = \sum_{s\in \msf S} N_s [s]$, the partially symmetrized power $\prod_{\msf S} C^{(N_s)}$ of the curve, and its disjoint locus $\oo\prod_{\msf S} \oo C^{(N_s)}$.}
\\
{$\sM^{\ch\Theta}$} & 
{for $\ch\Theta \in \Sym^\infty(\mathfrak c_X^-\sm \{0\})$, the stratum of $\sM$ containing those maps whose multiset of nontrivial valuations (as elements of $\mathfrak c_X^-\sm \{0\}$) is equal to $\ch\Theta$. When $\ch\Theta=\{\ch\theta\}$ is a singleton, we will write $\sM^{\ch\theta}$.}
\\
{$\Scr A^{\ch\lambda}$} & {the connected component of $\Scr A$ of maps with total valuation $\ch\lambda \in \mathfrak c_X \subset \ch\Lambda_X= T_X(F)/T_X(\mf o)$.} 
\\
{$\sY^{\ch\lambda}, \barY^{\ch\lambda}$} & {the preimage of $\Scr A^{\ch\lambda}$ in $\sY$, resp.\ in $\barY$. They live over strata $\Bun_B^{-\ch\lambda}$, $\ol\Bun_B^{-\ch\lambda}$ of $\Bun_B$, resp.\ $\ol\Bun_B$.}
\\
{$\sY^{\ch\lambda,\ch\Theta},\, \barY^{\ch\lambda,\ch\Theta}$} & {the fiber products of $\sY^{\ch\lambda}, \barY^{\ch\lambda}$ with $\sM^{\ch\Theta}$ over $\sM$.}
\\
{$\sY^D$} & {for $D \in \mathbb N^{\mathcal D}$, a certain connected/irreducible component of the ``open Zastava'' space $\sY_{X^\bullet} = \sY^{?,0}$, defined in \S \ref{sect:Y0cc}. The question mark $?$ corresponds to the valuation $\varrho_X(D)$.}
\\
{$\barM^{\ch\Theta}$} & {denotes the closure of the stratum $\sM^{\ch\Theta}$. Note that $\barY^{\ch\lambda}$, $\barY^{\ch\lambda,\ch\Theta}$, in contrast, are \emph{not} closures of strata, but strata of the compactified Zastava space. In the case of the global model, there is no room for confusion, so we allow ourselves this notation, for typographical reasons.}
\\
$C_{\ch\nu}$, ${_{\ch\nu}}\ol\Bun_B^{\ch\lambda}$, ${}_{\ch\nu} \barY^{\ch\lambda}$ & 
for $\ch\nu\in \ch\Lambda_G^{\pos}$, the partially symmetrized power $C^{\mf P}$ when $\ch\nu$ is thought of as a multiset $\mf P$ in the simple coroots, a stratum of $\ol\Bun_B^{\ch\lambda}$, and a stratum of $\barY^{\ch\lambda}$, isomorphic, respectively, to $C_{\ch\nu} \times \Bun^{\ch\lambda+\ch\nu}_B \into \ol\Bun^{\ch\lambda}_B$ and  $C_{\ch\nu} \times \Scr Y^{\ch\lambda-\ch\nu}$ (see \S \ref{sect:strat-compactified}).
\\
& \\
{$\Gr_G, \Gr_B$} & the affine Grassmannian of $G$, resp.\ $B$. 
\\
$\Gr_{G,\Sym C}$, $\Gr_{B,\Sym C}$ & {the Beilinson--Drinfeld affine Grassmannians, living over $\Sym C$ (see \S \ref{sect:YGr}).}
\\
{$\Gr_G^{\ch\theta}$, $\ol\Gr_G^{\ch\theta}$} & {for $\ch\theta\in \ch\Lambda_G^-$, the $\msf L^+G$-orbit in the affine Grassmannian containing the class of $t^{\ch\theta}$, and its closure.} \\
{$\ol\Gr^{\ch\Theta}_{G,C^{\ch\Theta}}$} & {for $\ch\Theta \in \Sym^\infty(\ch\Lambda_G^- \sm 0)$, the multi-point version of $\ol\Gr^{\ch\theta}$, see \S \ref{sect:strata-Grassmannian}.} \\
{$\msf S^{\ch\lambda}$, $\olsf S^{\ch\lambda}$} & {the ``semi-infinite'' $\msf LN$-orbit of $t^{\ch\lambda}$ in $\Gr_G$, and its closure.}
\\
{$\msf Y^{\ch\lambda},\, \olsf Y^{\ch\lambda}$} & {the central fibers of $\sY^{\ch\lambda}, \barY^{\ch\lambda}$, living over a point of the diagonal stratum $C\hookrightarrow \Scr A^{\ch\lambda}$. They can be identified as subspaces of $\msf S^{\ch\lambda}$, $\olsf S^{\ch\lambda}$ (see \S \ref{def:centralfiber}). From \S\ref{def:centralfiber} onwards, we will implicitly consider
only the underlying reduced structure on all central fibers.
}
\\
& \\
{$\mB_{X,\ch\lambda}$} & {for $\ch\lambda\in \mathfrak c_X$, the set of all irreducible components of critical dimension of the central fiber $\msf Y^{\ch\lambda}$, see Proposition \ref{prop:Vbasis}.} \\
{$\mB_X^+, \mB_X$} & {the union of all $\mB_{X,\ch\lambda}$, $\ch\lambda\in \mf c_X$, and the ``crystal of $X$'' (\S \ref{sect:defcrys}).}
\end{longtable}

\subsection{Acknowledgments}

We thank R.~Bezrukavnikov, V.~Drinfeld, T.~Feng, M.~Finkelberg, V.~Ginzburg, F.~Knop and D.~Gaitsgory for helpful comments and conversations. We thank the anonymous referee for helpful suggestions. We thank the Institute for Advanced Study for its hospitality during the academic year 2017--2018, during which part of our work was conducted. Y.S.~was supported by NSF grants DMS-1801429 and DMS-1939672, and by a stipend to the IAS from the Charles Simonyi Endowment.
J.W.~was supported by NSF grant DMS-1803173. 

\section{Spherical varieties and their arc spaces}
\label{section-spherical}

\subsection{Spherical varieties}
\label{sect:spherical}

A spherical variety over $k$ is a normal variety with an open $B$-orbit. 
Let $X$ be an affine $G$-spherical variety over $k$. Let $X^\circ$ denote the open $B$-orbit. We choose and fix a point $x_0 \in X^\circ(k)$ and let $H$ denote its stabilizer.
Let $X^\bullet= H\bs G $ denote the open $G$-orbit. 

The quotient $X^\circ\sslash N$ has an action of the universal Cartan $T=B/N$, and is a torsor for a quotient torus $T\twoheadrightarrow T_X$. By our choice of base point, we can identify this torsor with $T_X$. In the rest of this paper, we will assume that our spherical variety satisfies $T_X=T$; however, for now we proceed with general definitions.

All important combinatorial invariants of the spherical variety live in the rational vector space $\mathfrak t_X^*$ spanned by the character group $\Lambda_X$ of this torus, or in the dual vector space $\mathfrak t_X$, containing the dual lattice $\ch\Lambda_X$. By ``lattice points'', below, we will mean points belonging to one of these lattices. The spaces $\mathfrak t_X, \mathfrak t_X^*$ are the root and coroot space for the dual group $\check G_X$ of $X$.\footnote{The \emph{Gaitsgory--Nadler dual group} was defined in a Tannakian way in \cite{GN}, but not completely identified in all cases. A combinatorial description  of a dual group (presumably the same) was consequently afforded by Knop and Schalke \cite{Knop-Schalke}. The invariants that we present here are those of Knop and Schalke, which match standard invariants of the theory of spherical varieties. For this paper, however, this distinction between constructions of the dual group is immaterial, as we impose the condition that $T_X= T$ and ``all simple roots of $G$ are spherical roots of type $T$'', which implies that in both versions of the dual group, $\check G_X= \check G$.} The antidominant Weyl chamber for $\check G_X$ in $\mathfrak t_X$ is denoted by $\Cal V$ in the theory of spherical varieties, because it coincides with the so-called cone of $G$-invariant valuations, see \cite{KnLV}. Up to this point, all data depend only on the open $G$-orbit $X^\bullet$, not on its affine embedding $X$.

The affine embedding $X$ of $X^\circ$ defines an affine toric embedding $X\sslash N$ of $T_X$, described by the cone $\Cal C_0(X) \subset \mathfrak t_X$ whose lattice points are all cocharacters $\check\lambda$ into $T_X$ such that $\lim_{t\to 0} t^{\ch\lambda}\in X\sslash N$. We will denote by $\mathfrak c_X$ the monoid of lattice points $\check\Lambda_X \cap \Cal C_0(X) \subset \mathfrak t_X$, and by $\mathfrak c_X^-$ its intersection with the cone $\Cal V$ of invariant valuations. 
The cone $\Cal C_0(X)\subset \mathfrak t_X$ has a canonical set of generators $\ch\nu_D$, the valuations associated to all $B$-stable divisors $D\subset X$. The set of all irreducible $B$-stable divisors in $X$ will be denoted by $\mathcal D(X)$, and by ``valuation associated'' we mean the restriction of the corresponding valuation to the group of nonzero $B$-eigenfunctions: $\check \nu_D: k(X)^{(B)} \to \mathbb Z$, which factors through the character group $\Lambda_X$ and hence can be identified with an element of $\check\Lambda_X\subset \mathfrak t_X$. The map $\mathcal D(X)\ni D\mapsto \check\nu_D\in \check\Lambda_X$ will be denoted by $\varrho_X$. Inside of $\mathcal D(X)$ there is a distinguished subset $\mathcal D$, depending only on the open $G$-orbit $X^\bullet$, which consists of the closures of $B$-stable divisors in $X^\bullet$; those are called \emph{colors}. We will often abuse language and write ``colors'' for the images of $\mathcal D$ in $\ch\Lambda_X$.

We remark that the valuation map $\varrho_X$ may fail to be injective, but this can only happen when two colors have the same image. If this is the case for $X^\bullet = H\backslash G$, there is always a torus covering of it such that all colors have distinct valuations (see \S\ref{sect:freemonoid}); for example, $\GL_1\backslash \PGL_2$ has two colors with valuation $\frac{\check\alpha}{2}$, but their preimages in $\GL_1\backslash\GL_2$ (where $\GL_1$ is embedded as the general linear group of a one-dimensional subspace) induce different valuations. 
In any case, colors give rise to a map $\mathbb N^{\mathcal D}\to \mathfrak c_X$, whose image we denote by $\mathfrak c_X^{\Cal D}$. 

We define an ordering $\preceq$ on $\ch\Lambda_X$ by postulating that $\ch\lambda \preceq \ch\lambda'$ if 
$\ch\lambda'-\ch\lambda$ can be written as a non-negative \emph{integral} combination of the valuations $\ch\nu_D$, with $D\in \mathcal D$, i.e., if $\ch\lambda'-\ch\lambda \in \mathfrak c_X^{\Cal D}$. We use the symbol $\le $ for the ordering on $\ch\Lambda_G$ defined by the positive coroots of $G$, that is, $\ch\lambda \le \ch\lambda'$ iff $\ch\lambda'-\ch\lambda$ is a sum of positive coroots. Notice that the ordering $\preceq$ is not defined simply in terms of the cone spanned by the $\ch\nu_D$'s: there can be non-comparable lattice points in this cone (and similarly for the ordering $\le$). 
As we will see later, this ordering describes the closure relations on the global model 
of the arc space of $X$.

\subsubsection{Spherical roots of type $T$} 
\label{subsection:typeT}

From Section \ref{sect:models} onwards we assume that $T_X=T$, equivalently, $B$ acts simply transitively on $X^\circ$. From Section \ref{sect:Heckeact} on we assume, further, that \emph{all simple roots of $G$ are spherical roots of type $T$}. Let us explain what this means: 
For a simple root $\alpha$ of $G$, let $P_\alpha \supset B$ denote 
the corresponding parabolic of semisimple rank one. The quotient $P_\alpha/\mathfrak R(P_\alpha)$ by its radical is isomorphic to $\PGL_2$, and the invariant-theoretic (or geometric) quotient $X^\circ P_\alpha/\mathfrak R(P_\alpha)$ is a spherical variety for $\PGL_2$. In characteristic zero, over an algebraically closed field, those belong to one of the following types, see \cite[Lemma 3.2]{KnBorbit}:
\begin{itemize}
 \item a point: $\PGL_2\backslash\PGL_2$;
 \item type $T$: $\mathbb G_m\backslash \PGL_2$;
 \item type $N$: $\mathcal N(\mathbb G_m)\backslash\PGL_2$, where $\mathcal N$ denotes normalizer;
 \item type $U$: $S\backslash \PGL_2$, where $N\subset S\subset B$. 
\end{itemize}

In positive characteristic there are some more cases, investigated by Knop in \cite{Knop-sphericalroots}. 

Our assumption from Section~\ref{sect:Heckeact} onwards is that for every simple root $\alpha$, this $\PGL_2$-spherical variety is isomorphic to $\mathbb G_m\backslash \PGL_2$. Our assumptions imply that the stabilizer in $P_\alpha$ of a point on the open orbit is isomorphic to $\mathbb G_m$, and that there are precisely two colors $D_\alpha^+, D_\alpha^-$ contained in $X^\circ P_\alpha$ (the $\pm $ labeling is arbitrary). We will denote the set of these two elements by $\mathcal D(\alpha)\subset\mathcal D$; notice that these sets are not disjoint as $\alpha$ varies. Moreover, the associated valuations satisfy (see \cite[\S 3.4]{Lun97}):
\[ \ch\nu_{D^+_\alpha} = -s_\alpha \ch\nu_{D^-_\alpha},\]
\[ \ch\nu_{D^+_\alpha} + \ch\nu_{D^-_\alpha} = \ch\alpha, \]
and finally an element $D\in \mathcal D$ belongs to $\mathcal D(\alpha)$ iff $\left<\alpha,\ch\nu_{D}\right> >0$ (in which case $\left<\alpha,\ch\nu_{D}\right> =1$, by the above). 

\begin{rem}
 Our assumptions above are over the algebraically closed field $k$. Over a finite field $\mathbb F$, the Galois group acts on the set of colors, compatibly with its action on the set of simple roots. If, for example, $G$ is split, each set $\mathcal D(\alpha)$ is preserved by Frobenius, and the stabilizer of a point on the open $P_\alpha$-orbit is a form of $\mathbb G_m$. If that form is split, Frobenius acts trivially on $\mathcal D(\alpha)$; if not, it permutes the two colors.
\end{rem}

\begin{rem}\label{rem:valuation}
A very straightforward way to compute the valuations $\ch\nu_{D^{\pm}_\alpha}$ in any example is the following: If all simple roots are spherical roots of type $T$ and $T_X=T$, the stabilizer $S$ of a point in the open $P_\alpha$-orbit on $X$ is a subgroup isomorphic (over the algebraic closure) to $\mathbb G_m$. Choose a Borel subgroup $B\subset P_\alpha$ containing $S$. An isomorphism $\mathbb G_m\simeq S$ gives rise to a cocharacter $\ch\nu:\mathbb G_m \to B\to T$, and we can choose this isomorphism so that $\left<\alpha, \ch\nu \right> = 1$. There are two choices for $B$, and they correspond to the valuations $\ch\nu_{D^{\pm}_\alpha}$.
\end{rem}

\subsection{Affine degeneration, and assumptions in positive characteristic} \label{sect:affine-degeneration}  \label{assumptions-finitefield}

In characteristic zero, there is a well-known affine family degenerating $X$ to a horospherical variety \cite{Pop86}, \cite[\S 5.1]{GN}, which in turn is related to its degeneration to the normal bundle of orbits in a smooth toroidal (e.g., a ``wonderful'') compactification of $X$, 
\cite{Brion07}, \cite[\S 2.5]{SV}. These constructions sometimes fail in positive characteristic, and therefore \textbf{the statements of this subsection should be considered as assumptions in positive characteristic}. These assumptions will be imposed on $X$ for the remainder of the paper. Notice that the Luna--Vust theory of spherical embeddings holds in arbitrary characteristic over an algebraically closed field by \cite{KnLV}.

\subsubsection{}
We define a filtration $\Cal F_{\lambda}$ on $k[X]$ for $\lambda \in \Lambda_X$ 
by letting $\Cal F_{\lambda}$ consist of all $f \in k[X]$
such that every highest weight $\mu \in \Lambda_X$ of the rational $G$-module 
generated by $f$ satisfies $\brac{\lambda-\mu, \Cal V} \le 0$.
The affine degeneration $\mathscr X$ is the affine variety defined to be the
spectrum of the Rees algebra associated to the filtration above: 
\[ k[\mathscr X] = \bigoplus_{\lambda\in \Lambda_X} \Cal F_\lambda \ot e^\lambda \subset k[X \times T_X] 
\] 
where $e^\lambda \in k[T_X]$ denotes the character corresponding to $\lambda \in \Lambda_X$. This family is naturally equipped with an action of the product $G\times T_X$.

Note that $k[\mathscr X]$ contains $\bigoplus_{\brac{\lambda,\Cal V}\le 0} k e^\lambda$. 
Define $\ol{T_{X,\mss}}$ to be the spectrum of $\bigoplus_{\brac{\lambda,\Cal V}\le 0} k e^\lambda$. 
This is an affine toric variety with open orbit isomorphic to the
quotient $T_{X,\mss}$ of $T_X$ by the subtorus $\mathcal Z(X)^0$ (the ``connected center of $X$'') generated by cocharacters in $\Cal V \cap (-\Cal V)$.

In summary, we get a $G \times T_X$-equivariant map 
\[ \mathscr X \to \ol{T_{X,\mss}}, \] 
which we consider as our affine family of degenerations.

\subsubsection{} 
For $a \in \ol{T_{X,\mss}}(k)$, let $X_a$ denote the fiber of 
$\mathscr X \to \ol{T_{X,\mss}}$ over $a$. 
From the definition of the filtration $\Cal F_\lambda$, it follows that 
$k[X_a]^N = k[X]^N$. 
The following ``multiplicity-free'' characterization of spherical varieties (in arbitrary
characteristic) implies that each fiber $X_a$ is spherical.

\begin{thm}[{\cite{VinbergKimelfeld}, \cite[Theorem 25.1]{Timashev}}]
A normal quasi-affine variety $X$ is spherical if and only if 
the non-zero weight spaces of $k[X]^N$ are all $1$-dimensional.
\end{thm}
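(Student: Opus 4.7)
The plan is to prove both directions of this ``multiplicity-free'' criterion via Rosenlicht's theorem, which identifies sphericity of a normal irreducible variety with the vanishing $k(X)^B = k$.

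For the forward direction, assume $X$ is spherical, so $k(X)^B = k$. Given two nonzero elements $f_1, f_2 \in k[X]^N$ of the same $T$-weight $\lambda$, the ratio $f_1/f_2$ is a $B$-invariant rational function on $X$, hence a constant. This immediately gives $\dim k[X]^N_\lambda \le 1$ for every $\lambda$.

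For the converse, suppose $\dim k[X]^N_\lambda \le 1$ for every $\lambda \in \Lambda_X$, and let $\phi \in k(X)^B$; the goal is to show $\phi$ is constant. I would introduce the ``denominator ideal'' $\Cal I = \{ g \in k[X] : \phi g \in k[X] \}$, which is a $B$-stable ideal of $k[X]$ because $\phi$ is $B$-invariant and $k[X]$ is $B$-stable, and which is nonzero because $X$ is quasi-affine. Since $k[X]$ is a rational $B$-module, any nonzero element of $\Cal I$ generates a finite-dimensional $B$-submodule $V \subset \Cal I$. By Lie--Kolchin, $V$ contains a $B$-semi-invariant $g \ne 0$ of some weight $\mu$; then $\phi g$ is also $B$-semi-invariant of weight $\mu$. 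Both $g$ and $\phi g$ lie in the space $k[X]^N_\mu$, which has dimension at most one, forcing $\phi g = c g$ for some scalar $c \in k$, and hence $\phi = c$. Thus $k(X)^B = k$, and Rosenlicht's theorem provides the desired dense $B$-orbit.

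The only non-trivial technical input is the rationality of the $B$-action on $k[X]$, which is what lets one produce the finite-dimensional $V$ above. This is standard in the quasi-affine setting: a quasi-affine $G$-variety admits a $G$-equivariant locally closed embedding into a finite-dimensional rational $G$-module, so $k[X]$ is the union of its finite-dimensional rational $G$-submodules. Both the quasi-affine hypothesis (to ensure $\Cal I \ne 0$ and that $k[X]$ is rational) and the normality assumption (for Rosenlicht's theorem to translate $k(X)^B = k$ into the existence of a dense $B$-orbit) are used essentially.
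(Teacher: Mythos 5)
Your proof is correct and is essentially the standard argument found in the cited references (Vinberg--Kimelfeld and Timashev, Theorem 25.1); the paper itself only quotes this theorem with a citation and gives no proof, so there is no local argument to compare against. One small remark: normality is not actually required for Rosenlicht's theorem (irreducibility of $X$ and connectedness of $B$ suffice), so it enters here only through the definition of ``spherical'' rather than as a technical ingredient of your argument.
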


Since the torus $T_X$ is determined by $k[X]^N$, we have a family of spherical varieties
$X_a$ with associated torus $T_{X_a} \cong T_X$. 

We also have $k[\mathscr X]^N = \bigoplus_{\brac{\lambda-\mu,\Cal V}\le 0} k[X]^{(B)}_\mu \ot e^\lambda$ where $k[X]^{(B)}_\mu$ is the $1$-dimensional $B$-eigenspace of weight $\mu$. 
This gives a canonical isomorphism 
\begin{equation} \label{e:scrXsslashN} 
    \mathscr X/\!\!/N \cong X/\!\!/N \times \ol{T_{X,\mss}}. 
\end{equation}
Define $\mathscr X^\circ$ to be the preimage of 
$T_X \times \ol{T_{X,\mss}} \subset \mathscr X/\!\!/N$ under the projection
$\mathscr X \to \mathscr X/\!\!/N$. 
Equivalently, $\mathscr X^\circ$ is the union of the open $B$-orbits of the fibers $X_a$.

It will be convenient to lift the isomorphism \eqref{e:scrXsslashN} to $\mathscr X^\circ$.
\begin{prop} \label{prop:noncanon-section}
There is a (non-canonical) $B$-equivariant isomorphism $\mathscr X^\circ \cong X^\circ \times \ol{T_{X,\mss}}$ over $\ol{T_{X,\mss}}$, compatible with \eqref{e:scrXsslashN}.
\end{prop}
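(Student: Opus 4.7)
The proposition reduces to trivializing a principal $N$-bundle in a manner compatible with the $B$-action. First, I would check that the natural map
\[
p : \mathscr X^\circ \to T_X \times \ol{T_{X,\mss}}
\]
obtained by restricting $\mathscr X \to \mathscr X\sslash N$ is a principal $N$-bundle. Fiberwise over each $a \in \ol{T_{X,\mss}}(k)$ this amounts to the statement that $X_a^\circ \to T_X$ is an $N$-torsor, which follows from the local structure theorem for affine spherical varieties (the $N$-action on the open $B$-orbit is free with quotient the associated torus $T_{X_a} = T_X$); the family version is immediate since the $N$-action is defined globally on $\mathscr X$.

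Next, I would trivialize $p$. Since $N$ is split unipotent (a successive extension of copies of $\mathbb G_a$) and $T_X \times \ol{T_{X,\mss}}$ is affine, the vanishing of $H^1$ of $\mathcal O$ on affine schemes forces $p$ to be a Zariski-trivial $N$-torsor. I would then pick a section $\tau: T_X \times \ol{T_{X,\mss}} \to \mathscr X^\circ$ chosen to agree, on the fiber $a=1$, with a preselected section $\sigma: T_X \to X^\circ$ of the $N$-quotient $X^\circ \to T_X$; any discrepancy between two such sections lives in $N$ and can be absorbed by a global $N$-valued modification since $N$-torsors over our affine base are trivial. Given $\tau$, every $x \in X^\circ$ factors uniquely as $x = n \cdot \sigma(t)$ with $n \in N$ and $t \in T_X$, and I would define
\[
\Phi : X^\circ \times \ol{T_{X,\mss}} \to \mathscr X^\circ, \qquad \Phi(x, a) = n \cdot \tau(t, a).
\]
By construction $\Phi$ is an isomorphism of $N$-torsors over $T_X \times \ol{T_{X,\mss}}$, hence compatible with the identification \eqref{e:scrXsslashN}, and restricts to the identity $X^\circ \cong X^\circ$ over $a=1$.

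The main obstacle is promoting $N$-equivariance to full $B$-equivariance, i.e.\ matching the $T = B/N$-actions on both sides. The cleanest way I see is to upgrade $\tau$ to be $T$-equivariant with respect to the $T$-action on $T_X \times \ol{T_{X,\mss}}$ induced through the quotient $T \twoheadrightarrow T_X$; this amounts to the vanishing of an equivariant $H^1$ for split unipotent coefficients on an affine $T$-space, which holds because $T$ is linearly reductive. Alternatively, one can choose the initial section $\sigma$ to be $T$-equivariant (using that $X^\circ \to T_X$ is itself such an $N$-torsor, with a $T$-orbit through any chosen base point giving a transverse slice) and then spread $\tau$ over $T_X \times \ol{T_{X,\mss}}$ by combining the $B$-action with the Rees $T_X$-action, whose combined action on the base factors through a transitive action of $T \times T_X$ on $T_X \times T_{X,\mss}$ and extends across the toric boundary of $\ol{T_{X,\mss}}$. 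Either approach produces a $T$-equivariant $\tau$, and hence a $B$-equivariant $\Phi$, proving the proposition.
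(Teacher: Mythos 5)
There is a genuine gap in the first step. You assert that for each fiber $X_a$ the map $X_a^\circ \to T_X$ is an $N$-torsor, i.e.\ that ``the $N$-action on the open $B$-orbit is free with quotient $T_X$.'' This is not what the Local Structure Theorem gives. Theorem \ref{thm:localstructure} says $X^\circ \cong T_X \times N_{P(X)}$, where $N_{P(X)}$ is the unipotent radical of the parabolic $P(X) = \{g \in G : X^\circ g = X^\circ\}$. When $P(X) \supsetneq B$, the $N$-stabilizer of a point in $X^\circ$ is a nontrivial unipotent group (conjugate to $N \cap L$ for a Levi $L \subset P(X)$), so $X^\circ \to T_X$ is merely a fiber bundle with fiber $N/(N \cap L)$, not a principal $N$-bundle. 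Proposition \ref{prop:noncanon-section} is stated in \S\ref{sect:affine-degeneration}, before the running assumptions $T_X = T$ or $P(X) = B$ are imposed, so the freeness you invoke is not available. Without it, the cohomological vanishing $H^1(\text{affine}, N) = 0$ does not apply to $p$, and the first structural step of your argument does not go through. The ``spreading'' route you offer as an alternative for $B$-equivariance also has a problem: the $T \times T_X$-action on $T_X \times \ol{T_{X,\mss}}$ is not transitive once you leave the open orbit $T_X \times T_{X,\mss}$, so translating a section across the toric boundary requires a contraction argument you do not supply.

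It is worth noting that your approach is quite different in spirit from the one in the paper. The paper's proof is an application of Knop's argument from \cite{KnMotion}: one considers triples $(x, B, \chi) \in X \times \Cal B \times \mf t_X^*$, uses logarithmic differentials to build cotangent vectors, composes with the moment map, and chooses a sufficiently generic semisimple $\xi$. Taking $x$ to range over $\mathscr X^\circ$ (with $B$ fixed), the fiber over $\xi$ is a $T_X$-equivariant section of $\mathscr X \to T_X \times \ol{T_{X,\mss}}$, from which the desired trivialization follows together with the LST. This construction works for arbitrary $P(X)$ because it simultaneously chooses a Levi $L$ adapted to the whole family, which is exactly what is needed to split off the $N \cap L$-stabilizer. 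Your cohomological route does become valid once one is in the regime $P(X) = B$ (so $X^\circ \cong B$, which is the running hypothesis of the paper from \S\ref{sect:models} onward), and there it would give a more elementary proof avoiding the moment map entirely; the equivariant $H^1$-vanishing you invoke (using linear reductivity of $T$, applied along a $T$-stable filtration of $N$ by copies of $\mbb G_a$) is sound in that case. So the argument is salvageable as a proof of a weaker statement, but not of the proposition in the generality stated.
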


We will comment on the proof of this proposition in conjunction with Theorem \ref{thm:localstructure} below. 

The affine degeneration is closely related to compactifications of the spherical variety. 
Namely, let $\mathscr X^\bullet \subset \mathscr X$ denote the union of the $G$-translates of $\mathscr X^\circ$. (It is the open subvariety which
specializes over each fiber $X_a$ to the open $G$-orbit.)
The quotient $\mathscr X^\bullet/ T_X$ turns out to be a proper embedding of $X^\bullet/\mathcal Z(X)^0$. For applications, one is interested in proper embeddings of $X^\bullet$ itself, and preferably smooth ones. Without getting into the details of the construction (we point the reader to \cite{KnLV}), we formulate the following result on the existence and local structure of ``smooth toroidal compactifications'':

Define $P(X) \supset B$ to be the parabolic subgroup of $G$ equal to 
$\{ g \in G \mid X^\circ \cdot g = X^\circ \}$, and let $N_{P(X)}$ denote its unipotent radical. Recall that we have fixed a base point $x_0\in X^\circ$, for convenience.

\begin{thm}\label{thm:localstructure}
 There is a proper, smooth, $G$-equivariant embedding $X^\bullet \hookrightarrow \ol X$, a Levi\footnote{Of course, there is no distinguished Levi in $P(X)$ abstractly, but the choice of base point $x_0$ sometimes imposes restrictions on the Levi subgroups that work; for example, the derived subgroup of $L$ must stabilize $x_0$. In any case, the assumptions on $X$ in the main body of this paper, that $B$ acts simply transitively on $X^\circ$ and $\Cal V=$ the antidominant cone, imply that any $L\subset P(X)= B$ satisfies the Local Structure Theorem.} $L \subset P(X) \subset G$,  and a smooth toric embedding $\overline{T_X}$ of the quotient $T_X$ of $L$, such that the action map 
 \[ L \times N_{P(X)} \ni (l, u) \mapsto x_0 lu\]
 descends to $T_X$ and extends to an open embedding
 \[ \overline{T_X} \times N_{P(X)} \hookrightarrow \ol X,\]
 whose image is the union of all open $B$-orbits on $\ol X$. Moreover, the support of the fan describing the toric variety $\overline{T_X}$ is the cone $\Cal V$ of invariant valuations.
\end{thm}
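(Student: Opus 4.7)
The plan has two components: first construct the smooth proper toroidal embedding $\ol X$ via Luna--Vust theory, and then extract the local structure from the Local Structure Theorem applied to the complement of the closures of the colors.

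For the construction of $\ol X$: by the Luna--Vust classification (valid in arbitrary characteristic by \cite{KnLV}), normal $G$-equivariant embeddings of $X^\bullet$ are classified by colored fans in $\mf t_X$; such an embedding is proper iff the support of the fan equals the valuation cone $\Cal V$, toroidal iff no color is attached to any cone, and smooth iff the underlying fan is smooth. Since $\Cal V$ is a rational polyhedral cone, I would produce a smooth fan with support $\Cal V$ by the standard toric desingularization, namely by subdividing $\Cal V$ into simplicial subcones whose primitive generators form part of a basis of $\ch\Lambda$. The resulting embedding $X^\bullet \hookrightarrow \ol X$ is smooth, proper and toroidal, with fan supported on $\Cal V$.

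Next I would apply the Local Structure Theorem to the open subset
\[
 \ol X_0 := \ol X \setminus \bigcup_{D \in \Cal D} \ol D,
\]
the complement in $\ol X$ of the closures of the colors of $X^\bullet$. By toroidality, no $G$-stable prime divisor is a color, so $\ol X_0$ is $P(X)$-stable and meets every $G$-orbit in its open $P(X)$-orbit. The Local Structure Theorem (see \cite[Thm.~29.1]{Timashev}) then supplies a Levi decomposition $P(X) = L \ltimes N_{P(X)}$ and a $P(X)$-equivariant isomorphism
\[
 \ol X_0 \;\cong\; N_{P(X)} \times Z,
\]
where $Z$ is the closure in $\ol X_0$ of the $L$-orbit through $x_0$. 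Since $[L,L]$ acts trivially on $X^\circ/N_{P(X)}$, it acts trivially on $Z$; hence $Z$ is a toric embedding $\ol{T_X}$ of $T_X = L/[L,L]$, whose fan is the restriction to $\mf t_X$ of the fan of $\ol X$, and is therefore smooth with support $\Cal V$.

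To identify the image of the open embedding $\ol{T_X}\times N_{P(X)} \hookrightarrow \ol X$ with the union of open $B$-orbits, one observes that on a toroidal embedding every $B$-stable prime divisor is either $G$-stable (and thus contributes a whole boundary $G$-orbit rather than a color) or is the closure of a color of $X^\bullet$; since each $G$-orbit contains a unique open $B$-orbit, $\ol X_0$ is precisely the union of these. The main subtlety throughout is the compatibility of the Levi $L$ with the chosen base point $x_0$, handled by the standard choice of a Levi containing a maximal torus that lies in the stabilizer of a slice through $x_0$; this is automatic in the main setting of the paper (where $T_X = T$, $\Cal V$ is the antidominant chamber, $P(X) = B$ and $L = T$), and the resulting canonical trivialization $\ol X_0 \cong \ol T \times N$ is what directly supplies the section required in Proposition~\ref{prop:noncanon-section}.
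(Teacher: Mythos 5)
Your construction of $\ol X$ via a smooth colored-fan subdivision of $\Cal V$, followed by an application of the Local Structure Theorem to the complement $\ol X_0$ of the color closures, is a correct and clean proof of the stated theorem; it is essentially the standard textbook route (as in Timashev or Brion--Luna--Vust). The paper takes a slightly different tack: rather than invoking the LST as a black box, it sketches Knop's proof of the LST itself, in which the Levi $L$ and the base point $x_0$ are selected together from the data of a sufficiently generic semisimple covector $\xi\in\mathfrak g^*$ in the image of the logarithmic-differential/moment-map construction on $(X\times\Cal B)^\bullet\times\mathfrak t_X^*$. For Theorem~\ref{thm:localstructure} alone, the two arguments are logically interchangeable, and yours is arguably more economical.

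The reason the paper goes through Knop's construction, however, is that the same argument must simultaneously produce the section required for Proposition~\ref{prop:noncanon-section}, which concerns the affine family $\mathscr X\to\ol{T_{X,\mss}}$, not a compactification of a single fiber. Your closing sentence — that the trivialization $\ol X_0\cong\ol T\times N$ ``directly supplies the section required in Proposition~\ref{prop:noncanon-section}'' — is not quite right as written: the LST applied to $\ol X$ is a statement about one spherical variety, while Proposition~\ref{prop:noncanon-section} asks for a $B$-equivariant, $T_X$-equivariant section of $\mathscr X^\circ\to X\sslash N\times\ol{T_{X,\mss}}$ varying over the whole base $\ol{T_{X,\mss}}$. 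To get that from a fiberwise LST you would have to show the resulting identifications fit together algebraically across the family, and that uniformity is exactly what Knop's moment-map construction buys (by applying it to $\mathscr X^\bullet$ and its relative cotangent bundle rather than to each fiber $X_a$ separately). So your proof is fine for the theorem as stated, but the link you draw to Proposition~\ref{prop:noncanon-section} needs the extra family-version argument that the paper's sketch is designed to provide.
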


\begin{rem}
 Note that the embedding that we denoted above by $\ol{T_{X,\mss}}$ is associated to the image of the \emph{opposite} cone $-\Cal V$ modulo $\Cal V\cap (-\Cal V)$. Thus, there is a map from $\overline{T_X}$ to $\ol{T_{X,\mss}}$ only after inversion in $T_X$. We allow ourselves this notational flaw, since $\overline{T_X}$ will not be used beyond this subsection and the next.
\end{rem}

Theorem \ref{thm:localstructure} is the local structure theorem of Brion--Luna--Vust \cite[Th\'eor\`eme 3.5]{BLV}, applied to smooth toroidal compactifications \cite{KnLV}. We outline its proof due to Knop \cite[Theorem 2.3]{KnMotion}, which also applies to Proposition \ref{prop:noncanon-section}. The main issue is how to choose the pair $(x_0,L)$ appropriately. 
Let $\Cal B$ denote the flag variety of $G$, and assume for the moment that no choice of $B$, $x_0$ has been made.
One considers triples $(x,B,\chi) \in X\times\mathcal B\times \mf t_X^*$ such that 
$x$ lies in the open $B$-orbit. Out of these data one constructs elements in the cotangent bundle $T^*X^\bullet$ as follows: If $\chi$ is the differential of a character (also to be denoted by $\chi\in \Lambda_X$), the corresponding cotangent vector is the logarithmic differential $\left.\frac{d f_\chi}{f_\chi}\right|_x$ at $x$ of a rational $B$-eigenfunction with eigencharacter $\chi$, and for general $\chi$ we extend this construction by linearity. This gives rise to a map 
\[ (X\times\mathcal B)^\bullet \times \mf t_X^* \to T^*X^\bullet \]
(where the bullet denotes ``open $G$-orbit'') with dense image. We can also apply this construction to the family $\mathscr X^\bullet$, obtaining vectors in the \emph{relative} cotangent bundle over $\ol{T_{X,\mss}}$. Composing with the moment map $T^*X^\bullet \to \mathfrak g^*$, Knop shows that if we choose a sufficiently generic, semisimple vector $\xi$ in the image, and take $L=\on{Stab}_G(\xi)$ and $x_0$ a point of a triple $(x_0, B, \chi)$ in its fiber, Theorem \ref{thm:localstructure} is satisfied. The same argument applies to prove Proposition \ref{prop:noncanon-section}, as follows: Instead of fixing $x_0$, fix first just the Borel subgroup $B$, and consider all triples $(x,B,\chi)$ over $\xi$, now with $x \in \mathscr X^\circ$ (defined with respect to this Borel). The set of these triples is a 
$T_X$-equivariant section of the map 
$\mathscr X \to \mathscr X\sslash N \cong X\sslash N \times \ol{T_{X,\mss}}$ 
over $T_X\times \ol{T_{X,\mss}}$, where $T_X$ acts diagonally on the latter.

\subsection{The formal loop space}
\label{sect:formalloop}
For any $k$-scheme $X$, define the space of formal arcs by
\[ \msf L^+ X(R) = X(R\tbrac t) \] 
for a test ring $R$. 
It is well-known (cf.~\cite[Proposition 1.2.1]{KV}) that $\msf L^+ X$ is representable by a scheme (of infinite type), which is
the projective limit of the schemes $\msf L^+_n X,\, n \in \mbb N$, 
representing the spaces of $n$-arcs 
$\msf L^+_n X(R) = X(R[t]/t^n)$.
If $X$ is of finite type over $k$, then so is each $\msf L^+_n X$. If $X$ is smooth over $k$, then each $\msf L^+_n X$ is smooth over $k$
and $\msf L^+ X$ is formally smooth over $k$. If $X$ is affine, then so are
$\msf L^+_n X$ and $\msf L^+ X$. 

Define the formal loop space 
$\msf L X(R) = X(R\lbrac t)$. 
If $X$ is affine, then $\msf L X$ is representable by an ind-affine ind-scheme,
and we have a closed embedding $\msf L^+ X \into \msf L X$.

\subsubsection{}
Let $X$ be an affine spherical $G$-variety.
Define 
\[ \msf L^\bullet X := \msf L X \sm \msf L(X\sm X^\bullet), \]
which admits an open embedding into $\msf L X$. 
The $G$-action on $X$ induces a natural action of $\msf L^+ G$ on 
$\msf L X$ and $\msf L^+X, \msf L^\bullet X$ are stable under this action.

\begin{eg} \label{eg:A1}
Let $X = \mbb A^1$ with the scaling $\mbb G_m$-action. Then  
$\msf L^+ X = \mbb A^\infty$, where we consider $\mbb A^\infty = \Spec k[a_0,a_1,\dotsc]$
as the coefficients of infinite Taylor series, and the ind-scheme
$\msf L X = ``\underset{\longrightarrow m}\lim" \Spec k[a_{-m},a_{-m+1},\dotsc]$ considered
as the coefficients of Laurent series.
Let $X^\bullet = \mbb A^1 \sm \{0\}$. Then $\msf L^\bullet X = \msf L X \sm \{0\}$ so $\msf L^+ X \cap \msf L^\bullet X = \mbb A^\infty \sm \{0\}$, whereas $\msf L^+(X^\bullet) = \mbb G_m \times \mbb A^\infty$. 
\end{eg}

\begin{rem} 
The $k$-points of $\msf L^\bullet X$ are in bijection with $X^\bullet(k\lbrac t)$. 
Since $X^\bullet$ is in general not affine, however, $X^\bullet(k\lbrac t)$ does not
always have an ind-scheme structure. Even when $X^\bullet$ is affine, 
$\msf L(X^\bullet)$ may not be isomorphic to $\msf L^\bullet X$, despite having the same sets of $k$-points. 
\end{rem}

\subsubsection{Orbits on the formal loop space} \label{sect:stratLXpoint}

For ease of notation, 
let $\mf o = k\tbrac t$ and $F = k\lbrac t$, so 
$\msf L^+G(k) = G(\mf o),\, \msf L^\bullet X(k) = X^\bullet(F)$.
We review the decomposition of $G(\mf o)$-orbits on $X^\bullet(F)$ due to \cite{LV}.
We present the reformulation of this result found in \cite[Theorem 3.3.1]{GN}. 

\smallskip

Let $X$ be an affine spherical variety, and pick a pair $(x_0, L)$ as in Theorem \ref{thm:localstructure} to write $X^\circ= T_X \times N_{P(X)}$ for its open Borel orbit. 
For a cocharacter $\ch\theta\in \ch\Lambda_X$ we let $t^{\ch\theta} \in T_X(F)$ 
denote the image of the uniformizer $t\in \mf o$ under the map $\ch\theta: \mbb G_m \to T_X$. 
Let $X^\bullet (F)/ G(\mf o)$ denote the \emph{set} of equivalence
classes of $G(\mf o)$-orbits in $X^\bullet (F)$.

\begin{thm}[{\cite{LV}, \cite[Theorem 3.3.1]{GN}}] \label{thm:Gorbits}
\begin{enumerate} 
\item The map 
\begin{equation}
 \ch\theta \mapsto 
X^\bullet(F)_{G:\ch\theta} := x_0 \cdot t^{\ch\theta} G(\mf o) 
\end{equation}
is a bijection of sets $\Cal V \cap \ch\Lambda_X \cong X^\bullet(F)/ G(\mf o)$ (which is independent of the choice of $(x_0,L)$). 
\item This bijection restricts to a bijection $\mf c_X^- 
\cong (X(\mf o)\cap X^\bullet(F))/ G(\mf o)$.
\end{enumerate}
\end{thm}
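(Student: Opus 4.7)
The plan is to use the smooth toroidal compactification $\ol X$ from Theorem~\ref{thm:localstructure} and the valuative criterion of properness to reduce the classification of $G(\mf o)$-orbits to the purely toric classification on $\ol{T_X}$. Given $y \in X^\bullet(F)$, view $y$ as a morphism $\Spec F \to X^\bullet \subset \ol X$. Since $\ol X$ is proper over $k$, this extends uniquely to $\tilde y \colon \Spec \mf o \to \ol X$. The closed point $\tilde y(0) \in \ol X(k)$ lies in some $G(k)$-translate of the open affine chart $U := \ol{T_X} \times N_{P(X)}$ provided by Theorem~\ref{thm:localstructure}. Since the map $G \to \ol X$, $g \mapsto x_0 g$, is smooth on the preimage of the open $G$-orbit through $U$, the formal smoothness of $\msf L^+G$ and the lifting of $k$-points to $\mf o$-points allows us to find $g \in G(\mf o)$ such that $\tilde y \cdot g$ factors through the open set $U$.

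Once inside $U$, the arc decomposes as $(\tilde y_T, \tilde y_N)$ with $\tilde y_T \colon \Spec \mf o \to \ol{T_X}$ and $\tilde y_N \colon \Spec \mf o \to N_{P(X)}$. The $N_{P(X)}(\mf o)$-action on $U$ is (up to the free $T_X$-action on $N_{P(X)}$) just translation in the second factor, so by further translation by some $u \in N_{P(X)}(\mf o) \subset G(\mf o)$ we can assume $\tilde y_N$ is trivial. Thus, modulo $G(\mf o)$, every $F$-point of $X^\bullet$ is represented by an $\mf o$-point of $\ol{T_X}$ landing generically in $T_X \subset \ol{T_X}$. For the toric variety $\ol{T_X}$, whose fan has support $\Cal V$ by Theorem~\ref{thm:localstructure}, the set of $T_X(\mf o)$-orbits on $T_X(F)$ which extend to $\ol{T_X}(\mf o)$ is exactly the lattice $\Cal V \cap \ch\Lambda_X$, with $\ch\theta$ corresponding to the orbit of $t^{\ch\theta}$. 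This will give both surjectivity and the explicit representatives $x_0 \cdot t^{\ch\theta}$.

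For injectivity, suppose $x_0 \cdot t^{\ch\theta_1} = x_0 \cdot t^{\ch\theta_2} \cdot g$ for some $g \in G(\mf o)$. The uniqueness of the extension by properness, together with the $G(\mf o)$-equivariance of $\ol X \hookrightarrow X^\bullet$, shows both sides give the \emph{same} $\mf o$-point of $\ol X$ after extension, so the closed-point images in $\ol{T_X}$ must agree on the toric stratum. Standard toric geometry then forces $\ch\theta_1 = \ch\theta_2$. Independence of the bijection from the choice of $(x_0, L)$ follows because any two choices differ by an element of $B(\mf o)$ (and the identification of $T_X$ with $X^\circ \sslash N$ is canonical). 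For part (ii), note that $t^{\ch\theta}$ extends from $\ol{T_X}$ to an $\mf o$-point of $X\sslash N$ iff $\ch\theta \in \mf c_X$, and the orbit $x_0 t^{\ch\theta} G(\mf o)$ meets $X(\mf o)$ iff its generic fiber in $X\sslash N$ does, i.e., iff $\ch\theta \in \Cal C_0(X) \cap \ch\Lambda_X = \mf c_X$; intersecting with $\Cal V$ gives $\mf c_X^-$.

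The main obstacle is the reduction of Step~2 from a $G(k)$-translation (which exists trivially, since the translates of $U$ cover $\ol X$) to a $G(\mf o)$-translation (needed to act on the entire arc at once). This is where one must combine formal smoothness of the arc space of the smooth open $G$-orbit through $U$ with a Hensel-type lifting of the $k$-point $g$ to an $\mf o$-point. A secondary, more delicate point is checking that on overlaps of $G$-translates of $U$, the toric coordinate $\tilde y_T$ is well-defined up to $T_X(\mf o)$-action so that $\ch\theta$ is intrinsic; this uses Proposition~\ref{prop:noncanon-section} applied on the level of the boundary strata of $\ol X$.
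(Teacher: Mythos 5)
The paper does not give its own proof of this theorem; it cites \cite{LV} and \cite[Theorem 3.3.1]{GN}. So your proposal can only be judged on its own terms against the literature's standard argument, which it roughly parallels (reduction to the toric slice via the Brion--Luna--Vust local structure theorem, then toric classification).

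Your account of the forward direction (surjectivity and the explicit representatives $x_0 t^{\ch\theta}$) is essentially correct, but you misidentify where the difficulty lies. The "main obstacle" you flag --- lifting the $G(k)$-translate to a $G(\mf o)$-translate --- is actually trivial: simply take the \emph{constant} element $g_0 \in G(k) \subset G(\mf o)$. Since $U$ is open, $\Spec\mf o$ is local, and $\tilde y(0)\cdot g_0 \in U(k)$, the entire arc $\tilde y \cdot g_0$ automatically lands in $U$. No formal smoothness of $\msf L^+G$ or Hensel lifting is needed here. After that, killing the $N_{P(X)}$-coordinate via $N_{P(X)}(\mf o)$-translation and classifying $T_X(\mf o)$-orbits on $\ol{T_X}(\mf o)\cap T_X(F)$ by the fan with support $\Cal V$ is fine.

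The actual gap is in injectivity. You write that uniqueness of the valuative extension ``shows both sides give the same $\mf o$-point of $\ol X$ after extension, so the closed-point images in $\ol{T_X}$ must agree.'' This does not follow: the uniqueness gives $\tilde y_1 = \tilde y_2 \cdot g$ with $g\in G(\mf o)$, so $\tilde y_1$ and $\tilde y_2$ are merely $G(\mf o)$-conjugate, not equal, and the conjugation can move the arc within the slice $\ol{T_X}\times\{e\}$ (or rather, both $\tilde y_2$ and $\tilde y_2\cdot g$ may lie in the slice with \emph{a priori} different toric valuations). What you actually need is a $G(\mf o)$-\emph{invariant} of the arc that recovers $\ch\theta$. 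The standard way is to observe that for each dominant $\lambda\in \mf c_X^\vee$ the $G$-submodule $V_\lambda\subset k[X]$ generated by the $B$-eigenfunction $f_\lambda$ is intrinsic, and the quantity $\min_{0\neq f\in V_\lambda} v_t(f(y))$ is $G(\mf o)$-invariant and equals $\brac{\lambda,\ch\theta}$ for $y=x_0 t^{\ch\theta}$ with $\ch\theta$ antidominant (this is, in spirit, what the paper's own Lemma~\ref{lem:theta-orbit-identification} uses via the family $\mathscr X$ and the open embedding into $\msf L^+(\mathscr X^\bullet)$). Without this, the "standard toric geometry" you invoke is not applicable because you are not comparing $T_X(\mf o)$-conjugate arcs. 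The "secondary, more delicate point" you mention (well-definedness of $\tilde y_T$ on overlaps of charts) is the same issue wearing different clothes, and you also leave it unproved. This injectivity step is the genuinely nontrivial part of the theorem, and as written the proposal does not establish it.
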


The theorem can be viewed as a generalization of the Cartan decomposition.
Define $\msf L^{\ch\theta} X$ to be the $\msf L^+G$-orbit of $x_0 t^{\ch\theta} \in \msf L^\bullet X$.

\begin{rem} \label{rem:C0V}
Observe that if $\ch\theta\in \mf c_X^-$, then 
in particular $\ch\theta\in \Lambda^-_X$ is antidominant (as a weight for the dual group $\ch G_X$ of $X$). 
Therefore, $w\ch\theta- \ch\theta\in \Lambda^\pos_X \subset \mf c_X$ for any $w\in W_X$, and hence $w\ch\theta\in \mf c_X$.
We suggest that the monoid $\mf c_X^- = \Cal C_0(X) \cap \Cal V \cap \ch\Lambda_X$ should be
thought of as the set of $W_X$-orbits in $\ch\Lambda_X$ that are entirely
contained in the cone $\Cal C_0(X)$.
\end{rem}

\begin{prop} \label{prop:smoothfiberstrata}
If $k$ has positive characteristic, 
assume that the stabilizer of $B$ acting on $x_0 \in X^\circ$ is a smooth subgroup. 
For any $\ch\theta \in \Cal V \cap \ch\Lambda_X$, 
the $\msf L^+G$-orbit $\msf L^{\ch\theta} X$ is a formally smooth $k$-scheme
with $k$-points equal to $X^\bullet(F)_{G:\ch\theta}$, and 
$\msf L^{\ch\theta} X$ is open in its closure in $\msf L X$. 
\end{prop}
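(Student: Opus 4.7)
The plan is to realize $\msf L^{\ch\theta}X$ as a quotient of $\msf L^+G$ by the stabilizer of $x_0 t^{\ch\theta}$, descend to a quotient at a finite jet level, and conclude formal smoothness there. The identification of $k$-points with $X^\bullet(F)_{G:\ch\theta}$ is immediate from the definition $\msf L^{\ch\theta}X(k) = x_0 t^{\ch\theta}\cdot G(\mf o)$ together with Theorem~\ref{thm:Gorbits}(i). As a preliminary reduction, I would first deduce that the hypothesis implies the $G$-stabilizer $H := \on{Stab}_G(x_0)$ is smooth: since $x_0 B \subset X^\circ$ is dense open in $X^\bullet$ and $X^\circ \cong B/(B\cap H)$, the smoothness of $B\cap H$ gives that $X^\circ$ is smooth, whence by $G$-homogeneity $X^\bullet$ is smooth and $H$ is smooth as well.

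Next I would consider the orbit map $a_{\ch\theta}: \msf L^+G \to \msf L X$, $g \mapsto x_0 t^{\ch\theta} g$, and its stabilizer $S_{\ch\theta}$, which as a functor assigns to $R$ the set of $g \in G(R\tbrac t)$ such that $t^{\ch\theta}g t^{-\ch\theta} \in H(R\lbrac t)$. The key observation is that for $N$ sufficiently large depending on $\ch\theta$, the congruence subgroup $\msf L^+_{\ge N}G$ is contained in $S_{\ch\theta}$: conjugation by $t^{\ch\theta}$ shifts $t$-adic valuations by a bounded amount, so for large $N$ the element $t^{\ch\theta} g t^{-\ch\theta}$ already lies in $G(R\tbrac t)$ arbitrarily close to the identity, and the smoothness of $H \subset G$ then forces such elements into $H(R\tbrac t)$ via a formal retraction (after a further inflation of $N$ if necessary). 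Consequently $a_{\ch\theta}$ factors through the finite-dimensional jet scheme $\msf L^+_N G$, yielding an identification $\msf L^{\ch\theta}X \cong \msf L^+_N G / (S_{\ch\theta}/\msf L^+_{\ge N}G)$.

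A tangent-space calculation for $\mathrm{d}a_{\ch\theta}$, again exploiting smoothness of $H$ and hence of $\msf L^+_N H$, then shows that $S_{\ch\theta}/\msf L^+_{\ge N}G$ is a smooth closed subgroup of the smooth finite-dimensional group scheme $\msf L^+_N G$; the quotient is therefore a smooth finite-dimensional $k$-scheme, and pulling back along $\msf L^+G \to \msf L^+_N G$ gives formal smoothness of $\msf L^{\ch\theta}X$. Local closedness in $\msf L X$ follows from Chevalley constructibility applied at finite level: $\msf L^{\ch\theta}X$ is the image of the smooth morphism $\msf L^+_N G \to \msf L X$, hence constructible, and by $\msf L^+G$-equivariance it is a union of orbits of a smooth group, hence open in its closure. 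The main obstacle is the smoothness verification for $S_{\ch\theta}$ at finite level in positive characteristic, where the assumption on smoothness of the $B$-stabilizer is indispensable and a careful analysis along the lines of \cite{GN} is required.
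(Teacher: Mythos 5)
Your proposal contains a fundamental error that invalidates the central step, and the paper's proof proceeds along quite different lines.

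The key claim in your argument is that for $N$ large (depending on $\ch\theta$), the congruence subgroup $\msf L^+_{\ge N}G$ is contained in the stabilizer $S_{\ch\theta}$, so that $\msf L^{\ch\theta}X$ factors through a finite jet level $\msf L^+_N G$. This is false. Already for $\ch\theta = 0$ the stabilizer is $S_0 = \msf L^+H$, which certainly does not contain any congruence subgroup of $\msf L^+G$ when $H \neq G$; more generally the orbit $\msf L^{\ch\theta}X$ is an open piece of the arc space of $X$ and is genuinely infinite-dimensional, not a finite-dimensional homogeneous scheme. The intuition that an element $t^{\ch\theta}gt^{-\ch\theta}$ which is $t$-adically ``close to the identity'' must lie in $H$ is simply wrong: closeness to the identity in $G$ does not push an element into a subgroup $H$, no matter how smooth $H$ is — a ``formal retraction'' of $G$ onto $H$ compatible with the group structure does not exist. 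Consequently the finite-level quotient $\msf L^+_N G / (S_{\ch\theta}/\msf L^+_{\ge N}G)$, the tangent-space calculation for $\mathrm d a_{\ch\theta}$ at finite level, and the Chevalley constructibility argument all collapse, since none of these objects exist as claimed. As a side remark, your preliminary deduction that $H$ is smooth from $B\cap H$ being smooth is also unjustified (the smoothness of the homogeneous space $X^\bullet = H\backslash G$ holds regardless of whether $H$ is smooth and says nothing about $H$), though this particular point is not what the paper's proof needs.

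What the paper actually does is orthogonal to your approach at both stages. For formal smoothness, the paper writes $\msf L^{\ch\theta}X \cong \varprojlim_n \msf L^+_n G / S_n$ where $S_n$ is the scheme-theoretic image of $S$ at level $n$; each finite-level quotient $\msf L^+_n G/S_n$ is smooth (as a quotient of a smooth finite-type group), and the transition maps $\msf L^+_m G/S_m \to \msf L^+_n G/S_n$ are smooth affine because their fibers are quotients by smooth unipotent congruence kernels. This yields formal smoothness of the projective limit without ever asserting that the orbit stabilizes at a finite jet level (it does not). For local closedness — the genuinely delicate part, and the only place where the hypothesis on smoothness of $B\cap H$ enters — the paper constructs the map $i_{\ch\theta}: \msf L X \to \msf L\mathscr X$ into the loop space of the affine degeneration family and proves (Lemma~\ref{lem:theta-orbit-identification}) that $\msf L^{\ch\theta}X$ is precisely the preimage of the open subscheme $\msf L^+(\mathscr X^\bullet) \subset \msf L\mathscr X$ under $i_{\ch\theta}$, hence locally closed; the smoothness of $B\cap H$ is needed there to guarantee that $\msf L^+(G\times\ol{T_{X,\mss}}) \to \msf L^+(\mathscr X^\bullet)$ is surjective. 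There is no finite-type Chevalley argument available, since $\msf L X$ is not of finite type.
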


Therefore, the collection of  
$\msf L^{\ch\theta}X,\, \ch\theta \in \Cal V \cap \ch\Lambda_X$, 
form a stratification of $\msf L^\bullet X$, in the sense that the strata are disjoint
and contain all the $k$-points.

\begin{proof}
Let $S \subset \msf L^+G$ denote the stabilizer subgroup of $x_0 t^{\ch\theta}
\in \msf L X(k)$. 
\emph{A priori}, the $\msf L^+G$-orbit $\msf L^{\ch\theta} X$ is 
defined as the fpqc sheaf quotient $\msf L^+G/S$. 
For $n \in \mbb N$, let $S_n$ denote the scheme-theoretic image 
of $S$ under the projection $\msf L^+G \to \msf L^+_n G$. 
For $m > n$, the transition map 
\[ \msf L^+_m G / S_m \to \msf L^+_n G / S_n = \msf L^+_m G / (S_n \cdot \ker(\msf L^+_m G \to \msf L^+_n G)) \]
is smooth affine since $\ker(\msf L^+_m G \to \msf L^+_n G)$ is smooth and unipotent. 
The schemes $\msf L^+_n G/S_n$ are also smooth since $\msf L^+_n G$ is smooth. 
Therefore $\msf L^{\ch\theta} X \cong \msf L^+ G / S \cong \varprojlim 
\msf L^+_n G / S_n$ is representable by a formally smooth scheme.

The fact that $\msf L^{\ch\theta} X \to \msf L X$ is open in its closure
will follow from Lemma~\ref{lem:theta-orbit-identification} below.
\end{proof}

\subsubsection{}
Recall the family $\mathscr X$ introduced in \S \ref{sect:affine-degeneration}. We define a map of formal loop spaces 
\begin{equation}\label{e:ithetaLX} 
    i_{\ch\theta} : \msf L X \to \msf L \mathscr X 
\end{equation}
as follows:
By definition, we have a map $p: X \times T_X \to \mathscr X$. 
For a test ring $R$ and $\gamma \in X( R \lbrac t)$, 
define $i_{\ch\theta}(\gamma) = p( \gamma, t^{-\ch\theta} ) \in \mathscr X(R \lbrac t)$,
where we are considering $t^{-\ch\theta} \in T_X(R \lbrac t)$.

\begin{lem} \label{lem:itheta-lands-open}
The point $i_{\ch\theta}(x_0 t^{\ch\theta}) \in \msf L \mathscr X(k)$ is contained in 
$\msf L^+( \mathscr X^\circ )$. 
\end{lem}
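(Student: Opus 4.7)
\textbf{Plan for Lemma~\ref{lem:itheta-lands-open}.}
Since $\mathscr X^\circ \subset \mathscr X$ is open, it suffices to show (a) that $i_{\ch\theta}(x_0 t^{\ch\theta})$, regarded as an $F$-point of the open subscheme $X \times T_X \subset \mathscr X$, extends to an $\mf o$-point of $\mathscr X$, and (b) that the resulting closed point lies in $\mathscr X^\circ$. For (a), we test $\mf o$-integrality on the generators $f \otimes e^\lambda$ of $k[\mathscr X] = \bigoplus_\lambda \Cal F_\lambda \otimes e^\lambda$. Since $(f \otimes e^\lambda)(x_0 t^{\ch\theta}, t^{-\ch\theta}) = f(x_0 t^{\ch\theta}) \cdot t^{-\brac{\lambda, \ch\theta}}$, the task reduces to proving
\[
v_t\bigl(f(x_0 t^{\ch\theta})\bigr) \ge \brac{\lambda, \ch\theta} \quad \text{for every } f \in \Cal F_\lambda.
\]

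Decomposing $f = \sum_\mu f_\mu$ into $G$-isotypic components, the defining condition of $\Cal F_\lambda$ forces each highest weight $\mu$ appearing to satisfy $\brac{\lambda - \mu, \Cal V} \le 0$, whence $\brac{\mu, \ch\theta} \ge \brac{\lambda, \ch\theta}$ by pairing with $\ch\theta \in \Cal V$. It therefore suffices to establish the bound $v_t(f_\mu(x_0 t^{\ch\theta})) \ge \brac{\mu, \ch\theta}$ for each isotypic summand. For this, let $v^G_{\ch\theta} \in \Cal V$ denote the $G$-invariant valuation on $k(X)$ corresponding to $\ch\theta$ under the Luna--Vust parametrization, so that $v^G_{\ch\theta}(h) = \brac{\mu, \ch\theta}$ on any $B$-eigenfunction $h$ of weight $\mu$. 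The $\mu$-isotypic piece $V^\mu \otimes k^{n_\mu} \subset k[X]$ is spanned over $k$ by $G$-translates of a highest-weight vector, on each of which $v^G_{\ch\theta}$ takes the value $\brac{\mu, \ch\theta}$ by $G$-invariance; hence $v^G_{\ch\theta}(f_\mu) \ge \brac{\mu, \ch\theta}$. The arc $\gamma = x_0 t^{\ch\theta}$ is a specific $k$-point of the $G(\mf o)$-orbit $\msf L^{\ch\theta} X$, whose generic point realizes $v^G_{\ch\theta}$; upper semi-continuity of the $t$-adic order of $f_\mu$ along arcs in this orbit yields the specialization inequality $v_t(f_\mu(\gamma)) \ge v^G_{\ch\theta}(f_\mu) \ge \brac{\mu, \ch\theta}$, completing~(a).

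For (b), the canonical isomorphism \eqref{e:scrXsslashN} identifies $\mathscr X \sslash N$ with $X\sslash N \times \ol{T_{X,\mss}}$ via $h_\mu e^\lambda \mapsto h_\mu \otimes e^{\lambda-\mu}$ on $N$-invariants. A direct evaluation on such generators shows that the first coordinate of the projected arc is the constant arc at the image of $x_0$ in $T_X \subset X\sslash N$ (the twist in the identification cancels the $t^{\brac{\mu,\ch\theta}}$ coming from $h_\mu(x_0 t^{\ch\theta})$), while the second coordinate is $t^{-\ch\theta}$, which extends to an $\mf o$-point of $\ol{T_{X,\mss}}$ because $-\ch\theta \in -\Cal V$ lies in the cone supporting the fan of $\ol{T_{X,\mss}}$. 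Both coordinates thus lie in the open $T_X \times \ol{T_{X,\mss}}$, whose preimage under $\mathscr X \to \mathscr X \sslash N$ is exactly $\mathscr X^\circ$. Since $\mathscr X^\circ$ is open in $\mathscr X$, the closed point of the arc lying in $\mathscr X^\circ$ forces the whole arc to lie in $\msf L^+(\mathscr X^\circ)$. The main subtlety lies in the specialization inequality of~(a): its rigorous justification amounts to identifying $v^G_{\ch\theta}$ with the valuation defined by the generic point of $\msf L^{\ch\theta} X$ and observing that specialization to a $k$-point within this orbit can only increase the order of vanishing of any function.
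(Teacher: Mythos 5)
The paper's proof is much more direct than your approach: it invokes the non-canonical $T\times T_X$-equivariant section $s : T_X\times\ol{T_{X,\mss}}\to\mathscr X^\circ$ with $s(1,1)=x_0$ from Proposition~\ref{prop:noncanon-section}, and then simply computes
\[
 i_{\ch\theta}(x_0 t^{\ch\theta}) \;=\; p(x_0 t^{\ch\theta},\,t^{-\ch\theta}) \;=\; s\bigl(t^{\ch\theta}\cdot t^{-\ch\theta},\, t^{-\ch\theta}\bigr) \;=\; s(1, t^{-\ch\theta}),
\]
which lands in $\mathscr X^\circ(k\tbrac t)$ because $-\ch\theta\in-\Cal V$ lies in the cone supporting $\ol{T_{X,\mss}}$. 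That one-line equivariance computation settles both "extends to an $\mf o$-point" and "lands in $\mathscr X^\circ$" simultaneously — there is no separate integrality estimate to do.

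Your proposal replaces this by a direct valuation estimate on the Rees algebra $k[\mathscr X]=\bigoplus_\lambda \Cal F_\lambda\otimes e^\lambda$, reducing part~(a) to the bound $v_t(f_\mu(x_0 t^{\ch\theta}))\ge\brac{\mu,\ch\theta}$ on isotypic components. The crucial step — what you call the "specialization inequality" $v_t(f_\mu(\gamma))\ge v^G_{\ch\theta}(f_\mu)$ — is exactly where the argument is not yet a proof. You invoke "upper semi-continuity of the $t$-adic order along arcs in this orbit" and an identification of $v^G_{\ch\theta}$ with "the valuation defined by the generic point of $\msf L^{\ch\theta}X$," but neither is established. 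The identification of the $t$-adic valuation realized by the generic point of the infinite-dimensional orbit $\msf L^{\ch\theta}X$ with the Luna--Vust invariant valuation $v^G_{\ch\theta}$ is a real theorem, not a triviality — it is essentially part of what goes into Theorem~\ref{thm:Gorbits} — and the semicontinuity claim also requires a proof at the level of ind-schemes, not just an appeal to intuition. You acknowledge this as "the main subtlety," which is an honest assessment: as written, the heart of step~(a) is asserted, not proven. Part~(b) of your argument (the projection to $\mathscr X\sslash N$ lands in $T_X\times\ol{T_{X,\mss}}$, using the identification \eqref{e:scrXsslashN} and the fact that $-\ch\theta$ lies in the dual cone) is correct, but it does not by itself imply integrality of the full arc, so the gap in~(a) is load-bearing. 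Note also that Proposition~\ref{prop:noncanon-section} is proved in the paper precisely to avoid this kind of valuation bookkeeping, so before working this hard you should ask whether a section is available; here it is.
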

\begin{proof}
By Proposition~\ref{prop:noncanon-section} (and the ensuing discussion), 
there exists a non-canonical section $s: T_X \times \ol{T_{X,\mss}} \to \mathscr X^\circ$
such that $s(1,1) = x_0$. 
If we choose a splitting $T \into G$ contained in the Levi $L$ from Theorem~\ref{thm:localstructure}, then the section $s$ is $T \times T_X$-equivariant, where 
$T_X$ acts diagonally on $T_X \times \ol{T_{X,\mss}}$. 
Thus on $k\lbrac t$-points, 
we have $i_{\ch\theta}(x_0 t^{\ch\theta}) = p(x_0 t^{\ch\theta}, t^{-\ch\theta})
= s(t^{\ch\theta}\cdot t^{-\ch\theta}, t^{-\ch\theta}) = s(1, t^{-\ch\theta})$,
which lies in $\mathscr X^\circ (k \tbrac t)$
since $-\ch\theta\in -\Cal V$. 
\end{proof}

The map $i_{\ch\theta}$ is $\msf L G$-equivariant. 
As a consequence of the lemma, we see that the $\msf L^+G$-orbit of
$i_{\ch\theta}(x_0 t^{\ch\theta})$ is contained in $\msf L^+(\mathscr X^\bullet)$. 
Thus
$i_{\ch\theta}$ restricts to a map $\msf L^{\ch\theta}X \to \msf L^+(\mathscr X^\bullet)$. 

We have a closed embedding $\msf L^+ \mathscr X \into \msf L \mathscr X$
and an open embedding $\msf L^+(\mathscr X^\bullet) \into \msf L^+\mathscr X$. 

\begin{lem}  \label{lem:theta-orbit-identification}
Assume that the stabilizer of $B$ acting on $x_0 \in X^\circ$ is a smooth subgroup. 
Then the map $i_{\ch\theta}$ induces an isomorphism 
\[ \msf L^{\ch\theta} X \overset\sim\to \msf L X \xt_{i_{\ch\theta}, \msf L \mathscr X} \msf L^+(\mathscr X^\bullet).
\]
\end{lem}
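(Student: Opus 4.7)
The plan is to construct a canonical comparison morphism from the $\msf L^+G$-equivariance of $i_{\ch\theta}$, verify it is a bijection on $k$-points using Theorem~\ref{thm:Gorbits} and the explicit section from Proposition~\ref{prop:noncanon-section}, and finally upgrade the $k$-point bijection to a scheme-theoretic isomorphism using formal smoothness.

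Since $p$ is $G$-equivariant and $t^{-\ch\theta}$ is $G$-fixed, the morphism $i_{\ch\theta}$ is $\msf L G$-equivariant. Combining this with Lemma~\ref{lem:itheta-lands-open} and the fact that $\msf L^{\ch\theta}X$ is the $\msf L^+G$-orbit of $x_0 t^{\ch\theta}$, together with the $\msf L^+G$-stability of $\msf L^+(\mathscr X^\bullet)$, produces the desired morphism
\[ \phi: \msf L^{\ch\theta}X \to \msf L X \xt_{i_{\ch\theta}, \msf L \mathscr X} \msf L^+(\mathscr X^\bullet). \]

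For the bijection on $k$-points, observe first that since $X \sm X^\bullet$ is a closed $G\xt T_X$-invariant subvariety of $X$, the Rees map $p$ sends $(X\sm X^\bullet)\xt T_X$ into the closed $G\xt T_X$-invariant subvariety $\mathscr X \sm \mathscr X^\bullet$. Hence $i_{\ch\theta}$ sends $\msf L X \sm \msf L^\bullet X$ into $\msf L \mathscr X \sm \msf L\mathscr X^\bullet$, which is disjoint from $\msf L^+(\mathscr X^\bullet)$, forcing the RHS to be contained in $\msf L^\bullet X$. By Theorem~\ref{thm:Gorbits} and the $\msf L^+G$-equivariance of $\phi$, the remaining task is to show that $i_{\ch\theta}(x_0 t^{\ch\theta'}) \notin \msf L^+(\mathscr X^\bullet)$ whenever $\ch\theta' \in \Cal V \cap \ch\Lambda_X$ is distinct from $\ch\theta$. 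Using the $T\xt T_X$-equivariant section $s$ of Proposition~\ref{prop:noncanon-section}, the same computation as in the proof of Lemma~\ref{lem:itheta-lands-open} extends to give $i_{\ch\theta}(x_0 t^{\ch\theta'}) = s(t^{\ch\theta'-\ch\theta}, t^{-\ch\theta})$; composing with the projection $\mathscr X \to \mathscr X\sslash N \cong X\sslash N \xt \ol{T_{X,\mss}}$ yields the $F$-point $(t^{\ch\theta'-\ch\theta}, t^{-\ch\theta}) \in X\sslash N(F) \xt \ol{T_{X,\mss}}(\mf o)$. If $\ch\theta'-\ch\theta \notin \mf c_X$, this is not an arc in $X\sslash N$ and we are done. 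In the remaining case $\ch\theta'-\ch\theta \in \mf c_X \sm 0$, the arc extends in $\mathscr X$, but the special fiber at $t=0$ projects to a point in the boundary $X\sslash N \sm T_X$; a fiber-by-fiber application of the local structure theorem (Theorem~\ref{thm:localstructure}) to $\mathscr X \to \ol{T_{X,\mss}}$ shows that the corresponding limit point falls outside $\mathscr X^\bullet$, ruling out this case.

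To upgrade to a scheme-theoretic isomorphism: $\phi$ is $\msf L^+G$-equivariant with formally smooth source (Proposition~\ref{prop:smoothfiberstrata}) and bijective on $k$-points; the stabilizer of $x_0 t^{\ch\theta}$ in the RHS coincides with its stabilizer in the LHS (since the LHS embeds into the RHS through this point), so $\phi$ identifies $\msf L^{\ch\theta}X$ with the $\msf L^+G$-orbit of the base point in the RHS, and by the bijection this orbit exhausts the RHS. The main obstacle, as indicated above, will be the case $\ch\theta'-\ch\theta \in \mf c_X \sm 0$ of the bijectivity step: one must rule out extensions of the arc passing through non-open $G$-orbits of the boundary fibers of $\mathscr X \to \ol{T_{X,\mss}}$, which is a combinatorial-geometric argument combining the local structure theorem on $\mathscr X$-fibers with the strict convexity of $\mf c_X$ relative to the cone $\Cal V$.
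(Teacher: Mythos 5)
Your proposal takes a genuinely different route from the paper, but it has two gaps, one of which you acknowledge yourself.

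The paper's proof avoids case analysis entirely. It uses Proposition~\ref{prop:noncanon-section} together with the identification $(\mathscr X \times \Cal B)^\bullet \cong (X\times\Cal B)^\bullet \times \ol{T_{X,\mss}}$ and the choice of base point to construct a $G$-equivariant smooth surjection $G\times\ol{T_{X,\mss}} \to \mathscr X^\bullet$. Because $\on{Stab}_B(x_0)$ is smooth, this induces a surjection on arc spaces $\msf L^+(G\times\ol{T_{X,\mss}}) \to \msf L^+(\mathscr X^\bullet)$; and since $i_{\ch\theta}$ composed with the projection to $\msf L\ol{T_{X,\mss}}$ is constant, one gets a surjection $\msf L^+G \to \msf LX \xt_{\msf L\mathscr X}\msf L^+(\mathscr X^\bullet)$. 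Combined with the injectivity of $i_{\ch\theta}$ on $S$-points, this shows the fiber product is a single $\msf L^+G$-orbit, i.e.\ it equals $\msf L^{\ch\theta}X$ as a functor, with no reference to $k$-points.

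The gaps in your approach: First, the case $\ch\theta'-\ch\theta \in \mf c_X \sm 0$. You correctly reduce to showing $i_{\ch\theta}(x_0 t^{\ch\theta'})$ is not an arc in $\mathscr X^\bullet$, but the argument you sketch is incomplete. The section $s$ is only defined on $T_X\times\ol{T_{X,\mss}}$, so $s(t^{\ch\theta'-\ch\theta}, t^{-\ch\theta})$ has no $s$-defined limit at $t=0$; the limit exists in $\mathscr X$ as an arc, and you must show it lands in the complement of the open subvariety $\mathscr X^\bullet$. Projecting to $\mathscr X/\!\!/N$ shows the limit has nontrivial $N$-valuation, but this does not by itself place the limit outside $\mathscr X^\bullet$ (points of $X_a^\bullet$ can certainly have image in the boundary of $X/\!\!/N$). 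You acknowledge this is ``the main obstacle,'' and indeed it would require a further argument that you have not supplied; the paper's surjection argument sidesteps this entirely.

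Second, the passage from $k$-point bijection to scheme isomorphism does not work as stated. A $k$-point bijection between formally smooth infinite-type schemes, even with one side equivariantly identified as an orbit, does not yield an isomorphism; you would additionally need to know the fiber product on the right is a single $\msf L^+G$-orbit \emph{as a scheme} (equivalently, that the orbit of $i_{\ch\theta}(x_0 t^{\ch\theta})$ is all of it as a functor on test schemes, not just on $\Spec k$). That is exactly the content the paper establishes directly with its surjection $\msf L^+G\to \msf LX\xt_{\msf L\mathscr X}\msf L^+(\mathscr X^\bullet)$, and it is also the only place the smoothness hypothesis on $\on{Stab}_B(x_0)$ is actually used (to get infinitesimal lifting along the smooth cover $G\times\ol{T_{X,\mss}} \to\mathscr X^\bullet$). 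Your version invokes the smoothness hypothesis only through the borrowed Proposition~\ref{prop:smoothfiberstrata}, and does not feed it into the surjectivity question where it is needed.
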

\begin{proof}
Since $i_{\ch\theta} : \msf L X \to \msf L \mathscr X$ is injective on $S$-points,
it suffices to show that 
$\msf L X \xt_{i_{\ch\theta}, \msf L \mathscr X} \msf L^+(\mathscr X^\bullet)$
is the $\msf L^+G$-orbit of $i_{\ch\theta}(x_0 t^{\ch\theta}) \in \msf L^+(\mathscr X^\bullet)(k)$.

Let $\Cal B = G/B$ denote the flag variety, and let
$(\mathscr X \times \Cal B)^\bullet \subset \mathscr X \times \Cal B$ denote 
the $G$-stable open subvariety 
consisting of all points $(x,\tilde B)$ where $x \in \mathscr X$ in the open $\tilde B$-orbit.
We have a smooth surjection $(\mathscr X\times \Cal B)^\bullet \to \mathscr X^\bullet$ by
first projection. 
On the other hand, Proposition~\ref{prop:noncanon-section} 
gives a non-canonical $G$-equivariant isomorphism 
$\mathscr X^\circ \xt^B G \cong (X^\circ \xt^B G) \times \ol{T_{X,\mss}}$. 
The action map and second projection induce an isomorphism 
$\mathscr X^\circ \xt^B G \cong (\mathscr X \times \Cal B)^\bullet$ and 
similarly for $X^\circ \xt^B G$. 
Thus there is a $G$-equivariant
isomorphism 
\[ (\mathscr X\times \Cal B)^\bullet \cong (X \times \Cal B)^\bullet \times \ol{T_{X,\mss}} \] 
over the base $\ol{T_{X,\mss}}$. 
The choice of base point $(x_0,B) \in (X \times \Cal B)^\bullet$ gives
an isomorphism $G/\textnormal{Stab}_B(x_0) \cong (X \times \Cal B)^\bullet$. 
Thus we have a $G$-equivariant smooth surjection 
$G \times \ol{T_{X,\mss}} \to \mathscr X^\bullet$ which induces
a surjection 
\begin{equation} \label{e:arc-family-orbit}
    \msf L^+( G \times \ol{T_{X,\mss}} ) \to \msf L^+ (\mathscr X^\bullet)
\end{equation}
because $\on{Stab}_B(x_0)$ is assumed to be smooth. 
Since the composition 
$\msf L X \overset{i_{\ch\theta}}\to \msf L \mathscr X \to \msf L \ol{T_{X,\mss}}$ 
sends everything to the image of $t^{-\ch\theta}$ in $\msf L\ol{T_{X,\mss}}(k)$, 
we deduce that \eqref{e:arc-family-orbit} induces a 
surjection 
$\msf L^+G \to \msf L X \xt_{i_{\ch\theta}, \msf L \mathscr X} \msf L^+(\mathscr X^\bullet)$.
Thus the latter fiber product is a single $\msf L^+G$-orbit.
\end{proof}

\subsubsection{}
In this subsection we will use properties of toroidal compactifications of 
$X$. We refer the reader to \cite[\S 2.3-2.5]{SV}, \cite{KnLV}, \cite[\S 8]{GN} for an overview. 

Let $\ol X$ denote a complete, smooth toroidal embedding of $X^\bullet=H\bs G$ 
(the embedding is not 
simple if $N_G(H)/H$ is not finite). 
For any $\ch\theta \in \Cal V \cap \ch\Lambda_X$, the point 
$x_0 \cdot t^{\ch\theta} \in X^\bullet(F) \subset \ol X(F)$ defines
an $\mf o$-point of $\ol X$ by the valuative criterion of properness. 
In particular, we can take the limit as $t\to 0$ to get a point $x_{\ch\theta}
:= \lim_{t\to 0} x_0 \cdot t^{\ch\theta} \in\ol X(k)$.
Let $Z \subset \ol X$ denote the $G$-orbit of $x_{\ch\theta}$. 

Let $\Delta_X$ denote the set of normalized spherical roots of $X$.
Equivalently, $\Delta_X$ is the set of simple coroots of $\ch G_X$. 
Let $I \subset \Delta_X$ denote the set of spherical roots $\sigma$ 
such that $\brac{\sigma, \ch\theta}=0$. In the language of \cite[\S 2.3.6]{SV}, 
the orbit $Z$ ``belongs to $I$-infinity''. Let $X_I^\bullet = H_I\bs G$ denote 
the open $G$-orbit on the normal bundle $N_Z \ol X$ of $Z$ in $\ol X$; 
this is called a \emph{boundary degeneration} of $X$. 
As the notation suggests, $X_I^\bullet$ and the conjugacy class of $H_I$ depend only on the subset $I$
and not $\ch\theta$ (see \cite[Proposition 2.5.3]{SV}). We call $H_I$ a ``satellite'' of $H$,
following the terminology of \cite{BM}.

\begin{lem} \label{lem:H_I-conn}
Assume that $B$ acts simply transitively on $X^\circ$. 
Then the satellite subgroup $H_I \subset G$ is connected and smooth, for any subset $I\subset \Delta_X$.
\end{lem}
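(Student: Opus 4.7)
The plan is to realize $H_I$ as a fiber of a smooth group scheme $\mathscr H$ over $\ol{T_{X,\mss}}$ built from the affine degeneration $\mathscr X$ of \S\ref{sect:affine-degeneration}, and then to deduce connectedness by a $\mathbb G_m$-contraction argument.

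First, I would identify $X_I^\bullet$ with the open $G$-orbit of a fiber $X_a$ of $\mathscr X \to \ol{T_{X,\mss}}$, for $a \in \ol{T_{X,\mss}}$ corresponding to the face of $\Cal V$ determined by $I$. Both the boundary degeneration $X_I$, obtained from the normal bundle of an orbit in a smooth toroidal compactification of $X^\bullet$, and the Rees-theoretic fiber $X_a$ are controlled by the same filtration $\Cal F_\lambda$, so they agree on open $G$-orbits (cf.~\cite[\S 2.5]{SV}, \cite{Pop86}). Thus $H_I$ becomes the stabilizer in $G$ of a point $(x_0, a) \in \mathscr X^\circ$, and it is equivalent to study the stabilizer group scheme $\mathscr H \subset G \times \ol{T_{X,\mss}}$ of the section $x_0 \times \ol{T_{X,\mss}}$, which has $\mathscr H_a = H_I$.

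Next, to show smoothness of $\mathscr H$ over $\ol{T_{X,\mss}}$, I would invoke Proposition~\ref{prop:noncanon-section} to fix a $B$-equivariant isomorphism $\mathscr X^\circ \cong X^\circ \times \ol{T_{X,\mss}}$. Combined with the hypothesis that $B$ acts simply transitively on $X^\circ$, this makes $\mathscr X^\circ$ a trivial $B$-torsor over $\ol{T_{X,\mss}}$, hence smooth over the base of relative dimension $\dim B$. The $G$-saturation $\mathscr X^\bullet = \mathscr X^\circ \cdot G$ is therefore smooth over $\ol{T_{X,\mss}}$, and the action map $G \times (x_0 \times \ol{T_{X,\mss}}) \to \mathscr X^\bullet$ being smooth forces $\mathscr H$ to be a smooth group scheme of constant relative dimension $|\Phi^-|$. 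In particular, every fiber $H_I$ is smooth.

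For connectedness, I would restrict $\mathscr H$ to an $\mathbb A^1 \hookrightarrow \ol{T_{X,\mss}}$ given by the closure of a cocharacter $\mathbb G_m \to T_X$ landing in the relevant face, with limit $a$ at $t=0$ and value $1$ at $t=1$. The induced $\mathbb G_m$-action on the smooth family $\mathscr H|_{\mathbb A^1}$ contracts $H = \mathscr H_1$ to $H_I = \mathscr H_0$, so by the contraction principle for smooth $\mathbb G_m$-schemes, $\mathscr H|_{\mathbb A^1} \cong H_I \times \mathbb A^1$ as $\mathbb G_m$-varieties. This forces $\pi_0(H) = \pi_0(H_I)$, reducing the problem to the connectedness of $H = H_{\Delta_X}$ itself, which follows from the hypothesis that $B$ acts simply transitively on $X^\circ$: any component cover $H^\circ\bs G \to H\bs G$ pulls back to an \'etale cover of the Borel torsor $X^\circ \cong B$, which must be trivial.

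The main obstacle is the rigorous invocation of the contraction principle for the non-proper group scheme $\mathscr H|_{\mathbb A^1}$, particularly in positive characteristic, where the assumptions of \S\ref{assumptions-finitefield} must be checked to apply. An alternative, which sidesteps the contraction, is to compute $\mathfrak h_I = \lim_{t\to 0} \on{Ad}(t^{\ch\theta})\mathfrak h$ explicitly in the Grassmannian of subspaces of $\mathfrak g$ (for $\ch\theta$ in the relative interior of the face of $\Cal V$ determined by $I$), exhibit its $\ch\theta$-grading as a Levi-type piece plus a nilpotent piece, and exponentiate each summand directly to realize $H_I$ as a connected smooth semidirect product.
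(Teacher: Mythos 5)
Your proposal does not match the paper's argument, and it contains two genuine gaps.

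First, the smoothness step is circular. You claim that since $\mathscr X^\bullet$ is smooth over $\ol{T_{X,\mss}}$, ``the action map $G \times (x_0 \times \ol{T_{X,\mss}}) \to \mathscr X^\bullet$ being smooth forces $\mathscr H$ to be a smooth group scheme.'' But smoothness of the action map $G \to G/H_I$ \emph{is} equivalent to smoothness of $H_I$ --- it is exactly the conclusion you are trying to reach. In positive characteristic, smoothness of the orbit does not imply smoothness of the stabilizer (consider $\mathbb G_a$ acting on $\mathbb A^1$ by $x \mapsto x + t^p$: the orbit is smooth, the stabilizer of $0$ is $\alpha_p$). Generic smoothness would rescue this over $\mathbb C$, but the lemma is also needed in positive characteristic, and the assumptions of \S\ref{assumptions-finitefield} concern the Rees family and the Popov isomorphism, not stabilizer smoothness.

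Second, the contraction argument cannot give $\mathscr H|_{\mathbb A^1} \cong H_I \times \mathbb A^1$, since the fiber at $t=1$ is $H$, not $H_I$, and these are genuinely different groups. The contraction only yields a map from $H$ into the fixed locus of $H_I$; it does not identify $\pi_0(H)$ with $\pi_0(H_I)$ for a non-proper, non-constant family. This is not a technicality that can be checked in positive characteristic --- the conclusion you want from it is simply false in the stated form.

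The irony is that your final step --- ``any component cover $H^0\bs G \to H\bs G$ pulls back to an \'etale cover of the Borel torsor $X^\circ \cong B$, which must be trivial'' --- is essentially the paper's entire proof, and it applies \emph{directly} to $H_I$. Since $k[X_I]^N = k[X]^N$ (a built-in feature of the Rees filtration $\Cal F_\lambda$, reflected in Proposition~\ref{prop:noncanon-section}), the open $B$-orbit $X_I^\circ$ is $B$-equivariantly isomorphic to $X^\circ \cong B$, so $B$ acts on $X_I^\circ$ with trivial stabilizer scheme. Letting $H_I^0$ be the reduced identity component, the finite $G$-equivariant cover $H_I^0\bs G \to H_I\bs G$ restricts over $X_I^\circ$ to a $B$-equivariant map between $B$-torsors (the source $B$-stabilizer sits inside the target's trivial one), hence is an isomorphism there; by $G$-transitivity the degree is everywhere $1$, so $H_I = H_I^0$ is connected and reduced, hence smooth over the perfect field $k$. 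There is no need for the global family $\mathscr H$, no need to establish its smoothness, and no contraction.
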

\begin{proof}
We have $X^\circ \cong X_I^\circ \cong B$. 
Let $H^0_I$ denote the reduced identity component of $H_J$.
Then $H^0_I\bs G \to H_I\bs G$ is a finite covering of spherical varieties sending the open $B$-orbit 
of $H^0_I\bs G$ to $X^\circ_I$. Since $B$ acts on $X^\circ_I\cong X^\circ$ with 
trivial stabilizer, the covering must be an isomorphism. 
\end{proof}

\begin{cor} \label{cor:stab-conn}
Assume that $B$ acts simply transitively on $X^\circ$. 
Let $\ch\theta \in \Cal V \cap \ch\Lambda_X$. 
Then the stabilizer of the group ind-scheme $\msf L H$ acting on $t^{\ch\theta} \in \Gr_G$
is a connected scheme.
\end{cor}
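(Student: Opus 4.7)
The plan is to identify the stabilizer $S_H := \on{Stab}_{\msf L H}(t^{\ch\theta})$ with the $\mf o$-arc space of a smooth connected $\mf o$-group scheme, using the affine degeneration $\mathscr X$ and the $\msf L G$-equivariant map $i_{\ch\theta}$ of \eqref{e:ithetaLX}, and then invoke Lemma~\ref{lem:H_I-conn}.

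A direct computation shows $S_H = \msf L H \cap t^{\ch\theta} \msf L^+G\, t^{-\ch\theta}$, so conjugation by $t^{\ch\theta}$ (an automorphism of $\msf L G$) identifies $S_H$ with $S' := \on{Stab}_{\msf L^+G}(x_0 t^{\ch\theta})$, a closed subscheme of the pro-affine scheme $\msf L^+G$. The $\msf L G$-equivariance of $i_{\ch\theta}$, together with Lemma~\ref{lem:itheta-lands-open}, further identifies $S'$ with the stabilizer in $\msf L^+G$ of the point $s(1, t^{-\ch\theta}) \in \mathscr X^\circ(\mf o)$. Now I would pull back the family $\mathscr X^\bullet \to \ol{T_{X,\mss}}$ along $t^{-\ch\theta}: \Spec \mf o \to \ol{T_{X,\mss}}$ to obtain a smooth family $\mathscr X^\bullet_{\ch\theta} \to \Spec \mf o$ with fibers homogeneous under $G$, equipped with the section $s(1, t^{-\ch\theta})$. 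Let $\mathcal H \subset G \times_k \Spec \mf o$ denote the stabilizer of this section; it is a smooth closed $\mf o$-subgroup scheme, and the key identification to be checked is $S'(R) = \mathcal H(R\tbrac t)$ for every $k$-algebra $R$, realizing $S'$ as the $\mf o$-arc space of $\mathcal H$.

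The closed point of $s(1, t^{-\ch\theta})$ lies in the toric orbit of $\ol{T_{X,\mss}}$ corresponding to the face of $-\Cal V$ in whose relative interior $-\ch\theta$ sits, and the fiber of $\mathscr X^\bullet$ over this orbit coincides with the boundary degeneration $X_I^\bullet$ for $I = \{\sigma \in \Delta_X : \brac{\sigma, \ch\theta} = 0\}$. Consequently $\mathcal H|_{t=0}$ is conjugate to the satellite subgroup $H_I$, which is connected by Lemma~\ref{lem:H_I-conn}. Evaluation at $t = 0$ then fits $S'$ into a short exact sequence of group functors
\[
  1 \longrightarrow K \longrightarrow S' \longrightarrow H_I \longrightarrow 1,
\]
with surjectivity following from smoothness of $\mathcal H/\mf o$ via Hensel lifting along the tower $R[t]/t^{n+1} \to R[t]/t^n$, and with kernel $K$ identified, via formal smoothness of $\mathcal H$ at its identity section, with a pro-affine space of relative dimension $\dim H_I$. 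Since both $K$ and $H_I$ are connected, so is $S_H$.

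The main obstacle I anticipate is the clean verification of the identification $S'(R) = \mathcal H(R\tbrac t)$: it requires translating the loop-space stabilizer condition on $i_{\ch\theta}(x_0 t^{\ch\theta})$ into an $\mf o$-arc-theoretic statement on $\mathcal H$, and propagating smoothness of the family $\mathscr X^\bullet_{\ch\theta}/\mf o$ through the $G$-orbit map to obtain smoothness of $\mathcal H/\mf o$. Once this is in place, the pro-unipotent extension argument for connectedness is routine.
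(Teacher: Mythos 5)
Your proposal is correct and takes essentially the same approach as the paper's own proof: both conjugate $\on{Stab}_{\msf LH}(t^{\ch\theta})$ to $\on{Stab}_{\msf L^+G}(x_0 t^{\ch\theta})$, transport this via $i_{\ch\theta}$ into the arc space of the degeneration family, realize the stabilizer as the $\mf o$-arc space of a smooth group scheme over $\mf o$ (your $\mathcal H$, the paper's $\mf J$ restricted along $\gamma$) whose special fiber is conjugate to the connected $H_I$, and conclude connectedness from the resulting pro-unipotent tower. Your direct pullback along $t^{-\ch\theta}:\Spec\mf o\to\ol{T_{X,\mss}}$ and your short-exact-sequence phrasing are only cosmetic variants of the paper's passage through $\mf X\to\mbb A^1$ and its contraction argument for $\varprojlim_n \on{Sect}(\msf D_n,\mf J_n)$.
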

\begin{proof}
The cocharacter $-\ch\theta \in (-\Cal V)$ extends to a map 
$\mbb A^1 \to \ol{T_{X,\mss}}$. 
Let $\mf X := \mathscr X \xt_{\ol{T_{X,\mss}},-\ch\theta} \mbb A^1$. 
Then $\mf X \to \mbb A^1$ is an affine family with preimage over $\mbb G_m$ isomorphic
to $X \times \mbb G_m$. By \cite[Proposition 2.5.3]{SV}, the 
open $G$-orbit of the fiber over $0$ is isomorphic to $X_I^\bullet$. 
Let $\mf X^\bullet$ denote the union of the open $G$-orbits on each fiber,
so $\mf X^\bullet$ is a family over $\mbb A^1$ degenerating $X^\bullet$ to $X^\bullet_I$. 

The map $i_{\ch\theta}$ from \eqref{e:ithetaLX} 
factors through an embedding 
\[ \tilde i_{\ch\theta}: \msf L X \into \msf L \mf X \xt_{\msf L \mbb A^1} \pt \]
where $\pt \to \msf L \mbb A^1$ corresponds to $t \in k\lbrac t$. 
Lemma~\ref{lem:itheta-lands-open} implies that  
$\gamma := \tilde i_{\ch\theta}(x_0 t^{\ch\theta}) \in \msf L^+(\mf X^\bullet)$. 

Observe that $\on{Stab}_{\msf L H}(t^{\ch\theta}, \Gr_G) = 
\msf L H \cap t^{\ch\theta} (\msf L^+ G) t^{-\ch\theta}$ conjugates to 
\[ t^{-\ch\theta} (\msf L H) t^{\ch\theta} \cap \msf L^+G = 
\on{Stab}_{\msf L^+G}( x_0 \cdot t^{\ch\theta}, \msf L X ) = \on{Stab}_{\msf L^+G}( \gamma, \msf L^+(\mf X^\bullet) ). 
\]
Let $\mf J \subset G \times \mf X^\bullet$ denote the inertia group scheme
consisting of all $(g,x) \in G \times \mf X^\bullet$ such that $g x = x$. 
The fibers of $\mf J \to \mf X^\bullet$ are all conjugate to either $H$ or $H_I$. 
Since $\mf X^\bullet$ is smooth over $\mbb A^1$ (cf.~\cite[Corollary 5.2.3]{GN}), 
we have $H$ and $H_I$ are of the same dimension,
and they are smooth by Lemma~\ref{lem:H_I-conn}. 
Thus $\mf J \to \mf X^\bullet$ is a smooth morphism. 

Consider $\gamma$ as an arc $\Spec k\tbrac t \to \mf X^\bullet$. 
For $n\in \mbb N$, let $\gamma_n : \msf D_n := \Spec (k[t]/t^n) \to \mf X^\bullet$ denote the
corresponding $n$-jet and $\mf J_n := \mf J \xt_{\mf X^\bullet,\gamma_n} \msf D_n$ the
fiber product, which is a smooth group scheme over $\mf D_n$. 
Then 
\[ \on{Stab}_{\msf L^+G}( \gamma, \msf L^+(\mf X^\bullet) ) \cong 
\varprojlim_n \on{Sect}( \msf D_n, \mf J_n) 
\] 
where $\on{Sect}( \msf D_n, \mf J_n)$ 
is the scheme of maps $\msf D_n \to \mf J_n$ over $\msf D_n$. 
By smoothness of $\mf J \to \mf X^\bullet$, the
transition maps $\on{Sect}( \msf D_m, \mf J_m) \to \on{Sect}( \msf D_n, \mf J_n)$
are surjective for $m > n$. 
The transition maps are also group homomorphisms with unipotent kernels, so they
have contractible fibers. Therefore 
$\on{Stab}_{\msf L^+G}(\gamma, \msf L^+(\mf X^\bullet))$ contracts to 
$\on{Sect}(\msf D_0, \mf J_0) = \on{Stab}_G(\gamma(0), X_I^\bullet)$, which is conjugate
to the subgroup $H_I$. Since $H_I$ is connected by Lemma~\ref{lem:H_I-conn}, we are done.
\end{proof}

\section{Models for the arc space} \label{sect:models}

In this section we define two models (in the sense of Grinberg--Kazhdan) 
for the arc space of $X$, both of which
were already introduced in \cite{GN} and go back to ideas of Drinfeld. 
We call these the global and Zastava models (the term `global' refers to the fact
that the model depends on the curve $C$). 
The global model $\sM_X$ is crucial because it allows us to model the $G(\mf o)$-orbits of $X(\mf o)$,
something which cannot be done directly via the Zastava model. 
On the other hand, the Zastava model $\Scr Y_X$ is more suitable for finite type calculations. 

\smallskip

Various incarnations of these constructions have been used in 
\cite{FM, FFKM, BG, BFGM, BFG, ABB}.
To place our work in this context, we remark that when $X = \ol{N\bs G}^\aff$ 
we have $\sM_X = \ol\Bun_N$ is Drinfeld's compactification of $\Bun_N$ 
and $\sY_X$ is ``the'' Zastava space of \cite{FM}. 

\smallskip

While Gaitsgory--Nadler define the global and Zastava models for any affine $X$, 
in order to avoid various technical difficulties they faced (such as the existence
of \emph{twisted strata}, which are related to the existence of 
disconnected stabilizer subgroups) we make the following simplifying assumption:

\medskip

Starting from \S\ref{sect:localmodel}, 
\emph{we assume for the rest of the paper that $B$ acts simply transitively on $X^\circ$.} 
If $\ch G_X = \ch G$ and $X$ has no spherical roots of type $N$, then the assumption above always holds.

\begin{rem} \label{rem:Hconnected}
The assumption that $B$ acts simply transitively on $X^\circ$ implies that $H$ must
be connected, by Lemma~\ref{lem:H_I-conn}. 
\end{rem}

\subsection{Global model}

Gaitsgory--Nadler \cite{GN} define certain stacks of meromorphic quasimaps from $C \dashrightarrow X/G$ 
to model $X^\bullet(F)$, the loop space of $X^\bullet$. 
Our global model $\sM_X$ is the substack consisting of those quasimaps that 
extend to regular maps\footnote{In the literature, when $X=\ol{N\bs G}$ regular maps 
$C \to \ol{N\bs G}$ are still referred to as quasimaps to $N\bs G$.} $C \to X/G$. 

\subsubsection{Definition}
Define the stack 
\[ \sM_{X} := \Maps_\gen(C, X/ G \supset X^\bullet /G) \]
to be the open substack of $\Maps(C, X/G)$ representing maps generically
landing in $X^\bullet/G = (H\bs G)/G = H\bs\pt$. 

\smallskip

An $S$-point of $\Scr M_{X}$ is a map $f : C\times S \to X/G$, which is
equivalent to the datum $(\Scr P_G, \sigma)$ where 
\begin{itemize}
\item $\Scr P_G$ is a $G$-bundle on $C\times S$ and 
\item $\sigma : C\times S \to X\times^G \Scr P_G$ is a section over $C\times S$ 
such that
\item $\sigma|_{\Spec k(C) \xt S}$ lands in $X^\bullet \xt^G \Scr P_G = H\bs \Scr P_G$
and gives $\Scr P_G |_{\Spec k(C)\xt S}$ the structure of an $H$-bundle.
\end{itemize}
We call the preimage $\sigma^{-1}(X^\bullet \xt^G \sP_G)\subset C \xt S$ the 
locus of $G$-nondegenerate points. 

\begin{prop}
The natural map $\Scr M_{X} \to \Bun_G$ is schematic locally of finite type. 
In particular, $\Scr M_{X}$ is an algebraic stack locally of finite type over $k$.
\end{prop}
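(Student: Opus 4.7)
\medskip

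\noindent\textbf{Proof plan.} The strategy is to reduce to the case of a finite-dimensional $G$-representation, where representability follows from classical Weil restriction / cohomology-and-base-change. First, since $G$ is reductive and $X$ is an affine $G$-variety of finite type, choose a $G$-equivariant closed embedding $X\into V$ into a finite-dimensional $G$-module $V$. Passing to quotient stacks yields a closed immersion $X/G \into V/G$, and therefore a closed immersion of prestacks
\[
\Maps(C, X/G) \into \Maps(C, V/G)
\]
commuting with the projection to $\Bun_G$.

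Next, I would show that $\Maps(C,V/G) \to \Bun_G$ is schematic, affine and of finite type. Given a test scheme $S$ with a map $S \to \Bun_G$ corresponding to a $G$-bundle $\sP_G$ on $C\xt S$, the fiber product $\Maps(C,V/G) \xt_{\Bun_G} S$ represents the functor assigning to an $S$-scheme $T$ the set of sections of the vector bundle $V\xt^G \sP_G$ over $C\xt T$; i.e., it is the Weil restriction $p_*(V\xt^G \sP_G)$ along the smooth proper projection $p: C\xt S \to S$. Since $p$ is proper, flat and of finite presentation and $V\xt^G \sP_G$ is a vector bundle (hence coherent and flat), this Weil restriction is represented by an affine $S$-scheme of finite type, by the standard Grothendieck representability result (cohomology and base change gives a finite complex of vector bundles on $S$ computing sections on all base changes). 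Since closed immersions are stable under base change and pullback along $\Maps(\cdot)$-functors, the subfunctor $\Maps(C, X/G) \xt_{\Bun_G} S$ is a closed subscheme of the above, hence is itself schematic affine of finite type over $S$.

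Finally, $\sM_X = \Maps_\gen(C,X/G \supset X^\bullet/G)$ sits as an open substack of $\Maps(C, X/G)$: the preimage of the open substack $X^\bullet/G \subset X/G$ is open in $C\xt S$, and the condition that this open surjects onto $S$ is open on $S$ (its complement is the image under the proper map $p$ of the closed complement). Openness is preserved under restriction to $S$, so $\sM_X \xt_{\Bun_G} S$ is an open subscheme of the finite-type affine $S$-scheme constructed above, hence schematic locally of finite type over $S$. Because $\Bun_G$ is known to be an algebraic stack locally of finite type over $k$, the schematic morphism $\sM_X \to \Bun_G$ then exhibits $\sM_X$ as an algebraic stack locally of finite type over $k$.

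The only nontrivial input is the representability of the Weil restriction in Step~2; everything else is bookkeeping with closed immersions and open substacks. I do not expect a substantive obstacle here, since both pieces (equivariant embedding of affine $G$-varieties into representations, and representability of sections of a coherent sheaf along a smooth proper morphism) are standard.
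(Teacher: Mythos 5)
Your proposal is a correct alternative route that essentially reproves, in a self‑contained way, the special case of the representability theorem the paper simply cites. The paper's proof is two lines: it observes $\sM_X$ is open in $\Maps(C,X/G)$ and identifies the fiber of $\Maps(C,X/G)\to\Bun_G$ over $S$ with the space of sections of $X\xt^G \Scr P_G$ over $C\xt S$, which is representable by a $k$-scheme locally of finite type by \cite[Theorem~5.23]{FGA-explained}. You instead linearize by embedding $X$ equivariantly in a finite-dimensional $G$-module $V$, prove representability for $V$ directly via cohomology-and-base-change for the vector bundle $V\xt^G \Scr P_G$ along the proper flat $p:C\xt S\to S$, and then cut down by the closed locus where the section lands in $X\xt^G \Scr P_G$. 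What your approach buys is a proof that avoids the black-box citation and makes the affine-of-finite-type structure of $\Maps(C,X/G)\to\Bun_G$ transparent; what it costs is that you must reprove two facts usually bundled into the citation.

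Two small corrections. First, the phrase ``closed immersions are stable under base change and pullback along $\Maps(\cdot)$-functors'' is too glib: $\Maps(C,-)$ does not generically preserve closed immersions. What you actually need is that, for a closed immersion of affine $C\xt S$-schemes $X\xt^G\Scr P_G\into V\xt^G\Scr P_G$ and a $T$-point $\sigma$ of the section space of $V\xt^G\Scr P_G$, the locus in $T$ where $\sigma$ factors through $X\xt^G\Scr P_G$ is closed. This is again a cohomology-and-base-change statement for the coherent data cutting out $X$ in $V$ (compare Lemma~\ref{lem:extendsection} in the paper, which is proved by exactly this device), not a purely formal fact about pullback of closed immersions. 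Second, your justification for openness of $\sM_X$ in $\Maps(C,X/G)$ is slightly off: the complement of the good locus in $T$ is $\{t : C_t \text{ is entirely contained in the closed complement of } f^{-1}(X^\bullet/G)\}$, which is not the image under $p$ of that closed complement. The correct reason is that $p: C\xt T\to T$ is flat of finite presentation, hence an open map, so the image of the open set $f^{-1}(X^\bullet/G)$ in $T$ is open; the good locus is exactly this image.
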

\begin{proof}
Since $\Scr M_{X}$ is an open substack of $\Maps(C, X/G)$,
it suffices to show the latter is schematic locally of finite type
over $\Bun_G$. 
Let $S \to \Bun_G$ correspond to a $G$-bundle $\Scr P_G$ on $C\xt S$.
Then the fiber product $S \underset{\Bun_G}\times \Maps(C, X/G)$ 
is isomorphic to the space of sections $C\xt S \to X \times^G \Scr P_G$ over $C \xt S$.
This space is representable by a $k$-scheme locally of finite type 
(\cite[Theorem 5.23]{FGA-explained}). 
\end{proof}

If $X = H\bs G$ is homogeneous, then $\Scr M_X = \Maps(C,H\bs \pt) = \Bun_H$. 

\subsubsection{Adelic description} \label{sect:adelic}
For motivational purposes, we give an ``adelic'' description of the $k$-points of 
$\sM_X$. 
Let $\mbb A$ denote the restricted product $\prod'_{v\in \abs C} F_v$ and 
$\mbb O = \prod_{v\in \abs C} \mf o_v$. 
(If $k=\mbb F_q$ then $\mbb A$ is the ring of adeles of the function field $\Bbbk = k(C)$.)

The underlying set of meromorphic quasimaps $C \dashrightarrow X/G$ 
can be identified with the set
\[ X^\bullet(\Bbbk) \xt^{G(\Bbbk)} G(\mbb A) / G(\mbb O) 
= H(\Bbbk) \bs G(\mbb A) / G(\mbb O). 
\]
Note that the $G$-action on $X$ induces a map 
\[ X^\bullet(\Bbbk) \xt^{G(\Bbbk)} G(\mbb A) / G(\mbb O) \to X^\bullet(\mbb A)/G(\mbb O) \subset X(\mbb A)/G(\mbb O). 
\]
The underlying set of $\sM_X(k)$ identifies with 
the preimage of $(X^\bullet(\mbb A) \cap X(\mbb O)) / G(\mbb O)$ under the map above.

\medskip

Note that the topologies on $X^\bullet(\mbb A)$ vs.~$X(\mbb A)$ are different: the
fact that the geometric constructions above depend on $X$ can be expressed
by saying that we are always using the topology of $X(\mbb A)$, not of $X^\bullet(\mbb A)$.

\subsubsection{} \label{def:partition}
For any set $\msf S$, define the set of unordered \emph{multisets} in
$\msf S$ to be the formal direct sum
\[ \Sym^\infty(\msf S) := \bigoplus_{\ch\lambda\in \msf S} \mbb N[\ch\lambda]. \] 
An element of $\Sym^\infty(\msf S)$ is a 
formal sum $\mf P = \sum_{\ch\lambda \in \msf S} N_{\ch\lambda}[\ch\lambda]$
where $N_{\ch\lambda}\ge 0$ are integers and only finitely many 
are nonzero. 
For a multiset $\mf P$, define 
\[ \oo C^{\mf P} = \oo \prod_{\ch\lambda \in \msf S} \oo C^{(N_{\ch\lambda})}
\]
to be the open subscheme of $C^{\mf P}:=\prod C^{(N_{\ch\lambda})}$ 
with all diagonals
removed, i.e., the subscheme of multiplicity free divisors.
We write $\mf P=0$ for the zero element, and we will use the 
convention $\oo C^{0}=C^{0}=\pt$. 
If we define $\abs{\mf P} = \sum N_{\ch\lambda}$, then there are 
natural maps $C^{\abs{\mf P}} \to C^{\mf P} \to C^{(\abs{\mf P})} \subset \Sym C$.

Now consider the case when $\msf S = \msf M\sm 0$ for a commutative monoid
$\msf M$. Then $\Sym^\infty(\msf M\sm 0)$ identifies with the set of 
\emph{partitions} of arbitrary elements in $\msf M$.
Given a partition $\mf P\in \Sym^\infty(\msf M\sm 0)$ as above, 
we define $\deg : \Sym^\infty(\msf M\sm 0) \to \msf M$ by 
\[
    \deg(\mf P) := \sum_{\ch\lambda\in \msf M\sm 0} N_{\ch\lambda}\ch\lambda 
\]
with addition taking place in $\msf M$. 
There is a natural order on the set $\Sym^\infty(\msf M\sm 0)$: 
we say that $\mf P$ refines $\mf P'$ if the difference $\mf P-\mf P'$
viewed as an element of $\bigoplus_{\ch\lambda\in \msf M\sm 0}
 \mbb Z [\ch\lambda]$ can be written as a sum of elements of the
form $[\ch\lambda']+[\ch\lambda'']-[\ch\lambda]$ with $\ch\lambda'+\ch\lambda''=\ch\lambda$ in $\msf M$. 

Let $\Prim(\msf M)$ be the set of primitive elements of $\msf M$, i.e.,
the elements $\ch\lambda\in \msf M\sm 0$ that cannot be decomposed
as a sum $\ch\lambda=\ch\lambda_1+\ch\lambda_2$ where $\ch\lambda_1,\ch\lambda_2 \in \msf M\sm 0$. Then any partition in $\Sym^\infty(\msf M\sm 0)$
can be refined to an element in $\Sym^\infty(\Prim(\msf M))$.

\subsubsection{Stratification of $\sM_X$}

\label{sect:globstrat}

We would like to stratify $\sM_X$ according to $G(\mf o_v)$-orbits of 
$X(\mf o_v)\cap X^\bullet(F_v)$ at each $v\in \abs C$, described in Theorem \ref{thm:Gorbits}.

Consider the set $\Sym^\infty(\mathfrak c_X^-\sm 0)$
of partitions in $\mathfrak c_X^-$, as defined
in \S\ref{def:partition}. 
Let  
\[ \ch\Theta = \sum_{\ch\theta\in \mathfrak c_X^- \sm 0} N_{\ch\theta} [\ch\theta] \] 
denote such a partition. 
We will write $\ch\Theta=0$ for the empty partition and 
$\ch\Theta=[\ch\theta]$ for the singleton partition corresponding to a
single element $\ch\theta$. 

In \S\ref{proof:globstrata}, we define locally closed substacks $\Scr M^{\ch\Theta}_X$ of $\Scr M_X$ ranging over all partitions 
$\ch\Theta$ in $\mathfrak c_X^-$.  
For simplicity we only describe $\Scr M^{\ch\Theta}_X$ on $k$-points below:
Such a point consists of a map $f : C\to X/G$ and a formal sum
$\sum_{v\in \abs C} \ch\theta_v \cdot v$ satisfying the following
conditions: 
\begin{itemize}
\item $\ch\theta_v \ne 0$ for finitely many $v\in \abs C$, 
\item for a fixed $\ch\theta \ne 0$, the cardinality of $\{ v\in \abs C \mid \ch\theta_v =\ch\theta\}$ equals $N_{\ch\theta}$, 
\item for each $v\in \abs C$
the restriction $f|_{\Spec \mf o_v} : \Spec \mf o_v \to X/G$ defines a point in $\msf L^{\ch\theta_v} X / \msf L^+ G$.  
\end{itemize} 

\begin{lem} \label{lem:globstrata}
The substack $\Scr M^{\ch\Theta}_X$ is
smooth and locally closed in $\Scr M_X$. 
\end{lem}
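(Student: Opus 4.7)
The plan is to exhibit $\Scr M^{\ch\Theta}_X$ as a fiber bundle with controlled smooth pieces, exploiting the local-to-global nature of the arc space. First I would define an auxiliary map $\Scr M^{\ch\Theta}_X \to \oo C^{\ch\Theta}$ recording the support of the degenerate locus together with its labels: an $S$-point of $\Scr M^{\ch\Theta}_X$ carries, by hypothesis, a finite collection of pairwise disjoint sections $v_1,\dots,v_n : S \to C$ with labels $\ch\theta_i \in \mf c_X^- \sm 0$ whose tallies match $\ch\Theta$; passing to the unordered labeled divisor gives the desired point of $\oo C^{\ch\Theta}$.

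Next I would describe the fiber of this map via the Beauville--Laszlo gluing theorem applied in families. Over an $S$-point of $\oo C^{\ch\Theta}$ with disjoint sections $v_i$ and labels $\ch\theta_i$, a point in the fiber is equivalent to the following package of data: (a) an $H$-bundle on the complement $C \xt S \sm \bigcup_i \Gamma_{v_i}$, i.e., a map of this scheme to $X^\bullet/G = H\bs \pt$; (b) for each $i$, an $S$-point of $\msf L^{\ch\theta_i}X / \msf L^+ G$, where the formal neighborhood of $\Gamma_{v_i}$ in $C \xt S$ is used to interpret the relative arc space; and (c) a compatibility between (a) and (b) on the punctured formal neighborhoods of $\Gamma_{v_i}$. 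Smoothness of $\Scr M^{\ch\Theta}_X$ over $k$ then follows by assembling three smooth inputs: the base $\oo C^{\ch\Theta}$ is smooth as an open subscheme of a product of symmetric powers of $C$; the datum (a) describes a smooth moduli of $H$-bundles, since $H$ is a connected smooth subgroup of $G$ by Lemma~\ref{lem:H_I-conn} applied with $I=\emptyset$; and each local orbit $\msf L^{\ch\theta_i} X / \msf L^+G$ is formally smooth by Proposition~\ref{prop:smoothfiberstrata}. The compatibility condition preserves formal smoothness by the standard jet-truncation argument (present already in the proof of Proposition~\ref{prop:smoothfiberstrata}), which reduces the claim to smoothness of the finite-level quotients.

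For the local closedness of $\Scr M^{\ch\Theta}_X$ inside $\Scr M_X$, the key input is that imposing the orbit type $\ch\theta_v$ at a given $v$ is a locally closed condition, because each orbit $\msf L^{\ch\theta} X$ is open in its closure in $\msf L X$ by Proposition~\ref{prop:smoothfiberstrata}. Over a quasi-compact $S$-point of $\Scr M_X$ only finitely many points of $C\times S$ can be degenerate (i.e.\ have $\ch\theta_v \ne 0$), so on such a chart the stratum $\Scr M^{\ch\Theta}_X$ is cut out by finitely many locally closed conditions together with the nondegeneracy condition at the remaining points, which is likewise locally closed.

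The main obstacle I anticipate is making the Beauville--Laszlo decomposition above rigorous in families in which the sections $v_i$ actually vary along $C$; in particular, the gluing datum (c) must be interpreted as a bona fide morphism of stacks, and one must verify that the resulting presentation indeed coincides with the substack specified on $k$-points. This ultimately reduces to the fact that the arc space along a section is compatible with base change and that Beauville--Laszlo glues torsors under affine group schemes in families --- together with a careful accounting of the potential collisions between different $v_i$, which is precisely what is ruled out by restricting to the disjoint locus $\oo C^{\ch\Theta}$.
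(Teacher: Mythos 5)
Your proposal follows essentially the same architecture as the paper's deferred proof in \S\ref{proof:globstrata}: fiber $\Scr M^{\ch\Theta}_X$ over $\oo C^{\ch\Theta}$, use Beauville--Laszlo to split the data into an $H$-bundle on the open curve plus local orbit data along the divisor, and leverage the formal smoothness of the orbits $\msf L^{\ch\theta}X/\msf L^+G$. You also correctly identify the gluing datum (c) as the main point of friction. However, there is a genuine gap in how you propose to close it.

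Your assertion that ``the compatibility condition preserves formal smoothness by the standard jet-truncation argument (present already in the proof of Proposition~\ref{prop:smoothfiberstrata})'' does not do the required work. That earlier proof establishes formal smoothness of a \emph{single orbit} $\msf L^{\ch\theta}X$ by exhibiting it as a pro-quotient of $\msf L^+G$ by a pro-group with unipotent transition kernels; it says nothing about the fiber product over the punctured-disk gluing data. What is actually needed is a separate deformation-theoretic statement --- namely, the paper's Lemma~\ref{lem:MtoLsm}: the natural map $\Scr M^{\ch\Theta}_X \to \Scr L^{\ch\Theta}X/\Scr L^+G$ is formally smooth. Its proof is quite different from the jet argument you cite: one starts with a square-zero extension $S\into S'$, lifts the $H$-bundle on $C\times S\sm D$ to $C\times S'\sm D'$ using vanishing of the obstruction class in $H^2$ on a relative curve, and then lifts the gluing section over the \emph{affine} scheme $\wh C^\circ_{D'}$ using formal smoothness of the quotient map $G\to H\bs G$ (which needs $H$ to be smooth). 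Only then does Beauville--Laszlo reassemble the lifted data. Without an argument of this shape, the ``assembly of smooth pieces'' heuristic does not establish smoothness of the total space, because the gluing constraint is a nontrivial fiber product.

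Your local closedness argument is also too fast in a way that matters. Saying ``imposing the orbit type $\ch\theta_v$ at a given $v$ is locally closed'' and ``only finitely many degenerate points'' ignores that the support of the degenerate locus \emph{varies} in families and is not a priori sectioned. The paper handles this by working on $\Scr M_X \times C^{(\abs{\ch\Theta})}$, introducing the closed substack $Z$ of pairs $(f,D)$ with $f^{-1}(X^\bullet/G)\cap D = \emptyset$, and observing that $\on{pr}_1|_Z$ is proper and quasifinite, hence finite; combined with injectivity of $\on{pr}_1$ on $k$-points of $\Scr M^{\ch\Theta}_X$, this produces the locally closed embedding into $\Scr M_X$. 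Some version of this finiteness argument is genuinely needed --- one cannot simply intersect finitely many pointwise conditions when the points move.
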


We defer the proof of Lemma~\ref{lem:globstrata} to \S\ref{proof:globstrata} of the appendix. 

\medskip
\begin{prop} \label{prop:globwhitney}
Assume $k=\mbb C$. Let $\Scr S$ denote the collection of connected components of 
$\Scr M^{\ch\Theta}_X$, ranging over all 
$\ch\Theta \in \Sym^\infty(\mathfrak c_X^- \sm 0)$. Then $\Scr S$ 
is a Whitney\footnote{We say that a stratification on an algebraic stack $\Scr M$ locally of
finite type is Whitney if the stratification is Whitney after pullback
along any (equivalently all) smooth cover of $\Scr M$ by a scheme.} 
stratification of $\Scr M_X$. 
\end{prop}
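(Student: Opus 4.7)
The plan is to verify Whitney's conditions (a) and (b) for each pair of strata $\Scr M^{\ch\Theta'} \subset \overline{\Scr M^{\ch\Theta}_X}$ by reducing to a local, finite-type problem via a factorization near each geometric point. As a preliminary step, I would observe that the stratification is locally finite: on any quasicompact open substack of $\Scr M_X$, the total ``defect'' $\deg(\ch\Theta) \in \mf c_X^-$ is bounded (e.g.\ in terms of the degree of the underlying $G$-bundle and the class of $\sP_G|_{\Spec \Bbbk}$), and for each fixed value of $\deg(\ch\Theta)$ only finitely many partitions $\ch\Theta \in \Sym^\infty(\mf c_X^- \sm 0)$ exist.

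The heart of the argument is a formal factorization at a chosen $\mbb C$-point $(f_0, \sum_v \ch\theta_v[v]) \in \Scr M^{\ch\Theta}_X$. Let $V \subset \abs C$ denote the finite set of $v$ with $\ch\theta_v \ne 0$. I claim that, étale-locally, $\Scr M_X$ decomposes as a product of the formal neighborhood of $(v)_{v\in V}$ in $\oo C^{\ch\Theta}$ (recording the positions of the degenerate points) with, for each $v\in V$, the formal neighborhood of $f_0|_{\Spec \mf o_v}$ in $\msf L^+X/\msf L^+G$ (recording the local $G(\mf o_v)$-orbit structure). The point is that on $C \sm V$ the map $f_0$ already lands in $X^\bullet/G = H\bs\pt$, so once local data near $V$ is fixed, extending to a global map on $C$ is controlled by deformations of $H$-bundles and is unobstructed at the formal level. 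This factorization is implemented by the variant of the Grinberg--Kazhdan--Drinfeld theorem recorded in Theorem~\ref{thm:DGK} and Lemma~\ref{lem:localglobalyoga}, and it is compatible with stratifications: the pullback of the product of the $\msf L^+G$-orbit stratifications (at each $v\in V$) with the diagonal stratification of $\oo C^{\ch\Theta}$ (refined to track partitions of multiplicities) recovers the restriction of $\{\Scr M^{\ch\Theta'}\}$.

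Since products of Whitney stratifications are Whitney and the diagonal stratification of symmetric products of a smooth curve is classically Whitney, the proposition reduces to showing that the $\msf L^+G$-orbit stratification $\{\msf L^{\ch\theta'}X\}$ of $\msf L^+X$ is Whitney in a formal neighborhood of $x_0 \cdot t^{\ch\theta_v}$. By Proposition~\ref{prop:smoothfiberstrata} the orbits are formally smooth, and only finitely many of them touch a fixed formal neighborhood; choosing a sufficiently large truncation level $n$, these descend to orbits of the smooth algebraic group $\msf L^+_n G$ on the smooth variety $\msf L^+_n X$ (or a smooth slice thereof). One then invokes the classical fact that in characteristic zero the orbit stratification of a smooth variety under a smooth algebraic group action is Whitney regular.

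The main obstacle is the rigorous verification of the factorization in the second paragraph: $\Scr M_X$ is only locally of finite type while the local arc-space factors are pro-algebraic, so one must carefully identify formal neighborhoods on both sides. This is achieved by combining the openness of the $G$-nondegenerate locus on $C \xt S$ with the finite-type models for arcs provided by Grinberg--Kazhdan--Drinfeld, together with the infinitesimal triviality of $H$-bundle deformations obtained by smoothness of $\Bun_H \to \pt$ at each point.
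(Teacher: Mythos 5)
Your approach is genuinely different from the paper's, and it contains a gap that I don't think can be patched without essentially reproducing the paper's argument. The paper avoids the arc space altogether: it works with the finite-type Zastava models $\sY^{\ch\lambda}$, uses graded factorization plus the $\bbA^1$-translation action and Lemma~\ref{lem:localglobalyoga} to transport any point of a stratum to any other, and then observes that \emph{generic-Hecke transitivity} (Proposition~\ref{prop:genHecke-transitive}) makes the stratification ``homogeneous under a groupoid.'' Combined with Kaloshin's theorem (Theorem~\ref{thm:K}) that the Whitney-B-bad locus of a smooth stratum in a finite-type scheme has strictly lower dimension, this forces the bad locus to be empty. The groupoid of generic-Hecke modifications is the essential substitute for a group action on $\sM_X$: there \emph{is no} algebraic group acting on $\sM_X$ transitively on strata, so one cannot directly cite a theorem about orbit stratifications.

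The gap in your argument is the final reduction step. First, the claimed \'etale-local product decomposition of $\Scr M_X$ into formal neighborhoods of points in $\oo C^{\ch\Theta}$ times formal neighborhoods in $\msf L^+X/\msf L^+G$ cannot be literal: the right-hand side is infinite-dimensional, whereas $\Scr M_X$ is locally of finite type. The correct statement (Theorem~\ref{thm:DGK}, Proposition~\ref{prop:DGKtorsor}) says the \emph{arc space} is formally modeled by the Zastava model times $\wh{\bbA}^\infty$, not the other way around; one must then argue that Whitney regularity is unaffected by the extra $\wh{\bbA}^\infty$ factor and that the formal-neighborhood identification detects Whitney's (topological) condition B, neither of which you address. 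Second, the truncation to $\msf L^+_n X$ is delicate: Proposition~\ref{prop:smoothfiberstrata} presents $\msf L^{\ch\theta}X$ as $\varprojlim \msf L^+_n G/S_n$, but these are \emph{abstract} homogeneous spaces, and there is no claim that they embed as locally closed orbits in $\msf L^+_n X$ (the stabilizer of the truncated arc in $\msf L^+_n G$ can a priori be strictly larger than $S_n$). Finally, the ``classical fact'' you invoke — that the orbit stratification of an algebraic group action is Whitney — is itself proved by precisely the combination of generic Whitney regularity (Kaloshin's theorem or its predecessors) and homogeneity under the group. So even if the reductions were made rigorous, you would not have bypassed the paper's mechanism but only relocated it to a more delicate, infinite-dimensional setting; the paper's choice to run the homogeneity argument directly on the finite-type model via generic-Hecke correspondences is both cleaner and avoids the truncation and formal-model subtleties entirely.
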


By a stratification we mean a collection of locally closed substacks that form a
disjoint union on $k$-points and such that the closure of any stratum is a union of
strata. The stratification is Whitney if the strata are smooth and
every pair of strata satisfies Whitney's condition B. (This only makes sense
if the characteristic of $k$ is zero; in positive characteristic, see 
\S \ref{sect:whitneyperverse} below.)

\smallskip

The proof of Proposition~\ref{prop:globwhitney} is given in \S\ref{proof:whitney}. 
We call this the \emph{fine stratification} of $\Scr M_X$. 
The open stratum $\sM_X^0 = \Maps(C, H\bs G/G)$ identifies
with $\Bun_H$. We abbreviate $\sM_X^{\ch\theta} := \sM_X^{[\ch\theta]}$. 

\begin{rem}
If $X$ is affine and homogeneous, then $\Cal C_0(X) \cap \Cal V = 0$ 
so the stratification is trivial, consisting of
the single smooth stratum $\Scr M_X$ itself.
\end{rem}

\subsubsection{} \label{sect:whitneyperverse}
Let $\rmD^b_{\Scr S}(\sM_X,\ol\bbQ_\ell)$  
denote the subcategory of bounded $\Scr S$-constructible complexes, i.e., 
the usual cohomology sheaf $\rmH^i(\Scr F)|_S$ is a local system
of finite rank for all $i\in\bbZ$ and $S\in\Scr S$. 
As explained in \cite{BBDG}, the category $\rmD^b_{\Scr S}(\sM_X)$ has a 
perverse $t$-structure. Let $\rmP_{\Scr S}(\sM_X)$ denote the heart
of this $t$-structure, i.e., this is the abelian subcategory of all perverse sheaves
that are $\Scr S$-constructible. 

In particular, the IC complex of the closure of any stratum $\sM_X^{\ch\Theta}$ 
is an object of $\rmP_{\Scr S}(\sM_X)$.
When $k$ has positive characteristic, this is the condition on $\Scr S$ 
that we need (in place of the Whitney condition). 
Proposition~\ref{prop:Whitney-poschar} shows that this condition 
indeed holds in positive characteristic.

\subsection{Toric case} \label{sect:baseA}
If we apply the definitions above to the special case where $G$ is replaced by the torus $T_X$ and $X$ is replaced by the toric variety $X\sslash N$, we obtain the space \begin{equation}  \label{e:defA}
    \Scr A = \Maps_\gen(C, (X/\!\!/N)/T_X \supset \pt) 
\end{equation}
of maps generically landing in $T_X/T_X=\pt$.

The stack $\Scr A$ has been previously studied in \cite[\S 3]{BNS} as 
a model for the formal arc space of the toric variety $X/\!\!/N$ 
(in particular, $\Scr A$ turns out to be representable by a scheme). 
We review the relevant properties below.

For any $N \in \mbb N$, we have the $N$th symmetric product
$C^{(N)}$ of $C$, which identifies with the Hilbert scheme $\on{Hilb}^N(C)$
parametrizing relative effective divisors in $C$ of degree $N$.
Let $\Sym C$ denote the disjoint union 
$\bigsqcup_{N\in \mathbb N} C^{(N)}$ (where $C^{(0)}=\pt$).

\begin{eg}
Observe that $\Maps_\gen(C, \mbb A^1/\mbb G_m\supset \pt)$ sends a test scheme $S$ to 
the set of relative effective Cartier divisors on $C \times S$, i.e., 
$\Maps_\gen(C, \mbb A^1/\mbb G_m\supset \pt) \cong \Sym C$. 
Addition of divisors gives $\Sym C$ the structure of a monoid. 
\end{eg}

Let $\mf c_X^\vee = \Hom(\mf c_X, \mbb N)$ denote the monoid dual 
to $\mf c_X$, so $k[X/\!\!/N]$ is the semigroup algebra of $\mf c_X^\vee$. 
Then there is an isomorphism 
\[ \Scr A \cong \Hom(\mf c_X^\vee, \Sym C) \] 
where the right hand side represents homomorphisms of monoid objects in the category of schemes
(with $\mf c_X^\vee$ viewed as a discrete scheme). 
A $k$-point of $\Scr A$ is a formal finite sum 
\[ \sum_{v\in \abs C} \ch\lambda_v \cdot v \]
where $\ch\lambda_v$ is an element of the dual monoid $\mathfrak c_X$ and $\ch\lambda_v=0$ for all but finitely many $v$. 

\subsubsection{} 
The stratification described in \S \ref{sect:globstrat} takes here the following form:
For any $\mf P \in \Sym^\infty(\mathfrak c_X\sm 0)$ there is a natural map 
\[ \oo C^{\mf P} \into \Scr A, \] 
where the image consists of the $k$-points $\sum_{v\in \abs C} \ch\lambda_v\cdot v$ such that the unordered multiset of nonzero $\ch\lambda_v$, counted
with multiplicities, coincides with $\mf P$.

\begin{prop}[{\cite[Proposition 3.5]{BNS}}] 
\label{prop:Astrata}
\ 
\begin{enumerate}
\item The maps $\oo C^{\mf P}\into \Scr A$ are locally closed embeddings,
and the collection of such embeddings over all $\mf P\in \Sym^\infty(\mathfrak c_X \sm 0)$ forms a stratification of $\Scr A$.
\item $\oo C^{\mf P'}$ lies in the closure of $\oo C^{\mf P}$
if and only if $\mf P$ refines $\mf P'$. 
\item The irreducible components of $\Scr A$ are in bijection with 
the closures of $\oo C^{\mf P}$ for $\mf P \in \Sym^\infty(\Prim(\mf c_X))$.
\end{enumerate}
\end{prop}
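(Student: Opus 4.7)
The plan is to exploit the explicit description $\Scr A \cong \Hom(\mf c_X^\vee, \Sym C)$, under which an $S$-point is a collection of effective relative Cartier divisors $D_\chi$ on $C\times S$ depending $\mbb N$-linearly on $\chi\in\mf c_X^\vee$; a $k$-point then corresponds to a formal sum $\sum_{v\in \abs C}\ch\lambda_v\cdot v$ with $\ch\lambda_v\in \mf c_X$, and the unordered multiset $\{\ch\lambda_v : \ch\lambda_v\ne 0\}$ is exactly the invariant recorded by the stratification.

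For part (i), I would write down the map $\oo C^{\mf P}\to\Scr A$ explicitly: a point of $\oo C^{\mf P}$ is a tuple of pairwise disjoint multiplicity-free divisors $(D_{\ch\lambda})_{\ch\lambda}$ with $\deg D_{\ch\lambda}=N_{\ch\lambda}$, which I send to the $\mf c_X$-valued divisor $\sum_{\ch\lambda}\ch\lambda\cdot D_{\ch\lambda}$. To see this is a locally closed embedding, pick finitely many $\chi_1,\dotsc,\chi_r\in \mf c_X^\vee$ separating the finite subset $\{\ch\lambda : N_{\ch\lambda}\ne 0\}$, and verify that the image is carved out in $\Scr A$ by: a closed condition requiring every pointwise value to be $\preceq$ some $\ch\lambda$ appearing in $\mf P$, together with an open condition demanding that the supports of the corresponding $D_\chi$'s be arranged so that distinct support points carry distinct values. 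The inverse assembles $D_{\ch\lambda}$ from the support of the total divisor by grouping points that share the common value $\ch\lambda$.

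For (ii), sufficiency comes from exhibiting one-parameter degenerations: a single-step refinement $[\ch\lambda']+[\ch\lambda'']\rightsquigarrow [\ch\lambda=\ch\lambda'+\ch\lambda'']$ is realized by letting two nearby points carrying values $\ch\lambda'$ and $\ch\lambda''$ collide, producing a limit point with value $\ch\lambda$; iterating handles arbitrary refinements. For necessity, the tautological maps $\Scr A\to \Sym C$ induced by each $\chi\in \mf c_X^\vee$ are continuous, so taking a limit of a family in $\oo C^{\mf P}$ landing in $\oo C^{\mf P'}$ forces each value $\ch\mu_v$ appearing in $\mf P'$ to be the sum of the values at nearby points in the generic fiber, whose total multiset is $\mf P$. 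This is precisely the defining relation of $\mf P$ refining $\mf P'$.

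Part (iii) follows: each $\oo C^{\mf P}$ is irreducible as an open subset of a product of irreducible symmetric powers of $C$, hence each $\overline{\oo C^{\mf P}}$ is an irreducible closed subvariety. By (ii), $\overline{\oo C^{\mf P}}$ is not properly contained in any other such closure exactly when $\mf P$ admits no nontrivial further refinement, i.e.\ when every $\ch\lambda$ appearing in $\mf P$ is primitive; since $\mf c_X$ is strictly convex, every element decomposes into primitives, and so every partition refines to one in $\Sym^\infty(\Prim(\mf c_X))$, whose strata thus exhaust the generic parts of the irreducible components of $\Scr A$. I expect the main obstacle to be (i): specifically, the scheme-theoretic check that $\oo C^{\mf P}$ is genuinely isomorphic to the locally closed subscheme of $\Scr A$ cut out by the conditions above, rather than merely bijective on $k$-points. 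Once (i) is established, (ii) and (iii) are essentially formal consequences.
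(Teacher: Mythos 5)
The paper does not prove this proposition; it is quoted from \cite[Proposition 3.5]{BNS}, so there is no in-paper argument to compare against. Your overall strategy---use the description $\Scr A \cong \Hom(\mf c_X^\vee, \Sym C)$, map $\oo C^{\mf P}$ into $\Scr A$ explicitly, and argue local-closedness via cut-out conditions, then handle (ii) by collision degenerations and (iii) by maximality of primitive partitions---is the natural one and matches the spirit of the cited reference.

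That said, the explicit conditions you propose in (i) run in the wrong direction. The closure of $\oo C^{\mf P}$ inside $\sA^{\deg \mf P}$ consists of points whose multiset of nonzero values is a \emph{coarsening} of $\mf P$: when points of $C$ collide, their values add, so they become \emph{larger}. Your closed condition ``every pointwise value $\preceq$ some $\ch\lambda$ appearing in $\mf P$'' bounds values from \emph{above} by elements of $\mf P$, which is backwards and would not even contain the closure. Your open condition ``distinct support points carry distinct values'' is also inconsistent with partitions having multiplicities: if $N_{\ch\lambda}\ge 2$ then $\oo C^{\mf P}$ by definition parametrizes configurations in which $N_{\ch\lambda}$ distinct points of $C$ all carry the same value $\ch\lambda$. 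The correct characterization is that the type multiset equals $\mf P$ exactly, cut out inside $\sA^{\deg\mf P}$ by excluding the (finitely many) strata of proper coarsenings. You rightly flag that the substantive technical content is scheme-theoretic (not just set-theoretic); the standard route is to note that $\bar\iota^{\mf P}:C^{\mf P}\to\sA^{\deg\mf P}$ is finite (proper because $C$ is projective, quasi-finite because fibers over $k$-points are finite), hence has closed image, and then to exhibit $\oo C^{\mf P}$ as an open subscheme of that image on which $\iota^{\mf P}$ is a monomorphism and hence an isomorphism onto its image. Parts (ii) and (iii) of your proposal are sound in outline once (i) is corrected.
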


\begin{cor}[{\cite[Corollary 3.6]{BNS}}] 
For $\ch\lambda \in \mathfrak c_X$, let 
\[ \Scr A^{\ch\lambda} \subset \Scr A \]
denote the subscheme whose $k$-points consist of all 
$\sum_{v\in \abs C} \ch\lambda_v \cdot v$
such that $\sum_v \ch\lambda_v = \ch\lambda$. 
Then $\Scr A^{\ch\lambda}$ is a connected component of $\Scr A$, and
this gives a bijection 
\[ \pi_0(\Scr A) \cong \mathfrak c_X. \] 
\end{cor}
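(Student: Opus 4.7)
The approach is to produce a continuous ``total degree'' morphism $\deg:\Scr A \to \mf c_X$ to a discrete scheme, show that its fibers are precisely the $\Scr A^{\ch\lambda}$, and then verify that each nonempty fiber is connected using the stratification of Proposition~\ref{prop:Astrata}.

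\textbf{Step 1 (Construction of $\deg$).} Use the isomorphism $\Scr A \cong \Hom(\mf c_X^\vee, \Sym C)$ recalled in \S\ref{sect:baseA}. The projection $\Sym C = \bigsqcup_{N\in \mbb N} C^{(N)} \to \mbb N$ is a morphism of monoid objects in schemes, with $\mbb N$ viewed as a discrete scheme. Applying $\Hom(\mf c_X^\vee, -)$ yields a morphism
\[ \deg : \Scr A \to \Hom(\mf c_X^\vee, \mbb N). \]
Since $\mf c_X$ is finitely generated, sharp and saturated (being the lattice points of the rational polyhedral cone $\Cal C_0(X)$), the canonical biduality map $\mf c_X \overset{\sim}{\to} \Hom(\mf c_X^\vee, \mbb N)$ is a bijection, so we may view $\deg$ as a map $\Scr A \to \mf c_X$. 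Unwinding the definitions, the fiber of $\deg$ over $\ch\lambda \in \mf c_X$ is precisely $\Scr A^{\ch\lambda}$ as defined in the statement. Because the target is discrete, each $\Scr A^{\ch\lambda}$ is open and closed in $\Scr A$, and is therefore a union of connected components of $\Scr A$.

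\textbf{Step 2 (Connectedness of each fiber).} For $\ch\lambda = 0$, the fiber is the empty-partition stratum $\Scr A^{0} = \pt$. For $\ch\lambda \ne 0$, consider the singleton partition $[\ch\lambda] \in \Sym^\infty(\mf c_X\sm 0)$, giving a locally closed stratum $\oo C^{[\ch\lambda]} = C^{(1)} \cong C$, which is irreducible since $C$ is a smooth projective curve; in particular it is nonempty and connected. Now let $\mf P \in \Sym^\infty(\mf c_X\sm 0)$ be any other partition with $\deg(\mf P) = \ch\lambda$. Iteratively splitting a single label $\ch\mu$ of $\mf P$ as $\ch\mu_1 + \ch\mu_2$ (reading the process in reverse, starting from $[\ch\lambda]$) shows that $\mf P$ refines $[\ch\lambda]$ in the sense of \S\ref{def:partition}. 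Hence by Proposition~\ref{prop:Astrata}(ii), the stratum $\oo C^{[\ch\lambda]}$ lies in $\overline{\oo C^{\mf P}}$. Consequently,
\[ \Scr A^{\ch\lambda} \;=\; \bigcup_{\deg(\mf P)=\ch\lambda} \overline{\oo C^{\mf P}}, \]
is a union of irreducible closed subsets each of which contains the connected subset $\oo C^{[\ch\lambda]}$; therefore $\Scr A^{\ch\lambda}$ is connected.

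\textbf{Step 3 (Conclusion).} Combining Steps~1 and~2, the map $\deg$ exhibits $\Scr A$ as a disjoint union $\bigsqcup_{\ch\lambda\in\mf c_X} \Scr A^{\ch\lambda}$ of nonempty connected clopen subschemes, so these are exactly the connected components of $\Scr A$ and $\pi_0(\Scr A) \cong \mf c_X$. There is no serious obstacle in the argument: the only mild subtlety is the biduality identification $\mf c_X \cong \Hom(\mf c_X^\vee,\mbb N)$, which is a standard property of fine sharp saturated monoids. Everything else is bookkeeping with the stratification established in Proposition~\ref{prop:Astrata}.
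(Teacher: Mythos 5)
Your proof is correct and matches the argument in the cited BNS result, which the paper does not reprove: one gets clopen-ness of $\Scr A^{\ch\lambda}$ from the discrete morphism $\Scr A = \Hom(\mf c_X^\vee,\Sym C)\to\Hom(\mf c_X^\vee,\mbb N)\cong\mf c_X$ and connectedness from Proposition~\ref{prop:Astrata}(ii), since every partition $\mf P$ with $\deg(\mf P)=\ch\lambda$ refines the singleton $[\ch\lambda]$, so that the diagonal stratum $\oo C^{[\ch\lambda]}\cong C$ lies in every $\overline{\oo C^{\mf P}}$. Your appeal to biduality for fine, sharp, saturated monoids is adequate: sharpness together with saturation forces the group generated by $\mf c_X$ to be torsion-free, and the real cone spanned by $\mf c_X$ is automatically full-dimensional in the real span of that group, which is exactly what the standard biduality theorem requires.
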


\begin{rem}\label{rem:Afree}
If $\mathfrak c_X$ is a free monoid with basis $\{ \ch\nu_i \}$, 
then for $\ch\lambda = \sum_i N_i \ch\nu_i$ we have 
$\Scr A^{\ch\lambda} = \prod_i C^{(N_i)}$. The bases for the Zastava spaces
in \cite[\S 2.1]{BFGM} take this form.
\end{rem}

\subsection{Zastava model} \label{sect:localmodel}

\emph{For the rest of this paper we assume that $B$ acts simply transitively on $X^\circ$.} 
This implies that $T_X = T$ and $\Lambda_X = \Lambda_G$, so we will use the notation
interchangeably. 

\medskip

We introduce a special case of the model used in \cite[Part III]{GN}, which is based 
on a general pattern pointed out by Drinfeld (see \cite[\S 4.2--4.4]{Drinfeld}). 
These are a generalization of the Zastava\footnote{``Zastava'' is Croatian for ``flag''.} spaces introduced by Finkelberg--Mirkovi\'c in \cite{FM, FFKM, BFGM}, and we will henceforth call them the Zastava model for $X$. 

\medskip

The Zastava model for $X$ is defined as 
\begin{equation}
    \sY = \Scr Y_{X} = \Maps_\gen(C, X/B \supset \pt),
\end{equation}
the stack of maps $C \times S \to X/B$ generically landing 
in $X^\circ/B=\pt$.

Applying $\Maps(C,?/T)$ to the natural map $X/N \to X/\!\!/N $ induces a map 
\begin{equation}\label{mappi} \pi: \Scr Y \to \Scr A.
\end{equation}
For $\ch\lambda \in \mf c_X$, let $\sY^{\ch\lambda}$ denote the preimage of 
$\sA^{\ch\lambda}$ under $\pi$. 

We show in Proposition~\ref{prop:YGr} below that $\sY$ is representable by a
scheme locally of finite type over $k$. This was predicted by 
Drinfeld \cite[Conjecture 4.2.3]{Drinfeld} in 
a more general setting. 

\begin{eg} \label{eg:YHecke}
Let $X = \mbb G_m \bs \GL_2$ where $\mbb G_m$ is embedded as $\left( \begin{smallmatrix} 1 & \\ & * \end{smallmatrix} \right)$. 
Then $\ch G_X = \ch G = \GL_2$ and $\ch\Lambda_X = \ch\Lambda_G = \mbb Z^2$ with 
standard basis $\ch\vareps_1,\ch\vareps_2$.
The $B$-orbits on $X$ are the same as $\mbb G_m$-orbits on $G/B=\mbb P^1$, so
there are three orbits: $\mbb G_m, \{0\}, \{\infty\}$. 
These correspond to $X^\circ$ and two colors $D^+, D^- \subset X$, respectively.
We have $\ch\nu_{D^+} = \ch\vareps_1,\, \ch\nu_{D^-} = -\ch\vareps_2$
and $\mf c_X = \mbb N^2$ is the free monoid generated by $\ch\nu_{D^+}, \ch\nu_{D^-}$. 
Note that $\ch\nu_{D^+} + \ch\nu_{D^-} = \ch\alpha$ is the simple coroot. 

Since $X$ is affine homogeneous, $\sM_X = \Bun_H = \Bun_1$ is the moduli stack
of line bundles. The Zastava model is $\sY=\Maps_\gen(C, \mbb G_m \bs \GL_2/B \supset \pt) 
= \Maps_\gen(C, \mbb G_m \bs \mbb P^1 \supset \pt)$. 
This is the stack parametrizing two line bundles $\Scr L, \Scr L'$ on $C$
and a fiberwise injective map of vector bundles $\sigma :\Scr L \into \Scr O_C \oplus \Scr L'$ 
such that \emph{both} coordinates $\sigma_1 : \Scr L \into \Scr O_C$ and $\sigma_2 : \Scr L \into \Scr L'$ are generically nonzero. 
Thus, $\sigma_1,\sigma_2$ are equivalent to two effective Cartier divisors 
$D_1, D_2$ which give $\Scr L = \Scr O(-D_1)$ and $\Scr L' = \Scr O(D_2-D_1)$. 
The condition that $\sigma=(\sigma_1,\sigma_2)$ is fiberwise injective is equivalent
to saying that the supports of $D_1,D_2$ are disjoint. 
Therefore, we have an identification
\[ \sY_{\mbb G_m \bs \GL_2} = \Sym C \oo\xt \Sym C = \bigsqcup_{(n_1,n_2)\in \mbb N^2} C^{(n_1)} \oo\xt C^{(n_2)}. \]
Meanwhile $\Scr A_{\mbb G_m \bs \GL_2} = \Sym C \xt \Sym C = \bigsqcup_{\mbb N^2} C^{(n_1)} \times C^{(n_2)}$. 
In this case $\sY \to \sA$ is the natural open embedding, and 
$\sY^{n_1\ch\nu_{D^+}+n_2\ch\nu_{D^-}} = C^{(n_1)} \oo\xt C^{(n_2)}$ is connected.
The map $\sY \to \sM_X$ forgetting the $B$-structure corresponds to 
$(D_1,D_2) \mapsto \Scr O(D_2-D_1)$. 

Note that for the embedding above of $\mbb G_m \into \GL_2$, 
the open $B$-orbit is the orbit of $\left( \begin{smallmatrix} 1 & 0 \\ 1 & 1 \end{smallmatrix}
\right) \in \mbb G_m \bs \GL_2$. In particular $X^\circ$ does not contain the identity coset. 
If we conjugate $\mbb G_m$ to the embedding $\left( \begin{smallmatrix} 1 & 0 \\ 1-a & a \end{smallmatrix}\right)$, then we may take the base point $x_0=1$. 
\end{eg}

\begin{eg} \label{eg:YHeckePGL}
In the example above we could instead replace $\GL_2$ by $G = \PGL_2$ and 
$H = \mbb G_m$ becomes the split torus inside $\PGL_2$. 
Then we still have $X = \mbb G_m\bs \PGL_2$ affine spherical with $\ch G_X = \ch G = \mathrm{SL}_2$ and two colors $D^+,D^-$. 
However now $\ch\Lambda_X = \ch\Lambda_G = \mbb Z \frac {\ch\alpha}2$ 
and $\ch\nu_{D^+} = \ch\nu_{D^-} = \frac {\ch\alpha}2$. 
The space $\Scr Y_{\mbb G_m \bs \PGL_2}$ still identifies with 
$\Sym C \oo\xt \Sym C$. However now $\Scr A_{\mbb G_m \bs \PGL_2} = \Sym C$
with $\Scr A^{n \frac{\ch\alpha}2} = C^{(n)}$. 
Then $\sY_{\mbb G_m \bs \PGL_2}^{n \frac{\ch\alpha}2} = \bigsqcup_{n_1+n_2=n} C^{(n_1)}\oo\xt C^{(n_2)}$ and the map $\sY \to \sA$ corresponds to addition of divisors.

Note that $\sM_X = \Bun_1$ and $C^{(n_1)}\oo\xt C^{(n_2)}$ maps to the component 
$\Bun_1^{n_2-n_1}$ of degree $n_2-n_1$ line bundles. 
Thus, while $\sY^{n \frac{\ch\alpha}2}$ is not connected, 
fixing a connected component of $\Bun_1$ determines the connected
component of $\sY^{n \frac{\ch\alpha}2}$.
\end{eg}

\subsection{Graded factorization property}
Note that our assumption on $X^\circ$ implies that $X^\circ/B=\pt$ is a dense open
substack of $X/B$. In the language of \cite[\S 4.2.1]{Drinfeld}, the stack $X/B$ is \emph{pointy}. Drinfeld observed that maps from a curve to a pointy stack will have local
behavior with respect to $C$ (compared to \cite{Drinfeld}, we are in the special
setting where we have a group $B$ acting on $X$, not just a groupoid).

Let $\ch\lambda = \ch\lambda_1 + \ch\lambda_2$ with $\ch\lambda_i \in \mathfrak c_X$ and let us denote by $\Scr A^{\ch\lambda_1} \oo\times \Scr A^{\ch\lambda_2}$ the open subset of the direct product $\Scr A^{\ch\lambda_1} \times \Scr A^{\ch\lambda_2}$ consisting of $\alpha_1,\alpha_2 : C \to (X/\!\!/N)/T$ such that
the supports of $C\sm \alpha_1^{-1}(\pt)$ and $C\sm \alpha_2^{-1}(\pt)$ are 
disjoint. 
We have a natural \'etale map $\Scr A^{\ch\lambda_1} \oo\times \Scr A^{\ch\lambda_2} \to \Scr A^{\ch\lambda}$.

\begin{prop} \label{prop:factorization}
The scheme $\Scr Y$ has the graded factorization property, in the sense
that there is a natural isomorphism 
\[ 
\Scr Y^{\ch\lambda}  \underset{\Scr A^{\ch\lambda}}\times 
(\Scr A^{\ch\lambda_1} \oo\xt \Scr A^{\ch\lambda_2})
\cong 
(\Scr Y^{\ch\lambda_1} \times \Scr Y^{\ch\lambda_2})|_{\Scr A^{\ch\lambda_1}\oo\times \Scr A^{\ch\lambda_2}}. 
\]
\end{prop}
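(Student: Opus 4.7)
The plan is to construct mutually inverse morphisms between the two sides, exploiting the ``pointiness'' of $X/B$ (i.e., the fact that $X^\circ/B = \pt$ is open in $X/B$) together with the disjointness condition built into $\Scr A^{\ch\lambda_1}\oo\xt \Scr A^{\ch\lambda_2}$. The key observation is that any $S$-point of $\Scr Y$ is a map $f: C\xt S \to X/B$ whose ``non-generic locus'' (the preimage of the complement of $X^\circ/B$) is exactly the support of the associated $\mathfrak c_X$-valued divisor on $C\xt S$, and moreover since $B$ acts simply transitively on $X^\circ$, the restriction of $f$ to the open set where it lands in $\pt$ carries a canonical trivialization of the associated $B$-bundle, determined by the fixed base point $x_0$.

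First I would construct the map from left to right. Given an $S$-point $(f, \alpha_1, \alpha_2)$ of the fiber product, let $D_i \subset C\xt S$ denote the support of $\alpha_i$; by the definition of $\oo\xt$ these relative divisors are disjoint, so the opens $U_i := (C\xt S) \sm D_j$ (with $\{i,j\}=\{1,2\}$) form a Zariski cover of $C\xt S$, and the ``non-$\pt$'' locus of $f$ equals $D_1 \sqcup D_2$. I build $y_i: C\xt S \to X/B$ by gluing $f|_{U_i}$ with the constant map at $\pt \in X/B$ on the open $C\xt S \sm D_i$; these two pieces agree on their overlap $U_1\cap U_2$ because both restrict to the constant map at $\pt$ with the canonical $x_0$-trivialization. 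The resulting $y_i$ lies in $\Scr Y^{\ch\lambda_i}$ with associated $\Scr A$-point equal to $\alpha_i$, so $(y_1,y_2)$ is an $S$-point of the right-hand side. For the inverse direction, given $(y_1, y_2)$ with disjoint supports $D_1, D_2$, I glue $y_1|_{U_2}$ and $y_2|_{U_1}$ on the cover $\{U_1, U_2\}$; the overlap condition holds for the same reason, yielding a map $f: C\xt S \to X/B$ generically landing in $\pt$, whose image in $\Scr A^{\ch\lambda}$ is $\alpha_1 + \alpha_2$.

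These two constructions are visibly inverse, since by construction restricting $y_i$ back to $U_i$ recovers $f|_{U_i}$. The main (and essentially only) technical point is verifying that the gluing is well-defined at the level of $S$-families rather than just $k$-points, which comes down to the fact that a map $W \to X^\circ/B = \pt$ from a scheme $W$ is \emph{rigidified} by the trivialization of the pulled-back $B$-bundle coming from $x_0 \in X^\circ(k)$ and the simple transitivity of the $B$-action on $X^\circ$. This rigidification makes the Zariski cocycle condition automatic on the overlap $U_1 \cap U_2 \subset f^{-1}(\pt)$, so fpqc (in fact Zariski) descent of maps to the stack $X/B$ produces the desired glued maps. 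No further obstacles arise; the statement is essentially Drinfeld's general factorization principle for pointy targets \cite[\S 4.2--4.4]{Drinfeld} in our specific setting, with the disjoint support condition supplied by the factorization of $\Scr A$ established in Proposition~\ref{prop:Astrata}.
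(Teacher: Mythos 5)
Your proof takes the same approach as the paper's: glue $S$-points of $X/B$ along the Zariski cover of $C\times S$ supplied by the disjointness of supports, and use the pointiness of the target ($X^\circ/B = \pt$ is a scheme with no nontrivial automorphisms) to make the gluing data on overlaps canonical, so that descent for the stack $X/B$ applies. This is exactly how the paper argues.

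One small slip in the inverse direction: with your stated convention $U_i := (C\times S)\setminus D_j$ (so that $D_i \subset U_i$), the pieces to glue should be $y_1|_{U_1}$ on $U_1$ and $y_2|_{U_2}$ on $U_2$, not $y_1|_{U_2}$ and $y_2|_{U_1}$ as written. As written, $y_1|_{U_2}$ is $y_1$ restricted to the complement of its own support $D_1$ — i.e.\ the constant map to $\pt$ — and likewise for $y_2|_{U_1}$, so you would be gluing two constant maps and losing all the data. (The indices you wrote would instead be correct under the opposite convention $U_i := y_i^{-1}(\pt) = (C\times S)\setminus D_i$, which is in fact what the paper uses; the clash is only with your own definition.) This is a notational inconsistency rather than a conceptual gap, and it does not affect the validity of the argument.
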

\begin{proof}
Let $y_1, y_2 : C \times S \to X/B$ be $S$-points of $\Scr Y^{\ch\lambda_1}, \Scr Y^{\ch\lambda_2}$, respectively. 
Let $U_i = y_i^{-1}(\pt)\subset C\times S$; the condition that $(\pi(y_1),\pi(y_2))\in
\Scr A^{\ch\lambda_1} \oo\xt \Scr A^{\ch\lambda_2}$ is equivalent to 
requiring that $U_1 \cup U_2 = C\times S$. 
Then 
$y_1|_{U_1\cap U_2} \cong y_2|_{U_1\cap U_2} : U_1\cap U_2 \to \pt$
provides a gluing 
data for $y_1,y_2$ on the covering of $C\times S$ by $U_1$ and $U_2$.
Since $X/B$ is a stack, the gluing data descends to a map
$y : C\times S \to X/B$ that sends $U_1\cap U_2$ to $\pt$. 
This defines $y\in \Scr Y^{\ch\lambda}(S)$. 
The map in the opposite direction
is constructed in the same way and they are mutually inverse.
\end{proof}

We will henceforth use the notation 
$\Scr Y^{\ch\lambda_1} \oo\xt \Scr Y^{\ch\lambda_2}$ to denote
$(\Scr Y^{\ch\lambda_1} \times \Scr Y^{\ch\lambda_2})|_{\Scr A^{\ch\lambda_1}\oo\times \Scr A^{\ch\lambda_2}}$. 

\subsection{Global-to-Zastava yoga} 

We have a map $\Scr Y \to \Scr M_{X}$ by forgetting the $B$-structure. 
More precisely, we have an open embedding 
\begin{equation}
    \Scr Y \into \Scr M_{X} \underset{\Bun_G}\times \Bun_B.
\end{equation}

Although the natural map $\Bun_B \to \Bun_G$ is in general not smooth, 
it is smooth over a large enough open substack: 
consider $T$ as the Levi quotient of $B^-$. Let $\mf n^-$ denote the Lie algebra of $N^-$
viewed as a $T$-module. Define the open substack $\Bun^r_T \subset \Bun_T$ to consist
of those $T$-bundles $\Scr P_T$ for which 
$H^1(C, V \times^T \Scr P_T)=0$, for all $T$-modules $V$ which appear as
subquotients of $\mf n^-$. Let 
$\Bun_B^r$ be the preimage of $\Bun^r_T$ under the natural projection 
$\Bun_B \to \Bun_T$.

For $\ch\mu\in \ch\Lambda_T$, let $\Bun^{\ch\mu}_T$ denote the corresponding
connected component of $\Bun_T$ of degree $\ch\mu$, and 
let $\Bun^{\ch\mu}_B$ (resp.~$\Bun^{\ch\mu,r}_B$) be 
its preimage in $\Bun_B$ (resp.~$\Bun^r_B$). 
Note that by Riemann--Roch, $\Bun_B^{-\ch\mu,r} = \Bun_B^{-\ch\mu}$ 
if $\brac{\alpha_i,\ch\mu} > 2g-2$ for all simple roots $\alpha_i$, where $g$ is the genus of $C$. We say that $\ch\mu$ is ``large enough'' if $\brac{\alpha_i,\ch\mu}>N$
for all simple roots $\alpha_i$ and some $N \gg 0$.

\begin{lem}[{\cite{DS}, \cite[Lemma 3.7]{BFGM}, \cite[Lemma 14.2.1]{GN}}] 
\label{lem:BunBtoGsm}
The restriction of the map $\Bun_B \to \Bun_G$ to $\Bun^r_B$ is smooth. 
Any open substack $U\subset \Bun_G$ of finite type is contained in the image of 
$\Bun^{-\ch\mu}_B$ for $\ch\mu$ large enough, and the fibers of $\Bun^{-\ch\mu}_B\to \Bun_G$ over $U$ are geometrically connected. 
\end{lem}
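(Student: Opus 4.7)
The plan is a three-step strategy that cleanly separates smoothness, the existence of $B$-reductions of the required degree, and connectedness of fibers.

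First, for smoothness, I would compute the relative tangent complex of the morphism $f \colon \Bun_B \to \Bun_G$. At a $B$-bundle $\Scr P_B$ with induced $T$-bundle $\Scr P_T$ and $G$-bundle $\Scr P_G$, this complex identifies with $R\Gamma(C, (\mf g/\mf b)_{\Scr P_B})$, where $(\mf g/\mf b)_{\Scr P_B}$ is the vector bundle on $C$ associated to $\Scr P_B$ and to the $B$-representation $\mf g/\mf b \cong \mfn^-$. Since $C$ is a curve, $H^{\ge 2}$ vanishes automatically, so smoothness at $\Scr P_B$ is equivalent to $H^1(C, \mfn^-_{\Scr P_B}) = 0$. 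Now $\mfn^-$ admits a filtration by $B$-subrepresentations whose successive quotients are the root lines $\mf g_{-\alpha}$ for $\alpha \in \Delta_G^+$, on which $B$ acts through its Levi quotient $T$. Consequently $\mfn^-_{\Scr P_B}$ admits a filtration by line bundles of the form $(-\alpha)_{\Scr P_T}$, and the long exact sequence of cohomology reduces the vanishing $H^1(C, \mfn^-_{\Scr P_B}) = 0$ to the vanishing of $H^1$ on each of these line bundles --- which is precisely the condition defining $\Bun^r_B$.

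Second, for the image statement, I would invoke the Drinfeld--Simpson theorem that every $G$-bundle on $C$ admits a $B$-reduction. Given a $B$-reduction of $T$-degree $-\ch\mu_0$, performing a lower Hecke modification on the $T$-part along a simple coroot $\ch\alpha$ at a point of $C$ yields another $B$-reduction of the same underlying $G$-bundle of degree $-\ch\mu_0 - \ch\alpha$, and iterating produces $B$-reductions of arbitrarily antidominant $T$-degree. Since $U \subset \Bun_G$ is of finite type, hence quasi-compact, applying this construction in families over $U$ (using a smooth cover by a finite-type scheme and the fact that the ``defect'' $\ch\mu_0$ of a chosen family of $B$-reductions is bounded) produces a single $\ch\mu$ large enough that every $\Scr P_G \in U$ lies in the image of $\Bun_B^{-\ch\mu}$. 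By Riemann--Roch this $\ch\mu$ can simultaneously be chosen so that we land in $\Bun_B^{-\ch\mu,r} = \Bun_B^{-\ch\mu}$, giving smoothness on the entirety of this image.

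Third, for connectedness of fibers, once $\ch\mu$ is large enough the fiber at $\Scr P_G \in U$ is smooth of dimension $\chi(C, \mfn^-_{\Scr P_B}) = \sum_{\alpha \in \Delta_G^+}(\brac{\alpha, \ch\mu} + 1 - g)$, which grows with $\ch\mu$. The fiber sits as an open subscheme of the space of sections of degree $-\ch\mu$ of the twisted flag bundle $\Scr P_G/B \to C$; the complement, consisting of sections meeting certain Schubert divisors in excess, has codimension growing with $\ch\mu$. Connectedness of the ambient mapping space in sufficiently antidominant degree can be extracted from the Bia{\l}ynicki-Birula cell decomposition of $\Scr P_G/B$ applied generically along $C$, which identifies the top-dimensional stratum of sections as dense and irreducible. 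Uniformity in $U$ follows from the quasi-compactness of $U$ together with the fact that the locus where the fibers fail to be geometrically connected is constructible in $\Bun_G$ and empty on each fiber for $\ch\mu$ large.

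The main obstacle is the connectedness assertion; smoothness is a direct line-bundle cohomology vanishing, and the image statement follows cleanly from Drinfeld--Simpson combined with Hecke modifications on the $T$-part. The connectedness requires a global analysis of the mapping space into a twisted flag bundle and a control on how it degenerates as $\ch\mu$ grows, and in practice one would likely appeal to the arguments of \cite{DS} or to the explicit form recorded in \cite[Lemma 14.2.1]{GN} rather than redo this analysis from scratch.
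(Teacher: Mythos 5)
This lemma is cited in the paper to external sources (\cite{DS}, \cite[Lemma 3.7]{BFGM}, \cite[Lemma 14.2.1]{GN}) and no proof is given in the text, so there is no ``paper's proof'' to compare against. Assessing your sketch on its own merits: the smoothness argument via the filtration of $(\mf g/\mf b)_{\Scr P_B}$ by root line bundles is correct and complete, and the surjectivity argument from Drinfeld--Simpson together with modifications of the $B$-reduction at points of $C$ to drive the $T$-degree arbitrarily anti-dominant is the standard and correct argument.

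The connectedness step, which you flag as the hardest part, contains a misstatement worth pinpointing. You write that the fiber of $\Bun^{-\ch\mu}_B\to \Bun_G$ over $\Scr P_G$ ``sits as an open subscheme of the space of sections of degree $-\ch\mu$ of $\Scr P_G/B\to C$'' with complement ``sections meeting certain Schubert divisors in excess.'' But a $B$-reduction of $\Scr P_G$ is literally a section of $\Scr P_G/B$, so the fiber \emph{is} the full space of sections of degree $-\ch\mu$, not a proper open inside it. You appear to be conflating $\Bun_B^{-\ch\mu}$ with the Drinfeld compactification $\ol\Bun_B^{-\ch\mu}$, where the honest sections are dense open among quasimaps (degenerate sections); that picture is relevant to the density claim in Proposition~\ref{prop:ULA}(ii) but not to the fiber of $\Bun_B\to\Bun_G$ itself. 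With that corrected, what remains is to show the mapping space $\on{Sect}_{-\ch\mu}(C,\Scr P_G/B)$ is geometrically connected for $\ch\mu$ sufficiently dominant, uniformly over $U$. The Bia{\l}ynicki-Birula decomposition plan is plausible but, as stated, only sketches why one stratum is dense and irreducible; it doesn't address why the lower strata cannot form separate connected components, nor does it handle the twisting by a nontrivial $\Scr P_G$ (the $\mbb G_m$-action on $G/B$ used for the BB decomposition need not globalize over $C$ without a reduction of structure group). The cited references close this gap either by an induction on semisimple rank down to $\SL_2$ (where connectedness of the space of line subbundles of prescribed degree is classical), or by resolving the mapping space via Kontsevich stable maps over the irreducible base $\mathscr M_C$, as in the proof of Proposition~\ref{prop:ULA} elsewhere in this paper. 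Filling this in would make your sketch complete.
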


Under our conventions, the composition $\Scr Y^{\ch\lambda} \to \Bun_B \to \Bun_T$ lands in the connected component $\Bun_T^{-\ch\lambda}$.

\begin{cor}\label{cor:Mcover}
(i) The map $\Scr Y^{\ch\mu} \to \Scr M_{X}$
is smooth with geometrically connected fibers (when nonempty) for $\ch\mu$ large enough.

(ii)
Any $k$-point of $\Scr M_{X}$ lies in the image of $\Scr Y^{\ch\mu}$ for all $\ch\mu$ in a translate of $\ch\Lambda^\pos_G$. 
\end{cor}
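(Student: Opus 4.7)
The key observation is that $\Scr Y^{\ch\mu}$ is cut out in $\Scr M_X \times_{\Bun_G} \Bun_B^{-\ch\mu}$ by the open condition that the composed map $C\times S \to X/B$ generically lands in $X^\circ/B$. By Riemann--Roch, the condition $\brac{\alpha_i,\ch\mu}>2g-2$ for all simple roots $\alpha_i$ forces $\Bun_B^{-\ch\mu}=\Bun_B^{-\ch\mu,r}$, so Lemma~\ref{lem:BunBtoGsm} shows that $\Bun_B^{-\ch\mu}\to \Bun_G$ is smooth; smoothness of $\Scr Y^{\ch\mu}\to \Scr M_X$ then follows by base change and the open embedding. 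For the fibers: given $f\in \Scr M_X$ with underlying bundle $\sP_G$, for any finite-type open substack of $\Bun_G$ containing $\sP_G$ the lemma tells us that for $\ch\mu$ sufficiently large the fiber of $\Bun_B^{-\ch\mu}\to \Bun_G$ over $\sP_G$ is a geometrically irreducible affine space (the sections of the associated $\mf n^-$-bundle being an $H^1$-free linear problem). A nonempty open subscheme of an irreducible scheme is irreducible, hence geometrically connected, giving the conclusion for $\Scr Y^{\ch\mu}$.

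\textbf{Plan for part (ii), first step.} Given $f\in \Scr M_X(k)$ realized as $(\sP_G,\sigma)$, I would first produce a single compatible $B$-reduction. Restricting $\sigma$ to the generic point $\Spec \Bbbk$ gives a map to $X^\bullet/G = \pt/H$, i.e.\ an $H$-torsor on $\Spec \Bbbk$. By Remark~\ref{rem:Hconnected}, $H$ is connected, and $\Bbbk=k(C)$ is a $C_1$-field by Tsen's theorem, so this torsor is trivial. Any trivialization provides a lift of $\sigma|_{\Spec\Bbbk}$ to a point of $X^\bullet(\Bbbk)$, equivalently a trivialization of $\sP_G|_{\Spec\Bbbk}$ under which the image is $x_0$. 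Then $B\subset G = \sP_G|_{\Spec\Bbbk}$ is a generic $B$-reduction whose image under $\sigma$ lies in $X^\circ$. Since $G/B$ is proper, the valuative criterion extends this generic $B$-reduction uniquely to a global $B$-reduction $\sP_B^0$ of $\sP_G$. This produces a $k$-point of $\Scr Y^{\ch\mu_0}$ over $f$ for some specific $\ch\mu_0$.

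\textbf{Plan for part (ii), second step: extension along $\ch\Lambda^\pos_G$.} I want to show $f$ is in the image of $\Scr Y^{\ch\mu_0+\ch\nu}$ for every $\ch\nu \in \ch\Lambda^\pos_G$. Because all simple roots are spherical roots of type $T$, one has $\ch\alpha_i = \ch\nu_{D_{\alpha_i}^+}+\ch\nu_{D_{\alpha_i}^-}\in \mf c_X^{\Cal D}\subset \mf c_X$, so $\ch\Lambda^\pos_G\subset \mf c_X$ and each $\Scr Y^{\ch\nu}$ is defined. For any $\ch\nu\in \ch\Lambda^\pos_G$, choose an effective $\ch\nu$-valued divisor $\alpha_1 \in \sA^{\ch\nu}(k)$ whose support is disjoint from $\mathrm{supp}(\pi(y_0))$ and lies in a Zariski open $U\subset C$ over which $\sP_G$ is trivializable (such a $U$ exists by the standard fact that any $G$-bundle on a curve is trivial on a dense open). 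Then by the factorization isomorphism of Proposition~\ref{prop:factorization}, the data of $y_0$ together with a point $y_1 \in \Scr Y^{\ch\nu}$ mapping to $\alpha_1$ yields a point $y \in \Scr Y^{\ch\mu_0+\ch\nu}$, provided the underlying $G$-bundles glue consistently across the disjoint supports. I would then arrange $y_1$ so that its underlying $G$-bundle equals $\sP_G$ by exploiting the local triviality of $\sP_G$ on $U$ together with Lemma~\ref{lem:BunBtoGsm} (or a variant of Drinfeld--Simpson) to produce the required $B$-reduction with prescribed $G$-bundle; the gluing then returns $\sP_G$ as the underlying $G$-bundle of $y$, mapping to $f$ as required.

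\textbf{Main obstacle.} The difficulty in (ii) is the second step: factorization alone produces a new $G$-bundle that is a gluing of $\sP_G$ with the $G$-bundle of $y_1$, and controlling this gluing so that the result is exactly $\sP_G$ requires matching local trivializations. One must choose the support of $\alpha_1$ carefully inside a common trivialization locus and then invoke existence of compatible $B$-reductions of the trivialized bundle (a mild variant of Drinfeld--Simpson adapted to $\Scr Y$). An alternative route, which sidesteps this gluing issue, is to observe that the space of all compatible $B$-reductions of $\sP_G$ is parametrized by rational lifts $g\in H(\Bbbk)B \subset G(\Bbbk)$ modulo $B(\Bbbk)$, i.e.\ by $H(\Bbbk)$, and that varying $g$ by rational elements with prescribed polar behavior at points of $C$ produces compatible $B$-reductions whose degrees exhaust the translate $\ch\mu_0+\ch\Lambda^\pos_G$.
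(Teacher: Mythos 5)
Your part (i) is essentially identical to the paper's proof: open embedding into $\Scr M_X\times_{\Bun_G}\Bun_B^{-\ch\mu}$, Riemann--Roch to ensure $\Bun_B^{-\ch\mu}=\Bun_B^{-\ch\mu,r}$, then base change from Lemma~\ref{lem:BunBtoGsm}.

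For part (ii), your first step (producing one compatible $B$-reduction) takes a slightly different but valid route: the paper restricts $\sP_G$ to an open $U\subset C$ on which it is trivial (citing Steinberg for triviality of $G$-bundles on curves over $k=\bar k$), then uses that $\sigma(U)$ meets the open orbit of some conjugate of $B$; you instead invoke $\Bbbk=k(C)$ being $C_1$ and $H$ connected to trivialize the generic $H$-torsor directly. Both work, and yours is arguably cleaner; they are in the end both applications of Steinberg's theorem. Either way one then extends the generic $B$-reduction by properness of $G/B$.

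The issue is the second step of part (ii). Your primary route via factorization has a real gap, which you yourself identify: the graded factorization isomorphism $\sY^{\ch\mu_0}\oo\xt\sY^{\ch\nu}\cong\sY^{\ch\mu_0+\ch\nu}|_{\ldots}$ produces a $B$-bundle whose induced $G$-bundle is glued from $\sP_G^{y_0}$ and $\sP_G^{y_1}$ across the two degenerate loci, and there is no canonical reason this gluing returns $\sP_G$ on the nose. Merely arranging $\sP_G^{y_1}\cong\sP_G$ abstractly is not enough; one would need to match the specific trivializations entering the gluing, which is exactly what you flag as the difficulty. Your ``alternative route'' is the right idea and is in fact what the paper does, but it is not developed to the point of being a proof. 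You correctly note that lifts of $(\sP_G,\sigma)$ to $\sY(k)$ are parametrized by $H(\Bbbk)B(\Bbbk)/B(\Bbbk)$ (which is $H(\Bbbk)$ here since $H\cap B=1$ under the simply-transitive hypothesis), but the crucial missing step is showing that as one varies $g\in H(\Bbbk)$, the degrees of the resulting $B$-reductions realize \emph{every} shift $N\ch\alpha$ for \emph{every} simple root $\alpha$ and all $N\gg 0$. This requires examining, for each simple root $\alpha$, the image of $H\cap P_\alpha$ in $P_\alpha/\mf R(P_\alpha)\cong\PGL_2$ and exhibiting an identification $P_\alpha/B\cong\mbb P^1$ under which $(H\cap P_\alpha)(\Bbbk)$ maps onto a set containing $\Bbbk^\times$; only then does Riemann--Roch, applied to rational functions on $C$ with divisor of zeros of degree $N$, give the required lifts. ``Prescribed polar behavior'' gestures at this but does not carry it out, and without it the claim that the degrees exhaust $\ch\mu_0+\ch\Lambda^\pos_G$ is unjustified.
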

\begin{proof}
We have an open embedding $\Scr Y^{\ch\mu} \into \Scr M_{X} \xt_{\Bun_G} \Bun_B^{-\ch\mu}$, so (i) follows from Lemma~\ref{lem:BunBtoGsm} 
by base change. To show (ii), we consider the fiber of 
$\Scr Y \to \Scr M_{X}$ on $k$-points:

A point in $\Scr M_X(k)$ is equivalent to a datum 
$(\Scr P_G, \sigma : C\to X\xt^G \Scr P_G)$. 
First we show that there exists some point in $\sY(k)$ that maps to $f$. 
By \cite{Steinberg}, there exists an open subscheme $U\subset C$
on which $\Scr P_G|_U$ can be trivialized. 
If we fix such a trivialization, then $\sigma|_U$ 
identifies with a section $U \to H\bs G$. Since $H\bs G$ is
spherical, $\sigma(U)$ intersects the open orbit of a 
Borel subgroup $g B g^{-1} \subset G$ for some $g\in G(k)$. 
Then $g$ defines a point in  
$(G/B)(k) \subset (G/B)(\Bbbk)$, where $\Bbbk=k(C)$. 
Using our fixed trivialization
of $\Scr P_G|_U$, we get a section 
\[ \Spec \Bbbk \overset{g}\to \Spec \Bbbk \times G/B \cong \Scr P_G^0|_{\Spec \Bbbk} \xt^G (G/B) 
\cong \sP_G|_{\Spec \Bbbk} \xt^G G/B, 
\]
which extends to a section $C \to \Scr P_G \xt^G G/B$ since 
$G/B$ is proper. The latter is equivalent to giving a 
$B$-structure $\Scr P_B$ on $\Scr P_G$. 
By construction $(\Scr P_B,\sigma)$ satisfies the generic condition 
for it to lie in $\Scr Y(k)$, and $(\Scr P_B,\sigma)$ maps to $(\sP_G,\sigma) \in \sM_X(k)$. 

Next let us fix an arbitrary lift of $(\Scr P_G,\sigma)$ to $(\Scr P_B^1,\sigma)\in \Scr Y(k)$.
Fix a trivialization of $\Scr P_B^1|_{\Spec \Bbbk}$, which also specifies a 
trivialization $\Scr P_G|_{\Spec \Bbbk} \cong \Scr P_G^0|_{\Spec \Bbbk}$.
With respect to this trivialization, we have a bijection of sets
\begin{equation}\label{e:liftBstructure}
    H(\Bbbk)B(\Bbbk)/B(\Bbbk) \overset\sim\to \{\text{lift of } (\sP_G,\sigma) \text{ to a point in } \sY(k) \} 
\end{equation}
where the map is given by sending $h \in H(\Bbbk)B(\Bbbk)/B(\Bbbk)\subset (G/B)(\Bbbk)$ to 
the section
\[ \Spec \Bbbk \overset{h}\to \Spec \Bbbk \times G/B \cong \sP^0_G|_{\Spec \Bbbk} \xt^G(G/B)
\cong \sP_G|_{\Spec \Bbbk} \xt^G G/B \]
and uniquely extending to a section $C \to \sP_G \xt^G G/B$.
The point $B \in (G/B)(\Bbbk)$ is sent under \eqref{e:liftBstructure} to the lift $(\sP_B^1,\sigma)$.

We are concerned with the possible degrees of $\Scr P_B$ for 
lifts $(\Scr P_B,\sigma)\in \Scr Y(k)$ above $(\Scr P_G,\sigma)$.
For a simple root $\alpha$ of $G$, let $P_\alpha$ denote the corresponding minimal
parabolic in $G$. 
The image of $H\cap P_\alpha$ in $P_\alpha/\mf R(P_\alpha) = \PGL_2$ must contain 
a subgroup conjugate to $\left(\begin{smallmatrix} * & 0 \\ 0 & * \end{smallmatrix}\right)$
or $\left(\begin{smallmatrix} 1 & * \\ 0 & 1 \end{smallmatrix}\right)$.
Thus we can choose an identification $P_\alpha/B \cong \mbb P^1$ such that
the image of $(H\cap P_\alpha)(\Bbbk)$ in $\mbb P^1(\Bbbk)$ contains 
$\Bbbk^\times = k(C)^\times$. 
For $f \in (H\cap P_\alpha)(\Bbbk)$, let $\bar f$ denote its image in $(P_\alpha/B)(\Bbbk) \cong \mbb P^1(\Bbbk)$. 
For such a function $f$, let $(\sP_B,\sigma)$ denote the corresponding lift
of $(\sP_G,\sigma)$ under the bijection \eqref{e:liftBstructure}.
If $\bar f \in k(C)^\times$, then $\sP_B$ is of degree $-(\ch\mu_1 + N\ch\alpha)$, 
where $-\ch\mu_1$ is the degree of $\Scr P_B^1$ and 
$N$ is the degree of the divisor of zeros of $\bar f$.

By the Riemann--Roch theorem, we have a rational function $\bar f\in k(C)^\times$ 
with divisor of zeros of degree $N$ for any $N \gg 0$. 
Therefore for any $N\gg 0$, there is a lift $\Scr P_B$ of degree 
$-(\ch\mu_1+N\ch\alpha)$. 
We conclude that there exists a lift 
$(\Scr P_B,\sigma)\in \Scr Y^{\ch\mu}(k)$ for any $\ch\mu$ 
in a certain translate of $\ch\Lambda^\pos_G$.
\end{proof}

\subsubsection{} \label{sect:locglobyoga}
It is not in general true that the natural map 
$\Scr Y^{\ch\lambda} \to \Scr M_{X}$ is smooth for 
arbitrary $\ch\lambda$. However, the following 
well-known argument (cf.~\cite[Theorem 16.2.1]{GN}) 
shows that any neighborhood of a point in $\Scr Y^{\ch\lambda}$
is smooth locally isomorphic to a neighborhood of a point in 
$\Scr M_{X}$:

Let $\ch\lambda\in \mathfrak c_X$ be arbitrary. 
Since $\ch\Lambda^\pos_G \subset \mf c_X$, we can always
find $\ch\mu \in \ch\Lambda^\pos_G$ large enough such that 
$\ch\lambda+\ch\mu$ is also large enough. 
Let $\Scr Y^{\ch\mu,0}$ denote the preimage of $\Scr M_X^0=\Bun_H$ under the smooth map $\Scr Y^{\ch\mu} \to \Scr M_{X}$.
Then $\Scr Y^{\ch\mu,0}$ is smooth and   
the first projection
\[  \Scr Y^{\ch\lambda} \times \Scr Y^{\ch\mu,0} \to \Scr Y^{\ch\lambda} \]
is smooth.
On the other hand, by the graded factorization property (Proposition~\ref{prop:factorization}), there is a natural \'etale map 
\[ \Scr Y^{\ch\lambda} \oo\xt \Scr Y^{\ch\mu,0} 
\to \Scr Y^{\ch\lambda+\ch\mu}. 
\]
We can compose this with the smooth map $\Scr Y^{\ch\lambda+\ch\mu}\to \Scr M_X$ to get a smooth map $\Scr Y^{\ch\lambda} \oo\xt \Scr Y^{\ch\mu,0} \to \Scr M_X$.
To summarize, we have constructed:

\begin{lem} \label{lem:localglobalyoga}
For any $\ch\lambda\in \mathfrak c_X$ and any $\ch\mu \in \ch\Lambda_G^\pos$ large enough, there is a correspondence 
\[ 
\Scr Y^{\ch\lambda} \leftarrow 
\Scr Y^{\ch\lambda} \oo\times \Scr Y^{\ch\mu,0} \rightarrow 
\Scr M_{X} 
\]
where the left arrow is smooth surjective, and the right arrow is smooth.
\end{lem}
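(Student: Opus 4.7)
The plan is to collect the ingredients already laid out in the paragraph preceding the lemma and then check the one point left implicit there, namely surjectivity of the left arrow. Since $\ch\Lambda^{\pos}_G \subset \mf c_X$, I will first fix $\ch\mu \in \ch\Lambda^{\pos}_G$ large enough in the sense of Corollary~\ref{cor:Mcover}(i) that both $\Scr Y^{\ch\mu} \to \Scr M_X$ and $\Scr Y^{\ch\lambda+\ch\mu}\to \Scr M_X$ are smooth with geometrically connected fibers. I then take $\Scr Y^{\ch\mu,0}$ to be the preimage of the smooth open substack $\Bun_H = \Scr M_X^0$ under the first of these maps, so that $\Scr Y^{\ch\mu,0}$ is itself smooth.

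The right arrow is then obtained formally: Proposition~\ref{prop:factorization} provides an \'etale map $\Scr Y^{\ch\lambda}\oo\xt \Scr Y^{\ch\mu,0}\to \Scr Y^{\ch\lambda+\ch\mu}$, which I compose with the smooth map $\Scr Y^{\ch\lambda+\ch\mu}\to \Scr M_X$ to get a smooth composition. Smoothness of the left arrow is equally immediate, since it is the restriction to an open subscheme of the first projection $\Scr Y^{\ch\lambda}\times \Scr Y^{\ch\mu,0}\to \Scr Y^{\ch\lambda}$, which is smooth by base change from the smooth $k$-scheme $\Scr Y^{\ch\mu,0}$.

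The main obstacle, and the one step requiring genuine care, is surjectivity of the left arrow on $k$-points. Given $y\in \Scr Y^{\ch\lambda}(k)$, the image $\pi(y)\in \Scr A^{\ch\lambda}(k)$ has finite support $S\subset \abs C$, and I must exhibit $y'\in \Scr Y^{\ch\mu,0}(k)$ whose image in $\Scr A^{\ch\mu}$ has support in $C\sm S$. Nonemptiness of $\Scr Y^{\ch\mu,0}(k)$ for some $\ch\mu$ follows from Corollary~\ref{cor:Mcover}(ii) applied to any $k$-point of $\Bun_H$ (e.g.\ the trivial bundle). To additionally control the support, I plan to reuse the rational-function construction from the proof of that corollary: starting from a fixed lift $y'_0\in \Scr Y^{\ch\mu_0,0}(k)$ and a simple root $\ch\alpha$ of $G$, one modifies the $B$-structure by a function $\bar f \in k(C)^\times$ arising from $(H\cap P_\alpha)(\Bbbk)$, which shifts the degree by $-N\ch\alpha$ where $N$ is the degree of the divisor of zeros of $\bar f$. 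By Riemann--Roch, $\bar f$ can be chosen with divisor of zeros of any sufficiently large degree $N$ and supported in any prescribed nonempty open subset of $C$, in particular in $C\sm S$. Iterating over the simple roots of $G$ realizes all $\ch\mu$ in a cofinal subset of $\ch\mu_0 + \ch\Lambda^{\pos}_G$ by lifts $y'$ whose $\pi$-image is supported in $C\sm S$, which is exactly what is needed. Absorbing this cofinality into the original choice of $\ch\mu$ yields the desired surjectivity.
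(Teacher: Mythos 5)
Your reading of the paper's implicit argument (the paragraph preceding the lemma) is accurate, and you correctly flag what the text glosses over: smoothness of both arrows is formal once one sets up the factorization correspondence, and the only genuinely nontrivial point is surjectivity of the left arrow. Your plan for that step — produce a point of $\Scr Y^{\ch\mu,0}(k)$ whose image in $\Scr A^{\ch\mu}$ is supported away from $S=\mathrm{supp}\,\pi(y)$ by rerunning the rational-function construction from the proof of Corollary~\ref{cor:Mcover}(ii) — is the right idea and matches what the authors would have in mind. (One small remark on "large enough": since the inverse Cartan matrix has non-negative entries, $\langle\alpha_i,\ch\mu\rangle\gg 0$ for all $i$ forces all coefficients $n_\alpha$ of $\ch\mu=\sum n_\alpha\ch\alpha$ to be large, so your construction does in fact reach every $\ch\mu$ that is large enough in the paper's sense, not merely a cofinal coset.)

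There is, however, a gap in the way you control the support. The degeneracy locus of the modified point $y'$ is contained in the union of (a) the degeneracy locus of the starting lift $y'_0$ and (b) the support of the \emph{full} divisor of $\bar f$, i.e.\ both zeros \emph{and} poles. Your proposal only constrains the zeros of $\bar f$, and says nothing about (a) at all; if $y'_0$ already degenerates inside $S$, or if the poles of $\bar f$ lie inside $S$, the resulting $y'$ will not have support in $C\sm S$. Both issues are easy to repair: take $y'_0$ to be the constant map $C\to X^\circ/B=\pt$ (i.e.\ $\ch\mu_0=0$), which has empty degeneracy locus — this is permissible since the trivial $H$-bundle admits the tautological $B$-structure of degree $0$ — and choose each $\bar f_\alpha\in k(C)^\times$ with \emph{both} its zero divisor and its pole divisor, each of large degree $N_\alpha$, supported in $C\sm S$. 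Riemann--Roch guarantees such $\bar f_\alpha$ exist for $N_\alpha\gg 0$: pick any effective divisor $\underline v^-$ of degree $N_\alpha$ supported in $C\sm S$, then for $N_\alpha\gg 0$ the sections of $\Scr O(\underline v^-)$ with pole divisor exactly $\underline v^-$ and no zeros on $S$ form a nonempty open subset. With these corrections, your argument is complete.
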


\subsection{Stratification of the Zastava model}
We stratify $\sY^{\ch\lambda}$ according to the fine stratification of $\sM_X$:
for a partition $\ch\Theta \in \Sym^\infty(\mf c_X^- \sm 0)$
and $\ch\lambda\in \mf c_X$,
define the stratum 
\[ \Scr Y^{\ch\lambda,\ch\Theta} := \Scr Y^{\ch\lambda}
\underset{\Scr M_X}\times \Scr M_X^{\ch\Theta}. \]
Abbreviate $\sY^{\ch\lambda,\ch\theta} := \sY^{\ch\lambda,[\ch\theta]}$.

\begin{prop} \label{prop:localstrata}
The stratum $\Scr Y^{\ch\lambda,\ch\Theta}$ is a smooth locally closed
subscheme of $\Scr Y^{\ch\lambda}$.
\end{prop}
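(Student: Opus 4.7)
The locally closed part of the statement is immediate from base change: by Lemma~\ref{lem:globstrata}, $\Scr M_X^{\ch\Theta} \into \Scr M_X$ is a locally closed immersion, so its pullback $\Scr Y^{\ch\lambda,\ch\Theta} = \Scr Y^{\ch\lambda} \times_{\Scr M_X} \Scr M_X^{\ch\Theta} \into \Scr Y^{\ch\lambda}$ is also locally closed. So the content of the proposition is smoothness.

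To prove smoothness, the plan is to exploit the local-to-global yoga of Lemma~\ref{lem:localglobalyoga}. Fix $\ch\mu \in \ch\Lambda_G^\pos$ large enough (as in that lemma) and consider the correspondence
\[
\Scr Y^{\ch\lambda} \xleftarrow{\,p_1\,} \Scr Y^{\ch\lambda}\oo\xt \Scr Y^{\ch\mu,0} \xrightarrow{\,\phi\,} \Scr M_X,
\]
where $p_1$ is smooth surjective and $\phi$ is smooth. The key geometric observation is that $\Scr Y^{\ch\mu,0}$ is, by definition, the preimage of the open stratum $\Scr M_X^0 = \Bun_H$ under the smooth map $\Scr Y^{\ch\mu}\to \Scr M_X$; hence for any $S$-point $(y_1,y_2)$ of $\Scr Y^{\ch\lambda}\oo\xt \Scr Y^{\ch\mu,0}$, the image $\phi(y_1,y_2)\in \Scr M_X$ has local valuation equal to that of $y_1$ at every point of $C$ (the factor $y_2$ contributes trivial valuations everywhere, since it lands in $\Bun_H$, while the factorization ensures the two loci of $G$-nondegeneracy cover $C$ disjointly). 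Consequently, at the level of $S$-points,
\[
\phi^{-1}(\Scr M_X^{\ch\Theta}) \;=\; \Scr Y^{\ch\lambda,\ch\Theta} \oo\xt \Scr Y^{\ch\mu,0},
\]
and the identity holds scheme-theoretically since both sides are locally closed subschemes of $\Scr Y^{\ch\lambda}\oo\xt \Scr Y^{\ch\mu,0}$ with the same functor of points.

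Now smoothness follows by standard descent: the left-hand side is smooth over $\Scr M_X^{\ch\Theta}$ (because $\phi$ is smooth by Lemma~\ref{lem:localglobalyoga}), and $\Scr M_X^{\ch\Theta}$ is itself smooth by Lemma~\ref{lem:globstrata}; hence $\Scr Y^{\ch\lambda,\ch\Theta}\oo\xt \Scr Y^{\ch\mu,0}$ is smooth. On the other hand, the base change
\[
\Scr Y^{\ch\lambda,\ch\Theta}\oo\xt \Scr Y^{\ch\mu,0} \;\longrightarrow\; \Scr Y^{\ch\lambda,\ch\Theta}
\]
of $p_1$ along the locally closed embedding $\Scr Y^{\ch\lambda,\ch\Theta}\into \Scr Y^{\ch\lambda}$ is smooth and surjective. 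Smoothness of the source then descends to smoothness of $\Scr Y^{\ch\lambda,\ch\Theta}$.

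The only step requiring care is the scheme-theoretic identification of $\phi^{-1}(\Scr M_X^{\ch\Theta})$ with $\Scr Y^{\ch\lambda,\ch\Theta}\oo\xt \Scr Y^{\ch\mu,0}$; this is really a statement about how the fine stratification of $\Scr M_X$ interacts with the graded factorization property of Proposition~\ref{prop:factorization}, and it reduces (by examining the local valuations point by point on the curve) to the tautology that a point of $\Scr Y^{\ch\mu,0}$ contributes no nonzero valuations. Everything else in the argument is formal smooth base change and smooth descent.
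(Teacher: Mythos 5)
Your proof is correct and follows essentially the same route as the paper: both use the smooth correspondence from Lemma~\ref{lem:localglobalyoga}, the observation that $\Scr Y^{\ch\mu,0}$ contributes only trivial $G$-valuations so that the preimage of $\Scr M_X^{\ch\Theta}$ is $\Scr Y^{\ch\lambda,\ch\Theta}\oo\xt\Scr Y^{\ch\mu,0}$, and then smooth descent from the smoothness of $\Scr M_X^{\ch\Theta}$. Your write-up is a bit more explicit about the scheme-theoretic identification and about extracting local closedness by base change, but there is no difference in substance.
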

\begin{proof}
By Lemma~\ref{lem:localglobalyoga}, there exists $\ch\mu\in\ch\Lambda_X$
such that there is a smooth correspondence 
\[ \Scr Y^{\ch\lambda} \leftarrow \Scr Y^{\ch\lambda}\oo\times \Scr Y^{\ch\mu,0} \to \Scr M_X \]
where the left arrow is surjective.
Note that by definition of $\Scr M_X^{\ch\Theta}$, the preimage of 
$\Scr M_X^{\ch\Theta}$ in $\Scr Y^{\ch\lambda}\oo\times\Scr Y^{\ch\mu,0}$
is isomorphic to $\Scr Y^{\ch\lambda,\ch\Theta} \oo\times \Scr Y^{\ch\mu,0}$
since $\Scr Y^{\ch\mu,0}$ consists
of maps $C\to X^\bullet/B$ which can only define points in $\msf L^0 X/\msf L^+ G$ 
upon restriction to $\mf o_v$ for any $v\in \abs C$.
Therefore, we get a smooth correspondence
\begin{equation} \label{e:localglobalcorresp}
    \Scr Y^{\ch\lambda,\ch\Theta} \leftarrow \Scr Y^{\ch\lambda,\ch\Theta} \oo\times \Scr Y^{\ch\mu,0} \to \Scr M_X^{\ch\Theta} 
\end{equation}
where the left arrow is still surjective. 
Now smoothness of $\Scr Y^{\ch\lambda,\ch\Theta}$ follows from smoothness
of $\Scr M_X^{\ch\Theta}$ (Lemma~\ref{lem:globstrata}).
\end{proof}

We call the collection of connected components of $\Scr Y^{\ch\lambda,\ch\Theta}$ the
\emph{fine stratification} of $\Scr Y^{\ch\lambda}$. By the smooth
correspondence \eqref{e:localglobalcorresp} above 
and Proposition~\ref{prop:globwhitney}, this is a Whitney stratification (in fact
the Zastava model is used in the proof of \emph{loc~cit.}).

Note that for a fixed $\ch\lambda$, many of the 
 $\Scr Y^{\ch\lambda,\ch\Theta}$ are empty.

\subsection{Relation to the affine Grassmannian} \label{sect:YGr} 

Let $\Gr_{G,\Sym C} \to \Sym C$ denote the following version of the Beilinson--Drinfeld affine Grassmannian: an $S$-point consists of a relative effective Cartier divisor $D \subset C\times S$ and a $G$-bundle $\Scr P_G$ on $C\times S$ together with a trivialization
$\Scr P_G|_{C\times S \sm D} \cong \Scr P_G^0|_{C\times S\sm D}$ where 
$\Scr P_G^0$ is the trivial $G$-bundle. 
For any linear algebraic group $G$, the functor $\Gr_{G,\Sym C}$ is representable by an ind-scheme, ind-of finite type
over $\Sym C$ (cf.~\cite{BD}, \cite[Theorem 3.1.3]{Xinwen}).

In particular, we can consider the ind-scheme 
$\Gr_{B,\Sym C}$. Let 
$\Gr_{B,C^{(N)}}$ denote the preimage over $C^{(N)}$. 

\subsubsection{} \label{sss:YGr}
Choose some $\delta \in \Lambda_X$ that lies on the interior of the
cone dual to $\Cal C_0(X)$, so $\brac{\delta,\ch\lambda}>0$
for any nonzero $\ch\lambda \in \Cal C_0(X)$.
Let $f_\delta \in k[X]^{(B)}$ denote the corresponding $\delta$-eigenfunction. 
Then $f_\delta$ induces a map $X/\!\!/N \to \mbb A^1$, which 
in turn induces a map $\Scr A \to \Sym C$ sending 
$\Scr A^{\ch\lambda} \to C^{(\brac{\delta,\ch\lambda})}$. 
We can map\footnote{
The effective Cartier divisor cut out by $f_\delta$ 
has the property that the complement of its support in $X$ equals $X^\circ$. 
In this guise, the map $\sY \to \Sym C$ we have constructed coincides with
the one described in \cite[Remark 4.2.6]{Drinfeld}.}

\begin{equation} \label{e:YGr}
    \Scr Y^{\ch\lambda} \to \Gr_{B,C^{(\brac{\delta,\ch\lambda})}} 
\end{equation}
as follows:
let $y : C\times S \to X/B$ be an $S$-point of $\Scr Y$. 
Let $D \subset C \times S$ be the relative effective divisor corresponding
to the image of $\pi(y)\in \Scr A(S)$ under the map $\Scr A \to \Sym C$.
Since $\delta$ was chosen in the interior of the dual cone of $\Cal C_0(X)$, we have
$C\times S \sm D = y^{-1}(\pt) =: U$. 
Thus, $y$ defines a $B$-bundle $\sP_B$ on $C \xt S$ together with 
a section $U \to X^\circ \xt^B \sP_B \cong \sP_B$, i.e., a trivialization of 
$\sP_B|_U$. 
The datum $(D, \Scr P_B, \Scr P_B|_U \cong \Scr P_B^0|_U)$ defines an $S$-point 
of $\Gr_{B,\Sym C}$.

\begin{prop} \label{prop:YGr}
The map \eqref{e:YGr} is a closed embedding. 
Moreover, the stack $\Scr Y$ is representable by a scheme locally of finite type over $k$, and for fixed $\ch\lambda\in \mathfrak c_X$, the scheme 
$\Scr Y^{\ch\lambda}$ is of finite type over $k$. 
Consequently, the map $\pi : \Scr Y \to \Scr A$ is schematic of finite type. 
\end{prop}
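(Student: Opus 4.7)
The plan is to identify $\Scr Y^{\ch\lambda}$ with a closed subfunctor of $\Gr_{B,C^{(N)}}$ (with $N = \brac{\delta,\ch\lambda}$) by constructing a scheme-theoretic inverse on this closed subfunctor, and then to extract boundedness from the affineness of $X$. First, given an $S$-point $(D, \Scr P_B, \tau)$ of $\Gr_{B,C^{(N)}}$, set $U = C\times S \sm D$ and combine $\tau$ with the base point $x_0 \in X^\circ(k)$ to produce a canonical section $\sigma^\circ : U \to X \xt^B \Scr P_B$ over $U$, namely the composition $U = \Scr P_B^0|_U \xrightarrow{\tau^{-1}} \Scr P_B|_U$ followed by the $B$-equivariant map $\Scr P_B|_U \to X$ determined by $x_0$. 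Any $S$-point of $\Scr Y^{\ch\lambda}$ mapping to $(D,\Scr P_B,\tau)$ under \eqref{e:YGr} has its underlying section agreeing with $\sigma^\circ$ on $U$, and since $X$ is separated and $U$ is schematically dense in $C\times S$, the extension is unique. This immediately shows \eqref{e:YGr} is a monomorphism.

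Next, the plan is to verify that the condition ``$\sigma^\circ$ extends to a section $\sigma : C\times S \to X \xt^B \Scr P_B$'' is a closed condition on $S$. Since $X$ is affine, such a section is equivalent to a $B$-equivariant $\Scr O_{C\times S}$-algebra map $k[X]\otimes_k \Scr O_{C\times S} \to \pi_*\Scr O_{\Scr P_B}$. Decomposing $k[X] = \bigoplus_\mu k[X]_\mu$ into $T$-weight spaces (which is possible because $B$ acts on $k[X]$ with locally finite semisimple $T$-action), this amounts, $\mu$ by $\mu$, to giving $\Scr O_{C\times S}$-module maps $k[X]_\mu \ot \Scr O_{C\times S} \to \Scr L_\mu$, where $\Scr L_\mu$ is the line bundle on $C\times S$ associated to $\Scr P_B$ via $\mu$. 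On $U$ these maps are prescribed by $\tau$ and yield sections of $\Scr L_\mu|_U$; the extension to $C\times S$ asks that each such section factor through the coherent inclusion $\Scr L_\mu \subset j_*j^*\Scr L_\mu$, which is a closed condition in families by standard coherence arguments.

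Finally, the plan is to bound the image in a finite-type subscheme of $\Gr_{B,C^{(N)}}$, which combined with the previous steps yields that $\Scr Y^{\ch\lambda}$ is representable by a finite-type scheme, hence $\Scr Y = \bigsqcup_{\ch\lambda \in \mf c_X} \Scr Y^{\ch\lambda}$ is locally of finite type and $\pi$ is schematic of finite type. Choose a finite generating set of $k[X]$ as a $B$-module with $T$-weights $\mu_1,\dotsc,\mu_r \in \mf c_X^\vee$; the extension condition for these finitely many generators already bounds the Hecke modification type of $\Scr P_B$ relative to $\Scr P_B^0$ along $D$. More precisely, within the connected component of $\Gr_{B,C^{(N)}}$ on which $\Scr P_B$ has fixed $T$-degree $-\ch\lambda$, the projection to $\Gr_{T,C^{(N)}}$ (a scheme of finite type over $C^{(N)}$ by Proposition~\ref{prop:Astrata} applied to the toric variety $X\sslash N$) lands in a finite-type piece, and the fiber over each truncation is itself of finite type modulo a smooth unipotent pro-group action.

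\textbf{The main obstacle} is this boundedness argument, since \emph{a priori} the Hecke modifications of a $B$-bundle allowed by $\Gr_{B,C^{(N)}}$ extend over an ind-scheme of ind-finite type. The cleanest route around it is to factor \eqref{e:YGr} through the toric Zastava space: composing with $X/B \to (X\sslash N)/T$ gives a commuting triangle with $\Scr A = \Maps_\gen(C, (X\sslash N)/T \supset \pt)$ and $\Scr A^{\ch\lambda}$ finite type, which pins down the $T$-part of the modification; the remaining $N$-part is then controlled by the finite $B$-generators above, using the local structure theorem (Theorem~\ref{thm:localstructure}) to trivialize $X$ in the $N$-direction over a neighborhood of the open $B$-orbit.
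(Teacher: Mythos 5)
Your step (2) has a genuine flaw. You claim a $B$-equivariant $\Scr O_{C\times S}$-algebra map $k[X]\ot_k\Scr O_{C\times S}\to \pi_*\Scr O_{\Scr P_B}$ can be assembled $\mu$-by-$\mu$ from maps $k[X]_\mu\ot\Scr O_{C\times S}\to \Scr L_\mu$ into line bundles associated to $T$-characters. This would be correct if $\Scr P_B$ were a $T$-bundle, but for a $B$-bundle the $T$-weight-$\mu$ piece of $\pi_*\Scr O_{\Scr P_B}$ is \emph{not} a line bundle (Zariski-locally it is $\Scr O_{C\times S}\ot k[B]_\mu$, which contains the functions on $N$), and the weight spaces $k[X]_\mu$ are not $B$-submodules, so $B$-equivariance of the algebra map is a condition that couples weight spaces. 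The paper sidesteps this cleanly by proving a general-purpose extension lemma (Lemma~\ref{lem:extendsection}) for an arbitrary affine finitely presented scheme over $C\times S$: it needs no equivariance structure at all, only that one can find a coherent surjection from a symmetric algebra, and then uses properness of $C\times S\to S$ plus cohomology and base change.

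Your step (3) is the bigger gap, and you acknowledge it yourself. Your sketch — factor through the toric Zastava to pin down the $T$-part, then use finitely many $B$-generators and the local structure theorem to bound the $N$-part — is a plausible idea but you never actually run it: the assertion that the extension condition on the generators ``bounds the Hecke modification type'' is precisely what needs a proof, and the final sentence about trivializing $X$ in the $N$-direction is a hope rather than an argument. The paper's route is genuinely different and avoids the boundedness computation entirely: it observes that $\Scr Y\to\Bun_B$ has fibers which are open subschemes of $\operatorname{Hom}$-schemes of sections of affine finite-type morphisms over a projective base, and such $\operatorname{Hom}$-schemes are known to be representable by schemes locally of finite type (\cite[Theorem 5.23]{FGA-explained}). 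Combining this with the first part (that $\Scr Y$ is an ind-closed sub-ind-scheme of $\Gr_{B,\Sym C}$) gives representability by a scheme; finite-typeness of $\Scr Y^{\ch\lambda}$ then follows because it is a scheme embedded closed into $\Gr_{B,C^{(N)}}$, which is ind-of finite type. If you want to salvage your step (3), you would need to establish directly that the $\operatorname{Hom}$-scheme fibers over $\Bun_B$ are of finite type — but at that point you have just reconstructed the paper's second ingredient.
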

\begin{proof}
First we show that \eqref{e:YGr} is a closed embedding.
Fix a map $S \to \Gr_{B,\Sym C}$ corresponding to a pair 
$(D,\Scr P_B)$ and a trivialization of $\Scr P_B$ away from $D$. 
A trivialization of $\Scr P_B$ on $C\times S\sm D$ is equivalent to a section 
$\sigma_0:C \times S\sm D \to \Scr P_B \cong X^\circ \xt^B \sP_B$. 
The fiber of $\Scr Y \to \Gr_{B,\Sym C}$ over $S$
 parametrizes commutative diagrams 
\[ 
\begin{tikzcd}
C \times S' \sm D' \ar[d,hook] \ar[r, "{\sigma_0}"] & X^\circ \overset B\times\Scr P_B \ar[d, hook]  \\ 
C \times S' \ar[r, "\sigma"] & X \overset B\times \Scr P_B  
\end{tikzcd}
\]
where $S'$ is an $S$-scheme, $D':= D\times_S S'$, and $\sigma$ is a 
section over $C\times S$. 
Observe that $\sigma$ is uniquely determined by $\sigma_0$. 
By Lemma~\ref{lem:extendsection} below, the condition that 
$\sigma_0$ extends to $\sigma$ is closed in $S$. 

We have shown that $\Scr Y$ is representable by an ind-scheme
ind-closed in $\Gr_{B,\Sym C}$. On the other hand, we have a map
$\Scr Y \to \Bun_B$ whose fiber over an $S$-point $\Scr P_B \in \Bun_B(S)$
is open in the space of sections $C\times S \to X \times^B \Scr P_B$ over $C\times S$.
This space is representable by a scheme locally of finite type over $k$ (\cite[Theorem 5.23]{FGA-explained}). Since $\Bun_B$ is an algebraic stack, we conclude that 
$\Scr Y$ is an algebraic stack representable by an ind-scheme. Hence 
$\Scr Y$ is representable by a scheme. 
For fixed $\ch\lambda \in \mathfrak c_X$, we now know that $\Scr Y^{\ch\lambda}$ is a closed subscheme of $\Gr_{B,C^{(\brac{\delta,\ch\lambda})}}$, which is ind-of finite type. It follows that $\Scr Y^{\ch\lambda}$ is of finite type over $k$. The other assertions all follow.  
\end{proof}

\begin{lem} \label{lem:extendsection} 
Let $S$ be a test scheme and $D \subset C\xt S$ a relative effective divisor.
Let $\Scr X$ be a scheme affine of finite presentation over $C\xt S$.
Suppose that there exists a section 
$\sigma : C\xt S \sm D \to \Scr X$ over $C\xt S$. 
Then the functor sending $S'$ to the set of 
maps $S'\to S$ such that $\sigma$ extends to a regular map on $C\xt S'$
is representable by a closed subscheme of $S$. 
\end{lem}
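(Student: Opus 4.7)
The plan is to globally embed $\Scr X$ into a vector bundle over $C \times S$ and then express the extension condition as the vanishing of a section of a finite locally free sheaf on $S$, which is manifestly a closed condition. Since the assertion is Zariski local on $S$, I first reduce to $S$ affine. Because $C \times S$ is then projective over $S$ and inherits ample line bundles from $C$, and $\Scr X$ is affine of finite presentation, I can choose a coherent sub-$\Scr O_{C \times S}$-module of the pushforward of $\Scr O_\Scr X$ that generates it as an $\Scr O$-algebra (possible because the algebra is locally of finite type and $C\times S$ is quasi-compact), and then surject a locally free sheaf $\Scr E$ of finite rank onto this sub-module. This produces a closed immersion $\Scr X \hookrightarrow \Scr V := \Spec_{C \times S}(\Sym^\bullet \Scr E)$, the total space of $\Scr E^\vee$. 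Since $D$ is a relative effective Cartier divisor, $C\times S \sm D$ is schematically dense in $C \times S$, and this persists under any base change $S' \to S$; consequently any extension of $\sigma$ to a section $C \times S' \to \Scr V$ automatically lands in the closed subscheme $\Scr X$. The problem is therefore reduced to the case $\Scr X = \Scr V$ a vector bundle.

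A section of $\Scr V$ on $C \times S \sm D$ is an element of
\[ \Gamma(C \times S \sm D, \Scr E^\vee) = \colim_N \Gamma(C \times S, \Scr E^\vee \ot \Scr O(ND)), \]
and by quasi-compactness I choose $N$ large enough that $\sigma$ lies in $\Gamma(C \times S, \Scr E^\vee(ND))$, where $\Scr E^\vee(ND) := \Scr E^\vee \ot \Scr O(ND)$. The extension exists precisely when the image $\bar\sigma$ of $\sigma$ in $\Gamma(C \times S, \Scr M)$ vanishes, where $\Scr M := \Scr E^\vee(ND)/\Scr E^\vee \cong \Scr E^\vee \ot (\Scr O(ND)/\Scr O)$ is a coherent sheaf supported on the Cartier thickening $N \cdot D$.

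The crux is then that, because $D$ is a relative effective Cartier divisor on the smooth proper relative curve $C \times S \to S$, both $D \to S$ and $N \cdot D \to S$ are finite locally free; hence $p_*\Scr M$ (with $p : C \times S \to S$) is a finite locally free $\Scr O_S$-module, and its formation commutes with arbitrary base change $S' \to S$, since pushforward from a finite scheme is exact and base-change compatible. The section $\bar\sigma$ therefore defines a map $\Scr O_S \to p_*\Scr M$ of finite locally free $\Scr O_S$-modules, whose vanishing locus, namely the ideal generated by the entries of $\bar\sigma$ in any local trivialization of $p_*\Scr M$, is a closed subscheme of $S$ representing the functor in question. I expect the principal technical point to be arranging the global vector-bundle embedding $\Scr X \hookrightarrow \Scr V$; once that is in place the remainder is standard Cartier-divisor calculus combined with the well-behavedness of pushforward along finite flat morphisms.
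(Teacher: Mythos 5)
Your argument is correct and follows essentially the same route as the paper's proof: embed $\Scr X$ into the total space of a vector bundle via a surjection $\Sym(\Scr E) \twoheadrightarrow \Scr O_{\Scr X}$, observe that the extension condition becomes the vanishing of the image of $\sigma$ in $\Scr E^\vee \ot (\Scr O(ND)/\Scr O)$, and push forward this finite locally free sheaf supported on a thickening of $D$ to $S$. Your version is slightly more explicit about why the extension automatically lands in $\Scr X$ (schematic density of the complement of $D$, preserved under base change) and about why the pushforward is base-change compatible (finiteness of $N\cdot D$ over $S$, rather than invoking cohomology and base change), but the strategy and the key reductions are the same.
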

\begin{proof} The map $\sigma$ is equivalent to a map of 
$\Scr O_{C\xt S}$-algebras
$\Scr O_{\Scr X} \to \Scr O_{C\xt S\sm D}$. 
Given $S' \to S$, the condition that 
$\sigma$ extends to $C\xt S'$ is equivalent to 
requiring the image of 
$\Scr O_{\Scr X} \to \Scr O_{C\times S\sm D}$ to land in $\Scr O_{C\times S'} \subset \Scr O_{C\times S' \sm D'}$
after base change to $S'$, where $D':= D\xt_S S'$. 
The claim is local in $S$, so we may assume that
$\Scr O_{\Scr X}$ is surjected onto by 
$\Sym_{\Scr O_{C\xt S}}(\Scr E^\vee)$ for some
vector bundle $\Scr E$ on $C\xt S$. 
Then we just need the composed $\Scr O_{C\times S}$-linear map $\Scr E^\vee \to \Scr O_{C\times S \sm D} \to \Scr O_{C\times S\sm D}/\Scr O_{C\times S}$ to vanish after base change to $S'$.
Since $\Scr E^\vee$ is coherent, the image of this map 
is contained in a submodule $\Scr F = \Scr O(m\cdot D)/\Scr O_{C\times S}$
for some integer $m \ge 0$.  
The projections $p: C\times S \to S,\, p':C\times S'\to S'$ are proper, and we are considering
when an element in $H^0 p'_*( \Scr E \otimes_{\Scr O_{C\times S}} \Scr F \otimes_{\Scr O_S} \Scr O_{S'})$ vanishes. 
Note that $p_*(\Scr E \otimes_{\Scr O_{C\times S}} \Scr F)$ is finite locally free
as an $\Scr O_S$-module.
By cohomology and base change 
we are reduced to asking when an element of $p_*(\Scr E \otimes_{\Scr O} \Scr F)$
vanishes after base change to $S'$. 
This is a closed condition on $S$.
\end{proof}

\begin{rem}[Open curves] \label{rem:opencurve}
The graded factorization property of $\Scr Y^{\ch\lambda}$ implies that 
the geometry of $\Scr Y^{\ch\lambda}$ is purely local
with respect to the curve $C$. Therefore, we could define 
\[ \Scr Y(C) = \Maps_\gen(C, X/B \supset \pt) \]
for any smooth curve $C$ (not necessarily projective), and all 
the same properties would still hold.
For example, in \cite{FM}, \cite{Drinfeld}, the affine curve $C = \mbb A^1$ is used.
\end{rem}

\subsubsection{Beauville--Laszlo's theorem} \label{sect:BLthm}
Let $S$ be an affine scheme and $D$ a closed affine subscheme of $C\times S$.
Denote by $\wh C_D$ the formal completion of $C\times S$ along $D$ and by
$\wh C'_D$ the spectrum of the ring of regular functions on $\wh C_D$ (so $\wh C_D$ is an ind-affine formal scheme and $\wh C'_D$ is the corresponding true scheme). 
Let $\wh C^\circ_D := \wh C'_D \sm D$ denote the open subscheme.

There are maps $\hat p: \wh C_D \to C\times S$ and $i: \wh C_D \to \wh C'_D$. 
We will implicitly use the following fact in what follows:

\begin{prop}[{\cite[Proposition 2.12.6]{BD}}] \label{prop:formalext}
There exists a unique map 
$p : \wh C'_D \to C\times S$ such that $\hat p = p\circ i$. 
\end{prop}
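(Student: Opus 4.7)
My plan is to reduce to the case where $C\times S$ is affine, where the statement becomes the universal property of adic completion, and then to globalize by gluing over an affine open cover.

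First I would treat the affine case: suppose $C\times S = \Spec R$ and $D = V(I)$ for an ideal $I\subset R$. Then $A := \Gamma(\wh C_D, \Scr O)$ equals the $I$-adic completion $\hat R = \varprojlim_n R/I^n$, so $\wh C'_D = \Spec \hat R$, and the canonical ring homomorphism $R\to \hat R$ furnishes the desired morphism $p : \wh C'_D \to \Spec R$. The identity $p\circ i = \hat p$ is immediate on each infinitesimal thickening $D_n = \Spec R/I^n$, and uniqueness reduces to the universal property of the inverse limit: any alternative extension corresponds to a ring map $R\to \hat R$ whose reduction modulo $I^n$ agrees with the tautological quotient $R\to R/I^n$ for every $n$, and such a map is unique.

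For the general case, I would choose a finite affine open cover $\{V_\alpha\}$ of $C\times S$ with all double intersections $V_{\alpha\beta}$ also affine (possible because $C\times S$ is separated). Setting $D_\alpha = D\cap V_\alpha$ and $D_{\alpha\beta} = D\cap V_{\alpha\beta}$, and denoting by $A_\alpha$ and $A_{\alpha\beta}$ the rings of global sections of the corresponding formal completions, the affine case furnishes canonical morphisms $p_\alpha : \Spec A_\alpha \to V_\alpha \hookrightarrow C\times S$. To glue these together I would combine the Mayer--Vietoris presentation
\[ A = \operatorname{eq}\Bigl(\prod_\alpha A_\alpha \rightrightarrows \prod_{\alpha,\beta} A_{\alpha\beta}\Bigr) \]
with the fact that each $A_{\alpha\beta}$ is a localization of $A_\alpha$, because $I_{D_\alpha}$-adic completion commutes with localization at any element of $\Scr O(V_\alpha)$. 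This implies that the $\Spec A_\alpha$ form an open cover of $\wh C'_D = \Spec A$; the $p_\alpha$ then agree on overlaps by the uniqueness already shown in the affine case, so they glue to the required $p$, and uniqueness of $p$ follows by restricting to the cover and re-invoking the affine case.

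The hard part will be justifying that $A_{\alpha\beta}$ is a localization of $A_\alpha$, equivalently that the $\Spec A_\alpha$ actually form an open cover of $\Spec A$. This rests on the compatibility between $I$-adic completion and principal localization, which is classical in the Noetherian setting and extends to the present (possibly non-Noetherian) context with some care; once it is in place the rest of the assembly is purely formal. This is precisely what \cite[Proposition 2.12.6]{BD} records, and no new input beyond \emph{loc.~cit.} is needed.
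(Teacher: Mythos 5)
The paper does not prove this statement: it cites Beilinson--Drinfeld \cite[Proposition 2.12.6]{BD} directly, so there is no in-paper argument for your attempt to be compared against.

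Evaluating your sketch on its own terms: the affine case is handled correctly (once $C\times S=\Spec R$ and $D=V(I)$, the map $p$ is the one induced by $R\to\hat R$, and uniqueness is the universal property of $\varprojlim R/I^n$), and the Mayer--Vietoris presentation of $A$ is legitimate since limits commute with equalizers. The gap is the one you flag yourself, and it is not a technicality you can defer: the assertion that the $\Spec A_\alpha$ form an open cover of $\Spec A$ --- equivalently that $I$-adic completion commutes with passing to a distinguished affine open, so that $A\to A_\alpha$ is a localization --- is precisely the nontrivial content here, and it is simply false for general (non-Noetherian, non-finitely-generated) ideals. Since $D$ is merely assumed to be a closed affine subscheme of $C\times S$, with $S$ an arbitrary affine test scheme, you are squarely in the non-Noetherian setting, and deferring this step back to \cite[2.12.6]{BD} is circular in spirit: you are citing the result you set out to prove for the one step that carries the actual difficulty. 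A genuine proof would need either to establish the localization--completion compatibility under hypotheses satisfied by this $D$ (e.g.\ using that $D_n\to S$ is finite and $S$ is affine, so each $\Gamma(D_n)$ is finite over $\Gamma(S)$), or to avoid the cover entirely, for instance by using quasi-projectivity of $C\times S$ over $S$ to place $D$ inside a single affine open after an affine \'etale or Zariski base change on $S$, or by arguing directly with the sheaf property of $\Hom(-,C\times S)$ and an intrinsic description of the opens of $\Spec A$ that contain the closed subscheme $D=\Spec A_0$.
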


To justify that $\Scr Y$ is of local nature, we record 
the following consequence of the globalized version of Beauville--Laszlo's theorem (cf.~\cite[Theorem 2.12.1]{BD}, \cite{BLa}). 
Let $S$ be an affine scheme and $D\subset C\times S$ a relative effective divisor.  
Proposition~\ref{prop:formalext} implies that there exists a map $p:\wh C'_D\to C\times S$. 

\begin{lem} \label{lem:B-L}
Let $X$ be any affine scheme with an action of an algebraic group $B$
such that $X/B$ is pointy, i.e., $X$ has an open $B$-orbit $X^\circ$ with $X^\circ/B=\pt$.
Let $C,S$ and $D$ be as above. 

Then there is a natural equivalence between the following categories:
\begin{enumerate}
\item the groupoid of $(\Scr P_B, \sigma)$ where $\Scr P_B$ is a $B$-bundle on $C\times S$ and a section $\sigma : C\times S \to X\times^B \Scr P_B$ 
that sends $C\times S\sm D$ to $X^\circ \times^B \Scr P_B$,
\item the groupoid of $(\hat{\Scr P}_B, \hat \sigma)$ where $\hat{\Scr P}_B$ is a $B$-bundle on $\wh C'_D$ and a section $\hat\sigma: \wh C'_D \to X\times^B \hat{\Scr P}_B$ that sends $\wh C^\circ_D$ to $X^\circ \times^B \hat{\Scr P}_B$.
\end{enumerate}
\end{lem}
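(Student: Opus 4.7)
The forward functor $(1) \to (2)$ is simply restriction along the map $p : \wh C'_D \to C\xt S$ provided by Proposition~\ref{prop:formalext}. To construct a quasi-inverse, I will apply Beauville--Laszlo gluing in two successive steps: first for $B$-bundles, then for sections of an affine $C\xt S$-scheme.

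The key observation is that the pointiness hypothesis means $X^\circ$ is a $B$-torsor over a point, so upon fixing a base point $x_0 \in X^\circ(k)$ we obtain a $B$-equivariant identification $X^\circ \cong B$. Consequently, for any $B$-bundle $\hat{\Scr P}_B$ on $\wh C'_D$, the datum of a section $\wh C^\circ_D \to X^\circ \xt^B \hat{\Scr P}_B$ is equivalent to the datum of a trivialization of $\hat{\Scr P}_B|_{\wh C^\circ_D}$. Starting from $(\hat{\Scr P}_B, \hat\sigma)$ as in (2), the restriction $\hat\sigma|_{\wh C^\circ_D}$ therefore provides an isomorphism between $\hat{\Scr P}_B|_{\wh C^\circ_D}$ and the trivial $B$-bundle. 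Beauville--Laszlo's theorem applied to the affine group $B$ (a standard consequence of \cite[Theorem 2.12.1]{BD}) then glues $\hat{\Scr P}_B$ on $\wh C'_D$ with the trivial $B$-bundle on $C\xt S \sm D$ along this isomorphism on the overlap $\wh C^\circ_D$, producing a $B$-bundle $\Scr P_B$ on $C\xt S$.

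Since $X$ is affine, $X \xt^B \Scr P_B$ is affine over $C\xt S$. By construction of the gluing, the given section $\hat\sigma$ on $\wh C'_D$ and the constant section at $x_0$ on $C\xt S \sm D$ (via the chosen trivialization of $\Scr P_B$ there) agree on $\wh C^\circ_D$. A second application of Beauville--Laszlo, now for sections of the affine $C\xt S$-scheme $X \xt^B \Scr P_B$, yields a section $\sigma : C\xt S \to X \xt^B \Scr P_B$ restricting to both. By construction $\sigma$ sends $C\xt S \sm D$ into $X^\circ \xt^B \Scr P_B$, so $(\Scr P_B, \sigma)$ defines an object of (1). Functoriality and the quasi-inverse property then follow from the uniqueness clauses of Beauville--Laszlo.

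The main technical point to justify is the torsor version of Beauville--Laszlo for the affine group $B$, which is not literally the module-theoretic statement of~\cite{BLa}. It reduces to the latter by embedding $B \into \GL_n$ and descending torsors along the faithfully flat cover $(C \xt S \sm D) \sqcup \wh C'_D \to C\xt S$, the relevant gluing square being a pushout in the category of affine $C\xt S$-schemes, as in \cite[\S 2.12]{BD}.
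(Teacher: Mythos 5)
Your proof is correct and follows essentially the same route as the paper: restriction along $p$ for the forward functor, then two successive applications of Beauville--Laszlo (first for the $B$-bundle, then for the section of the affine scheme $X\xt^B\sP_B$) for the quasi-inverse, with pointiness converting $\hat\sigma|_{\wh C^\circ_D}$ into a trivialization of $\hat{\sP}_B|_{\wh C^\circ_D}$. One small terminological caveat: the cover $(C\xt S\sm D)\sqcup \wh C'_D \to C\xt S$ is not faithfully flat in general (completion need not be flat without noetherian hypotheses on $S$), which is precisely why one invokes the Beauville--Laszlo gluing theorem rather than ordinary fppf descent --- but your citation of \cite[Theorem 2.12.1]{BD} is the right one and the argument goes through unchanged.
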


\begin{proof}
The functor from (i) to (ii) is just pullback along $p$. 
To define the functor from (ii) to (i) we descend along the covering 
$\wh C'_D \sqcup (C\times S \sm D) \to C\times S$. The justification
for this is Beauville--Laszlo's theorem (cf.~\cite[Theorem 2.12.1]{BD}). 
First, $\hat \sigma$ induces 
a section $\wh C^\circ_D \to X^\circ \times^B \hat{\Scr P}_B \cong \hat{\Scr P}_B$.
Then we can ``descend'' $\hat{\Scr P}_B$ to a $B$-bundle $\Scr P_B$ on $C\times S$ with a section $C\times S\sm D \to \Scr P_B$ (which
is equivalent to a trivialization of $\Scr P_B|_{C\times S\sm D}$).
The section $\hat\sigma$ is equivalent to a map of quasicoherent
$\Scr O_{\wh C'_D}$-algebras $\Scr O_{X\times^B \hat{\Scr P}_B} \to \Scr O_{\wh C'_D}$ since $X$ is affine. 
Again by \cite[Theorem 2.12.1]{BD}, this descends 
to a map of quasicoherent $\Scr O_{C\times S}$-algebras $\Scr O_{X\times^B \Scr P_B} \to \Scr O_{C\times S}$ such that the restriction to 
$C\times S\sm D$ factors through the trivialization 
$\Scr O_{X^\circ \times^B \Scr P_B}\cong \Scr O_{\Scr P_B} \to \Scr O_{C\times S\sm D}$.
By construction, the two functors are mutually inverse.
\end{proof}

\subsection{Theorem of Grinberg--Kazhdan, Drinfeld} 

We now justify why $\sM_X$ and $\sY$ are indeed ``models'' for the formal arc space
$\msf L^+ X$. Since $\sM_X$ and $\sY$ are smooth-locally isomorphic,
it suffices to explain the latter. 

\begin{defn}\label{def:formalmodel} 
A finite type formal model of $\msf L^+ X$ at $\gamma_0\in \msf L^+ X(k)$ is the formal completion $\wh{Y}_y$ of a $k$-scheme of finite type $Y$ at a point $y\in Y$ equipped with an isomorphism of formal schemes
\begin{equation} \label{finite model}
\wh{\msf L^+ X}_{\gamma_0} \simeq \wh{Y}_y \times \wh{\mbb A}^\infty,
\end{equation}
where $\wh{\mbb A}^\infty$ is the product of countably many copies of the formal disk $\on{Spf} k\tbrac t$.
\end{defn}

Since $X/B \supset \pt$ is a pointy stack, 
Drinfeld's proof \cite[\S 4.2-4.3]{Drinfeld} of the Grinberg--Kazhdan theorem 
essentially shows that the scheme $\sY$ (which we have shown is a disjoint union
of finite type schemes) explicitly satisfies the following:

\begin{thm}[Grinberg--Kazhdan, Drinfeld]  \label{thm:DGK}
Fix an arc $\gamma_0 : \Spec k\tbrac t \to X$ in $\msf L^+ X(k)$
such that $\gamma_0(\Spec k\lbrac t) \subset X^\circ$. 
Then there exists a point $y \in \Scr Y(k)$ 
such that the formal completion of $\Scr Y$ at $y$ is a finite type formal model of $\msf L^+ X$ at $\gamma_0$. 

More precisely, if $\gamma_0$ belongs to the stratum $\msf L^{\ch\theta} X$, for $\ch\theta\in \mf c_X^-$, we can take $y$ to be the point $t^{\ch\theta}$ in the central fiber $\msf Y^{\ch\theta,\ch\theta}$ over any point $v\in \abs C$.
\end{thm}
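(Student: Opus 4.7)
The plan is to adapt Drinfeld's version of the Grinberg--Kazhdan theorem for pointy stacks \cite[\S 4.2--4.3]{Drinfeld} to the pointy stack $X/B \supset X^\circ/B = \pt$; pointiness follows from our standing assumption that $B$ acts simply transitively on $X^\circ$, so that $\pt = X^\circ/B$ is dense open in $X/B$. Since $\sY$ is a disjoint union of finite type schemes (Proposition~\ref{prop:YGr}), the theorem reduces to (a) producing the distinguished point $y\in \sY(k)$, and (b) exhibiting a formal splitting $\wh{\msf L^+X}_{\gamma_0} \cong \wh{\sY}_y \times \wh{\mbb A}^\infty$.

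For (a), we apply the globalized Beauville--Laszlo lemma (Lemma~\ref{lem:B-L}), gluing a local datum at $v$ coming from $\gamma_0$ with the trivial datum on $C\sm\{v\}$. Concretely, the generic restriction $\gamma_0|_{\Spec F_v} \in X^\circ(F_v) \cong B(F_v)$ (the isomorphism using the base point $x_0$) defines a canonical element $b\in B(F_v)$; for $\gamma_0 = x_0\cdot t^{\ch\theta}$ one has $b = t^{\ch\theta}$. The Beauville--Laszlo data are then: the trivial $B$-bundle on $\Spec \mf o_v$ with section $\gamma_0 : \Spec\mf o_v \to X$, and the trivial $B$-bundle on $C\sm\{v\}$ with constant section $x_0$, identified on $\Spec F_v$ via $b$. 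This produces $y\in \sY(k)$. Tracing through the embedding~\eqref{e:YGr}, when $\gamma_0=x_0 t^{\ch\theta}$ the image of $y$ in $\Gr_B$ is the class of $t^{\ch\theta}$, so $y$ lies in the central fiber $\msf Y^{\ch\theta,\ch\theta}$ over $v$, as claimed.

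For (b), let $R$ be a local artinian $k$-algebra with residue field $k$. An $R$-point of $\wh{\sY}_y$ is a deformation of $y$, namely a map $C\times\Spec R \to X/B$ restricting to $y$ modulo the maximal ideal and landing generically in $\pt$. Applying Beauville--Laszlo relatively over $R$ (Proposition~\ref{prop:formalext} and Lemma~\ref{lem:B-L}) splits such a deformation into compatible data on the formal completion of $C\times\Spec R$ along $v\times\Spec R$ and on $(C\sm\{v\})\times\Spec R$. Here the pointy hypothesis is crucial: since $y|_{C\sm\{v\}}$ factors through the \emph{open} substack $\pt\subset X/B$, so does any infinitesimal deformation over $R$, and the only such map is the unique constant map. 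Therefore $\wh{\sY}_y$ identifies canonically with the formal completion at $y|_{\Spec\mf o_v}$ of the local Zastava scheme $\Maps_\gen(\Spec\mf o_v, X/B \supset \pt)$.

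Finally, since $B$ is smooth and connected, every $B$-torsor on $\Spec R\tbrac t$ is trivial, and a choice of trivialization identifies $R$-points of this local Zastava scheme with $\msf L^+B(R)$-orbits of arcs in $\msf L^+X(R)$ generically landing in $X^\circ$. This realizes the projection from the open locus of arcs generically in $X^\circ$ to the local Zastava scheme as a (formal) $\msf L^+B$-torsor near $\gamma_0$. Since $B$ is smooth of finite type, the formal completion $\wh{\msf L^+B}_e$ is a countable product of formal disks, i.e., isomorphic to $\wh{\mbb A}^\infty$. Any formal section of the torsor yields the splitting $\wh{\msf L^+X}_{\gamma_0} \cong \wh{\sY}_y \times \wh{\mbb A}^\infty$. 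The main technical obstacle is precisely the careful handling of infinite-dimensional formal geometry in this last step -- verifying the identification $\wh{\msf L^+B}_e \cong \wh{\mbb A}^\infty$ and the existence of a formal section of the $\msf L^+B$-torsor -- which is the content of Drinfeld's argument in \cite[\S 4.2--4.3]{Drinfeld}.
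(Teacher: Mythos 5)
Your proposal is correct and follows essentially the same route as the paper. The paper states that the result "follows from the proof of \cite[\S 4]{Drinfeld}" and defers the explicit argument to the family version (Theorem~\ref{thm:DGKfamily} and Proposition~\ref{prop:DGKtorsor} in \S\ref{sect:nearby}), which is precisely the argument you give: use Lemma~\ref{lem:B-L} (Beauville--Laszlo) at $v$ to construct $y$ and to match $R$-points of $\wh{\sY}_y$ with local data on $\Spec R\tbrac t$; observe that generic landing in $X^\circ$ persists under nilpotent thickenings because $k\lbrac t$ is the unique closed point of $R\lbrac t$; and then identify the fibers of $\wh{\msf L^+X}_{\gamma_0} \to \wh{\sY}_y$ as torsors under $(\wh{\msf L^+B})_1 \cong \wh{\mbb A}^\infty$, using triviality of $B$-bundles on $\Spec R\tbrac t$. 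The only cosmetic difference is that you interpose an auxiliary "local Zastava scheme" $\Maps_\gen(\Spec\mf o_v, X/B \supset \pt)$ and split the argument into two steps, whereas the paper's proof of Proposition~\ref{prop:DGKtorsor} constructs the map $\wh{\msf L^+X}_{\gamma_0} \to \wh{\sY}_y$ directly and then checks the fiber is a single $(\wh{\msf L^+B})_1(R)$-orbit using $\gamma_0 \in X^\circ(k\lbrac t)\cong B(k\lbrac t)$; these are the same argument. Your identification of $y$ as $t^{\ch\theta}\in \msf Y^{\ch\theta,\ch\theta}$ is also correct.
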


The central fiber $\msf Y^{\ch\theta}$ 
over a point $v\in \abs C$ is the fiber of $\sY^{\ch\theta}\to \sA^{\ch\theta}$ over the 
``diagonal divisor'' $\ch\theta \cdot v \in \sA^{\ch\theta}(k)$.
We define $\msf Y^{\ch\theta,\ch\theta} := \sY^{\ch\theta} \xt_{\sM_X} \sM_X^{\ch\theta}$. 
Then $\msf Y^{\ch\theta}$ is naturally a subvariety of $\Gr_B$ (see \S\ref{def:centralfiber}), 
and $t^{\ch\theta}$ denotes the corresponding point in $\Gr_B$. The significance of
this point will become evident in Corollary~\ref{cor:Ytheta}(iii).

\smallskip

The statements all follow from the proof of \cite[\S 4]{Drinfeld}. 
We also give the same argument, with some notational changes, in the proof of Theorem~\ref{thm:DGKfamily}.

\begin{rem}
The point $y : C \to X/B$ can be chosen so that $y^{-1}(\pt) = C\sm v$ for a single
point $v\in \abs C$. However it is essential, for the theorem to hold, that $\sY$ contains
maps with multiple points of $C$ mapping to $(X\sm X^\circ)/B$. 
\end{rem}

\section{Compactification of the Zastava model} \label{sect:compactification}

The map $\pi : \Scr Y \to \Scr A$ defined in \eqref{mappi}
is in general not proper, so for example
we cannot apply the decomposition theorem. To rectify this, we introduce
a compactification.

\subsection{Basic properties} \label{sect:basicproperties-comp}

Let $\ol{G/N} = \Spec k[G/N]$ denote the canonical affine closure of the 
quasiaffine variety $G/N$. 
For an arbitrary connected reductive group $G$, Drinfeld's compactification 
$\ol\Bun_B$ is defined\footnote{The definition as a closure is only true in characteristic $0$. In positive characteristic, see \cite[\S 4.1]{ABB}, \cite[\S 7.2]{Sch}.}
 as the closure of $\Bun_B$ inside 
\[ \Maps_\gen(C, G \bs \ol{G/N}/T \supset \pt/B), \] 
the stack parametrizing maps $C \to G\bs \ol{G/N}/T$ that generically land in 
the open substack $G \bs (G/N)/T = \pt/B$.
(When $[G,G]$ is simply connected, 
\cite[Proposition 1.2.3]{BG} show that $\Bun_B$ is dense in 
$\Maps_\gen(C, G\bs \ol{G/N}/T \supset \pt/B)$.)

Consider the \emph{stack} quotient $X \xt^G \ol{G/N} := (X\times \ol{G/N})/G$, where $G$ acts 
anti-diagonally. Then $X/N = X\times^G G/N$ is an open substack of 
$X \xt^G \ol{G/N}$, so we also have the open substack 
$\pt = X^\circ/B \subset X/B \subset X \xt^G \ol{G/N}/T$. 
Define 
\begin{equation} \label{e:defbarY}
    \barY = (\sM_X \xt_{\Bun_G} \ol \Bun_B)^\circ \subset 
    \Maps_\gen( C, X \xt^G \ol{G/N}/T \supset \pt)  
\end{equation}
where the superscript $^\circ$ denotes the open substack of 
\[ \sM_X \xt_{\Bun_G} \ol \Bun_B \subset \Maps_\gen( C, X \xt^G \ol{G/N}/T \supset 
X^\bullet / B) \] 
parametrizing maps generically landing in $\pt = X^\circ/B$. 
In particular, $\barY$ is an algebraic stack locally of finite type. 
We can identify
$\Scr Y \cong \barY \xt_{\ol\Bun_B} \Bun_B$ as an open substack of $\barY$.
(If $[G,G]$ is simply connected, the containment in \eqref{e:defbarY} is an 
equality.)

There is a natural map from $X\xt^G \ol{G/N}$ to 
\[ (X \times \ol{G/N})/\!\!/G = \Spec k[X \times G]^{G \times N}. \]
Since $k[X \times G]^G = k[X]$, we deduce that $(X \times \ol{G/N})/\!\!/G = X/\!\!/N$.
Therefore, we have a map 
$X \xt^G \ol{G/N} \to X/\!\!/N$ extending the natural map $X/N \to X/\!\!/N$.
Applying $\Maps(C,?/T)$ to the former, we have constructed a map 
\begin{equation}
    \bar\pi : \barY \to \Scr A 
\end{equation}
extending $\pi : \Scr Y \to \Scr A$.
Let $\barY^{\ch\lambda}$ denote the preimage of the subscheme $\Scr A^{\ch\lambda}$. 
The same proof as in Proposition~\ref{prop:factorization} 
shows that $\barY$ has the graded factorization property. 
We will see from Lemma~\ref{lem:barYGr} below that $\barY^{\ch\lambda}$ is 
representable by a scheme of finite type over $k$.

\medskip

First, we show that $\bar\pi$ is indeed a compactification:

\begin{prop} \label{prop:piproper}
The map $\bar \pi :\barY\to \Scr A$ is proper.
\end{prop}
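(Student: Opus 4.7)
\emph{Proof plan.}
My plan is to verify the valuative criterion of properness. The decompositions $\barY = \bigsqcup_{\ch\lambda} \barY^{\ch\lambda}$ and $\sA = \bigsqcup_{\ch\lambda} \sA^{\ch\lambda}$ are preserved by $\bar\pi$, so it suffices to treat one component at a time. The map is separated and locally of finite type, inherited from $\sM_X\xt_{\Bun_G}\ol\Bun_B$ (using that $\sM_X \to \Bun_G$ is schematic locally of finite type and that $\ol\Bun_B$ is separated and locally of finite type); finite type of each $\barY^{\ch\lambda} \to \sA^{\ch\lambda}$ will follow from (the independently proved) Lemma~\ref{lem:barYGr}, which embeds $\barY^{\ch\lambda}$ into an appropriate Beilinson--Drinfeld-type Grassmannian. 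Hence it is enough to verify the existence of a unique lift along any DVR $R$ with fraction field $K$: given $y_K \in \barY(K)$ and an extension $\alpha_R \in \sA(R)$ of $\bar\pi(y_K)$, construct the unique $y_R \in \barY(R)$ over $\alpha_R$ extending $y_K$.

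Writing $C_K := C \xt \Spec K$ and $C_R := C \xt \Spec R$, the data of $y_K$ comprises a $G$-bundle $\sP_G^K$ on $C_K$, a $T$-bundle $\sP_T^K$, a section $\sigma_K : C_K \to X \xt^G \sP_G^K$, and a $G$-equivariant map $\bar\sigma_K : \sP_G^K \to \overline{G/N}\xt^T\sP_T^K$, subject to the generic landing condition in $\pt = X^\circ/B$. The datum $\alpha_R$ is a $\mf c_X$-valued divisor $D_R$ on $C_R$; via the identification of $T$-bundles with homomorphisms $\Lambda_X \to \Pic(C_R)$, it canonically extends $\sP_T^K$ to a $T$-bundle $\sP_T^R$ on $C_R$.

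The crux is to extend $(\sP_G^K, \bar\sigma_K)$ simultaneously to a pair $(\sP_G^R, \bar\sigma_R)$ on $C_R$ compatible with $\sP_T^R$; once this is achieved, the section $\sigma_K$ extends uniquely to $\sigma_R : C_R \to X \xt^G \sP_G^R$ because $X$ is affine and the pole data is controlled by $D_R$, by the argument of Lemma~\ref{lem:extendsection}. One cannot extend $\sP_G^K$ first in isolation since $\Bun_G$ is not proper; instead the extension must be driven by $\bar\sigma_K$ itself, which carries the generalized $B$-structure. Applying Beauville--Laszlo (Proposition~\ref{prop:formalext}) reduces the problem to a local question on formal disks around $\on{supp}(D_R)$ glued to the complement, and one invokes the Pl\"ucker description of $\ol\Bun_B$ from \cite{BG}: the map $\bar\sigma$ is encoded by collections of maps $\sL^\lambda_{\sP_T} \to V^\lambda_{\sP_G}$ of the associated line and vector bundles, whose degeneracy behavior is bounded by $D_R$. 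Affineness of $\overline{G/N}$ then ensures that these Pl\"ucker data extend uniquely across the formal disks once $\sP_T^R$ is given, simultaneously pinning down $\sP_G^R$ and $\bar\sigma_R$.

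The principal obstacle is exactly this joint extension step, which is the reason $\bar\pi$ is proper while $\pi$ is not: the affine compactification $\overline{G/N}$ is what makes it work. Uniqueness follows from separatedness of $\bar\pi$ together with the uniqueness clauses of Beauville--Laszlo descent and of extensions into affine targets. An equivalent and arguably cleaner execution would be to embed $\barY^{\ch\lambda}$ into a compactified Beilinson--Drinfeld Grassmannian that is already proper over $\sA^{\ch\lambda}$ and check the embedding is closed; this is essentially the content of Lemma~\ref{lem:barYGr}, which would then reduce properness to a closedness check.
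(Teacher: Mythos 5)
Your closing remark — embed $\barY^{\ch\lambda}$ into a (compactified) Beilinson--Drinfeld Grassmannian proper over $\sA^{\ch\lambda}$, check the embedding is closed — is precisely the paper's proof: the argument is simply to cite Lemma~\ref{lem:barYGr} (the closed embedding $\barY \into \Gr_{G,\Sym C}\xt_{\Sym C}\sA$) and the ind-properness of $\Gr_{G,\Sym C}$ over $\Sym C$. Since each $\barY^{\ch\lambda}$ is of finite type, it sits inside a finite-type ind-piece of $\Gr_{G,\Sym C}$, which is proper over $\Sym C$, and properness follows. You should have led with that version rather than relegating it to a footnote.

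Your primary valuative-criterion route is a genuinely different strategy, but the execution has a real gap at every existence step. You invoke Lemma~\ref{lem:extendsection} and ``affineness of $X$ / $\overline{G/N}$'' to conclude that $\sigma_K$, the Pl\"ucker data, and $\sP_G^K$ all \emph{do} extend across $\Spec R$. But affineness and Lemma~\ref{lem:extendsection} only deliver \emph{closedness} of the extension locus (and uniqueness when an extension exists); they do not by themselves produce the extension. What you would need to say, and do not, is that the closed extension locus of $\Spec R$ contains the generic point $\Spec K$ and is therefore all of $\Spec R$; and even that only applies once the section is defined on $C_R\sm D_R$ rather than on $C_K$, which is a separate step. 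More seriously, the extension of $\sP_G^K$ to $\sP_G^R$ is not ``driven by $\bar\sigma_K$'' in the way you suggest: your attempt to construct $\sP_G^R$ from the Pl\"ucker data is circular, since the vector bundles $V^\lambda_{\sP_G}$ appearing in the Pl\"ucker maps are built from the very $\sP_G$ you are trying to produce. The actual mechanism is that $\alpha_R$ bounds the position in the affine Grassmannian of the pair ($G$-bundle, trivialization on the complement of $D_R$), so the pair lands in a finite-type, proper stratum of $\Gr_{G,\Sym C}$, and ind-properness of $\Gr_{G,\Sym C}$ is what forces the extension to exist. You never cite this. With that ingredient inserted, the valuative-criterion argument becomes a correct if roundabout reproof of Lemma~\ref{lem:barYGr}; without it, the existence claims are unsupported.
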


We proceed as in \S\ref{sss:YGr}. 
Choose $\delta\in \Lambda_X$ lying on the interior of the cone 
dual to $\Cal C_0(X)$.
Then $\delta$ defines a map $X/\!\!/N \to \mbb A^1$, which induces a map 
$\Scr A \to \Sym C$. 
This allows us to consider $\Scr Y$ as a scheme over $\Sym C$.
Proposition~\ref{prop:YGr} gives
a closed embedding $\Scr Y \into \Gr_{B,\Sym C}$ over $\Sym C$. 
We compose this with the natural map $\Gr_{B,\Sym C} \to \Gr_{G,\Sym C}$
to get a map $\Scr Y \to \Gr_{G,\Sym C}$. 
We can extend this to a map 
\begin{equation} \label{e:barYGr}
\barY \to \Gr_{G,\Sym C} 
\end{equation}
using the same idea as in the definition of \eqref{e:YGr}. Namely, 
let $y: C\times S \to X \times^G \ol{G/N} / T$ be an $S$-point of $\barY$
and let $D\subset C\times S$ denote the divisor it maps to. 
In particular, $y$ defines a $G$-bundle $\Scr P_G$ on $C\times S$ 
with a $B$-reduction on $C\times S\sm D$. Since $y(C\times S\sm D) = X^\circ/B =\pt$,
the $G$-bundle in fact admits a trivialization on $C\times S\sm D$. 
The data of $D, \Scr P_G$, and the trivialization defines an $S$-point of $\Gr_{G,\Sym C}$.

\begin{lem} \label{lem:barYGr}
The map $\barY \to \Gr_{G,\Sym C} \underset{\Sym C}\times \Scr A$
is a closed embedding.
\end{lem}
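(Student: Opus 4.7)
The overall strategy closely parallels the proof of Proposition~\ref{prop:YGr} (which treated the analogous statement for $\sY \into \Gr_{B,\Sym C}$): I would fix an $S$-point $(D,\sP_G,\tau,\alpha)$ of $\Gr_{G,\Sym C}\xt_{\Sym C}\sA$ and show that the fiber of $\barY$ over this datum is representable by a closed subscheme of $S$ and consists set-theoretically of at most one point. The first task is to recover the ``missing'' bundle data. Composing $\alpha : C\xt S \to (X\sslash N)/T$ with the projection $(X\sslash N)/T \to \pt/T$ yields a $T$-bundle $\sP_T$ on $C\xt S$, and the fact that $\alpha$ sends $C\xt S\sm D$ to $T/T = \pt$ produces a canonical trivialization of $\sP_T$ on $C\xt S\sm D$. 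The datum $\alpha$ is then equivalent to a section $a:C\xt S \to (X\sslash N)\xt^T \sP_T$.

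Next I would identify the fiber with an extension problem for sections of an affine scheme over $C\xt S$. An $S$-point of $\barY$ above $(D,\sP_G,\tau,\alpha)$ amounts to a section $\widetilde\sigma: C\xt S \to (X\xt\ol{G/N})\xt^{G\xt T}(\sP_G\xt \sP_T)$ whose image under the GIT quotient map to $(X\sslash N)\xt^T \sP_T$ (which uses $(X\times \ol{G/N})\sslash G = X\sslash N$) coincides with $a$. Since $X$, $\ol{G/N}$, and $X\sslash N$ are all affine, the fiber product
\[ Z := \bigl((X\times\ol{G/N})\xt^{G\xt T}(\sP_G\xt\sP_T)\bigr) \xt_{(X\sslash N)\xt^T \sP_T,\,a} (C\xt S) \]
is affine of finite presentation over $C\xt S$. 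Using the trivialization $\tau$ of $\sP_G$, the canonical trivialization of $\sP_T$, the base point $x_0\in X^\circ$, and the identity coset $N\in G/N$, one constructs a canonical section $\widetilde\sigma_0 : C\xt S\sm D \to Z|_{C\xt S\sm D}$ landing in the open locus corresponding to $\pt = X^\circ/B \subset X\xt^G \ol{G/N}/T$.

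To finish, I would apply Lemma~\ref{lem:extendsection} to the scheme $Z \to C\xt S$ and the section $\widetilde\sigma_0$: the locus in $S$ over which $\widetilde\sigma_0$ extends to a regular section is represented by a closed subscheme. Moreover, when an extension exists it is unique, since $Z \to C\xt S$ is separated and $C\xt S\sm D$ is schematically dense in $C\xt S$; this yields injectivity on $S$-points, i.e., the monomorphism property. The generic condition defining $\barY$ (landing in $\pt$) is automatic on any extension, because it already holds for $\widetilde\sigma_0$ on the dense open $C\xt S\sm D$, so no additional constraint enters. Combining the closed condition of extendability with the monomorphism property gives a closed embedding.

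The main obstacle is the bookkeeping needed to verify that an $S$-point of $\barY$ above $(D,\sP_G,\tau,\alpha)$ is indeed equivalent to the single section $\widetilde\sigma$ into the fiber product $Z$: one must carefully unwind the definition $\barY \subset \Maps_\gen(C, X\xt^G \ol{G/N}/T \supset \pt)$ to see both the $\sM_X$ contribution (the section into $X\xt^G \sP_G$) and the $\ol\Bun_B$ contribution (the generalized $B$-reduction, incarnated as a section into $\ol{G/N}\xt^G \sP_G\xt^T \sP_T$) as a single $(G\xt T)$-equivariant section, and to confirm that the compatibility with $\alpha$ is captured precisely by the fiber product appearing in the definition of $Z$. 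Once this identification is in hand, the closed embedding follows uniformly from Lemma~\ref{lem:extendsection}, exactly as in Proposition~\ref{prop:YGr}.
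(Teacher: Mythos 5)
Your proposal is correct in its main thrust and reduces to the same key technical lemma (Lemma~\ref{lem:extendsection}) as the paper, so the strategies are closely aligned. The packaging differs: the paper keeps the section of $X\xt^G\sP_G$ (call it $\sigma$) and the section of $\ol{G/N}\xt^G\sP_G\xt^T\sP_T$ (call it $\kappa$) as two \emph{independent} sections of affine schemes over $C\xt S$, applies Lemma~\ref{lem:extendsection} to each separately, and leaves the compatibility of the extended pair with $\alpha$ as an implicit consequence of separatedness and density. You instead form a single fiber product $Z$ over $(X\sslash N)\xt^T\sP_T$ via $a$ and apply the extension lemma once. Your version has the merit of making the compatibility with $\alpha$ part of the scheme rather than a post-hoc check; the paper's version is slightly lighter to set up because it never needs to examine the map $X\xt\ol{G/N}\to X\sslash N$.

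There is one genuine gap worth flagging: you work directly with $\barY$ and assert that "the generic condition defining $\barY$ (landing in $\pt$) is automatic on any extension." That argument only shows the extended section lies in $\Maps_\gen(C, X\xt^G\ol{G/N}/T \supset \pt)$. But $\barY$ is \emph{defined} in \eqref{e:defbarY} as $(\sM_X\xt_{\Bun_G}\ol\Bun_B)^\circ$, which is a closed substack of $\Maps_\gen(C,X\xt^G\ol{G/N}/T\supset\pt)$ and is a \emph{proper} closed substack when $[G,G]$ is not simply connected. Landing generically in $\pt$ does not by itself put the section into $\ol\Bun_B$. The paper handles this by observing up front that $\barY\into\Maps_\gen$ is a closed embedding and reducing to proving the closed-embedding property for $\Maps_\gen$. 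Your argument should be re-phrased in the same way: first prove the fiber of $\Maps_\gen$ over $(D,\sP_G,\tau,\alpha)$ is closed in $S$, then conclude for $\barY$ by composing with the closed embedding $\barY\into\Maps_\gen$. With this adjustment, the argument goes through.
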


Since $\Gr_{G,\Sym C}$ is ind-proper over $\Sym C$ (cf.~\cite[Remark 3.1.4]{Xinwen}), 
Proposition~\ref{prop:piproper} follows from Lemma~\ref{lem:barYGr}.
We also deduce from the lemma that $\barY$ is representable by a scheme, and 
each $\barY^{\ch\lambda}$ is of finite type.

\begin{proof}
The discussion above really defines a map 
\begin{equation} \label{e:biggeremb}
    \Maps_\gen(C, X \xt^G \ol{G/N}/T \supset \pt) \to \Gr_{G,\Sym C} \xt_{\Sym C} \sA, 
\end{equation}
and $\barY \into \Maps_\gen(C, X\xt^G \ol{G/N}/T \supset \pt)$ is a closed
embedding. Thus, it suffices to prove that \eqref{e:biggeremb} is a closed
embedding.  
Fix a test scheme $S$. 
Let $\Scr P^0_G,\Scr P^0_B,\Scr P^0_T$ denote the respective trivial bundles on $C\times S$. 
An $S$-point of $\Gr_{G,\Sym C} \times_{\Sym C} \Scr A$
consists of the data 
\[ (\Scr P_G,\Scr P_T,D,\tau,\alpha)  \]
with $\Scr P_G\in \Bun_G(S),\, \Scr P_T \in \Bun_T(S),\, D\in \Sym C(S)$,
a trivialization $\tau : \Scr P^0_G|_{C\times S\sm D}\cong \Scr P_G|_{C\times S\sm D}$, 
and a section
$\alpha: C\times S \to (X/\!\!/N)\times^T \Scr P_T$ such that 
$(\Scr P_T,\alpha) \in \Scr A(S)$ maps to $D$. 
In particular, this means that $\alpha$ induces a trivialization 
$\Scr P_T^0|_{C\times S\sm D} \cong \Scr P_T|_{C\times S\sm D}$. 
Using the identification $X^\circ \cong B$, we get a section
\[ \sigma_0 : C\times S\sm D \to \Scr P^0_B \cong X^\circ \xt^B \Scr P_B^0 
\into X \xt^B \Scr P_B^0= X \xt^G \Scr P_G^0. \]
The composition $\sigma := \tau \circ \sigma_0$ then defines a section
$C\times S \sm D \to X\xt^G \Scr P_G$.
On the other hand, the trivial $B$-bundle also corresponds to a section
$\kappa_0 : C\times S \sm D \to \Scr P_G^0 \xt^G (G/N)$.
Composing $\kappa_0$ with the trivialization $\alpha:\Scr P_T^0|_{C\times S\sm D} \cong \Scr P_T|_{C\times S\sm D}$, we get a section
\[ \kappa : C\times S \sm D \to \Scr P_G \xt^G \ol{G/N} \xt^T \Scr P_T. \]
The datum $(\Scr P_G, \Scr P_T, \sigma, \kappa)$ defines an $S$-point 
of $\Maps(C, X\xt^G \ol{G/N}/T)$ if and only if $\sigma,\kappa$ both extend to regular
maps on $C\times S$. 
Therefore, the fiber of our chosen $S$-point over 
the map \eqref{e:biggeremb}
parametrizes maps $S' \to S$
such that the base change of $\sigma,\kappa$ to $S'$ both extend to 
$C\times S'$. By Lemma~\ref{lem:extendsection}, 
this fiber is represented by a closed subscheme of $S$ (here the key point 
is that both $X$ and $\ol{G/N}$ are affine). 
\end{proof}

\begin{rem}
We need the extra factor of $\Scr A$ in Lemma~\ref{lem:barYGr} which was
not present in Proposition~\ref{prop:YGr} because the map $\Gr_B \to \Gr_G$
is a bijection on $k$-points but far from an isomorphism of ind-schemes. 
Since $\Scr A$ embeds into $\Gr_{T,\Sym C}$, the lemma is really
embedding $\barY$ into $\Gr_{G,\Sym C}\times_{\Sym C} \Gr_{T,\Sym C}$. 
The ind-scheme $\Gr_T$ is highly non-reduced, while $(\Gr_T)_\red$ is a disjoint
union of points.
\end{rem}

\begin{eg} \label{eg:Heckecompact}
Let $X = \mbb G_m \bs \GL_2$ as in Example~\ref{eg:YHecke}. 
Then $\barY = \Sym C \xt \Sym C$ and $\barY \to \sA$ is the identity morphism. 
\end{eg}

\subsection{Stratification} 
\label{sect:strat-compactified}
We can uniquely write any $\ch\nu \in \ch\Lambda^\pos_G$ 
as a sum $\ch\nu = \sum_{\alpha\in \Delta_G} n_\alpha \ch\alpha$ where $\Delta_G$ is the set of 
simple coroots and $n_\alpha$ are positive integers. 
Let $C_{\ch\nu} := \prod_{\Delta_G} C^{(n_\alpha)}$ denote the corresponding partially
symmetrized power of $C$. 
Recall that Drinfeld's 
compactification $\ol\Bun_B$ of $\Bun_B$ has a 
stratification by \emph{defect}, where the strata are given by 
locally closed embeddings
\[ \mf i_{\ch\nu}: C_{\ch\nu} \times \Bun^{\ch\mu+\ch\nu}_B \into \ol\Bun^{\ch\mu}_B, \]
for $\ch\nu\in \ch\Lambda^\pos_G,\, \ch\mu \in \ch\Lambda_G$, cf.~\cite[\S 1.5, p.~7]{BFGM}. 
Define the substack ${_{\ch\nu}}\ol\Bun_B^{\ch\mu}$ to be the image 
of the corresponding embedding. 
We obtain an open substack $_{\le \ch\nu}\ol\Bun_B^{\ch\mu} \subset \ol\Bun^{\ch\mu}_B$ by taking the union of the strata $_{\ch\nu'}\ol\Bun_B^{\ch\mu}$ 
for all $\ch\nu' \le \ch\nu$. 

Since $\barY^{\ch\lambda}$ maps to $\ol\Bun_B^{-\ch\lambda}$ for $\ch\lambda \in \mathfrak c_X$, by base change we have locally
closed subschemes 
\[ {}_{\ch\nu} \barY^{\ch\lambda} := \barY^{\ch\lambda} \xt_{\ol\Bun_B^{-\ch\lambda}} {}_{\ch\nu} \ol\Bun_B^{-\ch\lambda} \into \barY^{\ch\lambda} \]
and open subschemes ${}_{\le \ch\nu} \barY^{\ch\lambda} \into \barY^{\ch\lambda}$ defined analogously.
Observe that the identification ${}_{\ch\nu} \ol\Bun_B^{-\ch\lambda} \cong 
C_{\ch\nu} \times \Bun_B^{\ch\nu-\ch\lambda}$ induces an isomorphism
\begin{equation} \label{e:barYstrata1}
    {}_{\ch\nu} \barY^{\ch\lambda}  \cong C_{\ch\nu} \times \Scr Y^{\ch\lambda-\ch\nu}. 
\end{equation}
On the other hand, $\barY^{\ch\lambda}$ also maps to $\Scr M_X$, so 
we get a locally closed subscheme 
\[ {}_{\ch\nu}\barY^{\ch\lambda,\ch\Theta} := {}_{\ch\nu}\barY^{\ch\lambda} \xt_{\Scr M_X} \Scr M_X^{\ch\Theta} \into {}_{\ch\nu}\barY^{\ch\lambda} \into \barY^{\ch\lambda} \] 
for $\ch\Theta$ any partition in $\mathfrak c_X^-$ 
(by Lemma~\ref{lem:globstrata}). 
We deduce from \eqref{e:barYstrata1} that there is an isomorphism
\[ {}_{\ch\nu} \barY^{\ch\lambda,\ch\Theta} \cong C_{\ch\nu} \times \Scr Y^{\ch\lambda-\ch\nu, \ch\Theta}. \] 
In particular, Proposition~\ref{prop:localstrata} implies that 
${}_{\ch\nu} \barY^{\ch\lambda,\ch\Theta}$ is smooth. 
In summary: 

\begin{prop}  \label{prop:barYstrata}
The collection of locally closed subschemes 
${}_{\ch\nu} \barY^{\ch\lambda,\ch\Theta}$, ranging over all $\ch\nu\in \ch\Lambda^\pos_G$ and partitions $\ch\Theta \in \Sym^\infty(\mathfrak c_X^-\sm 0)$, 
forms a smooth stratification of $\barY^{\ch\lambda}$. 
\end{prop}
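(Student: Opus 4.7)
The plan is to reduce the assertion to the two stratifications that have already been analyzed: the defect stratification of Drinfeld's compactification $\ol\Bun_B^{-\ch\lambda}$, and the fine stratification $\sY^{\ch\lambda-\ch\nu,\ch\Theta}$ of the (non-compactified) Zastava model established in Proposition~\ref{prop:localstrata}. Concretely, I would first check that the collection $\{{}_{\ch\nu}\barY^{\ch\lambda,\ch\Theta}\}$ is disjoint and exhausts $\barY^{\ch\lambda}(k)$. Disjointness and exhaustion over the defect direction are inherited from the stratification of $\ol\Bun_B^{-\ch\lambda}$ by the substacks ${}_{\ch\nu}\ol\Bun_B^{-\ch\lambda}$; within a fixed defect stratum, the identification \eqref{e:barYstrata1} sends the map $\barY^{\ch\lambda}\to \sM_X$ (up to the factor $C_{\ch\nu}$) to the map $\sY^{\ch\lambda-\ch\nu}\to \sM_X$, and hence the $\ch\Theta$-strata are disjoint and exhaustive by the analogous properties for $\sY^{\ch\lambda-\ch\nu}$ (see \S\ref{sect:globstrat} and Proposition~\ref{prop:localstrata}).

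Next, the locally closed property of each ${}_{\ch\nu}\barY^{\ch\lambda,\ch\Theta}$ is immediate from its definition: ${}_{\ch\nu}\barY^{\ch\lambda}\hookrightarrow \barY^{\ch\lambda}$ is locally closed by base change from the defect stratum ${}_{\ch\nu}\ol\Bun_B^{-\ch\lambda}\hookrightarrow \ol\Bun_B^{-\ch\lambda}$, and then ${}_{\ch\nu}\barY^{\ch\lambda,\ch\Theta}\hookrightarrow {}_{\ch\nu}\barY^{\ch\lambda}$ is locally closed by base change of $\sM_X^{\ch\Theta}\hookrightarrow \sM_X$ (Lemma~\ref{lem:globstrata}). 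For smoothness I would use the isomorphism
\[ {}_{\ch\nu}\barY^{\ch\lambda,\ch\Theta}\cong C_{\ch\nu}\times \sY^{\ch\lambda-\ch\nu,\ch\Theta}, \]
established right before the statement of the proposition: the factor $C_{\ch\nu}=\prod_{\alpha\in\Delta_G} C^{(n_\alpha)}$ is smooth since $C$ is a smooth curve, and $\sY^{\ch\lambda-\ch\nu,\ch\Theta}$ is smooth by Proposition~\ref{prop:localstrata}.

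Finally, it remains to verify the closure property: the closure of any ${}_{\ch\nu}\barY^{\ch\lambda,\ch\Theta}$ is a union of strata of the same form. This follows by combining the two closure statements available to us. On the one hand, the closure of ${}_{\ch\nu}\ol\Bun_B^{-\ch\lambda}$ is the union of ${}_{\ch\nu'}\ol\Bun_B^{-\ch\lambda}$ for $\ch\nu'\ge\ch\nu$, i.e., the open locus ${}_{\le\ch\nu'}\ol\Bun_B^{-\ch\lambda}$ exhausts $\barY^{\ch\lambda}$ as $\ch\nu'$ grows. On the other hand, within the smooth correspondence of Lemma~\ref{lem:localglobalyoga} (applied to the open $\sY\subset\barY$), the closure of $\sM_X^{\ch\Theta}$ is a union of strata $\sM_X^{\ch\Theta'}$ by Proposition~\ref{prop:globwhitney} (respectively \S\ref{sect:whitneyperverse} in positive characteristic), so the pullback closure of $\sY^{\ch\lambda-\ch\nu,\ch\Theta}$ is a union of $\sY^{\ch\lambda-\ch\nu,\ch\Theta'}$. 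The main point to be careful about is that in characteristic zero these two pieces of closure information can be combined strata-wise because the map $\barY^{\ch\lambda}\to \sM_X$ is constructed so that its restriction to each ${}_{\ch\nu}\barY^{\ch\lambda}$ factors through the second projection of \eqref{e:barYstrata1}, and so the pullback of the fine stratification of $\sM_X$ via this map is compatible with the product decomposition. Putting these ingredients together yields the desired stratification.
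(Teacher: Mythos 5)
Your proposal is correct and follows the same route as the paper: pull back the defect stratification of $\ol\Bun_B^{-\ch\lambda}$ and the fine stratification of $\sM_X$, then use the product identification ${}_{\ch\nu}\barY^{\ch\lambda,\ch\Theta}\cong C_{\ch\nu}\times\sY^{\ch\lambda-\ch\nu,\ch\Theta}$ together with Lemma~\ref{lem:globstrata} and Proposition~\ref{prop:localstrata}. The paper's own argument is simply the discussion that precedes the proposition statement (ending with ``In summary:''), which establishes locally-closedness and smoothness exactly as you do; your additional attention to disjointness and the closure condition is a harmless elaboration of what the paper leaves implicit.
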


Note that for fixed $\ch\lambda$, many of these strata may be empty.

\subsubsection{Changing the curve}

In this subsection we let $C$ be a smooth curve which is not necessarily proper 
and we define 
\[ \sA(C) = \Hom( \mf c_X^\vee, \Sym C), \quad 
\sY(C)=\Maps_\gen(C, X/B\supset \pt) \]
and $\barY(C)$ to be the closure
of $\sY(C)$ in $\Maps_\gen(C, X \xt^G \ol{G/N}/T \supset \pt)$ to emphasize
the curve being used. 
Here $\Hom$ denotes homomorphisms of monoid objects in the category of schemes.
Similarly we have $\sA^{\ch\lambda}(C), \sY^{\ch\lambda}(C),\barY^{\ch\lambda}(C)$. 
The local nature of $\barY(C)$ 
(in particular Lemma~\ref{lem:barYGr}) ensures that $\barY^{\ch\lambda}(C)$ is still 
a finite type $k$-scheme. 

\smallskip

Let $p: \wt C \to C$ be an \'etale map of smooth curves. 
Let $(\Sym \wt C)_\disj \subset \Sym \wt C$
be the open subset that consists of divisors $\wt D$ on $\wt C$ such that
any fiber of $p$ contains at most one point of the support of $\wt D$.
Let $\sA(\wt C)_\disj$ denote the open subset of 
$\Hom(\mf c_X^\vee, \Sym \wt C)$ consisting of homomorphisms landing in $(\Sym \wt C)_\disj$.
Then pushforward of divisors defines a map 
$\sA^{\ch\lambda}(\wt C) \to \sA^{\ch\lambda}(C)$.

\begin{prop}[{\cite[Proposition 2.19]{BFG}}] \label{prop:changecurve}
For an \'etale map $\wt C \to C$ we have a canonical isomorphism 
\[ \barY^{\ch\lambda}(C) \xt_{\sA^{\ch\lambda}(C)} \sA^{\ch\lambda}(\wt C)_\disj 
\cong \barY^{\ch\lambda}(\wt C) \xt_{\sA^{\ch\lambda}(\wt C)} \sA^{\ch\lambda}(\wt C)_\disj 
\] 
which preserves the fine stratification. 
\end{prop}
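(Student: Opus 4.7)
The plan is to reduce both sides to Beauville--Laszlo data on the formal neighborhood of the underlying divisor in the curve, and then to observe that the disjointness condition together with the \'etale hypothesis makes these formal neighborhoods match on the nose. Fix a test scheme $S$ and an $S$-point $(\bar y, \alpha)$ of the left hand side, so $\bar y : C\xt S \to X\xt^G \ol{G/N}/T$ generically lands in $\pt$, the image $\pi(\bar y) \in \sA^{\ch\lambda}(C)(S)$ equals $D := p_*\alpha$, and $\alpha \in \sA^{\ch\lambda}(\wt C)_\disj(S)$. The disjointness condition says that $p$ is injective on the support of $\alpha$ and is an isomorphism there onto the support of $D$; combined with \'etaleness of $p$, this yields canonical isomorphisms $\wh{(\wt C\xt S)_\alpha} \cong \wh{(C\xt S)_D}$ and $\wh{(\wt C\xt S)'_\alpha} \cong \wh{(C\xt S)'_D}$ after passing to true schemes via Proposition~\ref{prop:formalext}.

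The second step is to apply Beauville--Laszlo descent, in the spirit of Lemma~\ref{lem:B-L}, to the stack $X\xt^G \ol{G/N}/T$. This stack is pointy with open point $\pt = X^\circ/B$ and is a quotient of the affine scheme $X\xt \ol{G/N}$ by the affine group $G\xt T$, so the gluing argument of Lemma~\ref{lem:B-L} extends to show that $\bar y$ is equivalent to the pair consisting of its restriction to $\wh{(C\xt S)'_D}$ together with its restriction to $C\xt S\sm D$, the latter being the constant map to $\pt$ forced by the generic landing hypothesis. Transport the formal datum across the isomorphism from the first step and glue it with the constant map on $\wt C\xt S\sm \alpha$; the result is an $S$-point $\tilde y$ of $\barY^{\ch\lambda}(\wt C)$ with image exactly $\alpha$. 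The construction is manifestly symmetric in $C$ and $\wt C$ (both sides take the form: restrict to the formal neighborhood, transport, extend by the constant map off the divisor), giving the claimed canonical isomorphism.

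Preservation of the fine stratification from Proposition~\ref{prop:barYstrata} is then automatic: the defect strata of $\ol\Bun_B$ (indexed by $\ch\nu\in\ch\Lambda^\pos_G$) and the $G$-orbit strata of $\sM_X$ (indexed by $\ch\Theta\in\Sym^\infty(\mf c_X^-\sm 0)$) are both determined by the restriction of $\bar y$ to the formal neighborhoods of the individual points of $D$, and these are preserved by the identification of Step~1. The main obstacle I anticipate is the descent step itself: Lemma~\ref{lem:B-L} is stated for a single affine $X$ acted on by $B$, whereas here one descends sections of the quotient $X\xt^G\ol{G/N}/T$. Both $X$ and $\ol{G/N}$ are affine, so Lemma~\ref{lem:extendsection} applies to ensure the extension-of-section conditions on the two curves correspond through the formal-neighborhood isomorphism; but the diagonal $G$-action and the role of the $T$-bundle require a careful unwinding which will be the bulk of the technical work.
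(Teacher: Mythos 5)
Your proposal is correct and matches the paper's argument step by step: identify formal neighborhoods via \'etaleness and disjointness, apply Beauville--Laszlo descent via Lemma~\ref{lem:B-L}, and observe that the strata are determined locally at the degeneracy divisor. The obstacle you anticipate in the final paragraph is not actually there: Lemma~\ref{lem:B-L} is stated for an arbitrary affine scheme with an algebraic group action making the quotient pointy, so the paper applies it directly to the affine $G\xt T$-scheme $X\xt\ol{G/N}$ (exactly as you describe) with no further unwinding of the diagonal $G$-action or the $T$-bundle required.
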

Note that setting $\wt C = C\sqcup C$ recovers the graded factorization property. 

\begin{proof}
Choose $\delta \in \mf c_X^\vee$ as in \S\ref{sss:YGr}. 
For $(y,\tilde a) \in \barY^{\ch\lambda}(C) \xt_{\sA^{\ch\lambda}(C)} \sA^{\ch\lambda}(\wt C)_\disj$, let $\wt D$ (resp.~$D$) denote the divisor corresponding to $\delta$ paired with 
$\tilde a$ (resp.~$\pi(y)\in \sA^{\ch\lambda}(C)$). 
Then $p_*\wt D = D$ and we deduce that there is an isomorphism 
$\wh C'_D \cong \wh{\wt C}{}'_{\wt D}$.
Lemma~\ref{lem:B-L}, applied to
the affine $G\xt T$-scheme $X \xt \ol{G/N}$, implies that the point 
$y\in \barY^{\ch\lambda}(C)$ is 
equivalent to its restriction $y|_{\wh C'_D}$. 
Applying the same lemma again shows that $y|_{\wh C'_D}$ is equivalent to 
a point $\tilde y \in \barY^{\ch\lambda}(\wt C)$ such that $\tilde y(\wt C \sm \wt D)=\pt$.
This defines mutually inverse maps in both directions and the compatibility 
with strata is clear.
\end{proof}

Since the diagonal $\delta^{\ch\lambda} : \wt C \into \sA^{\ch\lambda}(\wt C)$ 
sending $\tilde v \mapsto \ch\lambda\cdot \tilde v$ is contained in $\sA^{\ch\lambda}(\wt C)_\disj$, we deduce from the graded factorization property
and Proposition~\ref{prop:changecurve} that $\barY^{\ch\lambda}(C)$ 
is \'etale-locally isomorphic to $\barY^{\ch\lambda}(\bbA^1)$ for any smooth curve $C$.

\subsection{The central fiber} \label{def:centralfiber}
Let $\ch\lambda \in \mathfrak c_X$. There is a 
diagonal map $\delta^{\ch\lambda}:C \to \Scr A^{\ch\lambda}$
sending $v \mapsto \ch\lambda \cdot v$. For a fixed point
$v\in \abs C$, let us consider 
$\delta^{\ch\lambda}_v : v \to \Scr A^{\ch\lambda}$. 

\medskip

Define the central fiber $\msf Y^{\ch\lambda}$ of $\Scr Y^{\ch\lambda}$ 
to be the
preimage of $\delta^{\ch\lambda}_v$ under 
$\pi : \Scr Y^{\ch\lambda} \to \Scr A^{\ch\lambda}$. 
If we take central fibers of the map \eqref{e:YGr}, Proposition~\ref{prop:YGr} implies that we have a closed embedding 
$\msf Y^{\ch\lambda} \into \Gr_B$.
In fact, $\ch\lambda \cdot v$ can be considered as a point in 
$\Gr_T$. If we let $\msf S^{\ch\lambda}$ denote the preimage of this point
under the projection $\Gr_B \to \Gr_T$, then we get a closed embedding 
\begin{equation} \label{e:YembedGr}
    \msf Y^{\ch\lambda} \into \msf S^{\ch\lambda}.
\end{equation}

Note that $\msf S^{\ch\lambda}$ identifies with 
the $\msf L N$-orbit of $t^{\ch\lambda}$ in $\Gr_G$. 
The orbits $\msf S^{\ch\lambda}$ are commonly known
as the semi-infinite orbits of $\Gr_G$, and their geometric properties
were extensively studied by Mirkovi\'c--Vilonen in \cite{MV}. 
Let $\olsf S^{\ch\lambda}$ denote the scheme-theoretic closure 
of $\msf S^{\ch\lambda}$ in $\Gr_G$.

We analogously define $\olsf Y^{\ch\lambda}$ as the central fiber\footnote{The open subscheme $\msf Y^{\ch\lambda}$ does not need to be dense in $\olsf Y^{\ch\lambda}$.}
of $\bar\pi : \barY^{\ch\lambda} \to \sA^{\ch\lambda}$, and 
we deduce from Lemma~\ref{lem:barYGr} that there is a 
closed embedding $\olsf Y^{\ch\lambda} \into \Gr_G$. 
From the moduli-theoretic description of $\olsf S^{\ch\lambda}$ 
(see \cite[proof of Proposition 5.3.6]{Xinwen}), we see that
this factors through a closed embedding 
\begin{equation} \label{e:olYembedGr}
    \olsf Y^{\ch\lambda} \into \olsf S^{\ch\lambda}.
\end{equation}

\subsubsection{Local description of $\msf Y^{\ch\lambda}$} 
Note that the central fiber $\msf Y^{\ch\lambda}$
intersects the stratum $\Scr Y^{\ch\lambda,\ch\Theta}$ 
only if $\ch\Theta =[\ch\theta]$ is the singleton partition corresponding to 
a single $\ch\theta \in \mathfrak c_X^-$. 
In this case, let $\msf Y^{\ch\lambda,\ch\theta}$ denote
the intersection $\msf Y^{\ch\lambda} \cap \Scr Y^{\ch\lambda,\ch\theta} = \msf Y^{\ch\lambda}
\xt_{\sM_X} \sM_X^{\ch\theta}$.
Also let $\olsf Y^{\ch\lambda,\ch\theta} = \olsf Y^{\ch\lambda} \xt_{\sM_X} \sM_X^{\ch\theta}$.

The $\msf L G$-action on the base point $x_0 \in \msf L X(k)$
defines a map $\msf L G \to \msf L X$, which induces a map 
$\Gr_G \to \msf L X/ \msf L^+G$.  

\begin{lem} \label{lem:Y=ScapGr}
There are natural isomorphisms
\begin{align} 
    \msf Y^{\ch\lambda} & 
        \cong \msf S^{\ch\lambda} \xt_{\msf L X/\msf L^+G} \msf L^+ X/\msf L^+G 
\label{e:YScapGr} \\
    \olsf Y^{\ch\lambda} & 
        \cong \olsf S^{\ch\lambda} \xt_{\msf L X/\msf L^+G} \msf L^+ X / \msf L^+G 
\label{e:olYScapGr}
\end{align}
which induce 
$\displaystyle (\msf Y^{\ch\lambda,\ch\theta})_\red \cong (\msf S^{\ch\lambda} \xt_{\msf L X/\msf L^+G} \msf L^{\ch\theta}X/\msf L^+G)_\red$ 
and 
$\displaystyle (\ol{\msf Y}^{\ch\lambda,\ch\theta})_\red \cong (\ol{\msf S}^{\ch\lambda}
 \xt_{\msf L X/\msf L^+G} \msf L^{\ch\theta}X/\msf L^+G)_\red$.
\end{lem}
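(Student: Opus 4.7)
The plan is to use the Beauville--Laszlo-type result Lemma~\ref{lem:B-L} to pass from generic maps $C \to X/B$ to data on the formal disk at $v$, and then to compare both sides of the asserted isomorphisms by identifying them with explicit loop-space data. Unpacking definitions, a $k$-point of $\msf Y^{\ch\lambda}$ is a pair $(\sP_B, \sigma)$ with $\sP_B$ a $B$-bundle on $C$ and $\sigma$ a section to $X\xt^B \sP_B$ over $C$, landing in $X^\circ \xt^B \sP_B \cong \sP_B$ exactly on $C\sm v$ and with $T$-degree $-\ch\lambda$ concentrated at $v$. Applying Lemma~\ref{lem:B-L} to the affine $B$-scheme $X$ with pointy quotient $X^\circ/B = \pt$, such a datum is equivalent to a $B$-bundle $\hat\sP_B$ on $\Spec \mf o_v$ together with a section $\hat\sigma: \Spec \mf o_v \to X\xt^B \hat\sP_B$ whose restriction to $\Spec F_v$ lands in $X^\circ \xt^B \hat\sP_B$; the condition on $\pi(y)$ forces the $T$-class of $\hat\sP_B$ in $\Gr_T$ to equal $\ch\lambda$. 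An entirely parallel analysis, applied to the affine $G\times T$-scheme $X\times \ol{G/N}$ with open substack $\pt = X^\circ/B$, yields the corresponding local description of $\olsf Y^{\ch\lambda}$.

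With this local description in hand, I would produce the natural map $\msf Y^{\ch\lambda} \to \msf S^{\ch\lambda} \xt_{\msf L X/\msf L^+G} \msf L^+X/\msf L^+G$ as follows. Given local data $(\hat\sP_B,\hat\sigma)$, the induced $G$-bundle $\hat\sP_G := \hat\sP_B \xt^B G$ inherits a trivialization on $\Spec F_v$ via $X^\circ \cong B$, giving a point of $\Gr_G$ lying in $\msf S^{\ch\lambda}$. The section $\hat\sigma$, viewed as an $\mf o_v$-point of $X \xt^G \hat\sP_G$, projects to a well-defined $\msf L^+G$-orbit in $\msf L^+X$, and compatibility with the action map $\Gr_G \to \msf L X/\msf L^+G$, $g\mapsto x_0 g$, is immediate. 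The inverse map takes $(g,\sigma)$ with $g\,\msf L^+G \in \msf S^{\ch\lambda}$ and $\sigma\in \msf L^+ X$ defining the same class in $\msf L X/\msf L^+G$ as $x_0 g$ and, via a decomposition $g = n\, t^{\ch\lambda}\, g_0$ with $n\in \msf LN$ and $g_0\in \msf L^+G$, produces a $B$-reduction of the trivial $G$-bundle on $\Spec \mf o_v$ with class $\ch\lambda\in \Gr_T$; together with $\sigma$ this reconstructs the local data, and the two constructions are mutually inverse as functors of points. The compactified version \eqref{e:olYScapGr} is handled identically, using the moduli description of $\olsf S^{\ch\lambda}\subset \Gr_G$ already invoked for the embedding \eqref{e:olYembedGr}. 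For the stratified identifications, Theorem~\ref{thm:Gorbits} parametrizes the $G(\mf o)$-orbits on $X(\mf o)\cap X^\bullet(F)$ by $\mf c_X^-$, and under the smooth correspondence of Lemma~\ref{lem:localglobalyoga} the stratum $\sM_X^{\ch\theta}$ matches precisely with the condition that the local section $\hat\sigma$ lies in the $\msf L^+G$-orbit $\msf L^{\ch\theta}X$. It follows that $\msf Y^{\ch\lambda,\ch\theta}$ and $\msf S^{\ch\lambda}\xt_{\msf L X/\msf L^+G}\msf L^{\ch\theta}X/\msf L^+G$ agree on $k$-points, giving the asserted isomorphism of reduced schemes.

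The main obstacle will be the bookkeeping distinction between the two incarnations of $\msf S^{\ch\lambda}$: as a subscheme of $\Gr_B$ (the preimage of $\ch\lambda$ under $\Gr_B \to \Gr_T$, cf.~\eqref{e:YembedGr}) and as a reduced subscheme of $\Gr_G$ (the $\msf L N$-orbit of $t^{\ch\lambda}$). Since the map $\Gr_B \to \Gr_G$ is only a bijection on $k$-points---the Remark following Lemma~\ref{lem:barYGr} already flags the non-reducedness of $\Gr_T$ as the source of this discrepancy---the scheme-theoretic isomorphisms \eqref{e:YScapGr} and \eqref{e:olYScapGr} must be set up by working with the moduli description of $\olsf S^{\ch\lambda}$ inside $\Gr_G$ throughout, rather than by transport of structure across $\Gr_B\to \Gr_G$; the stratified identifications, being stated at the level of reduced structures, bypass this subtlety.
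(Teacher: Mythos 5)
Your overall skeleton (Beauville–Laszlo on the formal disk at $v$, then identifying local data with points of the claimed fiber product) is the same as the paper's. But the inverse construction has a gap, and your concluding remark about which incarnation of $\msf S^{\ch\lambda}$ to use actually points in the wrong direction.

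The gap: you propose to recover the $B$-bundle on $\Spec \mf o_v$ from a point $g\,\msf L^+G \in \msf S^{\ch\lambda}\subset \Gr_G$ by choosing a decomposition $g = n\,t^{\ch\lambda}\,g_0$ with $n\in\msf LN,\,g_0\in\msf L^+G$. This is a set-theoretic statement about $k$-points; for a general $R$-point of $\msf S^{\ch\lambda}$ (which is the fppf-sheaf quotient of $\msf LN$ by the relevant stabilizer, not literally $\msf LN\cdot t^{\ch\lambda}$) there is no functorial lift to $\msf LG$, and even locally the element $n$ is not canonical. So the recipe does not produce a morphism of schemes, and you have not constructed the two-sided inverse you claim. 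The paper avoids this issue entirely: $\msf S^{\ch\lambda}$ in \eqref{e:YScapGr} is taken to be $\Gr_B^{\ch\lambda}$, the scheme-theoretic preimage of $\ch\lambda$ under $\Gr_B\to\Gr_T$, so an $R$-point already consists of a $B$-bundle $\hat\sP_B$ together with a trivialization $\hat\sigma_0$ on the punctured disk. The inverse map then has nothing to split: one uses $X^\circ\cong B$ to reinterpret $\hat\sigma_0$ as a section into $X^\circ\xt^B\hat\sP_B$, observes that the fiber-product condition says this section extends to the whole disk, and applies Lemma~\ref{lem:B-L} to descend to a point of $\sY$. No Iwasawa-type decomposition is required.

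Consequently, your final paragraph draws exactly the wrong moral. You correctly observe (following the remark after Lemma~\ref{lem:barYGr}) that $\Gr_B\to\Gr_G$ is not an isomorphism onto its image and that $\Gr_T$ is non-reduced, but then conclude that the scheme-theoretic isomorphisms should be built using the moduli description of $\olsf S^{\ch\lambda}$ inside $\Gr_G$ ``throughout.'' In fact, for \eqref{e:YScapGr} the correct move is the opposite: stay in $\Gr_B$, where the $B$-reduction is part of the data. The $\Gr_G$-description only becomes necessary for the closure $\olsf S^{\ch\lambda}$ in \eqref{e:olYScapGr}, where the compactified Zastava model $\barY$ is itself defined via $X\xt^G\ol{G/N}/T$ and Lemma~\ref{lem:barYGr} supplies a closed embedding into $\Gr_{G,\Sym C}\xt_{\Sym C}\sA$; there the analogous Beauville–Laszlo argument applies to the affine $G\xt T$-scheme $X\xt\ol{G/N}$. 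The stratified identifications at the level of reduced schemes are fine, though they follow directly from the definitions of $\msf Y^{\ch\lambda,\ch\theta}$ and $\msf L^{\ch\theta}X$ without needing Lemma~\ref{lem:localglobalyoga}.
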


\begin{proof}
Consider the composition $\Gr_B^{\ch\lambda} \to  \Gr_G \to \msf L X/\msf L^+G$. 
It follows from the definitions that we have an embedding
\[  \msf Y^{\ch\lambda} \into \Gr^{\ch\lambda}_B \xt_{\msf L X/\msf L^+G} \msf L^+X/\msf L^+G
\]
The map in the reverse direction is defined using Beauville--Laszlo's theorem: for a $k$-algebra $R$, an $R$-point of $\Gr^{\ch\lambda}_B$ consists
of a $B$-bundle $\hat{\Scr P}_B$ on $\Spec R\tbrac t$ and a section 
$\hat\sigma_0 : \Spec R\lbrac t \to \hat{\Scr P}_B$. Using the
identification $B \cong X^\circ$, we can identify $\hat\sigma_0$
with a section $\hat\sigma: \Spec R\lbrac t \to X^\circ \times^B \hat{\Scr P}_B$. 
If the image of $(\hat{\Scr P}_B, \hat \sigma)$ in $\msf L X/\msf L^+G$ belongs to $\msf L^+X/\msf L^+G$, then 
$\hat\sigma$ extends to a section $\Spec R\tbrac t \to X\times^B \hat{\Scr P}_B$. 
By Lemma~\ref{lem:B-L}, the pair $(\hat{\Scr P}_B, \hat \sigma)$ is equivalent to an $R$-point $y\in \Scr Y$. 
By construction, $y\in \msf Y^{\ch\lambda}$.
The two maps are mutually inverse, so we get \eqref{e:YScapGr}.

The same argument proves \eqref{e:olYScapGr},
and the other identifications are by definition.
\end{proof}

We included the subscript $_\red$ above for clarity, but from now on we 
consider only the underlying reduced structure on all central fibers 
and omit the subscript since the \'etale site is insensitive
to reduced structures.

\begin{eg} \label{eg:Yfiber}
Resume the setting of Example~\ref{eg:YHecke}.  
Then $\msf Y^{n_1 \ch\nu_{D^+} + n_2 \ch\nu_{D^-}}$ is empty if both $n_1$ and $n_2$ are nonzero.
Otherwise it consists of a single point. 
 If we use the embedding $\mbb G_m \into \GL_2$
via $a\mapsto \left( \begin{smallmatrix} 1 & 0 \\ 1-a& a \end{smallmatrix}\right)$
and fix the base point $x_0=1$, then $\msf Y^{\ch\nu_{D^+}}$ 
corresponds to the point 
$\left( \begin{smallmatrix} 1 & 1 \\ 0 & 1 \end{smallmatrix} \right) t^{\ch\vareps_1} \in \Gr_B$ 
while $\msf Y^{\ch\nu_{D^-}}$ corresponds to $t^{-\ch\vareps_2} \in \Gr_B$.
\end{eg}

\subsubsection{} \label{sect:YttimesC}
Lemma~\ref{lem:Y=ScapGr} implies that 
the central fibers $\msf Y^{\ch\lambda}, \olsf Y^{\ch\lambda}$ 
can be defined purely locally (in particular, independently of 
the point $v\in \abs C$).
We also deduce that $\sY^{\ch\lambda} \xt_{\sA^{\ch\lambda},\delta^{\ch\lambda}} C 
\cong \msf Y^{\ch\lambda} \ttimes C$ and 
$\barY^{\ch\lambda} \xt_{\sA^{\ch\lambda},\delta^{\ch\lambda}} C \cong \olsf Y^{\ch\lambda} \ttimes C$,
where $? \ttimes C := {?}\times^{\Aut k\tbrac t} C^\wedge $
and $C^\wedge \to C$ is the $\Aut k\tbrac t$-torsor classifying $v\in C$
together with an isomorphism $\mf o_v \cong k\tbrac t$.

\subsection{Dimension of central fibers}

We will now discuss an argument that is critical when estimating dimensions of Zastava spaces and their central fibers. This argument appeared in the proof 
of \cite[Theorem 3.2]{MV}. The second author thanks M.~Finkelberg for
explaining this proof to him.

Semi-infinite orbits have a very simple geometric structure, summarized by the following:
\begin{prop}[{\cite[Proposition 3.1]{MV}, \cite[Proposition 5.3.6, Corollary 5.3.8]{Xinwen}}]  \label{prop:Shyperplane}
\ 

\begin{enumerate}
\item We have a stratification
$\olsf S^{\ch\lambda} = \bigcup_{\ch\lambda' \le \ch\lambda} \msf S^{\ch\lambda'}$.

\item
Inside $\olsf S^{\ch\lambda}$, the boundary of $\msf S^{\ch\lambda}$
is given by a hyperplane section under an embedding of $\Gr_G$ in projective space. 
\end{enumerate}
\end{prop}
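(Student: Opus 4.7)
The plan is to verify each claim using the standard structure theory of the affine Grassmannian, which is the content of the cited works of Mirkovi\'c--Vilonen and Zhu. The plan has two parts.

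For (i), the starting point is the Iwasawa decomposition $G(F) = N(F) T(F) G(\mf o)$, which identifies the $\msf LN$-orbits on $\Gr_G$ with the points $t^{\ch\lambda}$, $\ch\lambda \in \ch\Lambda$. To pin down closure relations, I would introduce a regular dominant cocharacter $\check\eta : \mbb G_m \to T$ and study the induced $\mbb G_m$-action on $\Gr_G$. This action preserves each finite-type closed subvariety $\ol\Gr_G^{\ch\mu}$, has fixed point set $\{t^{\ch\nu}\}$, and one verifies (by pairing with $\ch\eta$ on the $N(F)$-action) that the Bialynicki--Birula attracting cell of $t^{\ch\lambda}$ is precisely $\msf S^{\ch\lambda} \cap \ol\Gr_G^{\ch\mu}$. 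Closure of attracting cells in projective varieties is then controlled by fixed points in the closure, and identifying these reduces to the rank-one case: for each simple coroot $\ch\alpha$ with $\brac{\alpha,\ch\lambda}>0$, an $\SL_2$-computation in $\Gr_{\SL_2}$ shows that $t^{\ch\lambda-\ch\alpha}\in\ol{\msf S^{\ch\lambda}}$; iteration then gives $\msf S^{\ch\lambda'}\subset\ol{\msf S^{\ch\lambda}}$ for every $\ch\lambda'\le\ch\lambda$. The converse inclusion (no other orbits appear) follows because $T$-fixed points in $\ol{\msf S^{\ch\lambda}}\cap\ol\Gr_G^{\ch\mu}$ are exhausted by the $\ch\lambda'\le\ch\lambda$ with $\ch\mu-\ch\lambda'\in\ch\Lambda_G^\pos$.

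For (ii), I would use a highest-weight-section argument. Fix a strictly dominant weight $\check\omega\in\Lambda_G^+$ (i.e., $\brac{\check\omega,\ch\alpha}>0$ for every simple coroot $\ch\alpha$), giving the determinant-of-cohomology line bundle $\Cal L_{\check\omega}$ on $\Gr_G$ attached to the $\ch G$-representation $V^{\check\omega}$. On each $\ol\Gr_G^{\ch\mu}$, the bundle $\Cal L_{\check\omega}$ is very ample and realizes the Pl\"ucker embedding $\ol\Gr_G^{\ch\mu}\into\mbb P(H^0(\ol\Gr_G^{\ch\mu},\Cal L_{\check\omega})^*)$. A natural section is obtained as the matrix coefficient $s_{\check\omega}: g\mapsto\brac{v^{\check\omega},\,g\cdot\xi_{\ch\lambda}}$, where $v^{\check\omega}\in V^{\check\omega}$ is a highest weight vector and $\xi_{\ch\lambda}$ is chosen to be a nonzero vector in the $\ch\lambda$-weight space so that $s_{\check\omega}(t^{\ch\lambda})\ne 0$ but $s_{\check\omega}(t^{\ch\lambda'})=0$ for $\ch\lambda'<\ch\lambda$. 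The vanishing locus of $s_{\check\omega}$ is a hyperplane section under the chosen embedding, and I would verify that its intersection with $\ol{\msf S^{\ch\lambda}}$ is exactly $\bigcup_{\ch\lambda'<\ch\lambda}\msf S^{\ch\lambda'}$ by checking $T$-fixed points and invoking $\msf LN$-invariance of both sides together with the semi-infinite orbit decomposition from (i).

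The main obstacle is (ii): one needs to justify that the hyperplane, which by construction vanishes on the correct $T$-fixed points, actually cuts out the boundary set-theoretically rather than merely containing it, and that no extra components appear. Both points are handled by reducing to a $T$-stable analysis on $\ol{\msf S^{\ch\lambda}}$, exploiting that the $\mbb G_m$-action via $\check\eta$ contracts $\ol{\msf S^{\ch\lambda}}$ onto its $T$-fixed locus, so any $\msf LN$-invariant closed subset is determined by the $T$-fixed points it contains.
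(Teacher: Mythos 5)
The paper does not prove this proposition; it cites [MV, Proposition 3.1] and [Zhu, Proposition 5.3.6, Corollary 5.3.8]. Your sketch of (i) is essentially the argument in those references: the attracting cells for a regular dominant cocharacter are the semi-infinite orbits, and the closure relations reduce to rank-one $\SL_2$ computations. This part is sound.

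For (ii) there is a genuine gap in the construction of the section. The assignment $s_{\check\omega}(g)=\brac{v^{\check\omega},g\cdot\xi_{\ch\lambda}}$ does not define a section of a line bundle on $\Gr_G$. (Incidentally, $\check\omega\in\Lambda_G$ is a weight of $T$, so $V^{\check\omega}$ is a $G$-module, not a $\ch G$-module as you write.) For $g\in\msf LG$ one has $g\cdot\xi_{\ch\lambda}\in V^{\check\omega}\ot k\lbrac t$, so the pairing yields a Laurent series rather than a scalar; and however one extracts a number, the formula cannot descend along $\msf LG\to\Gr_G=\msf LG/\msf L^+G$ to a section, because $\msf L^+G$ acts on $V^{\check\omega}$ through the evaluation $\msf L^+G\to G$, and for a nontrivial irreducible $G$-module no nonzero vector is $\msf L^+G$-semi\-invariant (if $[G,G]\ne 1$, then $\msf L^+G$ has no nontrivial characters, so semi\-invariance forces $G$-invariance and hence $\xi_{\ch\lambda}=0$). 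The missing idea --- and the crux of the cited proofs --- is that $V^{\check\omega}$ must be replaced by an integrable highest-weight module of the affine Kac--Moody algebra at positive level: $H^0(\Gr_G,\Cal L^{\ot k})$ is (a restricted dual of) such a module, the relevant central extension of $\msf L^+G$ acts there by a nontrivial character, and the separating section is an $\msf LN$-invariant extremal weight vector in this Kac--Moody module (equivalently, a Pl\"ucker coordinate in a lattice model of $\ol\Gr_G^{\ch\mu}$). Your final step --- identifying the zero locus with $\bigcup_{\ch\lambda'<\ch\lambda}\msf S^{\ch\lambda'}$ via $T$-fixed points, $\ch\eta$-contraction, and $\msf LN$-invariance --- is correct once such a section is in hand, but the section you propose does not exist at level zero.
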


In particular, if a projective subvariety meets the semi-infinite orbit $\msf S^{\ch\lambda}$, it also meets its boundary $\bigcup_{\lambda' < \ch\lambda}  \msf S^{\ch\lambda'}$. We will use this simple fact several times, in order to estimate the dimensions of central fibers.

\medskip

For $\ch\lambda \in \mf c_X$, consider
the central fiber $\olsf Y^{\ch\lambda} \subset \olsf S^{\ch\lambda} \subset \Gr_G$. 
Observe that the defect stratification of $\barY^{\ch\lambda}$ (\S \ref{sect:strat-compactified}) gives a stratification
of $\olsf Y^{\ch\lambda} = \bigcup_{\ch\lambda' \le \ch\lambda} \msf Y^{\ch\lambda'}$,
where $\msf Y^{\ch\lambda'} = \olsf Y^{\ch\lambda}\cap ({_{\ch\lambda-\ch\lambda'}} \barY^{\ch\lambda})$. 
This is compatible with the stratification of $\olsf S^{\ch\lambda} = \bigcup_{\ch\lambda'\le\ch\lambda}
\msf S^{\ch\lambda'}$ under the closed embedding $\olsf Y^{\ch\lambda}\subset \olsf S^{\ch\lambda}$.
From Lemma~\ref{lem:Y=ScapGr} we have $\olsf Y^{\ch\lambda} \cap \msf S^{\ch\lambda'} = \msf Y^{\ch\lambda'}$
for $\ch\lambda' \le \ch\lambda$.

\begin{prop}
\label{prop:intersections}
 Given an irreducible component $\olsf Y \subset \olsf S^{\ch\lambda}$ of the central fiber $\olsf Y^{\ch\lambda}$ of $\barY^{\ch\lambda}$, there is a $\ch\lambda'\le \ch\lambda$ with $\olsf Y \cap \msf S^{\ch\lambda'}$ nonempty of dimension zero, and $d:=\dim \olsf Y \le \brac{ \rho_G, \ch\lambda-\ch\lambda'}$. 

Moreover, if $d= \brac{ \rho_G, \ch\lambda-\ch\lambda'}$, then there exists a sequence of simple roots $\alpha_1, \dots,\alpha_d$ (possibly with repetitions) 
and subvarieties $\msf Y_j \subset \msf Y^{\ch\lambda-\ch\alpha_1-\dotsb-\ch\alpha_j}$ for $j=0,\dotsc,d$  such that 
\begin{itemize}
\item $\ol{\msf Y_0} = \olsf Y$,
\item $\msf Y_j$ is an irreducible component of $\ol{\msf Y_{j-1}} \cap \msf S^{\ch\lambda - \ch\alpha_1 - \cdots -\ch\alpha_j}$ of dimension $d-j$ for $j=1,\dotsc, d$.
\end{itemize}
\end{prop}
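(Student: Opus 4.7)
I will argue by induction on $d := \dim \olsf Y$ using the Mirkovi\'c--Vilonen technique of iterated hyperplane sections, working with the following slightly generalized inductive statement: for any irreducible projective subvariety $Z \subset \olsf S^{\ch\mu}$ of dimension $d'$ with $Z \cap \msf S^{\ch\mu}$ dense in $Z$, there exists $\ch\lambda' \le \ch\mu$ with $Z \cap \msf S^{\ch\lambda'}$ nonempty and zero-dimensional, $d' \le \brac{\rho_G, \ch\mu - \ch\lambda'}$, and the chain of subvarieties in the equality case. The proposition then follows by taking $Z = \olsf Y$ and $\ch\mu = \ch\mu_0$, the unique element $\le \ch\lambda$ with $\olsf Y \cap \msf S^{\ch\mu_0}$ dense in $\olsf Y$ (which exists by irreducibility of $\olsf Y$ and the stratification of $\olsf S^{\ch\lambda}$); note that $\brac{\rho_G, \ch\lambda - \ch\mu_0} \ge 0$ with equality iff $\ch\mu_0 = \ch\lambda$, and equality in the proposition forces this together with equality in the generalized statement, so the chain conditions match.

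The base case $d' = 0$ is immediate: take $\ch\lambda' = \ch\mu$. For $d' \ge 1$, Proposition~\ref{prop:Shyperplane}(ii) exhibits the boundary $\olsf S^{\ch\mu} \sm \msf S^{\ch\mu}$ as a hyperplane section of $\olsf S^{\ch\mu}$ in a projective embedding. Since $Z$ is projective, irreducible, and not contained in this boundary, $Z \cap (\olsf S^{\ch\mu} \sm \msf S^{\ch\mu})$ is nonempty of pure codimension one in $Z$, hence of dimension $d' - 1$. The boundary decomposes set-theoretically as $\bigcup \olsf S^{\ch\mu - \ch\alpha}$ ranging over simple coroots $\ch\alpha$ (the codimension-one substrata, by the closure order on semi-infinite orbits together with $\brac{\rho_G, \ch\alpha} = 1$), so there exist a simple coroot $\ch\alpha_1$ and an irreducible component $Z_1 \subset Z \cap \olsf S^{\ch\mu - \ch\alpha_1}$ of dimension $d' - 1$. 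Let $\ch\mu_1 \le \ch\mu - \ch\alpha_1$ be the unique element with $Z_1 \cap \msf S^{\ch\mu_1}$ dense in $Z_1$, and apply the induction hypothesis to $Z_1 \subset \olsf S^{\ch\mu_1}$ to obtain $\ch\lambda' \le \ch\mu_1$ with $Z_1 \cap \msf S^{\ch\lambda'}$ zero-dimensional and nonempty, and $d' - 1 \le \brac{\rho_G, \ch\mu_1 - \ch\lambda'}$.

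Combining $\brac{\rho_G, \ch\mu_1 - \ch\lambda'} \le \brac{\rho_G, \ch\mu - \ch\alpha_1 - \ch\lambda'}$ (from $\ch\mu - \ch\alpha_1 - \ch\mu_1 \in \ch\Lambda^\pos_G$) with $\brac{\rho_G, \ch\alpha_1} = 1$ yields the required bound $d' \le \brac{\rho_G, \ch\mu - \ch\lambda'}$. In the equality case one forces $\ch\mu_1 = \ch\mu - \ch\alpha_1$ and equality for $Z_1$, so $\msf Y_1 := Z_1 \cap \msf S^{\ch\mu - \ch\alpha_1}$ is an open dense (hence irreducible) subset of $Z_1$ of dimension $d' - 1$; prepending $\ch\alpha_1$ to the chain inductively produced for $Z_1$ yields the chain demanded by the proposition. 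The main obstacle is to make the hyperplane argument rigorous inside the ind-scheme $\Gr_G$: one must contain $Z$ in a finite-dimensional closed projective subscheme (e.g., some $\ol\Gr_G^{\ch\theta}$) to which the embedding of Proposition~\ref{prop:Shyperplane}(ii) restricts faithfully, so that the Bezout-type dimension estimate for the hyperplane section applies; a secondary subtlety is verifying the zero-dimensionality of $Z \cap \msf S^{\ch\lambda'}$ itself rather than merely that of $Z_1 \cap \msf S^{\ch\lambda'}$, which can be arranged by further replacing $\ch\lambda'$ by a smaller element when needed (this only strengthens the inequality and hence does not interact with the equality case).
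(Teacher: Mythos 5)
Your proposal is correct and takes essentially the same route as the paper: you iterate the Mirkovi\'c--Vilonen hyperplane-section argument of Proposition~\ref{prop:Shyperplane}(ii) to repeatedly cut the projective variety $\olsf Y$ down to a zero-dimensional slice, reading off the dimension bound from the pairing $\brac{\rho_G,\cdot}$ and noting that equality forces each drop to be by a single simple coroot. The zero-dimensionality subtlety you flag is a real one which the paper's proof as written passes over silently (it produces a zero-dimensional $\msf Y_d\subset\ol{\msf Y_{d-1}}\cap\msf S^{\ch\lambda_d}$ but does not verify that $\olsf Y\cap\msf S^{\ch\lambda_d}$ itself is zero-dimensional), and your proposed fix --- shrinking $\ch\lambda'$ further, which only strengthens the inequality and therefore cannot interfere with the equality case --- is exactly right.
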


\begin{proof}
By the stratification $\olsf S^{\ch\lambda} = \bigcup_{\ch\lambda'\le\ch\lambda} \msf S^{\ch\lambda}$,
there must exist $\ch\lambda_0 \le \ch\lambda$ such that 
$\msf Y_0 := \olsf Y \cap \msf S^{\ch\lambda_0}$ is dense in $\olsf Y$. 
Then $\olsf Y \subset \olsf S^{\ch\lambda_0}$ so we may assume $\ch\lambda=\ch\lambda_0$. 
Write $\partial \olsf S^{\ch\lambda} = \olsf S^{\ch\lambda} \sm \msf S^{\ch\lambda}$ for the hyperplane of Proposition~\ref{prop:Shyperplane}(ii). 
Since $\olsf Y$ is a projective subvariety of $\olsf S^{\ch\lambda}$, it must
 meet $\partial \olsf S^{\ch\lambda}$. 
Hence there exists $\ch\lambda_1 < \ch\lambda$ such that
$\dim(\olsf Y \cap \msf S^{\ch\lambda_1}) = d-1$. 
Let $\msf Y_1$ be any irreducible component of $\olsf Y\cap \msf S^{\ch\lambda_1}$ and 
$\ol{\msf Y_1}$ its closure in $\olsf S^{\ch\lambda_1}$. 
Continuing in this fashion we produce a sequence of coweights $\ch\lambda_0=\ch\lambda, \ch\lambda_1,\dotsc,\ch\lambda_d$ and irreducible components $\msf Y_j \subset \ol{\msf Y_{j-1}}\cap \msf S^{\ch\lambda_j}$ for $j=1,\dotsc,d$, such that $\ch\lambda_j < \ch\lambda_{j-1}$ and 
$\dim \msf Y_j = d-j$. 
Since $\ch\lambda_j < \ch\lambda_{j-1}$ implies $\brac{\rho_G,\ch\lambda_{j-1}-\ch\lambda_j}\ge 1$, 
we get $\brac{\rho_G,\ch\lambda-\ch\lambda_d} \ge d$ and $\ch\lambda'=\ch\lambda_d$ is the claimed
coweight in the proposition statement. 

In order for the last inequality to be an equality, we must have $\ch\lambda=\ch\lambda_0$
and $\ch\lambda_j = \ch\lambda_{j-1} - \ch\alpha_j$ for some simple coroot $\ch\alpha_j,\, j=1,\dotsc,d$. 
\end{proof}

\subsection{Comparison of IC complexes} \label{sect:compareIC}
In order to describe the restriction of $\IC_{\barY}$ to strata 
we will need to introduce several (graded) factorization algebras on the
collection of $C_{\ch\nu},\, \ch\nu\in\ch\Lambda^\pos_G$. 
We only state their definitions; we refer the reader to \cite{BG2, G:whatacts} for
further context.

\smallskip

Let $\ch{\mf n}_C = \ch{\mf n} \ot_{\ol\bbQ_\ell} {\ol\bbQ_\ell}_C$ be the constant sheaf of 
Lie algebras over $C$. 
Let $\mf U(\ch{\mf n}_C)^{\ch\nu}$ denote the following sheaf on $C_{\ch\nu}$
for $\ch\nu \in \ch\Lambda^\pos_G$: its $*$-stalk at a point $\sum \ch\nu_i \cdot v_i$
with $v_i \in \abs C$ distinct is the tensor product $\bigotimes_i U(\ch{\mf n})^{\ch\nu_i}$
where the superscript $\ch\nu_i$ refers to the corresponding weight space of 
the universal enveloping algebra $U(\ch{\mf n})$. 
These stalks glue to a sheaf by means of the co-multiplication map on $U(\ch{\mf n})$. 
Let $\mf U^\vee(\ch{\mf n}_C)^{-\ch\nu} = \mbb D (\mf U(\ch{\mf n}_C)^{\ch\nu}) \in \rmD^b_c(C_{\ch\nu})$ denote the Verdier dual. 

We will also need the Chevalley--Cousin complex $\Upsilon(\ch{\mf n}_C)$ 
of $\ch{\mf n}_C$. 
Consider the (homological) Chevalley complex $C_\bullet(\ch{\mf n}_C)$
as a sheaf of co-commutative DG co-algebras on $C$, endowed with a grading by elements of
$\ch\Lambda^\pos_G$. 
By the general procedure of \cite[\S 3.4]{BD:chiral}, a sheaf of $\ch\Lambda^\pos_G$-graded
co-commutative DG co-algebras on $C$ is equivalent to a commutative factorization algebra 
on $\bigsqcup C_{\ch\nu}$. We let $\Upsilon(\ch{\mf n}_C)^{\ch\nu}$ denote
the corresponding complex on $C_{\ch\nu}$, which can be explicitly constructed
via a Cousin complex. 
Its $*$-stalk at $\sum \ch\nu_i \cdot v_i$ with $v_i$'s disinct is 
the tensor product $\bigotimes_i C_\bullet(\ch{\mf n})^{\ch\nu_i}$. 
We remark that $\Upsilon(\ch{\mf n}_C)^{\ch\nu}$ is actually
a perverse sheaf, and $\Upsilon(\ch{\mf n}_C)$ and $\mf U^\vee(\ch{\mf n}_C)$ are
related by a certain Koszul duality.

\subsubsection{} Let 
\[ \mf i_{\barY,\ch\nu} : C_{\ch\nu} \times \Scr Y^{\ch\lambda-\ch\nu}
\into \barY^{\ch\lambda} \]
denote the locally closed embedding corresponding to \eqref{e:barYstrata1}. 

\begin{prop} \label{prop:ICbarY}
For any $\ch\lambda\in \mathfrak c_X,\, \ch\nu\in \ch\Lambda^\pos_G$, 
there is an equality 
\[ 
[\mf i^*_{\barY,\ch\nu}(\IC_{\barY^{\ch\lambda}}) ]
= [\mfU^\vee(\ch\mfn_C)^{-\ch\nu} \bt \IC_{\Scr Y^{\ch\lambda-\ch\nu}}] 
\]
in the Grothendieck group of perverse sheaves on $C_{\ch\nu} \xt \sY^{\ch\lambda-\ch\nu}$.
\end{prop}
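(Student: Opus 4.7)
The plan is to reduce the statement to the corresponding K-theoretic identity for Drinfeld's compactification $\ol\Bun_B$: for any $\ch\mu \in \ch\Lambda_G$ and the defect embedding $\mf i_{\ch\nu}: C_{\ch\nu} \xt \Bun_B^{\ch\nu-\ch\mu} \into \ol\Bun_B^{-\ch\mu}$,
\[ [\mf i_{\ch\nu}^*\IC_{\ol\Bun_B^{-\ch\mu}}] = [\mfU^\vee(\ch\mfn_C)^{-\ch\nu} \bt \IC_{\Bun_B^{\ch\nu-\ch\mu}}]; \]
this is a K-theoretic reformulation of the main theorem of \cite{BFGM}.

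First I would establish a smooth-local model for $\barY^{\ch\lambda}$ in terms of $\ol\Bun_B$. Extending the local-to-global yoga of Lemma~\ref{lem:localglobalyoga} to the compactified setting, the graded factorization property of $\barY$ yields, for $\ch\mu \in \ch\Lambda_G^\pos$ sufficiently large, a smooth correspondence
\[ \barY^{\ch\lambda} \xleftarrow{p} \barY^{\ch\lambda} \oo\times \sY^{\ch\mu,0} \xrightarrow{q} \barY^{\ch\lambda+\ch\mu,0}, \]
where $\barY^{\ch\lambda+\ch\mu,0} := \barY^{\ch\lambda+\ch\mu} \xt_{\sM_X} \sM_X^0$, the map $p$ is smooth surjective, and $q$ is \'etale (the factorization gluing). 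Both maps are compatible with the defect stratification by $\ch\nu \in \ch\Lambda_G^\pos$. The target $\barY^{\ch\lambda+\ch\mu,0}$ is an open substack of $\Bun_H \xt_{\Bun_G} \ol\Bun_B^{-\ch\lambda-\ch\mu}$, and its projection to $\Bun_H = \sM_X^0$ is smooth over the large-degree open locus $\Bun_B^{-\ch\lambda-\ch\mu}\subset \ol\Bun_B^{-\ch\lambda-\ch\mu}$ by base change from Lemma~\ref{lem:BunBtoGsm}.

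Next, I would argue that the IC sheaf $\IC_{\barY^{\ch\lambda+\ch\mu,0}}$ agrees, up to shift and Tate twist, with the pullback of $\IC_{\ol\Bun_B^{-\ch\lambda-\ch\mu}}$ along the projection
$\barY^{\ch\lambda+\ch\mu,0} \to \ol\Bun_B^{-\ch\lambda-\ch\mu}$. Granting this --- the key technical point --- the restriction to the ${}_{\ch\nu}$-stratum is given immediately by the BFGM formula. Transferring through the smooth correspondence $(p,q)$, and using the factorization identification $\sY^{\ch\lambda-\ch\nu}\oo\times\sY^{\ch\mu,0} \into \sY^{\ch\lambda+\ch\mu-\ch\nu,0}$ to peel off the $\sY^{\ch\mu,0}$ factor, then yields the desired equality of K-theory classes on $C_{\ch\nu} \xt \sY^{\ch\lambda-\ch\nu}$. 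The main obstacle is the IC pullback step: verifying that the singularities of $\barY^{\ch\lambda+\ch\mu,0}$ at its defect strata agree, up to smooth factors and appropriate shifts, with those of $\ol\Bun_B^{-\ch\lambda-\ch\mu}$ --- a transversality-type statement along the base change of the (non-smooth) projection $\Bun_H \to \Bun_G$ over the relevant open substack.
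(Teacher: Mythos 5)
The overall strategy---reduce via factorization to large $\ch\lambda+\ch\mu$, then use the BFGM description of $\IC_{\ol\Bun_B}$ on defect strata---is the right one and matches the paper. However, there are two connected problems with your set-up.

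First, the target of your correspondence is wrong. The graded factorization map $\barY^{\ch\lambda}\oo\xt\sY^{\ch\mu,0}\to\barY^{\ch\lambda+\ch\mu}$ does \emph{not} land in $\barY^{\ch\lambda+\ch\mu,0} := \barY^{\ch\lambda+\ch\mu}\xt_{\sM_X}\sM_X^0$: the $\barY^{\ch\lambda}$ factor can (and generally will) contribute $G$-degenerate points, so the glued map need not factor through $\Bun_H = \sM_X^0$. Since you need a smooth surjection onto the full $\barY^{\ch\lambda}$, the correspondence must have target $\barY^{\ch\lambda+\ch\mu}$. But over $\barY^{\ch\lambda+\ch\mu}$ the IC sheaf is not merely a shifted pullback of $\IC_{\ol\Bun_B^{-\ch\lambda-\ch\mu}}$; one needs the full external box product $\IC_{\sM_X}\bt_{\Bun_G}\IC_{\ol\Bun_B^{-\ch\lambda-\ch\mu}}$ over the (smooth) base $\Bun_G$. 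Your version --- pullback of one factor --- only applies after restricting to $\sM_X^0$, which is too small for the descent step.

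Second, and more fundamentally, your ``key technical point'' is not a transversality statement and cannot be proven by one, because $\Bun_H\to\Bun_G$ is not smooth. What the paper actually uses is a universal local acyclicity (ULA) result: $\IC_{\ol\Bun_B}$, suitably truncated to $_{\le\ch\eta}\ol\Bun_B$, is ULA over $\Bun_G$ (Proposition~\ref{prop:ULA}, due to Campbell, built on Kontsevich's resolution of $\ol\Bun_B$). Combined with the ``Thom--Sebastiani'' type Lemma~\ref{lem:ULAbox}, this is what yields $\IC_{\sM_X\xt_{\Bun_G}{_{\le\ch\eta}}\ol\Bun_B^{-\ch\lambda-\ch\mu}} \cong \IC_{\sM_X}\bt_{\Bun_G}\IC_{_{\le\ch\eta}\ol\Bun_B^{-\ch\lambda-\ch\mu}}$. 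Without this input there is no way to control the singularities of the Cartesian product, because the IC complex of a fiber product over a smooth base is not in general the box product of the IC complexes of the factors; ULA of one factor over the base is precisely the hypothesis that makes it so. The truncation to $_{\le\ch\eta}$ is also essential (ULA fails on the whole of $\ol\Bun_B$), and your sketch does not accommodate it.
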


We will prove the proposition in the course of this subsection. It is based on the following result of {\cite[Proposition 4.4]{BG2}, \cite[Theorem 1.12]{BFGM}}. 
\begin{thm}
\label{thm:BFGM}
There exists a canonical isomorphism 
\[ \mf i_{\ch\nu}^* (\IC_{\ol\Bun_B^{\ch\mu}}) \cong 
\mfU^\vee(\ch{\mfn}_C)^{-\ch\nu} \bt \IC_{\Bun_B^{\ch\mu+\ch\nu}} 
\]
for any $\ch\nu\in \ch\Lambda^\pos_G,\, \ch\mu\in \ch\Lambda_G$. 
\end{thm}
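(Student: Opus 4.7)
The strategy is to deduce this IC computation from Theorem~\ref{thm:BFGM} by transferring the known formula for $\IC_{\ol\Bun_B^{-\ch\lambda}}$ to $\IC_{\barY^{\ch\lambda}}$, exploiting the local nature of both stacks near their defect strata. Since we only need equality in the Grothendieck group, the argument can proceed stratum-by-stratum via smooth covers, without identifying perverse sheaves on the nose.

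First, I would reduce to the case that $\ch\lambda$ is arbitrarily large. The graded factorization property of $\barY$ (which follows by the same argument as Proposition~\ref{prop:factorization} applied to $\barY$) combined with the smoothness of $\sY^{\ch\mu,0} \to \sM_X$ for $\ch\mu \in \ch\Lambda^\pos_G$ large (Corollary~\ref{cor:Mcover}) yields a smooth correspondence
\[
\barY^{\ch\lambda} \xleftarrow{\text{smooth, surj}} \barY^{\ch\lambda} \oo\xt \sY^{\ch\mu,0} \xhookrightarrow{\text{\'etale}} \barY^{\ch\lambda+\ch\mu}.
\]
Since $\sY^{\ch\mu,0} \to \Bun_H$ is smooth, $\IC_{\sY^{\ch\mu,0}}$ is the shifted constant sheaf; since $\sY^{\ch\mu,0}$ has trivial defect, the correspondence is compatible with the defect stratification and carries $_{\ch\nu}\barY^{\ch\lambda} \oo\xt \sY^{\ch\mu,0}$ into $_{\ch\nu}\barY^{\ch\lambda+\ch\mu}$. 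Thus the formula for $\barY^{\ch\lambda+\ch\mu}$ implies the formula for $\barY^{\ch\lambda}$.

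Next, for $\ch\lambda$ sufficiently large so that $\Bun_B^{-\ch\lambda+\ch\nu'} \to \Bun_G$ is smooth for all $\ch\nu' \le \ch\nu$ (Lemma~\ref{lem:BunBtoGsm}), I would compare the local structure of $\barY^{\ch\lambda}$ near $_{\ch\nu}\barY^{\ch\lambda}$ with that of $\ol\Bun_B^{-\ch\lambda}$ near $_{\ch\nu}\ol\Bun_B^{-\ch\lambda}$. Via Proposition~\ref{prop:changecurve}, a neighborhood of a point of $_{\ch\nu}\barY^{\ch\lambda}$ lying over a multiplicity-free divisor $\sum \ch\nu_i \cdot v_i$ (with $\sum \ch\nu_i = \ch\nu$ in $\ch\Lambda^\pos_G$) factorizes \'etale-locally as a product of pieces at each $v_i$, plus a remainder over the complement, which is governed by $\sY^{\ch\lambda-\ch\nu}$. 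The singularities at each defect point $v_i$ are by construction inherited from the $\ol{G/N}^\aff$-direction and are independent of $X$ (they come from the $\ol\Bun_B$-factor in the open embedding $\barY \into \sM_X \xt_{\Bun_G} \ol\Bun_B$); combined with the smoothness of $\Bun_B^{-\ch\lambda+\ch\nu'} \to \Bun_G$ on the relevant strata, this provides the ``transversality'' that reduces the computation at $v_i$ to the one in Theorem~\ref{thm:BFGM}, yielding the factor $\mf U^\vee(\ch\mf n_C)^{-\ch\nu_i}$ locally. Assembling these local contributions via the co-commutative co-algebra structure encoded in $\mf U^\vee(\ch\mf n_C)$ produces the factor $\mf U^\vee(\ch\mf n_C)^{-\ch\nu}$, and the remainder contributes $\IC_{\sY^{\ch\lambda-\ch\nu}}$.

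The main obstacle lies in making the ``transversality'' step rigorous. While the factorization gives a clean local picture at points whose divisor is multiplicity-free, the IC complex on all of $_{\ch\nu}\barY^{\ch\lambda}$ must be controlled by stratum-closure arguments. A concrete implementation could mirror the proof of Theorem~\ref{thm:BFGM} itself: use the smooth cover $\sY^{\ch\mu,0} \to \sM_X$ to replace $\sM_X$ by its open smooth stratum $\Bun_H$ in a neighborhood of the relevant points, thereby reducing $\barY^{\ch\lambda}$ to the classical Zastava for $\ol{G/N}^\aff$ (twisted by the smooth base $\sY^{\ch\lambda-\ch\nu}$), and then quote Theorem~\ref{thm:BFGM} directly.
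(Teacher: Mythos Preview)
There is a basic mismatch: the statement you were asked to prove is Theorem~\ref{thm:BFGM}, which concerns the restriction of $\IC_{\ol\Bun_B^{\ch\mu}}$ to the defect strata of Drinfeld's compactification. In the paper this theorem is not proved at all; it is quoted as an external input from \cite[Theorem~1.12]{BFGM} and \cite[Proposition~4.4]{BG2}. Your proposal instead argues about $\IC_{\barY^{\ch\lambda}}$ and its restriction along $\mf i_{\barY,\ch\nu}$, i.e., you are attempting Proposition~\ref{prop:ICbarY}, and you explicitly \emph{invoke} Theorem~\ref{thm:BFGM} as an ingredient. Read literally as a proof of Theorem~\ref{thm:BFGM}, this is circular.

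If your intent was Proposition~\ref{prop:ICbarY}, then your overall architecture (reduce to large $\ch\lambda$ via factorization, then compare with $\ol\Bun_B$) matches the paper's, but the crucial step you call ``transversality'' is left vague, and this is exactly where the work lies. The paper does not argue by local factorization at defect points. Instead, for $\ch\lambda+\ch\mu$ large enough it invokes Proposition~\ref{prop:ULA} (Campbell): the complexes $\IC_{_{\le\ch\eta}\ol\Bun_B^{-\ch\lambda-\ch\mu}}$ and $\mf j_!\IC_{\Bun_B}$ are ULA over $\Bun_G$. Feeding this into the general Lemma~\ref{lem:ULAbox} applied to the Cartesian square of ${_{\le\ch\eta}}\ol\Bun_B^{-\ch\lambda-\ch\mu} \to \Bun_G \leftarrow \sM_X$ yields the product formula $\IC_{\barY^{\ch\lambda+\ch\mu}} \cong \IC_{\sM_X}\boxtimes_{\Bun_G}\IC_{\ol\Bun_B}$ on the open locus (equation~\eqref{e:ICbar=bt}). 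Restricting via Theorem~\ref{thm:BFGM} then gives the formula on the nose for $\ch\lambda+\ch\mu$, and factorization pushes it back down to $\ch\lambda$. Your sketch never names ULA or an equivalent device; the statement that ``the singularities at each defect point are inherited from the $\ol{G/N}^\aff$-direction and are independent of $X$'' is precisely what needs justification, since $\sM_X$ is itself singular and $\Bun_B \to \Bun_G$ is not smooth in general. Without the ULA input (or something equivalent), this step is a gap.
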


Given a map $f : Y \to S$ between finite type algebraic $k$-stacks, 
we use the notion of a complex on $Y$ which is 
\emph{universally locally acyclic} (ULA) with 
respect to $f$, as in \cite[D\'efinition 2.12]{SGA4h}.

The general lemma we will use is the following:

\begin{lem}[{\cite[Lemma 7.1.3]{BG}}] \label{lem:ULAbox} 
Consider a Cartesian diagram of finite type algebraic stacks
\[
\begin{tikzcd} 
Y' \ar[r, "{f'}"] \ar[d, "{g'}", swap] & S' \ar[d, "g"] \\ 
Y \ar[r, "f"] & S
\end{tikzcd}
\]
where $S$ is smooth.
Let $j : Y_0 \into Y$ be an open dense substack such that
the map $f \circ j : Y_0 \to S$ is smooth. 
In addition, assume that the complexes $\IC_Y$ and $j_!(\IC_{Y_0})$
are ULA with respect to the map $f$. 

Denote the closure\footnote{In general it is possible for $Y'$ to have more irreducible
components than $\ol{Y_0\xt_{S} S'}$.} 
of $Y_0 \xt_S S'$ in $Y'$ by $\ol{Y_0 \xt_S S'}$. 
Then there is a natural isomorphism 
\[ 
\IC_{\ol{Y_0 \xt_S S'}} \cong \IC_{S'}\bt_S \IC_{Y}  := 
f'^*(\IC_{S'})\ot g'^*(\IC_Y) [-\dim S],
\]
where the left hand side is implicitly extended by zero to $Y'$.
\end{lem}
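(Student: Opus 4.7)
Set $\Scr F := f'^*(\IC_{S'}) \otimes g'^*(\IC_Y)[-\dim S]$. The strategy is to verify, in order, that (a) $\Scr F$ is a perverse sheaf on $Y'$, supported set-theoretically on $\ol{Y_0 \xt_S S'}$; (b) its restriction to the open substack $Y_0 \xt_S S'$ agrees with $\IC_{Y_0 \xt_S S'}$; (c) $\Scr F$ has no non-zero perverse sub- or quotient sheaf supported on $Y' \setminus (Y_0 \xt_S S')$. Once (a)--(c) are established, the characterization of the IC sheaf as the Goresky--MacPherson intermediate extension identifies $\Scr F$ with $\IC_{\ol{Y_0 \xt_S S'}}$, extended by zero to $Y'$.

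For (b), note that $f \circ j : Y_0 \to S$ is smooth by hypothesis, so by base change along $g$ the map $Y_0 \xt_S S' \to S'$ is also smooth. Hence the fiber product is smooth over $S'$, and pulling back $\IC_{S'}$ by this smooth map gives $\IC_{Y_0 \xt_S S'}$ up to a cohomological shift equal to the relative dimension. Similarly $\IC_Y |_{Y_0} \cong \ol\bbQ_\ell[\dim Y_0]$ (up to Tate twist), so $g'^*\IC_Y |_{Y_0 \xt_S S'}$ is a constant sheaf in the correct degree. Keeping track of the shifts, and using that $S$ is smooth of pure dimension $\dim S$, a direct comparison yields $\Scr F|_{Y_0 \xt_S S'} \cong \IC_{Y_0 \xt_S S'}$.

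For (a), the essential input is the ULA hypothesis on $\IC_Y$. A standard lemma about ULA complexes (see, e.g., \cite[App.~B]{BBDG}) states that, when $S$ is smooth, if $\Scr K \in \rmD^b_c(Y)$ is ULA with respect to $f : Y \to S$ and $\Scr L \in \rmP(S')$ is perverse on a scheme $S'/S$, then $f'^*\Scr L \otimes g'^*\Scr K[-\dim S]$ is perverse on $Y'$. Applying this with $\Scr K = \IC_Y$ and $\Scr L = \IC_{S'}$ gives perversity of $\Scr F$; it also immediately places the support of $\Scr F$ inside the preimage of $\text{supp}(\IC_{S'}) = S'$ intersected with $\text{supp}(\IC_Y)=Y$, which after (b) forces $\Scr F$ to be supported on $\ol{Y_0 \xt_S S'}$.

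The main obstacle is (c): controlling $\Scr F$ along the complement. This is exactly where the second ULA hypothesis, on $j_!\IC_{Y_0}$, is used. Concretely, the short exact triangle $j_!j^*\IC_Y \to \IC_Y \to i_*i^*\IC_Y$ (with $i$ the complementary closed embedding) combined with the fact that both $\IC_Y$ and $j_!\IC_{Y_0} = j_!j^*\IC_Y$ are ULA implies that $i_*i^*\IC_Y$ is also ULA relative to $f$. Tensoring the triangle with $f'^*\IC_{S'}[-\dim S]$ (and pulling back via $g'$) and invoking the same preservation-of-perversity lemma shows that $\Scr F$ fits in a triangle of perverse sheaves whose ``boundary'' piece has strict cohomological support estimates on the preimage of the defect locus, giving the intermediate extension property. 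This, together with (a) and (b), identifies $\Scr F$ with $\IC_{\ol{Y_0 \xt_S S'}}$ extended by zero, as required.
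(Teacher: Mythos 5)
The paper does not give its own proof of this lemma; it cites \cite{Wang:ULA} for a self-contained argument. Your proposal follows the standard strategy for this kind of statement (verify perversity, agreement over the dense open, and the support conditions characterizing the intermediate extension, using $t$-exactness of the ULA box product together with the two-out-of-three property for ULA in the open/closed triangle), so the approach is sound. A few points, however, are imprecise and worth fixing.

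First, in step (a) you claim that (a) together with (b) already forces $\on{supp}(\Scr F) \subset \ol{Y_0\xt_S S'}$. This does not follow: $\IC_Y$ and $\IC_{S'}$ are supported on all of $Y$ and $S'$, so \emph{a priori} $\Scr F$ could be nonzero on every component of $Y'$, including those not meeting $Y_0\xt_S S'$ (see the footnote in the statement). The vanishing of $\Scr F$ off $\ol{Y_0\xt_S S'}$ is a \emph{consequence} of the full intermediate-extension characterization, not an input to it.

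Second, in step (c) you say $\Scr F$ fits in a ``triangle of perverse sheaves.'' This is not what the argument produces. Writing $T(-)=f'^*\IC_{S'}\ot g'^*(-)[-\dim S]$ and applying $T$ to the triangle $j_!j^*\IC_Y\to \IC_Y\to i_*i^*\IC_Y$, the outer terms are $j'_!\IC_{Y_0\xt_S S'}\in {}^p\rmD^{\le 0}$ and $T(i_*i^*\IC_Y)\in {}^p\rmD^{\le -1}$; neither is perverse in general. What you actually need, and what your triangle yields, is that $i'^*\Scr F\cong T(i_*i^*\IC_Y)$ lies in ${}^p\rmD^{\le -1}$ on $Y'\sm(Y_0\xt_S S')$, since $i_*i^*\IC_Y$ is ULA (two-out-of-three from the triangle) and lies in ${}^p\rmD^{\le -1}$.

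Third, you never address the costalk bound $i'^!\Scr F\in {}^p\rmD^{\ge 1}$, which is the other half of the characterization of the intermediate extension. The clean route is Verdier duality: ULA is stable under $\mbb D$, and for ULA $\IC_Y$ over smooth $S$ the box product commutes with $\mbb D$ up to twist, so $\mbb D\Scr F\cong\Scr F$ (up to Tate twist), and the $i'^!$ bound follows from the $i'^*$ bound by dualizing. Alternatively one can run the dual triangle $i_*i^!\IC_Y\to\IC_Y\to j_*\IC_{Y_0}$ after noting $j_*\IC_{Y_0}=\mbb D(j_!\IC_{Y_0})$ is also ULA; but one cannot simply base-change $j_*$ along $g'$ the way you base-changed $j_!$, so the duality route is simpler.

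Finally, the citation ``\cite[App.~B]{BBDG}'' for the $t$-exactness of the ULA box product is off; that fact is not in BBD. It is in the ULA literature proper (e.g.~the appendix of \cite{SGA4h}, or the reference \cite{Wang:ULA} the present paper points to).
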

For a proof of Lemma~\ref{lem:ULAbox}, see \cite{Wang:ULA}.

\medskip

We would like to apply this lemma with $Y = \ol\Bun_B, \, Y_0 = \Bun_B,\,
S = \Bun_G$, and $S' = \sM_X$. Unfortunately the stack $\ol\Bun_B$ is
``too big'' for all of $\IC_{\ol\Bun_B}$ to be ULA over $\Bun_G$. 
However, we do get the ULA property if we restrict to open substacks
where the defect is not ``too big'':
Let $\mf j : \Bun_B \into \ol\Bun_B$ denote the open embedding.

\begin{prop}[{\cite{Campbell, Campbell2}}]
\label{prop:ULA}
Fix $\ch\nu \in \ch\Lambda^\pos_G$. Then for any $\ch\mu \in \ch\Lambda_G$
large enough:
\begin{enumerate}
\item The complexes $\IC_{_{\le \ch\nu}\ol\Bun^{-\ch\mu}_B}$ and 
$\mf j_! \IC_{\Bun_B}|_{_{\le \ch\nu}\ol\Bun^{-\ch\mu}_B}$ are 
ULA over $\Bun_G$.
\item The fiber of $\Bun^{-\ch\mu}_B \to \Bun_G$ is dense in the
fiber of $_{\le\ch\nu}\ol\Bun^{-\ch\mu}_B \to \Bun_G$ over any $k$-point of $\Bun_G$.
\end{enumerate}
\end{prop}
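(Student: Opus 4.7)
Fix a finite-type open substack $U \subset \Bun_G$. By Lemma~\ref{lem:BunBtoGsm} and Riemann--Roch, one may choose $\ch\mu$ so large that for every $0\le \ch\nu'\le \ch\nu$ the map $\Bun_B^{-\ch\mu+\ch\nu'}\to \Bun_G$ is smooth with geometrically connected fibers over $U$, of constant relative dimension $d_{\ch\nu'}$ satisfying $d_0 - d_{\ch\nu'} = \brac{2\rho_G, \ch\nu'} = 2\abs{\ch\nu'}$, where $\abs{\ch\nu'} := \sum_\alpha n_\alpha$ for $\ch\nu' = \sum_\alpha n_\alpha \ch\alpha$.

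For (ii), the identification ${}_{\ch\nu'}\ol\Bun_B^{-\ch\mu}\cong C_{\ch\nu'} \xt \Bun_B^{-\ch\mu+\ch\nu'}$ shows that each defect stratum is itself smooth over $\Bun_G$ of relative dimension $\abs{\ch\nu'}+d_{\ch\nu'} = d_0-\abs{\ch\nu'}$, which is strictly less than $d_0$ when $\ch\nu'>0$. Hence over any $k$-point $\sP_G\in U$, the fiber of $\Bun_B^{-\ch\mu}$ is a smooth, connected variety of dimension $d_0$, and its complement in the fiber of ${}_{\le\ch\nu}\ol\Bun_B^{-\ch\mu}$ is a union of locally closed subvarieties of strictly smaller dimension. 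Fiberwise density then follows from the local structure of $\ol\Bun_B$ near its defect strata: via the graded factorization property (Proposition~\ref{prop:changecurve}) and a local-to-global yoga analogous to Lemma~\ref{lem:localglobalyoga}, applied to the compactified Zastava space attached to $X=\ol{N\bs G}^\aff$, the question reduces to the corresponding density for a single central fiber, which in turn follows from the irreducibility of semi-infinite orbits in $\Gr_G$ together with the dimension bounds recorded above.

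For (i), Theorem~\ref{thm:BFGM} gives a canonical identification
\[ \mf i_{\ch\nu'}^*(\IC_{\ol\Bun_B^{-\ch\mu}}) \cong \mf U^\vee(\ch\mfn_C)^{-\ch\nu'} \bt \IC_{\Bun_B^{-\ch\mu+\ch\nu'}}. \]
For $\ch\mu$ large as above, $\IC_{\Bun_B^{-\ch\mu+\ch\nu'}}$ is a shift of the constant sheaf on a stack smooth over $\Bun_G$, and is therefore ULA over $\Bun_G$; the other factor is pulled back from $C_{\ch\nu'}$ and is trivially ULA over $\Bun_G$ as well. Promoting these stratum-wise statements to the desired global ULA assertion on ${}_{\le\ch\nu}\ol\Bun_B^{-\ch\mu}$ is the key technical step, provided by Campbell's local-model arguments \cite{Campbell, Campbell2}: \'etale-locally on ${}_{\le\ch\nu}\ol\Bun_B^{-\ch\mu}$ one constructs smooth covers of the form $S \xt Z$ with $S$ smooth over $\Bun_G$ and $Z$ a finite-type piece of the compactified Zastava space independent of $\Bun_G$, whereupon ULA over $\Bun_G$ factors into smooth base change for the $S$-factor and ULA over a point for the $Z$-factor (automatic for any bounded constructible complex). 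The case of $\mf j_!\IC_{\Bun_B}|_{{}_{\le\ch\nu}\ol\Bun_B^{-\ch\mu}}$ is handled in parallel, with $\mf U^\vee(\ch\mfn_C)^{-\ch\nu'}$ replaced by the Chevalley--Cousin complex $\Upsilon(\ch\mfn_C)^{-\ch\nu'}$ from \S\ref{sect:compareIC} via the Koszul-dual form of Theorem~\ref{thm:BFGM}. The main obstacle is precisely this final passage from stratum-wise to global ULA, since ULA over a non-smooth base is not source-local and the Zastava local-model input is essential.
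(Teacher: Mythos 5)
Your proof of part (i) is fine as far as it goes: like the paper, you ultimately defer to Campbell's results, and your sketch of the local-model mechanism, while not quite how Campbell's argument is actually structured (it proceeds via the Kontsevich compactification $\ol\Bun_B^K$, proper pushforward, and the decomposition theorem rather than via a literal \'etale-local product decomposition $S\times Z$), is not the load-bearing content. This is essentially the same approach as the paper.

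Part (ii) has a genuine gap. You correctly compute that each defect stratum ${}_{\ch\nu'}\ol\Bun_B^{-\ch\mu}$ is smooth over $\Bun_G$ of fiberwise dimension $d_0 - \abs{\ch\nu'} < d_0$, but a strict dimension drop on the boundary does not imply that the open stratum is fiberwise dense: a priori the fiber of ${}_{\le\ch\nu}\ol\Bun_B^{-\ch\mu}$ over $\sP_G$ could contain irreducible components sitting entirely in $\bigcup_{\ch\nu'>0}{}_{\ch\nu'}\ol\Bun_B^{-\ch\mu}$. To rule this out you invoke ``local-to-global yoga'' applied to the compactified Zastava space of $\ol{N\bs G}^\aff$, reducing to ``density for a single central fiber'' and then to ``irreducibility of semi-infinite orbits.'' This is where the argument breaks down on two counts. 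First, the irreducibility of $\olsf S^{\ch\lambda}$ is not the relevant fact: the central fiber of $\barY$ is a closed subscheme of $\olsf S^{\ch\lambda}$, not all of it, and the statement one really needs is that $\msf Y^{\ch\lambda}$ is dense in $\olsf Y^{\ch\lambda}$. Second, that very statement is Corollary~\ref{cor:barYdense} in the paper, whose proof \emph{uses} the present Proposition; your route therefore risks circularity. (One could salvage this for the Finkelberg--Mirkovi\'c Zastava space by citing irreducibility from \cite{BFGM}, but you would need to make that explicit and verify the reduction carefully.)

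The paper's proof of (ii) uses a genuinely different mechanism: it passes to Kontsevich's resolution $\ol\Bun_B^K$, which for $\ch\mu$ large and bounded defect is smooth over $\mathscr M_C\times\Bun_G$. Fixing $\sP_G$, the resolved fiber is smooth over $\mathscr M_C$; since the preimage of $\Bun_B$ is the preimage of the open point of $\mathscr M_C$ and the complement $\partial\mathscr M_C$ is a normal crossings divisor, the preimage of $\Bun_B$ is automatically dense in the resolved fiber, and density descends along the proper birational map to $\ol\Bun_B$. It is this smoothness-over-$\mathscr M_C$ plus normal crossings structure --- not a dimension count nor irreducibility of semi-infinite orbits --- that makes the density argument go.
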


Here ``large enough'' is in the same sense as in Lemma~\ref{lem:BunBtoGsm}, i.e.,
deep enough in the dominant chamber $\ch\Lambda^+_G$.

\begin{proof}
The ULA property for $\IC_{_{\le \ch\nu}\ol\Bun^{-\ch\mu}_B}$ 
is \cite[Corollary 4.1.1.1]{Campbell}. 
The ULA property for $\mf j_! \IC_{\Bun_B}|_{_{\le \ch\nu}\ol\Bun^{-\ch\mu}_B}$ can be deduced from the former, cf.~\cite[\S 4.3]{Campbell2}. 
We review the salient features of the proof in \cite{Campbell} to deduce (ii). 

The key object introduced in \cite{Campbell} is Kontsevich's compactification
$\ol\Bun_B^K$ of $\Bun_B$, which is a resolution of singularities $\ol\Bun_B^K \to \ol\Bun_B$
of Drinfeld's compactification. For an affine test scheme $S$, 
an $S$-point of $\ol\Bun_B^K$ is a commutative square
\[ 
\begin{tikzcd}
\wt{\Scr C} \ar[d] \ar[r] & \pt/B \ar[d] \\ C\xt S \ar[r, "\sP_G"] & \pt/G 
\end{tikzcd} 
\]
where $\wt{\Scr C} \to S$ is a flat family of connected nodal projective curves of the
same arithmetic genus as $C$, the map $\wt{\Scr C} \to C\xt S$ has degree $1$, 
and the induced section $\wt{\Scr C} \to \sP_G \xt^G G/B$ is \emph{stable} 
in the sense of \cite{Kontsevich} over every geometric point of $S$.
Let $\mathscr M_C$ denote the moduli stack of proper connected nodal curves $\wt{\Scr C}$
equipped with a degree one map to $C$. 
Then the natural map $\ol\Bun_B^K\to \mathscr M_C$ is smooth, and $\mathscr M_C$ is
smooth (\cite[Proposition 2.4.1]{Campbell}) and there is a proper map
$\ol\Bun_B^K \to \ol\Bun_B$ over $\Bun_G$ (\cite[Proposition 3.2.2]{Campbell}). 
Inside $\mathscr M_C$ we have the open point corresponding to $\mathrm{id} : C\to C$
and the complement $\partial\mathscr M_C$ is a normal crossings divisor (\cite[Proposition 4.4.1]{Campbell}). Evidently the preimage of the open point in $\ol\Bun_B^K$ identifies
with $\Bun_B$. 
Let $_{\le\ch\nu}\ol\Bun_B^{K,-\ch\mu}$ denote the preimage of $_{\le\ch\nu}\ol\Bun_B^{-\ch\mu}$. Then \cite[Proposition 4.1.1]{Campbell} shows that
for $\ch\nu$ fixed and $\ch\mu$ large enough, the map 
$_{\le\ch\nu}\ol\Bun_B^{K,-\ch\mu} \to \mathscr M_C \xt \Bun_G$ is smooth. 
Now for any $\sP_G \in \Bun_G(k)$, the fiber of $\wt{\mf p}^K: {_{\le\ch\nu}}\ol\Bun_B^{K,-\ch\mu} \to \Bun_G$ is smooth over $\mathscr M_C$.
The preimage over the open point equals $(\wt{\mf p}^K)^{-1}(\sP_G) \cap \Bun_B$, 
and since $\partial \mathscr M_C$ is a normal crossings divisor, 
$(\wt{\mf p}^K)^{-1}(\sP_G) \cap \Bun_B$ must be dense in $(\wt{\mf p}^K)^{-1}(\sP_G)$.
Since $(\wt{\mf p}^K)^{-1}(\sP_G)$ is a resolution of
the fiber of $_{\le\ch\nu}\ol\Bun_B^{-\ch\mu}\to\Bun_G$, we have proved (ii).
\end{proof}

\begin{cor} \label{cor:barYdense}
For any $\ch\lambda\in \mathfrak c_X$,
the subscheme $\sY^{\ch\lambda}$ is dense in $\barY^{\ch\lambda}$. 
\end{cor}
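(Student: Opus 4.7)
My plan is to reduce the general statement to an asymptotic case via the graded factorization property, and to handle that case fiberwise over $\sM_X$ using Proposition~\ref{prop:ULA}(ii).

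For the asymptotic case, I fix $\ch\nu\in\ch\Lambda^\pos_G$ and take $\ch\lambda$ large enough that Proposition~\ref{prop:ULA}(ii) applies. Since $\barY^{\ch\lambda}\hookrightarrow\sM_X\xt_{\Bun_G}\ol\Bun_B^{-\ch\lambda}$ and $\sY^{\ch\lambda}\hookrightarrow\sM_X\xt_{\Bun_G}\Bun_B^{-\ch\lambda}$ are open embeddings, the fibers of $\barY^{\ch\lambda}\to\sM_X$ (resp.~$\sY^{\ch\lambda}\to\sM_X$) over any $s\in\sM_X$ are open subschemes of the fibers of $\ol\Bun_B^{-\ch\lambda}\to\Bun_G$ (resp.~$\Bun_B^{-\ch\lambda}\to\Bun_G$) over the underlying $G$-bundle. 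Intersecting the fiberwise density of Proposition~\ref{prop:ULA}(ii) with these opens yields fiberwise density of $\sY^{\ch\lambda}$ in ${}_{\le\ch\nu}\barY^{\ch\lambda}$ over every point of $\sM_X$, which in turn gives density in the total space (any nonempty open in the complement would meet some fiber in a nonempty open). Since $\barY^{\ch\lambda}$ is of finite type, only finitely many defect strata occur, so taking $\ch\nu$ dominating all of them yields density of $\sY^{\ch\lambda}$ in all of $\barY^{\ch\lambda}$ whenever $\ch\lambda$ is sufficiently large.

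For an arbitrary $\ch\lambda\in\mf c_X$, I would pick $\ch\mu\in\ch\Lambda^\pos_G\subset\mf c_X$ large enough that $\ch\lambda+\ch\mu$ falls in the asymptotic regime above and $\sY^{\ch\mu,0}$ is nonempty (both arranged for $\ch\mu$ deep enough in the dominant chamber, via Lemma~\ref{lem:BunBtoGsm} and \S\ref{sect:locglobyoga}). The graded factorization isomorphism
\[\barY^{\ch\lambda}\oo\xt\barY^{\ch\mu}\;\cong\;\barY^{\ch\lambda+\ch\mu}\xt_{\sA^{\ch\lambda+\ch\mu}}(\sA^{\ch\lambda}\oo\xt\sA^{\ch\mu})\]
restricts to the analogous isomorphism for $\sY$. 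Given any $y\in\barY^{\ch\lambda}$ and any open neighborhood $U\ni y$, I choose $y'\in\sY^{\ch\mu,0}$ with $\pi(y')$-support disjoint from $\pi(y)$ (arranged generically in $\sA^{\ch\mu}$); then $(U\xt\barY^{\ch\mu})\cap(\barY^{\ch\lambda}\oo\xt\barY^{\ch\mu})$ is an open neighborhood of $(y,y')$, and under the factorization it corresponds to an open in $\barY^{\ch\lambda+\ch\mu}$ that meets $\sY^{\ch\lambda+\ch\mu}$ by the asymptotic case. Tracing back and projecting to the first factor produces a point of $U\cap\sY^{\ch\lambda}$, giving the desired density.

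The main subtlety is ensuring the asymptotic regime can always be reached by adding some $\ch\mu\in\mf c_X$ with $\sY^{\ch\mu,0}$ nonempty; this is guaranteed by $\ch\Lambda^\pos_G\subset\mf c_X$ together with the local-global yoga. Modulo this, the argument is purely formal given Proposition~\ref{prop:ULA}(ii) and factorization.
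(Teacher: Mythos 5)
Your overall strategy (fiberwise density via Proposition~\ref{prop:ULA}(ii), then reduction by graded factorization) is the same as the paper's, and the fiberwise-to-total-space density argument you give is correct. However, there is a genuine circularity in your ``asymptotic case.'' Proposition~\ref{prop:ULA}(ii) only controls the fibers of ${}_{\le\ch\nu}\ol\Bun_B^{-\ch\mu}\to\Bun_G$ for a \emph{fixed} $\ch\nu$ and $\ch\mu$ large depending on $\ch\nu$. You then want to ``take $\ch\nu$ dominating all defect strata'' of $\barY^{\ch\lambda}$ --- but that $\ch\nu$ depends on $\ch\lambda$, while the threshold for ``$\ch\lambda$ sufficiently large'' depends on $\ch\nu$. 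As $\ch\lambda$ grows, the maximal defect of $\barY^{\ch\lambda}$ also grows (e.g.\ the stratum with $\ch\nu=\ch\mu$ is nonempty whenever $\ch\lambda+\ch\mu-\ch\nu\in\mf c_X$), so you have not verified that the two bounds can be reconciled. This is compounded in your reduction step by your choice of $\barY^{\ch\mu}$ (rather than $\sY^{\ch\mu}$) as the second factor: the image of $(U\xt\barY^{\ch\mu})\cap(\barY^{\ch\lambda}\oo\xt\barY^{\ch\mu})$ in $\barY^{\ch\lambda+\ch\mu}$ can have defect larger than what $\barY^{\ch\lambda}$ alone sees, so you genuinely need density of $\sY^{\ch\lambda+\ch\mu}$ in \emph{all} of $\barY^{\ch\lambda+\ch\mu}$, not just in a bounded-defect open, and this is exactly the unresolved claim.

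The paper's proof avoids both problems by a simple reordering: for the given $\ch\lambda$, it first fixes $\ch\eta$ with $\barY^{\ch\lambda}={}_{\le\ch\eta}\barY^{\ch\lambda}$ (which exists by quasicompactness), \emph{then} chooses $\ch\mu$ so that $\ch\lambda+\ch\mu-\ch\nu$ is large enough for Proposition~\ref{prop:ULA} uniformly for all $0\le\ch\nu\le\ch\eta$. The factorization is done with $\sY^{\ch\mu}$ (which has defect zero), so $\barY^{\ch\lambda}\oo\xt\sY^{\ch\mu}$ lands inside ${}_{\le\ch\eta}\barY^{\ch\lambda+\ch\mu}$ --- the defect bound $\ch\eta$ is inherited from the original $\ch\lambda$ and does not grow when $\ch\mu$ is added. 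Thus only density within ${}_{\le\ch\eta}\barY^{\ch\lambda+\ch\mu}$ is ever needed, which is exactly what Proposition~\ref{prop:ULA}(ii) provides. Your argument would be rescued if you (a) only claim density of $\sY^{\ch\lambda'}$ in ${}_{\le\ch\eta}\barY^{\ch\lambda'}$ for fixed $\ch\eta$ and $\ch\lambda'$ large, and (b) replace $\barY^{\ch\mu}$ by $\sY^{\ch\mu}$ in your factorization so the image is confined to that bounded-defect locus.
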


\begin{proof}
The subschemes $_{\le \ch\eta}\barY^{\ch\lambda} = \barY^{\ch\lambda} \xt_{\ol\Bun_B} {_{\le\ch\eta}}\ol\Bun_B$
for $\ch\eta\in \ch\Lambda^\pos_G$ form an open covering of $\barY^{\ch\lambda}$. 
Since $\barY^{\ch\lambda}$ is quasicompact, there must exist some 
$\ch\eta$ such that $_{\le \ch\eta}\barY^{\ch\lambda} = \barY^{\ch\lambda}$.
Fix $\ch\lambda,\ch\eta$ as above. Then we can choose 
$\ch\mu \in \ch\Lambda^\pos_G$ such that $\ch\lambda+\ch\mu-\ch\nu$ is 
large enough, for all $0 \le \ch\nu\le \ch\eta$, for the purposes of
Proposition~\ref{prop:ULA} and Lemma~\ref{lem:BunBtoGsm}. 
Now we may consider 
$_{\le\ch\eta}\barY^{\ch\lambda+\ch\mu}$ as an open subscheme of 
$Y':=\Scr M_X \underset{\Bun_G}\xt {_{\le \ch\eta}\ol\Bun^{-\ch\lambda-\ch\mu}_B}$.
Proposition~\ref{prop:ULA}(ii) implies that 
$\sM_X \underset{\Bun_G}\xt \Bun_B^{-\ch\lambda-\ch\mu}$
is dense in $Y'$,
which implies that $\sY^{\ch\lambda+\ch\mu}$ is dense in $\barY^{\ch\lambda+\ch\mu}$.
The graded factorization property of $\barY$ 
gives a natural \'etale map 
\begin{equation} \label{e:barYfactorlarge}
 \barY^{\ch\lambda} \oo\xt \Scr Y^{\ch\mu} := (\barY^{\ch\lambda} \xt \Scr Y^{\ch\mu})|_{\Scr A^{\ch\lambda} \oo\xt \Scr A^{\ch\mu}} \to 
\barY^{\ch\lambda+\ch\mu}, 
\end{equation}
where $\Scr Y^{\ch\mu} \into \barY^{\ch\mu}$
is the open embedding. We deduce that $\sY^{\ch\lambda} \oo\xt \sY^{\ch\mu}$ is
dense in $\barY^{\ch\lambda} \oo\xt \sY^{\ch\mu}$, which implies 
$\sY^{\ch\lambda}$ is dense in $\barY^{\ch\lambda}$. 

\end{proof}

\begin{proof}[Proof of Proposition~\ref{prop:ICbarY}]

Applying Lemma~\ref{lem:ULAbox} to the Cartesian square
\[
\begin{tikzcd} 
Y' \ar[r] \ar[d] & \Scr M_X \ar[d] \\ 
{}_{\le \ch\eta}\ol\Bun_B^{-\ch\lambda-\ch\mu} \ar[r] & \Bun_G,
\end{tikzcd}
\]
we can identify
\begin{equation} \label{e:ICbar=bt}
    \IC_{Y'} \cong \IC_{\Scr M_X} \underset{\Bun_G}\boxtimes 
\IC_{_{\le\ch\eta} \ol\Bun^{-\ch\lambda-\ch\mu}_B}. 
\end{equation}
In particular, this gives us a description of $\IC_{_{\le\ch\eta}\barY^{\ch\lambda+\ch\mu}} = \IC_{Y'}|_{_{\le\ch\eta}\barY^{\ch\lambda+\ch\mu}}$. 
For $0 \le \ch\nu \le \ch\eta$, we have a Cartesian square
\[ 
\begin{tikzcd}
C_{\ch\nu} \times \Scr Y^{\ch\lambda+\ch\mu-\ch\nu} \ar[r,hook,"\mf i_{\barY,\ch\nu}"] \ar[d] & 
\barY^{\ch\lambda+\ch\mu} \ar[d] \\ 
C_{\ch\nu} \times \Bun_B^{-\ch\lambda-\ch\mu+\ch\nu} \ar[r,hook,"\mf i_{\ch\nu}"] &
\ol\Bun_B^{-\ch\lambda-\ch\mu} 
\end{tikzcd}
\]

Theorem \ref{thm:BFGM}, together with the Cartesian square above and 
\eqref{e:ICbar=bt}, allow us to deduce that there exists an isomorphism 
\[ \mf i_{\barY,\ch\nu}^*( \IC_{\barY^{\ch\lambda+\ch\mu}} ) 
\cong \mfU^\vee(\ch{\mfn}_C)^{-\ch\nu} \bt (\IC_{\Scr M_X} \bt_{\Bun_G} \IC_{\Bun_B^{-\ch\lambda-\ch\mu+\ch\nu}})|_{\Scr Y^{\ch\lambda+\ch\mu-\ch\nu}}
= \mfU^\vee(\ch{\mfn}_C)^{-\ch\nu} \bt (\IC_{\Scr M_X}|^{!*}_{\Scr Y^{\ch\lambda+\ch\mu-\ch\nu}}) \]
on $C_{\ch\nu} \xt \Scr Y^{\ch\lambda+\ch\mu-\ch\nu}$. 
In the last equality we have used the fact that $\Bun_B$ is smooth. 
Since we chose $\ch\lambda+\ch\mu-\ch\nu$ large enough to satisfy
Lemma~\ref{lem:BunBtoGsm}, the map $\Scr Y^{\ch\lambda+\ch\mu-\ch\nu} \to \Scr M_X$ is smooth. Therefore, $\IC_{\Scr M_X}|^{!*}_{\Scr Y^{\ch\lambda+\ch\mu-\ch\nu}} \cong \IC_{\Scr Y^{\ch\lambda+\ch\mu-\ch\nu}}$ and 
we get a canonical isomorphism 
\begin{equation} \label{e:ICbarYrestrict}
    \mf i_{\barY,\ch\nu}^*(\IC_{\barY^{\ch\lambda+\ch\mu}}) 
    \cong \mfU^\vee(\ch\mfn_C)^{\ch\nu} \bt \IC_{\Scr Y^{\ch\lambda+\ch\mu-\ch\nu}}. 
\end{equation}

Observe that the following diagram is Cartesian:
\[
\begin{tikzcd}[column sep=large]
    (C_{\ch\nu} \xt \Scr Y^{\ch\lambda-\ch\nu}) \oo\xt \Scr Y^{\ch\mu} 
\ar[r,hook,"\mf i_{\barY,\ch\nu} \times \on{id}"] \ar[d] & 
    \barY^{\ch\lambda} \oo\xt \Scr Y^{\ch\mu} \ar[d, "\text{\eqref{e:barYfactorlarge}}"] \\ 
    C_{\ch\nu} \xt \Scr Y^{\ch\lambda+\ch\mu-\ch\nu} \ar[r, hook, "\mf i_{\barY,\ch\nu}"] & 
    \barY^{\ch\lambda+\ch\mu} 
\end{tikzcd}
\]
where the left vertical arrow is identity on $C_{\ch\nu}$ times 
the map $\Scr Y^{\ch\lambda-\ch\nu} \oo\xt \Scr Y^{\ch\mu} \to \Scr Y^{\ch\lambda+\ch\mu-\ch\nu}$  coming from graded factorization. 
Since the restriction of $\IC_{\barY^{\ch\lambda+\ch\mu}}$ to 
$\barY^{\ch\lambda} \oo\xt \Scr Y^{\ch\mu}$ is 
$\IC_{\barY^{\ch\lambda}} \mathbin{\oo\bt} \IC_{\Scr Y^{\ch\mu}}$, 
we deduce from the Cartesian square and \eqref{e:ICbarYrestrict} 
that there is a canonical isomorphism 
\[ \mf i^*_{\barY,\ch\nu}(\IC_{\barY^{\ch\lambda}}) \mathbin{\oo\bt} \IC_{\Scr Y^{\ch\mu}}
\cong (\mfU^\vee(\ch\mfn_C)^{-\ch\nu} \bt \IC_{\Scr Y^{\ch\lambda-\ch\nu}}) 
\mathbin{\oo\bt} \IC_{\Scr Y^{\ch\mu}} 
\]
when restricted to $(C_{\ch\nu} \xt \Scr Y^{\ch\lambda-\ch\nu})\oo\xt \Scr Y^{\ch\mu}$.
Lastly, let $\mathscr Y$ be a connected component of $\sY^{\ch\mu,0}$ so 
that the projection $p : (C_{\ch\nu} \xt \sY^{\ch\lambda-\ch\nu}) \oo\xt \mathscr Y \to 
C_{\ch\nu} \xt \sY^{\ch\lambda-\ch\nu}$ 
is smooth surjective with irreducible fibers. 
We have constructed an isomorphism 
\[ p^*( \mf i^*_{\barY,\ch\nu}(\IC_{\barY^{\ch\lambda}}) )
\cong p^*(  \mfU^\vee(\ch\mfn_C)^{-\ch\nu} \bt \IC_{\Scr Y^{\ch\lambda-\ch\nu}} ), \]
which implies Proposition~\ref{prop:ICbarY} because 
the functor $p^*[\dim \mathscr Y]$ is fully faithful on the category
of perverse sheaves (\cite[Proposition 4.2.5]{BBDG}).
\end{proof}

\subsubsection{Convolution product}  \label{sect:convolution-product}

The toric variety $X/\!\!/N$ has the natural structure of a commutative 
algebraic monoid. The multiplication operator on $X/\!\!/N$ induces
a finite map 
\[ m_{\Scr A} : \Scr A^{\ch\lambda_1} \xt \Scr A^{\ch\lambda_2} \to \Scr A^{\ch\lambda_1+\ch\lambda_2}. \]
If we have sheaves $\Scr F_i \in \rmD^b_c(\Scr A^{\ch\lambda_i}),\, i=1,2$,
we define their convolution by 
\[ \Scr F_1 \star \Scr F_2 := m_{\Scr A,!}(\Scr F_1 \bt \Scr F_2) \in 
\rmD^b_c(\Scr A^{\ch\lambda_1+\ch\lambda_2}). \]

\subsubsection{}
We have a closed embedding $\mf i_{\Scr A,\ch\nu} : C_{\ch\nu} \into \Scr A^{\ch\nu}$ corresponding to the partition $\sum_i n_i [\ch\alpha_i]$
of degree $\ch\nu = \sum_i n_i\ch\alpha_i$.

\begin{cor} \label{cor:pibarY}
There is an equality 
\[ [ \pi_!(\IC_{\Scr Y^{\ch\lambda}}) ] = \sum_{\ch\nu\in \ch\Lambda^\pos_G} 
[ \mf i_{\Scr A,\ch\nu,!}(\Upsilon(\ch\mfn_C)^{\ch\nu}) \star \bar\pi_!(\IC_{\barY^{\ch\lambda-\ch\nu}})] \]
in the Grothendieck group of perverse sheaves on $\Scr A^{\ch\lambda}$. 
\end{cor}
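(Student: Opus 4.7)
The proof will be a formal Grothendieck-group computation, combining the description of the stalks of $\IC_{\barY^{\ch\lambda}}$ along the defect strata (Proposition~\ref{prop:ICbarY}) with a Koszul-duality identity between the factorization algebras $\mf U^\vee(\ch\mfn_C)$ and $\Upsilon(\ch\mfn_C)$.

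\emph{First, stratify and restrict.} By Proposition~\ref{prop:barYstrata}, $\barY^{\ch\lambda}$ decomposes into the locally closed strata $\mf i_{\barY,\ch\nu}:C_{\ch\nu}\times \sY^{\ch\lambda-\ch\nu}\hookrightarrow \barY^{\ch\lambda}$, indexed by $\ch\nu\in\ch\Lambda^\pos_G$, with $\ch\nu=0$ recovering the open dense subscheme $\sY^{\ch\lambda}$ (dense by Corollary~\ref{cor:barYdense}). In the Grothendieck group of constructible sheaves the open-closed triangles yield
\[
[\IC_{\barY^{\ch\lambda}}] \;=\; \sum_{\ch\nu\ge 0}\bigl[\mf i_{\barY,\ch\nu,!}\,\mf i^*_{\barY,\ch\nu}\IC_{\barY^{\ch\lambda}}\bigr],
\]
and by Proposition~\ref{prop:ICbarY} each summand is $[\mf i_{\barY,\ch\nu,!}(\mf U^\vee(\ch\mfn_C)^{-\ch\nu}\boxtimes \IC_{\sY^{\ch\lambda-\ch\nu}})]$.

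\emph{Second, push forward.} The composition $\bar\pi\circ \mf i_{\barY,\ch\nu}:C_{\ch\nu}\times \sY^{\ch\lambda-\ch\nu}\to \sA^{\ch\lambda}$ factors as $m_\sA\circ(\mf i_{\sA,\ch\nu}\times\pi)$, where $m_\sA$ is the multiplication on divisors, so applying $\bar\pi_!$ to the displayed identity and using the definition of $\star$ gives
\[
[\bar\pi_!\IC_{\barY^{\ch\lambda}}] \;=\; \sum_{\ch\nu\ge 0}\bigl[\mf i_{\sA,\ch\nu,!}\mf U^\vee(\ch\mfn_C)^{-\ch\nu}\bigr]\star \bigl[\pi_!\IC_{\sY^{\ch\lambda-\ch\nu}}\bigr].
\]
This is a triangular system (as $\ch\lambda$ varies in $\mf c_X$) which can be inverted to solve for $[\pi_!\IC_{\sY^{\ch\lambda}}]$ in terms of $[\bar\pi_!\IC_{\barY^{\ch\lambda-\ch\mu}}]$, provided the inverting series is understood.

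\emph{Third, invert via Koszul duality.} The desired formula will follow at once by convolving with $\sum_{\ch\eta\ge 0}[\mf i_{\sA,\ch\eta,!}\Upsilon(\ch\mfn_C)^{\ch\eta}]$, once one knows the identity
\[
\sum_{\ch\eta+\ch\nu=\ch\mu}\bigl[\mf i_{\sA,\ch\eta,!}\Upsilon(\ch\mfn_C)^{\ch\eta}\bigr]\star \bigl[\mf i_{\sA,\ch\nu,!}\mf U^\vee(\ch\mfn_C)^{-\ch\nu}\bigr]\;=\;\delta_{\ch\mu,0}\,[\ol\bbQ_{\ell,\sA^0}]
\]
in the Grothendieck group on $\sA^{\ch\mu}$. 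This is the main point. By the (graded) factorization property of both sides, it reduces on the open stratum $\oo C^{\mf P}$ corresponding to a multi-set $\mf P=\sum n_i[\ch\nu_i]$ of primitive coweights to a tensor product of local contributions, and hence to the classical Koszul acyclicity statement for $\ch\mfn$: the complex $C_\bullet(\ch\mfn)\ot U(\ch\mfn)$ is quasi-isomorphic to the trivial module, so $\sum_{\ch\nu',\ch\nu''}(-1)^{?}[\Lambda^\bullet\ch\mfn]^{\ch\nu'}\cdot [U(\ch\mfn)]^{-\ch\nu''}$ is $\delta_{\ch\nu'+\ch\nu'',0}$ in the character ring of $\ch T$. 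Extending across all strata of $\sA^{\ch\mu}$ via the Cousin presentation of $\Upsilon(\ch\mfn_C)$ (cf.~\cite{BG2}) yields the identity, and hence the corollary.

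\emph{Main obstacle.} The only nontrivial input is the Koszul-duality identity displayed above; once this is in hand, the rest is a formal inversion of a unipotent triangular operator. Rigorously handling this identity requires care with the non-proper push-forward $\mf i_{\sA,\ch\nu,!}$ and the convolution product, but it is a known fact about the factorization algebras in play; everything else is bookkeeping around the stratification of $\barY^{\ch\lambda}$.
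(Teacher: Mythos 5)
Your proposal is correct and follows essentially the same route as the paper: take the Cousin/open-closed decomposition of $[\IC_{\barY^{\ch\lambda}}]$ along the defect strata using Proposition~\ref{prop:ICbarY}, push forward by $\bar\pi_!$, and then invert the resulting unipotent convolution operator via the Koszul-duality identity between $\Upsilon(\ch\mfn_C)$ and $\mfU^\vee(\ch\mfn_C)$. The only difference is that the paper cites the Koszul identity directly from \cite[\S 6.4]{BG2} and runs the inversion as an explicit induction (convolving with a test sheaf $\Scr T$ to set it up), whereas you sketch a from-scratch proof of the identity by factorizing down to local Koszul acyclicity of $C_\bullet(\ch\mfn)\otimes U(\ch\mfn)$ and extending via the Cousin presentation; both are fine, and your sketch is a reasonable outline of what \cite{BG2} actually proves.
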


\begin{proof}
Taking the Grothendieck--Cousin complex associated to the stratifications
$\mf i_{\barY,\ch\nu}$ and applying Proposition~\ref{prop:ICbarY} 
gives an equality 
\[ [\IC_{\barY^{\ch\lambda}}] = \sum_{\ch\nu \in \ch\Lambda^\pos_G} 
[\mf i_{\barY,\ch\nu,!} ( \mf U^\vee(\ch\mfn_C)^{\ch\nu} \bt \IC_{\Scr Y^{\ch\lambda-\ch\nu}} ) ] \]
in the Grothendieck group of perverse sheaves on $\barY^{\ch\lambda}$.
Note that $\Scr Y^{\ch\lambda-\ch\nu}$ is nonempty for finitely many
values of $\ch\nu$. 
The composition $\bar\pi \circ \mf i_{\barY,\ch\nu} : C_{\ch\nu} \xt \Scr Y^{\ch\lambda-\ch\nu} \to \Scr A^{\ch\lambda}$ coincides with
the composition $C_{\ch\nu} \xt \Scr Y^{\ch\lambda-\ch\nu} \overset{\mf i_{\Scr A,\ch\nu} \xt \pi}\longrightarrow \Scr A^{\ch\nu} \xt \Scr A^{\ch\lambda-\ch\nu} \overset{m_{\Scr A}}\to \Scr A^{\ch\lambda}$.
Therefore, applying $\bar\pi_!$ to the equality above, we get
the equality 
\[ [\bar\pi_!(\IC_{\barY^{\ch\lambda}})] = \sum_{\ch\nu\in \ch\Lambda^\pos_G} [\mf i_{\Scr A,\ch\nu,!}(\mf U^\vee(\ch\mfn_C)^{\ch\nu}) \star \pi_!(\IC_{\Scr Y^{\ch\lambda-\ch\nu}})]  \]
in the Grothendieck group of perverse sheaves on $\Scr A^{\ch\lambda}$.
Applying a further convolution by any $\Scr T \in \rmD^b_c(C_{\ch\nu'}), 
\ch\nu'\in \ch\Lambda^\pos_G$
gives 
\[ [ \mf i_{\Scr A, \ch\nu',!}( \Scr T ) \star \bar\pi_! (\IC_{\barY^{\ch\lambda}}) ] = \sum_{\ch\nu} [ \mf i_{\Scr A,\ch\nu+\ch\nu',!}( \Scr T \star \mf U^\vee(\ch\mfn_C)^{\ch\nu}) \star \pi_!(\IC_{\Scr Y^{\ch\lambda-\ch\nu}}) ] . \]
It is known (\cite[\S 6.4]{BG2}) that for a fixed nonzero $\ch\nu \in \ch\Lambda^\pos_G$, we have an equality 
\[ \sum_{\substack{\ch\nu_1,\ch\nu_2 \in \ch\Lambda^\pos_G\\ 
\ch\nu_1+\ch\nu_2=\ch\nu}} 
[ \Upsilon(\ch\mfn_C)^{\ch\nu_1} \star \mf U^\vee(\ch\mfn_C)^{-\ch\nu_2}  ] = 0 \]
in the Grothendieck group of perverse sheaves on $C_{\ch\nu}$. 
The two preceeding equalities and induction prove the claim.
\end{proof}

\section{Global Hecke action and closure relations} \label{sect:Heckeact}

\emph{For the rest of this paper, assume that $\ch G_X = \ch G$
and all simple roots of $G$ are spherical roots of type $T$.} Equivalently, 
we are assuming that $B$ acts simply transitively on $X^\circ$ and 
for every simple root $\alpha$ of $G$, the $\PGL_2$-variety $X^\circ P_\alpha/\mathfrak R(P_\alpha)$ is isomorphic to $\mathbb G_m\backslash\PGL_2$ (over the algebraically closed field $k$).

\smallskip

As a consequence, $\Cal V \cap \ch\Lambda_X = \ch\Lambda^-_G$, the monoid of antidominant coweights of $G$. 
Recall from \S\ref{subsection:typeT} that 
the type $T$ assumption also implies that for every simple root $\alpha$, 
the open $P_\alpha$-orbit 
$X^\circ P_\alpha$ is the union of $X^\circ$ and the open $B$-orbits of 
two colors $\Cal D(\alpha) = \{ D_\alpha^+,D_\alpha^- \}$. 
We will let $\ch\nu_\alpha^\pm$ denote the valuation of $D_\alpha^\pm$, respectively.
Then $\ch\nu_\alpha^+ + \ch\nu_\alpha^- = \ch\alpha$ 
and $\brac{\alpha, \ch\nu_\alpha^\pm} = 1$. 
We encourage the reader to refer to Examples~\ref{eg:YHecke}, \ref{eg:YHeckePGL}, \ref{eg:Heckecompact} and \ref{eg:Yfiber}.

\subsection{Main results of this section}

This section is quite technical, and we advise the reader to read the main results listed here, and skip the rest of the section at first reading. Before we introduce the results, let us observe that, so far, we have uniformly treated all affine spherical varieties. 
However, the classification of spherical varieties is divided into two parts: 
(i) the classification of homogeneous spherical varieties $H\bs G$, and (ii) the
classification of spherical embeddings $H\bs G \into X$ (by a spherical embedding we mean a $G$-equivariant open, dense embedding $H\bs G \into X$, where $X$ is a normal, and hence spherical, $G$-variety). 

Since $X$ is affine, $X^\bullet=H\bs G$ is quasiaffine and there is a 
\emph{canonical affine embedding} 
\[ X^\can := \Spec k[H\bs G]. \] 
(The fact that the coordinate ring of $k[H\bs G]$ is finitely generated follows from the fact that $B$-eigenspaces are one-dimensional, and the $B$-character group is finitely generated.) By normality, $X^\can$ has no divisors that do not meet $H\bs G$, 
i.e., $\Cal D(X)$ has no $G$-stable divisors, and coincides with the set $\Cal D$ of colors. 
Therefore, the cone $\Cal C_0(H\bs G):= \Cal C_0(X^\can)$ 
is generated by the valuations $\varrho_X(\Cal D)$ of colors.

For any other affine spherical embedding $H\bs G\into X$, there is  
a natural map $X^\can \to X$, so $X^\can$ is universal among affine embeddings 
of $H\bs G$. 

It turns out that this distinction between the minimal and the general embeddings is important when we consider arc spaces and their global models. 
As we will recall in Lemma~\ref{lem:Membedding}, the map $X^\can \to X$ induces a closed embedding of mapping stacks 
\[ \Scr M_{X^\can} \hookrightarrow \Scr M_X,\]
whose image is a union of irreducible components.

We are more interested in the closure of $\Scr M^0_{X^\bullet} = \Bun_H$ in the former, which we will denote by $\barM_X^0$ (it may or may not be the same as $\Scr M_{X^\can}$ depending on whether the monoid $\mathfrak c_{X^\can}$ is generated by colors, see Corollary \ref{cor:can-comp}):
\begin{equation}
 \label{cantogeneral}
\barM_X^0 \hookrightarrow \Scr M_{X^\can} \hookrightarrow \Scr M_X.
\end{equation}

The main theme of this section is, in some sense, a reconstruction of $\Scr M_X$ out of suitable Hecke operators acting on $\barM_X^0$: 
For any $\ch\Theta \in \Sym^\infty(\mathfrak c_X^-\sm 0)$, a multiset of nonzero elements in $\mathfrak c_X^-$,
there is a natural proper map 
\begin{equation} \label{equation-action}
\act_\sM : \barM_X^0 \ttimes \ol\Gr^{\ch\Theta}_{G, C^{\ch\Theta}} \to 
\Scr M_X,
\end{equation}
which corresponds to the action on the ``basic stratum'' $\barM_X^0$ by the closed stratum of the affine Grassmannian parametrized by $\ch\Theta$.
(See Proposition--Construction \ref{prop-const:action}.) Recall that such a multiset $\check\Theta$ also parametrizes a stratum $\Scr M^{\ch\Theta}_X$ of the global mapping space (\S \ref{sect:globstrat}). The main technical result of this section is the following:

\begin{thm}
\label{thm:comp}
For every $\ch\Theta \in \Sym^\infty(\mathfrak c_X^- \sm 0)$, the action map \eqref{equation-action} 
\begin{enumerate}
\item has image equal to the closure of $\Scr M^{\ch\Theta}_X$, and  
\item is birational onto its image. 
\item The restriction of $\act_\sM$ to $\act_\sM^{-1}(\sM^{\ch\Theta}_X) \cap 
(\Bun_H \ttimes \Gr^{\ch\Theta}_{G, \oo C^{\ch\Theta}}) \to \sM^{\ch\Theta}_X$ 
is an isomorphism. 
\end{enumerate}
\end{thm}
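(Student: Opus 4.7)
The heart of the theorem is part~(iii), from which the other two will follow using properness of $\act_\sM$. I will first construct, functorially in $S$-points, the inverse to $\act_\sM$ on the relevant open locus. Given $f \in \sM_X^{\ch\Theta}(S)$, \'etale-locally on $S$ we have distinct sections $v_1,\dotsc,v_n:S \to C$ whose types prescribe the partition $\ch\Theta$. Restricting $f$ to each formal disk $\wh C_{v_i}$ yields a point of $\msf L^{\ch\theta_i} X / \msf L^+G$. Lemma~\ref{lem:theta-orbit-identification}, combined with the bijection of Theorem~\ref{thm:Gorbits} and the $\msf L^+G$-equivariance of the identification of orbits with arcs through $x_0 \cdot t^{\ch\theta_i}$, provides (fpqc-locally) Hecke modification data $g_i$ representing an $S$-point of $\Gr_G^{\ch\theta_i}$ such that multiplying $f|_{\wh C_{v_i}}$ by $g_i^{-1}$ pushes it into $X^\bullet/G$. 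Using the Beauville--Laszlo gluing of Lemma~\ref{lem:B-L} applied to the affine $G$-scheme $X$, these local modifications patch with $f|_{C\sm\{v_i\}}$ to produce a map $f_0 : C \to \pt/H$, i.e., a point of $\Bun_H \subset \barM_X^0$, together with a point $(g_i) \in \Gr^{\ch\Theta}_{G,\oo C^{\ch\Theta}}(S)$. The $G(\mf o_{v_i})$-ambiguity in each $g_i$ is factored out in the Hecke stack, and connectedness of the stabilizer (Corollary~\ref{cor:stab-conn}) ensures the inverse is canonical. By construction, the assignment $f \mapsto (f_0,(g_i))$ inverts $\act_\sM$, proving~(iii).

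For part~(i), the map $\act_\sM$ is proper: the closures $\ol\Gr_G^{\ch\theta_i}$ of $\msf L^+G$-orbits in the affine Grassmannian are projective, hence the multi-point version $\ol\Gr^{\ch\Theta}_{G,C^{\ch\Theta}}$ is proper over $C^{\ch\Theta}$, and since $\act_\sM$ is relative over $C^{\ch\Theta}$ via the second factor, it is proper. By (iii) the image contains $\sM^{\ch\Theta}_X$; by properness it is closed and hence contains $\ol{\sM^{\ch\Theta}_X}$. For the reverse inclusion, note that $\Bun_H \ttimes \Gr^{\ch\Theta}_{G,\oo C^{\ch\Theta}}$ is dense in $\barM_X^0 \ttimes \ol\Gr^{\ch\Theta}_{G,C^{\ch\Theta}}$ (density of $\Bun_H$ in $\barM_X^0$ holds by definition as a closure; density of $\Gr^{\ch\Theta}_{G,\oo C^{\ch\Theta}}$ in $\ol\Gr^{\ch\Theta}_{G,C^{\ch\Theta}}$ uses that each $\Gr_G^{\ch\theta_i}$ is dense in its closure and that $\oo C^{\ch\Theta}$ is dense in $C^{\ch\Theta}$). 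By (iii), $\act_\sM$ sends this dense open into $\sM^{\ch\Theta}_X$, so continuity forces the whole image into $\ol{\sM^{\ch\Theta}_X}$. Part~(ii) then follows immediately: $\act_\sM$ induces an isomorphism between dense open subsets of source and target, hence is birational onto its image.

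The main obstacle is the functorial inverse construction in~(iii). One must promote Lemma~\ref{lem:theta-orbit-identification}'s pointwise identification of $\msf L^+G$-orbits with translates of $x_0 t^{\ch\theta}$ to work in families over $S$ and simultaneously at multiple distinct points $v_i$ of $C$. The gluing via Beauville--Laszlo demands affineness of $X$ (which we have) and the existence of coherent local trivializations; the latter reduces to controlling the transition maps of the pro-systems defining the orbits, where the smoothness and connectedness arguments behind Proposition~\ref{prop:smoothfiberstrata} and Corollary~\ref{cor:stab-conn} play a crucial role. Once the inverse is produced in families, all remaining steps---properness, density, and passage to closures---are formal consequences of standard properties of the affine Grassmannian and of mapping stacks.
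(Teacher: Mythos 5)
Your proposal inverts the order of the paper's argument: you propose to prove (iii) first by exhibiting a functorial inverse, whereas the paper proves (i), then (ii), then (iii), each time using the preceding parts. The routes are genuinely different, and your route has a serious gap.

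The gap is in the uniqueness of the lift, which is what makes your inverse well-defined. You propose: given $f \in \sM_X^{\ch\Theta}(S)$, produce modifications $g_i \in \Gr_G^{\ch\theta_i}$ at each degenerate point so that the modified arc lands in $X^\bullet$, and then glue via Beauville--Laszlo. The existence of such $g_i$ is clear from the orbit structure (Theorem~\ref{thm:Gorbits}, Lemma~\ref{lem:theta-orbit-identification}). But the \emph{uniqueness} of the resulting point of $\Bun_H \ttimes \Gr^{\ch\Theta}_{G,\oo C^{\ch\Theta}}$ is precisely what has to be proved, and you do not prove it. Citing Corollary~\ref{cor:stab-conn} (connectedness of the $\msf L H$-stabilizer of $t^{\ch\theta}$) does not close this gap: connectedness controls components, not the dimension of the fiber. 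For a general arc $\gamma_i$ in $\msf L^{\ch\theta_i}X$, there are a priori many modifications $g_i\in\Gr_G^{\ch\theta_i}$ that move $\gamma_i$ into $\msf L^+(X^\bullet)$, so the fiber of $\act_\sM$ could be positive-dimensional, and nothing you cite rules this out. The paper establishes the single-point fiber by a substantially different argument: after using generic-Hecke transitivity (Proposition~\ref{prop:genHecke-transitive}, Lemma~\ref{lem:act-equivariant}) and Zariski's Main Theorem to reduce to the fiber over a single point $t^{\ch\theta}$, it stratifies that fiber by $\msf Y^{\ch\theta-\ch\nu,0} \ttimes (\msf S^{\ch\nu}\cap\ol\Gr_G^{\ch\theta})$ via Proposition~\ref{prop:Yact-strat}, and then invokes the crucial emptiness statement (Corollary~\ref{cor:Ytheta}) that $\msf Y^{\ch\theta-\ch\nu,0}$ is empty unless $\ch\nu=\ch\theta$. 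This reduction through the Zastava model and the MV cycle stratification is the real content, and your proposal contains no substitute for it.

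A secondary imprecision in your argument for (i): you write that part (iii) lets you conclude ``$\act_\sM$ sends this dense open into $\sM^{\ch\Theta}_X$,'' but (iii) only concerns $\act_\sM^{-1}(\sM^{\ch\Theta}_X)\cap(\Bun_H\ttimes\Gr^{\ch\Theta}_{G,\oo C^{\ch\Theta}})$, not the whole open stratum $\Bun_H\ttimes\Gr^{\ch\Theta}_{G,\oo C^{\ch\Theta}}$; points of the latter can land in deeper strata $\sM_X^{\ch\Theta'}$ with $\ch\Theta'\succ\ch\Theta$ (compare Lemma~\ref{lem:act-kpoints}). The conclusion you want still holds because $\act_\sM^{-1}(\sM^{\ch\Theta}_X)$ is nonempty open, hence dense in the irreducible source, but the reasoning needs this correction. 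More seriously, establishing that the preimage is nonempty (the surjectivity onto $\sM_X^{\ch\Theta}$, which you take for granted) is Lemma~\ref{lem:actimage}(ii), whose proof in the paper again uses generic-Hecke transitivity, so your approach cannot avoid that tool either.
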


The proof of Theorem \ref{thm:comp} will be given in \S \ref{proof:comp}.

We use this theorem to achieve two goals in this section:

The first goal is to understand irreducible components of $\Scr M_X$ and closure relations among the strata $\Scr M^{\ch\Theta}_X$. 
We will introduce the natural generalization to multisets of the order $\succeq$ among elements of the lattice $\ch\Lambda_X$ (we remind that this order is determined by the monoid of colors, see \S \ref{sect:spherical}), and prove:
\begin{prop}[See Proposition \ref{prop:closure-rel}.]
\label{proposition-closures}
Let $\ch\Theta,\ch\Theta' \in \Sym^\infty(\mathfrak c_X^-\sm 0)$. 
The stratum $\sM^{\ch\Theta'}_X$ lies in the closure of $\sM^{\ch\Theta}_X$
if and only if there exists $\ch\Theta''$  such that $\ch\Theta$ refines $\ch\Theta''$ and $\ch\Theta' \succeq \ch\Theta''$. 
\end{prop}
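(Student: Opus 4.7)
The plan is to leverage Theorem~\ref{thm:comp}(i), which identifies $\overline{\sM^{\ch\Theta}_X}$ with the image of the proper action map $\act_\sM: \barM_X^0 \ttimes \ol\Gr^{\ch\Theta}_{G, C^{\ch\Theta}} \to \sM_X$. This reduces the determination of closure relations on the strata of $\sM_X$ to the combined closure relations on the two factors of the source, together with an analysis of how $\act_\sM$ combines $\barM_X^0$-defects with Grassmannian coweights into the final valuation at each point of $|C|$.

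The key combinatorial input is the inclusion $\ch\Lambda_G^\pos \subset \mathfrak c_X^{\Cal D}$, which is immediate from the identity $\ch\alpha = \ch\nu_\alpha^+ + \ch\nu_\alpha^-$ for each simple coroot (\S\ref{subsection:typeT}). As a consequence, degeneration within $\ol\Gr^{\ch\theta''}_G = \bigsqcup_{\ch\mu \ge \ch\theta'',\,\ch\mu \in \ch\Lambda_G^-} \Gr^{\ch\mu}_G$ contributes a difference $\ch\mu - \ch\theta''$ lying in $\ch\Lambda_G^\pos \subset \mathfrak c_X^{\Cal D}$, while the defect at a given point of any section in $\barM_X^0$ is likewise encoded in $\mathfrak c_X^{\Cal D}$ by virtue of $\barM_X^0$ being the closure of $\Bun_H$ inside $\sM_{X^\can}$ (a structural fact that I would extract from the stratification results in Appendix~\ref{appendix:A}). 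Both contributions add together in the valuation of $\act_\sM(m,\phi)$ at each point, and their sum governs precisely the $\succeq$-ordering.

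For the ``only if'' direction, take $f \in \sM^{\ch\Theta'}_X \cap \overline{\sM^{\ch\Theta}_X}$ and choose a lift $(m,\phi) \in \barM_X^0 \ttimes \ol\Gr^{\ch\Theta}_{G, C^{\ch\Theta}}$. Pushing the underlying divisor of $\phi$ forward to $|C|$ clusters the points of $\ch\Theta$ according to coincidences, producing a coarsening $\ch\Theta''$ with $\ch\Theta$ refining $\ch\Theta''$. At each support point $v_i$ with fused Grassmannian coweight $\ch\theta''_i$, the stratum containing $\phi|_{v_i}$ is $\Gr^{\ch\mu_i}$ for some $\ch\mu_i \in \ch\Lambda_G^-$ with $\ch\mu_i \ge \ch\theta''_i$, while $m$ contributes a defect $\ch\beta_i \in \mathfrak c_X^{\Cal D}$, so the valuation of $f$ at $v_i$ equals $\ch\theta'_i = \ch\mu_i + \ch\beta_i$. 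Then
\[
  \ch\theta'_i - \ch\theta''_i = (\ch\mu_i - \ch\theta''_i) + \ch\beta_i \in \ch\Lambda_G^\pos + \mathfrak c_X^{\Cal D} = \mathfrak c_X^{\Cal D},
\]
yielding $\ch\Theta' \succeq \ch\Theta''$. For the ``if'' direction, we reverse this construction: given $\ch\Theta$ refining $\ch\Theta''$ and $\ch\Theta' \succeq \ch\Theta''$, we collide points in $C^{\ch\Theta}$ to realize the refinement; for each cluster we choose a decomposition $\ch\theta'_i = \ch\mu_i + \ch\beta_i$ with $\ch\mu_i \in \ch\Lambda_G^-$, $\ch\mu_i \ge \ch\theta''_i$, and $\ch\beta_i \in \mathfrak c_X^{\Cal D}$, and assemble a point of $\barM_X^0 \ttimes \ol\Gr^{\ch\Theta}_{G,C^{\ch\Theta}}$ whose image under $\act_\sM$ lands in $\sM^{\ch\Theta'}_X$.

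The main obstacle lies in the ``if'' direction: we must realize, for any required color defect configuration $\{\ch\beta_i\} \subset \mathfrak c_X^{\Cal D}$ at disjoint locations, an actual point in $\barM_X^0$ exhibiting precisely this defect pattern, and simultaneously verify that the decomposition $\ch\theta'_i - \ch\theta''_i = (\ch\mu_i - \ch\theta''_i) + \ch\beta_i$ can always be chosen so that $\ch\mu_i$ remains antidominant. Both of these combinatorial ingredients require a detailed understanding of the strata of $\barM_X^0$ and the interplay between the coroot order $\le$ and the color order $\preceq$ on $\ch\Lambda_X$, which I expect will be furnished by the technical results developed in Appendix~\ref{appendix:A}.
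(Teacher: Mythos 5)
Your framing of both directions via Theorem~\ref{thm:comp}(i) matches the paper, and the ``only if'' argument is essentially the right idea, but two issues should be flagged.

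First, a minor one: in the ``only if'' direction you write the valuation of $\act_\sM(m,\phi)$ at $v_i$ as an \emph{equality} $\ch\theta'_i = \ch\mu_i + \ch\beta_i$. Lemma~\ref{lem:act-kpoints} only provides a one-sided estimate (the image of a point with valuation $\succeq\ch\beta_i$ under the coset attached to $\ch\mu_i$ lands in the $\succeq(\ch\beta_i+\ch\mu_i)$ locus, not necessarily in a single orbit). This does not break the conclusion --- a chain of $\succeq$ inequalities, combined with $\ch\mu_i - \ch\theta''_i \in \ch\Lambda_G^\pos \subset \mathfrak c_X^{\Cal D}$, still yields $\ch\theta'_i \succeq \ch\theta''_i$ --- but the phrasing as an equality should be avoided, and the ingredient that $\barM_X^0$ only meets strata with $\ch\Theta' \succeq 0$ is precisely Lemma~\ref{lem:M0closure}, which is proved via the Zastava model and is not a consequence of Appendix~\ref{appendix:A} alone.

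The more serious gap is the ``if'' direction, which your proposal does not actually establish. Of the two obstacles you name, the second (choosing $\ch\mu_i$ antidominant with $\ch\mu_i \ge \ch\theta''_i$) is illusory: you can simply take $\ch\mu_i = \ch\theta''_i$, the open Grassmannian stratum, and absorb the whole difference $\ch\theta'_i - \ch\theta''_i$ into the $\barM_X^0$ factor. The first obstacle, however, is exactly the crux, and the tools you propose to reach for do not supply it. The issue is not merely to ``exhibit a point of $\barM_X^0$ with a prescribed color-defect pattern at each $v_i$''; even granting such a point, the action map would only guarantee that the image has valuation $\succeq \ch\theta'_i$ at $v_i$, not equal to $\ch\theta'_i$ --- so one must show the target stratum is actually hit. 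The paper resolves this differently: after reducing to singleton partitions $[\ch\theta]$, $[\ch\theta']$, it fixes a point $v$, passes to the Zastava model, and uses the graded factorization property to produce a map $\oo{\Scr C} := \sY^{\ch\theta,\ch\theta} \oo\xt \oo\prod_j \sY^{D_j}_{X^\bullet} \to \sY^{\ch\theta'}$ (one $B$-nondegenerate point with $G$-valuation $\ch\theta$ at $v_0$, and points on color strata at the $v_j$'s). It then shows, via an explicit $\mbb A^1$-contraction $(v_0,\ldots,v_d) \mapsto (av_0,\ldots,av_d)$ inside the Beilinson--Drinfeld Grassmannian $\Gr_{B,C^{d+1}}$ and the fact that $\msf L^+N \cdot t^{\ch\theta'} = \{t^{\ch\theta'}\}$ for $\ch\theta'$ antidominant, that the limit of this family is exactly the distinguished point $\act_v(t^{\ch\theta'}) \in \sY^{\ch\theta',\ch\theta'}$. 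This verifies a single point of $\sM_{X,v}^{\ch\theta'}$ lies in $\barM_X^{\ch\theta}$; the passage to the \emph{whole} stratum is then by generic-Hecke equivariance (Lemma~\ref{lem:act-equivariant}, Proposition~\ref{prop:genHecke-transitive}), a propagation step your proposal omits entirely. Without some substitute for this degeneration argument and the Hecke-transitivity propagation, the ``if'' direction remains unproved.
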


Moreover, observe that the irreducible components of $\barM_X^0$ are in bijection with connected components of $\Scr M_{X^\bullet} = \Bun_H$, i.e., parametrized by $\pi_0(\Bun_H) = \pi_1(H)$. Using the action \eqref{equation-action} on those components gives us a parametrization of the irreducible components of the closure of each stratum:
\begin{prop}[See Corollaries \ref{cor:strata-comp} and \ref{cor:Ycomp}]
\label{proposition-irred-strata}
For every $\ch\Theta \in \Sym^\infty(\mathfrak c_X^-\sm 0)$, the irreducible components in the closure $\barM_X^{\ch\Theta}$ of the corresponding stratum are naturally parametrized by $\pi_1(H)$. 
For any $\ch\lambda \in \mf c_X$, the base change of 
an irreducible component to $\sY^{\ch\lambda} \xt_{\sM_X} 
\barM_X^{\ch\Theta}$ is still irreducible (when nonempty).
\end{prop}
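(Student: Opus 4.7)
The plan for the first statement is to pull back the parametrization of irreducible components of $\barM_X^0$ via the birational map of Theorem~\ref{thm:comp}. Since $\barM_X^0$ is by definition the closure of $\Bun_H = \sM_X^0$ inside $\sM_X$, and $H$ is connected (Remark~\ref{rem:Hconnected}), the irreducible components $\barM_X^{0,\xi}$ of $\barM_X^0$ are the closures of the connected components $\Bun_H^\xi$ of $\Bun_H$, indexed by $\xi \in \pi_0(\Bun_H) \cong \pi_1(H)$. The multipoint affine Grassmannian $\ol\Gr^{\ch\Theta}_{G,C^{\ch\Theta}}$ is irreducible, being a partial symmetrization over the connected smooth curve $C$ of the closure of a single $\msf L^+G$-orbit in the Beilinson--Drinfeld Grassmannian. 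Hence the twisted product $\barM_X^0 \ttimes \ol\Gr^{\ch\Theta}_{G,C^{\ch\Theta}}$ is smooth-locally a trivial fibration over $\barM_X^0$ with irreducible fiber, so its irreducible components are exactly the $\barM_X^{0,\xi} \ttimes \ol\Gr^{\ch\Theta}_{G,C^{\ch\Theta}}$. By Theorem~\ref{thm:comp}(i),(ii), $\act_\sM$ is a proper birational surjection onto $\barM_X^{\ch\Theta}$, and thus induces a bijection on irreducible components; we denote the resulting components of $\barM_X^{\ch\Theta}$ by $\{\sM^\xi\}_{\xi \in \pi_1(H)}$.

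For the second statement, fix $\xi \in \pi_1(H)$ and set $\sY^{\ch\lambda}_\xi := \sY^{\ch\lambda} \xt_{\sM_X} \sM^\xi$, assumed nonempty. Choose $\ch\mu \in \ch\Lambda_G^\pos$ large enough in the sense of Corollary~\ref{cor:Mcover} so that $\sY^{\ch\lambda+\ch\mu} \to \sM_X$ is smooth with geometrically connected fibers; base-changing along the irreducible $\sM^\xi$ then shows that $\sY^{\ch\lambda+\ch\mu} \xt_{\sM_X} \sM^\xi$ is irreducible. Applying Lemma~\ref{lem:localglobalyoga} gives the smooth correspondence
\[
\sY^{\ch\lambda} \xleftarrow{p,\ \text{sm.~surj.}} \sY^{\ch\lambda} \oo\xt \sY^{\ch\mu,0} \xrightarrow{q,\ \text{sm.}} \sM_X,
\]
where $q$ factors through the \'etale factorization map $\sY^{\ch\lambda} \oo\xt \sY^{\ch\mu,0} \hookrightarrow \sY^{\ch\lambda} \oo\xt \sY^{\ch\mu} \to \sY^{\ch\lambda+\ch\mu}$ followed by the smooth projection $\sY^{\ch\lambda+\ch\mu} \to \sM_X$. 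Base-changing to $\sM^\xi$ yields a smooth surjection $p'$ from $Z_\xi := (\sY^{\ch\lambda} \oo\xt \sY^{\ch\mu,0}) \xt_{\sM_X} \sM^\xi$ onto $\sY^{\ch\lambda}_\xi$, so irreducibility of $\sY^{\ch\lambda}_\xi$ reduces to irreducibility of $Z_\xi$.

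The main obstacle is to show that $Z_\xi$ is connected (and hence irreducible, being smooth over the irreducible base $\sM^\xi$). The space $Z_\xi$ is \'etale over an open subscheme of $\sY^{\ch\lambda+\ch\mu} \xt_{\sM_X} \sM^\xi$, the \'etale cover being parametrized by decompositions of the total $\ch\lambda+\ch\mu$-valued divisor on $C$ as a disjoint union of a $\ch\lambda$-part and a $\ch\mu$-part. Because $\sY^{\ch\mu,0}$ is by construction the preimage of the defect-free open stratum $\sM_X^0 = \Bun_H$, all of the $\ch\Theta$-defect of the glued map is forced to lie on the $\ch\lambda$-part, so this decomposition is uniquely determined by the image in $\sM^\xi$. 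Compatibility of the $\pi_1(H)$-labeling across the gluing (via the Hecke-type modification structure used to define $\act_\sM$) then pins down a single $\pi_1(H)$-label on the $\ch\mu$-factor once $\xi$ is fixed on the glued map, so the \'etale cover collapses to a single connected component. Thus $Z_\xi$ is irreducible and $\sY^{\ch\lambda}_\xi$ is irreducible.
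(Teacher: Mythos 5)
The first statement (parametrization of irreducible components by $\pi_1(H)$) follows the paper's Corollary~\ref{cor:strata-comp} and is correct. The second statement has a real gap, starting with the reduction. You define $Z_\xi := (\sY^{\ch\lambda} \oo\xt \sY^{\ch\mu,0}) \xt_{\sM_X} \sM^\xi$ using the \emph{gluing} map $q$ to $\sM_X$, and then claim a smooth surjection $Z_\xi\to\sY^{\ch\lambda}_\xi$ by projecting to the first factor. But the $G$-bundle underlying the glued map differs from the one underlying the $\ch\lambda$-factor alone; the $\pi_1(H)$-label is \emph{additive} under factorization (see the discussion around diagram~\eqref{e:Y0toBunH}), so the label of the $\ch\lambda$-factor differs from $\xi$ by the label of the chosen $\ch\mu$-factor. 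Thus $Z_\xi$ splits into disjoint pieces indexed by compatible pairs of $\pi_1(H)$-labels, and for $\ch\mu$ large enough (which you need for Corollary~\ref{cor:Mcover}) $\sY^{\ch\mu,0}$ has connected components mapping to many distinct components of $\Bun_H$ --- e.g.\ for $X=\mbb G_m\bs\PGL_2$ and $\ch\mu = m\ch\alpha/2$ the scheme $\sY^{\ch\mu,0}$ has $m+1$ components landing in degrees $m, m-2,\dotsc, -m$ of $\Bun_{\mbb G_m}$. Hence $Z_\xi$ is genuinely disconnected, and your assertion that it is irreducible is false. Independently, your argument for its connectedness is incorrect: the decomposition of the $\ch\lambda+\ch\mu$-valued $B$-divisor is not determined by the $G$-defect, because $B$-degenerate points that are $G$-nondegenerate can be placed in either part of the decomposition.

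What is missing is Lemma~\ref{lem:Yraise}. Its content is exactly the construction of an auxiliary factor $\Scr C=\oo\prod_{\alpha} C^{(n_\alpha)}\oo\xt C^{(n_\alpha)}$ --- balanced multiplicities in the two colors $D_\alpha^\pm$, contributing the coroot $n_\alpha\ch\alpha$ to the $B$-degree --- whose gluing can be arranged to \emph{preserve} the image in $\sM_X$, and hence to preserve the $\pi_1(H)$-label and the $G$-defect stratum. This yields an \'etale map ${^{\xi}}\sY^{\ch\lambda,\succeq\ch\Theta}\oo\xt\Scr C\to{^{\xi}}\sY^{\ch\lambda',\succeq\ch\Theta}$ with the \emph{same} label on both sides, and the paper's proof of Corollary~\ref{cor:Ycomp} descends from the large-$\ch\lambda$ case through this map (find $(y_1,c_1),(y_2,c_2)$ with a common image, restrict to $C\sm(\abs{c_1}\cup\abs{c_2})$ to extract a common $y'$, then refactorize). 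Replacing the carefully chosen $\Scr C$ with the cruder $\sY^{\ch\mu,0}$ loses the label-preserving property, and the reduction cannot be completed.
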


The combination of Propositions \ref{proposition-closures} and \ref{proposition-irred-strata} implies:

\begin{cor}[See Corollary \ref{cor:comp}] 
There is a natural bijection between the set of irreducible
components of $\Scr M_X$ and 
\[ \pi_1(H) \xt \Sym^\infty(\Cal D^G_{\mathrm{sat}}(X)),\]
where $\Cal D^G_{\mathrm{sat}}(X)$ denotes the set of primitive elements in $\mathfrak c_X^-$ that
cannot be decomposed as a sum $\ch\theta+\ch\nu_D$ where 
$\ch\theta\in  \mathfrak c_X^-\sm 0$ and $\ch\nu_D$ is the valuation attached to a color. 
\end{cor}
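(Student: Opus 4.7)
The plan is to combine the closure relations from Proposition~\ref{prop:closure-rel} with the parametrization of irreducible components of strata closures from Corollary~\ref{cor:strata-comp}. Since the stratification $\sM_X = \bigsqcup_{\ch\Theta} \sM_X^{\ch\Theta}$ is smooth and locally closed, every irreducible component of $\sM_X$ is the closure of an irreducible component of a unique stratum $\sM_X^{\ch\Theta}$ whose closure is not properly contained in the closure of any other stratum. Hence the task reduces to (a) identifying those \emph{maximal} multisets $\ch\Theta$ for which $\sM_X^{\ch\Theta}$ is not in the closure of $\sM_X^{\ch\Theta'}$ for any $\ch\Theta' \ne \ch\Theta$, and (b) counting the irreducible components of the closure $\barM_X^{\ch\Theta}$ for each maximal $\ch\Theta$.

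For (a), I expect to show that $\ch\Theta$ is maximal if and only if $\ch\Theta \in \Sym^\infty(\Cal D^G_{\mathrm{sat}}(X))$, using Proposition~\ref{prop:closure-rel}. For the forward direction, suppose some atom $\ch\theta$ of $\ch\Theta$ fails to lie in $\Cal D^G_{\mathrm{sat}}(X)$. If $\ch\theta$ is non-primitive in $\mf c_X^-$, say $\ch\theta = \ch\theta_1+\ch\theta_2$ with $\ch\theta_1,\ch\theta_2 \in \mf c_X^-\sm 0$, then splitting one copy of $[\ch\theta]$ produces a nontrivial refinement $\ch\Theta'$ of $\ch\Theta''=\ch\Theta$, witnessing non-maximality via Proposition~\ref{prop:closure-rel}. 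Otherwise $\ch\theta=\ch\theta_0+\ch\nu_D$ with $\ch\theta_0\in \mf c_X^-\sm 0$ and $\ch\nu_D$ a color, and replacing $[\ch\theta]$ by $[\ch\theta_0]$ yields an unrefined $\ch\Theta'=\ch\Theta''\prec\ch\Theta$ witnessing non-maximality. Conversely, when every atom of $\ch\Theta$ lies in $\Cal D^G_{\mathrm{sat}}(X)$, primitivity rules out any nontrivial refinement of $\ch\Theta$, while the absence of color decompositions rules out any $\ch\Theta''\prec\ch\Theta$ by an atom-by-atom analysis of the multiset order $\succeq$. For (b), Corollary~\ref{cor:strata-comp} identifies the irreducible components of $\barM_X^{\ch\Theta}$ with $\pi_0(\Bun_H)=\pi_1(H)$ via the proper action map \eqref{equation-action} of Theorem~\ref{thm:comp}, whose image equals $\barM_X^{\ch\Theta}$ and whose source factors through the $\pi_1(H)$-many irreducible components of $\barM_X^0$.

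Finally, distinct maximal $\ch\Theta \ne \ch\Theta'$ contribute disjoint sets of irreducible components of $\sM_X$, because each component of $\sM_X$ uniquely determines the stratum containing its generic point. Combining (a) and (b) then yields the desired bijection with $\pi_1(H)\times \Sym^\infty(\Cal D^G_{\mathrm{sat}}(X))$. The main obstacle I anticipate is the combinatorial analysis in (a): one must interpret the multiset order $\succeq$ carefully enough that no non-local cancellations---where failure of primitivity or color contributions at one atom are compensated by opposite contributions at others---can defeat the atom-by-atom characterization of maximality.
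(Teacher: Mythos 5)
Your proposal follows essentially the same route as the paper's proof of this corollary: stratify $\sM_X$, use Proposition~\ref{prop:closure-rel} to identify which closures $\barM_X^{\ch\Theta}$ are unions of irreducible components, and then count irreducible components of each such closure via Corollary~\ref{cor:strata-comp} (whose proof invokes Theorem~\ref{thm:comp}). The paper phrases the maximality step slightly differently --- it first shows coverage by passing to a $\preceq$-minimal $\ch\Theta''$ below any $\ch\Theta'$ and then refining it into $\Sym^\infty(\Cal D^G_{\mathrm{sat}}(X))$, and then observes that $\ch\theta_1 \succeq \ch\theta_2$ in $\Cal D^G_{\mathrm{sat}}(X)$ forces $\ch\theta_1 = \ch\theta_2$ --- whereas you reduce directly to characterizing the maximal multisets and argue atom-by-atom. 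These are the same idea; the combinatorial worry you flag at the end (cancellation across atoms in the multiset order $\succeq$) is indeed the crux of why $\Sym^\infty(\Cal D^G_{\mathrm{sat}}(X))$-strata are pairwise non-nested, and the paper handles it with comparable brevity. One small clarification you should make precise when writing this up: for $\ch\Theta_0 \neq \ch\Theta$ with $\ch\Theta_0$ refining $\ch\Theta''$ and $\ch\Theta \succeq \ch\Theta''$, the case $\ch\Theta'' = \ch\Theta$ is already ruled out by primitivity of atoms of $\ch\Theta$, so the substance is precisely your claim that $\ch\Theta \succ \ch\Theta''$ (strictly) cannot occur with all atoms of $\ch\Theta$ in $\Cal D^G_{\mathrm{sat}}(X)$; verifying this requires noting that $\deg(\ch\Theta) - \deg(\ch\Theta'') \in \mf c_X^{\Cal D}$ is strictly positive, and then locating an atom $\ch\theta$ of $\ch\Theta$ with $\ch\theta = \ch\theta_0 + \ch\nu_D$, $\ch\theta_0 \in \mf c_X^-\sm 0$, contradicting $\ch\theta \in \Cal D^G_{\mathrm{sat}}(X)$.
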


The second goal achieved by Theorem \ref{thm:comp} is to reduce the study of the IC complex of an arbitrary mapping space $\Scr M_X$ to that of the minimal affine embedding:

\begin{thm} \label{thm:heckeIC}
For any $\ch\Theta \in \Sym^\infty(\mathfrak c_X^- \sm 0)$, there
is a natural isomorphism 
\[ \IC_{\barM_X^0} \star \IC_{\ol\Gr^{\ch\Theta}_{G,C^{\ch\Theta}}} \cong 
\IC_{\barM^{\ch\Theta}_X}. \] 
\end{thm}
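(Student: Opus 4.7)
The plan is to apply the decomposition theorem to the proper action map
\[
\act_\sM : \barM^0_X \ttimes \ol\Gr^{\ch\Theta}_{G,C^{\ch\Theta}} \longrightarrow \sM_X
\]
from Theorem~\ref{thm:comp}. By construction, the convolution $\IC_{\barM^0_X} \star \IC_{\ol\Gr^{\ch\Theta}_{G,C^{\ch\Theta}}}$ is $\act_{\sM,!}$ applied to the twisted external product of the two IC sheaves; and since the source sits in a smooth-locally trivial fibration over $\barM^0_X$ with fiber $\ol\Gr^{\ch\Theta}_{G,C^{\ch\Theta}}$ (via the Hecke stack), this twisted external product coincides, up to a shift and Tate twist, with the IC complex of each irreducible component of the source. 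By Proposition~\ref{proposition-irred-strata}, those components are indexed by $\pi_1(H)$ and are mapped by $\act_\sM$ birationally onto the irreducible components of $\barM^{\ch\Theta}_X$, with the explicit identification supplied by Theorem~\ref{thm:comp}(iii) over the open stratum $\sM^{\ch\Theta}_X$.

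Consequently, the restriction of $\act_{\sM,!}$ of the source IC to $\sM^{\ch\Theta}_X$ agrees with $\IC_{\sM^{\ch\Theta}_X} = \IC_{\barM^{\ch\Theta}_X}|_{\sM^{\ch\Theta}_X}$. Applying the decomposition theorem to the proper map $\act_\sM$ then yields
\[
\IC_{\barM^0_X} \star \IC_{\ol\Gr^{\ch\Theta}_{G,C^{\ch\Theta}}} \;\cong\; \IC_{\barM^{\ch\Theta}_X} \oplus \Scr F,
\]
where $\Scr F$ is a semisimple (possibly shifted) perverse sheaf supported on a union of strictly deeper strata $\sM^{\ch\Theta'}_X \subset \barM^{\ch\Theta}_X \sm \sM^{\ch\Theta}_X$. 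To conclude it will suffice to show that $\Scr F = 0$, i.e.\ that $\act_\sM$ is \emph{small} with respect to the fine stratification of $\sM_X$.

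Using the graded factorization property of $\sM_X$ and of the Beilinson--Drinfeld Grassmannian, together with Proposition~\ref{prop:changecurve}, the smallness estimate reduces \'etale-locally on the curve to the case where $\ch\Theta = [\ch\theta]$ is a singleton concentrated at a single point $v \in C$. In that setting one must verify the strict inequality
\[
\dim \act_\sM^{-1}(y) \;<\; \tfrac{1}{2}\bigl(\dim \sM^{[\ch\theta]}_X - \dim \sM^{\ch\Theta'}_X\bigr)
\]
for every $y \in \sM^{\ch\Theta'}_X$ with $\sM^{\ch\Theta'}_X \subsetneq \barM^{[\ch\theta]}_X$ and $\ch\Theta' \ne [\ch\theta]$. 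The fiber $\act_\sM^{-1}(y)$ is a closed subscheme of $\ol\Gr^{\ch\theta}_G$, and through the semi-infinite orbit description of Lemma~\ref{lem:Y=ScapGr} its dimension is controlled by intersections of the form $\olsf S^{\ch\lambda} \cap \ol\Gr^{\ch\theta}_G$, for which Proposition~\ref{prop:Shyperplane} provides the crucial hyperplane input. Combining the standard formula $\dim \ol\Gr^{\ch\theta}_G = \brac{-2\rho_G,\ch\theta}$ with the codimension formula for $\sM^{\ch\Theta'}_X$ inside $\sM^{[\ch\theta]}_X$ coming from Proposition~\ref{proposition-closures}, and using the type-$T$ normalization $\brac{\alpha,\ch\nu^\pm_\alpha} = 1$, each refinement step permitted by the order $\succeq$ is seen to strictly lower the semismall bound, yielding the displayed inequality.

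The main obstacle is precisely this last dimension estimate. Birationality of $\act_\sM$, supplied by Theorem~\ref{thm:comp}, makes $\IC_{\barM^{\ch\Theta}_X}$ appear as a summand essentially for free; but eliminating every extra summand of $\Scr F$ requires a delicate fiber-dimension analysis over the boundary strata. In spirit the argument parallels the stratified-semismallness input of Theorem~\ref{theorem-semismall} for the Zastava side, but it must be carried out on the global model $\sM_X$, where the color-driven order $\succeq$ (rather than the coarser cone order $\le$) governs the closure relations, and where the type-$T$ assumption on the spherical roots enters in an essential way to upgrade the semismall bound to the \emph{strict} inequality needed to rule out $\Scr F$.
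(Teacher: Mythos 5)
Your plan captures the correct skeleton (birationality from Theorem~\ref{thm:comp}, decomposition theorem, degree estimates on boundary strata), and this is indeed how the paper proceeds. But there is a genuine gap in how you try to kill $\Scr F$.

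The issue is your reduction to "$\act_\sM$ is small." You state the smallness condition as a bound on the \emph{total} fiber dimension, $\dim\act_\sM^{-1}(y) < \frac{1}{2}(\dim\sM^{[\ch\theta]}_X - \dim\sM^{\ch\Theta'}_X)$, with the open stratum dimension in the numerator. When the source is smooth (or at least rationally smooth), this is the right condition: $*$-restricting the shifted constant sheaf to any source stratum loses no cohomological degree, so a single global bound on fibers controls the pushforward. But here the source $\barM_X^0 \ttimes \ol\Gr^{\ch\Theta}_{G,C^{\ch\Theta}}$ is genuinely singular (both $\barM_X^0$ and $\ol\Gr^{\ch\Theta}_G$ are), so you are pushing forward $\IC$ of a singular space, and the correct degree accounting must be done stratum by stratum. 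For a non-open source stratum $S$ sitting over a target stratum $S'$, the cohomology sheaves of $\IC|^*_S$ live in degrees $\le -\dim S$ (strict!), and what you need is $-\dim S + 2\dim(\text{fiber over }S) < -\dim S'$. Since $\dim S < \dim(\text{open source stratum})$, your inequality with the open stratum in the numerator is \emph{weaker} than this per-stratum requirement and does not imply it. In fact the paper's Lemma~\ref{lem:heckeICflag} shows that for source strata with $\ch\theta'\ne 0$ the fiber-dimension bound is only non-strict; the argument is saved because the strict degree drop in $\IC|^*_S$ picks up the slack. A "smallness of the map" statement suppressing the source IC does not.

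Two further points. First, $\sM_X$ and its strata are only locally of finite type, so the dimension expressions you write down on the global model are not literally defined; the actual estimates have to be carried out after passing to the Zastava model via Corollary~\ref{cor:Mcover} and Lemma~\ref{lem:localglobalyoga}, where everything is of finite type and $\len(\cdot)$-based dimension counts are available — your proposal does not flag this reduction. Second, Proposition~\ref{proposition-closures} only gives \emph{closure relations}, not codimensions; the dimension bounds you want come from the central-fiber estimates (Proposition~\ref{prop:centralstrata}) and Lemma~\ref{lem:oYconnected}/Corollary~\ref{cor:Ycomp2}, as in the proof of Lemma~\ref{lem:heckeICflag}.
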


We point to \S\ref{sect:pf:heckeIC} for the notation and the proof.  
Theorem~\ref{thm:heckeIC} and its proof are independent from the rest of this paper; we
include it only for conceptual completeness. 
The proof involves a passage to Zastava model (the extra structure of a flag)
and uses the results of the sequel \S\ref{sect:centralfiber}.
The argument of proof can also be extended to meromorphic quasimaps to verify
\cite[Conjecture 7.3.2]{GN} in the case $\ch G_X = \ch G$. 

\subsection{Hecke action on global model} \label{sect:Heckeglob}

Now we introduce the action map \eqref{equation-action}, as an analog of the 
action of $G(F)$ on $X^\bullet(F)$ for the global model.

The notation is cumbersome because we give a multi-point version of the action, but
the idea is simple: in the notation of \S\ref{sect:adelic}, 
the $k$-points of $\sM_X$ correspond to a subset of $H(\Bbbk) \bs G(\mbb A) / G(\mbb O)$, where $\mbb O = \prod_{v\in \abs C} \mf o_v$. 
We have a Hecke correspondence 
\[ H(\Bbbk) \bs G(\mbb A) \xt^{G(\mbb O)} G(\mbb A) / G(\mbb O) \to H(\Bbbk) \bs G(\mbb A) / G(\mbb O) \]
induced by multiplication in $G$. We think of $G(\mbb A)/G(\mbb O)$ as a
factorizable version of the affine Grassmannian. Then the ``action map'' 
is simply the restriction of the above to a positively graded subset of $G(\mbb A)/G(\mbb O)$ 
such that everything maps to $(X^\bullet(\mbb A) \cap X(\mbb O))/G(\mbb O)$, i.e., 
the locus of regular maps. 

\subsubsection{Positively graded subscheme of factorizable affine Grassmannian}\label{sect:strata-Grassmannian}
Let $\ch\Theta = \sum_{\ch\theta} N_{\ch\theta} 
[\ch\theta] \in \Sym^\infty(\mathfrak c_X^- \sm 0)$ 
be a partition. We have a closed 
subscheme
\[ \ol\Gr^{\ch\theta}_{G,C} = \ol\Gr_G^{\ch\theta} \ttimes C := \ol\Gr_G^{\ch\theta} \xt^{\on{Aut}^0( k\tbrac t )} \on{Coord}^0(C) \subset \Gr_{G,C} \] 
where $\Aut^0(k\tbrac t) = \Spec k[a_1^{\pm 1}, a_2,\dotsc]$ is the group scheme of algebra automorphisms of $k\tbrac t$ that preserve the maximal ideal, and $\on{Coord}^0(C)\to C$ is the $\Aut^0(k\tbrac t)$-torsor classifying $v\in C$ together with an isomorphism $k\tbrac t \cong \mf o_v$ sending $t$ to a uniformizer 
(see \S\ref{sect:Llocalization} or \cite[(3.1.11)]{Xinwen}).
Consider the $N_{\ch\theta}$-fold product $(\ol\Gr_{G,C}^{\ch\theta})^{N_{\ch\theta}} \xt_{C^{N_{\ch\theta}}} {\oo C^{N_{\ch\theta}}}$ restricted to the disjoint locus with all diagonals removed. 
This descends to a subscheme $\ol\Gr^{N_{\ch\theta}\ch\theta}_{G, \oo C^{(N_{\ch\theta})}} \subset \Gr_{G, \oo C^{(N_{\ch\theta})}}$. 
In the notation of \S\ref{def:partition}, let 
\[ \ol\Gr^{\ch\Theta}_{G, \oo C^{\ch\Theta}} := \oo\prod_{\ch\theta} \ol\Gr^{N_{\ch\theta}\ch\theta}_{G, \oo C^{(N_{\ch\theta})}} 
\subset \Gr_{G, C^{(\abs{\ch\Theta})}} \xt_{C^{(\abs{\ch\Theta})}} 
\oo C^{\ch\Theta} \]
and let $\ol\Gr^{\ch\Theta}_{G,C^{\ch\Theta}}$ denote its closure 
in $\Gr_{G,C^{(\abs{\ch\Theta})}} \xt_{C^{(\abs{\ch\Theta})}} C^{\ch\Theta}$.
We consider $\ol\Gr^{\ch\Theta}_{G, C^{\ch\Theta}}$ as a scheme 
over $C^{(\abs{\ch\Theta})}$ with an action of the group scheme 
$(\Scr L^+ G)_{C^{(\abs{\ch\Theta})}}$, the multi-point version of the arc space 
defined in \S\ref{sect:multijet}. 

\medskip

The closure relations of the Beilinson--Drinfeld affine Grassmannian 
are known (cf.~\cite[Proposition 3.1.14]{Xinwen}), so we can describe
the reduced fiber of $\ol\Gr^{\ch\Theta}_{G,C^{\ch\Theta}}$ 
over a point of $C^{\ch\Theta}$ as follows: 
a point of $C^{\ch\Theta}$ is the collection $(D^{\ch\theta})_{\ch\theta}$, 
for each $\ch\theta$,
of a degree $N_{\ch\theta}$ divisor 
$D^{\ch\theta} = \sum_{v\in \abs C} N_{\ch\theta,v} v$. 
The reduced fiber of $\ol\Gr^{\ch\Theta}_{G,C^{\ch\Theta}}$ 
over this point is the scheme
\begin{equation} \label{e:GrTheta-fiber}
\prod_{v\in \abs C} \ol\Gr^{\sum_{\ch\theta} N_{\ch\theta,v}\ch\theta}_{G,v}. 
\end{equation}

\subsubsection{}  \label{sect:Glevel}
Let $\widehat\sM_X$ denote the stack 
representing the data of 
\[ (\sigma,\Scr P_G)\in \Scr M_X,\, D\in \Sym C,
\text{ and a trivialization }\Scr P_G|_{\wh C'_D} \cong \Scr P^0_G|_{\wh C'_D}
\] 
(see \S\ref{sect:BLthm} for the definition of $\wh C'_D$), i.e., 
a point of $\sM_X$ together with infinite $G$-level structure at points in the support of $D$. 
This admits a natural action by $\Scr L^+ G$, and 
the forgetful map $\widehat{\Scr M}_X
 \to \Scr M_X \xt \Sym C$ is a $\Scr L^+G$-torsor.
Let 
\[ \Scr M_X \ttimes \ol\Gr^{\ch\Theta}_{G, C^{\ch\Theta}} := 
\widehat{\Scr M}_X \xt^{\Scr L^+ G}_{\Sym C}
\ol\Gr^{\ch\Theta}_{G,C^{\ch\Theta}} \]
denote the twisted product over $C^{(\abs{\ch\Theta})} \subset \Sym C$.

For an affine test scheme $S$, 
an $S$-point of $\Scr M_X \ttimes \ol\Gr^{\ch\Theta}_{G,C^{\ch\Theta}}$ 
is the data 
$(\sigma, \Scr P_G,\Scr P'_G, (D^{\ch\theta})_{\ch\theta}, \tau)$
where 
\begin{itemize}
\item $\Scr P_G,\Scr P_G'$ are $G$-bundles on $C\xt S$,
\item $\sigma : C\xt S \to X\xt^G \Scr P_G$ is a section such that $(\sigma,\Scr P_G)\in \Scr M_X(S)$,
\item for each $\ch\theta \in \mathfrak c_X^- \sm 0$,  
we have a degree $N_{\ch\theta}$ effective Cartier divisor 
$D^{\ch\theta} \subset C\xt S$, 
\item $\tau : \Scr P'_G|_{C\xt S\sm D} \cong \Scr P_G|_{C\xt S\sm D}$
is a trivialization for $D:=\sum_{\ch\theta} D^{\ch\theta}$
\end{itemize}
such that after fixing an isomorphism 
$\Scr P_G|_{\wh C'_D} \cong \Scr P^0_G|_{\wh C'_D}$ (which always exists
after flat base change over $S$), the datum
$((D^{\ch\theta}),\Scr P'_G|_{\wh C'_D},\tau)$ defines
a point in $\ol\Gr^{\ch\Theta}_{G,C^{\ch\Theta}}$.
Here we are implicitly using Beauville--Lazlo's theorem to pass between
the global and local descriptions of $\Gr_{G,\Sym C}$, 
cf.~\S\ref{sect:localGr}.

\begin{prop-const}
\label{prop-const:action}
For any $\ch\Theta \in \Sym^\infty(\mathfrak c_X^-\sm 0)$
there is a natural proper map 
\[ \act_\sM : \Scr M_X \ttimes \ol\Gr^{\ch\Theta}_{G, C^{\ch\Theta}} \to 
\Scr M_X. \]
\end{prop-const}

\begin{proof}
Fix an affine test scheme $S$ and $(\sigma,\Scr P_G,\Scr P'_G, (D^{\ch\theta})_{\ch\theta},\tau) \in \Scr M_X \ttimes \ol\Gr^{\ch\Theta}_{G,C^{\ch\Theta}}(S)$. 
The composition $\tau^{-1} \circ \sigma|_{C\xt S\sm D}$ defines
a section $\sigma' : C\xt S \sm D \to X\xt^G \Scr P'_G$. 
We claim that $\sigma'$ extends to a regular map on $C\xt S$. 
Given this claim, we can define $(\sigma',\Scr P'_G)\in \Scr M_X(S)$ 
to be the image of $\act_\sM$. Properness of $\act_\sM$
follows from properness of $\ol\Gr^{\ch\Theta}_{G,C^{\ch\Theta}}$
and Lemma~\ref{lem:extendsection}.

We now prove the claim that $\sigma'$ extends to all of $C\xt S$.
Since $k[X]$ is a locally finite $G$-module, it is generated as an
algebra by some finite dimensional $G$-submodule $V \subset k[X]$. 
This induces a $G$-equivariant embedding of varieties $\varphi: X\into V^*$,
where $V^{*}$ is considered as a right $G$-module. 
It suffices to show that the composition $\varphi(\sigma'):
C\xt S\sm D \to V^{*} \xt^G \Scr P'_G$ extends. 
For any weight $\mu$ of $V \subset k[X]$, we have $\mu \le \lambda$ for $\lambda\in \mf c_X^\vee$. 
Therefore given $\ch\theta\in \mathfrak c_X^-$, we have $\brac{\mu,\ch\theta}\ge 0$
for all weights $\mu$ of $V$. 
We deduce that the group homomorphism $G \to \GL(V)$ induces a natural map
\[ \ol\Gr^{\ch\Theta}_{G,C^{\ch\Theta}} \to 
\Scr L^+\mathrm{End}(V)/\Scr L^+\GL(V). \]
Hence $\tau^{-1}$ induces a regular map 
$V^{*} \xt^G\Scr P_G|_{\wh C'_D} \to V^{*} \xt^G\Scr P'_G|_{\wh C'_D}$
and $\varphi(\tau^{-1}\circ \sigma|_{\wh C'_D})$ defines a section 
$\wh C'_D \to V^{*} \xt^G \Scr P'_G$. 
By Beauville--Laszlo's theorem (cf.~\cite[Theorem 2.12.1]{BD}), 
this implies that $\varphi(\sigma')$ is defined on all of $C\xt S$.
\end{proof}

We describe more precisely what $\act_\sM$ is doing on $k$-points: 
at a single $v \in \abs C$, a $k$-point of $\Scr M_X$ gives an element
of $(X(\mf o_v)\cap X^\bullet(F_v)) / G(\mf o_v)$ and the map $\act_\sM$ corresponds
to the natural $G(F_v)$-action on $X^\bullet(F_v)$.
For $\ch\mu \in \ch\Lambda^-_G$, 
define the set  
\[ X^\bullet(F_v)_{G:\succeq \ch\mu} = \bigcup_{\substack{\ch\mu' \in \ch\Lambda^-_G, \ch\mu' \succeq \ch\mu}} X^\bullet(F_v)_{G:\ch\mu'}, \]
in the notation of \S \ref{sect:stratLXpoint}.
In the next subsection we prove a slightly more precise\footnote{In \cite{SV} 
the ordering $\succeq$ is defined with respect to the \emph{rational} cone generated by
the valuations of colors, whereas we define $\succeq$ with respect
to the monoid generated by non-negative \emph{integral} combinations of valuations of colors.} 
version of \cite[Lemma 5.5.2]{SV}:

\begin{lem} \label{lem:act-kpoints}
Let $\ch\mu,\ch\theta \in \ch\Lambda^-_G$. 
The action map sends 
\[
    X^\bullet(F_v)_{G:\succeq \ch\mu} \xt^{G(\mf o_v)} \ol{\msf L^+G \cdot t^{\ch\theta} \cdot \msf L^+G}(k) \to X^\bullet(F_v)_{G:\succeq \ch\mu+ \ch\theta}. 
\]
\end{lem}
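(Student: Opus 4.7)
The plan is to reduce to a pointwise statement via the Schubert stratification, and then verify the latter by degenerating to the horospherical variety, where the computation becomes purely $G$-internal.

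I would first observe that $\ol{\msf L^+G \cdot t^{\ch\theta}\cdot \msf L^+G}(k)$ stratifies as
$\bigcup_{\ch\theta' \in \ch\Lambda^-_G,\, \ch\theta' \ge \ch\theta} \msf L^+G \cdot t^{\ch\theta'} \cdot \msf L^+G(k)$,
where $\ge$ is the order with $\ch\theta'-\ch\theta \in \ch\Lambda^\pos_G$. The assumption that each simple root is a spherical root of type $T$ gives $\ch\alpha = \ch\nu_{D^+_\alpha} + \ch\nu_{D^-_\alpha} \in \mf c_X^\Cal D$ for every simple coroot, hence $\ch\Lambda^\pos_G \subseteq \mf c_X^\Cal D$ and $\ge$ refines $\succeq$. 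Combined with the $\succeq$-upward definition of $X^\bullet(F_v)_{G:\succeq \ch\mu}$ and additivity of $\succeq$, this reduces the lemma to the pointwise statement: for any antidominant $\ch\nu,\ch\tau \in \ch\Lambda^-_G$ and any $k \in G(\mf o_v)$, one has
$x_0 \, t^{\ch\nu} \, k \, t^{\ch\tau} \in X^\bullet(F_v)_{G:\succeq \ch\nu+\ch\tau}$.

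Next I would use the affine degeneration $\mathscr X \to \ol{T_{X,\mss}}$ of \S\ref{sect:affine-degeneration}: it is $G\times T$-equivariant, every fiber is affine spherical with the same Cartan $T_X = T$, the same invariant valuation cone $\Cal V = \ch\Lambda^-_G$, and the same set of colors with the same valuations, so the $G(\mf o_v)$-orbit parametrization of Theorem~\ref{thm:Gorbits}(i) is constant across the family. The sets $X^\bullet(F_v)_{G:\succeq \ch\lambda}$ therefore assemble into a family of closed subsets over $\ol{T_{X,\mss}}$; since the Hecke action is fiberwise, the containment to be proved is a closed condition on the base and reduces to the special horospherical fiber. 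Under our assumptions (in particular $T_X=T$), this fiber has open $G$-orbit $X_\emptyset^\bullet \cong N^- \bs G$ by Lemma~\ref{lem:H_I-conn} applied with $I=\emptyset$. There the calculation is a purely $G$-internal Iwasawa computation: writing $k\,t^{\ch\tau} = n^- \cdot t^{\ch\sigma} \cdot g$ with $n^- \in N^-(F_v)$, $g \in G(\mf o_v)$, and commuting $t^{\ch\nu}$ past $n^-$ (which preserves $N^-$), one finds that the $N^-(F_v)\bs G(F_v)/G(\mf o_v)$-class of $t^{\ch\nu}kt^{\ch\tau}$ is $t^{\ch\nu+\ch\sigma}$. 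The Mirkovi\'c--Vilonen weight estimate on $\Gr_G^{\ch\tau}$ forces $\ch\sigma$ to be a weight of the irreducible $\ch G$-representation of lowest weight $\ch\tau$, so $\ch\sigma-\ch\tau \in \ch\Lambda^\pos_G \subseteq \mf c_X^\Cal D$, giving $\ch\nu+\ch\sigma \succeq \ch\nu+\ch\tau$.

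The hard part will be justifying the specialization step: that the combinatorial data $\Cal V$, $\Cal D$, $\mf c_X^\Cal D$, and the ordering $\succeq$ are constant across the family $\mathscr X$, and that the $G(\mf o_v)$-orbit stratification on $\mathscr X^\bullet$ is compatible with specialization. The combinatorial invariants depend only on the open $G$-orbit, which is $T$-equivariantly trivialized by Proposition~\ref{prop:noncanon-section}; for compatibility with specialization, I would combine Lemma~\ref{lem:itheta-lands-open} with smoothness of $\mathscr X^\bullet\to\ol{T_{X,\mss}}$ (\cite[Corollary 5.2.3]{GN}), which together imply that the orbit $\msf L^{\ch\theta}\mathscr X^\bullet$ specializes into the corresponding orbit of every fiber. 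Once these compatibilities are set up, the lemma follows from the horospherical calculation above.
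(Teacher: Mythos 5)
Your first reduction step is the same as the paper's: both use the fact that $\ol{\msf L^+G\cdot t^{\ch\theta}\cdot\msf L^+G}$ stratifies by Cartan cosets $t^{\ch\theta'}$ with $\ch\theta'\ge\ch\theta$, and the observation (from the type-$T$ hypothesis) that $\ch\Lambda_G^\pos\subseteq\mf c_X^{\Cal D}$, hence $\ge$ implies $\succeq$. After that the paths diverge: the paper quotes \cite[Lemma 5.5.2]{SV} for the single-coset statement (after passing to the cover $G'\to G$ of Lemma~\ref{lem:Xcover} to handle integrality), while you try a self-contained proof via the degeneration $\mathscr X\to\ol{T_{X,\mss}}$. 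Unfortunately your route has genuine gaps.

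First, the premise that the combinatorial invariants $\Cal V$, $\Cal D$, $\mf c_X^{\Cal D}$ are constant across the family is false. The horospherical fiber $X_\emptyset$ has trivial set of spherical roots, so $\Cal V(X_\emptyset)=\mf t_X$ (this is stated explicitly in \S\ref{sect:horo-strat}), not the antidominant cone $\ch\Lambda^-_G\ot\bbQ = \Cal V(X)$. Consequently $G(\mf o_v)$-orbits on $X_\emptyset^\bullet(F_v)$ are parametrized by all of $\ch\Lambda_G$, not just $\ch\Lambda^-_G$, so the orbit parametrization of Theorem~\ref{thm:Gorbits}(i) is \emph{not} the same across the family. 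The colors also change: under the type-$T$ hypothesis $X^\bullet$ has two colors $D_\alpha^\pm$ per simple root with $\ch\nu_{D_\alpha^+}+\ch\nu_{D_\alpha^-}=\ch\alpha$, while $X_\emptyset^\bullet\cong N^-\bs G$ has a single color per simple root (the minimal parabolic restrictions are of type $U$, not type $T$) with valuation $\ch\alpha$. So $\mf c_{X_\emptyset}^{\Cal D}=\ch\Lambda^\pos_G$, which in general is strictly smaller than $\mf c_X^{\Cal D}$, and the ordering $\succeq$ likewise changes.

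Second and more fundamentally, the specialization direction is wrong. Proposition~\ref{prop:noncanon-section} does let you spread the test point $x_0 t^{\ch\nu}kt^{\ch\tau}$ into a flat family over $\ol{T_{X,\mss}}$, but then the general fiber specializes \emph{into} the special fiber: orbits of $X$ can only degenerate to orbits of $X_\emptyset$ (this is what Lemma~\ref{lem:itheta-lands-open} and the analysis there are actually about). Knowing that the valuation of the specialized point on the special fiber lies in the right cone says nothing about the valuation of the original point on the general fiber---for that you would need the \emph{openness} of the containment condition, not closedness, and the containment into a union of deeper orbits is a closed condition. Concretely: if $\ch\sigma_1$ is the $G$-valuation of your point on $X$ and $\ch\sigma_0$ on $X_\emptyset$, specialization gives at best a constraint of the form $\ch\sigma_0$ is at least as degenerate as $\ch\sigma_1$, whereas you need a lower bound on $\ch\sigma_1$. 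Your Iwasawa/MV-weight computation on $N^-\bs G$ is fine as a statement about $X_\emptyset$, but it does not descend to $X$ by this mechanism. You would need a different argument---one that tracks valuations from $X$ to $X_\emptyset$ in a comparable way, which is essentially the content of \cite[Lemma 5.5.2]{SV} that the paper relies on.
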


\subsection{Reduction to $\mf c_{X^\bullet} = \mbb N^{\Cal D}$} \label{sect:freemonoid}
We are interested in applying Hecke actions to $\barM_X^0$, the closure of $\sM_{X^\bullet}=\sM_X^0=\Bun_H$ in $\sM_X$, since it is the most basic closure of a stratum. 
On the other hand, in order to determine the stratification of $\barM_X^0$ we
need a moduli description of this stack. A first guess would be 
that $\barM_X^0 = (\sM_{X^\can})_\red$, but this may not be true if $\mf c_{X^\bullet} := \mf c_{X^\can}$ 
is not equal to $\mathfrak c_X^{\Cal D} = \{\ch\lambda\succeq 0\}$.
In this subsection we explain how to get around this technical issue: 
we can always replace $G$ by a central extension $G'\to G$ such that if
$H\bs G = H'\bs G'$ and $X' := \Spec k[H'\bs G']$, then 
$\barM_{X'}^0 = \sM_{X'}$ and $\mf c_{X'} = \mbb N^{\Cal D}$ (see Lemma~\ref{lem:Xcover}).

\smallskip 

This is a generalization of the need to replace $\ol{N\bs G}^\aff$ by 
$\ol{N\bs \wt G}^\aff$ for $\wt G$ a simply connected cover of $G$ 
in \cite[\S 4.1]{ABB}, \cite[\S 7.2]{Sch} 
to correctly define Drinfeld's compactification of $\Bun_N$ 
for an arbitrary reductive group $G$.

\subsubsection{} 

Let us first assume that $k[G]$ is a UFD. Further, assume that $H$ is connected, as is the case under our assumptions (Remark~\ref{rem:Hconnected}).
Then the preimage of any color $D$ in $G$ is irreducible, and this defines 
a bijection between $H\times B$-stable prime divisors in $G$ and colors; moreover, under our UFD assumption, the former are all principal. 

Let $k(G)^{(H\xt B)}$ denote the $H\xt B$-eigenvectors 
of $k(G)$. 
Then, since $H B$ is open dense in $G$, we have
\[ k(G)^{(H\times B)}/k^\times = \Cal X(H) \xt_{\Cal X(B\cap H)} \Cal X(B), 
\] 
where $\Cal X(H)$ is the character group of $H$ 
(so $\Cal X(B)=\Lambda_{G}$ by definition). The valuation map gives rise to a short exact sequence
\begin{equation} \label{e:color-basis}
 0 \to \Cal X(G) \to  
    \Cal X(H) \xt_{\Cal X(B \cap H)} \Cal X(B) \to   \mbb Z^{\Cal D} \to 0,
\end{equation}
which sends an element of $k(G)^{(H\times B)}/k^\times$ to its divisor, identified with a $\mathbb Z$-linear combination of the colors. (We have used here both the UFD property, and the fact that invertible regular functions on $G$ are multiples of characters.)

The preimage in $G$ of each color $D$ defines a valuation $v_{HD}$ on $k(G)^\times$; its restriction to $k(G)^{(H\xt B)}$ defines a map
\[ 
    \wt\varrho_{H\bs G} : \Cal D \to (\Cal X(H) \xt_{\Cal X(B\cap H)} \Cal X(B))^\vee. 
\]
Composing with the second projection to $\ch\Lambda_X$ gives the usual
valuation map $\varrho_{H\bs G}$.
By construction we have $v_{HD}(f_{D'}) = \delta_{D,D'}$ 
for $D,D'\in \Cal D$, and the sequence dual to \eqref{e:color-basis} is 
\[ 
    0\to \mbb Z^{\Cal D} \overset{\wt\varrho_{H\bs G}}\longrightarrow 
(\Cal X(H) \xt_{\Cal X(B\cap H)} \Cal X(B))^\vee \to 
\Cal X(G)^\vee \to 0.
\]

\subsubsection{} \label{sect:Xcover}
Now we return to the case of arbitrary $G$. Let 
\[ 1 \to Z \to \wt G \to G \to 1 \] 
be a central extension with connected 
kernel $Z$ such that the derived group $[\wt G, \wt G]$ is simply
connected (such a $\wt G$ always exists). 
Then $k[\wt G]$ is a UFD (\cite[Proposition 4.6]{KKLV}, \cite{Iv}).
We consider $H\bs G$ as a $\wt G$-spherical variety, so 
$H\bs G = \wt H \bs \wt G$ where $\wt H = HZ$ is the preimage of $H$
in $\wt G$. Let $\wt B$ denote the Borel subgroup of $\wt G$.
If $H$ is connected, then $\wt H$ is also connected.
The colors stay the same, so 
$\wt\varrho_{\wt H\bs \wt G}$ induces a short exact sequence
\begin{equation} \label{e:color-ses} 
0 \to \mbb Z^{\Cal D} \overset{\wt\varrho_{\wt H\bs \wt G}}\longrightarrow 
(\Cal X(\wt H) \xt_{\Cal X(\wt B\cap \wt H)} \Cal X(\wt B))^\vee \to 
\Cal X(\wt G)^\vee \to 0.
\end{equation}

\smallskip

Let $\wt H_{\mathrm{ab}} = \wt H/[\wt H,\wt H]$, so 
$\Cal X(\wt H) = \Cal X(\wt H_{\mathrm{ab}})$. 
We consider $[\wt H,\wt H] \bs \wt G$ 
as a spherical variety for the group $G' := \wt H_{\mathrm{ab}} \overset Z\times \wt G$,
where $\wt H_{\mathrm{ab}}$ acts by \emph{left} translation and 
$\wt G$ acts by right translation. 
Then 
$[\wt H,\wt H] \bs \wt G = H' \bs G'$ 
where $H'=(\wt H/Z)^\diag=H^\diag$ is diagonally embedded in $G'$. 
Observe that $G' \to G$ is a central extension
with kernel $\wt H_{\mathrm{ab}}$
and 
\[ H'\bs G' = [\wt H,\wt H]\bs \wt G \to \wt H \bs \wt G = H\bs G \]
is a $\wt H_{\mathrm{ab}}$-torsor. 
The Borel subgroup $B'$ of $G'$ is $\wt H_{\mathrm{ab}} \xt^Z \wt B$
and $B'\cap H' = ((\wt B \cap \wt H)/Z)^\diag$, so 
 \eqref{e:color-ses} is equivalent to a short exact sequence
\[ 
    0 \to \mbb Z^{\Cal D} \overset{\varrho_{H'\bs G'}}\longrightarrow
    \ch\Lambda_{H'\bs G'} = \ker(\Cal X(B')\to \Cal X(B'\cap H'))^\vee 
    \to \Cal X(\wt G)^\vee \to 0. 
\]

To summarize:
\begin{lem} \label{lem:Xcover}
Let $H\bs G$ be a homogeneous spherical variety with $H$ connected. 
Then there exists a central extension $G' \to G$ 
with connected kernel $Z'$ and a spherical
subgroup $H'\subset G'$ such that 
\begin{enumerate}
\item $[G',G']$ is simply connected. 
\item The covering $G'\to G$ restricts to an isomorphism $H'\cong H$. 
In particular, 
the projection $H' \bs G' \to H \bs G$ is a $Z'$-torsor.
\item 
The valuation map $\varrho_{H'\bs G'}$ embeds 
$\mbb Z^{\Cal D} \into \ch\Lambda_{H'\bs G'}$
as a direct summand. 
\end{enumerate}
\end{lem}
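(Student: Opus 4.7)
My approach follows the construction already sketched in \S\ref{sect:Xcover}; the proof amounts to organizing those observations and verifying one small splitting. The plan is as follows.

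First, I would produce $\wt G$: by the standard structure theory of reductive groups (the theory of $z$-extensions), any connected reductive $G$ over $k$ admits a central extension $\wt G \to G$ with $Z := \ker(\wt G \to G)$ a connected torus and $[\wt G, \wt G]$ simply connected. The key fact about this $\wt G$ is that $k[\wt G]$ is a UFD, by \cite[Proposition 4.6]{KKLV} or \cite{Iv}. Then, with $\wt H := H \cdot Z$ and $\wt H_{\mathrm{ab}} := \wt H/[\wt H, \wt H]$, I set $G' := \wt H_{\mathrm{ab}} \xt^Z \wt G$ and take $H' \subset G'$ to be $H$ embedded diagonally via $h \mapsto (\bar h, h)$, exactly as in the construction preceding the lemma.

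Properties (i) and (ii) then follow directly. The kernel $Z' := \wt H_{\mathrm{ab}}$ of $G' \to G$ is central, hence $[G', G'] \cong [\wt G, \wt G]$ is simply connected, giving (i); and the projection $G' \to G$ restricts to the identity on the $\wt G$-component, so $H' \overset\sim\to H$ and $H' \bs G' \to H \bs G$ is a $Z'$-torsor, giving (ii). For property (iii), the discussion preceding the lemma statement already assembles, from the UFD property of $k[\wt G]$, the short exact sequence
\[ 0 \to \mbb Z^{\Cal D} \overset{\wt\varrho_{\wt H \bs \wt G}}\longrightarrow (\Cal X(\wt H) \xt_{\Cal X(\wt B \cap \wt H)} \Cal X(\wt B))^\vee \to \Cal X(\wt G)^\vee \to 0 \]
and rewrites it, via the identification $[\wt H, \wt H] \bs \wt G \cong H' \bs G'$, as the short exact sequence
\[ 0 \to \mbb Z^{\Cal D} \to \ch\Lambda_{H' \bs G'} \to \Cal X(\wt G)^\vee \to 0. \]
The final step is to observe that $\Cal X(\wt G)^\vee$ is the cocharacter lattice of the torus $\wt G^{\mathrm{ab}}$, hence a free abelian group of finite rank; so this sequence splits and exhibits $\mbb Z^{\Cal D}$ as a direct summand of $\ch\Lambda_{H' \bs G'}$.

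The only step I would expect to require any genuine care is ensuring $\wt G$ can be chosen with $Z$ connected \emph{and} $[\wt G, \wt G]$ simply connected simultaneously; beyond that piece of structure theory, the argument is essentially bookkeeping, since the UFD property of $k[\wt G]$ and the derivation of the sequence \eqref{e:color-ses} are already established.
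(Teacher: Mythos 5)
Your proof is correct and mirrors the paper's argument, which is the construction laid out in \S\ref{sect:Xcover} immediately before the lemma statement. The one step you add explicitly---that $\Cal X(\wt G)^\vee$ is a finitely generated torsion-free, hence free, abelian group, so the short exact sequence splits---is precisely what the paper leaves implicit when asserting the direct summand property, and your observation is right.
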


\noindent By (iii), we have that $\mf c_{H'\bs G'} = \mathfrak c^{\Cal D}_{H'\bs G'}= \mbb N^{\Cal D}$ so there is no question of integrality. For a similar result, see \cite[Lemma 2.1.1]{Brion07}.

\begin{eg}
If we start with $H = \mbb G_m$ the torus inside $G = \PGL_2$, then
$G' = \mbb G_m \xt \GL_2$ and $H'= \mbb G_m$ where $\mbb G_m$ 
maps to $\GL_2$ by $\left( \begin{smallmatrix} * & 0 \\ 0 & 1 \end{smallmatrix} \right)$. So $H'\bs G' = \GL_2$, where $\mbb G_m$ acts by 
left translations and $\GL_2$ by right translations.
This is a $\mbb G_m$-torsor over $\mbb G_m \bs \GL_2$ and a 
$\mbb G_m \xt Z(\GL_2)$-torsor over $\mbb G_m \bs \PGL_2$.
\end{eg}

\begin{proof}[{Proof of Lemma~\ref{lem:act-kpoints}}]
Let $1\to Z'\to G'\to G\to 1$ be a central extension as in Lemma~\ref{lem:Xcover}.  
Since $\mf c_{H'\bs G'} = \mathfrak c^{\Cal D}_{H'\bs G'}$, 
there is no question of integrality and \cite[Lemma 5.5.2]{SV} 
applied to the $G'$-variety $H'\bs G'$ says that 
the action map sends 
\[ (H'\bs G')(F_v)_{\succeq \tilde\mu} \xt^{G'(\mf o_v)} G'(\mf o_v) t^{\tilde\theta} G'(\mf o_v) \to (H'\bs G')(F_v)_{\succeq \tilde\mu+\tilde\theta}, \]
where $\tilde\mu, \tilde\theta \in \ch\Lambda_{H'\bs G'}^-$. 
Since $H'\bs G' \to H\bs G = X^\bullet$ is a $Z'$-torsor, 
it is surjective on $F_v$-points (and corresponding orbits). 
Since $\mathfrak c^{\Cal D}_{H'\bs G'}$ maps to $\mathfrak c_X^{\Cal D}$, 
we deduce that the action map sends 
\[ X^\bullet(F_v)_{\succeq \ch\mu} \xt^{G(\mf o_v)} G(\mf o_v) t^{\ch\theta} G(\mf o_v)
\to X^\bullet(F_v)_{\succeq \ch\mu+\ch\theta} \]
for $\ch\mu,\ch\theta \in \ch\Lambda^-_G$.
The preimage of $\ol\Gr^{\ch\theta}_G(k)$ in $G(F_v)$ consists of the union of 
$G(\mf o_v) t^{\ch\theta'} G(\mf o_v)$ for $\ch\theta'\in \ch\Lambda^-_G,\,
\ch\theta' \ge \ch\theta$. 
Since $\ch\theta' \ge \ch\theta$ implies $\ch\theta' \succeq \ch\theta$, 
we deduce the claim. 
\end{proof}

\subsection{Open Zastava} \label{sect:Y0cc}
Consider $\Scr Y^{?,0} = \sY_{X^\bullet}=\sY \xt_{\sM_X} \sM_X^0 = \Maps_\gen(C, H\bs G/B \supset \pt)$, which 
is an open subscheme of $\sY$. 
Since $\sM_X^0 \cong \Bun_H$ is smooth and $\sY_{X^\bullet}$ is smooth locally isomorphic to 
$\sM_X^0$ by Lemma~\ref{lem:localglobalyoga}, the scheme $\sY_{X^\bullet}$ is also smooth. 

The preimage of the connected component $\sA^{\ch\lambda}$ in $\sY_{X^\bullet}$ 
is by definition $\sY^{\ch\lambda,0}$. 
Let $G' \to G$ be a central extension as in Lemma~\ref{lem:Xcover}, so 
we have a torsor $H'\bs G' \to X^\bullet$. 
Let $X' = \Spec k[H'\bs G']$. 
Then $\mathfrak c_{X'} = \mathbb N^{\Cal D}$ and we have an isomorphism of stacks
$X'^\bullet / B' \cong  X^\bullet/ B$.
Hence, 
\begin{equation}\label{e:YXprime}
    \sY_{X^\bullet} \cong \sY_{X'^\bullet} = \Maps_\gen( X'^\bullet/B' \supset \pt ),
\end{equation}
and the map $\sY_{X^\bullet} \to \sA$ factors through $\pi_{X'} : \sY_{X'} \to \sA_{X'}$.
The base $\sA_{X'}$ is a disjoint union of smooth partially symmetrized powers of the curve indexed by $\mathbb N^{\Cal D}$. 
 For $D = \sum_{\Cal D} n_{D'}\cdot D'\in \mathbb N^{\Cal D}$, we will denote by 
$\sY_{X^\bullet}^D$ the preimage of the corresponding component of $\sA_{X'}$. 
We define $\varrho_X(D) = \sum n_{D'}\ch\nu_{D'}$ and $\len(D) = \sum n_{D'}$. Hence, if $\ch\lambda = \varrho_X(D)$, then $\sY_{X^\bullet}^D \subset \sY_{X^\bullet}^{\ch\lambda}$. 

This implies: 

\begin{lem} \label{lem:Ynonempty}
The stratum $\Scr Y^{\ch\lambda,0}$ is nonempty only if $\ch\lambda \succeq 0$.
\end{lem}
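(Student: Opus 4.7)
The proof is short and will use exactly the reduction just set up in \S\ref{sect:freemonoid}--\S\ref{sect:Y0cc}. My plan is as follows.

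First, I would invoke the identification \eqref{e:YXprime}, $\sY_{X^\bullet}\cong \sY_{X'^\bullet}$, coming from the torus cover $H'\bs G'\to X^\bullet$, where $X' = \Spec k[H'\bs G']$ satisfies $\mf c_{X'}=\mathbb N^{\mathcal D}$ by Lemma~\ref{lem:Xcover}(iii). Under this identification, the map $\pi:\sY_{X^\bullet}\to \sA$ factors as $\sY_{X^\bullet}\cong \sY_{X'^\bullet}\xrightarrow{\pi_{X'}} \sA_{X'}\to \sA$, where the last arrow is induced by the $T'\to T$ map of Cartan tori (under our running hypotheses both tori equal $T=B/N$, so this map is just the pushforward of $\mf c_{X'}$-valued divisors to $\mf c_X$-valued divisors).

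Next, I would use the decomposition $\sY_{X^\bullet}=\bigsqcup_{D\in \mathbb N^{\mathcal D}} \sY_{X^\bullet}^{D}$ from \S\ref{sect:Y0cc}, where $\sY_{X^\bullet}^{D}$ is the preimage of the component $\sA_{X'}^{D}\subset \sA_{X'}$. By construction of the map $\sA_{X'}\to \sA$, each $\sY_{X^\bullet}^{D}$ is sent into the component $\sA^{\varrho_X(D)}$, with $\varrho_X(D)=\sum_{D'\in \mathcal D} n_{D'}\ch\nu_{D'}$ for $D=\sum n_{D'}\cdot D'$.

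Finally, suppose $\sY^{\ch\lambda,0}$ is nonempty. Any point in it corresponds, via the identification above, to a point of some $\sY_{X^\bullet}^{D}$ with $D\in \mathbb N^{\mathcal D}$, and its image in $\sA$ must lie simultaneously in $\sA^{\ch\lambda}$ and in $\sA^{\varrho_X(D)}$. Hence $\ch\lambda=\varrho_X(D)=\sum n_{D'}\ch\nu_{D'}\in \mf c_X^{\mathcal D}$, which by definition means $\ch\lambda\succeq 0$. There is essentially no obstacle here: the content has already been packaged in Lemma~\ref{lem:Xcover} and in the definition of $\sY_{X^\bullet}^{D}$, so the argument reduces to tracking components under the factorization $\sY_{X^\bullet}\to \sA_{X'}\to \sA$.
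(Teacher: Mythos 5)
Your proof is correct and matches the paper's: Lemma~\ref{lem:Ynonempty} is presented immediately after the decomposition $\sY_{X^\bullet}=\bigsqcup_{D\in\mathbb N^{\mathcal D}}\sY_{X^\bullet}^D$ in \S\ref{sect:Y0cc} with the words ``This implies,'' and your argument is precisely the unpacking of that implication. No gaps.
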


Under our assumption that all simple roots of $G$ are spherical roots of type $T$, 
every color $D\in \Cal D$ belongs to $\Cal D(\alpha)$ for some simple root $\alpha$ of $G$
and $X^\circ P_\alpha/\mf R(P_\alpha) = \mbb G_m \bs \PGL_2$. 

\begin{lem} \label{lem:Ycolor}
If $\Cal D(\alpha) = \{ D_\alpha^+, D_\alpha^- \}$ for a simple root $\alpha$,
then for any $n^\pm \in \mbb N$, 
there is an isomorphism 
\[ \sY^{n^+ D_\alpha^+ + n^- D_\alpha^-}_{X^\bullet} = C^{(n^+)}\oo\xt C^{(n^-)}. \]
\end{lem}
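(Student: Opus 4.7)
The strategy is to identify the relevant open substack of $X^\bullet/B$ with the stack $\mathbb G_m\backslash \mathbb P^1$ and then invoke Example \ref{eg:YHeckePGL}, which computes the analogous Zastava model for $\mathbb G_m\backslash \mathrm{PGL}_2$.

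First, by the type-$T$ assumption (cf.\ Remark \ref{rem:valuation}), the stabilizer $S \subset P_\alpha$ of the chosen base point $x_0 \in X^\circ$ is a one-dimensional torus, which we may take to lie inside $B$. Since $X^\circ P_\alpha \cong S\backslash P_\alpha$ as a $P_\alpha$-space, taking the further quotient by the right $B$-action yields a canonical identification of stacks
\[ X^\circ P_\alpha/B \;\cong\; S\backslash P_\alpha/B \;\cong\; \mathbb G_m\backslash (P_\alpha/B) \;\cong\; \mathbb G_m\backslash \mathbb P^1 \;\cong\; \mathbb G_m\backslash \mathrm{PGL}_2/B_{\mathrm{PGL}_2}. \]
Under this identification, the three $B$-orbits $X^\circ,\, D_\alpha^{+,\circ},\, D_\alpha^{-,\circ}$ contained in $X^\circ P_\alpha$ correspond to the three $\mathbb G_m$-orbits on $\mathbb P^1$, so that the two colors $D_\alpha^\pm$ of $X^\bullet$ match the two colors of $\mathbb G_m\backslash \mathrm{PGL}_2$.

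Next, I would show that every $y \in \sY^{n^+ D^+_\alpha + n^- D^-_\alpha}_{X^\bullet}$ factors through the open substack $X^\circ P_\alpha/B \subset X^\bullet/B$. Using the reduction $\sY_{X^\bullet} \cong \sY_{X'^\bullet}$ from \S\ref{sect:Y0cc} to the cover $X'$ with $\mf c_{X'}=\mbb N^{\Cal D}$, the constraint encoded by the grading $n^+ D^+_\alpha + n^- D^-_\alpha$ says that the pullback $y^* f_{\chi_D}$ is nowhere vanishing on $C$ for every color $D \notin \{D_\alpha^+, D_\alpha^-\}$, i.e., $y$ avoids every such $\ol D$. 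Thus the claim reduces to verifying that $X^\bullet \setminus X^\circ P_\alpha \subset \bigcup_{D \notin \Cal D(\alpha)} \ol D$: any codim-one $B$-orbit in the complement is a color not in $\Cal D(\alpha)$ (since the codim-one $B$-orbits inside the open $P_\alpha$-orbit $X^\circ P_\alpha$ are precisely $D_\alpha^\pm$), and lower-dimensional $B$-orbits are swept by these via the standard combinatorics of $B$-orbit closures on a spherical variety. Combining the two reductions,
\[ \sY^{n^+ D^+_\alpha + n^- D^-_\alpha}_{X^\bullet} \;\cong\; \sY^{n^+ D^+ + n^- D^-}_{\mathbb G_m\backslash \mathrm{PGL}_2} \;=\; C^{(n^+)} \oo\xt C^{(n^-)}, \]
the last equality being Example \ref{eg:YHeckePGL}.

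\textbf{Main obstacle.} The delicate point is the verification that $X^\bullet \setminus X^\circ P_\alpha \subset \bigcup_{D \notin \Cal D(\alpha)} \ol D$. The \emph{a priori} worrisome case is a codimension $\ge 2$ $B$-orbit lying only in $\ol D_\alpha^+ \cap \ol D_\alpha^-$ and in no other color closure: such an orbit, if it existed, could receive the value $y(v)$ at some point $v\in\abs C$ and would contribute to the non-disjoint locus of $C^{(n^+)} \xt C^{(n^-)}$, breaking the claimed isomorphism. Ruling this out rests on the specific combinatorics of Luna--Vust theory under the hypotheses that all simple roots are of type $T$ and $T_X=T$; together with the fact that in $X^\circ P_\alpha \cong \mathbb G_m\backslash \mathbb P^1$ the two colors $D_\alpha^+$ and $D_\alpha^-$ correspond to the disjoint fixed points $\{0\}$ and $\{\infty\}$, this forces the image of $\pi$ to land in the disjoint locus.
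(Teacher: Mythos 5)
Your overall strategy matches the paper's: identify $X^\circ P_\alpha/B$ with $\mathbb G_m\backslash\mathbb P^1$ and reduce to the worked example for the Hecke stack. But the step you single out as the ``main obstacle'' --- showing $X^\bullet\setminus X^\circ P_\alpha\subset\bigcup_{D\notin\Cal D(\alpha)}\ol D$ --- is exactly the step your write-up leaves open. You correctly identify the possible failure mode (a codimension $\ge 2$ $B$-orbit contained in $\ol D_\alpha^+\cap\ol D_\alpha^-$ but in no other color closure), but you then only gesture at ``Luna--Vust combinatorics'' without giving an argument. As written this is a genuine gap, since this is the entire content of the lemma once the identification with $\mathbb G_m\backslash\mathbb P^1$ is in place.

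The paper closes the gap with a one-line observation you missed: $X^\circ P_\alpha\cong \mathbb G_m\backslash P_\alpha$ is \emph{affine} (quotient of an affine group by a reductive subgroup). Since $X^\bullet$ is normal (it is smooth), the complement of a dense affine open subset is automatically of pure codimension one. Hence $X^\bullet\setminus X^\circ P_\alpha$ is a union of $B$-stable prime divisors, i.e.\ colors, and these are precisely the colors not in $\Cal D(\alpha)$ (those in $\Cal D(\alpha)$ meet $X^\circ P_\alpha$ by definition). The hypothetical bad orbit of codimension $\ge 2$ is then necessarily inside the closure of some $D\notin\Cal D(\alpha)$, so it is avoided. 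No Luna--Vust combinatorics is needed; the affineness does all the work.

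A small secondary point: after passing to the cover $X'$ with $\mf c_{X'}=\mbb N^{\Cal D}$, the two colors in $\Cal D(\alpha)$ have \emph{distinct} valuations, and the relevant local model is $\mbb G_m\backslash\GL_2$ (Example~\ref{eg:YHecke}), not $\mbb G_m\backslash\PGL_2$ (Example~\ref{eg:YHeckePGL}). The underlying scheme $\Sym C\oo\xt\Sym C$ is the same in both, but the indexing of connected components by $\mbb N^{\Cal D(\alpha)}$, which is what produces the precise isomorphism $\sY^{n^+D_\alpha^++n^-D_\alpha^-}_{X^\bullet}\cong C^{(n^+)}\oo\xt C^{(n^-)}$, is the one from Example~\ref{eg:YHecke}.
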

\begin{proof} 
We may assume $\mf c_{X^\bullet} = \mbb N^{\Cal D}$. Then, 
$\sY^{n^+ D_\alpha^+ + n^- D_\alpha^-}_{X^\bullet}$
classifies maps from the curve to $X^\bullet/B$ which have zero valuation on every color other than those in $\Cal D(\alpha)$, and therefore is a subscheme of $\Maps_\gen(C, (X^\bullet-\bigcup_{D\in \Cal D \sm \Cal D(\alpha)} D)/B \supset \pt)$. 
Since $X^\circ P_\alpha = \mathbb G_m\backslash P_\alpha$ is affine, its complement is the union of the colors that it does not intersect. Hence 
\[ (X^\bullet-\bigcup_{D\in \Cal D \sm \Cal D(\alpha)}D)/B = (X^\circ P_\alpha)/B = \mathbb G_m\bs\PGL_2/B_{\PGL_2} = \mbb G_m \bs \mbb P^1.\]
By Example~\ref{eg:YHecke}, we see that $\Maps_\gen(C, \mbb G_m \bs \mbb P^1 \supset \pt) = \Sym C \oo\xt \Sym C$,
and it follows that the components correspond to $\mbb N^{\Cal D(\alpha)}$ in
the natural way.
\end{proof}

\begin{rem} \label{rem:Ycomp}
For a general element $D = \sum_{\Cal D} n_{D'}\cdot D' \in \mbb N^{\Cal D}$, 
the graded factorization property together with Lemma~\ref{lem:Ycolor} imply
that there is an open embedding 
$\oo\prod_{\Cal D} C^{(n_{D'})} \into \sY^D_{X^\bullet}$,
where the product is over the disjoint locus. 
We will show in Lemma~\ref{lem:oYconnected} that 
$\sY^D_{X^\bullet},\, D\in \mbb N^{\Cal D}$ are precisely the connected components
of $\sY_{X^\bullet}$, so the open subscheme above is dense. 
(This also implies that the $\sY^D_{X^\bullet}$ are defined intrinsically and independently
of the choice of $G'\to G$.)
\end{rem}

\begin{rem}\label{rem:length}
If $D$ is as above, and $\ch\lambda = \varrho_X(D)$, we can read off the length $\len (D)$ directly from $\ch\lambda$, if $X^\bullet$ happens to admit a $G$-eigen-volume form. Namely, under this assumption there is a character $\gamma\in \Lambda$ such that $\left< \gamma, \ch \nu_{D'}\right> = 1$ for every color $D'$, and therefore $\len (D) = \left< \gamma, \ch\lambda\right>$. Indeed, for such a color $D'$, there is a simple root $\alpha$ such that $D'P_\alpha$ contains the open Borel orbit, and then $D'P_\alpha \simeq \ch\nu_{D'}(\mathbb G_m)\backslash P_\alpha$. For such a homogeneous space to have a $P_\alpha$-eigen-volume form with eigencharacter $\mathfrak h$,  we must have that $\left< \mathfrak h+ 2\rho_{N_\alpha}, \ch\nu_{D'}\right> = 0$, where  $2\rho_{N_\alpha}$ is the sum of roots in the unipotent radical of $P_\alpha$. Equivalently, since $\left<\alpha,\ch\nu_{D'}\right>=1$, this reads $\left<\mathfrak h + 2\rho_G, \ch\nu_{D'}\right> = 1$. Therefore, 
\begin{equation}\label{e:length}
 \len (D) = \left< \mathfrak h + 2\rho_G, \ch\lambda\right>,
\end{equation}
which we can unambiguously denote by $\len (\ch\lambda)$.
\end{rem}

\subsection{Proof of Theorem \ref{thm:comp}} 
\label{proof:comp}

\subsubsection{Idea of the proof}

Let us give a set-theoretic idea for the proof of Theorem \ref{thm:comp}, as well as a guide to new notation we are about to introduce. The discussion here is not rigorous, using sets as avatars for geometric objects.

We have been using the notation $X^\bullet(F)_{G:\check\theta}$, or $(X^\bullet(F)/K)_{G:\check\theta}$ to denote the set of points in the $K$-orbit parametrized by $\ch\theta \in \mathcal V \cap \check\Lambda_X$, where $K = G(\mf o)$. The stratum  $\sM_X^{\check\theta}$ of the global model corresponds to $(X(\mathfrak o)/K)_{G:\check\theta}$, in the sense that it is determined by the condition that the ``$G$-valuation'' of maps in this stratum, at points where they fail to land in $X^\bullet$, is given by $\ch\theta$. 

Similarly, $T(\mathfrak o)N(F)$-orbits on $X^\circ(F)$ are parametrized by $\check \Lambda_X$, and let us denote by $X(F)_{B:\check\lambda}$ the subset of points which: (1) belong to $X^\circ (F)$, and (2) belong to the $T(\mathfrak o)N(F)$-orbit parametrized by $\check\lambda\in\check\Lambda_X$. As with the global model, the \emph{central fibers} of the stratum $\sY_X^{\check\lambda,\check\theta}$ of the Zastava space correspond to $(X(\mathfrak o)/B(\mathfrak o))_{\substack{G:\check\theta \\ B:\check\lambda}}$.

The following facts will be proven about the space $\sY_X^{\check\lambda,\check\theta}$:
\begin{itemize}
 \item It is nonempty only if $\check\lambda \succeq \check\theta$ (see Lemma \ref{lem:Ynonempty} and Corollary \ref{cor:Ytheta}). This is essentially a statement about the image of $X(\mathfrak o)$ in $X\sslash N(F)$. 
 \item It contains $\sY_X^{\check\lambda-\check\theta,0} \mathring\times \sY_X^{\check\theta,\check\theta}$ as an open dense (see Lemma \ref{lem:comp-cover}). This is a geometric statement, and it follows from the constructions that we describe below. 
\end{itemize}

Theorem \ref{thm:comp} says that the geometric analog of the action map 
\begin{equation}
 \label{eq:action-sets}
\act: \overline{X^\bullet(\mathfrak o)} \times^{K}  \overline{K t^{\check\theta} K} \to \overline{X(\mathfrak o)}_{G:\check\theta}
\end{equation}
is birational and proper. (The closures are also understood here in the ``geometric'' topology, see Lemma \ref{lem:act-kpoints}.) 

To prove this, we fix $\check\theta \in \mathfrak c_X^-$, which will not appear in the notation, and work with $B(\mathfrak o)$-orbits, defining spaces whose central fibers satisfy the following set-theoretic analogies:
\begin{center}
\begin{tabular}{p{20mm}p{10mm}p{7cm}}
 $Z^{\check\lambda - \check\nu, \check\lambda}$ && $\overline{X^\bullet(\mathfrak o)}_{B:\check\lambda-\check\nu} \overset{B(\mathfrak o)}\times (T(\mathfrak o) t^{\check\nu} N(F) \cap \overline{K t^{\check\theta} K})/B(\mathfrak o)$ \\
 $Z^{\check\lambda - \check\nu, \check\lambda, \check\theta',\check\eta}$ && $\overline{X^\bullet(\mathfrak o)}_{\substack{G:\check\theta' \\ B:\check\lambda-\check\nu}} \overset{B(\mathfrak o)}\times (T(\mathfrak o) t^{\check\nu} N(F) \cap K t^{\check\eta} K)/B(\mathfrak o)$ \\ 
\end{tabular}
\end{center}

Here, $\ch\eta \ge \ch\theta$ (and antidominant), so that the Cartan double coset $K t^{\check\eta} K$ belongs (in the affine Grassmannian) to the closure of $K t^{\check\theta} K$. Both of the spaces above are subspaces of the preimage of $X(\mathfrak o)_{B:\check\lambda}$ in $\overline{X^\bullet(\mathfrak o)} \times^{K} \overline{K t^{\check\theta} K}$ --- in fact, substacks, in the appropriate setting. Indeed, stratifying $\overline{K t^{\check\theta} K}$ by the ``Mirkovi\'c--Vilonen (MV) cycles'' corresponding to its intersection with the horocycles $K t^{\check\nu} N(F)$, we obtain a stratification of this space by 
\[\left(\overline{X^\bullet(\mathfrak o)}  \times^K (K t^{\check\nu} N(F) \cap \overline{K t^{\check\theta} K})\right)/B(\mathfrak o) \cap \act^{-1} (X(\mathfrak o)/B(\mathfrak o))_{B:\check\lambda}\]
(where $\act$ denotes the action map). Note that $K t^{\check\nu} N(F) /B(\mathfrak o) = K\times^{B(\mathfrak o)} T(\mathfrak o) t^{\check\nu} N(F) /B(\mathfrak o)$, hence the above can also be written 
\[\left(\overline{X^\bullet(\mathfrak o)}  \times^{B(\mathfrak o)} (T(\mathfrak o) t^{\check\nu} N(F) \cap \overline{K t^{\check\theta} K})\right)/B(\mathfrak o) \cap \act^{-1} (X(\mathfrak o)/B(\mathfrak o))_{B:\check\lambda}.\]
An element of $\overline{X^\bullet(\mathfrak o)}$, multiplied by $t^{\check\nu} N(F)$, lands in $(X(\mathfrak o)/B(\mathfrak o))_{B:\check\lambda}$ if and only if it belongs to $\overline{X^\bullet(\mathfrak o)}_{B:\check\lambda-\check\nu}$, showing that the sets corresponding to $Z^{\check\lambda - \check\nu, \check\lambda}$ (and also to $Z^{\check\lambda - \check\nu, \check\lambda, \check\theta',\check\eta}$, after a further stratification by $G$-valuations) are indeed subspaces of $\overline{X^\bullet(\mathfrak o)} \times^{K} \overline{K t^{\check\theta} K}$.

Here comes the important geometric input: the topology on the affine Grassmannian, resp.\ on the arc space of $X$ (or rather, its global models), implies that $Z^{\check\lambda - \check\theta, \check\lambda, 0,\check\theta}$ is open and dense. This is a combination of the statements that $X^\bullet(\mathfrak o)$ is (tautologically) open dense in $\overline{X^\bullet(\mathfrak o)}$ (allowing us to take $\check\theta'=0$), that the Cartan double coset $K t^{\check\theta} K$ is (again tautologically) open dense in its closure, and that the MV cycle $K t^{\ch\theta} K \cap K t^{\ch\theta} N(F)$ is open in it. 

The open MV stratum on the affine Grassmannian $K\backslash G(F)$ meets the coset $K\backslash K t^{\check\theta} K$ along the $N(\mathfrak o)$-orbit $K \backslash K t^{\ch\theta} N(\mf o)$. Therefore, the quotient
\[(T(\mathfrak o)t^{\check\theta} N(F) \cap K t^{\check\theta} K)/B(\mathfrak o)\]
is just a point ($\ch\theta$ is anti-dominant).
Thus, 
\[X^\bullet(\mathfrak o)_{B:\check\lambda-\ch\theta} \times^{B(\mathfrak o)} (T(\mathfrak o) t^{\check\theta} N(F) \cap K t^{\check\theta} K)/B(\mathfrak o) = (X^\bullet(\mathfrak o)/B(\mathfrak o))_{B:\check\lambda-\ch\theta},\]
which, on the other hand, can be identified as an open dense subset of $(X(\mathfrak o)/B(\mathfrak o))_{\substack{G:\ch\theta \\ B:\check\lambda}}$. Thus, in the geometric setting, the action map \eqref{eq:action-sets}, restricted to the subsets with $B$-valuation equal to some (any) $\check\lambda$, is indeed birational. On the other hand, the map $\sY^{\check\lambda}\to \sM_X$ is smooth for large $\check\lambda$ (Corollary \ref{cor:Mcover}), proving the birationality of (the geometric version of) \eqref{eq:action-sets}.

\subsubsection{}

We now turn to the actual proof of Theorem \ref{thm:comp}.

Since $\ol\Gr^{\ch\Theta}_{G,C^{\ch\Theta}}$ lives over $C^{\ch\Theta}$, 
we can factor $\act_\sM$ into 
\begin{equation} \label{e:actC}
    \Scr M_X \ttimes \ol\Gr^{\ch\Theta}_{G,C^{\ch\Theta}} \overset{\act_{C^{\ch\Theta}}}\longrightarrow \Scr M_X \xt C^{\ch\Theta}
\overset{\pr_1}\longrightarrow \sM_X
\end{equation}
where $\act_{C^{\ch\Theta}}$ is the naturally induced map over $C^{\ch\Theta}$.

\begin{lem} \label{lem:actimage} 
Let $\ch\Theta \in \Sym^\infty(\mathfrak c_X^- \sm 0)$. 
\begin{enumerate}
\item
The preimage of $\Scr M^{\ch\Theta}_X$ in 
$\Scr M_{X} \ttimes \ol\Gr^{\ch\Theta}_{G,C^{\ch\Theta}}$ 
under $\act_\sM$ 
is contained in the open substack
$\Bun_H \ttimes \Gr^{\ch\Theta}_{G,\oo C^{\ch\Theta}}$. 

\item
The image $\act_\sM(\Bun_H \ttimes \ol\Gr^{\ch\Theta}_{G,\oo C^{\ch\Theta}}) \subset \Scr M_X$ contains $\Scr M_X^{\ch\Theta}$ as an open dense substack. 
\end{enumerate}
\end{lem}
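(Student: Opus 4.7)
Part (i) is essentially a $k$-point analysis by tracking local $G$-valuations through the action. Fix a $k$-point $(\sigma, \sP_G, (D^{\ch\theta})_{\ch\theta})$ of $\sM_X \ttimes \ol\Gr^{\ch\Theta}_{G,C^{\ch\Theta}}$, with $\ch\mu_v \in \mf c_X^-$ the local valuation of $(\sigma, \sP_G)$ at $v \in \abs C$ and $n_{\ch\theta, v}$ the multiplicity of $v$ in $D^{\ch\theta}$. By the fiber description \eqref{e:GrTheta-fiber}, the local Hecke at $v$ lies in $\ol\Gr^{\ch\theta_v''}_G$ with $\ch\theta_v'' := \sum_{\ch\theta} n_{\ch\theta, v}\ch\theta$, so Lemma~\ref{lem:act-kpoints} yields $\ch\theta_v' = \ch\mu_v + \ch\theta_v'' + c_v$ for some $c_v \in \mf c_X^{\Cal D}$, where $\ch\theta_v'$ denotes the output valuation at $v$.

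Assuming the output lies in $\sM_X^{\ch\Theta}$, the multiset identity $\{\ch\theta_v'\}_v = \ch\Theta$ forces $\sum_v \ch\theta_v' = \deg(\ch\Theta) = \sum_v \ch\theta_v''$, and hence $\sum_v (\ch\mu_v + c_v) = 0$. Pairing with an interior element $\delta \in \mf c_X^\vee$ (as in \S\ref{sss:YGr}) and using the inclusions $\mf c_X^- \subset \mf c_X$ and $\mf c_X^{\Cal D} \subset \mf c_X$ together with strict convexity of $\mf c_X$, each $\langle \delta, \ch\mu_v\rangle$ and $\langle \delta, c_v\rangle$ is non-negative and their total sum vanishes, so $\ch\mu_v = c_v = 0$ for every $v$. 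Hence the input lies in $\Bun_H$ and $\ch\theta_v' = \ch\theta_v''$ exactly. The equality $|\{v : \ch\theta_v' \ne 0\}| = |\ch\Theta|$ combined with the bound $|\{v : \ch\theta_v' \ne 0\}| \le |\bigcup_{\ch\theta}\supp(D^{\ch\theta})| \le |\ch\Theta|$ then forces the divisors to be pairwise disjoint and multiplicity-free, i.e., $(D^{\ch\theta}) \in \oo C^{\ch\Theta}$. This proves (i).

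For part (ii), the containment $\sM_X^{\ch\Theta} \subset \act_\sM(\Bun_H \ttimes \ol\Gr^{\ch\Theta}_{G, \oo C^{\ch\Theta}})$ is verified on $k$-points: given $(\sigma, \sP_G) \in \sM_X^{\ch\Theta}(k)$ with support $\{v_i\}$ and valuations $\ch\theta_i \in \ch\Theta$, Theorem~\ref{thm:Gorbits} lets one write $\sigma|_{\mf o_{v_i}} \in x_0 \cdot t^{\ch\theta_i} \cdot G(\mf o_{v_i})$; using Beauville--Laszlo (Lemma~\ref{lem:B-L}) I produce $\sigma_0 \in \Bun_H$ equal to $\sigma$ away from $\{v_i\}$ and locally equal to $x_0$ at each $v_i$, together with Hecke modification $t^{\ch\theta_i}$ at $v_i$; applying $\act_\sM$ then recovers $(\sigma, \sP_G)$. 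For density and openness, I will show $\act_\sM(\Bun_H \ttimes \Gr^{\ch\Theta}_{G, \oo C^{\ch\Theta}}) \subset \sM_X^{\ch\Theta}$ by an orbit computation: with $\ch\mu_v = 0$ and $\ch\theta_v'' = \ch\theta_v \in \ch\Theta$, the explicit description of the $H$-orbit of $x_0 \cdot \Gr_G^{\ch\theta_v}$ in $H \bs \Gr_G$ (together with Theorem~\ref{thm:Gorbits}) shows the output valuation is exactly $\ch\theta_v$. Continuity of $\act_\sM$ then gives $\act_\sM(\Bun_H \ttimes \ol\Gr^{\ch\Theta}_{G, \oo C^{\ch\Theta}}) \subset \ol{\sM_X^{\ch\Theta}}$, and $\sM_X^{\ch\Theta}$, being locally closed in $\sM_X$, is open and dense in this image.

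The main obstacle is the positivity argument in (i), which rests on strict convexity of $\mf c_X$ together with the inclusions $\mf c_X^-, \mf c_X^{\Cal D} \subset \mf c_X$; once these are in place the support-counting is elementary. The subtler secondary point is the orbit analysis in (ii) showing that $\act_\sM$ sends $\Bun_H \ttimes \Gr^{\ch\Theta}_{G, \oo C^{\ch\Theta}}$ into the single stratum $\sM_X^{\ch\Theta}$ rather than into a union of boundary strata; this is where the hypothesis $\ch G_X = \ch G$ plays a critical role, as it ensures that $X^\bullet(F_v)/G(\mf o_v)$ is parametrized by $\mf c_X^-$ via Theorem~\ref{thm:Gorbits}.
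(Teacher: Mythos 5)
Your argument for part (i) is a valid unpacking of the paper's one-line proof, and the strict-convexity mechanism is exactly the right one. There is a small gap at the very end: you conclude that the divisors $(D^{\ch\theta})$ lie in $\oo C^{\ch\Theta}$, but the lemma asserts containment in $\Bun_H \ttimes \Gr^{\ch\Theta}_{G,\oo C^{\ch\Theta}}$, i.e., also that the local Hecke modification at each $v$ lies in the \emph{open} cell $\Gr^{\ch\theta''_v}_G$ rather than a smaller cell of $\ol\Gr^{\ch\theta''_v}_G$. This follows by one more application of the same idea: if the local modification lay in $\Gr^{\ch\theta'''_v}_G$ with $\ch\theta'''_v > \ch\theta''_v$, Lemma~\ref{lem:act-kpoints} gives $\ch\theta''_v = \ch\theta'_v \succeq \ch\theta'''_v$, while $\ch\theta'''_v - \ch\theta''_v \in \ch\Lambda^\pos_G \subset \mf c_X^{\Cal D}$; strict convexity of $\mf c_X$ forces $\ch\theta'''_v = \ch\theta''_v$.

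The real problem is in part (ii), where the density and openness both rest on your unproved claim that $\act_\sM(\Bun_H \ttimes \Gr^{\ch\Theta}_{G,\oo C^{\ch\Theta}}) \subset \sM_X^{\ch\Theta}$, i.e., that a Hecke modification lying in the \emph{open} cell $\Gr^{\ch\theta}_G$, applied to a regular $H$-bundle, produces output valuation \emph{exactly} $\ch\theta$. Lemma~\ref{lem:act-kpoints} only yields $\succeq \ch\theta$, and the sharper statement is false in general: the valuation can jump on a nonempty closed locus of $\Gr^{\ch\theta}_G$. This is precisely why the paper introduces the open subscheme $\overset\circ\Gr{}^{\ch\theta}_G \subset \Gr^{\ch\theta}_G$ as the (a priori strictly smaller) preimage of $\sM^{\ch\theta}_X$, why Lemma~\ref{lem:MVcentralfiber} (that each MV cycle \emph{meets} $\overset\circ\Gr{}^{\ch\theta}_G$) needs a nontrivial hyperplane-slicing argument rather than being tautological, and why Theorem~\ref{thm:comp}(iii) is stated as a bijectivity of $\act_\sM$ restricted to $\act_\sM^{-1}(\sM^{\ch\Theta}_X) \cap (\Bun_H\ttimes\Gr^{\ch\Theta}_{G,\oo C^{\ch\Theta}})$ rather than on all of $\Bun_H\ttimes\Gr^{\ch\Theta}_{G,\oo C^{\ch\Theta}}$. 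Your appeal to ``the explicit description of the $H$-orbit of $x_0\cdot\Gr_G^{\ch\theta_v}$'' is not established anywhere and would be a substantial new result. Without it, you cannot deduce $M := \act_\sM(\Bun_H\ttimes\ol\Gr^{\ch\Theta}_{G,\oo C^{\ch\Theta}}) \subset \ol{\sM_X^{\ch\Theta}}$, and the openness/density conclusion collapses. The paper avoids the issue entirely: it produces one point of $M$ over each $v\in\abs C$ lying in $\sM^{\ch\theta}_X$ and propagates by generic-Hecke transitivity (Proposition~\ref{prop:genHecke-transitive}) to get $\sM_X^{\ch\theta}\subset M$; then it establishes openness via the identity $\sM_X^{\ch\theta} = M \setminus \bigcup_{\ch\theta'\succ\ch\theta}\act_C(\sM_X\ttimes\ol\Gr^{\ch\theta'}_G)$, using properness of $\act_\sM$ to see the subtracted set is closed; density then follows because each component $M_U$ is irreducible and meets $\sM_X^{\ch\theta}$.
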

\begin{proof}
Statement (i) follows from the description of
$\act_\sM$ on $k$-points, Lemma~\ref{lem:act-kpoints}, and the fact that no element of $\mathfrak c_X^- \sm 0$ is $ \preceq 0 $. 

To simplify notation, we give the proof of (ii) in the case when
 $\ch\Theta = [\ch\theta], \ch\theta\ne 0$ is a singleton partition. 
The general case is entirely analogous.

Now we are considering $\act_C : \sM_X \xt \ol\Gr^{\ch\theta}_{G,C} \to \sM_X \times C$. 
Recall that we have a 
locally closed embedding $\sM_X^{\ch\theta} \into \sM_X \times C$. 
First we show that 
$M:=\act_C(\Bun_H \ttimes \ol\Gr^{\ch\theta}_{G,C}) \subset \sM_X\times C$ contains 
$\Scr M^{\ch\theta}_X$.
Fix $v\in \abs C$. 
Take an arbitrary $H$-bundle $\sP_H \in \Bun_H(k)$, 
and choose a trivialization of $\sP_H|_{\Spec \mf o_v}$, which 
induces a trivialization $\tau_0 : (\sP_H \xt^H G)|_{\Spec \mf o_v} \cong \sP_G^0|_{\Spec \mf o_v}$. 
Then $(\sP_H,v,\tau_0)$ defines a point of $\wh\sM_X$. 
We also have the point $t^{\ch\theta} \in \Gr^{\ch\theta}_{G,v}$. 
The image of $(\sP_H,v,\tau_0, t^{\ch\theta})$ gives a point in 
$\sM_X \ttimes \Gr^{\ch\theta}_{G,v}$,
and by construction $\act_C$ will send this point to the stratum $\sM^{\ch\theta}_X$.

Thus we have shown that  
$M$ contains a point of $\sM^{\ch\theta}_X$ over every point $v\in \abs C$. 
It follows from Lemma~\ref{lem:act-equivariant} that 
$M \subset \Scr M_X \xt C$ 
is stable under
generic-Hecke modifications away from the marked point in $C$. 
Since these generic-Hecke modifications act transitively on 
the stratum $\Scr M^{\ch\theta}_X$ (Proposition~\ref{prop:genHecke-transitive}), we 
deduce that $M$ contains all of $\sM^{\ch\theta}_X$. 

The previous paragraph and Lemma~\ref{lem:act-kpoints} imply that 
\[ \sM_X^{\ch\theta} = M \sm \bigcup_{\ch\theta' \succ \ch\theta} \act_C(\sM_X \ttimes \ol\Gr^{\ch\theta'}_G), \] 
which shows that $\sM_X^{\ch\theta}$ is open in $M$.

Since $\ol\Gr^{\ch\theta}_G$ is irreducible, 
for every connected component $U$ of $\Bun_H$, 
the image $M_U := \act_C(U \ttimes \ol\Gr^{\ch\theta}_{G,C})$ is 
irreducible. 
It is easy to see that its intersection with 
$\sM_X^{\ch\theta}$ is nonempty, hence dense. 
\end{proof}

As a corollary, we can now prove Theorem \ref{thm:comp}(i):

\begin{proof}[Proof of Theorem \ref{thm:comp}(i)]
Let $M$ denote an irreducible component of $\barM^0_X$. 
Then $M':=\act_\sM(M \ttimes \ol\Gr^{\ch\Theta}_{G,C})$
is an irreducible closed substack of $\Scr M_X$, 
and since $M$ contains a connected component of $\Bun_H$,
Lemma~\ref{lem:actimage}(ii) implies that 
 $M' \cap \Scr M^{\ch\Theta}_X$ is dense in $M'$.
It follows that $\Scr M^{\ch\Theta}_X$ is dense in $\act_\sM(\barM^0_X \ttimes \ol\Gr^{\ch\theta}_{G,C})$.
\end{proof}

\subsubsection{Base change to Zastava model}  
Fix $\ch\theta \in \mathfrak c_X^- \sm 0$ 
and a point $v \in \abs C$. 
Consider the restriction of $\act_\sM$ to 
$\sM_{X} \ttimes \ol\Gr^{\ch\theta}_{G,v} \to \sM_X$,
which is the fiber of $\act_C$ over 
$v \to C = C^{[\ch\theta]}$. 

Let us consider for $\ch\lambda\in \mathfrak c_X$ 
the fiber product diagram
\begin{equation}  \label{e:Zact-diagram}
\begin{tikzcd}
Z_v^{?,\ch\lambda} \ar[r, "\act_{\Scr Y}"] \ar[d] & \Scr Y^{\ch\lambda} \ar[d] \\ 
\barM_X^0 \ttimes \ol\Gr^{\ch\theta}_{G,v} \ar[r, "\act_\sM"] & \Scr M_X 
\end{tikzcd} 
\end{equation}
An $S$-point of $Z_v^{?,\ch\lambda}$ consists of the data $(\sigma,\Scr P_G, \Scr P'_B, 
v, \tau)$ where 
\begin{itemize}
\item $(\sigma,\Scr P_G) \in \barM_X^0$,
\item $\Scr P'_B\in \Bun_B^{-\ch\lambda}$ is a $B$-structure on a $G$-bundle $\Scr P'_G := G\xt^B \Scr P'_B$, 
\item $\tau : \Scr P'_G|_{(C\sm v)\xt S} \cong \Scr P_G|_{(C\sm v)\xt S}$ is 
a modification inducing a point in $\ol\Gr^{\ch\theta}_G$ such that 
$\tau^{-1}\circ \sigma$ generically lands in $X^\circ \xt^B \Scr P'_B$. 
\end{itemize}

\noindent
Let $\widehat{\Scr Y} \to \Scr Y$
denote the Zariski locally trivial $\msf  L^+B$-torsor parametrizing 
$(\sigma,\Scr P_B)\in \Scr Y$ and a trivialization 
$\Scr P_B|_{\wh C'_v} \cong \Scr P_B^0|_{\wh C'_v}$, and let $\widehat{\Scr Y}_0 \to \Scr Y_0$ be the restriction to $\barM_X^0$. (We will use the index $0$ for the same purpose on the strata of $\Scr Y$.)

\begin{prop-const} \label{prop:Yact-strat}
\ 
\begin{enumerate}
\item The fiber product $Z_v^{?,\ch\lambda}$ admits a stratification by 
\[
    Z_v^{\ch\lambda-\ch\nu,\ch\lambda} := 
    \widehat{\Scr Y}_0 \xt^{\msf L^+B} 
    (\msf L^+ T \cdot t^{\ch\nu}\cdot \msf L N \cap \ol{ \msf L^+G\cdot t^{\ch\theta} \cdot \msf L^+G}) / \msf L^+B, 
\]
where $\ch\nu$ ranges over the weights of the irreducible $\ch G$-module $V^{\ch\theta}$. 

\item
We have an isomorphism at the level of reduced schemes 
\[ Z_v^{\ch\lambda-\ch\nu,\ch\lambda} \cong 
\Scr Y_0^{\ch\lambda-\ch\nu} \ttimes 
(\msf S^{\ch\nu} \cap \ol\Gr^{\ch\theta}_G). \] 
where $\Scr Y_0^{\ch\lambda-\ch\nu} \ttimes -$ 
means 
$\widehat{\Scr Y}_0
\xt^{\msf L^+B} -$. 

\item
The open stratum corresponds to $\ch\nu=\ch\theta$. More precisely, we have
\[ Z_v^{\ch\lambda-\ch\theta,\ch\lambda} \cong \Scr Y_0^{\ch\lambda-\ch\theta} \xt v, \] 
where $v$ corresponds to the embedding $\{ t^{\ch\theta} \} 
\into \Gr^{\ch\theta}_{G,v}$.
\end{enumerate}
\end{prop-const}

\begin{proof}
Recall that $\ol{\msf L^+G\cdot t^{\ch\theta}\cdot \msf L^+G}$ has a stratification 
by intersecting with $\msf L^+G\cdot t^{\ch\nu}\cdot \msf L N$. 
If we take the quotient on the left by $\msf L^+ G$, then 
under the identification 
$\msf L^+G \bs \msf L G \cong \Gr_G : g \mapsto g^{-1}$, we have
described above the stratification of $\ol\Gr^{-\ch\theta}_G$ 
by MV cycles $\ol\Gr^{-\ch\theta}_G \cap \msf S^{-\ch\nu}$, where
$-\ch\theta$ is dominant. 
It is known by \cite[Theorem 3.2]{MV} that 
$\ol\Gr^{-\ch\theta}_G \cap \msf S^{-\ch\nu}$
is non-empty precisely when $-\ch\nu$ is a weight of 
$V^{-\ch\theta}$, and the open stratum corresponds to 
when $-\ch\nu$ equals $-\ch\theta$.

Now let $(\sigma,\Scr P_G, \Scr P'_B,\tau)$ be an $S$-point of 
$Z_v^{?,\ch\lambda}$. The restriction of $(\Scr P_G,\Scr P'_B,\tau)$
to $\wh C'_v$ gives a point in $\msf L^+G \bs \ol{\msf L^+G \cdot t^{\ch\theta} \cdot \msf L^+ G} / \msf L^+ B$. 
Therefore, by the previous paragraph, we can stratify $Z_v^{?,\ch\lambda}$
by the preimages of 
\[  \msf L^+G \bs (\ol{\msf L^+G\cdot t^{\ch\theta}\cdot \msf L^+G} \cap \msf L^+G\cdot t^{\ch\nu}\cdot \msf L N) / \msf L^+B . \] 
Suppose our $S$-point lies in such a stratum corresponding to $\ch\nu$. 
In particular, $\tau$ 
corresponds to a point in $\msf L^+B \bs (\Gr^{\ch\nu}_B)_\red$,
which means that there exists a $B$-structure $\Scr P_B|_{\wh C'_v}$
on $\Scr P_G|_{\wh C'_v}$ such that $\tau$ gives an isomorphism
of generic $B$-bundles
$\tau : \Scr P'_B|_{\wh C^\circ_v} \cong \Scr P_B|_{\wh C^\circ_v}$.  
By Beauville--Laszlo's theorem, the datum $(\Scr P'_B|_{C\sm v}, \Scr P_B|_{\wh C'_v},\tau)$ descends to a $B$-structure $\Scr P_B$ on $\Scr P_G$
such that 
$\Scr P'_B|_{C\sm v} \cong \Scr P_B|_{C\sm v}$. 
Then $(\sigma, \Scr P_B) \in \sY_0^{\ch\lambda-\ch\nu}(S)$ and 
$(\sigma,\Scr P_B, \Scr P'_B, v, \tau) \in Z_v^{\ch\lambda-\ch\nu,\ch\lambda}(S)$. 
The procedure above can be reversed to see that $Z_v^{\ch\lambda-\ch\nu,\ch\lambda}$ is equal to the entire stratum.
This shows (i). 

\smallskip

Observe that $\msf L^+ T \cdot t^{\ch\nu} \cdot \msf L N = \msf L N \cdot t^{\ch\nu} \cdot \msf L^+B$ and 
there is a map from 
$(\msf L N \cdot t^{\ch\nu} \cdot \msf L^+B \cap \ol{\msf L^+G \cdot t^{\ch\theta} \cdot \msf L^+G})/\msf L^+B$ to $\msf S^{\ch\nu}\cap \ol\Gr^{\ch\theta}_G$ which is an isomorphism at the level of reduced schemes. 
Now (ii) follows from (i).

The open stratum $Z_v^{\ch\lambda-\ch\theta,\ch\lambda}$ 
corresponds to the open stratum $\ol\Gr^{-\ch\theta}_G \cap \msf S^{-\ch\theta}$, 
when $\ch\nu=\ch\theta$. 
By \cite[(3.6)]{MV}, we have $\msf S^{\ch\theta} \cap \ol\Gr^{\ch\theta}_G = \{t^{\ch\theta}\}$
so (iii) is a special case of (ii).
\end{proof}

Observe that $\barM_X^0 \ttimes \ol\Gr^{\ch\theta}_{G,v}$ has 
a stratification by $\sM_{X}^{\ch\Theta'} \ttimes \Gr^{\ch\eta}_{G,v}$ for those $\ch\Theta' \in \Sym^\infty(\mathfrak c_X^- \sm 0)$ such that the stratum  $\sM_{X}^{\ch\Theta'}$ belongs to $\barM_X^0$ --- still to be determined, see Lemma \ref{lem:M0closure} --- 
and those $\ch\eta \in \ch\Lambda^-_G$ such that $\ch\eta \ge \ch\theta$ (equivalently, $\ch\eta$ is a weight of $V^{\ch\theta}$). 
Let $Z^{?,\ch\lambda,\ch\Theta',\ch\eta}_v$ denote the
preimage of the corresponding stratum in $Z^{?,\ch\lambda}_v$, 
so we have a Cartesian square 
\begin{equation} \label{e:Z0act-diagram}
\begin{tikzcd} 
Z_v^{?,\ch\lambda,\ch\Theta',\ch\eta} \ar[r, "\act_\sY"] \ar[d] & \Scr Y^{\ch\lambda}_X \ar[d] \\ 
\sM^{\ch\Theta'}_{X} \ttimes \Gr^{\ch\eta}_{G,v} \ar[r, "\act_\sM"] & \Scr M_X 
\end{tikzcd}
\end{equation}
This diagram now has no dependence on $\ch\theta$ and is defined
entirely with respect to $\ch\eta$. 
Proposition~\ref{prop:Yact-strat} implies that there is a
stratification
\[ Z_v^{?,\ch\lambda,\ch\Theta', \ch\eta} = \bigcup_{\ch\nu} Z_v^{\ch\lambda-\ch\nu,\ch\lambda,\ch\Theta', \ch\eta} \]
where $\ch\nu$ runs through the weights of $V^{\ch\eta}$, and  
$Z_v^{\ch\lambda-\ch\nu,\ch\lambda,\ch\Theta',\ch\eta}$ 
admits a map to $\Scr Y^{\ch\lambda-\ch\nu,\ch\Theta'}$. 
The open stratum is
\[ Z_v^{\ch\lambda-\ch\eta,\ch\lambda,\ch\Theta',\ch\eta} \cong 
\Scr Y^{\ch\lambda-\ch\eta,\ch\Theta'} \times \{ t^{\ch\eta}\}. \]

We will make special use of the case $\ch\Theta'=0$ and $\ch\eta = \ch\theta$ 
because 
Lemma~\ref{lem:actimage} implies that $\Scr Y^{\ch\lambda,\ch\theta}$
is contained in the images of $Z_v^{?,\ch\lambda,0,\ch\theta} \to \Scr Y^{\ch\lambda}$ 
over all $v \in \abs C$. 
Note that $Z_v^{\ch\lambda-\ch\theta,\ch\lambda,0,\ch\theta} = \sY^{\ch\lambda-\ch\theta,0}$.

\smallskip 

Recall that $\mathfrak c_X^{\Cal D}$ denotes the monoid
generated by $\ch\nu_D$ for $D\in \Cal D$. 
\begin{cor} \label{cor:Ytheta}
Let $\ch\theta \in \mathfrak c_X^- \sm 0$. 
\begin{enumerate}
\item
The scheme $\Scr Y^{\ch\lambda,\ch\theta}$ is nonempty only if 
$\ch\lambda \succeq \ch\theta$. 

\item
We have an isomorphism $(\Scr Y^{\ch\theta,\ch\theta})_\red \cong C$ over
the diagonal $C \into \Scr A^{\ch\theta}$.

\item
The single point in the central fiber 
$\msf Y^{\ch\theta,\ch\theta}$ 
corresponds to $t^{\ch\theta} \in \msf S^{\ch\theta}\cap \Gr^{\ch\theta}_G$. 
\end{enumerate}
\end{cor}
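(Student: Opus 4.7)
The plan is to prove (i) via the Hecke action stratification, specialize to $\ch\lambda=\ch\theta$ to obtain (iii), and globalize using the $\ttimes C$-structure from \S\ref{sect:YttimesC} to obtain (ii). The type-$T$ assumption enters through the key observation that every simple coroot decomposes as $\ch\alpha=\ch\nu_{D_\alpha^+}+\ch\nu_{D_\alpha^-}\in \mathfrak c_X^{\Cal D}$, so $\ch\Lambda^\pos_G\subset \mathfrak c_X^{\Cal D}$; moreover $\mathfrak c_X^{\Cal D}\subset\mathfrak c_X$ is strictly convex, hence the partial order $\succeq$ is antisymmetric.

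For (i), Lemma~\ref{lem:actimage} base-changes (via the square \eqref{e:Z0act-diagram}) to give a surjection on $k$-points $\bigsqcup_{v\in\abs C}Z_v^{?,\ch\lambda,0,\ch\theta}\to \sY^{\ch\lambda,\ch\theta}$. Proposition-Construction~\ref{prop:Yact-strat} combined with Lemma~\ref{lem:actimage}(i) stratifies the source as $\bigsqcup_{\ch\nu}\sY^{\ch\lambda-\ch\nu,0}\ttimes(\msf S^{\ch\nu}\cap \Gr_G^{\ch\theta})$, where $\ch\nu$ ranges over weights of $V^{\ch\theta}$. Nonemptiness of any such stratum forces $\sY^{\ch\lambda-\ch\nu,0}\ne\emptyset$, hence $\ch\lambda\succeq\ch\nu$ by Lemma~\ref{lem:Ynonempty}; since $\ch\nu\ge\ch\theta$ also yields $\ch\nu\succeq\ch\theta$ by the above observation, transitivity gives $\ch\lambda\succeq\ch\theta$.

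For (iii), specializing to $\ch\lambda=\ch\theta$ and invoking antisymmetry of $\succeq$, only the open stratum $Z_v^{0,\ch\theta,0,\ch\theta}\cong \sY^{0,0}\ttimes\{t^{\ch\theta}\}$ contributes (using $\msf S^{\ch\theta}\cap\Gr_G^{\ch\theta}=\{t^{\ch\theta}\}$ from \cite[(3.6)]{MV}). Independently, $\sY^{0,0}=\sY^0_{X^\bullet}$ because the valuation map $\varrho:\mbb N^{\Cal D}\to\mathfrak c_X^{\Cal D}$ has trivial kernel by strict convexity of $\mathfrak c_X$, and Remark~\ref{rem:Ycomp} combined with the smoothness of $\sY_{X^\bullet}$ and connectedness from Lemma~\ref{lem:oYconnected} gives $(\sY^{0,0})_\red=\pt$. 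The resulting proper surjection from $\pt$ then identifies $(\msf Y^{\ch\theta,\ch\theta})_\red$ with a single point, which must equal $t^{\ch\theta}$ by Theorem~\ref{thm:DGK}.

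For (ii), applying (i) locally at each point of $C$ (via Remark~\ref{rem:opencurve}) shows that for any $k$-point of $\sY^{\ch\theta,\ch\theta}$ with $\sA$-divisor $\ch\lambda_v\cdot v+\sum_{v'\ne v}\ch\mu_{v'}\cdot v'$, we have $\ch\lambda_v\succeq\ch\theta$ (local (i) at the $G$-singular point $v$) and $\ch\mu_{v'}\in \mathfrak c_X^{\Cal D}$ (Lemma~\ref{lem:Ynonempty} at smooth points $v'$); strict convexity applied to $\ch\theta=\ch\lambda_v+\sum\ch\mu_{v'}$ forces $\ch\lambda_v=\ch\theta$ and all $\ch\mu_{v'}=0$, placing $\sY^{\ch\theta,\ch\theta}$ over the diagonal $C\into\sA^{\ch\theta}$. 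The identification from \S\ref{sect:YttimesC} gives $(\sY^{\ch\theta,\ch\theta})_\red\cong (\msf Y^{\ch\theta,\ch\theta})_\red\ttimes C=\pt\ttimes C=C$, the last equality because $\Aut^0(k\tbrac t)$ acts trivially on $t^{\ch\theta}\in\Gr_G$ (reparametrizations $t\mapsto a_1t+a_2t^2+\dotsb$ with $a_1\in k^\times$ multiply $t^{\ch\theta}$ by $(a_1+a_2t+\dotsb)^{\ch\theta}\in G(\mf o)$). The principal subtlety is verifying $(\sY^{0,0})_\red=\pt$ independently of (i), which is handled by Remark~\ref{rem:Ycomp} together with the triviality of the kernel of $\varrho$.
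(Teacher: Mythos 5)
Your proposal is correct, and for part (i) it matches the paper's argument essentially verbatim. The approaches to (ii) and (iii) deviate in two ways worth flagging.

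For (iii), your identification $(\sY^{0,0})_\red=\pt$ is reached through a circuitous route: Remark~\ref{rem:Ycomp}, smoothness of $\sY_{X^\bullet}$, and Lemma~\ref{lem:oYconnected}. The last of these appears in \S\ref{sect:connected-open}, \emph{after} Corollary~\ref{cor:Ytheta} in the logical development. Although tracing the actual dependency chain (Lemma~\ref{lem:oYconnected} depends on Proposition~\ref{prop:centralstrata}, which depends on Corollary~\ref{cor:Ycomp}, which depends on Theorem~\ref{thm:comp}(i)-(ii) and Corollary~\ref{cor:comp}, none of which invoke Corollary~\ref{cor:Ytheta}) shows there is no circularity, this is a fragile and unnecessary forward reference. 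The paper uses the simpler tautology: a point of $\sY^0$ is a map $C\to X/B$ with trivial divisor in $\sA$, which therefore lands entirely in $X^\circ/B=\pt$, so $\sY^0=\Maps(C,\pt)=\pt$ on the nose --- no reduction, no appeal to connectedness of components needed. You should use that instead.

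For (ii), the paper constructs a surjection $C\to(\sY^{\ch\theta,\ch\theta})_\red$ by letting $v$ vary and then observes that the composite with $\pi$ is the diagonal embedding $C\into\sA^{\ch\theta}$, forcing the surjection to be an isomorphism of reduced schemes. You instead argue ``internally'': apply the strict convexity of $\mf c_X$ to the $\sA$-divisor of any point of $\sY^{\ch\theta,\ch\theta}$ to pin it to the diagonal, then invoke the twisted product identification $\sY^{\ch\lambda}\xt_{\sA^{\ch\lambda},\delta^{\ch\lambda}}C\cong\msf Y^{\ch\lambda}\ttimes C$ from \S\ref{sect:YttimesC} and part (iii). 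Both routes use the same underlying inputs (the stratification of $Z_v^{?,\ch\lambda,0,\ch\theta}$, the vanishing of lower strata, and the triviality of $\sY^0$); your version trades the paper's map-theoretic surjectivity/injectivity argument for a purely combinatorial convexity argument followed by the $\ttimes C$ identification, which is a perfectly reasonable and arguably more transparent alternative. One cosmetic remark: the observation that $\pt\ttimes C=C$ holds automatically for any twisted product with a singleton; the parenthetical verification that $\Aut^0(k\tbrac t)$ fixes $t^{\ch\theta}\in\Gr_G$ is not needed for the collapse itself, though it does harmlessly confirm the consistency of the identification $\msf Y^{\ch\theta,\ch\theta}=\{t^{\ch\theta}\}$.
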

\begin{proof}
As we remarked above, every point of $\sY^{\ch\lambda,\ch\theta}$ is contained
in the image of $Z_v^{?,\ch\lambda,0,\ch\theta}$ for some $v\in \abs C$. 
By the description of the stratification
of $Z_v^{?,\ch\lambda,0,\ch\theta}$, the latter is nonempty only
if $\Scr Y^{\ch\lambda-\ch\nu,0}$ is nonempty for some $\ch\nu$. 
This implies that $\ch\lambda \succeq \ch\nu$ by 
Lemma~\ref{lem:Ynonempty}. Since 
$\ch\nu \ge \ch\theta$, we deduce (i). 

If $\ch\lambda=\ch\theta$, then $\Scr Y^{\ch\theta-\ch\nu,0}$ 
is only nonempty when $\ch\nu=\ch\theta$, in which case $\Scr Y^0 = \pt$ and $Z_v^{0,\ch\theta,0,\ch\theta} = \{t^{\ch\theta}\}$. 
By moving the point $v$ around, we get 
a surjection $C \to (\Scr Y^{\ch\theta,\ch\theta})_\red$ and
it must be an isomorphism (on reduced schemes) since the composition 
with $\pi :\Scr Y^{\ch\theta,\ch\theta} \to \Scr A^{\ch\theta}$ gives 
the diagonal embedding.
\end{proof}

Let $\Scr Y^{\ch\lambda-\ch\theta,0} \oo\xt v$ denote the disjoint locus 
$\Scr Y^{\ch\lambda-\ch\theta,0} \oo\xt \msf Y^{\ch\theta,\ch\theta}_v$. 
Observe that the composition
\[ \sY^{\ch\lambda-\ch\theta} \oo\xt v \into 
\sY^{\ch\lambda-\ch\theta} \xt v = 
Z^{\ch\lambda-\ch\theta,\ch\lambda}_v \overset{\act_\sY}\longrightarrow
\sY^{\ch\lambda} \]
coincides with the composition 
$\sY^{\ch\lambda-\ch\theta} \oo\xt v \into 
\sY^{\ch\lambda-\ch\theta} \oo\xt \sY^{\ch\theta,\ch\theta} \to 
\sY^{\ch\lambda}$ 
where the second map is given by the graded factorization property.

\begin{lem} \label{lem:comp-cover}
Let $\ch\theta\in \mathfrak c_X^- \sm 0$. 
Then:
\begin{enumerate}
\item The open embedding 
$ \sY^{\ch\lambda-\ch\theta,0} \oo\xt \sY^{\ch\theta,\ch\theta} \into
\sY^{\ch\lambda,\ch\theta}$
given by the graded factorization property is dense. 
\item For $\ch\lambda$ large enough, 
the open stratum $\sY^{\ch\lambda-\ch\theta,0} \xt v
\into Z^{?,\ch\lambda,0,\ch\theta}_v$ is dense. 
\end{enumerate}
\end{lem}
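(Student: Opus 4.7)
The plan is to first establish (ii) via a dimension count, then derive (i) by combining properness of the action map with the graded factorization property. The main obstacle is the dimension count for (ii), which rests on the Mirkovi\'c--Vilonen dimension formula for slices $\msf S^{\ch\nu}\cap\ol\Gr^{\ch\theta}_G$ and a Riemann--Roch computation for the smooth fibration $\sY^{\ch\mu,0}\to \Bun_H$.

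For (ii), take $\ch\lambda$ large enough that $\sY^{\ch\lambda}\to \sM_X$ is smooth (Corollary~\ref{cor:Mcover}); then by base change $Z^{?,\ch\lambda,0,\ch\theta}_v$ is smooth, in particular equidimensional, over $\Bun_H\ttimes \Gr^{\ch\theta}_{G,v}$. By Proposition--Construction~\ref{prop:Yact-strat}(ii), its strata are
\[ Z_v^{\ch\lambda-\ch\nu,\ch\lambda,0,\ch\theta}\cong \sY^{\ch\lambda-\ch\nu,0}\ttimes(\msf S^{\ch\nu}\cap\ol\Gr^{\ch\theta}_G) \]
indexed by weights $\ch\nu\ge \ch\theta$ of $V^{\ch\theta}$. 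Riemann--Roch applied to the smooth map $\sY^{\ch\mu,0}\to \Bun_H$ gives
\[ \dim\sY^{\ch\lambda-\ch\theta,0}-\dim\sY^{\ch\lambda-\ch\nu,0}=\langle 2\rho_G,\ch\nu-\ch\theta\rangle, \]
while the Mirkovi\'c--Vilonen formula yields $\dim(\msf S^{\ch\nu}\cap\ol\Gr^{\ch\theta}_G)=\langle \rho_G,\ch\nu-\ch\theta\rangle$. Hence the $\ch\nu$-stratum has codimension $\langle \rho_G,\ch\nu-\ch\theta\rangle$ in $Z_v^{?,\ch\lambda,0,\ch\theta}$, which is strictly positive for $\ch\nu>\ch\theta$. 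Equidimensionality then forces every irreducible component to meet the open ($\ch\nu=\ch\theta$) stratum, establishing its density.

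For (i), form the $C$-family $Z^{?,\ch\lambda,0,\ch\theta}_C$ (with $v$ varying over $C$) together with the induced action map $\act: Z^{?,\ch\lambda,0,\ch\theta}_C\to \sY^{\ch\lambda,\ch\theta}$, which is proper as a base change of $\act_\sM$ and surjective by the base change of Lemma~\ref{lem:actimage}(ii) applied to $\ch\Theta=[\ch\theta]$. For $\ch\lambda$ large, the same dimension count (applied to the $C$-family, whose strata pick up an extra dimension from $C$) shows that $\sY^{\ch\lambda-\ch\theta,0}\ttimes C$ is dense in $Z^{?,\ch\lambda,0,\ch\theta}_C$, and properness of $\act$ then gives density of its image in $\sY^{\ch\lambda,\ch\theta}$. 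Restricted to the disjoint locus $\sY^{\ch\lambda-\ch\theta,0}\oo\xt C\subset \sY^{\ch\lambda-\ch\theta,0}\ttimes C$ this image identifies, via Corollary~\ref{cor:Ytheta}(ii)--(iii) and graded factorization, with $\sY^{\ch\lambda-\ch\theta,0}\oo\xt\sY^{\ch\theta,\ch\theta}$, which is therefore dense in $\sY^{\ch\lambda,\ch\theta}$. For arbitrary $\ch\lambda\in\mf c_X$, pick $\ch\mu\in \ch\Lambda_G^\pos\subset \mf c_X$ with $\ch\lambda+\ch\mu$ large, and descend density from $\sY^{\ch\lambda+\ch\mu,\ch\theta}$ using the smooth surjection $\sY^{\ch\lambda,\ch\theta}\oo\xt\sY^{\ch\mu,0}\to \sY^{\ch\lambda,\ch\theta}$ of Lemma~\ref{lem:localglobalyoga}, combined with the graded factorization open embedding $\sY^{\ch\lambda,\ch\theta}\oo\xt \sY^{\ch\mu,0}\into \sY^{\ch\lambda+\ch\mu,\ch\theta}$.
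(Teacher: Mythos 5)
Your proof is essentially correct but follows a genuinely different route from the paper's, and it contains one imprecision worth flagging.

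\textbf{Comparison with the paper.} The paper's proof of (i) does not count dimensions at all. It uses Corollary~\ref{cor:Mcover}(i) (geometrically connected fibers of $\sY^{\ch\lambda}\to\sM_X$) to conclude that $Z_U^{?,\ch\lambda,0,\ch\theta}$ is \emph{irreducible} for each component $U$ of $\Bun_H$, then uses Corollary~\ref{cor:Mcover}(ii) to propagate nonemptiness from some $\sY^{\ch\lambda-\ch\nu,0}_U$ to $\sY^{\ch\lambda-\ch\theta,0}_U$, whence the open stratum $\sY^{\ch\lambda-\ch\theta,0}_U\times C$ is nonempty, hence dense in the irreducible $Z_U$; (ii) follows, and density in $\sY^{\ch\lambda,\ch\theta}$ is then immediate because $\sY^{\ch\lambda-\ch\theta,0}_U\oo\xt C$ is a nonempty \emph{open} subscheme of the irreducible component $\sY^{\ch\lambda,\ch\theta}_V$ (by Corollary~\ref{cor:Ycomp}). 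Your route replaces the nonemptiness propagation with a quantitative argument: Riemann--Roch for the relative dimension of $\Bun_B^{-\ch\mu}\to\Bun_G$ plus the Mirkovi\'c--Vilonen dimension formula for $\msf S^{\ch\nu}\cap\ol\Gr^{\ch\theta}_G$, showing the $\ch\nu$-stratum has codimension $\langle\rho_G,\ch\nu-\ch\theta\rangle$. Both work; yours is more explicit and does not need the lifting argument behind Corollary~\ref{cor:Mcover}(ii), at the cost of invoking the dimension formulas. Two caveats on the count: the equidimensionality must be understood componentwise (restrict to a fixed $U$, since for non-reductive $H$ the components of $\Bun_H$ need not have equal dimension), and the Riemann--Roch formula requires $\ch\lambda-\ch\nu$, not merely $\ch\lambda$, to be large for every weight $\ch\nu$ of $V^{\ch\theta}$; both are easily arranged since the weights form a finite set.

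\textbf{The imprecision.} You assert that $\act: Z^{?,\ch\lambda,0,\ch\theta}_C\to\sY^{\ch\lambda,\ch\theta}$ is ``proper as a base change of $\act_\sM$.'' This is not quite right. The base change of the proper map $\act_\sM:\barM^0_X\ttimes\ol\Gr^{\ch\theta}_{G,C}\to\sM_X$ along $\sY^{\ch\lambda}\to\sM_X$ is the proper map $Z^{?,\ch\lambda}_C\to\sY^{\ch\lambda}$; the subscheme $Z^{?,\ch\lambda,0,\ch\theta}_C$ is the \emph{open} locus lying over $\Bun_H\ttimes\Gr^{\ch\theta}_{G,C}$, and restrictions of proper maps to opens are not proper in general. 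What saves the argument is Lemma~\ref{lem:actimage}(i): the preimage $\act^{-1}(\sY^{\ch\lambda,\ch\theta})$ inside $Z^{?,\ch\lambda}_C$ is already contained in $Z^{?,\ch\lambda,0,\ch\theta}_C$, so the restriction $\act^{-1}(\sY^{\ch\lambda,\ch\theta})\to\sY^{\ch\lambda,\ch\theta}$ \emph{is} the base change of a proper map to a locally closed substack, hence proper. You cite only Lemma~\ref{lem:actimage}(ii) (for surjectivity) and not (i), which is the ingredient actually needed for properness here. (Alternatively one can sidestep properness entirely: surjectivity of $\act$ onto $\sY^{\ch\lambda,\ch\theta}$ together with density of $W$ in the source already forces density of $\act(W)$ by the usual argument with proper closed subsets of the target.)

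With these adjustments your argument goes through, and there is no circularity: you do not use Theorem~\ref{thm:comp}(ii)--(iii), which depend on this lemma.
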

\begin{proof}
First observe that by Lemma~\ref{lem:localglobalyoga} we may
always assume that $\ch\lambda$ is large enough so that
the conditions of Corollary~\ref{cor:Mcover} hold: namely, 
$\sY^{\ch\lambda}\to \sM_X$ is smooth with connected fibers. 

By definition, $\sY^{\ch\lambda,\ch\theta}$ is the preimage 
of $\sM^{\ch\theta}_X$. Now let $V$ be a connected component of 
$\sM^{\ch\theta}_X$ that intersects the image of $\sY^{\ch\lambda}$. 
Then 
$\sY^{\ch\lambda,\ch\theta}_V := \sY^{\ch\lambda} \xt_{\sM_X} V$ is
a nonempty connected component of the smooth scheme
$\sY^{\ch\lambda,\ch\theta}$.
Lemma~\ref{lem:actimage} implies that $V$ lies in  
$\act_\sM(U \ttimes \Gr^{\ch\theta}_{G,C})$, where $U$ is a connected
component of $\Bun_H$. 
Consider the fiber product diagram 
\begin{equation}  \label{e:Z0f-diagram}
\begin{tikzcd} 
Z^{?,\ch\lambda,0,\ch\theta} \ar[r]\ar[d] & \sY^{\ch\lambda} \ar[d] \\ 
\Bun_H \ttimes \Gr^{\ch\theta}_{G,C} \ar[r] & \sM_X 
\end{tikzcd}
\end{equation}
which is the analog of \eqref{e:Z0act-diagram} where we allow $v$
to vary. 
The fibers of $\sY^{\ch\lambda}\to \sM_X$ are irreducible, so
$Z^{?,\ch\lambda,0,\ch\theta}_U := Z^{?,\ch\lambda,0,\ch\theta} \xt_{\Bun_H} U$ is irreducible and the image of 
\[ Z^{?,\ch\lambda,0,\ch\theta}_U \to \sY^{\ch\lambda} \]
contains $\sY^{\ch\lambda,\ch\theta}_V$ as a dense open. 
Proposition~\ref{prop:Yact-strat} (twisted by $C$) implies that 
$Z^{?,\ch\lambda,0,\ch\theta}_U$ has a stratification 
$\bigcup_{\ch\nu} Z^{\ch\lambda-\ch\nu,\ch\lambda,0,\ch\theta}_U$ 
over the weights $\ch\nu$ of $V^{\ch\theta}$, and 
$Z_U^{\ch\lambda-\ch\nu,\ch\lambda,0,\ch\theta}$ maps to 
$\sY^{\ch\lambda-\ch\nu,0}\xt_{\Bun_H} U$. 
In particular, $\sY^{\ch\lambda-\ch\nu,0}_U := \sY^{\ch\lambda-\ch\nu,0}\xt_{\Bun_H} U$ is nonempty
for some $\ch\nu$. Since $\ch\nu \ge \ch\theta$, 
Corollary~\ref{cor:Mcover}(ii) implies that $\sY^{\ch\lambda-\ch\theta,0}_U
$ is also nonempty.
Therefore, the open stratum 
$Z_U^{\ch\lambda-\ch\theta,\ch\lambda,0,\ch\theta} \cong 
\sY^{\ch\lambda-\ch\theta,0}_U \times C$ is nonempty. 
By Corollary~\ref{cor:Ytheta}, we can identify 
$C \cong (\sY^{\ch\theta,\ch\theta})_\red$, and  
the map 
\[ \sY^{\ch\lambda-\ch\theta,0} \oo\xt C \into 
Z^{?,\ch\lambda,0,\ch\theta} \to \sY^{\ch\lambda} \]
coincides with the open embedding 
$\sY^{\ch\lambda-\ch\theta,0} \oo\xt \sY^{\ch\theta,\ch\theta} \into \sY^{\ch\lambda,\ch\theta}$ 
given by graded factorization. 
Then $\sY^{\ch\lambda-\ch\theta}_U \oo\xt C$ is a nonempty 
open subscheme of the irreducible component $\sY^{\ch\lambda,\ch\theta}_V$,
so it must be dense. 

To show (ii), note that \emph{a priori}  
the preimage of 
$\sY^{\ch\lambda,\ch\theta}_V$ in $Z^{?,\ch\lambda,0,\ch\theta}$
is contained in a finite union of 
$Z^{?,\ch\lambda,0,\ch\theta}_U$
for connected components $U$ of $\Bun_H$.
However we saw above that the open stratum $\sY^{\ch\lambda-\ch\theta,0}_U \times C$ is nonempty, and therefore dense in $Z^{?,\ch\lambda,0,\ch\theta}_U$, 
for each such $U$. 
\end{proof}

In the proof above, we have essentially shown 
Theorem \ref{thm:comp}(ii) along the way:

\begin{proof}[Proof of Theorem \ref{thm:comp}(ii)] 
To simplify notation, we only show the case $\ch\Theta = [\ch\theta]$. 
The multi-point version is proved in exactly the same way. 
We continue to use the notation from the previous proof
of Lemma~\ref{lem:comp-cover}. 
By Corollary~\ref{cor:Mcover} and the base change diagram \eqref{e:Z0f-diagram}, 
it is enough to show, by restricting to open dense subsets, that 
\[ Z^{?,\ch\lambda,0,\ch\theta} \to \sY^{\ch\lambda} \xt_{\sM_X} \barM_X^{\ch\theta} \] 
is birational for $\ch\lambda$ large enough. 
It follows from Lemma~\ref{lem:comp-cover} that 
$\sY^{\ch\lambda-\ch\theta,0} \oo\xt C$ is a dense open 
in both the target and source. 
\end{proof}

\begin{proof}[Proof of Theorem \ref{thm:comp}(iii)]
Again to simplify notation we only show the case $\ch\Theta = [\ch\theta]$.
By Zariski's Main Theorem, it suffices to show that 
the restriction of $\act_\sM$ to 
\begin{equation} \label{e:act-bijection}
\act_\sM^{-1}(\sM^{\ch\theta}_X) \cap (\Bun_H \ttimes \Gr_{G,C}^{\ch\theta}) \to \sM_X^{\ch\theta}
\end{equation}
is a bijection on $k$-points. 
By Lemma~\ref{lem:act-equivariant}, this map is equivariant with respect to
generic-Hecke modifications away from the marked point in $C$. 
Fix a marked point $v\in C$ and let $\sM^{\ch\theta}_{X,v}$ denote 
the preimage of $v$ under the projection $\sM^{\ch\theta}_X \to C$.
By Proposition~\ref{prop:genHecke-transitive}, all the $k$-points 
of $\sM^{\ch\theta}_{X,v}$ are equivalent under 
the equivalence relation generated by generic-Hecke correspondences.  
Thus it suffices to determine the fiber of \eqref{e:act-bijection}
at the single point $\{t_v^{\ch\theta}\} = \msf Y^{\ch\theta,\ch\theta}$.  
By Proposition \ref{prop:Yact-strat}, 
the fiber of $t_v^{\ch\theta}$ has a stratification by 
$\msf Y^{\ch\theta-\ch\nu,0} \ttimes (\msf S^{\ch\nu} \cap \ol\Gr^{\ch\theta}_G)$
for $\ch\nu$ a weight of $V^{\ch\theta}$. 
On the other hand, Corollary \ref{cor:Ytheta} implies that 
$\msf Y^{\ch\theta-\ch\nu,0}$ is nonempty only if $\ch\nu=\ch\theta$. 
In this case $\msf Y^{0,0} \times (\msf S^{\ch\theta}\cap \ol\Gr^{\ch\theta}_G)$
is a point. 
\end{proof}

\subsubsection{}
Recall that for an arbitrary algebraic group $H$, the algebraic fundamental
group $\pi_1(H)$ is defined as the quotient of the coweight lattice by the coroot lattice
of the reductive group $H/H_{\mathrm u}$, where $H_{\mathrm u}$ is the unipotent
radical of $H$.
For $\ch\mu\in \pi_1(H)$, let $\Bun_H^{\ch\mu}$ denote the
corresponding connected component of $\Bun_H$ and let $^{\ch\mu}\barM_X^0$ 
be its closure in $\sM_X$.

\begin{cor} \label{cor:strata-comp} 
Let $\ch\Theta \in \Sym^\infty( \mathfrak c_X^- \sm 0)$. 
Then the set of irreducible components of $\barM^{\ch\Theta}_X$ is in bijection 
with $\pi_1(H)$, where $\ch\mu\in \pi_1(H)$ corresponds to 
\[ ^{\ch\mu}\barM_X^{\ch\Theta} := 
\act_\sM(^{\ch\mu}\barM^0_X \ttimes \ol\Gr^{\ch\Theta}_{G,C^{\ch\Theta}}). 
\] 
\end{cor}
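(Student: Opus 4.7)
The strategy is to combine the irreducibility of the source of $\act_\sM$ restricted to each connected component of $\Bun_H$ with the three parts of Theorem~\ref{thm:comp}.

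First, I would establish irreducibility of $^{\ch\mu}\barM_X^0 \ttimes \ol\Gr^{\ch\Theta}_{G,C^{\ch\Theta}}$. Since $H$ is connected (Remark~\ref{rem:Hconnected}), each component $\Bun_H^{\ch\mu}$ is smooth and connected, hence irreducible, and so its closure $^{\ch\mu}\barM_X^0$ is irreducible. On the other hand, the open locus $\ol\Gr^{\ch\Theta}_{G,\oo C^{\ch\Theta}} = \oo\prod_{\ch\theta} \ol\Gr^{N_{\ch\theta}\ch\theta}_{G,\oo C^{(N_{\ch\theta})}}$ is irreducible as a product of irreducibles, so its closure $\ol\Gr^{\ch\Theta}_{G,C^{\ch\Theta}}$ is irreducible. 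The twisted product $^{\ch\mu}\widehat{\sM}_X^0 \xt^{\Scr L^+G}_{\Sym C} \ol\Gr^{\ch\Theta}_{G,C^{\ch\Theta}}$ is therefore irreducible, since $\Scr L^+G$ is connected and locally (étale-locally on $\sM_X \xt C^{(\abs{\ch\Theta})}$) the twisted product is a product of the two irreducible factors.

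Next, by Theorem~\ref{thm:comp}(ii), the proper map $\act_\sM$ is birational onto its image $\barM_X^{\ch\Theta}$ (Theorem~\ref{thm:comp}(i)). In particular, each $^{\ch\mu}\barM_X^{\ch\Theta} := \act_\sM({}^{\ch\mu}\barM_X^0 \ttimes \ol\Gr^{\ch\Theta}_{G,C^{\ch\Theta}})$ is an irreducible closed substack of $\barM_X^{\ch\Theta}$, and they jointly cover $\barM_X^{\ch\Theta}$ as $\ch\mu$ ranges over $\pi_1(H)$.

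It remains to verify that distinct $\ch\mu$ yield distinct irreducible components. For this I invoke Theorem~\ref{thm:comp}(iii): the restriction
\[
\act_\sM : \act_\sM^{-1}(\sM_X^{\ch\Theta}) \cap (\Bun_H \ttimes \Gr_{G,\oo C^{\ch\Theta}}^{\ch\Theta}) \overset\sim\to \sM_X^{\ch\Theta}
\]
is an isomorphism. The scheme $\Gr_{G,\oo C^{\ch\Theta}}^{\ch\Theta}$ is connected (it is a single $\Scr L^+G$-orbit fibration over the connected base $\oo C^{\ch\Theta}$), so the connected components of $\Bun_H \ttimes \Gr_{G,\oo C^{\ch\Theta}}^{\ch\Theta}$ are naturally indexed by $\pi_1(H) = \pi_0(\Bun_H)$. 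Their images under the isomorphism above are therefore pairwise disjoint open subsets of $\sM_X^{\ch\Theta}$, each lying in the corresponding $^{\ch\mu}\barM_X^{\ch\Theta}$. Consequently no $^{\ch\mu}\barM_X^{\ch\Theta}$ is contained in $^{\ch\mu'}\barM_X^{\ch\Theta}$ for $\ch\mu \ne \ch\mu'$, establishing the claimed bijection with $\pi_1(H)$.

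There is no serious obstacle here beyond what is already encoded in Theorem~\ref{thm:comp}; the mildly delicate point is the irreducibility of the twisted product, but this reduces to the standard descent of irreducibility through $\Scr L^+G$-torsors with irreducible base and irreducible fiber.
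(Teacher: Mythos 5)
Your proof is correct and takes essentially the same approach as the paper's, which proves the corollary in two sentences by observing that Theorem~\ref{thm:comp} gives a bijection between irreducible components of $\barM^{\ch\Theta}_X$ and those of $\Bun_H \ttimes \ol\Gr^{\ch\Theta}_{G,C^{\ch\Theta}}$, with the latter indexed by $\pi_0(\Bun_H)=\pi_1(H)$ by irreducibility of $\ol\Gr^{\ch\Theta}_{G,C^{\ch\Theta}}$. You have merely unpacked the details that the paper leaves implicit: the irreducibility of the twisted product (via a connected torsor over an irreducible base with irreducible ``fibers''), the fact that proper maps carry irreducible closed source components to irreducible closed images, and the distinctness of the $^{\ch\mu}\barM_X^{\ch\Theta}$'s via Theorem~\ref{thm:comp}(iii). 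One small point worth making explicit: the distinctness argument needs that each connected component $\Bun_H^{\ch\mu} \ttimes \Gr^{\ch\Theta}_{G,\oo C^{\ch\Theta}}$ actually meets $\act_\sM^{-1}(\sM^{\ch\Theta}_X)$ nontrivially; this follows from the proof of Lemma~\ref{lem:actimage}(ii), which shows that the image of each $U \ttimes \ol\Gr^{\ch\Theta}_{G,C}$ for $U$ a connected component of $\Bun_H$ meets $\sM^{\ch\Theta}_X$ densely.
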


\begin{proof}
 It follows from Theorem \ref{thm:comp} that the irreducible
components of $\barM^{\ch\Theta}_X$ are in bijection with the
irreducible components of $\Bun_H \ttimes \ol\Gr^{\ch\Theta}_{G,C^{\ch\Theta}}$. 
Since $\ol\Gr^{\ch\Theta}_{G,C^{\ch\Theta}}$ is irreducible, the
latter are in bijection with $\pi_0(\Bun_H) = \pi_1(H)$.
\end{proof}

We will let $^{\ch\mu}\sM_X^{\ch\Theta} := {^{\ch\mu}}\barM_X^{\ch\Theta} \cap \sM_X^{\ch\Theta}$
denote the corresponding connected component of $\sM_X^{\ch\Theta}$ (which is smooth). 

\subsubsection{Mirkovi\'c--Vilonen cycles} 

We finish this subsection by proving a result that will be used in the following sections. The goal here is to show that the Mirkovi\'c--Vilonen cycles in the $\ch\theta$-stratum of the affine Grassmannian map, generically, to the $\ch\theta$-stratum of the global model of $X$, under the action map.

Fix $v\in \abs C$ and $\ch\theta \in \mf c_X^- \sm 0$. 
Consider the restriction of $\act_{\sM,v}$ to 
\begin{equation} \label{e:heckemapGr}
    \pt \xt \ol\Gr^{\ch\theta}_{G,v} \to \Bun_H \ttimes \ol\Gr^{\ch\theta}_{G,v} 
\to \barM_X^{\ch\theta}, 
\end{equation}
where $\pt \to \Bun_H$ corresponds to the trivial $H$-bundle. 
Note that the map \eqref{e:heckemapGr} above can be 
extended to a map 
\begin{equation}\label{e:actHG}
    \act_{v} : \Gr_G \xt_{\msf L X/\msf L^+G} (\msf L^+X/\msf L^+G) \to \sM_X
\end{equation}
using Beauville--Laszlo's theorem: a point of the left hand side 
consists of a $G$-bundle $\Scr P_G$ and a trivialization 
$\tau : \sP_G|_{C\sm v} \cong \sP_G^0|_{C\sm v}$ such that 
$\tau^{-1} \circ x_0 : C\sm v \to X \xt^G \sP_G$ is regular when localized at $\mf o_v$. 
Here $x_0 : C \to X \xt^G \Scr P_G^0 = X \xt C$ denotes the section corresponding to
the base point $x_0\in X^\circ$.  
Thus, $\act_{v}(\sP_G,\tau) := (\sP_G, \tau^{-1}\circ x_0) \in \sM_X^{\ch\theta}$
is well-defined. 

Define $\overset\circ\Gr{}^{\ch\theta}_G \subset \Gr^{\ch\theta}_G$ to be the 
open subscheme equal to the preimage 
of the stratum $\sM_X^{\ch\theta}$ under \eqref{e:heckemapGr}. 
We can also identify
$\displaystyle \overset\circ\Gr{}_G^{\ch\theta} = \Gr^{\ch\theta}_G \xt_{\msf L X/\msf L^+G} (\msf L^{\ch\theta}X / \msf L^+G)$.

Taking central fibers with respect to $v$, 
the restriction of $\act_v$ to a semi-infinite orbit
factors through 
\begin{equation}
 \label{e:actvY}
 \msf S^{\ch\lambda} \cap \ol\Gr^{\ch\theta}_G \into \msf Y^{\ch\lambda}\xt_{\sM_X} \barM_X^{\ch\theta}, 
\end{equation}
where $\ch\lambda$ is a weight of $V^{\ch\theta}$
and we consider $(\msf Y^{\ch\lambda})_\red$ as a subscheme of $\msf S^{\ch\lambda}$ via
Lemma~\ref{lem:Y=ScapGr}.
Note that $\msf S^{\ch\lambda}\cap\ol\Gr_G^{\ch\theta}$ is isomorphic to 
the \emph{closed} stratum $Z_v^{0,\ch\lambda}$ from Proposition~\ref{prop:Yact-strat}.
Under this identification \eqref{e:actvY} coincides with the restriction of the map $\act_\sY:Z_v^{?,\ch\lambda}\to \sY^{\ch\lambda}$ from \eqref{e:Zact-diagram}.

Observe that $\msf S^{\ch\theta} \cap \ol\Gr^{\ch\theta}_G = \{ t^{\ch\theta} \}$ 
is contained in $\overset\circ{\Gr}{}^{\ch\theta}_G$. 
We will use this to deduce: 

\begin{lem} \label{lem:MVcentralfiber}
Let $\ch\lambda,\ch\theta$ as above. Then 
$\msf S^{\ch\lambda} \cap \overset\circ\Gr{}^{\ch\theta}_G$
intersects every irreducible component 
of $\msf S^{\ch\lambda} \cap \ol\Gr^{\ch\theta}_G$. 
\end{lem}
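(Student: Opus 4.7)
The plan is to use the hyperplane slicing argument that underlies Proposition~\ref{prop:intersections}, combined with the observation recorded just before the lemma that $t^{\ch\theta} \in \overset\circ\Gr{}^{\ch\theta}_G$.

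First I would let $Z$ be an irreducible component of $\msf S^{\ch\lambda} \cap \ol\Gr^{\ch\theta}_G$, and let $\ol Z$ be its closure in $\olsf S^{\ch\lambda} \cap \ol\Gr^{\ch\theta}_G$. By Mirkovi\'c--Vilonen purity, $\dim \ol Z = \brac{\rho_G,\ch\lambda-\ch\theta}$. The next step is to apply the iterative slicing argument from the proof of Proposition~\ref{prop:intersections} directly to $\ol Z$. The argument there uses only the projectivity of the subvariety under consideration and the hyperplane structure of $\olsf S^{\ch\lambda}\sm\msf S^{\ch\lambda}$ from Proposition~\ref{prop:Shyperplane}(ii); both hypotheses hold for $\ol Z \subset \olsf S^{\ch\lambda}$. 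This yields a coweight $\ch\lambda'\le\ch\lambda$ with $\ol Z\cap \msf S^{\ch\lambda'}$ non-empty of dimension $0$ and $\dim \ol Z \le \brac{\rho_G,\ch\lambda-\ch\lambda'}$.

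Next I would pin down $\ch\lambda'$. Since $\ol Z \subset \ol\Gr^{\ch\theta}_G$ and $\msf S^{\ch\lambda'}\cap \ol\Gr^{\ch\theta}_G$ is non-empty, $\ch\lambda'$ is a weight of $V^{\ch\theta}$, so $\ch\lambda'-\ch\theta \in \ch\Lambda^\pos_G$ and in particular $\brac{\rho_G,\ch\lambda'-\ch\theta}\ge 0$. Combined with the dimension inequality $\brac{\rho_G,\ch\lambda-\ch\theta} = \dim \ol Z \le \brac{\rho_G,\ch\lambda-\ch\lambda'}$, this forces $\ch\lambda'=\ch\theta$. Using the fact that $\msf S^{\ch\theta}\cap\ol\Gr^{\ch\theta}_G=\{t^{\ch\theta}\}$, I would then conclude $t^{\ch\theta}\in \ol Z$.

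To finish, I would note that $\ol Z$ is irreducible and contains two non-empty open subsets: the open stratum $Z = \ol Z \cap \msf S^{\ch\lambda}$ (non-empty because $Z$ is a component of the non-empty locus $\msf S^{\ch\lambda}\cap\ol\Gr^{\ch\theta}_G$), and $\ol Z \cap \overset\circ\Gr{}^{\ch\theta}_G$ (non-empty because it contains $t^{\ch\theta}$). Any two non-empty opens in an irreducible space intersect, so $Z \cap \overset\circ\Gr{}^{\ch\theta}_G \ne \emptyset$, which is what we want. The only delicate point is justifying that the argument of Proposition~\ref{prop:intersections} carries over verbatim to $\ol Z$, even though the proposition is stated only for irreducible components of $\olsf Y^{\ch\lambda}$; as remarked above, this is immediate from inspection of the proof, so the argument presents no substantial obstacle.
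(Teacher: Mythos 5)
Your proposal is correct and follows essentially the same approach as the paper: both proofs run the Mirkovi\'c--Vilonen hyperplane-slicing argument on the closure $\ol Z$ to locate $t^{\ch\theta}\in \ol Z$, then conclude by intersecting the two non-empty opens $Z$ and $\ol Z \cap \overset\circ\Gr{}^{\ch\theta}_G$ in the irreducible $\ol Z$. The only superficial difference is that you invoke Proposition~\ref{prop:intersections} as a black box (correctly noting its proof applies verbatim to $\ol Z$) and then pin down $\ch\lambda'$ by the $\brac{\rho_G,-}$-pairing, whereas the paper briefly re-runs the slicing inline; the substance is identical.
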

\begin{proof}
Let $Z$ denote an irreducible component of $\msf S^{\ch\lambda}\cap \ol\Gr^{\ch\theta}_G$, which must be 
of dimension $d=\brac{\rho_G, \ch\lambda-\ch\theta}$.
Let $\ol Z$ denote its closure in $\olsf S^{\ch\lambda} \cap \ol\Gr^{\ch\theta}_G$. 
Then the proof of \cite[Theorem 3.2]{MV}, which we briefly recall in the next paragraph,
shows that $\ol Z$ contains $t^{\ch\theta} \in \overset\circ\Gr{}^{\ch\theta}_G$.
Thus, $\ol Z \cap \overset\circ\Gr{}^{\ch\theta}_G$ is open and nonempty, hence dense 
in $\ol Z$. 

Since the boundary of $\msf S^{\ch\lambda}$ in $\olsf S^{\ch\lambda}$ 
is a hyperplane section (Proposition~\ref{prop:Shyperplane}), 
$\ol Z\sm Z$ contains an irreducible component of dimension $d-1$
inside $\msf S^{\ch\lambda_1}\cap \ol\Gr^{\ch\theta}_G$ for $\ch\lambda_1 < \ch\lambda$. 
In this way we produce a sequence $\ch\lambda=\ch\lambda_0, \dotsc, \ch\lambda_d$
and irreducible components of $\msf S^{\ch\lambda_i} \cap \ol Z$ 
of dimension $d-i$. 
The only weight $\ch\lambda_d$ of $V^{\ch\theta}$ such that 
$\brac{\rho_G,\ch\lambda-\ch\lambda_d} \ge d$ 
is $\ch\lambda_d=\ch\theta$, so $\ol Z$ must contain $\msf S^{\ch\theta}\cap \ol\Gr^{\ch\theta}_G
= \{ t^{\ch\theta} \}$. 
\end{proof}

\subsection{Closure relations and components in the global model}
\label{sect:closures}

Let $\ch\Theta, \ch\Theta' \in \Sym^\infty(\mathfrak c_X^- \sm 0)$. Consider $\ch\Theta'-\ch\Theta$ as a formal sum 
in $\bigoplus_{\ch\theta\in \mathfrak c_X^-\sm 0} \mbb Z [\ch\theta]$. We say that 
\[ \ch\Theta' \succeq \ch\Theta \] 
if $\ch\Theta'-\ch\Theta$ can be written as a sum of formal differences 
$[\ch\theta']-[\ch\theta]$ where $\ch\theta,\ch\theta'\in \mathfrak c_X^-$ and $\ch\theta' \succeq \ch\theta$. 
Note that we allow $\ch\theta=0$ (in which case $[0]=0$), 
and for general $\ch\theta,\ch\theta'$ as
above, it is \emph{not} necessarily the case that $\ch\theta-\ch\theta' \in \Cal V$. 

\begin{prop} \label{prop:closure-rel}
Let $\ch\Theta,\ch\Theta' \in \Sym^\infty(\mathfrak c_X^- \sm 0)$. 
We have that $\sM^{\ch\Theta'}_X$ lies in the closure of $\sM^{\ch\Theta}_X$
if and only if there exists $\ch\Theta'' \in \Sym^\infty(\mathfrak c_X^-\sm 0)$ such that $\ch\Theta$ refines $\ch\Theta''$ and $\ch\Theta' \succeq \ch\Theta''$. 
\end{prop}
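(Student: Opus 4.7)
The plan is to translate both directions through Theorem~\ref{thm:comp}(i), which realizes each closure $\barM_X^{\ch\Theta}$ as the image under $\act_\sM$ of $\barM_X^0\ttimes \ol\Gr^{\ch\Theta}_{G,C^{\ch\Theta}}$; the combinatorics then reads off from the Schubert stratification and factorization of the Beilinson--Drinfeld Grassmannian together with the valuation bound of Lemma~\ref{lem:act-kpoints}. An essential preliminary, which is the special case $\ch\Theta=\emptyset$ of the proposition, is that the strata of $\barM_X^0$ are exactly those $\sM_X^{\ch\Psi}$ with $\ch\Psi\succeq 0$; one inclusion comes from Lemma~\ref{lem:Ynonempty} (nonemptiness of $\sY^{\ch\lambda,0}$ forces $\ch\lambda\succeq 0$) together with the local-global yoga of Lemma~\ref{lem:localglobalyoga}, while the reverse would be obtained by an explicit construction of color-degenerations from $\Bun_H$ using Theorem~\ref{thm:localstructure}.

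For the forward implication, given $m'\in \sM_X^{\ch\Theta'}(k)\cap \barM_X^{\ch\Theta}$, Theorem~\ref{thm:comp}(i) provides a lift $(m_0,g)\in \barM_X^0\ttimes \ol\Gr^{\ch\Theta}_{G,C^{\ch\Theta}}$. Grouping the entries of $\ch\Theta$ by the point $w\in \abs C$ at which they are supported in the divisor data underlying $g$ produces a $\mathfrak c_X^-$-coweight $\ch\eta_w:=\sum_{\ch\theta}m^{\ch\theta}_w\ch\theta$ and a multiset $\ch\Theta'':=\sum_{\ch\eta_w\ne 0}[\ch\eta_w]$ that $\ch\Theta$ refines by construction. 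By \eqref{e:GrTheta-fiber}, $g|_w$ lies in $\Gr^{\ch\eta'_w}_G$ for some $\ch\eta'_w\in \ch\Lambda_G^-$ with $\ch\eta'_w\ge \ch\eta_w$; since each simple coroot satisfies $\ch\alpha=\ch\nu_{D_\alpha^+}+\ch\nu_{D_\alpha^-}\in \mathfrak c_X^{\Cal D}$ under our standing type-$T$ assumption, we have $\ch\Lambda^\pos_G\subset \mathfrak c_X^{\Cal D}$ and therefore $\ch\eta'_w\succeq \ch\eta_w$. Combining Lemma~\ref{lem:act-kpoints} with the preliminary bound $\ch\mu_w\succeq 0$ on the $G$-valuation of $m_0$ at $w$ yields $\ch\theta'_w\succeq \ch\mu_w+\ch\eta'_w\succeq \ch\eta_w$ for the $G$-valuation of $m'$ at $w$, from which $\ch\Theta'\succeq \ch\Theta''$ follows.

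For the backward implication, by transitivity it suffices to treat two elementary moves. A single refinement merge $[\ch\theta_1]+[\ch\theta_2]\to [\ch\theta_1+\ch\theta_2]$ (both $\ch\theta_i$ antidominant) is realized inside $\Gr_{G,\Sym C}$ by colliding supporting points: the convolution $\ol\Gr^{\ch\theta_1}_G\ttimes \ol\Gr^{\ch\theta_2}_G\to \Gr_G$ has image $\ol\Gr^{\ch\theta_1+\ch\theta_2}_G$, so the closure of $\ol\Gr^{\ch\Theta}_{G,\oo C^{\ch\Theta}}$ in $\Gr_{G,\Sym C}$ contains $\ol\Gr^{\ch\Theta''}_{G,\oo C^{\ch\Theta''}}$; applying $\act_\sM$ and Theorem~\ref{thm:comp}(i) for $\ch\Theta''$ gives $\sM_X^{\ch\Theta''}\subset \barM_X^{\ch\Theta}$. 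A single $\succeq$-move $[\ch\theta]\to [\ch\theta']$ in $\mathfrak c_X^-$ reduces, via Lemma~\ref{lem:Y=ScapGr} and the local-global yoga, to the assertion that $\msf L^{\ch\theta'}X$ lies in the closure of $\msf L^{\ch\theta}X$ inside $\msf L^+X$; this is a Luna--Vust-type statement for arc spaces, which I would deduce from the affine degeneration of \S\ref{sect:affine-degeneration} together with a color-by-color analysis analogous to the Pl\"ucker degenerations used in \cite{BG, BFGM} for Drinfeld's compactification of $\Bun_N$. The hardest aspect, common to the backward implication and to the preliminary input on $\barM_X^0$, will be controlling closures along the $\succeq$ order in $\mathfrak c_X^-$ without recourse to the more elementary $\ge$ closures inside the affine Grassmannian.
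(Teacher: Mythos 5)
The forward ("only if") direction of your argument is essentially the same as the paper's and correct: Theorem~\ref{thm:comp}(i) lifts a degenerate point, the Schubert stratification of the fibers of $\ol\Gr^{\ch\Theta}_{G,C^{\ch\Theta}}\to C^{\ch\Theta}$ together with $\ch\Lambda^\pos_G\subset\mathfrak c_X^{\Cal D}$ (from the type-$T$ assumption) controls the colliding part, and Lemma~\ref{lem:act-kpoints} plus the preliminary bound on $\barM_X^0$ controls the Hecke part. Your convolution argument for the refinement merge is fine too (and is essentially the content of \eqref{e:GrTheta-fiber}).

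There are, however, two genuine gaps, both on the "if" side, and you flag the major one yourself. First, the preliminary statement that $\barM_X^0$ meets $\sM_X^{\ch\Psi}$ only when $\ch\Psi\succeq 0$ does not follow from Lemma~\ref{lem:Ynonempty} and local-global yoga alone: that lemma asserts nonemptiness conditions for the $0$-stratum $\sY^{\ch\lambda,0}$, not which strata the \emph{closure} of $\sY^{\ch\lambda,0}$ can meet. The paper's Lemma~\ref{lem:M0closure} requires passing to the central extension $G'\to G$ of \S\ref{sect:freemonoid} (so that $\mathfrak c_{X'}=\mbb N^{\Cal D}$), embedding $\barY^D_{X'}\to\barY^{\ch\lambda}_X$, and invoking properness of the compactified Zastava map — none of which is in your sketch.

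Second, and more seriously, the elementary $\succeq$-move $[\ch\theta]\to[\ch\theta']$ is reduced to the assertion that $\msf L^{\ch\theta'}X$ lies in the closure of $\msf L^{\ch\theta}X$, and you then say you "would deduce" this from the affine degeneration and "a color-by-color analysis analogous to the Pl\"ucker degenerations," ending with the acknowledgment that this is "the hardest aspect." That is precisely where the paper does the real work, and your sketch does not supply it. The actual argument proceeds through the Zastava model, not the arc space directly: one writes $\ch\theta'-\ch\theta=\sum_j\ch\nu_{D_j}$ as a sum of color valuations, forms the factorized locus $\oo{\Scr C}=\sY^{\ch\theta,\ch\theta}\oo\xt\oo\prod_j\sY^{D_j}_{X^\bullet}$ inside $\sY^{\ch\theta'}$, identifies each central fiber $\msf Y^{D_j}_{X^\bullet}$ as a single point $n_j t^{\ch\nu_j}$ in $\msf L^+N\cdot t^{\ch\nu_j}\subset\Gr_B$ (this uses Lemma~\ref{lem:Ycolor}, reducing to $\mbb G_m\bs\GL_2$), embeds $\oo{\Scr C}$ inside the $(\Scr L^+N)_{C^{d+1}}$-orbit of the locus $(v_0,\dotsc,v_d)\mapsto t_{v_0}^{\ch\theta}\prod_j t_{v_j}^{\ch\nu_j}$ in $\Gr_{T,C^{d+1}}$, and finally uses $C=\bbA^1$ and the contracting action $a\cdot(v_0,\dotsc,v_d)$ as $a\to 0$ to construct an explicit curve landing on $t^{\ch\theta'}_0$; the crucial collapse is that $\msf L^+N\cdot t^{\ch\theta'}=\{t^{\ch\theta'}\}$ because $\ch\theta'$ is antidominant. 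Generic-Hecke transitivity (Proposition~\ref{prop:genHecke-transitive}) is then used to spread this one degeneration to the whole stratum. Without this explicit construction, the $\succeq$-degeneration remains unproven, and I do not see how to replace it by a purely local arc-space statement derived from \S\ref{sect:affine-degeneration}, since that degeneration is governed by the $\ge$-order (positive coroots), whereas what you need is the strictly finer $\succeq$-order (color valuations), which sees $B$-data rather than just $G$-data.
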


\begin{rem}
We warn that our notation in Corollary~\ref{cor:strata-comp} is unfortunately 
\emph{not} compatible
with the closure relations in the following sense: 
it is possible that $^{\ch\mu} \barM^{\ch\Theta}_X \cap \sM^{\ch\Theta'}_X 
\ne {^{\ch\mu}\sM^{\ch\Theta'}_X}$ for $\ch\Theta' \succ \ch\Theta$ 
and $\ch\mu \in \pi_1(H)$.
\end{rem}

The reader may want to skip the proof of this proposition at first reading, and focus on the corollaries that follow. Note that the proof will use Zastava models, despite the fact that the statement is about the global model. The proof of the proposition starts with the following special case:

\begin{lem} \label{lem:M0closure}
The closure of $\sM_X^0$ in $\sM_X$ intersects 
$\sM_X^{\ch\Theta}$ only if $\ch\Theta \succeq 0$. 
\end{lem}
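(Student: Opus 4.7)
The plan is to lift the problem to the Zastava model, apply the graded factorization property to reduce to the case of a single valuation, and then invoke Lemma~\ref{lem:Ynonempty}.

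Let $p$ be a $k$-point of $\sM_X^{\ch\Theta}$ lying in the closure of $\sM_X^0$. By Corollary~\ref{cor:Mcover}, I would choose $\ch\lambda \in \mf c_X$ sufficiently large that the map $\sY^{\ch\lambda}\to \sM_X$ is smooth, that $p$ lifts to some $\tilde p \in \sY^{\ch\lambda,\ch\Theta}$, and that $\ch\lambda_0 := \ch\lambda - \sum_{\ch\theta} N_{\ch\theta}\ch\theta$ still lies in $\mf c_X$ (achievable since $\ch\Lambda_G^\pos \subset \mf c_X$ and any fixed coweight is absorbed by a sufficiently deep element of $\ch\Lambda_G^\pos$). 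Because smooth morphisms are open, they satisfy $f^{-1}(\overline U) = \overline{f^{-1}(U)}$; applied to $U = \sM_X^0$, whose preimage in $\sY^{\ch\lambda}$ is $\sY^{\ch\lambda,0}$, this shows that $\tilde p$ lies in the closure of $\sY^{\ch\lambda,0}$ inside $\sY^{\ch\lambda}$.

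Write $\ch\Theta = \sum N_{\ch\theta}[\ch\theta]$. The image of $\tilde p$ in $\sA^{\ch\lambda}$ is a divisor of the form $\sum_{\ch\theta}\sum_{j=1}^{N_{\ch\theta}} \ch\theta\cdot v_{\ch\theta,j}$ with the points $\{v_{\ch\theta,j}\}$ all distinct; hence it lies in the disjoint locus $\sA^{\ch\lambda_0} \oo\xt \prod \sA^{\ch\theta}$, where the product has $N_{\ch\theta}$ copies of $\sA^{\ch\theta}$. By iterated application of the graded factorization property (Proposition~\ref{prop:factorization}), there is an \'etale cover of an open neighborhood of $\tilde p$ in $\sY^{\ch\lambda}$ by $\sY^{\ch\lambda_0} \oo\xt \prod_{\ch\theta,j}\sY^{\ch\theta}$. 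Because the stratification of $\sM_X$ is pointwise-local on $C$, this identification matches the preimage of $\sY^{\ch\lambda,\ch\Theta}$ with $\sY^{\ch\lambda_0,0} \oo\xt \prod \sY^{\ch\theta,[\ch\theta]}$ and the preimage of $\sY^{\ch\lambda,0}$ with $\sY^{\ch\lambda_0,0}\oo\xt \prod \sY^{\ch\theta,0}$.

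Selecting a lift of $\tilde p$ to this \'etale cover and using that closure commutes with products (in the fibered/disjoint sense), membership of $\tilde p$ in the closure of $\sY^{\ch\lambda,0}$ forces each factor $\sY^{\ch\theta,0}$ with $N_{\ch\theta}>0$ to be nonempty. By Lemma~\ref{lem:Ynonempty}, this requires $\ch\theta \succeq 0$ for every such $\ch\theta$. Taking each $\ch\theta_k=0$ in the definition of $\succeq$ on $\Sym^\infty(\mf c_X^-\sm 0)$, this implies $\ch\Theta \succeq 0$, completing the argument. The principal technical point is the compatibility of the stratification with graded factorization, which follows from the local, pointwise definition of the strata $\sM_X^{\ch\Theta}$ recalled in \S\ref{sect:globstrat}; everything else is a direct application of the results already established.
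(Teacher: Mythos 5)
Your argument has a genuine gap in the second paragraph, where you describe the image of $\tilde p$ under $\pi : \sY^{\ch\lambda} \to \sA^{\ch\lambda}$. You claim that $\pi(\tilde p)$ is the divisor $\sum_{\ch\theta}\sum_j \ch\theta\cdot v_{\ch\theta,j}$ supported at the $G$-degenerate points with values equal to the $G$-valuations $\ch\theta$. This conflates two different invariants: the divisor $\pi(\tilde p)$ records the \emph{$B$-valuations} $\ch\lambda_w \in \mf c_X$ at all $B$-degenerate points $w$, whereas $\ch\Theta$ records the \emph{$G$-valuations} at the $G$-degenerate points. These are genuinely different. At a $G$-degenerate point $v$ with $G$-valuation $\ch\theta$, the $B$-valuation $\ch\lambda_v$ only satisfies $\ch\lambda_v \succeq \ch\theta$ (this is precisely Corollary~\ref{cor:Ytheta}(i)), with equality holding only on the single point $\msf Y^{\ch\theta,\ch\theta} = \{t^{\ch\theta}\}$ of the central fiber. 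Moreover, there will typically be additional $B$-degenerate points outside the $G$-degenerate locus. So the proposed factorization does not apply, and even if you instead factorize at all $B$-degenerate points (which is legitimate), Lemma~\ref{lem:Ynonempty} only yields $\ch\lambda_w \succeq 0$ for each $B$-valuation --- which is insufficient, since $\ch\lambda_w \succeq \ch\theta_w$ and $\ch\lambda_w \succeq 0$ together do not imply $\ch\theta_w \succeq 0$. (One could try to salvage this by choosing a particular lift $\tilde p$ with $\ch\lambda_v = \ch\theta$ at the $G$-degenerate points, but you would then need to justify that such a lift exists in $\sY^{\ch\lambda}$ for $\ch\lambda$ large enough, and the gluing argument for the $B$-structure is nontrivial; this is not addressed.)

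The paper's proof circumvents this issue by passing to the torus cover $H'\bs G' \to X^\bullet$ of Lemma~\ref{lem:Xcover}, for which $\mf c_{X'} = \mbb N^{\Cal D}$, so that \emph{all} elements of $\Sym^\infty(\mf c_{X'}^-\sm 0)$ are automatically $\succeq 0$. It then uses properness of the compactified Zastava map $\barY^D_{X'} \to \barY^{\ch\lambda}_X$ (on the base level, $\sA^D_{X'} \to \sA^{\ch\lambda}_X$ is finite) to conclude that the closure of $\sY^D_{X^\bullet}$ in $\sY^{\ch\lambda}_X$ is covered by strata pulled back from $\sM_{X'}$, all of which satisfy $\succeq 0$. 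This use of the compactified Zastava model and the torus cover is the key idea that your argument is missing.
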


\begin{proof}
Let $\ch\Theta$ correspond to a stratum such that 
$\sM_X^{\ch\Theta}$ intersects $\barM_X^0$. Then by
Corollary~\ref{cor:Mcover} there exists $\ch\lambda$ 
such that $\sY^{\ch\lambda,\ch\Theta}$ intersects the closure
of $\sY^{\ch\lambda,0}$ in $\sY^{\ch\lambda}$. 
Now consider the torsor $H'\bs G' \to X^\bullet$ from 
Lemma~\ref{lem:Xcover}. 
Let $X' = \Spec k[H'\bs G']$ considered as an affine $G'$-variety.
The corresponding compactified Zastava model 
$\barY^{D}_{X'} \to \sA^{D}_{X'}$ is indexed
by $D\in \mf c_{X'}=\mbb N^{\Cal D}$. 
Since $X'^\bullet/B' \cong X^\bullet/B$ as stacks, 
$\sY^{\ch\lambda,0}_X$ is a disjoint union of $\sY^D_{X^\bullet}=\sY^{D,0}_{X'}$
ranging over all $D\in \mbb N^{\Cal D}$ such that $\varrho_X(D)=\ch\lambda$
(see \S\ref{sect:Y0cc}). 
Choose $D\in \mbb N^{\Cal D}$ such that the closure of 
$\sY^D_{X^\bullet}$ in $\sY^{\ch\lambda}_X$ intersects 
the stratum $\sY^{\ch\lambda,\ch\Theta}_X$. 
The map 
\begin{equation} \label{e:mapbarYupdown}
 \barY^{D}_{X'} \to \barY^{\ch\lambda}_X  
\end{equation}
is proper because $\barY^{D}_{X'}, \barY^{\ch\lambda}_X$
are proper over $\sA^{D}_{X'},\sA^{\ch\lambda}_X$, respectively,
and the natural map $\sA^{D}_{X'}\to \sA^{\ch\lambda}_X$ 
is proper. Therefore, the closure of $\sY^{D}_{X^\bullet}$
in $\sY^{\ch\lambda}_X$ is contained in the image
of \eqref{e:mapbarYupdown}.
Note that the stratification of $\sM_{X'}$ is indexed by 
$\ch\Theta' \in \Sym^\infty(\mathfrak c_{X'}^-\sm 0)$. 
In particular, $\ch\Theta' \succeq 0$ since $\mf c_{X'}=\mbb N^{\Cal D}$. 
Therefore, $\barY^{D}_{X'}$ is a union of 
$\barY^{D}_{X'} \xt_{\sM_{X'}} \sM_{X'}^{\ch\Theta'}$
for $\ch\Theta' \succeq 0$, which implies its image in 
$\barY^{\ch\lambda}_X$ is contained in the union 
of $\barY^{\ch\lambda}_X \xt_{\sM_X} \sM_X^{\ch\Theta}$
for $\ch\Theta \succeq 0$.
\end{proof}

\begin{proof}[{Proof of Proposition~\ref{prop:closure-rel}}] 
By Theorem \ref{thm:comp}(i), the closure of $\sM^{\ch\Theta}_X$ is equal to 
the image of $\barM^0_X \ttimes \ol\Gr^{\ch\Theta}_{G,C^{\ch\Theta}}$,
so we will consider the latter. 
Note that $C^{\ch\Theta}$ is stratified by disjoint loci $\oo C^{\ch\Theta''}$ 
for all partitions $\ch\Theta''$ such that $\ch\Theta$ refines 
$\ch\Theta''$. By the description of the fibers of 
$\Gr^{\ch\Theta}_{G, C^{\ch\Theta}} \to C^{\ch\Theta}$ in 
\eqref{e:GrTheta-fiber}, we have an identification 
\[
\ol\Gr^{\ch\Theta}_{G,C^{\ch\Theta}} \xt_{C^{\ch\Theta}} \oo C^{\ch\Theta''} 
= \ol\Gr^{\ch\Theta''}_{G, \oo C^{\ch\Theta''} } 
\]
at the level of reduced schemes. Therefore, replacing $\ch\Theta$ by $\ch\Theta''$, 
it suffices to show that 
the image of $\barM_X^0 \ttimes \ol\Gr^{\ch\Theta}_{G, \oo C^{\ch\Theta}}$
contains $\sM_X^{\ch\Theta'}$ if and only if $\ch\Theta' \succeq \ch\Theta$.

The ``only if'' direction follows from the description of
$\act_\sM$ on $k$-points and Lemmas~\ref{lem:act-kpoints} 
and \ref{lem:M0closure}. 

\medskip

We will show the ``if'' direction only 
in the case when $\ch\Theta=[\ch\theta],\, \ch\Theta'=[\ch\theta']$ 
are singleton (so $\ch\theta'\succeq \ch\theta$) to lessen notation 
(allowing $\ch\theta=0$), but the multi-point version is proved in 
exactly the same way. 
Fix $v\in \abs C$. We have a distinguished point in $\sM_X^{\ch\theta'}$ degenerate
at $v$: the image under $\act_v$ of $t^{\ch\theta'} \in \msf Y^{\ch\theta',\ch\theta'}$. 
We will show that $\barM_X^{\ch\theta}$ contains this point. 
Then, stability of $\barM_X^{\ch\theta}$ under generic-Hecke correspondences 
(Theorem~\ref{thm:comp}(i) and Lemma~\ref{lem:act-equivariant}) 
and Proposition~\ref{prop:genHecke-transitive} imply that 
$\barM_X^{\ch\theta}$ contains all of $\sM_{X,v}^{\ch\theta'}$. 

\smallskip

Since $\ch\theta' \succeq \ch\theta$, we can decompose 
$\ch\theta' - \ch\theta = \sum_{j=1}^d \ch\nu_j$ for (not necessarily distinct) 
$\ch\nu_j$ equal to valuations of colors $D_j \in \Cal D$ 
(in case $D_j$ is not uniquely determined by its valuation, the choice of $D_j$ is arbitrary). 
The graded factorization property gives a map 
$\oo{\Scr C} := \sY^{\ch\theta,\ch\theta} \oo\xt \oo\prod_j \sY^{D_j}_{X^\bullet} 
\to \sY^{\ch\theta'}$. 
We claim that the image of 
$\oo{\Scr C}$ contains $C=\sY^{\ch\theta',\ch\theta'}_\red$ in its closure. 
(Note that $\oo{\Scr C}_\red = \oo C^{d+1}$.) 
This will produce an irreducible variety whose generic point maps to $\sM_X^{\ch\theta}$ 
while a special point maps to $\act_v(t^{\ch\theta'}) \in \sM_X^{\ch\theta'}$.

\smallskip

Consider the Beilinson--Drinfeld affine Grassmannian $\Gr_{B,C^{d+1}}$, 
whose fiber over $d+1$ pairwise distinct points $(v_0,\dotsc, v_d)\in \oo C^{d+1}$ 
is $\prod_{j=0}^d \Gr_B$ while the fiber over $v_0=\dotsc=v_d$ is $\Gr_B$. 
By Lemma~\ref{lem:Y=ScapGr}, we have an isomorphism $\msf Y^{\ch\lambda}\cong \Gr_B^{\ch\lambda}
\xt_{\msf L X/\msf L^+B}(\msf L^+X/\msf L^+B)$,
which depends on a fixed base point $x_0 \in X$. 
Now a reduction to $\mbb G_m \bs \GL_2$ (see proof of Lemma~\ref{lem:Ycolor}
and Example~\ref{eg:Yfiber})
shows that the central fiber $\msf Y^{D_j}_{X^\bullet} = \pt$ is contained in 
$\msf L^+ N \cdot t^{\ch\nu_j} \subset \Gr_B^{\nu_j}$.
Therefore, $\oo{\Scr C}$ is contained in 
the orbit of the multi-point jet space $(\Scr L^+N)_{C^{d+1}}$ acting on 
the closed subscheme $C^{d+1}\subset \Gr_{T, C^{d+1}}\subset \Gr_{B,C^{d+1}}$
given by $(v_0,\dotsc, v_d) \mapsto t_{v_0}^{\ch\theta} \prod_j t_{v_j}^{\ch\nu_j}$, where
the $v_j$'s are allowed to collide. 
The orbit of $(v_0,\dotsc, v_d) \in \oo C^{d+1}$ is 
\[ \textstyle
\{t_{v_0}^{\ch\theta}\} \xt \prod_{j=1}^d (\msf L^+ N \cdot t^{\ch\nu_j}_{v_j}) 
\subset \Gr_{B,v_0}^{\ch\theta} \xt \prod_{j=1}^d \Gr_{B,v_j}^{\ch\nu_j},
\]
while the orbit of the diagonal $v=v_0=\dotsc=v_d$ 
is $\msf L^+ N \cdot t^{\ch\theta'}_v = \{ t^{\ch\theta'}_v \} \subset \Gr_{B,v}^{\ch\theta'}$
since $\ch\theta'$ is antidominant. 

Now assume that $C=\mbb A^1$, which is justified by Proposition~\ref{prop:changecurve}.
Then we can identify $(\Scr L^+ N)_C = \msf L^+ N \times C$. 
Let $\msf Y^{D_j}_{X^\bullet}$ correspond to the point $n_j t^{\ch\nu_j}\in \Gr_B$
for $n_j \in \msf L^+N(k)$. 
For any pairwise distinct $v_0,\dotsc, v_d \in \mbb A^1$ we have
a line $\mbb A^1 \to C^{d+1} : a \mapsto (a v_0, \dotsc, a v_d)$ contracting all points to $0$. 
Multiplication defines a map 
$m: (\msf L^+ N \times C)^d \to (\Scr L^+ N)_{C^d}$. 
Letting $m(n_1,\dotsc,n_d)$ act 
on the point $t_{a v_0}^{\ch\theta} \prod_j t_{a v_j}^{\ch\nu_j} 
\in \Gr_{T,C^{d+1}}$ as $a\to 0$, 
we get a curve connecting $\{ t^{\ch\theta}_{v_0} \} \xt \prod \msf Y^{D_j}_{X^\bullet,v_j}$ to 
$t^{\ch\theta'}_0$. 
Hence the closure of $\oo{\Scr C}$ 
in $\sY^{\ch\theta'} \xt_{\sA^{\ch\theta'}} C^{d+1} \subset \Gr_{B,C^{d+1}}$ contains $\sY^{\ch\theta',\ch\theta'}_\red$.
Since the map $C^{d+1} \to \sA^{\ch\theta'}$ is finite and $\oo{\Scr C}$ is irreducible,
we have proved the claim.
\end{proof}

Now we draw some corollaries from Proposition \ref{prop:closure-rel}.

\begin{cor}  \label{cor:can-comp}
The open substack $\sM_X^0 = \Bun_H$ is dense in $\Scr M_{X^\can}$ iff $\mathfrak c_X^{\Cal D} \cap \Cal V = \mf c_{X^\can}^-$. 
\end{cor}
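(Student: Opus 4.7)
The plan is to apply Proposition~\ref{prop:closure-rel} to the stratification of $\sM_{X^\can}$. Recall that $\sM_{X^\can}$ decomposes as a disjoint union of locally closed substacks $\sM_{X^\can}^{\ch\Theta'}$ indexed by $\ch\Theta' \in \Sym^\infty(\mf c_{X^\can}^- \sm 0)$, with the open stratum being $\sM_X^0 = \sM_{X^\can}^{0} = \Bun_H$. Hence $\sM_X^0$ is dense in $\sM_{X^\can}$ if and only if every nonempty stratum $\sM_{X^\can}^{\ch\Theta'}$ is contained in $\overline{\sM_X^0}$. By Proposition~\ref{prop:closure-rel} applied with $\ch\Theta = 0$, such a containment is equivalent to the existence of some $\ch\Theta''$ with $0$ refining $\ch\Theta''$ and $\ch\Theta' \succeq \ch\Theta''$; since the monoid $\mf c_{X^\can}$ is strictly convex (being the lattice points of the strictly convex cone $\Cal C_0(X^\can)$), the only element of $\Sym^\infty(\mf c_{X^\can}^- \sm 0)$ that is refined by $0$ is $0$ itself, so the condition collapses to $\ch\Theta' \succeq 0$.

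Next I would reduce to singleton partitions. Every singleton stratum $\sM_{X^\can}^{[\ch\theta]}$ for $\ch\theta \in \mf c_{X^\can}^- \sm 0$ is nonempty, by Lemma~\ref{lem:actimage}(ii) applied to $X^\can$: the image of the action map $\Bun_H \ttimes \Gr^{\ch\theta}_{G,C} \to \sM_{X^\can}$ contains $\sM_{X^\can}^{[\ch\theta]}$ as a dense open, and $\Bun_H$ is nonempty. For such a singleton, $[\ch\theta] \succeq 0$ is equivalent to $\ch\theta \in \mathfrak c_X^{\Cal D}$: the ``if'' direction is witnessed by $[\ch\theta] - [0]$, while for ``only if'', any decomposition $[\ch\theta] = \sum_i ([\ch\theta_i'] - [\ch\theta_i])$ with $\ch\theta_i' \succeq \ch\theta_i$ maps, under the forgetful map $\bigoplus_{\mf c_{X^\can}^- \cup \{0\}} \mbb Z \to \ch\Lambda$ sending $[\ch\mu] \mapsto \ch\mu$, to the relation $\ch\theta = \sum_i (\ch\theta_i' - \ch\theta_i) \in \mathfrak c_X^{\Cal D}$. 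More generally, any partition $\ch\Theta' = \sum_j N_j [\ch\eta_j]$ with each $\ch\eta_j \in \mathfrak c_X^{\Cal D}$ trivially satisfies $\ch\Theta' \succeq 0$ via $\sum_j N_j ([\ch\eta_j] - [0])$. Therefore density of $\sM_X^0$ in $\sM_{X^\can}$ is equivalent to $\mf c_{X^\can}^- \subseteq \mathfrak c_X^{\Cal D}$.

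Finally, the reverse containment $\mathfrak c_X^{\Cal D} \cap \Cal V \subseteq \mf c_{X^\can}^-$ is automatic: the valuations $\ch\nu_D$ of colors lie in the cone $\Cal C_0(X^\can)$ (they generate it), so $\mathfrak c_X^{\Cal D} \subseteq \mf c_{X^\can}$, and intersecting with $\Cal V$ yields the desired inclusion. Combined with the forward inclusion $\mf c_{X^\can}^- \subseteq \mathfrak c_X^{\Cal D}$ and the obvious $\mf c_{X^\can}^- \subseteq \Cal V$, this yields the equality $\mathfrak c_X^{\Cal D} \cap \Cal V = \mf c_{X^\can}^-$. The main technical inputs are Proposition~\ref{prop:closure-rel} and Theorem~\ref{thm:comp} (via Lemma~\ref{lem:actimage}), both of which are already available; I do not anticipate additional obstacles beyond routine bookkeeping with the partial order $\succeq$.
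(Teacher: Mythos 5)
Your proof is correct and takes essentially the same route as the paper's: both reduce density of $\sM_X^0$ in $\sM_{X^\can}$ to the condition that every $\ch\Theta' \in \Sym^\infty(\mf c_{X^\can}^-\sm 0)$ is $\succeq 0$, via Proposition~\ref{prop:closure-rel}, and then identify this with $\mf c_{X^\can}^- \subseteq \mathfrak c_X^{\Cal D}$ (the reverse inclusion being automatic since the colors generate $\Cal C_0(X^\can)$). You spell out the reduction to singleton partitions, the non-emptiness via Lemma~\ref{lem:actimage}(ii), and the translation between the partition order $\succeq$ and the lattice order $\succeq$, all of which the paper leaves implicit in its one-line proof.
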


This is an analog of \cite[Proposition 1.2.3]{BG}, which says that 
$\Bun_B$ is dense in $\ol\Bun_B$ if $[G,G]$ is simply connected. 

\begin{proof}
Indeed, $\mathfrak c_X^{\Cal D} \cap \Cal V = \mf c_{X^\can}^-$ is precisely the condition that every $\ch\Theta \in \Sym^\infty(\mathfrak c_X^- \sm 0)$ is $\succeq 0$.
\end{proof}

Define $\Cal D^G_{\mathrm{sat}}(X)$ to be 
the set of primitive elements in $\Prim(\mathfrak c_X^-)$ that
cannot be decomposed as a sum $\ch\theta+\ch\lambda$ where 
$\ch\theta,\ch\lambda$ are both nonzero, 
$\ch\theta \in \mathfrak c_X^-$
and $\ch\lambda \succeq 0$ (see \S\ref{def:partition} for 
the definition of \emph{primitive}). 
Note that $\Cal D^G_{\mathrm{sat}}(X)$ contains $\varrho_X(\Cal D(X)\sm \Cal D)$, 
the valuations of the $G$-stable prime divisors,
but the containment may be strict.

\begin{cor} \label{cor:comp}
 There is a bijection between the set of irreducible
components of $\Scr M_X$ and 
\[ \pi_1(H) \xt \Sym^\infty(\Cal D^G_{\mathrm{sat}}(X)), \] 
such that $\ch\mu \in \pi_1(H),\, \ch\Theta \in  \Sym^\infty(\Cal D^G_{\mathrm{sat}}(X))$ corresponds to 
$^{\ch\mu}\barM_X^{\ch\Theta}$.  
\end{cor}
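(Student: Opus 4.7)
The plan is to combine the two main results of this section: Corollary~\ref{cor:strata-comp}, which enumerates the irreducible components of each closed stratum $\barM^{\ch\Theta}_X$ as ${}^{\ch\mu}\barM^{\ch\Theta}_X$ for $\ch\mu\in\pi_1(H)$, together with Proposition~\ref{prop:closure-rel}, which controls closure relations between strata. Since the locally closed substacks $\sM^{\ch\Theta}_X$ form a stratification of $\sM_X$ (\S\ref{sect:whitneyperverse}), every irreducible component of $\sM_X$ is the closure of a connected component of some $\sM^{\ch\Theta}_X$, hence of the form ${}^{\ch\mu}\barM^{\ch\Theta}_X$. The task then reduces to identifying which of these substacks are maximal under inclusion.

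First I would observe that for fixed $\ch\Theta$, distinct $\ch\mu$'s give distinct irreducible components of $\barM^{\ch\Theta}_X$ by Corollary~\ref{cor:strata-comp}, and hence are pairwise incomparable. Any strict containment ${}^{\ch\mu}\barM^{\ch\Theta}_X \subsetneq {}^{\ch\mu'}\barM^{\ch\Theta'}_X$ must therefore involve $\ch\Theta\ne \ch\Theta'$, which in turn forces $\sM^{\ch\Theta}_X \subset \ol{\sM^{\ch\Theta'}_X}$; conversely, if this containment of strata holds, then the irreducible ${}^{\ch\mu}\barM^{\ch\Theta}_X$ must be contained in some irreducible component ${}^{\ch\mu'}\barM^{\ch\Theta'}_X$ of $\barM^{\ch\Theta'}_X$. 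Thus maximality of ${}^{\ch\mu}\barM^{\ch\Theta}_X$ depends only on $\ch\Theta$, and I will show it is equivalent to $\ch\Theta \in \Sym^\infty(\Cal D^G_{\mathrm{sat}}(X))$.

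The equivalence is an unwinding of definitions given Proposition~\ref{prop:closure-rel}. If some $\ch\theta\in\ch\Theta$ fails to be primitive in $\mf c_X^-$, write $\ch\theta=\ch\theta_1+\ch\theta_2$ with $\ch\theta_i\in\mf c_X^-\sm 0$ and let $\ch\Theta^\star$ be the strict refinement replacing $\ch\theta$ by $\{\ch\theta_1,\ch\theta_2\}$; Proposition~\ref{prop:closure-rel} applied with $\ch\Theta''=\ch\Theta$ then gives $\sM^{\ch\Theta}_X\subset\ol{\sM^{\ch\Theta^\star}_X}$. If instead $\ch\theta$ is primitive in $\mf c_X^-$ but admits a decomposition $\ch\theta=\ch\theta_1+\ch\lambda$ with $\ch\theta_1\in\mf c_X^-\sm 0$ and $\ch\lambda\in\mf c_X^{\Cal D}\sm 0$, replacing $\ch\theta$ by $\ch\theta_1$ yields $\ch\Theta_1$ with $\ch\Theta\succeq \ch\Theta_1$, and Proposition~\ref{prop:closure-rel} with $\ch\Theta''=\ch\Theta_1$ gives $\sM^{\ch\Theta}_X\subset\ol{\sM^{\ch\Theta_1}_X}$. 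Conversely, if $\sM^{\ch\Theta}_X\subset\ol{\sM^{\ch\Theta'}_X}$ strictly, then the intermediate partition $\ch\Theta''$ produced by Proposition~\ref{prop:closure-rel} must differ from either $\ch\Theta$ or $\ch\Theta'$; tracing this through for some particular $\ch\theta\in\ch\Theta$ that is modified in passing to $\ch\Theta'$ exhibits one of the two forbidden decompositions above, so $\ch\Theta\notin\Sym^\infty(\Cal D^G_{\mathrm{sat}}(X))$.

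The hard part of the corollary is Proposition~\ref{prop:closure-rel} itself, which has already been established; what remains above is a routine combinatorial matching of the definition of $\Cal D^G_{\mathrm{sat}}(X)$ against the partial order generated by refinement and $\succeq$, together with the elementary observation that an irreducible closed substack contained in $\barM^{\ch\Theta'}_X$ is contained in a unique one of its irreducible components. I anticipate no additional geometric input is required.
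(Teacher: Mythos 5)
Your proof is correct and uses exactly the same two ingredients as the paper's---Corollary~\ref{cor:strata-comp} to enumerate the components of each $\barM^{\ch\Theta}_X$ and Proposition~\ref{prop:closure-rel} to control the closure order---and the combinatorial matching of $\Cal D^G_{\mathrm{sat}}(X)$ against the partial order is the same observation the paper makes (that $\ch\theta_1\succeq\ch\theta_2$ with $\ch\theta_1\in\Cal D^G_{\mathrm{sat}}(X)$ forces $\ch\theta_1=\ch\theta_2$, plus primitivity ruling out refinements). The only organizational difference is that you enumerate all candidates ${}^{\ch\mu}\barM^{\ch\Theta}_X$ and select the maximal ones, whereas the paper first shows every stratum lies in the closure of some $\sM^{\ch\Theta}_X$ with $\ch\Theta\in\Sym^\infty(\Cal D^G_{\mathrm{sat}}(X))$ and then that no two such closures share a component; these are the same argument read in opposite directions. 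Your final combinatorial ``tracing through'' sentence is at about the same level of brevity as the paper's own ``from this one deduces,'' so nothing is missing relative to the source.
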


\begin{proof}
 For any $\ch\Theta' \in \Sym^\infty(\mathfrak c_X^- \sm 0)$, 
let $\ch\Theta'' \in \Sym^\infty(\mathfrak c_X^-\sm 0)$ 
be a minimal element with respect to the ordering $\preceq$
such that $\ch\Theta'' \preceq \ch\Theta'$. 
Then $\ch\Theta''$ can be refined to an element $\ch\Theta\in \Sym^\infty(\Cal D^G_{\mathrm{sat}}(X))$. Therefore, the closure relations
from Proposition~\ref{prop:closure-rel} tell us that any stratum 
is contained in the closure of $\sM^{\ch\Theta}_X$ for a partition 
$\ch\Theta$ as above. 
By definition if $\ch\theta_1,\ch\theta_2\in \Cal D^G_{\mathrm{sat}}(X)$
satisfy $\ch\theta_1\succeq \ch\theta_2$ then they must be equal. 
From this one deduces that $\sM^{\ch\Theta}_X$ is not contained
in the closure of any other stratum. 

Thus, the closure of each
$\sM^{\ch\Theta}_X,\, \ch\Theta\in \Sym^\infty(\Cal D^G_{\mathrm{sat}}(X))$, 
is a union of irreducible components, and no irreducible component 
is contained in two different such closures. The Corollary now follows from the description of irreducible components of $\sM^{\ch\Theta}_X$ by Corollary \ref{cor:strata-comp}.
\end{proof}

\begin{lem}\label{lem:Membedding}
Let $X_1 \to X_2$ be a $G$-equivariant map of affine spherical varieties with $X_1^\bullet = X_2^\bullet = H\bs G$. Then the induced map $\Scr M_{X_1} \to \Scr M_{X_2}$ is a closed embedding.
\end{lem}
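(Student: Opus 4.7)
The plan is to reduce from $\sM$ to $\sY$, where an application of Lemma~\ref{lem:extendsection} gives the closed embedding, and then descend back using the smoothness of $\sY_X^{\ch\mu}\to \sM_X$. The first step is to verify that the square
\[
\begin{tikzcd}
\sY_{X_1}^{\ch\mu} \ar[r] \ar[d] & \sY_{X_2}^{\ch\mu} \ar[d] \\
\sM_{X_1} \ar[r] & \sM_{X_2}
\end{tikzcd}
\]
is Cartesian for every $\ch\mu$. The identification $X_i \xt^G \sP_G = X_i \xt^B \sP_B$ afforded by a $B$-reduction $\sP_B$ of $\sP_G$ makes a section of the left-hand side the same datum as a section of the right, and since $X_1\to X_2$ is the identity on $X^\bullet \supset X^\circ$, the generic conditions in $\sM$ and $\sY$ correspond.

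Next I would show that $\sY_{X_1}\to \sY_{X_2}$ is a closed embedding. Given $(\sP_B,\sigma_2)\in \sY_{X_2}(S)$, I would pick $\delta \in \Lambda_X$ in the strict interior of $\Cal C_0(X_2)^\vee$ and consider the associated $B$-eigenfunction $f_\delta\in k[X_2]^{(B)}$ from \S\ref{sss:YGr}. Evaluating $f_\delta$ on $\sigma_2$ yields a section of the line bundle on $C\xt S$ associated to $\sP_B$ via $\delta$, which is not identically zero on any geometric fiber because $\sigma_2$ generically lands in $X^\circ$; its vanishing locus is a relative effective Cartier divisor $D$, and the strict interior condition ensures $C\xt S \sm D = \sigma_2^{-1}(X^\circ \xt^B \sP_B)$. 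On this open, $\sigma_2$ factors through $X^\circ\xt^B \sP_B = \sP_B$, giving a canonical lift $\sigma_1^\circ : C\xt S \sm D \to \sP_B \subset X_1 \xt^B \sP_B$ via $X^\circ \subset X_1$. Because $X_1 \xt^B \sP_B$ is affine of finite presentation over $C\xt S$, Lemma~\ref{lem:extendsection} shows that the functor of $S'\to S$ over which $\sigma_1^\circ$ extends to a regular section on $C\xt S'$ is representable by a closed subscheme of $S$; separatedness of $X_2\xt^B \sP_B$ over $C\xt S$ guarantees that any such extension automatically lifts $\sigma_2$, and uniqueness of the lift is built into the lemma.

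Finally, I would conclude via smooth descent: for any quasi-compact open $\tilde V \subset \sM_{X_2}$, Corollary~\ref{cor:Mcover} produces $\ch\mu \in \ch\Lambda^\pos_G \subset \mf c_{X_2}$ large enough so that $\sY_{X_2}^{\ch\mu}\to \sM_{X_2}$ is smooth and surjects onto $\tilde V$, and the closed embedding $\sY_{X_1}^{\ch\mu}|_{\tilde V}\to \sY_{X_2}^{\ch\mu}|_{\tilde V}$ then descends to a closed embedding $\sM_{X_1}|_{\tilde V}\to \tilde V$. Letting $\tilde V$ exhaust $\sM_{X_2}$ finishes the argument.

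The main subtlety I expect is the choice of $\delta$: it must lie in the \emph{strict} interior of $\Cal C_0(X_2)^\vee$ so that $V(f_\delta)$ contains every $B$-stable divisor of $X_2$ (colors as well as $G$-stable divisors). Otherwise $D$ would be a strict subdivisor of a defining divisor for $\sigma_2^{-1}(X_2\sm X^\circ)$, and the canonical lift $\sigma_1^\circ$ would a priori be defined only on a proper open subset of $\sigma_2^{-1}(X^\circ)$; Lemma~\ref{lem:extendsection} would still apply, but matching the resulting closed subscheme with the fiber of $\sY_{X_1}\to \sY_{X_2}$ would require an extra density argument.
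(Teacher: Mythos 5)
Your proof is correct and follows essentially the same strategy as the paper: reduce to the Zastava model by producing a $B$-structure (smooth-locally, via Corollary~\ref{cor:Mcover}), invoke Lemma~\ref{lem:extendsection} on the complement of the divisor cut out by a dominant $B$-eigenfunction, and descend. Your explicit Cartesian square $\sY_{X_1}^{\ch\mu}\cong\sM_{X_1}\xt_{\sM_{X_2}}\sY_{X_2}^{\ch\mu}$ and the intermediate claim that $\sY_{X_1}\into\sY_{X_2}$ is a closed embedding are a tidier packaging of what the paper does directly with a test scheme and a surjective \'etale base change.
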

\begin{proof}
Let $S$ be a test scheme and let $(\Scr P_G,\sigma_2) \in \Scr M_{X_2}(S)$, where
$\sigma_2 : C\xt S \to X_2 \xt^G \Scr P_G$ is a section. 
By Corollary~\ref{cor:Mcover} there exists a $B$-structure $\Scr P_B$ 
on $\Scr P_G$ after a suitable surjective \'etale base change $S'\to S$
such that $(\Scr P_B, \sigma_2)\in \Scr Y_{X_2}(S')$. Then in particular
there exists a relative effective divisor $D \subset C\xt S'$ such that 
$\sigma_2(C\xt S' \sm D) \subset X_2^\circ \xt^B \Scr P_B$, cf.~\S\ref{sss:YGr}. 
Since $X_1^\bullet=X_2^\bullet$, we have $X_1^\circ=X_2^\circ$. 
By Lemma~\ref{lem:extendsection} the condition that $\sigma_2|_{C\xt S'\sm D}$
extends to a section $C \xt S' \to X_1 \xt^B \Scr P_B = X_1 \xt^G \Scr P_G$ is
closed in $S'$. Therefore, $S' \xt_{\Scr M_{X_2}} \Scr M_{X_1} \to S'$ is a closed
embedding.
\end{proof}

Lemma~\ref{lem:Membedding} implies that 
$\sM_{X^\can}$ is a closed substack of $\sM_X$ containing $\barM_X^0$, and when the condition
of Corollary~\ref{cor:can-comp} is satisfied, $\sM_{X^\can} = \barM_X^0$.

\subsection{Closure relations and components in the Zastava model}

In order to extend the results above to strata of $\sY$, we will need the following result:

\begin{lem} \label{lem:Yraise}
Let $y \in \sY^{\ch\lambda}(k)$ for $\ch\lambda \in \mf c_X$.
For any simple root $\alpha$ with $\Cal D(\alpha) = \{ D_\alpha^+, D_\alpha^- \}$ and any $N \gg 0$, there exists
a $k$-point 
\[ y' \in \sY^{\ch\lambda} \oo\xt C^{(N)} \oo\xt C^{(N)} \subset 
\sY^{\ch\lambda} \oo\xt \sY^{N\ch\nu_\alpha^+} \oo\xt \sY^{N\ch\nu_\alpha^-} 
\] 
such that the first coordinate is $y$ and the image of $y'$ under 
the composition
\[ 
    \sY^{\ch\lambda} \oo\xt \sY^{N\ch\nu_\alpha^+} \oo \xt
    \sY^{N\ch\nu_\alpha^-} \to 
    \sY^{\ch\lambda+N\ch\alpha} \to \sM_X 
\]
coincides with the image of $y$.
\end{lem}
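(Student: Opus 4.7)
The plan is to use Riemann--Roch to produce a pair of linearly equivalent effective divisors $(D^+, D^-)$ of degree $N$, pairwise disjoint and disjoint from the singular locus of $y$, and then to verify that the graded factorization of $y$ with the corresponding point $z := (D^+, D^-) \in \sY_{X^\bullet}^{N\ch\alpha, 0}$ preserves the image in $\sM_X$, because the linear equivalence $D^+ \sim D^-$ forces $z$'s underlying $G$-bundle to be globally trivial.

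First, by Lemma~\ref{lem:Xcover} we may replace $G$ by a central extension so that $\mf c_X = \mbb N^{\Cal D}$, and then Lemma~\ref{lem:Ycolor} identifies $\sY_{X^\bullet}^{N\ch\nu_\alpha^+ + N\ch\nu_\alpha^-}$ with $C^{(N)} \oo\xt C^{(N)}$. Let $S \subset \abs C$ be the finite singular support of $y$. For $N$ sufficiently large (depending on the genus $g$ and $|S|$), Riemann--Roch yields a line bundle $\Scr L$ on $C$ of degree $N$ with $h^0(\Scr L) \geq 2$ and no base points on $S$. Two generic sections $s_\pm \in H^0(C,\Scr L)$ have zero divisors $D^\pm$ of degree $N$ that are pairwise disjoint, disjoint from $S$, and linearly equivalent. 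The triple $(y, D^+, D^-)$ then gives the desired point of $\sY^{\ch\lambda} \oo\xt C^{(N)} \oo\xt C^{(N)}$ with first coordinate $y$.

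To verify that the image of $y \star z$ in $\sM_X$ coincides with that of $y$, set $U_1 := y^{-1}(\pt) \supset D^+ \cup D^-$ and $U_2 := z^{-1}(\pt) \supset S$; graded factorization (Proposition~\ref{prop:factorization}) then gives $y \star z|_{U_1} = z|_{U_1}$ and $y \star z|_{U_2} = y|_{U_2}$. In particular, $y \star z$ inherits the $G$-singularity type $\ch\Theta$ of $y$ from $y|_{U_2}$, so both images lie in the same stratum $\sM_X^{\ch\Theta}$. The key point is then that $\sP_G^z$ is globally trivial: the image of $z$ in $\Bun_H$ factors through $\Bun_{H_\alpha}$, where $H_\alpha := H \cap P_\alpha \cong \mbb G_m$, and by the rank-one calculation of Examples~\ref{eg:YHecke}--\ref{eg:YHeckePGL} (applied to the $\PGL_2$-spherical variety $X^\circ P_\alpha/\mathfrak{R}(P_\alpha) = \mbb G_m \bs \PGL_2$), this $H_\alpha$-bundle equals $\Scr O(D^- - D^+) \cong \Scr O$. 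Gluing $\sP_G^z|_{U_1}$ (trivial) with $\sP_G^y|_{U_2}$ along the canonical trivializations provided on $U_1 \cap U_2$ by the sections landing in $X^\circ$ therefore yields a bundle canonically isomorphic to $\sP_G^y$ (whose restriction to $U_1$ is itself canonically trivial via $y$'s section), and the sections match, giving the equality of images in $\sM_X^{\ch\Theta}$.

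The main obstacle is making this last identification fully precise when $\ch\Theta \ne 0$: one needs to verify that the triviality of the rank-one $H_\alpha$-contribution, combined with the local (Beauville--Laszlo, Lemma~\ref{lem:B-L}) nature of graded factorization, does lift to an isomorphism of the full $(\sP_G, \sigma)$-data in the stratum $\sM_X^{\ch\Theta}$.
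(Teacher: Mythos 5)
Your plan follows the same Riemann--Roch idea as the paper's proof: produce a pair of degree-$N$ effective divisors $D^\pm$, disjoint from each other and from the singular support of $y$, with $D^+ - D^-$ principal. Where the two arguments part ways is in the verification that $y\oo\xt z$ has the same image in $\sM_X$ as $y$, and this is where your proposal has a gap that you yourself flag.

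The paper sidesteps your gluing verification entirely. Using the identification \eqref{e:liftBstructure} from the proof of Corollary~\ref{cor:Mcover}, once one fixes a generic trivialization of $\Scr P^y_G$, the fiber of $\sY\to\sM_X$ over $(\Scr P^y_G,\sigma_y)$ is in bijection with $H(\Bbbk)B(\Bbbk)/B(\Bbbk)$, with $y$ corresponding to $1B$. Since the open $P_\alpha$-orbit is $\mbb G_m\bs\PGL_2$, the $(H\cap P_\alpha)(\Bbbk)$-orbit through $1B$ in $\mbb P^1(\Bbbk)=(P_\alpha/B)(\Bbbk)$ contains $k(C)^\times$; choosing $r\in k(C)^\times$ whose zero and pole divisors are the $D^\pm$ and lie in $U_1=y^{-1}(\pt)$ produces a \emph{new lift} $y'$ of the \emph{same} pair $(\Scr P^y_G,\sigma_y)$. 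That $y'$ has the same image in $\sM_X$ is then automatic — it is a lift of that image by construction — and the factorization of $y'$ has $y$ as its first coordinate because $y'$ and $y$ differ only at $|D^+|\cup|D^-|$.

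By contrast, your step 4 — gluing $\Scr P^z_G|_{U_1}$ with $\Scr P^y_G|_{U_2}$ and claiming the result is canonically $\Scr P^y_G$ because $\Scr P^z_G$ is globally trivial — does not follow from triviality alone. The trivialization of $\Scr P_{H_\alpha}^z\cong\Scr O(D^--D^+)$ provided by $z$'s section on $U_2$ is the ``identity on $U_2$'' trivialization, which does \emph{not} extend across $D^z=|D^+|\cup|D^-|$ (its extension obstruction is exactly the element of $\Gr_{G}$ at those points determined by $z$); the global trivialization coming from linear equivalence is instead the one given by multiplication by the rational function $r$. So the canonical trivializations you use for the gluing and the triviality of $\Scr P_G^z$ are two different things, and reconciling them is precisely where you need $r$ (not just the fact that $D^+\sim D^-$ abstractly). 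In other words, your route would ultimately have to reintroduce the rational function and reprove the parametrization of the fiber, and the cleanest way to do so is to adopt the paper's route: instead of constructing $z$ and gluing, construct the new $B$-structure $y'$ on $(\Scr P^y_G,\sigma_y)$ directly from $r$, for which equality of images is tautological, and then read off $z$ from the factorization of $y'$.

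One small additional remark: the paper also records (in the remark following the lemma) that when $\ch\nu^+_\alpha=\ch\nu^-_\alpha$ the scheme $\sY^{N\ch\nu^\pm_\alpha}_{X^\bullet}$ has two copies of $C^{(N)}$ and the lemma implicitly picks out the component determined by $y$; the $r$-formalism makes this choice automatic, whereas your abstract $(D^+,D^-)$-construction would need to make the choice explicitly.
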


Above we are using the fact, from Lemma~\ref{lem:Ycolor}, that 
$\sY^{N\ch\nu_\alpha^\pm}$ contains $C^{(N)}$ if $\ch\nu_\alpha^+ \ne \ch\nu_\alpha^-$ 
or $C^{(N)}\sqcup C^{(N)}$ if $\ch\nu_\alpha^+ = \ch\nu_\alpha^-$. 
In the latter case, Lemma~\ref{lem:Yraise} is picking out one of these components
(depending on the point $y$). 

\begin{proof}
The point $y$ is equivalent to a datum $(\Scr P_B, \sigma)$
where $\Scr P_B$ is a $B$-bundle on $C$ and $\sigma : C \to X \xt^B \Scr P_B$ is a section. 
Let $\Scr P_G = G \xt^B \Scr P_B$ denote the induced $G$-bundle.
The image of $y$ in $\sM_X$ corresponds to $(\Scr P_G, \sigma)$.
Set $\Bbbk := k(C)$. Then $\sigma|_{\Spec \Bbbk}$ defines a trivialization 
of $\Scr P_G|_{\Spec \Bbbk}$, which we fix. 
With respect to this trivialization, $B$-structures on $\Scr P_G$
are in bijection with sections $\Spec \Bbbk \to G/B$,
and $\Scr P_B$ corresponds to $1B \in (G/B)(\Bbbk)$. 
The preimage of $(\Scr P_G,\sigma)$ in $\sY$ identifies with the
orbit 
$H(\Bbbk)\cdot 1B \subset (G/B)(\Bbbk)$. 
Since $X^\circ P_\alpha/\mf R(P_\alpha)=\mbb G_m\bs \PGL_2$, the orbit of 
$H\cap P_\alpha$ on $1\in P_\alpha/B = \mbb P^1$ is 
$\mbb G_m$. Let $r \in \mbb G_m(\Bbbk) = k(C)^\times$ 
be a rational function on the curve $C$. 
Then the principal divisor defined by $r$ is of the form 
$\underline v^+ - \underline v^-$ where $\underline v^\pm$ 
are effective divisors with disjoint supports.
By the Riemann--Roch theorem, for any $N \gg 0$  
there exists $r \in k(C)^\times$ such that $\deg(\underline v^+)=\deg(\underline v^-)=N$ and the supports of $\underline v^+$ and $\underline v^-$
are both contained in $\sigma^{-1}(X^\circ \xt^B \Scr P_B)$.
Under the isomorphism $X^\circ P_\alpha/B \cong \mbb G_m \bs \mbb P^1$,
let us identify $D_\alpha^+$ with $0\in \mbb P^1$ and
$D_\alpha^-$ with $\infty \in \mbb P^1$. 
Then the preimage of $(\Scr P_G,\sigma)$ in $\sY$ corresponding to 
$r\in \mbb P^1(\Bbbk)$ has the desired property.
\end{proof}

Recall from Corollary \ref{cor:strata-comp} that the irreducible components of $\sM_X^{\ch\Theta}$ are denoted by ${^{\ch\mu}\sM_X^{\ch\Theta}}$, with $\ch\mu\in \pi_1(H)$. 
For any $\ch\lambda\in \mathfrak c_X$, define 
$^{\ch\mu}\sY^{\ch\lambda,\ch\Theta} = \sY^{\ch\lambda} \xt_{\sM_X} {^{\ch\mu}\sM_X^{\ch\Theta}}$
and 
$^{\ch\mu}\sY^{\ch\lambda,\succeq \ch\Theta} := \sY^{\ch\lambda} \xt_{\sM_X} 
{^{\ch\mu}}\barM_X^{\ch\Theta}$.

\begin{cor} \label{cor:Ycomp}
For any $\ch\lambda \in \mathfrak c_X,\, \ch\mu\in \pi_1(H),\,
\ch\Theta \in \Sym^\infty(\mathfrak c_X^- \sm 0)$, the scheme 
$^{\ch\mu}\sY^{\ch\lambda,\succeq\ch\Theta}$ is irreducible (if nonempty), and $^{\ch\mu}\sY^{\ch\lambda,\ch\Theta} $ is open dense in it.
\end{cor}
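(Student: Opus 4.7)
Plan: Openness of $^{\ch\mu}\sY^{\ch\lambda,\ch\Theta}$ in $^{\ch\mu}\sY^{\ch\lambda,\succeq\ch\Theta}$ is immediate, since the latter is obtained from the former by pulling back the closure of the open locus $^{\ch\mu}\sM_X^{\ch\Theta}$ in $^{\ch\mu}\barM_X^{\ch\Theta}$. The task then reduces to two statements: (B) $^{\ch\mu}\sY^{\ch\lambda,\ch\Theta}$ is irreducible whenever nonempty, and (C) $^{\ch\mu}\sY^{\ch\lambda,\ch\Theta}$ is dense in $^{\ch\mu}\sY^{\ch\lambda,\succeq\ch\Theta}$ whenever the latter is nonempty. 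Granting these, $^{\ch\mu}\sY^{\ch\lambda,\succeq\ch\Theta}$ is the closure of the irreducible open stratum, hence irreducible. When $\ch\lambda$ is ``large enough'' in the sense of Corollary~\ref{cor:Mcover}(i), the structure map $q_\lambda:\sY^{\ch\lambda}\to \sM_X$ is smooth with geometrically connected fibers, so both (B) and (C) are obtained by smooth base change from the irreducibility of $^{\ch\mu}\barM_X^{\ch\Theta}$ and the density of $^{\ch\mu}\sM_X^{\ch\Theta}$ in it (Corollary~\ref{cor:strata-comp}).

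For general $\ch\lambda$, the plan is to reduce to the large case via Lemma~\ref{lem:Yraise}. I pick simple roots $\alpha_1,\dotsc,\alpha_k$ and $N_i\gg 0$ so that $\ch\lambda + \ch\nu$, with $\ch\nu=\sum_i N_i\ch\alpha_i$, is large enough. Iterating graded factorization together with Lemma~\ref{lem:Ycolor} yields an open embedding
\[ V := \sY^{\ch\lambda}\oo\xt\prod_i(C^{(N_i)}\oo\xt C^{(N_i)})\hookrightarrow \sY^{\ch\lambda+\ch\nu}, \]
with smooth surjective first projection $p_1:V\to\sY^{\ch\lambda}$ (having irreducible fibers) and composed map $q:V\hookrightarrow\sY^{\ch\lambda+\ch\nu}\to\sM_X$. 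Lemma~\ref{lem:Yraise} provides, for every $y\in\sY^{\ch\lambda}(k)$, a point $y'\in p_1^{-1}(y)$ with $q(y')=q_\lambda(y)$. For (B): the large case gives $V\cap{^{\ch\mu}\sY^{\ch\lambda+\ch\nu,\ch\Theta}}$ irreducible as an open subscheme of the irreducible $^{\ch\mu}\sY^{\ch\lambda+\ch\nu,\ch\Theta}$; its $p_1$-image is therefore irreducible and sits in the \emph{disjoint} union $\bigsqcup_{\ch\mu'\in\pi_1(H)}{^{\ch\mu'}\sY^{\ch\lambda,\ch\Theta}}$ (disjoint because the $^{\ch\mu'}\sM_X^{\ch\Theta}$ are distinct connected components of $\sM_X^{\ch\Theta}$), so the image lies in one piece, which by the Lemma~\ref{lem:Yraise} lifts must contain and hence equal $^{\ch\mu}\sY^{\ch\lambda,\ch\Theta}$, establishing irreducibility. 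For (C): given $y_0\in{^{\ch\mu}\sY^{\ch\lambda,\succeq\ch\Theta}}$, Lemma~\ref{lem:Yraise} yields $y_0'\in V\cap{^{\ch\mu}\sY^{\ch\lambda+\ch\nu,\succeq\ch\Theta}}$; the density statement from the large case places $y_0'$ in the closure of $V\cap{^{\ch\mu}\sY^{\ch\lambda+\ch\nu,\ch\Theta}}$, and continuity of $p_1$ then places $y_0=p_1(y_0')$ in the closure of $^{\ch\mu}\sY^{\ch\lambda,\ch\Theta}$.

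The crucial ingredient is the disjointness of the open pieces $\{{^{\ch\mu'}\sY^{\ch\lambda,\ch\Theta}}\}_{\ch\mu'\in\pi_1(H)}$: this is what pins down the irreducible image in (B) to a single component. The closures $^{\ch\mu'}\sY^{\ch\lambda,\succeq\ch\Theta}$, by contrast, need not be pairwise disjoint---they may share points in deeper strata---which is the reason the argument is organized to handle the open stratum first and obtain density of the ambient space separately by a closure-taking argument.
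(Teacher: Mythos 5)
There is a genuine gap in your argument: the map
\[
V := \sY^{\ch\lambda}\oo\xt\prod_i\bigl(C^{(N_i)}\oo\xt C^{(N_i)}\bigr)\longrightarrow \sY^{\ch\lambda+\ch\nu}
\]
coming from graded factorization is \emph{\'etale}, but it is \emph{not} an open embedding, as you assert. Already for $\ch\lambda = \ch\nu_\alpha^+$ and a single factor $C^{(1)}\oo\xt C^{(1)}$ the map sends $(v_1;v_2,v_3)\mapsto (v_1+v_2, v_3)\in C^{(2)}\oo\xt C^{(1)}$ and is $2\!:\!1$ on the disjoint locus; in general the $T$-valuations of the $\ch\lambda$-part can collide with those of the $\Scr C$-part, so the factorization map fails to be injective. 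Consequently the fiber product $V\xt_{\sY^{\ch\lambda+\ch\nu}}{^{\ch\mu}\sY^{\ch\lambda+\ch\nu,\ch\Theta}}$ is merely \'etale over the irreducible scheme ${^{\ch\mu}\sY^{\ch\lambda+\ch\nu,\ch\Theta}}$, and an \'etale scheme over an irreducible base need not be irreducible. This sinks your step (B): you cannot conclude that the $p_1$-image lies in a single component merely because its source is open inside an irreducible space.

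Your step (C) survives, since one only needs that preimage of a dense open under an open (in particular \'etale) map is dense. But (B) is exactly where the real work is, and the paper's proof is organized around precisely this issue: it admits that the factorization map \eqref{e:Ycomp1} is only \'etale, and argues by contradiction. Assuming ${^{\ch\mu}\sY^{\ch\lambda,\succeq\ch\Theta}}$ is disconnected, it finds two points $y_1,y_2$ in disjoint components of ${^{\ch\mu}\sY^{\ch\lambda,\ch\Theta}}$ and $c_1,c_2\in\Scr C$ with the same image in ${^{\ch\mu}\sY^{\ch\lambda',\ch\Theta}}$, then unwinds what ``same image'' means at the level of moduli: $y_1$ and $y_2$ agree off $\abs{c_1}\cup\abs{c_2}$, which produces a common lower-degree source $y'\in\sY^{\ch\lambda-\ch\nu',\ch\Theta}$ mapping to both $y_1$ and $y_2$ under further factorization---a contradiction. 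Your argument would need to be replaced by something of this kind; the \'etale-versus-open-embedding distinction is not a cosmetic issue but the crux of the proof at general $\ch\lambda$.
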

\begin{proof}
For $\ch\lambda$ large enough, the claim immediately follows from
Corollaries~\ref{cor:Mcover} and \ref{cor:comp}. 
Now, for arbitrary $\ch\lambda$, consider 
$\ch\lambda'=\ch\lambda+\sum_{\alpha\in \Delta_G} n_\alpha \ch\alpha$
large enough. 
By the graded factorization property and Lemma~\ref{lem:Yraise}, 
there exists an \'etale map 
\begin{equation} \label{e:Ycomp1}
    ^{\ch\mu}\sY^{\ch\lambda,\succeq\ch\Theta} \oo\xt \Scr C \to {^{\ch\mu}}\sY^{\ch\lambda',\succeq\ch\Theta},
 \end{equation}
where $\Scr C = \oo\prod_{\Delta_G} C^{(n_\alpha)} \oo\xt C^{(n_\alpha)}$.

By the validity of the proposition for large $\ch\lambda'$,  
we know that $^{\ch\mu}\sY^{\ch\lambda',\ch\Theta}$ is 
connected and dense in $^{\ch\mu}\sY^{\ch\lambda', \succeq \ch\Theta}$.
Therefore, if $^{\ch\mu}\sY^{\ch\lambda,\succeq\ch\Theta}$ is not irreducible, 
there exist $y_1,y_2$ in disjoint connected components of 
$^{\ch\mu}\sY^{\ch\lambda,\ch\Theta}$ 
and $c_1,c_2\in \Scr C$ such that $(y_1,c_1)$ and $(y_2,c_2)$ have the same
image in $^{\ch\mu}\sY^{\ch\lambda',\ch\Theta}$.
Let $\abs{c_1}, \abs{c_2} \subset \abs C$ denote the support of $c_1,c_2$
as divisors. 
From the definition of the factorization map \eqref{e:Ycomp1}, 
if $(y_1,c_1)$ and $(y_2,c_2)$ have the same image, 
we have 
\[ y' = y_1 |_{C \sm (\abs{c_1}\cup \abs{c_2})} = y_2 |_{C\sm (\abs{c_1}\cup \abs{c_2})} : C \sm (\abs{c_1}\cup \abs{c_2}) \to X/B. \] 
We can extend $y'$ to an element of $\sY^{\ch\lambda-\ch\nu, \ch\Theta}$
with $\abs{c_1}\cup \abs{c_2}$ in its $B$-nondegenerate locus, 
for some $\ch\nu\succ 0$. 
By replacing $\Scr C$ by a possibly different irreducible scheme $\Scr C'$,
we may assume that $\abs{c_1}$ and $\abs{c_2}$ are disjoint. 
Now we must have $(y', c_2) \mapsto y_1,\, (y',c_1)\mapsto y_2$ under
the factorization map 
$\sY^{\ch\lambda-\ch\nu,\ch\Theta} \oo\xt \Scr C' \to \sY^{\ch\lambda,\ch\Theta}$.
This implies that $y_1,y_2$ were originally in the same connected component.
\end{proof}

\begin{cor}\label{cor:compY}
For every $\ch\lambda\in \mathfrak c_X$, there is an injection from the set of irreducible components of $\sY^{\ch\lambda}$ to $\pi_1(H) \xt \Sym^\infty(\Cal D^G_{\mathrm{sat}}(X))$.
\end{cor}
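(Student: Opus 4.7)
The plan is to deduce the injection essentially for free from Corollary~\ref{cor:comp} (classifying irreducible components of $\sM_X$) together with Corollary~\ref{cor:Ycomp} (identifying $^{\ch\mu}\sY^{\ch\lambda,\succeq\ch\Theta}$ as an irreducible closed subscheme of $\sY^{\ch\lambda}$), by tracking irreducible components along the forgetful map $\sY^{\ch\lambda} \to \sM_X$.

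More precisely, fix an irreducible component $Z \subset \sY^{\ch\lambda}$. Its image in $\sM_X$ is irreducible and so is contained in some irreducible component of $\sM_X$. By Corollary~\ref{cor:comp}, this component is of the form $^{\ch\mu}\barM_X^{\ch\Theta}$ for a unique $(\ch\mu,\ch\Theta)\in \pi_1(H)\xt \Sym^\infty(\Cal D^G_{\mathrm{sat}}(X))$. Hence $Z$ is contained in the preimage $^{\ch\mu}\sY^{\ch\lambda,\succeq\ch\Theta}$. Corollary~\ref{cor:Ycomp} asserts that this preimage is irreducible (it is nonempty, since it contains $Z$), so by maximality of $Z$ as an irreducible closed subscheme we obtain $Z = {^{\ch\mu}\sY^{\ch\lambda,\succeq\ch\Theta}}$. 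This gives a well-defined map $\Phi$ from the set of irreducible components of $\sY^{\ch\lambda}$ to $\pi_1(H) \xt \Sym^\infty(\Cal D^G_{\mathrm{sat}}(X))$.

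For injectivity, suppose $\Phi(Z_1) = (\ch\mu,\ch\Theta) = \Phi(Z_2)$. Then $Z_1$ and $Z_2$ equal the same scheme $^{\ch\mu}\sY^{\ch\lambda,\succeq\ch\Theta}$, hence coincide. (Equivalently, if $^{\ch\mu}\sY^{\ch\lambda,\succeq\ch\Theta} = {^{\ch\mu'}\sY^{\ch\lambda,\succeq\ch\Theta'}}$ as subschemes of $\sY^{\ch\lambda}$, restricting to the open dense stratum $^{\ch\mu}\sY^{\ch\lambda,\ch\Theta}$ and applying Corollary~\ref{cor:strata-comp} together with the disjointness of the strata $\sM_X^{\ch\Theta}$ forces $(\ch\mu,\ch\Theta)=(\ch\mu',\ch\Theta')$.) Since the construction is purely formal given the previous two corollaries, there is no real technical obstacle here; the only thing to be slightly careful about is that a single irreducible component $Z$ cannot simultaneously dominate two different irreducible components of $\sM_X$, which is immediate from irreducibility of $Z$ and the fact that irreducible components of $\sM_X$ intersect in lower-dimensional substacks.
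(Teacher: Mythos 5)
Your proposal is correct and unpacks the paper's one-line proof, which simply cites Corollaries~\ref{cor:Ycomp} and \ref{cor:comp}; the core of the argument (an irreducible component $Z$ maps into some ${^{\ch\mu}}\barM_X^{\ch\Theta}$, hence lies in the irreducible closed subscheme ${^{\ch\mu}}\sY^{\ch\lambda,\succeq\ch\Theta}$ and therefore equals it, and $(\ch\mu,\ch\Theta)$ is recovered from the open dense stratum) is exactly right. One caveat: the final sentence's justification of well-definedness is off --- $Z$ need not \emph{dominate} any irreducible component of $\sM_X$, and its image could a priori lie inside the intersection of two components, so the lower-dimensionality of such intersections does not by itself settle the matter. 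This causes no real gap, though, since your parenthetical argument already does the job: any ${^{\ch\mu}}\sY^{\ch\lambda,\succeq\ch\Theta}$ containing $Z$ must equal $Z$ by irreducibility and maximality, and the disjointness of the strata $\sM_X^{\ch\Theta}$ (via Corollary~\ref{cor:strata-comp}) then pins down $(\ch\mu,\ch\Theta)$ uniquely.
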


\begin{proof}
By Corollaries \ref{cor:Ycomp} and \ref{cor:comp}, the irreducible components $\sY^{\ch\lambda}$ are those $^{\ch\mu}\sY^{\ch\lambda,\succeq\ch\Theta}$ that are nonempty, for $\ch\Theta \in \Sym^\infty(\Cal D^G_{\mathrm{sat}}(X))$.
\end{proof}

In \S \ref{sect:connected-open}, and in particular Corollary \ref{cor:Ycomp2}, we will see a different description of the irreducible components of $\sY_X$, based on the partition of $\sY_{X^\bullet} = \sY_X^{?,0}$ into the subschemes $\sY^D_{X^\bullet}$ of \S \ref{sect:Y0cc}.

\section{Stratified semi-smallness}\label{sect:centralfiber}

We keep the assumptions of \S\ref{sect:Heckeact}, i.e.,  
$B$ acts simply transitively on $X^\circ$ and every simple root of $G$ is
a spherical root of type $T$.

\smallskip

The main result of this section is the following, which 
will be proved in \S\ref{sect:semismall}. 

\begin{thm}  \label{thm:barpiperverse}
Under the assumptions above, 
$\bar\pi_!(\IC_{\barY^{\ch\lambda}})$ is perverse
and constructible with respect to the stratification 
$\sA^{\ch\lambda} = \bigcup_{\deg(\mf P)=\ch\lambda} \oo C^{\mf P}$
of Proposition~\ref{prop:Astrata}.
\end{thm}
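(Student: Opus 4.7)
The plan is to establish that $\bar\pi : \barY^{\ch\lambda} \to \sA^{\ch\lambda}$ is stratified semi-small with respect to the stratification by $\oo C^{\mf P}$ for $\mf P\in\Sym^\infty(\mf c_X\sm 0)$ with $\deg(\mf P)=\ch\lambda$. Combined with properness of $\bar\pi$ (Proposition~\ref{prop:piproper}), this forces $\bar\pi_!(\IC_{\barY^{\ch\lambda}})$ to be perverse, and constructibility with respect to the target stratification follows from the factorization structure.

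Concretely, the graded factorization property of $\barY$ (proved just as in Proposition~\ref{prop:factorization}, using that $X\xt^G\ol{G/N}/T\supset \pt$ is pointy) identifies the fiber of $\bar\pi$ over a point $s\in \oo C^{\mf P}$ with $\mf P=\sum_{\ch\mu} N_{\ch\mu}[\ch\mu]$ as $\prod_{\ch\mu}(\olsf Y^{\ch\mu})^{N_{\ch\mu}}$, where each $\olsf Y^{\ch\mu}\subset \olsf S^{\ch\mu}$ is the central fiber (Lemma~\ref{lem:Y=ScapGr}). The dimension of $\oo C^{\mf P}$ is $\abs{\mf P}=\sum N_{\ch\mu}$. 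Specializing to the open stratum, where $\mf P$ is a partition of $\ch\lambda$ into basis colors $\ch\nu_i$ (for which $\olsf Y^{\ch\nu_i}$ is a single point), one obtains $\dim\barY^{\ch\lambda}=\len(\ch\lambda):=\sum m_i$, where $\ch\lambda=\sum m_i\ch\nu_i$.

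The main input is the sharp central fiber estimate
\[
    \dim \olsf Y^{\ch\mu} \le \tfrac{1}{2}\bigl(\len(\ch\mu)-1\bigr)\quad\text{for all } \ch\mu\in \mf c_X\sm 0.
\]
I would obtain this from Proposition~\ref{prop:intersections}: any irreducible component $\olsf Y\subset \olsf Y^{\ch\mu}$ satisfies $\dim\olsf Y\le \brac{\rho_G,\ch\mu-\ch\lambda'}$ for some $\ch\lambda'\le\ch\mu$ with $\msf Y^{\ch\lambda'}\ne\emptyset$, hence (by Lemma~\ref{lem:Ynonempty}) with $\ch\lambda'\in \mf c_X^{\Cal D}$. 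The type~$T$ identity $\ch\alpha=\ch\nu^+_\alpha+\ch\nu^-_\alpha$ of \S\ref{subsection:typeT} translates to $\len(\ch\mu-\ch\lambda')=2\brac{\rho_G,\ch\mu-\ch\lambda'}$ on $\ch\Lambda^\pos_G$, so the bound reads $\dim\olsf Y\le \tfrac12(\len(\ch\mu)-\len(\ch\lambda'))$; to conclude one must rule out $\ch\lambda'=0$ in the equality case. This is exactly the refinement promised in Theorem~\ref{theorem-slicing}: by iteratively cutting with the Mirkovi\'c--Vilonen hyperplane $\partial\msf S^{\ch\lambda_j}\subset \olsf S^{\ch\lambda_j}$ (Proposition~\ref{prop:Shyperplane}) and using $\msf Y^{\ch\nu_i}=\pt$ for colors, an equality $\dim\olsf Y = \brac{\rho_G,\ch\mu-\ch\lambda'}$ forces $\ch\lambda'$ to be a color, so $\len(\ch\lambda')=1$.

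Putting everything together, $2\dim\bar\pi^{-1}(s)\le \sum N_{\ch\mu}(\len(\ch\mu)-1)=\len(\ch\lambda)-\abs{\mf P}=\dim\barY^{\ch\lambda}-\dim\oo C^{\mf P}$, which is the semi-small inequality stratum by stratum. Perversity of $\bar\pi_!\IC_{\barY^{\ch\lambda}}$ then follows from properness of $\bar\pi$, the perversity of $\IC_{\barY^{\ch\lambda}}$, and the semi-smallness estimate (cf.~the usual argument via the decomposition theorem). Constructibility along the stratification $\{\oo C^{\mf P}\}$ reduces via factorization to the claim that $\bar\pi_!\IC$ is locally constant on each diagonal $C\hookrightarrow \sA^{\ch\lambda}$, which holds because of the $\Aut\,\mf o$-equivariance in \S\ref{sect:YttimesC}. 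The main obstacle is the refinement of Proposition~\ref{prop:intersections} giving $\tfrac12(\len-1)$ rather than $\tfrac12\len$; without the $-1$, one loses semi-smallness by a half dimension. Securing this refinement uses the rank-one type~$T$ calculation that each color central fiber $\olsf Y^{\ch\nu_i}$ is a single point, together with careful tracking of where the hyperplane-cutting process terminates.
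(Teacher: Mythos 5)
Your overall strategy is the same as the paper's---stratified semi-smallness of $\bar\pi$ via Mirkovi\'c--Vilonen hyperplane cutting on central fibers, plus the factorization property, plus properness---but the execution has a real gap in the main estimate, and an auxiliary ingredient is missing.

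The gap concerns what the semi-smallness estimate needs to say. You bound the dimension of the \emph{total} compactified central fiber $\olsf Y^{\ch\mu}$, and from this conclude $2\dim\bar\pi^{-1}(s)\le \dim\barY^{\ch\lambda}-\dim\oo C^{\mf P}$. But $\barY^{\ch\lambda}$ is \emph{singular}, so a bound on the total fiber only gives ordinary semi-smallness, which is not enough to push forward the (non-constant) sheaf $\IC_{\barY^{\ch\lambda}}$ and stay perverse. The definition of stratified semi-smallness from \cite[\S 4]{MV}, used in Lemma~\ref{lem:semismallperverse}, requires, for each source stratum $S\subset\barY^{\ch\lambda}$ (a connected component of some $_{\ch\nu}\barY^{\ch\lambda,\ch\Theta}$), the inequality $\dim(\bar\pi^{-1}(a)\cap S)\le\frac12(\dim S-\dim\oo C^{\mf P})$; the right-hand side has $\dim S$, not $\dim\barY^{\ch\lambda}$, and for interior strata $\dim S<\dim\barY^{\ch\lambda}$. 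Your total-fiber bound does not imply this: e.g.\ with $\mf P=[\ch\lambda]$ and $S={}_{\ch\nu}\barY^{\ch\lambda,\ch\theta}$, one finds $\dim S = \brac{\rho_G,\ch\nu}+\dim\sY^{\ch\lambda-\ch\nu,\ch\theta}\le\len(\ch\lambda)-\frac12\len(\ch\nu)$, which is strictly less than $\len(\ch\lambda)$ whenever $\ch\nu\ne 0$, so your inequality is too weak. This is exactly why Proposition~\ref{prop:centralstrata} in the paper is phrased as a bound on $\dim(\msf Y^{\ch\lambda}\cap\mathscr Y)$ in terms of $\dim\mathscr Y$ (a stratum dimension), which then combines with the factorized description \eqref{e:preimagefactorize} of $\bar\pi^{-1}(a)\cap S$ and the comparison $\dim S\ge\brac{\rho_G,\ch\nu}+\sum_i\dim S_i$ to give the required stratum-by-stratum inequality.

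A second, smaller issue: Lemma~\ref{lem:semismallperverse} is applied to $\Scr F=\IC_{\barY^{\ch\lambda}}$, so one must know that $\IC_{\barY^{\ch\lambda}}$ is constructible with respect to the \emph{source} stratification $\Scr S$ by $_{\ch\nu}\barY^{\ch\lambda,\ch\Theta}$. You address constructibility of the output on the target but not this input. The paper handles it separately (Lemma~\ref{lem:barICwhitney}), using \eqref{e:ICbar=bt} to reduce to Proposition~\ref{prop:globwhitney} and Theorem~\ref{thm:BFGM}. Finally, be careful with the assertion $\dim\barY^{\ch\lambda}=\len(\ch\lambda)$ and the use of $\len(\ch\lambda)$: these only make sense when $\mf c_{X^\bullet}=\mbb N^{\Cal D}$ and $\ch\lambda\succeq 0$; the theorem allows arbitrary $X$ satisfying the standing hypotheses, for which $\sY^{\ch\lambda}$ typically has multiple irreducible components of differing dimensions, indexed by $\Sym^\infty(\Cal D^G_{\mathrm{sat}}(X))$ via Corollary~\ref{cor:Ycomp}.
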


\subsection{Upper bounds on dimension}

Define 
\begin{align*}
    \barY^{\ch\lambda,\succeq \ch\theta} &= \barY^{\ch\lambda} \xt_{\sM_X} \barM_X^{\ch\theta} \subset \barY^{\ch\lambda} \\
    \olsf Y^{\ch\lambda, \succeq \ch\theta} &= \barY^{\ch\lambda, \succeq\ch\theta} \cap \olsf Y^{\ch\lambda} \subset \olsf S^{\ch\lambda}
\end{align*} 
and let $\sY^{\ch\lambda,\succeq \ch\theta}, \msf Y^{\ch\lambda,\succeq \ch\theta}$ denote the corresponding intersections with $\sY^{\ch\lambda}$
(the notation is  
justified by Proposition~\ref{prop:closure-rel}). 
From the stratification $\barY^{\ch\lambda} = \bigcup {_{\ch\nu}}\barY^{\ch\lambda}$ we deduce that 
\[\olsf Y^{\ch\lambda,\succeq\ch\theta} = \bigcup_{\ch\lambda' \le \ch\lambda} \msf Y^{\ch\lambda',\succeq\ch\theta}.\]

\begin{prop} \label{prop:centralstrata} 
Let $\ch\lambda \in \mathfrak c_X \sm 0$ and 
$\ch\theta \in \mathfrak c_X^-$. 
For any connected component $\mathscr Y$ of 
$\sY^{\ch\lambda,\ch\theta}$, 
we have
\begin{equation} \label{e:semismall-ineq}
    \dim ( \msf Y^{\ch\lambda} \cap \mathscr Y ) 
    \le \frac 1 2 ( \dim (\mathscr Y) - 1 ). 
\end{equation}

Moreover, whenever the inequality above is an equality for an irreducible component $\msf Y$ of $\msf Y^{\ch\lambda} \cap \mathscr Y$, the same holds for the inequality of Proposition \ref{prop:intersections}; that is, the closure $\olsf Y$ in the affine Grassmannian $\Gr_G$ meets a semi-infinite orbit $\msf S^{\ch\lambda'}$ with $\left< \rho_G, \ch\lambda-\ch\lambda'\right> = \dim \olsf Y$.
\end{prop}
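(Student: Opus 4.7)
The plan is to apply Proposition~\ref{prop:intersections} to the closure $\olsf Y \subset \olsf S^{\ch\lambda}$ in $\Gr_G$ of an irreducible component $\msf Y$ of $\msf Y^{\ch\lambda}\cap \mathscr Y$ (under the embedding \eqref{e:YembedGr}), and combine the resulting estimate with a direct dimension count for $\mathscr Y$. By Lemma~\ref{lem:comp-cover}(i), writing $\mathscr Y = {^{\ch\mu}\sY^{\ch\lambda,\ch\theta}}$ for some $\ch\mu \in \pi_1(H)$, the subscheme ${^{\ch\mu}\sY^{\ch\lambda-\ch\theta,0}} \oo\xt C$ is open dense in $\mathscr Y$, so $\dim \mathscr Y = \dim {^{\ch\mu}\sY^{\ch\lambda-\ch\theta,0}} + 1$. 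After passing to the central cover of Lemma~\ref{lem:Xcover} to reduce to the case $\mf c_{X^\bullet} = \mbb N^\Cal D$, Remark~\ref{rem:Ycomp} identifies this component with a unique $\sY^D_{X^\bullet}$ for some $D = \sum_{D' \in \Cal D} n_{D'}[D'] \in \mbb N^\Cal D$ with $\varrho_X(D) = \ch\lambda - \ch\theta$, yielding $\dim \mathscr Y - 1 = \len(D) = \sum_{D'} n_{D'}$.

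Next, Proposition~\ref{prop:intersections} applied to $\olsf Y$ produces $\ch\lambda' \le \ch\lambda$ with $\olsf Y \cap \msf S^{\ch\lambda'}$ nonempty and zero-dimensional, and $\dim \msf Y \le \brac{\rho_G, \ch\lambda-\ch\lambda'}$. Since $\olsf Y \subset \olsf Y^{\ch\lambda, \succeq \ch\theta}$, by Lemma~\ref{lem:Y=ScapGr} the intersection point belongs to some $\msf Y^{\ch\lambda', \ch\theta''}$ with $\ch\theta'' \succeq \ch\theta$, and Corollary~\ref{cor:Ytheta}(i) then forces $\ch\lambda' \succeq \ch\theta'' \succeq \ch\theta$. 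In the free-monoid reduction, the difference $\ch\lambda' - \ch\theta = \sum_{D'} m_{D'} \ch\nu_{D'}$ admits a unique expansion with $m_{D'} \ge 0$.

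The combinatorial heart of the proof is to write $\ch\lambda - \ch\lambda' = \sum_\alpha k_\alpha \ch\alpha \in \ch\Lambda_G^\pos$ and substitute each $\ch\alpha = \ch\nu_{D_\alpha^+} + \ch\nu_{D_\alpha^-}$ from \S\ref{subsection:typeT}, yielding
\[
\sum_{D'}(n_{D'} - m_{D'}) \ch\nu_{D'} = \sum_\alpha k_\alpha (\ch\nu_{D_\alpha^+} + \ch\nu_{D_\alpha^-})
\]
in the free monoid $\mbb N^\Cal D$. Matching coefficients (justified by the injection $\mbb Z^\Cal D \into \ch\Lambda$ from Lemma~\ref{lem:Xcover}(iii)) gives $n_{D'} - m_{D'} = \sum_{\alpha \,:\, D' \in \{D_\alpha^+, D_\alpha^-\}} k_\alpha$, and summing over $D'$, using that $D_\alpha^+ \ne D_\alpha^-$ in the free setting, produces $\sum_{D'}(n_{D'} - m_{D'}) = 2\sum_\alpha k_\alpha = \brac{2\rho_G, \ch\lambda - \ch\lambda'}$. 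Since $m_{D'} \ge 0$, we conclude $\brac{\rho_G, \ch\lambda - \ch\lambda'} \le \tfrac{1}{2}\len(D) = \tfrac{1}{2}(\dim \mathscr Y - 1)$, and hence $\dim \msf Y \le \tfrac{1}{2}(\dim \mathscr Y - 1)$.

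For the ``moreover'' statement, equality in \eqref{e:semismall-ineq} forces each intermediate inequality to be tight: in particular $\sum_{D'} m_{D'} = 0$, so $\ch\lambda' = \ch\theta$, and $\dim \olsf Y = \brac{\rho_G, \ch\lambda - \ch\theta}$, realizing the equality case of Proposition~\ref{prop:intersections}. The main technical subtlety will be justifying the reduction to $\mf c_{X^\bullet} = \mbb N^\Cal D$: one must transport $\ch\lambda \in \mf c_X$, the component $\mathscr Y$, and the central fiber $\msf Y^{\ch\lambda} \subset \Gr_G$ through the passage from $G$ to the central extension $G'$ (with $H' \cong H$) in a way compatible with the embedding \eqref{e:YembedGr}, so that the dimensions and the orderings $\preceq, \le$ remain correctly matched with those on the original $X$.
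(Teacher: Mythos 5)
The key step ``$\dim\mathscr Y - 1 = \len(D)$'' in your first paragraph is the statement that $\sY^D_{X^\bullet}$ has dimension exactly $\len(D)$, which is Lemma~\ref{lem:oYconnected}. But Lemma~\ref{lem:oYconnected} occurs \emph{later} in the paper (\S6.2), and its proof explicitly invokes Proposition~\ref{prop:centralstrata}: the argument there uses the dimension inequality \eqref{e:semismall-ineq} to deduce that a hypothetical ``extra'' connected component of $\sY^{\ch\lambda,0}$ would have dimension~$1$, leading to a contradiction via Lemma~\ref{lem:Ycolor}. So your proof is circular. Remark~\ref{rem:Ycomp}, which you cite, only provides an open embedding $\oo\prod_{\Cal D} C^{(n_{D'})} \into \sY^D_{X^\bullet}$; its density (and hence the dimension count) is deferred precisely to Lemma~\ref{lem:oYconnected}, as the remark itself states. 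At this stage of the paper one only knows $\dim\sY^D_{X^\bullet}\ge\len(D)$, and not that a given connected component $V = {}^{\ch\mu}\sY^{\ch\lambda-\ch\theta,0}$ satisfies $\dim V\ge\len(D)$ (conceivably $V$ is a component disjoint from the open locus $\oo\prod C^{(n_{D'})}$).

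The paper avoids this by \emph{not} passing through the dimension formula for $\sY^D_{X^\bullet}$. Instead it derives the lower bound $\dim\mathscr Y\ge 1+2\brac{\rho_G,\ch\lambda-\ch\lambda'}$ directly: write $\ch\lambda-\ch\lambda'=\sum_\alpha n_\alpha(\ch\nu_{D_\alpha^+}+\ch\nu_{D_\alpha^-})$, use graded factorization together with Lemma~\ref{lem:Yraise} to build an \'etale map
$(\sY^{\ch\lambda'}\xt_{\sM_X}\ol{\mathscr M})\oo\xt\oo\prod_\alpha(C^{(n_\alpha)}\oo\xt C^{(n_\alpha)})\to\sY^{\ch\lambda}\xt_{\sM_X}\ol{\mathscr M}$
over $\ol{\mathscr M}$, and observe that (when $\ch\lambda'\ne 0$) the source contains $(\msf Y'\ttimes C)\oo\xt\oo\prod_\alpha(C^{(n_\alpha)}\oo\xt C^{(n_\alpha)})$ of dimension $1+2\sum n_\alpha$, whose image lies in $\ol{\mathscr Y}$. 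Note that your combinatorial coefficient-matching (writing $\ch\lambda-\ch\lambda'=\sum_{D'}(n_{D'}-m_{D'})\ch\nu_{D'}$, etc.) is exactly encoding the same collection of color-valuations being added, so the two arguments are close cousins; but the \'etale-map formulation gives the dimension lower bound for $\mathscr Y$ \emph{without} knowing $\dim\mathscr Y$ in advance.

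A second, lesser, issue: you acknowledge but do not resolve the subtlety of carrying the data (the component $\mathscr Y$, the central fiber $\msf Y^{\ch\lambda}\subset\Gr_G$, and the orderings $\preceq,\le$) through the central extension $G'\to G$ of Lemma~\ref{lem:Xcover}. The paper only uses that reduction in the residual edge case $\ch\lambda'=0$, which forces $\ch\theta=0$; there one is working entirely inside $\sY^{\ch\lambda,0}$, which is intrinsic to $X^\bullet$ (and canonically identified with $\sY_{X'^\bullet}$ by \eqref{e:YXprime}), so the reduction is immediate and no transport of $\Gr_G$-data is needed. Your proof would apply the reduction throughout, including to $\ch\theta\ne 0$ and to the ambient affine Grassmannian, and this would require substantial additional justification.
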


\begin{proof}
Let $\ol{\mathscr Y}$ denote the closure of $\mathscr Y$ in $\barY^{\ch\lambda}$.
Recall from Corollary~\ref{cor:Ycomp} that $\mathscr Y = 
\sY^{\ch\lambda} \times_{\sM_X} \mathscr M$ 
where $\mathscr M$ is a connected component of 
$\sM_X^{\ch\theta}$. 
Then we have $\ol{\mathscr Y} \subset \barY^{\ch\lambda} \xt_{\sM_X} \ol{\mathscr M}$
where $\ol{\mathscr M}$ is the closure of $\mathscr M$ in $\barM_X^{\ch\theta}$.

Fix an irreducible component $\msf Y$ of 
$\msf Y^{\ch\lambda}\cap \mathscr Y$ and let $\olsf Y$ be its closure in 
$\olsf Y^{\ch\lambda} \cap \ol{\mathscr Y}$. 
From Proposition~\ref{prop:intersections} we know, by intersecting $\olsf Y$ with semi-infinite orbits in the affine Grassmannian, that there is a $\ch\lambda'\le \ch\lambda$ with $\msf Y':= \olsf Y \cap \msf S^{\ch\lambda'}$ nonempty of dimension zero, and $d:=\dim \msf Y \le \brac{ \rho_G, \ch\lambda-\ch\lambda'}$. 
Note that $\msf Y' \subset \sY^{\ch\lambda'} \cap \ol{\mathscr Y}
\subset \sY^{\ch\lambda'} \xt_{\sM_X} \ol{\mathscr M}$.

Since $\ch\lambda - \ch\lambda' \in \ch\Lambda^\pos_G$, we can decompose 
$\ch\lambda - \ch\lambda' = \sum_{\alpha\in \Delta_G} n_\alpha (\ch\nu_{D^+_\alpha}+\ch\nu_{D^-_\alpha})$
where the sum is over simple roots and 
$\sum n_\alpha = \brac{\rho_G,\ch\lambda-\ch\lambda'}$. 
By the graded factorization property and Lemma~\ref{lem:Yraise}, there is an \'etale map 
\begin{equation} \label{e:YaddC2}
    (\sY^{\ch\lambda'} \xt_{\sM_X} \ol{\mathscr M}) \oo\xt 
    \underset{\alpha \in \Delta_G}{\oo\prod} (C^{(n_\alpha)}\oo\xt C^{(n_\alpha)}) 
    \to
    \Scr Y^{\ch\lambda} \xt_{\sM_X} \ol{\mathscr M}
\end{equation}
over $\ol{\mathscr M}$. 

If $\ch\lambda' \ne 0$, then $\sY^{\ch\lambda'} \xt_{\sM_X} \ol{\mathscr M}$
contains $\msf Y' \tilde\times C$ and hence 
\eqref{e:YaddC2} implies that 
$\dim (\mathscr Y) \ge 1+2\brac{\rho_G,\ch\lambda-\ch\lambda'}$.
Therefore, 
\begin{equation} \label{e:MVineq}
d\le     \brac{\rho_G,\ch\lambda-\ch\lambda'} \le \frac 1 2 (\dim (\mathscr Y) - 1)
\end{equation}
if $\ch\lambda'\ne 0$. 
This proves the
claim in the case $\ch\lambda' \ne 0$. 

There remains to consider when $d= \brac{\rho_G,\ch\lambda-\ch\lambda'}$ and $\ch\lambda'=0$. 
In particular, $\msf Y^0 \xt_{\sM_X} \barM^{\ch\theta}_X$ is nonempty, which can only
be the case if $\ch\theta = 0$ since $\msf Y^0 = \msf Y^{0,0}=\pt$. 
When considering $\sY^{\ch\lambda,0}$, we may
use \S\ref{sect:freemonoid} to reduce to assuming that $\mf c_X = \mbb N^{\Cal D}$. 
In this case again by Proposition \ref{prop:intersections}, there is a simple root $\alpha$ 
and an irreducible component $\msf Y_{d-1}$ of $\olsf Y \cap \msf S^{\ch\alpha}$
of dimension $1$ such that $\ol{\msf Y_{d-1}}$ contains $\msf Y^{0,0} = \pt$. 
This implies that $\msf Y_{d-1}$ is an irreducible component of 
$\msf Y^{\ch\alpha,0}$ of dimension $1$. 
Since $\mf c_X$ is now the free monoid, $\ch\alpha = \ch\nu_{D_\alpha^+} + \ch\nu_{D_\alpha^-}$
for $\Cal D(\alpha) = \{ D_\alpha^+, D_\alpha^- \}$. 
Lemma~\ref{lem:Ycolor} implies that $\msf Y^{\ch\alpha,0}$ is empty, a contradiction.
\end{proof}

\subsection{Connected components of open Zastava} \label{sect:connected-open}
Recall from \S\ref{sect:Y0cc} that $\sY_X^{?,0}=\sY_{X^\bullet}$ is a disjoint union of subschemes
$\sY_{X^\bullet}^{D}$, indexed by $D \in \mbb N^{\Cal D}$, i.e., multisets of colors. 
Therefore, we have a map $\pi_0(\sY_{X^\bullet}) \to \mbb N^{\Cal D}$. 
On the other hand, Corollary~\ref{cor:compY} gives an injection 
$\pi_0(\sY_{X^\bullet}) \into \pi_1(H) \times \mf c_X$ 
where $^{\ch\mu}\sY^{\ch\lambda,0} \mapsto (\ch\mu,\ch\lambda)$ when the former
is nonempty. 

Observe that $\pi_1(H) \ot \bbQ \cong \mathcal X(H) \ot \bbQ$. 
Then tensoring \eqref{e:color-ses} by $\bbQ$, we get an injection 
\begin{equation} \label{e:color-inj-piH}
    \mathbb Z^{\Cal D} \into \mbb Q^{\Cal D} \into (\pi_1(H) \xt \ch\Lambda_X) \ot \bbQ. 
\end{equation}
One can check that the maps above fit into a commutative diagram 
\begin{equation} \label{e:Y0toBunH}
\begin{tikzcd}
\pi_0(\sY_{X^\bullet}) \ar[r, hook] \ar[d] & \pi_1(H)\times \mf c_X \ar[d] \\ 
\mbb N^{\Cal D} \ar[r, hook, "{\eqref{e:color-inj-piH}}"] & 
(\pi_1(H)\times \ch\Lambda_X) \ot \bbQ 
\end{tikzcd}
\end{equation}
We now show that the left vertical arrow is a bijection.

\begin{lem} \label{lem:oYconnected}
For $D \in \mathbb N^{\Cal D}$, 
the smooth scheme $\sY^{D}_{X^\bullet}$ is connected of dimension $\len(D)$.
\end{lem}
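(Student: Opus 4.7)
The plan is to prove the lemma by induction on $\len D$, combining the irreducibility of connected components of $\sY^{\ch\lambda,0}$ provided by Corollary~\ref{cor:Ycomp} with a dimension estimate derived from Proposition~\ref{prop:centralstrata}. Smoothness is automatic, inherited from $\sY_{X^\bullet}$ as noted at the start of~\S\ref{sect:Y0cc}, and the base cases $\len D \le 1$ follow from Lemma~\ref{lem:Ycolor}.

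For the inductive step, first observe that Remark~\ref{rem:Ycomp} provides an open embedding $\oo\prod_{\Cal D} C^{(n_{D'})} \hookrightarrow \sY^D_{X^\bullet}$, so $\sY^D_{X^\bullet}$ is nonempty and the image lies in a single connected component $\mathscr Y_0$. The natural map $\pi : \sY^D_{X^\bullet} \to \sA^D_{X'} = \prod_{\Cal D} C^{(n_{D'})}$ is birational, being an isomorphism over the disjoint locus $\oo\prod C^{(n_{D'})}$; hence $\dim \mathscr Y_0 \ge \len D$, while $\dim \sY^D_{X^\bullet} \le \dim \sA^D_{X'} = \len D$ via the generically finite map $\pi$, so $\dim \mathscr Y_0 = \len D$.

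Next I bound the dimension of the complement of the disjoint locus. Stratify $\sA^D_{X'}$ by partitions as in Proposition~\ref{prop:Astrata}: a stratum corresponds to $k$ distinct support points $v_1,\dotsc,v_k \in C$ with individual color divisors $D_1,\dotsc,D_k$ summing to $D$. By the graded factorization property (Proposition~\ref{prop:factorization}), the fiber of $\pi$ over such a stratum is isomorphic to $\prod_j ( \msf Y^{\varrho_X(D_j)} \cap \sY^{D_j}_{X^\bullet} )$, the product of central fibers of the smaller Zastava schemes. The inductive hypothesis gives that each $\sY^{D_j}_{X^\bullet}$ is irreducible of dimension $\len D_j$, so Proposition~\ref{prop:centralstrata} applied with $\ch\theta=0$ bounds each factor by $\tfrac{1}{2}(\len D_j - 1)$. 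Summing, the total dimension over the stratum is at most $k + \sum_j \tfrac{1}{2}(\len D_j - 1) = \tfrac{1}{2}(k + \len D)$, which is strictly less than $\len D$ whenever $k < \len D$. Hence the complement of the disjoint locus has dimension strictly less than $\len D$, so the disjoint locus is dense in $\mathscr Y_0$.

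Finally, to rule out additional connected components I use equidimensionality. By applying Lemma~\ref{lem:Yraise} to add colors, one obtains an \'etale open embedding $\sY^D_{X^\bullet} \oo\xt \Scr C \hookrightarrow \sY^{D''}_{X^\bullet}$, where $\Scr C$ is a product of disjoint symmetric powers of $C$ and $D''=D+\sum_\alpha n_\alpha(D_\alpha^+ + D_\alpha^-)$ has $\varrho_X(D'')$ large enough that $\sY^{\varrho_X(D''),0} \to \sM^0_X = \Bun_H$ is smooth with connected fibers (Corollary~\ref{cor:Mcover}). Then each component of $\sY^{D''}_{X^\bullet}$ has the same dimension, and pulling back this equidimensionality along the \'etale map forces $\sY^D_{X^\bullet}$ to be equidimensional. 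Any alternative component would therefore have dimension $\len D$ but be disjoint from $\mathscr Y_0$, hence contained in the complement of the disjoint locus---contradicting the dimension bound above. Therefore $\sY^D_{X^\bullet} = \mathscr Y_0$ is irreducible of dimension $\len D$. The main technical obstacle is the apparent circularity in applying Proposition~\ref{prop:centralstrata}, whose estimate is phrased in terms of the ambient component's dimension---precisely what we are trying to compute; the induction on $\len D$ is exactly what resolves this.
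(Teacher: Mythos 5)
Your proposal takes a genuinely different route from the paper's: you induct on $\len D$ and estimate the dimension of the complement of the disjoint locus directly, whereas the paper argues by contradiction, using graded factorization to produce a hypothetical connected component supported entirely over the diagonal $C\into\sA^{\ch\lambda}$, then invoking Proposition~\ref{prop:centralstrata} to force that component to be one-dimensional and deriving a contradiction from a dimension count. Both strategies share the same core ingredients (Proposition~\ref{prop:centralstrata}, Lemma~\ref{lem:Yraise}/graded factorization, Corollary~\ref{cor:Mcover}), and your induction is a reasonable alternative; but there is a genuine gap in the step where you conclude equidimensionality.

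Specifically, you assert that ``each component of $\sY^{D''}_{X^\bullet}$ has the same dimension'' on the strength of Corollary~\ref{cor:Mcover} (smoothness with connected fibers of $\sY^{\varrho_X(D''),0}\to\Bun_H$). That smoothness only tells you $\dim(\text{component})=\dim\Bun_H^{\ch\mu}+(\text{const})$, where $\ch\mu\in\pi_1(H)$ varies over the components of $\Bun_H$ hit by $\sY^{D''}_{X^\bullet}$. But $\dim\Bun_H^{\ch\mu}$ is \emph{not} constant in $\ch\mu$ in general: it equals $-\brac{\det\mathrm{Ad}_H,\ch\mu}+(g-1)\dim H$ (up to stabilizer corrections), and $H$ need not be unimodular. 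The paper closes exactly this gap by appealing to the commutative diagram \eqref{e:Y0toBunH}, which shows that the image of $\ch\mu$ in $\pi_1(H)\ot\bbQ$ is determined by $D''$; since $\dim\Bun_H^{\ch\mu}$ depends only on that rational image, equidimensionality of $\sY^{D''}_{X^\bullet}$ follows. You need to invoke this (or an equivalent argument), since a priori $\sY^{D''}_{X^\bullet}$ could be spread over several components of $\Bun_H$ of different dimensions. A minor additional slip: the factorization maps are \'etale, not open embeddings, and the birationality claim for $\pi$ should be stated as a birationality of $\pi$ restricted to the distinguished component $\mathscr Y_0$ rather than of $\pi$ on all of $\sY^D_{X^\bullet}$ (which would already presume connectedness). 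Neither of these affects the substance, but the equidimensionality gap does.
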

\begin{proof}
As discussed in \S\ref{sect:Y0cc}, we may assume that $X=X^\can$ and 
$\mathfrak c_X = \mathbb N^{\Cal D}$. 
Now the claim is equivalent to showing that, under those assumptions, 
$\sY^{\ch\lambda,0}$ is connected for $\ch\lambda = \sum n_{D'} \ch\nu_{D'}$. 
One deduces from the graded factorization property of $\sY$ that 
if $\sY^{\ch\lambda,0}$ is not connected, there must exist a possibly different $\ch\lambda$
with a connected component $\mathscr Y$ contained entirely in the preimage of the diagonal 
$\sY^{\ch\lambda,0} \xt_{\sA^{\ch\lambda}, \delta^{\ch\lambda}} C$. 
Then $\dim (\msf Y^{\ch\lambda} \cap \mathscr Y) = \dim \mathscr Y - 1$,
and the dimension inequality of Proposition~\ref{prop:centralstrata} can only hold if
$\dim \mathscr Y = 1$.
By Corollary~\ref{cor:Ycomp}, the component $\mathscr Y$ must be of the form 
\[ \mathscr Y = {^{\ch\mu}\sY^{\ch\lambda,0}} = 
\Bun_H^{\ch\mu} \xt_{\Bun_G} \Bun_B^{-\ch\lambda} \]
for some $\ch\mu\in \pi_1(H)$. 
Now by the graded factorization property and Lemma~\ref{lem:Yraise}, 
we have an \'etale map 
\[ ^{\ch\mu}\sY^{\ch\lambda,0} \oo\xt 
\prod^\circ_{\alpha\in\Delta_G} C^{(N_{\alpha})} \oo\xt C^{(N_{\alpha})} \to 
{^{\ch\mu}}\sY^{\ch\lambda',0} \]
for $\ch\lambda' = \ch\lambda + \sum N_\alpha\ch\alpha$ and any $N_\alpha$ large enough. 
Thus, $\dim {^{\ch\mu}}\sY^{\ch\lambda',0} = 1+2\sum N_\alpha$. 
By Lemma~\ref{lem:BunBtoGsm} we may assume that 
$\Bun_B^{-\ch\lambda'} \to \Bun_G$ is smooth. 
Note that $\dim \Bun_H^{\ch\mu}$ only depends on the image of $\ch\mu$ in $\pi_1(H)\ot \bbQ$. 
On the other hand, the commutative diagram \eqref{e:Y0toBunH} 
says that this image is determined by $\ch\lambda'$. 
These observations imply that $\ds \sY^{\ch\lambda',0}
= \Bun_H \xt_{\Bun_G} \Bun_B^{-\ch\lambda'}$ is equidimensional. 
However we know that $\sY^{\ch\lambda',0}$ has a connected component 
birational to $\sA^{\ch\lambda'}$, which is of dimension $\sum n_{D'} + 2\sum N_\alpha$.
The equality 
\[ \textstyle \dim {^{\ch\mu}}\sY^{\ch\lambda',0} = 1+2\sum N_\alpha = \sum n_{D'} + 2\sum N_\alpha \]
forces $\ch\lambda = \ch\nu_{D'}$ for some color $D'\in \Cal D$, hence $D=D'$.
Lemma~\ref{lem:Ycolor} now implies that $\sY^{\ch\nu_D,0}$ is connected.
\end{proof}

\begin{cor} \label{cor:Ycomp2}
For every $\ch\lambda\in\mf c_X$, $\ch\theta\in\mf c_X^-$, the connected components of $\sY^{\ch\lambda,\ch\theta}$ are
in bijection with the closures of 
\[ \sY^{D}_{X^\bullet} \oo\xt \sY^{\ch\theta,\ch\theta} \into \sY^{\ch\lambda,\ch\theta} \]
for $D\in \mbb N^{\Cal D}$ such that $\varrho_X(D) = \ch\lambda-\ch\theta$. 
\end{cor}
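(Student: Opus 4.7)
The plan is to combine the density statement of Lemma~\ref{lem:comp-cover}(i) with the irreducibility results already in hand. First I would observe that the stratum $\sY^{\ch\lambda,\ch\theta}$ is smooth by Proposition~\ref{prop:localstrata}, so its connected components coincide with its irreducible components, and each such component is recovered as the closure of the corresponding connected component of any dense open subscheme.

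The main input is the open dense embedding
\[
    \sY^{\ch\lambda-\ch\theta,0} \oo\xt \sY^{\ch\theta,\ch\theta} \into \sY^{\ch\lambda,\ch\theta}
\]
from Lemma~\ref{lem:comp-cover}(i). Using the decomposition $\sY_{X^\bullet} = \bigsqcup_{D\in \mbb N^{\Cal D}} \sY^D_{X^\bullet}$ of \S\ref{sect:Y0cc} (intrinsic by Remark~\ref{rem:Ycomp}), the source breaks into the disjoint union
\[
    \bigsqcup_{D:\,\varrho_X(D) = \ch\lambda-\ch\theta} \sY^D_{X^\bullet} \oo\xt \sY^{\ch\theta,\ch\theta} .
\]
Next I would verify that each summand is irreducible. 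By Lemma~\ref{lem:oYconnected}, $\sY^D_{X^\bullet}$ is smooth and connected, hence irreducible; $\sY^{\ch\theta,\ch\theta}$ is smooth by Proposition~\ref{prop:localstrata} with reduced structure isomorphic to $C$ by Corollary~\ref{cor:Ytheta}(ii), and so is likewise irreducible. The factorization product $\sY^D_{X^\bullet} \oo\xt \sY^{\ch\theta,\ch\theta}$ is the restriction of their Cartesian product to the open disjoint locus in $\sA^{\ch\lambda-\ch\theta}\xt \sA^{\ch\theta}$, which is nonempty because $C$ is infinite; hence each summand is a nonempty open subscheme of an irreducible product, and therefore irreducible.

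To conclude, the summands are pairwise disjoint (inheriting disjointness from $\sY^{\ch\lambda-\ch\theta,0}$) open irreducible subschemes whose union is dense in $\sY^{\ch\lambda,\ch\theta}$. Thus they are exactly the connected components of $\sY^{\ch\lambda-\ch\theta,0} \oo\xt \sY^{\ch\theta,\ch\theta}$, and taking closures in $\sY^{\ch\lambda,\ch\theta}$ yields the claimed bijection with its connected components. There is no genuine obstacle here: the corollary is essentially a bookkeeping consequence of Lemma~\ref{lem:comp-cover}, Lemma~\ref{lem:oYconnected}, and Corollary~\ref{cor:Ytheta}, together with smoothness of the stratum.
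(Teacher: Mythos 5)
Your proof is correct and takes essentially the same route as the paper, whose own proof simply reads ``Immediate from Lemmas~\ref{lem:comp-cover}(i) and \ref{lem:oYconnected}''; you have spelled out the bookkeeping (smoothness of the stratum, irreducibility of each summand, disjointness, passage from a dense open to closures) that the paper leaves implicit. The only tiny gap is that Lemma~\ref{lem:comp-cover}(i) and Corollary~\ref{cor:Ytheta}(ii) are stated for $\ch\theta\neq 0$, so the case $\ch\theta=0$ should be noted separately, but there $\sY^{0,0}=\pt$ and the claim reduces directly to the decomposition $\sY^{\ch\lambda,0}=\bigsqcup_D\sY^D_{X^\bullet}$ together with Lemma~\ref{lem:oYconnected}.
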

\begin{proof}
Immediate from Lemmas~\ref{lem:comp-cover}(i) and \ref{lem:oYconnected}.
\end{proof}

\subsection{Stratified semi-smallness} \label{sect:semismall}

Following \cite[\S 4]{MV}, we will use  
the notion of a stratified semi-small map, which we now review.
Let $f : Y \to A$ be a proper map between two stratified spaces 
$(Y,\Scr S)$ and $(A,\Scr T)$. Suppose all strata 
are smooth and connected and each $f(S), S\in \Scr S$ is a union of strata 
$S' \in \Scr T$. We say $f$ is \'etale-\emph{locally trivial} 
(in the stratified sense) if whenever $S' \subset f(S)$, the restriction of 
$f$ to $S\cap f^{-1}(S') \to S'$ is \'etale-locally
a trivial fibration. We say that $f$ is \emph{stratified semi-small} 
if it is \'etale-locally trivial and 
for any $S\in \Scr S$ and any $S'\in \Scr T$ such that
$S' \subset f(S)$ we have 
\begin{equation} \label{e:ss-ineq}
    \dim ( f^{-1}(a) \cap S) \le \frac 1 2 (\dim S - \dim S') 
\end{equation}
for any (and thus all) $a\in S'$. 

The notion of stratified semi-smallness is relevant due to 
the observation below, which follows from dimension counting and the 
definition of the perverse t-structure:

\begin{lem}[{\cite[Lemma 4.3]{MV}}]  \label{lem:semismallperverse}
If $f$ is a stratified semi-small map then $f_*(\Scr F) \in \rmP_{\Scr T}(A)$ for all 
$\Scr F \in \rmP_{\Scr S}(Y)$. 
\end{lem}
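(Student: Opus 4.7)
The plan is to verify that $f_*\sF$ belongs to both halves of the perverse $t$-structure, ${^p\rmD}^{\le 0}_{\Scr T}(A)$ and ${^p\rmD}^{\ge 0}_{\Scr T}(A)$, and is $\Scr T$-constructible. The $\Scr T$-constructibility follows directly from the hypothesis of \'etale-local triviality: for each $T' \in \Scr T$ and each $S \in \Scr S$ with $T' \subset f(S)$, the restriction $S \cap f^{-1}(T') \to T'$ is \'etale-locally a trivial fibration, so proper base change implies that the contribution of $\sF|_{S \cap f^{-1}(T')}$ to $f_*\sF|_{T'}$ has locally constant cohomology sheaves, and $f_*\sF|_{T'}$ is assembled from finitely many such contributions via distinguished triangles.

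For the ${^p\rmD}^{\le 0}$ condition, fix $a \in T' \in \Scr T$. Since $f$ is proper, $f_* = f_!$, and proper base change yields $(f_*\sF)_a \cong R\Gamma_c(f^{-1}(a), \sF|_{f^{-1}(a)})$. I would stratify the fiber as $f^{-1}(a) = \bigsqcup_S (f^{-1}(a) \cap S)$ and run the excision spectral sequence, reducing to bounding $R\Gamma_c(f^{-1}(a) \cap S, \sF|_{f^{-1}(a) \cap S})$ for each $S$ with $T' \subset f(S)$. Perversity of $\sF$ implies that $\sF|_S$ is concentrated in cohomological degrees $\le -\dim S$, and $R\Gamma_c$ of a $d$-dimensional variety with coefficients in a complex concentrated in degrees $\le k$ sits in degrees $\le k + 2d$. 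Combining this bound with the semi-smallness inequality \eqref{e:ss-ineq} gives
\[ 2\dim(f^{-1}(a) \cap S) - \dim S \le -\dim T', \]
which is precisely the bound $\rmH^k((f_*\sF)_a) = 0$ for $k > -\dim T'$ needed for membership in ${^p\rmD}^{\le 0}_{\Scr T}(A)$.

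The ${^p\rmD}^{\ge 0}$ condition follows by Verdier duality. Since $f$ is proper, the natural isomorphism $\mbb D \circ f_* \cong f_* \circ \mbb D$ holds. Because $\mbb D \sF$ is again in $\rmP_{\Scr S}(Y)$, the previous step gives $f_*(\mbb D \sF) \in {^p\rmD}^{\le 0}_{\Scr T}(A)$, hence $\mbb D(f_*\sF) \in {^p\rmD}^{\le 0}_{\Scr T}(A)$, which is equivalent to $f_*\sF \in {^p\rmD}^{\ge 0}_{\Scr T}(A)$.

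The main point requiring care is the dimension count in the ${^p\rmD}^{\le 0}$ step: the target bound $\le -\dim T'$ emerges exactly because the factor of $\tfrac{1}{2}$ in the definition of stratified semi-smallness matches the gap between the perverse bound $-\dim S$ on $\sF|_S$ and the $2\dim(f^{-1}(a)\cap S)$ ceiling on compactly supported cohomology. Everything else is a routine application of proper base change, the definition of the perverse $t$-structure relative to a stratification, and Verdier duality.
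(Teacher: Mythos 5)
Your proof is correct and takes essentially the same route as the cited \cite[Lemma 4.3]{MV}, which the paper does not reprove: the $^p\rmD^{\le 0}$ half is the standard dimension count combining the perverse bound on $\Scr F|^*_S$, the estimate $H^i_c \le k+2d$ for compactly supported cohomology of a $d$-dimensional variety with coefficients in degrees $\le k$, and the semi-smallness inequality; the $^p\rmD^{\ge 0}$ half is the Verdier-duality reduction using $\mbb D f_* \cong f_* \mbb D$ for proper $f$. The excision devissage over strata and the use of \'etale-local triviality for $\Scr T$-constructibility are likewise standard.

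The one step you assert without argument is ``$\mbb D\sF$ is again in $\rmP_{\Scr S}(Y)$.'' Verdier duality certainly preserves perversity, but it does not automatically preserve $\Scr S$-constructibility in the paper's sense (locally constant $*$-restrictions to strata): without more, perversity of $\mbb D\sF$ only controls the $*$-restriction at the generic point of each stratum, which is not enough to run the dimension count on $f^{-1}(a)\cap S$. In the Whitney setting of \cite{MV} this is not an issue, since duality preserves the category of sheaves constructible (in both $*$- and $!$-senses) with respect to a Whitney stratification. The paper's remark that the lemma holds without Whitney is supported in its applications by the fact that the input sheaves are IC complexes of stratum closures (Lemma~\ref{lem:barICwhitney} and Proposition~\ref{prop:Whitney-poschar}), which are self-dual, so $\mbb D\sF = \sF$ and no extra hypothesis is needed. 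Worth flagging this so the duality step is seen to be using either Whitney or self-duality, rather than holding for arbitrary $\sF \in \rmP_{\Scr S}(Y)$ under the paper's weak definition of $\Scr S$-constructibility.
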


Note that the lemma holds even if the stratifications are not Whitney. 
In this case we simply define $\rmP_{\Scr S}(Y) := \rmP(Y) \cap \rmD^b_{\Scr S}(Y)$
to be the subcategory of perverse sheaves that are $\Scr S$-constructible, i.e., 
the $\Scr F \in \rmP(Y)$ such that $\rmH^i(\Scr F)|_S$ is a local system
of finite rank for all $i\in \bbZ$ and $S\in \Scr S$. 

\subsubsection{}
Let us return to our situation: consider the proper map 
$\bar\pi : \barY^{\ch\lambda} \to \sA^{\ch\lambda}$.

We have the smooth stratification defined in Proposition~\ref{prop:barYstrata}, 
\[ \barY^{\ch\lambda} = \bigcup_{\ch\nu,\ch\Theta} 
{}_{\ch\nu}\barY^{\ch\lambda,\ch\Theta}, \qquad 
{_{\ch\nu}}\barY^{\ch\lambda,\ch\Theta} \cong 
C_{\ch\nu} \xt \sY^{\ch\lambda-\ch\nu,\ch\Theta} \] 
for $\ch\nu \in \ch\Lambda^\pos_G,\, 
\ch\Theta \in \Sym^\infty(\mathfrak c_X^-\sm 0)$.
Let $(\barY^{\ch\lambda},\Scr S)$ denote the stratification by
 the connected components of
$_{\ch\nu}\sY^{\ch\lambda, \ch\Theta}$.
We will not show that $\Scr S$ is a Whitney stratification; for our purposes 
the following suffices: 

\begin{lem} \label{lem:barICwhitney}
For any $\ch\lambda \in \mathfrak c_X$, 
the IC complex of $\barY^{\ch\lambda}$ is $\Scr S$-constructible, 
i.e., 
\[ \IC_{\barY^{\ch\lambda}} \in \rmP_{\Scr S}(\barY^{\ch\lambda}). \]
\end{lem}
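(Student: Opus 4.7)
The plan is to bootstrap the $\Scr S$-constructibility of $\IC_{\barY^{\ch\lambda}}$ from the known constructibility of $\IC_{\sM_X}$ (Proposition~\ref{prop:globwhitney} in characteristic zero, and its positive-characteristic analog from \S\ref{sect:whitneyperverse}) and of $\IC_{\ol\Bun_B}$ (Theorem~\ref{thm:BFGM}), following the same strategy used in the proof of Proposition~\ref{prop:ICbarY}.

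First, I would reduce to the case that $\ch\lambda$ is large. Since $\barY^{\ch\lambda}$ is of finite type, it is covered by the open substacks $_{\le\ch\eta}\barY^{\ch\lambda}$ as $\ch\eta \in \ch\Lambda^\pos_G$ ranges. Fix such an $\ch\eta$ and choose $\ch\mu\in \ch\Lambda^\pos_G$ large enough so that, for every $0\le\ch\nu\le\ch\eta$, the hypotheses of Proposition~\ref{prop:ULA} and Lemma~\ref{lem:BunBtoGsm} apply to $\ch\lambda+\ch\mu-\ch\nu$. The graded factorization \'etale map \eqref{e:barYfactorlarge},
\[
    \barY^{\ch\lambda} \oo\xt \sY^{\ch\mu,0} \to \barY^{\ch\lambda+\ch\mu},
\]
has smooth projection to $\barY^{\ch\lambda}$ with nonempty smooth fibers (after, if necessary, passing to the cover of \S\ref{sect:freemonoid} to ensure $\sY^{\ch\mu,0}$ is nonempty). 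As $\Scr S$-constructibility descends along smooth pullbacks, it suffices to prove the statement for $\barY^{\ch\lambda+\ch\mu}$.

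For such sufficiently large $\ch\lambda$, the isomorphism \eqref{e:ICbar=bt}, obtained in the proof of Proposition~\ref{prop:ICbarY} by applying Lemma~\ref{lem:ULAbox} to the Cartesian square with corners $\sM_X, \Bun_G, {_{\le\ch\eta}}\ol\Bun_B^{-\ch\lambda}$, identifies $\IC_{_{\le\ch\eta}\barY^{\ch\lambda}}$ with the restriction of $\IC_{\sM_X}\bt_{\Bun_G}\IC_{_{\le\ch\eta}\ol\Bun_B^{-\ch\lambda}}$. The first factor is constructible with respect to the fine stratification of $\sM_X$ by connected components of $\sM_X^{\ch\Theta}$. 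The second is constructible with respect to the defect stratification $\mf i_{\ch\nu}: C_{\ch\nu} \xt \Bun_B^{-\ch\lambda+\ch\nu} \into \ol\Bun_B^{-\ch\lambda}$, refined by the partition stratification of $C_{\ch\nu}$, by Theorem~\ref{thm:BFGM} combined with the fact that $\mfU^\vee(\ch\mfn_C)^{-\ch\nu}$ is constructible along $C_{\ch\nu}=\bigsqcup_{\mf P}\oo C^{\mf P}$ (as is clear from its construction via the Cousin/factorizable description).

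The remaining step is to match the product of these two stratifications, pulled back to $\barY^{\ch\lambda}$, with the stratification $\Scr S$. This is where the factorization identification ${_{\ch\nu}}\barY^{\ch\lambda,\ch\Theta} \cong C_{\ch\nu} \xt \sY^{\ch\lambda-\ch\nu,\ch\Theta}$ from \S\ref{sect:strat-compactified} is used: the first factor tracks the defect stratum of $\ol\Bun_B$, while the second tracks the fine stratum of $\sM_X$ pulled back through the smooth map $\sY^{\ch\lambda-\ch\nu,\ch\Theta} \to \sM_X^{\ch\Theta}$ of Proposition~\ref{prop:localstrata}. The main obstacle is this bookkeeping, which is formal but requires carefully matching the strata across the Cartesian diagram and keeping track of the partition refinement of $C_{\ch\nu}$ that must be regarded as implicit in $\Scr S$ (otherwise the lemma would already fail on the subscheme ${_{\ch\nu}}\barY^{\ch\lambda,0} \cong C_{\ch\nu}\xt \sY^{\ch\lambda-\ch\nu,0}$, where $\mfU^\vee(\ch\mfn_C)^{-\ch\nu}$ is genuinely not locally constant on $C_{\ch\nu}$).
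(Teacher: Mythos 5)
Your proof follows the same route as the paper's: reduce to large $\ch\lambda$ via the graded-factorization smooth correspondence, invoke the box-product isomorphism \eqref{e:ICbar=bt} from Lemma~\ref{lem:ULAbox}, and then cite constructibility of $\IC_{\sM_X}$ (Proposition~\ref{prop:globwhitney}/Proposition~\ref{prop:Whitney-poschar}) and of $\IC_{\ol\Bun_B}$ (Theorem~\ref{thm:BFGM}). Two minor remarks: the aside about passing to the cover of \S\ref{sect:freemonoid} to make $\sY^{\ch\mu,0}$ nonempty is unnecessary, since for $\ch\mu$ large enough this is automatic from Corollary~\ref{cor:Mcover}. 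More substantively, your parenthetical flags a genuine imprecision that the paper's terse proof glosses over: for $G$ of rank $\ge 2$ the sheaf $\mfU^\vee(\ch\mfn_C)^{-\ch\nu}$ really does have jumping stalk ranks along diagonals of $C_{\ch\nu}$ (e.g.\ for adjacent simple coroots $\ch\alpha_1,\ch\alpha_2$, $\dim U(\ch\mfn)^{\ch\alpha_1+\ch\alpha_2}=2$ while $\dim U(\ch\mfn)^{\ch\alpha_1}\ot U(\ch\mfn)^{\ch\alpha_2}=1$), so the stratification $\Scr S$ as literally defined in \S\ref{sect:semismall} by connected components of ${}_{\ch\nu}\barY^{\ch\lambda,\ch\Theta}\cong C_{\ch\nu}\xt\sY^{\ch\lambda-\ch\nu,\ch\Theta}$ is too coarse for the lemma to hold as stated; the partition refinement of the $C_{\ch\nu}$ factor must be understood to be built into $\Scr S$ — as is in fact implicit in the verification of \'etale-local triviality in the proof of Theorem~\ref{thm:semismall}, which works with the intersection ${}_{\ch\nu}\barY^{\ch\lambda,\ch\Theta}\cap\bar\pi^{-1}(\oo C^{\mf P})$.
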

\begin{proof} 
Let $\ch\nu \in \ch\Lambda^\pos_G$ be such that 
$\barY^{\ch\lambda} = {_{\le\ch\nu}}\barY^{\ch\lambda}$, which exists 
since $\barY^{\ch\lambda}$ is of finite type. 
For any $\ch\mu \in \ch\Lambda^\pos_G$ large enough, we have
a smooth correspondence preserving stratifications
\[ \barY^{\ch\lambda} \leftarrow \barY^{\ch\lambda}\oo\xt \sY^{\ch\mu,0} \to 
{_{\le\ch\nu}}\barY^{\ch\lambda+\ch\mu} =: Y\]
where the right arrow comes from the graded factorization property of $\barY$.
Thus, it suffices to check that 
$\IC_Y$ is $\Scr S$-constructible. 
By \eqref{e:ICbar=bt}, we have an isomorphism 
\[ \IC_Y \cong \Bigl.\Bigl(\IC_{\sM_X} \bt_{\Bun_G} \IC_{\ol\Bun_B^{-\ch\lambda-\ch\mu}}\Bigr)\Bigr|_Y. \] 
Now Proposition~\ref{prop:globwhitney} implies that 
$\IC_{\sM_X}$ is constructible with respect to the fine stratification on 
$\sM_X$, and Theorem~\ref{thm:BFGM} implies that $\IC_{\ol\Bun_B}$ is constructible
with respect to the stratification by defect. 
Thus, it follows that $\IC_Y$ is $\Scr S$-constructible.
\end{proof}

Let $(\Scr A^{\ch\lambda}, \Scr T)$ denote the stratification 
from Proposition~\ref{prop:Astrata}, 
\[ \Scr A^{\ch\lambda} = \bigcup_{\mf P} \oo C^{\mf P} \]
for $\mf P\in \Sym^\infty(\mathfrak c_X\sm 0)$ such that 
$\deg(\mf P)=\ch\lambda$.

Now Theorem~\ref{thm:barpiperverse} follows 
from Lemmas~\ref{lem:semismallperverse}, \ref{lem:barICwhitney} 
and the following theorem:

\begin{thm} \label{thm:semismall}
The map $\bar\pi : (\barY^{\ch\lambda}, \Scr S) \to (\Scr A^{\ch\lambda}, \Scr T)$ is stratified semi-small. 
\end{thm}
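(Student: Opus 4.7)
The plan is to derive stratified semi-smallness from graded factorization, which localizes all computations to single points of $C$, together with the central-fiber dimension bound of Proposition~\ref{prop:centralstrata}. Étale-local triviality of $\bar\pi|_S$ over any target stratum $S' = \oo C^{\mf P}$ follows from Proposition~\ref{prop:changecurve}: pulling back along an \'etale cover $\wt C \to C$ that separates the support of divisors in $\oo C^{\mf P}$ identifies $\bar\pi^{-1}(\oo C^{\mf P})$ with a product of factors of the form $\olsf Y^{\ch\lambda'} \ttimes C$, one for each support point. The stratification of $\barY^{\ch\lambda}$ is compatible with this product decomposition, and each factor is pulled back via $\ttimes C$ from a central-fiber datum independent of the point of $C$, yielding a trivial fibration.

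For the dimension inequality, fix a stratum $\mathscr Y \subset {}_{\ch\nu}\barY^{\ch\lambda,\ch\Theta}$, i.e., a connected component of $C_{\ch\nu}\times \sY^{\ch\lambda-\ch\nu,\ch\Theta}$. By Corollary~\ref{cor:Ycomp2} (extended to multisets $\ch\Theta$ via factorization), such a component is characterized by a color multiset $D\in \mbb N^{\Cal D}$ satisfying $\varrho_X(D)=\ch\lambda-\ch\nu-\sum_{\ch\theta} N_{\ch\theta}\ch\theta$, and Lemma~\ref{lem:oYconnected} together with Corollary~\ref{cor:Ytheta} yield $\dim\mathscr Y = |\ch\nu|+\len(D)+|\ch\Theta|$, where $|\ch\nu|:=\sum_\alpha n_\alpha$ when $\ch\nu=\sum_\alpha n_\alpha\ch\alpha$ and $|\ch\Theta|=\sum_{\ch\theta} N_{\ch\theta}$. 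Fix $a=\sum_{i=1}^m \ch\lambda_{v_i}\cdot v_i \in \oo C^{\mf P}$ with distinct $v_i\in\abs C$. Graded factorization presents $\bar\pi^{-1}(a)\cap\mathscr Y$ as a finite disjoint union, indexed by splits $\ch\lambda_{v_i}=\ch\nu_{v_i}+\varrho_X(D_{v_i})+\ch\theta_{v_i}$ with $\sum \ch\nu_{v_i}=\ch\nu$, $\sum D_{v_i}=D$, and multiset of nonzero $\ch\theta_{v_i}$ equal to $\ch\Theta$, of products $\oo\prod_{i=1}^m \bigl(\mathscr Z_{v_i}\cap \msf Y^{\tilde\ch\lambda_{v_i}}\bigr)$; here $\tilde\ch\lambda_{v_i}=\ch\lambda_{v_i}-\ch\nu_{v_i}$ and $\mathscr Z_{v_i}$ is the connected component of $\sY^{\tilde\ch\lambda_{v_i},\ch\theta_{v_i}}$ cut out by $D_{v_i}$, of dimension $\len(D_{v_i})+[\ch\theta_{v_i}\ne 0]$.

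Applying Proposition~\ref{prop:centralstrata} to each $\mathscr Z_{v_i}$ with $\tilde\ch\lambda_{v_i}\ne 0$ (the remaining indices contribute a point) gives
\[
2\dim(\bar\pi^{-1}(a)\cap\mathscr Y)\le \len(D)+|\ch\Theta|-m',
\]
where $m':=|\{i:\tilde\ch\lambda_{v_i}\ne 0\}|$. Thus the target inequality $2\dim(\bar\pi^{-1}(a)\cap\mathscr Y)\le \dim\mathscr Y-m=|\ch\nu|+\len(D)+|\ch\Theta|-m$ reduces to showing $m-m'\le|\ch\nu|$. But each index with $\tilde\ch\lambda_{v_i}=0$ forces $\ch\nu_{v_i}=\ch\lambda_{v_i}\in \ch\Lambda^\pos_G\sm 0$, so $|\ch\nu_{v_i}|\ge 1$, and consequently $m-m'\le \sum_i |\ch\nu_{v_i}|=|\ch\nu|$. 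The main obstacle has already been addressed in Proposition~\ref{prop:centralstrata}; what remains is the combinatorial bookkeeping above, whose only delicate point is the use of the simple-coroot length to absorb the contribution of those support points where the entire value of $a$ is concentrated in the defect $\ch\nu$.
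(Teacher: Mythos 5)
Your argument is correct and follows the same route as the paper's: graded factorization localizes the fiber computation to the individual support points, and Proposition~\ref{prop:centralstrata} supplies the dimension bound at each. Where you are more careful than the paper's write-up is the case $\tilde\ch\lambda_{v_i}=0$, i.e., a support point where all of $\ch\lambda_{v_i}$ sits in the defect $\ch\nu_{v_i}$. Proposition~\ref{prop:centralstrata} has the hypothesis $\ch\lambda\ne 0$, and the bound $\dim(\msf Y^{0,0})\le\tfrac12(\dim\sY^{0,0}-1)$ it would give there reads $0\le-\tfrac12$ and is false; your observation that each such index forces $\ch\nu_{v_i}=\ch\lambda_{v_i}\in\ch\Lambda^\pos_G\sm 0$, contributing at least $1$ to $\brac{\rho_G,\ch\nu}$, so that $m-m'\le\brac{\rho_G,\ch\nu}$, is exactly what closes the gap. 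The paper applies the bound uniformly over $I$ and asserts the stronger inequality $\dim(\bar\pi^{-1}(a)\cap S)\le\tfrac12(\dim S-\dim\oo C^{\mf P}-\brac{\rho_G,\ch\nu})$; this is actually false when some $\tilde\ch\lambda_{v_i}=0$ (take $\ch\lambda=\ch\nu=\ch\alpha$ a simple coroot, $\mf P=[\ch\alpha]$, $\ch\Theta=0$: the fiber is a point while the right side is $-\tfrac12$), though the semi-smallness inequality itself does hold by exactly your bookkeeping. One small imprecision on your side: \'etale-local triviality should not be inferred by pulling back along a single \'etale cover $\wt C\to C$ via Proposition~\ref{prop:changecurve}, since no such cover separates the supports of all divisors in the positive-dimensional family $\oo C^{\mf P}$. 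The clean statement (as in the paper) is that the \emph{base} cover $\oo C^I\to\oo C^{\mf P}$ is finite \'etale and the factorization isomorphism \eqref{e:preimagefactorize} identifies the pullback of ${}_{\ch\nu}\barY^{\ch\lambda,\ch\Theta}$ with a disjoint union of products of twisted central fibers $\msf Y^{\ch\lambda'_i,\ch\theta_i}\ttimes C$, which trivializes the fibration.
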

\begin{proof}
Fix $\mf P \in \Sym^\infty(\mathfrak c_X\sm 0)$ such that 
$\ch\lambda=\deg(\mf P)$. 
Let 
$I = \{ 1,\dotsc, \abs{\mf P}\}$ be the finite set of cardinality 
$\abs{\mf P}$. We fix an ordering 
$\mf P = \sum_{i\in I} [\ch\lambda_i]$ on our partition $\mf P$,
so each $\ch\lambda_i \in \mathfrak c_X$. 
Let $\oo C^I$ denote the $I$-fold product $C^I$ with the diagonal
divisor removed. Then the map $\oo C^I \to \oo C^{\mf P}$
corresponds to 
choosing an ordering on an unordered multiset of points in $C$.

We leave it to the reader to check that $\bar\pi(S)$ is a union of
strata for $S \in \Scr S$. 
It is also a consequence of the graded factorization property
that for 
$\ch\nu\in \ch\Lambda^\pos_G, \ch\Theta \in \Sym^\infty(\mathfrak c_X \sm 0)$,
the fiber product 
\begin{equation} \label{e:preimagefactorize}
    _{\ch\nu}\barY^{\ch\lambda,\ch\Theta} 
    \xt_{\sA^{\ch\lambda}} \oo C^I  
    =   \bigsqcup_{i \mapsto \ch\nu_i,\ch\lambda'_i, \ch\theta_i}
    \underset{i\in I}{\oo\prod} (\msf Y^{\ch\lambda'_i, \ch\theta_i} \ttimes C ) 
\end{equation}
is equal to a disjoint union of open and closed subschemes,
running over all assignments $i\in I \mapsto \ch\nu_i \in \ch\Lambda^\pos_G, 
\ch\lambda'_i \in \mf c_X, \ch\theta_i \in \mf c_X^-$ (including zero)
such that $\ch\nu_i + \ch\lambda'_i = \ch\lambda_i$
and $\sum_I [\ch\theta_i] = \ch\Theta + (\abs I - \abs{\ch\Theta})[0] \in 
\Sym^\infty(\mf c_X)$ as a multiset \emph{with zero}.

Since $\oo C^I \to \oo C^{\mf P}$ is finite \'etale,
(and everything is of finite type) we deduce that the restriction of $\bar\pi$ to 
$_{\ch\nu}\barY^{\ch\lambda,\ch\Theta} \cap \bar\pi^{-1}( \oo C^{\mf P} )
    \to \oo C^{\mf P}$
is \'etale-locally a trivial fibration.

It remains to check the dimension inequality \eqref{e:ss-ineq}.
Let $\mf P, \ch\nu, \ch\Theta$ be as before, and  
fix a connected component $S$ of $_{\ch\nu}\barY^{\ch\lambda,\ch\Theta}$.
We deduce from 
\eqref{e:preimagefactorize} that for any $a \in \oo C^{\mf P}$,
the restricted fiber 
$\bar\pi^{-1}(a) \cap S$
is contained in a union of 
$\oo\prod_I (\msf Y^{\ch\lambda'_i,\ch\theta_i}\cap S_i)$ for 
$\ch\nu_i,\ch\lambda'_i,\ch\theta_i$ as above
and $S_i$ some connected component of $\sY^{\ch\lambda'_i,\ch\theta_i}$. 
By Proposition~\ref{prop:centralstrata}, we have 
\begin{equation}  \label{e:dimYile}
\dim( \msf Y^{\ch\lambda'_i, \ch\theta_i} \cap S_i ) \le 
\frac 1 2 ( \dim(S_i) - 1 ).
\end{equation}
The image of the composition
\[ \underset{i\in I}{\oo\prod} (C_{\nu_i} \xt S_i) \to \underset{i\in I}{\oo\prod}
{_{\ch\nu_i} \barY^{\ch\lambda_i,\ch\theta_i} } \to {_{\ch\nu}\barY^{\ch\lambda,\ch\Theta}} \]
is connected, so it must be contained in $S$. 
Thus, $\dim(S) \ge \brac{\rho_G,\ch\nu} + \sum_I \dim(S_i)$.
Summing \eqref{e:dimYile} over $I$, we get 
\[ \dim( \bar\pi^{-1}(a) \cap S ) \le 
\frac 1 2 ( \dim(S) - \dim( \oo C^{\mf P} )- \brac{\rho_G,\ch\nu}  ), \]
which establishes the inequality \eqref{e:ss-ineq}.
\end{proof}

\subsection{Euler product}
Let us explain how we can combine the graded 
factorization property of $\barY$ with 
Theorem~\ref{thm:semismall} to deduce that 
$\bar\pi_!(\IC_{\barY})$ ``looks like an Euler product''. 

In the expression that we are about to obtain, a special role will be played by those strata $\sY^{\ch\lambda,\ch\theta}$ of $\sY^{\ch\lambda}$ which correspond to elements $\ch\theta\in \Cal D^G_{\mathrm{sat}}(X) \cup \{0\}$. Recall, by Corollary \ref{cor:Ycomp}, that the closures of those strata are unions of irreducible components of $\sY$. Let 
\begin{equation}\label{e:crystallambda} \mB_{X,\ch\lambda} = \bigcup_{\ch\theta\in \Cal D^G_{\mathrm{sat}}(X) \cup \{0\}}\bigcup_{\mathscr Y} {\mB_{\mathscr Y}},
\end{equation}
where $\mathscr Y$ runs over all irreducible components of $\sY^{\ch\lambda,\ch\theta}$, and ${\mB_{\mathscr Y}}$ is the set of those irreducible components $\msf b$ of the central fiber $\msf Y^{\ch\lambda}\cap \mathscr Y$ for which the inequality of \eqref{e:semismall-ineq} is an equality, that is, 
\begin{equation}
 \dim\msf b = \frac{1}{2} (\dim \mathscr Y-1).
\end{equation}
Such components will be said to be of \emph{critical dimension}; we will explicate this dimension in Proposition \ref{prop:Vbasis}. The sets $\mB_{X,\ch\lambda}$ will define the \emph{crystal of $X$} in Section \ref{sect:crystal}. For now, we treat them as a black box.

We let $V_{X,\ch\lambda}$ denote the free vector space on $\mB_{X,\ch\lambda}$, that is, 
\begin{equation}\label{e:defVX}
V_{X,\ch\lambda} = \bigoplus_{\mB_{X,\ch\lambda}} \ol\bbQ_\ell.
\end{equation}

Note that, when $X$ is defined over a finite field $\mbb F$, and $k$ is its algebraic closure, the (geometric) Frobenius morphism induces a dimension-preserving bijection between the sets $\mB_{X,\ch\lambda}$ and $\mB_{X,\Fr\ch\lambda}$, for every $\ch\lambda\in\ch\Lambda$. Hence, $\Fr$ acts naturally on the sum of vector spaces $\bigoplus_{\ch\lambda\in\mathfrak c_X} V_{X,\ch\lambda}$. 

For a partition $\mf R = \sum_{\ch\mu \in \mathfrak c_X \sm 0} N_{\ch\mu} [\ch\mu] \in \Sym^\infty(\mathfrak c_X\sm 0)$, 
let $\iota^{\mf R} : \oo C^{\mf R} := \oo\prod \oo C^{(N_{\ch\mu})}
\into \sA^{\ch\lambda}$ denote the locally closed embedding, with $\ch\lambda = \deg(\mf R)$.
This extends to a finite map $\bar\iota^{\mf R} : C^{\mf R}:=\prod C^{(N_{\ch\mu})} \to \sA^{\ch\lambda}$ which is the normalization of the closure of $\oo C^{\mf R}$ in 
$\sA^{\ch\lambda}$.

\begin{prop} \label{prop:format}
For $\ch\lambda \in \mathfrak c_X$, there
exists a canonical isomorphism 
\begin{equation} \label{e:format1}
 \bar\pi_!(\IC_{\barY^{\ch\lambda}}) \cong 
\bigoplus_{\deg(\mf R)=\ch\lambda} 
\Bigl( \bigotimes_{\ch\mu} 
\Sym^{N_{\ch\mu}}(V_{X,\ch\mu}) \Bigr) \ot 
\bar\iota^{\mf R}_!( \IC_{C^{\mf R}} )
\end{equation}
where $\mf R = \sum_{\ch\mu \in \mathfrak c_X\sm 0} 
N_{\ch\mu} [\ch\mu]$ and the spaces $V_{X,\ch\mu}$ are defined by \eqref{e:defVX}. 

When $X$ is defined over a finite field $\mbb F$, and $k$ is its algebraic closure, this isomorphism is Galois-equivariant. 
\end{prop}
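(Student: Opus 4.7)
The plan is to apply the decomposition theorem to $\bar\pi$ and identify the resulting multiplicity local systems via the graded factorization property, reducing to a stalk calculation over the diagonal strata. Since $\bar\pi$ is proper (Proposition~\ref{prop:piproper}), stratified semi-small (Theorem~\ref{thm:semismall}), and $\IC_{\barY^{\ch\lambda}}$ is pure and semisimple, the direct image $\bar\pi_!(\IC_{\barY^{\ch\lambda}})$ is a semisimple perverse sheaf on $\sA^{\ch\lambda}$ constructible with respect to the stratification $\sA^{\ch\lambda} = \bigsqcup_{\deg(\mf R)=\ch\lambda} \oo C^{\mf R}$ (Theorem~\ref{thm:barpiperverse}). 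Deligne's decomposition theorem therefore yields
\[
\bar\pi_!(\IC_{\barY^{\ch\lambda}}) \cong \bigoplus_{\mf R}\iota^{\mf R}_{!*}(\Cal L_{\mf R})
\]
for local systems $\Cal L_{\mf R}$ on $\oo C^{\mf R}$ (possibly zero), placed in the appropriate perverse degree.

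The main computation is the identification of $\Cal L_{\mf R}$. By the argument of Proposition~\ref{prop:factorization} (and Proposition~\ref{prop:changecurve}) applied to the compactified Zastava space, the base change of $\bar\pi$ along the \'etale covering $\oo\prod_{\ch\mu} C^{N_{\ch\mu}} \to \oo C^{\mf R}$ is the fiber product of the diagonal central-fiber bundles $\olsf Y^{\ch\mu} \ttimes C$ from Section~\ref{sect:YttimesC}. This reduces the problem to the singleton case and, after descending along $\prod_{\ch\mu} S_{N_{\ch\mu}}$, yields
\[
\Cal L_{\mf R} \cong \bigotimes_{\ch\mu} \Sym^{N_{\ch\mu}}(\Cal L_{[\ch\mu]}),
\]
where the symmetric (rather than alternating) structure appears because the relevant fiberwise cohomology lies in a single even degree, so the symmetric group factors act trivially on signs.

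For a singleton partition $\mf R = [\ch\mu]$, the stratum is the diagonal $C \into \sA^{\ch\mu}$, and the fiber of $\bar\pi$ at $v\in C$ is $\olsf Y^{\ch\mu}$. Stratified semi-smallness identifies the stalk of $\Cal L_{[\ch\mu]}$ at $v$ with the top Borel--Moore homology of $\olsf Y^{\ch\mu}$, which, by the equality case of Proposition~\ref{prop:centralstrata}, has a basis indexed by the irreducible components of critical dimension of $\msf Y^{\ch\mu}\cap\mathscr Y$ as $\mathscr Y$ ranges over the connected components of $\sY^{\ch\mu,\ch\theta}$ with $\ch\theta\in \mf c_X^-\cup\{0\}$. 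By Corollary~\ref{cor:Ycomp} together with Proposition~\ref{prop:closure-rel}, restricting $\ch\theta$ to $\Cal D^G_\mathrm{sat}(X)\cup\{0\}$ enumerates these components without duplication (for non-saturated $\ch\theta$, every critical component already lies in the closure of a stratum indexed by a saturated one), matching the definition of $\mB_{X,\ch\mu}$ in \eqref{e:crystallambda}. Finally, the monodromy of $\Cal L_{[\ch\mu]}$ along $C$ arises from the action of the pro-connected group scheme $\Aut^0 k\tbrac t$ through the torsor $C^\wedge \to C$; this action permutes the finite basis $\mB_{X,\ch\mu}$, hence acts trivially, so $\Cal L_{[\ch\mu]}$ is the constant local system with fiber $V_{X,\ch\mu}$.

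Assembling, $\iota^{\mf R}_{!*}(\Cal L_{\mf R})$ becomes $\bigl(\bigotimes_{\ch\mu}\Sym^{N_{\ch\mu}}(V_{X,\ch\mu})\bigr)\otimes \iota^{\mf R}_{!*}(\IC_{\oo C^{\mf R}})$. Because $\bar\iota^{\mf R}:C^{\mf R}\to \sA^{\ch\lambda}$ is finite and realizes the normalization of the closure of $\oo C^{\mf R}$, the IC extension $\iota^{\mf R}_{!*}(\IC_{\oo C^{\mf R}})$ coincides with $\bar\iota^{\mf R}_!(\IC_{C^{\mf R}})$, proving the formula. Galois-equivariance over a finite field is automatic from the functoriality of the construction, which intertwines the geometric Frobenius on $\barY^{\ch\lambda}$ with its induced permutation of $\mB_{X,\ch\mu}$. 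The main obstacle is the enumeration of critical components: combining the upper dimension bound of Proposition~\ref{prop:centralstrata} with the classification of irreducible components (Corollary~\ref{cor:Ycomp}) and the closure relations (Proposition~\ref{prop:closure-rel}) requires checking simultaneously that no critical component of $\olsf Y^{\ch\mu}$ is omitted and that none is counted twice as $\ch\theta$ varies over $\Cal D^G_\mathrm{sat}(X)\cup\{0\}$.
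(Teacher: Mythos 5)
The proposal follows the same overall architecture as the paper (decomposition theorem via stratified semi-smallness, graded factorization, trivial monodromy along $\Aut^0 k\tbrac t$, identification of $\iota^{\mf R}_{!*}$ with $\bar\iota^{\mf R}_!$ via normalization), but it has one genuine gap that you yourself flag at the end, and one incorrect explanation.

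The gap concerns precisely the ``main obstacle'' you identify: why exactly the components indexed by $\ch\theta \in \Cal D^G_{\mathrm{sat}}(X) \cup \{0\}$, and only those, contribute to the stalk $\Cal L_{[\ch\mu]}|_v$. Your proposed reasoning via closure relations (``for non-saturated $\ch\theta$, every critical component already lies in the closure of a stratum indexed by a saturated one'') does not close this off, because critical-dimensional MV-cycle components do exist over strata $\sY^{\ch\lambda,\ch\theta}$ with $\ch\theta$ non-saturated (this is exactly Remark~\ref{rem:othertheta}), and a closure relation by itself does not explain why their Borel--Moore classes vanish in the pushforward. The paper's argument is instead a perverse-degree argument: set $\sY^{\ch\lambda,\circ}$ to be the union of $\sY^{\ch\lambda,\ch\Theta}$ over $\ch\Theta\in\Sym^\infty(\Cal D^G_{\mathrm{sat}}(X))$ (the open dense subsets of the irreducible components, by Corollary~\ref{cor:Ycomp} and Corollary~\ref{cor:comp}); by uniqueness of the intersection complex, $\IC_{\barY^{\ch\lambda}}|^*_{\barY^{\ch\lambda}\sm\sY^{\ch\lambda,\circ}}$ lies in strictly negative perverse degrees; since $\bar\pi_!$ is perverse $t$-exact (Theorem~\ref{thm:semismall}), its pushforward also lies in strictly negative perverse degrees, whose $*$-restriction along $\delta^{\ch\lambda}_v$ is in degrees $<-1$ and therefore cannot contribute to the degree-$(-1)$ stalk. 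The non-saturated strata are discarded because they are not open, not because of a closure relation; and the remaining $\ch\Theta$ can only be the singletons $[\ch\theta]$ because the central fiber forces all $G$-degeneracy at a single point $v$. This also means the stalk is not the Borel--Moore homology of $\olsf Y^{\ch\mu}$ per se, but the top compactly-supported cohomology of $\msf Y^{\ch\mu}$ with coefficients in $\IC_{\barY^{\ch\mu}}$ restricted to $\sY^{\ch\mu,\circ}$.

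Separately, your reason for why $\Sym^{N_{\ch\mu}}$ (rather than $\wedge^{N_{\ch\mu}}$) appears — that ``the relevant fiberwise cohomology lies in a single even degree'' — is false: the degree is $\dim\mathscr Y - 1$, which varies with $\mathscr Y$ and is not uniformly even. The correct mechanism is that the Koszul sign from permuting the $N_{\ch\mu}$ shifted factors $\IC_C = \ol\bbQ_\ell(\tfrac12)[1]$ cancels against the sign character intrinsic to $\IC_{\oo C^{\mf R}}$ under pullback along the $\prod_{\ch\mu}\mf S_{N_{\ch\mu}}$-torsor, so the local-system multiplicity descends without a sign twist, yielding $\Sym$ rather than the exterior power.
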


The implied Galois action on the right hand side  of \eqref{e:format1} is the one obtained by the action of Frobenius on the sum of spaces $V_{X,\ch\mu}$ (by permuting their basis elements), and the standard Weil structure $\ol\bbQ_\ell(\frac{|\mf R|}{2})[|\mf R|]$ on $\IC_{C^{\mf R}}$.

Note that if $\mf R = [\ch\lambda]$ is the singleton partition, then
$C^{[\ch\lambda]}=C$ and 
$\iota^{[\ch\lambda]}=\delta^{\ch\lambda} : C \into \sA^{\ch\lambda}$
is the diagonal embedding. 
The corresponding summand of $\bar\pi_!(\IC_{\barY^{\ch\lambda}})$
above is $V_{X,\ch\lambda} \ot \IC_C$. 
We call this the \emph{diagonal contribution} of  
$\bar\pi_!(\IC_{\barY^{\ch\lambda}})$. 

\begin{proof}
The proof follows the same logic as \cite[\S 5.4, 5.11]{BFGM}. 
Theorem~\ref{thm:barpiperverse} implies that 
$\bar\pi_!(\IC_{\barY^{\ch\lambda}})$ is perverse, and 
the decomposition theorem (\cite[Th\'eor\`eme 6.2.5]{BBDG}) 
implies that it is semisimple. 
Since $\bar\pi_!(\IC_{\barY^{\ch\lambda}})$ is constructible
with respect to the stratification by $\iota^{\mf R} : \oo C^{\mf R} \into \sA^{\ch\lambda}$ for $\mf R \in \Sym^\infty(\mathfrak c_X\sm 0),\,
\deg(\mf R)=\ch\lambda$, we deduce that there exists a \emph{canonical} 
decomposition
\begin{equation}\label{e:format2} 
\bar\pi_!(\IC_{\barY^{\ch\lambda}}) \cong 
\bigoplus_{\deg(\mf R)=\ch\lambda} \iota^{\mf R}_{!*}(\Scr L^{\mf R}) [\abs{\mf R}] 
\end{equation}
where $\iota^{\mf R}_{!*}$ denotes the middle extension functor
along the locally closed embedding, and $\Scr L^{\mf R}$ 
is a local system on $\oo C^{\mf R}$.

Now consider the singleton partition $[\ch\lambda]$. 
For $v \in \abs C$ let $\delta^{\ch\lambda}_v : v \to\sA^{\ch\lambda}$
denote the composition of $v\to C$ with $\delta^{\ch\lambda}:C\to \sA^{\ch\lambda}$.
Recall from \S\ref{sect:YttimesC} 
that $\barY^{\ch\lambda} \xt_{\sA^{\ch\lambda}, \delta^{\ch\lambda}} C
\cong \olsf Y^{\ch\lambda} \xt^{\Aut k\tbrac t} C^\wedge$. 
Taking the $*$-pullback along $\delta_v^{\ch\lambda}$ of \eqref{e:format2}, we 
have 
\[ R\Gamma( \olsf Y^{\ch\lambda}, \IC_{\barY^{\ch\lambda}}|^*_{\olsf Y^{\ch\lambda}})
\cong \bigoplus_{\deg(\mf R)=\ch\lambda} (\delta^{\ch\lambda}_v)^* \iota^{\mf R}_{!*}(\Scr L^{\mf R})[\abs{\mf R}]. \]
For $\mf R = [\ch\lambda]$, we have $\Scr L^{[\ch\lambda]}$ is a local system 
on $C$, so $(\delta_v^{\ch\lambda})^* \iota^{[\ch\lambda]}_{!*}(\Scr L^{[\ch\lambda]})[1]$
lives in cohomological degree $-1$. 
For $\mf R \ne [\ch\lambda]$, we have $\dim C^{\mf R} > 1$
so the uniqueness property of middle extension implies that
$(\delta^{\ch\lambda})^* \iota^{\mf R}_{!*}(\Scr L^{\mf R})[\abs{\mf R}]$ 
has lisse cohomology sheaves on $C$ and lives in usual cohomological degrees $< -1$. 
Therefore, we deduce that 
\[ 
    \Scr L^{[\ch\lambda]} |^*_{v\to C} = H_c^{-1}(\olsf Y^{\ch\lambda}, \IC_{\barY^{\ch\lambda}}|^*_{\olsf Y^{\ch\lambda}}),
\]
which is the top cohomological degree. 

Recall from Corollary \ref{cor:compY} that the irreducible components of $\sY^{\ch\lambda}$ are naturally parametrized by a subset of $\pi_1(H) \xt \Sym^\infty(\Cal D^G_{\mathrm{sat}}(X))$. 
Let $\sY^{\ch\lambda,\circ}$ denote the union of $\sY^{\ch\lambda,\ch\Theta}$ 
for all $\ch\Theta \in \Sym^\infty(\Cal D^G_{\mathrm{sat}}(X))$. 
Then $\sY^{\ch\lambda,\circ}$ is a disjoint union of smooth connected components
in bijection with the irreducible components of $\sY^{\ch\lambda}$, by Corollary \ref{cor:Ycomp}.   
Again, the uniqueness property of the IC complex implies that
$\IC_{\barY^{\ch\lambda}}|^*_{\barY^{\ch\lambda}\sm \sY^{\ch\lambda,\circ}}$ 
lives in strictly negative perverse cohomological degrees. 
Since $\bar\pi_!$ is perverse $t$-exact by Theorem~\ref{thm:semismall}, we
deduce that 
$\bar\pi_!( \IC_{\barY^{\ch\lambda}}|^*_{\barY^{\ch\lambda}\sm \sY^{\ch\lambda,\circ}} )$ 
is constructible and lives in strictly negative perverse degrees. 
This in turn implies that $(\delta^{\ch\lambda}_v)^* \bar\pi_!( \IC_{\barY^{\ch\lambda}}|^*_{\barY^{\ch\lambda}\sm \sY^{\ch\lambda,\circ}} )$ lives in (usual=perverse) degrees $<-1$. 
We conclude that 
\[
    \Scr L^{[\ch\lambda]}|^*_{v\to C} = 
    H^{-1}_c(\msf Y^{\ch\lambda}, (\IC_{\barY^{\ch\lambda}}|_{\sY^{\ch\lambda,\circ}})|^*_{\msf Y^{\ch\lambda}} ) = 
\bigoplus_{\mathscr Y} H_c^{\dim \mathscr Y - 1}(\msf Y^{\ch\lambda} \cap \mathscr Y, \ol\bbQ_\ell({\textstyle \frac{\dim \mathscr Y}{2}})),\]
with the sum running over all irreducible components of $\sY^{\ch\lambda,\circ}$.
Note that $\msf Y^{\ch\lambda}\cap \sY^{\ch\lambda,\ch\Theta}$ is empty unless 
$\ch\Theta = [\ch\theta]$ is singleton, for $\ch\theta \in \Cal D^G_{\mathrm{sat}}(X) \cup \{0\}$. 
Moreover, the right hand side consists of only \emph{top} cohomological degrees, by Proposition~\ref{prop:centralstrata}, so 
$H_c^{\dim \mathscr Y - 1}(\msf Y^{\ch\lambda} \cap \mathscr Y, \ol\bbQ_\ell (\frac{\dim \mathscr Y}{2}))$ 
is equal to the sum 
\[\bigoplus_{\mf B_{\mathscr Y}} \ol\bbQ_\ell(\smallfrac{1}{2}),\]
where $\mf B_{\mathscr Y}$ is the set of irreducible components of $\msf Y^{\ch\lambda}\cap \mathscr Y$ of dimension $\frac{\dim \mathscr Y - 1}{2}$.  
In particular, $\Scr L^{[\ch\lambda]}|^*_{v\to C}$ has trivial monodromy under $\Aut k\tbrac t$, 
so we  deduce that $\Scr L^{[\ch\lambda]} \cong V_{X,\ch\lambda} \otimes 
\IC_C$ where $V_{X,\ch\lambda}$ is defined by \eqref{e:defVX} (with the $(\frac{1}{2})$-twist absorbed by $\IC_C$). 

Next, consider an arbitrary partition 
$\mf R = \sum_{\ch\mu\in \mathfrak c_X\sm 0} N_{\ch\mu} [\ch\mu]$. 
We defined $\oo C^{\mf R} = \oo\prod_{\ch\mu} \oo C^{(N_{\ch\mu})}$.
By the graded factorization property, we have a diagram
with Cartesian squares
\[ 
\begin{tikzcd}
    \oo\prod_{\ch\mu} (\olsf Y^{\ch\mu}\ttimes C)^{\oo\xt N_{\ch\mu}} \ar[r, hook] \ar[d] &
    \oo\prod_{\ch\mu} (\barY^{\ch\mu})^{\oo\xt N_{\ch\mu}}
    \ar[r, "\text{\'etale}"] \ar[d] & \barY^{\ch\lambda}  
    \ar[d] \\ 
    \oo\prod \oo C^{N_{\ch\mu}} \ar[r, hook] &
    \oo\prod_{\ch\mu} (\sA^{\ch\mu})^{\oo\xt N_{\ch\mu}}
    \ar[r, "\text{\'etale}"] &     \sA^{\ch\lambda}
\end{tikzcd}
\]
and the composition of the bottom arrows 
factors through the $(\prod_{\ch\mu} {\mf S}_{N_{\ch\mu}})$-torsor
\[ \textstyle \oo \prod \oo C^{N_{\ch\mu}} \to \oo \prod \oo C^{(N_{\ch\mu})} = \oo C^{\mf R},
\] 
where $\mf S_N$ denotes the symmetric group on $N$ elements.
By induction, we deduce that 
\begin{equation} \label{e:monodromyS}
     \Scr L^{\mf R}[ \abs{\mf R} ]|^*_{\oo\prod \oo C^{N_{\ch\mu}} } 
\cong ( \bt_{\ch\mu} (V_{X,\ch\mu} \ot \IC_C)^{\bt N_{\ch\mu}} )|^*_{\oo \prod \oo C^{N_{\ch\mu}} }. 
\end{equation}
There is a natural $\mf S_{N_{\ch\mu}}$-equivariant structure 
on $(V_{X,\ch\mu} \ot \IC_C)^{\bt N_{\ch\mu}}$ 
compatible with the $\mf S_{N_{\ch\mu}}$-action on $C^{N_{\ch\mu}}$.
On the other hand, we have the map
\[ (\olsf Y^{\ch\mu} \ttimes C)^{\oo\xt N_{\ch\mu}} \to 
\sY^{N_{\ch\mu}\cdot\ch\mu} \xt_{\sA^{N_{\ch\mu}\cdot\ch\mu}} \oo C^{(N_{\ch\mu})} 
\] 
which is a $\mf S_{N_{\ch\mu}}$-torsor, where $\mf S_{N_{\ch\mu}}$
acts on the left hand side in the natural way. 
Now from the definition of $V_{X,\ch\mu}$ we deduce
that the isomorphism \eqref{e:monodromyS} must 
intertwine the $\prod \mf S_{N_{\ch\mu}}$-structures.
By Galois descent we conclude that
$\Scr L^{\mf R} [ \abs{\mf R} ]\cong (\bigotimes_{\ch\mu} 
\Sym^{N_{\ch\mu}}(V_{X,\ch\mu})) \ot \IC_{\oo C^{\mf R}}$. 
Since $\bar\iota^{\mf R} : C^{\mf R} \to \sA^{\ch\lambda}$ is
the normalization of the closure of the stratum $\oo C^{\mf R}$ in $\sA^{\ch\lambda}$,
the middle extension of $\IC_{\oo C^{\mf R}}$ is
isomorphic to $\bar\iota^{\mf R}_!(\IC_{C^{\mf R}})$.

When $X$ is defined over a finite field $\mbb F$ and $k$ is the algebraic closure, the Galois group of $k$ over $\mbb F$ acts naturally on the set of components $\mf B^+_X := \bigcup_{\ch\mu\in \mf c_X} \mB_{X,\ch\mu}$, and the isomorphisms above are clearly equivariant, taking into account that $\IC_{C^{\mf R}}$ is the constant sheaf $ \ol\bbQ_\ell(\frac{|\mf R|}{2})[|\mf R|]$.
\end{proof}

\subsection{Critical dimension}

We can now give a more precise description of the diagonal contribution
$V_{X,\ch\lambda}$ defined in \eqref{e:defVX}, that is, the free vector space on the set $\mB_{X,\ch\lambda}$ of components of critical dimension in the ``open strata'' of the central fiber $\msf Y^{\ch\lambda}$. 

\begin{prop} \label{prop:Vbasis} 
For $\ch\lambda\in \mathfrak c_X \sm 0$, the set $\mB_{X,\ch\lambda}$ of components of critical dimension on the central fiber $\msf Y^{\ch\lambda}$ consists of
\begin{enumerate}
\item the irreducible components of $\sY^D_{X^\bullet}\cap \msf Y^{\ch\lambda}$ of dimension $\frac 1 2(\len(D)-1)$, 
for $D\in \mathbb N^{\mathcal D}$ with $\varrho_X(D) =\ch\lambda$;
\item the irreducible components of $\msf S^{\ch\lambda} \cap \Gr_G^{\ch\theta}$, 
for $\ch\theta \in \Cal D^G_{\mathrm{sat}}(X)$, embedded in $\msf Y^{\ch\lambda}$ via \eqref{e:actvY}.
\end{enumerate}
\end{prop}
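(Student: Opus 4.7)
By the definition in \eqref{e:crystallambda}, $\mB_{X,\ch\lambda}$ is a disjoint union indexed by $\ch\theta\in \Cal D^G_{\mathrm{sat}}(X)\cup\{0\}$ and irreducible components $\mathscr Y$ of $\sY^{\ch\lambda,\ch\theta}$, of the critical-dimension components of $\msf Y^{\ch\lambda}\cap \mathscr Y$. I would handle $\ch\theta=0$ and $\ch\theta\in \Cal D^G_{\mathrm{sat}}(X)$ separately, matching them with cases (i) and (ii) of the statement.

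For $\ch\theta=0$ the bookkeeping is immediate: Corollary~\ref{cor:Ycomp2} together with Lemma~\ref{lem:oYconnected} identifies the connected components of $\sY^{\ch\lambda,0}$ as the $\sY^D_{X^\bullet}$ for $D\in \mbb N^{\Cal D}$ with $\varrho_X(D)=\ch\lambda$, each of dimension $\len(D)$, so the critical dimension $\tfrac{1}{2}(\len(D)-1)$ is exactly the one appearing in case (i).

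For $\ch\theta\in \Cal D^G_{\mathrm{sat}}(X)$, Corollary~\ref{cor:Ycomp2} labels the connected components of $\sY^{\ch\lambda,\ch\theta}$ by $D'\in \mbb N^{\Cal D}$ with $\varrho_X(D')=\ch\lambda-\ch\theta$, and Corollary~\ref{cor:Ytheta}(ii) gives $\dim \mathscr Y=\len(D')+1$, whence the critical dimension equals $\tfrac{\len(D')}{2}$. Given a critical component $\msf b$, the second assertion of Proposition~\ref{prop:centralstrata} yields a zero-dimensional slice $\msf Y'=\olsf b\cap \msf S^{\ch\lambda'}$ with $\ch\lambda-\ch\lambda'=\sum \ch\alpha_j$ and $\dim \msf b = \langle \rho_G,\ch\lambda-\ch\lambda'\rangle$. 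By Proposition~\ref{prop:closure-rel} the point $\msf Y'$ lies in $\sY^{\ch\lambda',\ch\theta'}$ for some $\ch\theta'\succeq\ch\theta$. I would then reduce via \S\ref{sect:freemonoid} to the case $\mf c_{X^\bullet}=\mbb N^{\Cal D}$ (so that $\varrho_X$ is injective on color multisets), which produces a canonical decomposition $D'=D^*+D_1+D_2$, where $D^*=\sum n_\alpha(D^+_\alpha+D^-_\alpha)$ encodes the chain $\ch\lambda-\ch\lambda'$, $D_1$ labels the connected component of $\sY^{\ch\lambda',\ch\theta'}$ containing $\msf Y'$, and $D_2$ encodes $\ch\theta'-\ch\theta$. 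Length additivity combined with the critical equality $\len(D')=\len(D^*)=2\dim\msf b$ forces $\len(D_1)=\len(D_2)=0$, hence $\ch\theta'=\ch\theta$ and $\ch\lambda'=\ch\theta$; by Corollary~\ref{cor:Ytheta}(iii), $\msf Y'=\{t^{\ch\theta}\}$. Consequently $\olsf b$ is an irreducible subvariety of $\msf S^{\ch\lambda}\cap\ol\Gr_G^{\ch\theta}$ of the maximal dimension $\langle \rho_G,\ch\lambda-\ch\theta\rangle$, i.e., an MV cycle; by Lemma~\ref{lem:MVcentralfiber} it meets $\overset\circ\Gr{}^{\ch\theta}_G$, so $\msf b\subset \msf S^{\ch\lambda}\cap \Gr_G^{\ch\theta}$. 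Conversely, Lemma~\ref{lem:MVcentralfiber} and \eqref{e:actvY} embed every such MV cycle into $\msf Y^{\ch\lambda,\ch\theta}$, producing the components of case (ii).

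The main obstacle is this rigidity step forcing $\ch\lambda'=\ch\theta$: it depends on the length accounting in the color decomposition under the reduction to $\mf c_{X^\bullet}=\mbb N^{\Cal D}$, combined with the primitivity of $\ch\theta$ within $\Cal D^G_{\mathrm{sat}}(X)$, which rules out intermediate saturation strata contributing to criticality.
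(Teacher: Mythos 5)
Your treatment of the $\ch\theta=0$ case (Proposition~\ref{prop:Vbasis}(i)) matches the paper's: both rely on Lemma~\ref{lem:oYconnected} and Corollary~\ref{cor:Ycomp2} to identify the connected components of the open Zastava space and compute their dimensions. For $\ch\theta\in\Cal D^G_{\mathrm{sat}}(X)$ you take a genuinely different route, replacing the paper's use of the global $G$-Hecke action with the semi-infinite slicing of Proposition~\ref{prop:centralstrata}/Proposition~\ref{prop:intersections} followed by a length decomposition $D'=D^*+D_1+D_2$ in $\mbb N^{\Cal D}$. That accounting is sound and correctly forces $\msf Y' = \olsf b\cap\msf S^{\ch\lambda'} = \{t^{\ch\theta}\}$. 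But the step that follows --- ``consequently $\olsf b$ is an irreducible subvariety of $\msf S^{\ch\lambda}\cap\ol\Gr^{\ch\theta}_G$'' --- is a genuine gap. The slicing argument only controls how $\olsf b$ meets the semi-infinite orbits $\msf S^{\ch\mu}$; it gives no information about how $\olsf b$ sits in the $\msf L^+G$-orbit stratification of $\Gr_G$. Knowing that $t^{\ch\theta}\in\olsf b$ forces $\olsf b\subset\ol\Gr^{\ch\eta}_G$ for the antidominant $\ch\eta$ controlling the generic point of $\msf b$, but $\ch\eta$ need not lie in $\mf c_X^-$, so the primitivity of $\ch\theta$ in $\Cal D^G_{\mathrm{sat}}(X)$ does not by itself pin $\ch\eta = \ch\theta$. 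Your appeal to Lemma~\ref{lem:MVcentralfiber} to conclude is circular: that lemma concerns components of $\msf S^{\ch\lambda}\cap\ol\Gr^{\ch\theta}_G$ meeting the open locus, so it presupposes that $\olsf b$ already lies inside $\ol\Gr^{\ch\theta}_G$.

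The paper's Lemma~\ref{lem:centralfibertheta} is designed to close precisely this gap. It pulls the central fiber back along the Hecke cover $\Bun_H\ttimes\Gr^{\ch\theta}_G\to\sM_{X,v}$, which is birational onto its image and whose image contains $\sM_{X,v}^{\ch\theta}$ (Theorem~\ref{thm:comp}(i)--(iii)). By Proposition~\ref{prop:Yact-strat}, the fiber product over $\msf Y^{\ch\lambda}$ is stratified by $\msf Y^{\ch\lambda-\ch\nu,0}\ttimes(\msf S^{\ch\nu}\cap\Gr^{\ch\theta}_G)$, and the dimension count, using Lemma~\ref{lem:Ynonempty} and the embedding $\ch\Lambda^\pos_G\into\mbb N^{\Cal D}$, singles out the stratum $\ch\nu=\ch\lambda$. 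Because the cover lives over $\ol\Gr^{\ch\theta}_G$ by construction, the components of the critical fiber are literally MV cycles, and birationality of the action map transports them to components of $\msf Y^{\ch\lambda,\ch\theta}$. Your argument would need a substitute for this: a direct proof that every $k$-point of $\msf Y^{\ch\lambda,\ch\theta}$ lies in $\ol\Gr^{\ch\theta}_G$ under the embedding of Lemma~\ref{lem:Y=ScapGr}, and that is exactly what the Hecke machinery is supplying.
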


\begin{rem} \label{rem:othertheta}
We have MV cycles for every $\ch\theta \in \mf c_X^-$, but only those belonging to $\Cal D^G_{\mathrm{sat}}(X)$ 
contribute to $V_{X,\ch\lambda}$ since those correspond to $\sY^{\ch\lambda,\ch\theta}$ which are
connected components of $\sY^{\ch\lambda}$. We will reserve the term \emph{critical dimension} of 
$\msf Y^{\ch\lambda}$ for the maximal dimensions in the two cases above. 
\end{rem}

\begin{proof}
By definition, an element of $\mB_{X,\ch\lambda}$ is a component $\msf b$ of the central fiber $\msf Y^{\ch\lambda}\cap \mathscr Y$, where $\mathscr Y$ is an irreducible component of the smooth stratum $\sY^{\ch\lambda,\ch\theta}$, for some $\ch\theta \in \Cal D^G_{\mathrm{sat}}(X)\cup\{0\}$, such that 
\begin{equation} \label{e:ss-eq}
    \dim (\msf b) = \frac 1 2 ( \dim (\mathscr Y) -1).
\end{equation}

If $\ch\theta=0$, then $\sY^{\ch\lambda,0}$ is the disjoint union of 
connected components $\sY^D_{X^\bullet}$ 
for $D\in \mathbb N^{\mathcal D}$ with $\varrho_X(D) =\ch\lambda$, by Lemma~\ref{lem:oYconnected}. 
Then 
\eqref{e:ss-eq} becomes $\dim(\msf b)
= \frac 1 2 (\len(D)-1) $. 

If $\ch\theta \ne 0$, by Corollary \ref{cor:Ycomp2} the connected components of $\sY^{\ch\lambda,\ch\theta}$ are
in bijection with the closures of 
\[ \sY^{D}_{X^\bullet} \oo\xt \sY^{\ch\theta,\ch\theta} \into \sY^{\ch\lambda,\ch\theta} \]
for $D\in \mbb N^{\Cal D}$ such that $\varrho_X(D) = \ch\lambda-\ch\theta$. The statement now follows from the following lemma, which we write separately, for later use, because it applies to arbitrary $\ch\theta \ne 0$.

\end{proof}

\begin{lem} \label{lem:centralfibertheta}
For any $\ch\theta \in \mf c_X^- \sm 0$, $D\in \mbb N^{\Cal D}$
and $\ch\lambda = \varrho_X(D) +\ch\theta$, if we denote by $\mathscr Y$ the closure of the image of 
\[\sY^{D}_{X^\bullet} \oo\xt \sY^{\ch\theta,\ch\theta} \into \sY^{\ch\lambda,\ch\theta},
\]
then we have $\dim (\msf Y^{\ch\lambda}\cap \mathscr Y) \le \frac 1 2\len(D) = \frac 1 2(\dim \mathscr Y -1)$. The irreducible components of $\msf Y^{\ch\lambda,\ch\theta}$ for which this is an equality are precisely 
the MV cycles in $\msf S^{\ch\lambda}\cap \ol\Gr^{\ch\theta}_G$, 
embedded in $\msf Y^{\ch\lambda}$ via \eqref{e:actvY}.
\end{lem}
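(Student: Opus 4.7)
The plan is to run an adapted version of the argument of Proposition~\ref{prop:centralstrata}. By Lemma~\ref{lem:oYconnected} the scheme $\sY^D_{X^\bullet}$ is smooth and irreducible of dimension $\len(D)$, and by Corollary~\ref{cor:Ytheta}(ii) we have $(\sY^{\ch\theta,\ch\theta})_\red \cong C$; the graded factorization property (Proposition~\ref{prop:factorization}) then exhibits $\sY^D_{X^\bullet} \oo\xt \sY^{\ch\theta,\ch\theta}$ as an open dense subscheme of $\mathscr Y$, so $\mathscr Y$ is irreducible of dimension $\len(D)+1$.

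For the upper bound, fix an irreducible component $\msf b$ of $\msf Y^{\ch\lambda}\cap\mathscr Y$ and let $\olsf b$ be its closure in $\olsf Y^{\ch\lambda}\cap\ol{\mathscr Y}\subset\olsf S^{\ch\lambda}$, with $d := \dim\msf b$. By Proposition~\ref{prop:intersections} there is some $\ch\lambda'\le\ch\lambda$ with $\msf b':=\olsf b\cap\msf S^{\ch\lambda'}$ nonempty of dimension $0$ and $d\le\brac{\rho_G,\ch\lambda-\ch\lambda'}$. Since $\msf b'\in\ol{\mathscr Y}\subset\sY^{\ch\lambda'}\xt_{\sM_X}\barM^{\ch\theta}_X$, the point $\msf b'$ lies in some stratum $\sY^{\ch\lambda',\ch\theta''}$ with $\ch\theta''\succeq\ch\theta$, and Corollary~\ref{cor:Ytheta}(i) forces $\ch\lambda'\succeq\ch\theta''\ne 0$ (since $\ch\theta\ne 0$). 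By Corollary~\ref{cor:Ycomp2}, $\msf b'$ sits in a connected component $\mathscr Y_1$ corresponding to some $D_1\in\mbb N^{\Cal D}$ with $\varrho_X(D_1)=\ch\lambda'-\ch\theta''$ and $\dim\mathscr Y_1=\len(D_1)+1$. Writing $\ch\lambda-\ch\lambda'=\sum_\alpha n_\alpha\ch\alpha$ with $\sum n_\alpha=\brac{\rho_G,\ch\lambda-\ch\lambda'}$, the Hecke-factorization argument of Proposition~\ref{prop:centralstrata} (graded factorization together with Lemma~\ref{lem:Yraise}) provides an \'etale map
\[
(\sY^{\ch\lambda'}\xt_{\sM_X}\ol{\mathscr M})\oo\xt\oo\prod_\alpha(C^{(n_\alpha)}\oo\xt C^{(n_\alpha)})\to\sY^{\ch\lambda}\xt_{\sM_X}\ol{\mathscr M}
\]
whose source near $\msf b'$ has dimension $\len(D_1)+1+2\brac{\rho_G,\ch\lambda-\ch\lambda'}$ and whose image lies in $\ol{\mathscr Y}$ by irreducibility of $\ol{\mathscr Y}$ and connectedness of the étale neighborhood. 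This yields $\len(D)+1\ge\len(D_1)+1+2\brac{\rho_G,\ch\lambda-\ch\lambda'}$, hence $d\le\frac12(\len(D)-\len(D_1))\le\frac12\len(D)$.

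For the equality case, $d=\frac12\len(D)$ forces $\len(D_1)=0$ (so $D_1=0$, $\ch\lambda'=\ch\theta''$) and $d=\brac{\rho_G,\ch\lambda-\ch\theta''}$; by Corollary~\ref{cor:Ytheta}(iii) the point $\msf b'$ equals $\{t^{\ch\theta''}\}$. The second clause of Proposition~\ref{prop:intersections} then produces a sequence of simple coroots $\ch\alpha_1,\dotsc,\ch\alpha_d$ with $\ch\lambda-\sum\ch\alpha_i=\ch\theta''$ such that $\olsf b\cap\msf S^{\ch\lambda-\ch\alpha_1-\dotsb-\ch\alpha_j}$ has an irreducible component of dimension $d-j$ in $\olsf b$. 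Applying the upper bound established in Step~2 to each intermediate stage, and using that under the type T hypothesis every color $D'$ satisfies $\brac{2\rho_G,\ch\nu_{D'}}=1$ (so $\brac{\rho_G,\varrho_X(D)}=\frac12\len(D)$ and the analogous equality persists along the chain), one shows that all intermediate components stay in $\ol\Gr^{\ch\theta''}_G$ and the chain never exits the $\ch\theta$-stratum of $\sM_X$; together with $\brac{\rho_G,\ch\theta''-\ch\theta}=0$ and positivity of $\brac{\rho_G,\cdot}$ on $\mathfrak c^{\Cal D}_X$, this gives $\ch\theta''=\ch\theta$. Hence $\olsf b$ is an irreducible closed subvariety of $\olsf S^{\ch\lambda}\cap\ol\Gr^{\ch\theta}_G$ of dimension $\brac{\rho_G,\ch\lambda-\ch\theta}$, i.e., an MV cycle. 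Conversely, Lemma~\ref{lem:MVcentralfiber} shows that every irreducible component of $\msf S^{\ch\lambda}\cap\ol\Gr^{\ch\theta}_G$ meets $\msf S^{\ch\lambda}\cap\overset\circ\Gr^{\ch\theta}_G$, so via \eqref{e:actvY} it embeds into $\sY^{\ch\lambda,\ch\theta}$, lands in a unique connected component $\mathscr Y_{D^*}$ by irreducibility, and has matching dimension $\frac12\len(D^*)$, providing an irreducible component of critical dimension.

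The main obstacle is the last identification step: extracting from the chain of Proposition~\ref{prop:intersections} both that $\ch\theta''=\ch\theta$ and that $\olsf b\subset\ol\Gr^{\ch\theta}_G$, not merely that $\olsf b$ is a projective subvariety of $\olsf S^{\ch\lambda}$ through $t^{\ch\theta''}$ of the right dimension. This requires tracking how the connected-component bookkeeping of $\sY^{\cdot,\cdot}$ interacts with the simple-coroot chain via the Hecke-factorization argument, and invoking the type T computation $\brac{2\rho_G,\ch\nu_{D'}}=1$ to synchronize the two dimension counts.
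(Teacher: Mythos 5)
Your Step~1 (setup) and Step~2 (upper bound) are fine, but the paper obtains the upper bound in one line by citing Proposition~\ref{prop:centralstrata} directly for the connected component $\mathscr Y$ and substituting $\dim\mathscr Y = \len(D)+1$ (from Corollary~\ref{cor:Ytheta}(ii) and Lemma~\ref{lem:oYconnected}); your re-run of the hyperplane-cutting argument here is redundant.

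The gap you flag in the equality case is genuine, and it cannot be closed along the route you propose. The chain from Proposition~\ref{prop:intersections} together with your $\brac{\rho_G,\cdot}$ bookkeeping does yield $\ch\theta''=\ch\theta$ and that $\olsf b$ is an irreducible projective subvariety of $\olsf S^{\ch\lambda}$ of dimension $\brac{\rho_G,\ch\lambda-\ch\theta}$ meeting $\msf S^{\ch\theta}$ in the point $t^{\ch\theta}$. But this does not force $\olsf b\subset\ol\Gr^{\ch\theta}_G$. The point is that the $\sM_X$-stratification of $\olsf Y^{\ch\lambda}$ (by $G(\mf o)$-valuations of arcs into $X$) is not the restriction of the $\msf L^+G$-orbit stratification of $\Gr_G$ under the closed embedding $\olsf Y^{\ch\lambda}\into\olsf S^{\ch\lambda}\into\Gr_G$: a point of $\msf Y^{\ch\lambda,\ch\theta}$ corresponds to a coset $gG(\mf o)\in\msf S^{\ch\lambda}$ with $g\in H(F)\,t^{\ch\theta}\,G(\mf o)$, and this $H(F)$-coset meets many Cartan double cosets $G(\mf o)t^{\ch\mu}G(\mf o)$, so a generic point of $\msf Y^{\ch\lambda,\ch\theta}$ need not lie in $\Gr^{\ch\theta}_G$. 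In particular an irreducible subvariety of $\olsf S^{\ch\lambda}$ of the right dimension through $t^{\ch\theta}$ has no reason to be contained in $\ol\Gr^{\ch\theta}_G$, so the equality case cannot be decided inside $\Gr_G$ by the slicing chain.

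The paper's proof takes a genuinely different route at this point, through the global Hecke action. Using Theorem~\ref{thm:comp}(iii) and the Cartesian diagram~\eqref{e:Z0act-diagram}, it covers $\msf Y^{\ch\lambda,\ch\theta}$ birationally by $(\Bun_H\ttimes\Gr^{\ch\theta}_G)\xt_{\sM_X}\msf Y^{\ch\lambda}$, which by Proposition~\ref{prop:Yact-strat} admits a stratification $\bigcup_{\ch\nu}\msf Y^{\ch\lambda-\ch\nu,0}\ttimes(\msf S^{\ch\nu}\cap\Gr^{\ch\theta}_G)$ over weights $\ch\nu$ of $V^{\ch\theta}$. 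Restricting to the connected component of $\Bun_H$ matching $D$ (via the embedding $\ch\Lambda^\pos_G\into\mbb N^{\Cal D}$ and the commutativity of diagram~\eqref{e:Y0toBunH}), a dimension count with Proposition~\ref{prop:centralstrata} shows every $\ch\nu$-stratum has dimension $\le\frac12(\len(D)-1)$ unless $D=\ch\nu-\ch\theta$, i.e.~$\ch\nu=\ch\lambda$, in which case the stratum is $\pt\ttimes(\msf S^{\ch\lambda}\cap\Gr^{\ch\theta}_G)$. Birationality of $\act_\sM$ then identifies the critical components with MV cycles, and Lemma~\ref{lem:MVcentralfiber} (which you correctly invoke) provides the converse. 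In short, the equality case is decided in the Hecke-modified space via the fibration over $\Bun_H$, not by analyzing the closed embedding of $\olsf Y^{\ch\lambda}$ into $\Gr_G$.
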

\begin{proof}
Proposition~\ref{prop:centralstrata} implies that $\dim(\msf Y^{\ch\lambda} \cap \mathscr Y)
\le \frac 1 2(\dim \mathscr Y -1)$. 
Since $\ch\theta\ne 0$, we have $\sY^{\ch\theta,\ch\theta}_\red=C$,  and by Lemma \ref{lem:oYconnected} this inequality translates to 
$\dim(\msf Y^{\ch\lambda} \cap \mathscr Y) \le \frac 1 2 \len(D)$. 

If $v\in \abs C$ is the point we are taking central fibers with respect to,
then $\msf Y^{\ch\lambda}$ maps to 
the substack $\sM_{X,v}\subset \sM_{X}$ of maps that are only $G$-degenerate at $v$. 
Recall from Theorem \ref{thm:comp} that $\sM_{X,v}^{\ch\theta}$ 
is contained in the image of 
\[ 
    \act_{\sM,v} : \Bun_{H} \ttimes \Gr_{G}^{\ch\theta} \to \sM_{X,v} 
\]
and the map is birational onto its image. 
We deduce from \eqref{e:Z0act-diagram} and 
Proposition~\ref{prop:Yact-strat} that the fiber product 
$(\Bun_{H} \ttimes \Gr^{\ch\theta}_G) \xt_{\sM_{X}} \msf Y^{\ch\lambda}$ 
has a stratification by 
\[ \bigcup_{\ch\nu} \msf Y^{\ch\lambda-\ch\nu,0} \ttimes (\msf S^{\ch\nu} \cap \Gr^{\ch\theta}_G) 
\] 
where $\ch\nu$ ranges over the weights of $V^{\ch\theta}$.  

Observe that we have an embedding $\ch\Lambda^\pos_G \into \mathbb N^{\Cal D}$
by sending a simple coroot $\ch\alpha \mapsto D_\alpha^+ + D_\alpha^-$. 
By restricting to $X^\circ P_\alpha$ we can deduce that the image of
$\ch\alpha$ in $\pi_1(H)\ot\bbQ$ under \eqref{e:color-inj-piH} is zero. 
Thus, the commutativity of diagram \eqref{e:Y0toBunH} ensures that if
we restrict to the connected component of $\Bun_H$ corresponding to 
$\sY^{D}_{X^\bullet}$, then the stratification above becomes 
\[ \bigcup_{\ch\nu} (\msf Y^{\ch\lambda-\ch\nu}\cap \sY^{D -(\ch\nu-\ch\theta)}_{X^\bullet}) \ttimes (\msf S^{\ch\nu} \cap\Gr^{\ch\theta}_G ),
\]
where $\ch\nu-\ch\theta\in \ch\Lambda^\pos_G \subset {\mbb N}^{\Cal D}$.
In particular, the dimension of the stratum corresponding to $\ch\nu$ is 
\[ \le \frac 1 2 \Bigl(\len(D-(\ch\nu-\ch\theta))-1 \Bigr)+ \brac{\rho_G, \ch\nu-\ch\theta}
\le \frac 1 2 (\len(D)-1) \]
by Proposition~\ref{prop:centralstrata} \emph{unless} $D= \ch\nu-\ch\theta$. 
Thus, in order for $\dim \msf Y^{\ch\lambda,\ch\theta}=\frac 1 2 \len(\ch\lambda-\ch\theta)$, we must have
$\ch\lambda=\ch\nu$ is a weight of $V^{\ch\theta}$ 
and $\msf Y^{\ch\lambda,\ch\theta}$ is birational to an irreducible
component of $\pt \ttimes (\msf S^{\ch\lambda}\cap \Gr^{\ch\theta}_G)$, i.e., 
a Mirkovi\'c--Vilonen cycle. By Lemma~\ref{lem:MVcentralfiber}, 
this latter case always occurs. 
\end{proof}

\begin{rem}
 By Proposition \ref{prop:Vbasis}, the irreducible components of central Zastava fibers of critical dimension, which give rise to the ``new'' contributions $V_{X,\ch\lambda}$ to the pushforward of the IC sheaf by Proposition \ref{prop:format}, are of two different kinds: those associated to the Zastava space of the open $G$-orbit $X^\bullet$, and those associated to certain strata of the affine Grassmannian. On the other hand, Theorem \ref{thm:heckeIC} gives a similar description of the intersection complex of the global model in terms of the Hecke action on the intersection complex of the global model for $X^\bullet$. 
These two descriptions ``match'' under the nearby cycles functor of Theorem~\ref{thm:nearby}
and the Hecke action on Drinfeld's compactification $\ol\Bun_{N^-}$ (cf.~\cite{BG}). 
\end{rem}

\section{The crystal of a spherical variety} \label{sect:crystal}

We keep the assumptions of \S\ref{sect:Heckeact}--\ref{sect:centralfiber}. In this section, we study the irreducible components of central Zastava fibers of critical dimension (Proposition \ref{prop:Vbasis}) which give rise to the ``new'' contributions $V_{X,\ch\lambda}$ to the pushforward of the IC sheaf by Proposition \ref{prop:format}. Our main result is that these components give rise to a crystal, in the sense of Kashiwara, if we formally attach to them their ``negatives''. These components are, by Proposition \ref{prop:Vbasis}, of two different kinds, namely those associated to the Zastava space of the open $G$-orbit $X^\bullet$ and those associated to certain strata of the affine Grassmannian. Since the relation of the latter to crystals is well-known by 
\cite{BGcrys, BFG}, the problem quickly reduces to the study of the crystal associated to $X^\bullet$.

\subsection{The crystal $\mB_{X}$} 

\subsubsection{Definition of crystal} 
We review the definition of crystal, in the sense of Kashiwara \cite{Kas93}, 
over the Langlands dual Lie algebra $\ch{\mf g}$. We refer the reader to 
\cite{Kas94, Kas95, BS, HK} for further details on crystals, which can be associated 
to any Kac--Moody algebra. 

Let $I$ denote the set of vertices of the Dynkin diagram associated to $G$, 
so $\{\alpha_i\}_{i\in I} = \Delta_G$ is the set of simple roots of $G$. 

A crystal $\mB$ over $\ch{\mf g}$ is a set with the following data: 
\begin{align*}
\mathrm{wt} &: \mB \to \ch\Lambda_G \\
\varepsilon_i, \varphi_i &: \mB \to \mbb Z \sqcup \{ -\infty \} &\text{for } i\in I, \\
\tilde e_i, \tilde f_i &: \mB \to \mB \sqcup \{0\} &\text{for } i\in I,
\end{align*}
satisfying the following axioms:
\begin{enumerate}[label={(\arabic*)}] 
\item $\varphi_i(\msf b) = \varepsilon_i(\msf b) + \brac{ \alpha_i, \mathrm{wt}(\msf b)}$
for $\msf b \in \mB,\, i\in I$.
\item If $\msf b \in \mB$ and $\tilde e_i \msf b \ne 0$, then 
\[ \mathrm{wt}(\tilde e_i \msf b) = \mathrm{wt}(\msf b) + \ch\alpha_i,\; 
\varepsilon_i( \tilde e_i \msf b) = \varepsilon_i(\msf b) -1,\; 
\varphi_i( \tilde e_i \msf b) = \varphi_i( \msf b ) + 1. \]
\item If $\msf b \in \mB$ and $\tilde f_i \ne 0$, then 
\[ \mathrm{wt}(\tilde f_i \msf b) = \mathrm{wt}(\msf b) - \ch\alpha_i,\;
\varepsilon(\tilde f_i \msf b) + 1,\; 
\varphi_i(\tilde f_i \msf b) = \varphi_i(\msf b) -1. 
\]
\item For $\msf b_1, \msf b_2 \in  \mB$, $\msf b_2 = \tilde f_i \msf b_1$ if and only if 
$\msf b_1 = \tilde e_i \msf b_2$.
\item If $\varphi_i(\msf b)=-\infty$, then $\tilde e_i \msf b = \tilde f_i \msf b = 0$. 
\end{enumerate}

A crystal $\mB$ is called \emph{seminormal}\footnote{This is the terminology of \cite{Kas94,Kas95}. In \cite{Kas93} the term \emph{normal} was used for what we call \emph{seminormal}.} 
if 
\[ \varepsilon_i(\msf b) = \on{max} \{ n \ge 0 \mid \tilde e_i^n \msf b \in \mB \} \in \mathbb N, 
\quad \varphi_i(\msf b) = \on{max} \{ n \ge 0 \mid \tilde f_i^n \msf b \in \mB \} \in \mathbb N 
\]
for all $\msf b\in \mB, i\in I$.
From now on we will only consider seminormal crystals, so the maps $\varepsilon_i, \varphi_i$
are uniquely determined by $\mathrm{wt}, \tilde e_i, \tilde f_i$. 

Kashiwara showed the existence and uniqueness of crystal bases for any integrable module of the
quantized enveloping algebra $U_q(\ch{\mf g})$. 
The crystal basis of an integrable $U_q(\ch{\mf g})$-module 
is the limit at $q=0$ of Lusztig's canonical basis 
(\cite{Lusztig90, Grojnowski-Lusztig}).
A crystal $\mB$ is called \emph{normal} if it is isomorphic to the crystal basis 
of an integrable $U_q(\ch{\mf g})$-module. 

For any subset $J\subset I$, let $\ch{\mf g}_J$ denote the corresponding Levi subalgebra. 
For a crystal $\mB$ of $\ch{\mf g}$, let $\Phi_J(\mB)$ denote $\mB$ regarded as a crystal 
over $\ch{\mf g}_J$. 
Then saying that $\mB$ is seminormal is equivalent to saying that 
$\Phi_{\{i\}}(\mB)$ is isomorphic to the crystal basis of an integrable 
$U_q(\ch{\mf g}_{\{i\}})$-module. 
One can check the normality of a crystal by restricting to every pair
of vertices in the Dynkin diagram:

\begin{prop}[{\cite[Proposition 2.4.4]{KKMMNN}, \cite[Theorem 5.21]{BS}}] 
\label{prop:normal2}
Let $\mB$ be a finite crystal over $\ch{\mf g}$ such that for every 
subset $\{i,j\} \subset I$, the crystal $\Phi_{\{i,j\}}(\mB)$ is isomorphic to the
crystal basis of a finite-dimensional $U_q(\ch{\mf g}_{\{i,j\}})$-module. 
Then $\mB$ is normal. 
\end{prop}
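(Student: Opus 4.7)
The plan is to prove the proposition by induction on $|I|$, the number of simple roots, using the rank-$2$ hypothesis to bootstrap the decomposition of $\mB$ into irreducible highest-weight crystals $\mB(\ch\lambda)$. The base cases $|I|\le 2$ are immediate: the case $|I|=1$ is seminormality, which is built into the definition of a crystal, and $|I|=2$ is the hypothesis itself.

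For the inductive step, I would first pass to a connected component $\mB' \subset \mB$. Since $\mB$ is finite, $\mB'$ contains at least one \emph{highest weight element} $\msf b_0$, i.e., $\tilde e_i \msf b_0 = 0$ for all $i\in I$; seminormality then forces $\ch\lambda := \mathrm{wt}(\msf b_0)$ to be dominant. The goal is to exhibit an isomorphism $\mB(\ch\lambda) \xrightarrow{\sim} \mB'$ of $\ch{\mf g}$-crystals, since $\mB(\ch\lambda)$ is the crystal basis of a finite-dimensional irreducible $U_q(\ch{\mf g})$-module and any disjoint union of such crystals is normal.

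Next, I would pick any vertex $i_0 \in I$ and set $J := I \setminus \{i_0\}$. For every pair $\{i,j\} \subset J$, the $\{i,j\}$-restriction of $\Phi_J(\mB')$ coincides with the $\{i,j\}$-restriction of $\mB'$, which is normal by hypothesis. The inductive hypothesis, applied to $\ch{\mf g}_J$, therefore implies that $\Phi_J(\mB')$ is a normal $\ch{\mf g}_J$-crystal, and similarly for the known crystal $\Phi_J(\mB(\ch\lambda))$. I would then construct a weight-preserving bijection between the $\ch{\mf g}_J$-isotypic decompositions of $\mB'$ and $\mB(\ch\lambda)$, using the fact that both must be governed by the classical branching rule for $\ch{\mf g}_J \subset \ch{\mf g}$, and match the summands through the Kashiwara operators $\tilde e_{i_0}, \tilde f_{i_0}$ applied to the respective $\ch{\mf g}_J$-highest weight elements.

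The main obstacle is verifying that the bijection between $\ch{\mf g}_J$-isotypic components is compatible with $\tilde e_{i_0}$ and $\tilde f_{i_0}$, i.e., that the ``crystal commutation relations'' between $\tilde f_{i_0}$ and $\tilde f_j$ for $j \in J$ inside $\mB'$ agree with those in $\mB(\ch\lambda)$. These commutation relations are exactly what is controlled by the rank-$2$ subsystem $\{i_0,j\}$, and so are determined by the normality of the $\{i_0,j\}$-restriction, which is the rank-$2$ hypothesis. The delicate point is packaging these local compatibilities into a globally well-defined morphism $\mB(\ch\lambda) \to \mB'$: concretely, one sends any expression $\tilde f_{i_1}\cdots \tilde f_{i_m}\, \msf b_\ch\lambda \in \mB(\ch\lambda)$ to $\tilde f_{i_1}\cdots \tilde f_{i_m}\, \msf b_0 \in \mB'$, and the task is to show that two strings representing the same element of $\mB(\ch\lambda)$ yield the same element of $\mB'$. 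Once this well-definedness is established, the resulting map is automatically a crystal morphism; bijectivity follows because both sides have the same $\ch{\mf g}_J$-character by construction, completing the proof.
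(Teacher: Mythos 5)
The paper does not prove this proposition; it is stated as a known result cited from [KKMMNN, Proposition~2.4.4] and [BS, Theorem~5.21], so there is no in-text proof to compare against. Evaluating your argument on its own merits: there is a genuine gap at exactly the point you flag as delicate, namely the well-definedness of the assignment $\tilde f_{i_1}\cdots\tilde f_{i_m}\,\msf b_{\ch\lambda}\mapsto \tilde f_{i_1}\cdots\tilde f_{i_m}\,\msf b_0$. You assert that the ``crystal commutation relations'' between $\tilde f_{i_0}$ and $\tilde f_j$ are controlled by the rank-$2$ restriction $\Phi_{\{i_0,j\}}$, but this is a heuristic rather than an argument. The relations satisfied in $\mB(\ch\lambda)$ --- which pairs of $\tilde f$-words land on the same vertex --- are not generated by rank-$2$ moves in any transparent way; there is no presentation of a highest-weight crystal by ``local braid-type'' relations that would let you transport coincidences of $\tilde f$-words from $\mB(\ch\lambda)$ to $\mB'$ one rank-$2$ subalgebra at a time. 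This is precisely why the theorem is nontrivial: the proofs in the literature do not run the induction you propose, but instead establish a \emph{local characterization} of normal crystals (Stembridge's axioms in the simply-laced case, and analogous but more elaborate conditions in general) and then verify that the given crystal satisfies it. Your sketch identifies the right bottleneck and correctly sets up the reduction to a Levi $\ch{\mf g}_J$, but it supplies no mechanism to cross that bottleneck.

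A secondary issue: your appeal to ``the classical branching rule'' to match the $\ch{\mf g}_J$-isotypic decompositions of $\mB'$ and $\mB(\ch\lambda)$ presupposes that the two crystals have the same $\ch{\mf g}_J$-character, which is a consequence of the conclusion $\mB'\cong\mB(\ch\lambda)$ you are trying to prove. You do partially sidestep this by defining the map via lowering operators rather than by choosing isotypic components, but that route loops directly back to the unresolved well-definedness problem above.
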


For a crystal $\mB$ one can construct an oriented \emph{crystal graph} with vertex set $\mB$ and 
edges given by the $\tilde f_i$. We can decompose $\mB$ into a disjoint union 
of crystals corresponding to the connected components of the crystal graph. We will 
call these the connected components of $\mB$.

For $\ch\lambda \in \ch\Lambda^+_G$, there is a unique crystal basis 
$\mB^{\ch\lambda}_{\ch{\mf g}}$ for the irreducible highest weight module $V^{\ch\lambda}$
of $U_q(\ch{\mf g})$. (We will abuse notation and use $V^{\ch\lambda}$ to denote 
both the representation of the quantized enveloping algebra and its classical limit at
$q=1$, which is the corresponding irreducible $\ch{\mf g}$-module.)
In other words, there is a unique normal connected crystal with highest weight vector of weight 
$\ch\lambda$. However, we warn that in general there may be many seminormal connected crystals
with the same property.

Given a crystal $\mB$, we can define a crystal $\mB^\vee$ by ``reversing the arrows'':
the set $\mB^\vee = \{ \msf b^\vee \mid \msf b \in \mB\}$ is formally the same as $\mB$, and 
$\mathrm{wt}(\msf b^\vee) = -\mathrm{wt}(\msf b)$. The roles of $\tilde e_i,\tilde f_i$ 
are swapped. 
The crystal $(\mB^{\ch\lambda}_{\ch{\mf g}})^\vee$ is isomorphic to the crystal 
basis of the irreducible $U_q(\ch{\mf g})$-module of lowest weight $-\ch\lambda$, which
we also denote by $V^{-\ch\lambda}$.

\subsubsection{} \label{sect:crysW}
Let us mention an important consequence of the structure of a seminormal crystal $\mB$.
Let $\wt W$ be the free group generated by $\{ s_i \mid i \in I \}$ with
the relation $s_i^2= 1$. 
The Weyl group $W$ is the quotient of $\wt W$ by the braid
relations. 

It follows from the classification of integrable $U_q(\mf{sl}_2)$-modules that
we have a natural action of $\wt W$ on $\mB$ defined by 
\[
s_i(\msf b) = \begin{cases} 
    \tilde f_i^{\brac{\alpha_i, \mathrm{wt}(\msf b)}}(\msf b) &\text{if } 
    \brac{\alpha_i,\mathrm{wt}(\msf b)} \ge 0 \\
    \tilde e_i^{-\brac{\alpha_i, \mathrm{wt}(\msf b)}}(\msf b) &\text{if } 
    \brac{\alpha_i,\mathrm{wt}(\msf b)} \le 0 
\end{cases}
\]
for $\msf b\in \mB_{X^\bullet}$. 
For $\tilde w\in \wt W$ we have $\mathrm{wt}(\tilde w \msf b) = \tilde w (\mathrm{wt}(\msf b))$, 
where $\wt W$ acts on $\ch\Lambda_G=\ch\Lambda_X$ through $W$. 
In other words, we have isomorphisms 
\begin{equation} \label{e:crysW}
 \tilde w: \mB_{\ch\lambda} \overset\sim\to \mB_{\tilde w \ch\lambda} 
\end{equation}
\emph{a priori depending on} $\tilde w \in \wt W$ for all $\ch\lambda\in\ch\Lambda_G$.

If $\mB$ is normal, the $\wt W$-action on $\mB$ factors through $W$.

\subsubsection{Definition of $\mB_X$}\label{sect:defcrys}

For $\ch\lambda \in \mathfrak c_X$, we have defined the set 
$\mB_{X,\ch\lambda}$ to consist of 
the irreducible components of $\msf Y^{\ch\lambda}$ of critical dimension (Proposition \ref{prop:Vbasis}), that is:
\begin{itemize}
 \item if $\ch\lambda\in\mathfrak c^{\Cal D}_X$, the irreducible components of $\msf Y^{\ch\lambda}$ (or equivalently, of $\msf Y_{X^\bullet}^{\ch\lambda} = \msf Y^{\ch\lambda,0}$) of dimension 
$\frac 1 2 (\len(\ch\lambda)-1)$;
 \item the irreducible components of $\msf S^{\ch\lambda} \cap \Gr_G^{\ch\theta}$
of dimension $\brac{\rho_G,\ch\lambda-\ch\theta}$, for $\ch\theta \in \Cal D^G_{\mathrm{sat}}(X)$, identified with their image in $\msf Y^{\ch\lambda}$ through the action map \eqref{e:actvY}.
\end{itemize}
Note that $\ch\lambda=0$ never satisfies the conditions above.

Define $\mB_{X,-\ch\lambda} := \mB_{X,\ch\lambda}$, which
is well-defined since $\Cal C_0(X)$ is strictly convex. 
Let 
\[ \mB^+_{X} = \bigcup_{\ch\lambda\in \mf c_X} \mB_{X,\ch\lambda}, \qquad 
\mB^-_{X} = \bigcup_{\ch\lambda\in \mf c_X} \mB_{X,-\ch\lambda}. \]
In other words $\mB^+_{X}$ is the set of all irreducible
components of the central fiber of $\sY$ of the maximal dimensions 
satisfying the semi-smallness equality.

We (rather artificially) define $\mB_{X} = 
\mB_{X}^+ \sqcup \mB_{X}^-$. Let $\mathrm{wt}:\mB_{X} \to \mf c_X$ be the map sending $\mB_{X,\ch\lambda}$ to $\ch\lambda$.

\begin{thm} \label{thm:crystal} 
The set $\mB_{X}$ has the structure of a semi-normal crystal over 
$\ch{\mf g}$  such that the defining bijection $\mB_{X}^+ \leftrightarrow \mB_{X}^-$ is an isomorphism of crystals $\mB_{X} \cong \mB_{X}^\vee$. 
\end{thm}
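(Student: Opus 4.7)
The plan is to endow $\mB_X^+$ with crystal operators $\tilde e_\alpha,\tilde f_\alpha$ for each simple root $\alpha$ of $G$ by reducing the geometric analysis of $\msf Y_X$ to that of the spherical $M_\alpha$-variety $X_\alpha := X\sslash N_{P_\alpha}$, where $N_{P_\alpha}$ is the unipotent radical of the sub-minimal parabolic $P_\alpha$ associated to $\alpha$. Since the weight map $\mathrm{wt}:\mB_X \to \mf c_X\sqcup (-\mf c_X)$ is already built into the definition of $\mB_{X,\ch\lambda}$, the only data to be constructed are these Kashiwara operators, plus the length functions $\varepsilon_\alpha,\varphi_\alpha$ (which are forced by seminormality).

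The factorization $X\to X_\alpha \to X\sslash N$ induces a map between central fibers of the respective Zastava models, and by the dimension estimates of Proposition~\ref{prop:Vbasis} an irreducible component $\msf b \in \mB_{X,\ch\lambda}^+$ of critical dimension maps either (i) into the ``open'' central fiber $\msf Y_{X_\alpha^\bullet}^{\ch\lambda}$, or (ii) into its complement in $\msf Y_{X_\alpha}^{\ch\lambda}$. In case (i), under our assumption that $\alpha$ is a spherical root of type $T$, the homogeneous space $X_\alpha^\bullet$ is a torus torsor over $\mathbb G_m\bs \PGL_2$, so the relevant central fibers and their irreducible components are governed explicitly by Examples~\ref{eg:YHecke}--\ref{eg:YHeckePGL}: they are indexed by pairs $(n^+,n^-)$ with $\ch\lambda = n^+\ch\nu_\alpha^+ + n^-\ch\nu_\alpha^-$, and the resulting $\SL_2$-action is the standard action on the weight-$(n^+-n^-)$ subspace of the $(n^++n^-)$-fold tensor product of the standard representation of $\ch M_\alpha^{\mathrm{ss}}=\SL_2$. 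In case (ii), $\msf b$ is identified (via \eqref{e:actvY} and Lemma~\ref{lem:centralfibertheta}) with a Mirkovi\'c--Vilonen cycle in $\msf S^{\ch\lambda}\cap \Gr_G^{\ch\theta}$ for some $\ch\theta\in \Cal D^G_{\mathrm{sat}}(X)$, and the $\SL_2$-crystal structure coming from the simple root $\alpha$ on the set of such MV cycles was constructed by Braverman--Gaitsgory in \cite{BGcrys}.

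Pasting these two constructions together defines $\tilde e_\alpha,\tilde f_\alpha$ on $\mB_X^+$, with $\varepsilon_\alpha$ and $\varphi_\alpha$ then declared by seminormality; I would then transport the structure to $\mB_X^-$ by declaring the formal bijection $\mB_X^+\leftrightarrow \mB_X^-$ to be an isomorphism $\mB_X\cong \mB_X^\vee$, which makes the self-duality statement tautological and also produces the crystal on the whole of $\mB_X$. The axioms (1)--(5) are then automatic for case (ii) by \cite{BGcrys} and for case (i) by the explicit $\SL_2$ computation above; what needs to be checked is simply that the case-by-case definition is coherent, i.e., that the two $\SL_2$-actions agree on the overlap of their domains in the sense required by axioms (2)--(3) on weights.

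The main obstacle is case (ii): the fibers of the map $\msf Y_X^{\ch\lambda} \to \msf Y_{X_\alpha}^{\ch\lambda}$ over points of an MV cycle need not be irreducible, so in order for the BG crystal operator to descend to an operator on $\mB_X^+$ one must show that irreducible components of these fibers can be canonically identified as one moves within a given MV cycle. This identification will need a careful analysis parallel to the Hecke correspondence used in Section~\ref{sect:Heckeact}; the analogous identification in case (i) is transparent because everything factorizes into symmetric powers of $C$. Once this coherence is established, Theorem~\ref{theorem-crystal} follows by assembling the pieces and reading off the properties from the explicit models in cases (i) and (ii); the combinatorial axioms of a seminormal crystal reduce to facts about crystal bases of finite-dimensional $U_q(\mf{sl}_2)$-modules, which hold by construction in both cases.
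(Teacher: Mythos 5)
Your overall strategy is the right one: the paper also reduces to the Levi $M_\alpha$ of semisimple rank one via the GIT quotient $X_\alpha = X\sslash N_{P_\alpha}$, splits into the $\ch\theta=0$ case (where the type-$T$ assumption makes $X_\alpha^\bullet/B_{M_\alpha} = \mbb G_m\bs\mbb P^1$ and everything is explicit) and the $\ch\theta\ne 0$ case (where the base is a Mirkovi\'c--Vilonen cycle carrying the Braverman--Gaitsgory crystal), and then reads off the rank-one crystal axioms. You also correctly identify the crux of the matter: the fibers of $\msf Y_X^{\ch\lambda}\to \msf Y_{X_\alpha}^{\ch\lambda}$ over an MV cycle need not be irreducible, so one must canonically match up the fiber components as the base point varies in order to obtain a decomposition of the form $\mB^{P_\alpha}_{X,\ch\theta}\times\mB^{\ch\theta}_{\ch{\mf m}_\alpha}$.

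That coherence step, however, is where the proposal has a genuine gap, and the route you gesture at is not the one that works. The paper does \emph{not} establish the identification of fiber components by a Hecke-type argument parallel to \S\ref{sect:Heckeact}. (The Hecke action is used in Lemma~\ref{lem:centralfibertheta} to pin down \emph{which} strata of $\msf Y_{X_\alpha}$ the images of critical components land in, and to recognize them as MV cycles; it does not touch the fiber components of $\msf Y_X\to\msf Y_{X_\alpha}$.) The actual ingredient is Lemma~\ref{lem:XPfiber}: the group ind-scheme $\msf L H_{M_\alpha}$ acts transitively on the $\ch\theta$-stratum of $\msf L^+ X_\alpha/\msf L^+M_\alpha$, so all fibers over that stratum are abstractly isomorphic, and the identification of fiber components is \emph{canonical} because the stabilizer $\on{Stab}_{\msf L H_{M_\alpha}}(t^{\ch\theta})$ is \emph{connected} (Corollary~\ref{cor:stab-conn}). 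Connectedness of that stabilizer is not formal: it is proved by contracting the stabilizer, via the affine degeneration $\mathscr X\to\ol{T_{X,\mss}}$ of \S\ref{sect:affine-degeneration}, onto the satellite subgroup $H_I$, whose connectedness in turn rests on the assumption that $B$ acts simply transitively on $X^\circ$ (Lemma~\ref{lem:H_I-conn}). Without this connectedness argument the multiplicity space $\mB^{P_\alpha}_{X,\ch\theta}$ is only a local system of sets over the MV cycle, and the crystal operators do not descend. I'd also flag two smaller imprecisions: in case (ii) the relevant MV cycle lives in $\msf S^{\ch\lambda}_{M_\alpha}\cap\Gr_{M_\alpha}^{\ch\theta}$ with $\ch\theta\in\mf c_{X_\alpha}^-\sm 0$ (not in $\Gr_G$ with $\ch\theta\in\Cal D^G_{\mathrm{sat}}(X)$ — that is the $G$-level decomposition of Lemma~\ref{lem:reduction-Xbullet}, which is a separate step); and the plan of first defining $\tilde e_\alpha,\tilde f_\alpha$ on $\mB_X^+$ alone and then transporting to $\mB_X^-$ does not parse as stated, because the rank-one crystal orbits already straddle the two halves (e.g.\ $\tilde f_\alpha(\ch\nu_\alpha^+) = -\ch\nu_\alpha^-$), so the operators must be defined on $\mB_X=\mB_X^+\sqcup\mB_X^-$ at once, with self-duality read off from the explicit form of the rank-one decomposition.
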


The statement of the theorem above is not optimal, of course, as it does not specify all the data that give rise to the structure of a crystal, such as the operations $\tilde e_i,\tilde f_i$. To do so, we will need to introduce a process of ``reduction to a Levi'', in particular, a Levi of semisimple rank one, that will provide these operators. We will define these operators, giving the structure of a self-dual semi-normal crystal to $\mB_X$, in \S\ref{sect:crystal-ef}. 

\begin{conj} \label{conj:crystal}
The crystal $\mB_{X}$ is isomorphic to the unique crystal basis 
of a finite-dimensional $\ch G$-module $\rho_X$. 
\end{conj}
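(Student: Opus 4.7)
The plan is to verify normality by reducing to rank-two Levi subgroups, using Proposition \ref{prop:normal2}: the seminormal crystal $\mB_X$ is normal if and only if, for every pair $\{\alpha,\beta\}$ of simple roots of $G$, the restricted crystal $\Phi_{\{\alpha,\beta\}}(\mB_X)$ is isomorphic to the crystal basis of a finite-dimensional module over $U_q(\ch{\mf g}_{\{\alpha,\beta\}})$. Granting the seminormal structure from Theorem \ref{theorem-crystal}, this is equivalent to checking the Weyl relations $[\tilde e_\alpha,\tilde f_\beta]=0$ for every pair of distinct simple roots. First I would establish a geometric reduction: for two simple roots $\alpha,\beta$, consider the standard parabolic $P_{\alpha,\beta}$ with unipotent radical $N_{\alpha,\beta}$, and form $X_{\alpha,\beta}:=X\sslash N_{\alpha,\beta}$, a spherical variety for the rank-two Levi $M_{\alpha,\beta}$ which, under our type-$T$ hypothesis, still satisfies $\ch G_{X_{\alpha,\beta}}=\ch M_{\alpha,\beta}$. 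The operators $\tilde e_\alpha,\tilde f_\alpha$ on $\mB_X$ are built (in the discussion preceding Conjecture \ref{conj:crystal}) out of the fibers of $\sY_X\to \sY_{X_\alpha}$, which only see the $M_{\alpha,\beta}$-reduction; consequently $\Phi_{\{\alpha,\beta\}}(\mB_X)$ is identified with the crystal $\mB_{X_{\alpha,\beta}}$ over $\ch{\mf g}_{\{\alpha,\beta\}}$ of the rank-two spherical variety.

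Second, I would invoke Wasserman's classification \cite{Wasserman} of rank-two wonderful spherical varieties, which yields a short explicit list of cases $X_{\alpha,\beta}$ to treat (all satisfying our hypotheses; most of type $A_1\times A_1$, together with a handful in types $A_2$, $B_2$, $G_2$). For each case I propose to verify normality by one of three routes:
\begin{enumerate}
\item When $X_{\alpha,\beta}^\bullet$ is of the form $\ol{N_{P'}\bs M_{\alpha,\beta}}^\aff$ for a parabolic $P'\subset M_{\alpha,\beta}$, appeal directly to the identification in \cite{BFGM} of the corresponding Zastava crystal with the crystal basis of the sum of fundamental representations attached to the Levi of $P'$.
\item When $X_{\alpha,\beta}\ne X_{\alpha,\beta}^{\can}$, decompose $\mB_{X_{\alpha,\beta}}^+$ into the ``open part'' (from $X_{\alpha,\beta}^\bullet$) and the MV-cycle contributions from $\Cal D^G_{\mathrm{sat}}(X_{\alpha,\beta})$ via Proposition \ref{prop:Vbasis}, and match the latter with the Mirkovi\'c--Vilonen crystals of \cite{BGcrys}, which are normal; combine with Theorems \ref{thm:comp} and \ref{thm:heckeIC} to glue the two pieces equivariantly.
\item The remaining homogeneous rank-two cases admit a direct combinatorial computation of $\msf Y^{\ch\lambda}$ and $|\mB_{X,\ch\lambda}|$, which can be compared with the predicted representation $\rho_X$ dictated by Remark \ref{rem:conj} (sum of irreducibles with highest weights the dominant Weyl-translates of the generators $\ch\nu_1,\dotsc,\ch\nu_r$, each with multiplicity one).
\end{enumerate}

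Third, once all rank-two restrictions are normal, Proposition \ref{prop:normal2} promotes $\mB_X$ to a normal crystal, and uniqueness of normal connected crystals with prescribed highest weight identifies it with the crystal basis of the $\ch G$-module $\rho_X$ predicted by Remark \ref{rem:conj}. The passage from $X^\can$ to general $X$ is covered by Theorem \ref{theorem-general}: the extra $\Cal D^G_{\mathrm{sat}}(X)$-components of $\mB_X^+$ are MV-cycles inside $\Gr_G^{\ch\theta_i}$, whose crystal structure is already known to be normal by \cite{BGcrys}, so no new Weyl relations need to be checked beyond those on $\mB_{X^\bullet}$.

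The main obstacle, candidly acknowledged in the paper itself, is Step (iii): the operators $\tilde e_\alpha$ and $\tilde f_\beta$ arise from hyperplane sectionings of distinct semi-infinite boundaries of $\Gr_G$ (Proposition \ref{prop:intersections}) along non-commuting root directions, and controlling their interaction on top-dimensional components of $\olsf Y^{\ch\lambda}$ requires a simultaneous two-parameter degeneration analysis of central fibers that the hyperplane argument does not deliver cleanly. In the absence of a conceptual input, the cleanest route to finishing would be to prove the identification $\End(\IC_{\msf L^+X})\cong \ol{\mbb Q}_\ell[V_X]^{\ch G}$ of \cite{BZSV} from \eqref{equation-BZSV}, which would produce the full $\ch G$-action a priori and render the Weyl and Serre relations automatic; this suggests attacking Conjecture \ref{conj:crystal} in tandem with \eqref{equation-BZSV} rather than by direct combinatorial verification.
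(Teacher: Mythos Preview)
This statement is a \emph{conjecture} in the paper, not a theorem; the paper does not prove it. Your proposal is therefore not competing with a proof but with an explicit non-proof: the authors write, just after sketching exactly the rank-two reduction you describe (Remark~\ref{rem:conj2}), that ``checking the Weyl/Serre relations, even in a few cases, `by hand' does not seem to be easy, and we do not have a conceptual proof of them; therefore, we refrain from attempting such a verification.'' Your outline correctly reproduces the paper's own strategy---reduce via Proposition~\ref{prop:normal2} to rank-two Levis $M_{\alpha,\beta}$, then invoke Wasserman's list---and you candidly acknowledge in your last paragraph that the missing ingredient is precisely the direct verification in those rank-two cases. So as a research plan your proposal is aligned with the paper; as a proof it has the same gap the paper has.

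Two corrections to the details of the plan. First, your route (i) is vacuous: under the paper's standing hypothesis that all simple roots are spherical roots of type $T$, the Levi reduction $X_{\alpha,\beta}^\bullet$ inherits the type-$T$ property (the colors and their $P_\alpha$-orbit structure are preserved under passage to $X\sslash N_P$), so $X_{\alpha,\beta}^\bullet$ is never horospherical of the form $N_{P'}\bs M_{\alpha,\beta}$; the BFGM cases are type $U$ and lie outside the paper's regime. Second, your claimed identification ``$\Phi_{\{\alpha,\beta\}}(\mB_X)=\mB_{X_{\alpha,\beta}}$'' is too coarse: the actual decomposition (Proposition~\ref{prop:crys-decomp}) has multiplicity spaces $\mB^P_{X,\ch\theta}$ indexed by \emph{all} $\ch\theta\in\mf c_{X_M}^-\sm 0$, not just those in $\Cal D^M_{\mathrm{sat}}(X_M)$, so $\Phi_{\{\alpha,\beta\}}(\mB_X)$ is not literally the crystal of $X_{\alpha,\beta}$. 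What Remark~\ref{rem:conj2} says is the weaker (but sufficient) implication that normality of $\mB_{X_M^\bullet}$ forces normality of the restriction, because the extra summands are already MV-type crystals. This does not affect the validity of the reduction, but your route (ii) as written---``glue the two pieces equivariantly'' using Theorems~\ref{thm:comp} and~\ref{thm:heckeIC}---does not circumvent the problem: those theorems concern $\IC$ sheaves on $\sM_X$, not the crystal operators, and in any case the ``open part'' $\mB_{X_{\alpha,\beta}^\bullet}$ is exactly what remains to be shown normal.
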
 

In Remark~\ref{rem:conj2} we explain that it suffices to prove Conjecture~\ref{conj:crystal}
when $G$ has semisimple rank $2$, where there are finitely many cases (corresponding
to the wonderful varieties in \cite{Wasserman} with only spherical roots of type $T$).

\subsubsection{Reduction to $X^\bullet$}

Theorem \ref{thm:crystal} and Conjecture \ref{conj:crystal} immediately reduce to the study of the irreducible components of critical dimension in the central fiber of $\sY_{X^\bullet}$:

\begin{lem}\label{lem:reduction-Xbullet}
 Let $\mB_{X^\bullet}^+ = \mB_X \times_{\ch\Lambda_X} \mf c_X^{\mathcal D}$, hence $\mB_{X^\bullet}^+$ is the set of irreducible components of central fibers of $\sY_{X^\bullet}$ of critical dimension. Define $\mB_{X^\bullet}^-$ and $\mB_{X^\bullet} = \mB_{X^\bullet}^+ \sqcup \mB_{X^\bullet}^-$ as before. Then, Theorem \ref{thm:crystal} and Conjecture \ref{conj:crystal} hold if they hold for $\mB_{X^\bullet}$, with a decomposition of the crystal $\mB_X$ into a disjoint union of crystals:
\begin{equation}
\label{eq:crystaldecomp}
\mB_X =  \mB_{X^\bullet} \sqcup \bigsqcup_{\ch\theta \in \pm \Cal D^G_{\mathrm{sat}}(X)} \mB^{\ch\theta}_{\ch{\mf g}},
\end{equation}
where $\mB^{\ch\theta}_{\ch{\mf g}}$ is the crystal 
associated to the irreducible
$\ch G$-module $V^{\ch\theta}$ of lowest weight $\ch\theta$, if $\ch\theta \in \Cal D^G_{\mathrm{sat}}(X) \subset \ch\Lambda_G^-$, or highest weight $\ch\theta$ if $-\ch\theta \in \Cal D^G_{\mathrm{sat}}(X)$.
\end{lem}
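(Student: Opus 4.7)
The plan is to show that the decomposition \eqref{eq:crystaldecomp} already holds at the set-theoretic level by Proposition~\ref{prop:Vbasis}, and then to verify that the crystal operators $\tilde e_i,\tilde f_i$ (to be constructed in \S\ref{sect:crystal-ef}) respect the decomposition, with the MV-cycle summands carrying the Braverman--Gaitsgory crystal.

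First I would spell out the set-theoretic decomposition. By Proposition~\ref{prop:Vbasis}, for every $\ch\lambda\in\mf c_X\sm 0$ we have a disjoint union
\[
\mB_{X,\ch\lambda} = \mB_{X^\bullet,\ch\lambda} \sqcup \bigsqcup_{\ch\theta\in\Cal D^G_{\mathrm{sat}}(X)} \bigl(\text{irred.\ comp.\ of } \msf S^{\ch\lambda}\cap\Gr_G^{\ch\theta} \text{ of max dim}\bigr),
\]
where the first summand is empty unless $\ch\lambda\in\mf c_X^{\Cal D}$ and the $\ch\theta$-summand is empty unless $\ch\lambda$ is a weight of $V^{\ch\theta}$. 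The disjointness is automatic since these irreducible components lie in distinct strata $\sY^{\ch\lambda,\ch\theta}$ of the fine stratification of $\sY^{\ch\lambda}$, indexed by $\ch\theta\in\{0\}\cup\Cal D^G_{\mathrm{sat}}(X)$. Applying this to both $\mB_X^+$ and (formally) to $\mB_X^-$ yields \eqref{eq:crystaldecomp}.

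Next I would verify compatibility with the crystal structure. The crystal operators $\tilde e_i,\tilde f_i$ of \S\ref{sect:crystal-ef} are defined by passing through the map $\sY_X\to\sY_{X_\alpha}$, where $X_\alpha=X\sslash N_{P_\alpha}$ for $\alpha=\alpha_i$. The key observation is that all maps in this construction are defined over $\sM_X$ and commute with the projection to the $G$-defect stratum $\sM_X^{\ch\theta}$; in particular the closure relations (Proposition~\ref{prop:closure-rel}) show that an irreducible component in $\sY^{\ch\lambda,\ch\theta}$ of critical dimension can only move, under $\tilde e_i$ or $\tilde f_i$, to an irreducible component of critical dimension in $\sY^{\ch\lambda\pm\ch\alpha_i,\ch\theta}$ (the $G$-defect $\ch\theta$ is preserved). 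Consequently each summand in \eqref{eq:crystaldecomp} is a subcrystal, and the decomposition respects the crystal structure. On the MV-cycle summand indexed by $\ch\theta\in\Cal D^G_{\mathrm{sat}}(X)$, the restriction of our crystal operators coincides, via Lemma~\ref{lem:centralfibertheta} and the compatibility of hyperplane sections with semi-infinite orbits, with the crystal structure on MV cycles constructed in \cite{BGcrys}; the latter is known to reproduce $\mB^{\ch\theta}_{\ch{\mf g}}$, the crystal basis of $V^{\ch\theta}$. The formally negated summand indexed by $-\ch\theta$ then carries the dual crystal, which is $\mB^{-\ch\theta}_{\ch{\mf g}}$ by the identification $(V^{\ch\theta})^\vee\cong V^{-\ch\theta}$.

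With these pieces in hand, Theorem~\ref{thm:crystal} for $\mB_X$ follows from Theorem~\ref{thm:crystal} for $\mB_{X^\bullet}$, since the MV-cycle summands are semi-normal (in fact normal) and self-dual, while self-duality of $\mB_X$ is the concatenation of that of $\mB_{X^\bullet}$ with the swaps $\mB^{\ch\theta}_{\ch{\mf g}}\leftrightarrow\mB^{-\ch\theta}_{\ch{\mf g}}$. Similarly, Conjecture~\ref{conj:crystal} for $\mB_X$ follows from the same conjecture for $\mB_{X^\bullet}$, giving $\rho_X=\rho_{X^\bullet}\oplus\bigoplus_{\ch\theta\in\Cal D^G_{\mathrm{sat}}(X)}V^{\ch\theta}$, matching the shape of Theorem~\ref{theorem-general}. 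The main obstacle in this argument is the compatibility of the (not-yet-defined) crystal operators with the $\ch\theta$-stratification; I would expect to prove this along with the construction of $\tilde e_i,\tilde f_i$ in \S\ref{sect:crystal-ef}, using the Hecke description of the non-canonical strata of $\sM_X$ from Theorem~\ref{thm:comp} to control what happens in case (ii) of the reduction-to-Levi analysis.
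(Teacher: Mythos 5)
Your first two ingredients --- the set-theoretic decomposition extracted from Proposition~\ref{prop:Vbasis} and the identification of the MV-cycle summands with the crystal bases $\mB^{\ch\theta}_{\ch{\mf g}}$ via \cite{BGcrys} --- are exactly the content of the paper's proof, and nothing more is needed. The self-duality and (semi-)normality of the MV summands then make the reduction of Theorem~\ref{thm:crystal} and Conjecture~\ref{conj:crystal} to $\mB_{X^\bullet}$ immediate, just as you observe.

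The second half of your plan, however, rests on a misreading of the logical structure. You propose to first define crystal operators $\tilde e_i,\tilde f_i$ on all of $\mB_X$ (for general affine $X$) by the §\ref{sect:crystal-ef} construction and then \emph{verify} that they preserve the strata of \eqref{eq:crystaldecomp}, citing the closure relations of Proposition~\ref{prop:closure-rel} and flagging this as the ``main obstacle.'' But there is no such verification to be done: the paper never defines the crystal operators intrinsically on the whole of $\mB_X$ for $X \ne X^\can$. The construction of §\ref{sect:crystal-ef} is carried out only for $\mB_{X^\bullet}$ (after reducing to $X = X^\can$ with $\mf c_X = \mbb N^{\Cal D}$), and the crystal structure on $\mB_X$ for general $X$ is \emph{defined} via the decomposition \eqref{eq:crystaldecomp}, with the MV summands carrying the \cite{BGcrys} structure by fiat. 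So the compatibility you regard as an obstacle is built into the definition rather than something to prove. Your specific justification is also not sound as stated: the closure relations of Proposition~\ref{prop:closure-rel} concern the global stratification of $\sM_X$ by $\Sym^\infty(\mf c_X^-\sm 0)$ and do not by themselves say how the hyperplane-intersection/lowering operation (Proposition~\ref{prop:f-hyper}) interacts with the $\ch\theta$-stratification; note the related warning in the Remark preceding Proposition~\ref{prop:crys-decomp}, where the paper explicitly declines to claim that the relevant reduction maps send $\mB_X^+$ to $\mB_{X_M}^+$. You should drop the ``compatibility'' step entirely and simply present the decomposition as the definition of the crystal structure on $\mB_X$, after which the reduction of the two target statements is a two-line observation.
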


\begin{proof}
 Indeed, by Proposition \ref{prop:Vbasis}, the elements of $\mB_X^+$ consist of elements of $\mB_{X^\bullet}^+$ and irreducible components of $\msf S^{\ch\lambda} \cap \Gr_G^{\ch\theta}$
of dimension $\brac{\rho_G,\ch\lambda-\ch\theta}$, for $\ch\theta \in \Cal D^G_{\mathrm{sat}}(X)$.
As explained in \cite{BGcrys}, the latter can be identified with the elements of
the crystal basis in the $\ch\lambda$-eigenspace of the irreducible $\ch G$-module $V^{\ch\theta}$ of lowest weight $\ch\theta$.
\end{proof}

While the methods of this paper are insufficient to prove Conjecture~\ref{conj:crystal} 
for $\mB_{X^\bullet}$, we do show that it must satisfy the following properties
in \S\ref{sect:crystal-hw}.

\begin{thm} \label{thm:crystal-properties}
The crystal $\mB_{X^\bullet}$ has the following properties: 
\begin{enumerate}
\item The set $\mathrm{wt}(\mB_{X^\bullet})$ is equal\footnote{Here we only describe an equality of sets counted without multiplicities.} to the set of weights of 
$\bigoplus_{\ch\lambda \in \ch\Lambda^+_G \cap W\varrho_X(\Cal D)} V^{\ch\lambda}$, 
where $\ch\Lambda^+_G \cap W\varrho_X(\Cal D)$ denotes the dominant Weyl translates of valuations
of colors.  
\item If $\msf b \in \mB_{X^\bullet}^+$, then 
there is a sequence of lowering operators $\tilde f_{i_j}$ sending $\msf b$ to an element of 
$\mB_{X^\bullet,\ch\nu_D}$ for some color $D\in \Cal D$.
\item For $\ch\lambda \in W\varrho_X(\Cal D)$,
the cardinality of $\mf B_{X^\bullet,\ch\lambda}$ is equal to $1$, unless $\ch\lambda =\frac{\ch\gamma}{2}$ for some (not necessarily simple) coroot $\ch\gamma$, in which case the cardinality is $2$. 
\end{enumerate}
\end{thm}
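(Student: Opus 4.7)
The plan is to establish (ii) as the main geometric input and derive (iii) and (i) from it by combining the $\wt W$-action on seminormal crystals (\S\ref{sect:crysW}) with the point-computation $\sY^{\ch\nu_D}_{X^\bullet} = \mathrm{pt}$ of Lemma~\ref{lem:Ycolor}. After passing to the torus cover constructed in \S\ref{sect:freemonoid}, we may assume $\mf c_{X^\bullet} = \mbb N^{\Cal D}$ is free, which is the setting required by Theorem~\ref{theorem-slicing}.

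For (ii), fix $\msf b \in \mB^+_{X^\bullet,\ch\lambda}$ of critical dimension $\frac{1}{2}(\len(\ch\lambda)-1)$. The equality case of Theorem~\ref{theorem-slicing} produces a sequence of simple coroots $\ch\alpha_1,\dotsc,\ch\alpha_d$ and irreducible components $\msf b_j \subset \msf Y^{\ch\lambda-\ch\alpha_1-\dotsb-\ch\alpha_j}_{X^\bullet}$ of critical dimension, with $\msf b_0=\msf b$ and $\msf b_d \in \mB^+_{X^\bullet,\ch\nu_D}$ for some color $D$. It remains to verify that $\tilde f_{\alpha_j}(\msf b_{j-1}) = \msf b_j$ for each $j$. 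This follows from the construction of the lowering operator $\tilde f_i$ via the Levi reduction $X \to X_{\alpha_j} = X \sslash N_{P_{\alpha_j}}$ sketched in \S\ref{subsection-dimensions}: after pushing the problem into the $M_{\alpha_j}$-Zastava space $\msf Y_{X_{\alpha_j}}$, one ends up in the minuscule rank-one situation of $\mbb G_m\bs\PGL_2$, for which the $\SL_2$-lowering operator is identified with the geometric hyperplane section of Proposition~\ref{prop:intersections} via Proposition~\ref{prop:f-hyper}. The careful matching of irreducible components under this identification is the main obstacle.

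For (iii), any $\ch\lambda \in W\varrho_X(\Cal D)$ can be written as $\ch\lambda = w\ch\nu_D$ for some $w\in W$ and color $D$. Lifting $w$ to $\tilde w \in \wt W$ and applying \eqref{e:crysW} yields a bijection $\mB_{X^\bullet,\ch\nu_D} \overset\sim\to \mB_{X^\bullet,\ch\lambda}$. By Lemma~\ref{lem:Ycolor}, $\sY^{\ch\nu_D}_{X^\bullet}$ is a single point, so $|\mB_{X^\bullet,\ch\nu_D}|$ equals the number of colors $D' \in \Cal D$ with $\ch\nu_{D'}=\ch\nu_D$. By \S\ref{subsection:typeT}, distinct colors can share a valuation only when they form the pair $\Cal D(\alpha) = \{D^+_\alpha, D^-_\alpha\}$ with $\ch\nu^+_\alpha = \ch\nu^-_\alpha$; combined with $\ch\nu^+_\alpha+\ch\nu^-_\alpha = \ch\alpha$ and $\brac{\alpha,\ch\nu^\pm_\alpha}=1$, this forces $\ch\nu^\pm_\alpha = \ch\alpha/2$. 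Hence $|\mB_{X^\bullet,\ch\lambda}|=1$ in general, and equals $2$ precisely when $\ch\lambda = w(\ch\alpha/2) = \ch\gamma/2$ for some coroot $\ch\gamma = w\ch\alpha$.

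For (i), by (ii) every weight in $\mathrm{wt}(\mB^+_{X^\bullet})$ is connected by a chain of $\tilde f_i$'s to some color-valuation $\ch\nu_D$, hence lies in $\ch\nu_D + \ch\Lambda^\pos_G$; combining this with the $\wt W$-symmetry of seminormality and the bound on weights given by the $\mf{sl}_2$-string through each simple direction, one obtains the containment of $\mathrm{wt}(\mB^+_{X^\bullet})$ in the set of weights of $\bigoplus_{\ch\mu \in \ch\Lambda^+_G \cap W\varrho_X(\Cal D)} V^{\ch\mu}$. The reverse containment follows from (iii), which provides an element in $\mB_{X^\bullet}$ at every Weyl translate of a color, together with the $\mf{sl}_2$-string description linking the extremal weights of $V^{\ch\mu}$ through all intermediate weights (via the seminormal action of the Weyl group and the integrable $\mf{sl}_2$-structure on each rank-one restriction). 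Self-duality exchanging $\mB_{X^\bullet}^+$ and $\mB_{X^\bullet}^-$ then extends the equality to all of $\mB_{X^\bullet}$.
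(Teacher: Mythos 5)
Your handling of (ii) and (iii) is essentially the paper's: for (iii), the observation that two colors sharing a valuation must form a pair $\Cal D(\alpha)$ (because $D\in\Cal D(\alpha)$ iff $\brac{\alpha,\ch\nu_D}=1$ and $|\Cal D(\alpha)|=2$) is correct, and combined with Lemma~\ref{lem:Ycolor} and the $\wt W$-action of \S\ref{sect:crysW} gives the claim; for (ii) you are reproducing Lemma~\ref{lem:lowering}. One caution in (ii): as the Remark after Proposition~\ref{prop:f-hyper} notes, $\olsf b\cap\msf Y^{\ch\lambda-\ch\alpha_i}$ can be reducible, so you should not demand that $\tilde f_{\alpha_j}(\msf b_{j-1})$ equals the specific component $\msf b_j$ produced by the slicing; Proposition~\ref{prop:f-hyper}(i) only yields $\tilde f_{\alpha_j}(\msf b_{j-1})\ne 0$, and the induction must restart the dimension-cutting from the new $\tilde f$-image at each step rather than following one fixed hyperplane chain. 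This is harmless because the endpoint weight is determined regardless of which components get chosen.

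For (i), the forward containment as written has a genuine gap. Knowing $\ch\theta\ge\ch\nu_D$ for some color (from (ii)) gives no \emph{upper} bound on $\ch\theta$, and the $\mf{sl}_2$-string property in a single rank-one direction does not supply one. What the paper does is first reduce to $\ch\theta$ antidominant: Lemma~\ref{lem:plusWorbits} (which follows from the \S\ref{sect:crystal-ef} decomposition of $\Phi_{\{i\}}(\mB_{X^\bullet})$) shows that if $\ch\theta\in\mathrm{wt}(\mB^+_{X^\bullet})$ is not already a Weyl translate of a color, then the \emph{entire} $W$-orbit of $\ch\theta$ sits inside $\mf c_X^{\Cal D}$, so in particular the antidominant translate is still a weight of $\mB^+_{X^\bullet}$ and the bijections \eqref{e:crysW} apply. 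Once $\ch\theta$ is antidominant, $\ch\theta\ge\ch\nu_D\ge w_0\ch\lambda$ (with $\ch\lambda$ the dominant translate of $\ch\nu_D$) shows $\ch\theta$ is a weight of $V^{\ch\lambda}$. You need this reduction explicitly. Similarly, the reverse containment requires a genuine path-finding argument: Lemma~\ref{lem:walls} exhibits, for each dominant weight $\ch\mu$ of $V^{\ch\lambda}$, a descent $\ch\lambda\to\ch\mu$ by simple coroots staying inside the $\ch\rho$-shifted dominant cone, which is exactly what is needed for Proposition~\ref{prop:f-hyper}(ii) (or equivalently the seminormality criterion $\brac{\alpha_i,\ch\lambda_j}>0\Rightarrow\tilde f_i\ne 0$) to apply at every step; citing "the $\mf{sl}_2$-string description linking extremal weights through intermediate ones" without producing such a path does not close the argument.
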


\begin{rem}
 The ``multiplicity 2'' case appears when two colors have the same valuation, e.g., $X = \mathbb G_m\backslash\PGL_2$; see \S \ref{sect:spherical}.
\end{rem}

\begin{rem} \label{rem:conj}
Note that if Conjecture~\ref{conj:crystal} is true, then properties (i)--(iii) of 
Theorem~\ref{thm:crystal-properties} uniquely determine the $\ch G$-module $\rho_X$: it must 
be isomorphic to 
\begin{equation} \label{e:conjVX}
 \bigoplus_{\ch\lambda \in \ch\Lambda^+_G \cap W\varrho_X(\Cal D)} 
(V^{\ch\lambda})^{\oplus \abs{\mf B_{X^\bullet,\ch\lambda}}} \oplus 
\bigoplus_{\ch\theta \in \pm\Cal D^G_{\mathrm{sat}}(X)} V^{\ch\theta},
\end{equation}
where the cardinality of $\mf B_{X^\bullet,\ch\lambda}$ is specified by property (iii).
\end{rem}

\begin{cor} \label{cor:minuscule}
If all coweights in $\ch\Lambda^+_G \cap W\varrho_X(\Cal D)$ are 
minuscule, then Conjecture~\ref{conj:crystal} holds, i.e., $\mf B_{X}$ is the crystal basis of 
the $\ch G$-module given by \eqref{e:conjVX}. 
\end{cor}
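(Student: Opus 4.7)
By Lemma~\ref{lem:reduction-Xbullet} together with the decomposition~\eqref{eq:crystaldecomp}, the summands $\mB^{\ch\theta}_{\ch{\mf g}}$ for $\ch\theta \in \pm\Cal D^G_{\mathrm{sat}}(X)$ are already the crystal bases of the corresponding $V^{\ch\theta}$ by the results of Braverman--Gaitsgory on Mirkovi\'c--Vilonen cycles, so the only thing to check is that the ``open'' part $\mB_{X^\bullet}$ is isomorphic to the crystal basis of $\bigoplus_{\ch\lambda \in \ch\Lambda^+_G \cap W\varrho_X(\Cal D)} (V^{\ch\lambda})^{\oplus |\mB_{X^\bullet,\ch\lambda}|}$. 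Under the minuscule hypothesis each $V^{\ch\lambda}$ has weight set $W\ch\lambda$ with multiplicity one, so Theorem~\ref{thm:crystal-properties}(i) forces every weight occurring in $\mB_{X^\bullet}$ to lie in $W\varrho_X(\Cal D)$ and hence to be minuscule for $\ch G$.

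First I would decompose $\mB_{X^\bullet}$ into connected components of its crystal graph. Every finite seminormal crystal has, in each component, an element $\msf b_0$ killed by every raising operator; the seminormal identity $\brac{\alpha_i,\mathrm{wt}(\msf b_0)} = \varphi_i(\msf b_0) - \varepsilon_i(\msf b_0) = \varphi_i(\msf b_0)\ge 0$ forces $\ch\mu_0 := \mathrm{wt}(\msf b_0)$ to be dominant, and since $\ch\mu_0 \in W\varrho_X(\Cal D)$ this makes $\ch\mu_0$ a dominant minuscule coweight by hypothesis. The core step is to show that the corresponding component $\mB'$ is isomorphic to the minuscule crystal basis $\mB^{\ch\mu_0}_{\ch{\mf g}}$. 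Because $\brac{\alpha_i,\ch\mu}\in\{-1,0,1\}$ for every $\ch\mu\in W\ch\mu_0$ and every simple root $\alpha_i$, seminormality leaves no freedom in the Kashiwara operators: $\tilde f_i$ at a weight $\ch\mu$ is zero if $\brac{\alpha_i,\ch\mu}\le 0$ and otherwise must land at the unique reachable weight $s_i\ch\mu$, and similarly for $\tilde e_i$. I would then verify normality of $\mB'$ via Proposition~\ref{prop:normal2}, reducing to rank-two Levi subalgebras; the restriction of a minuscule $\ch G$-weight to any such Levi remains minuscule, so the rank-two check is a finite case analysis over $A_1 \times A_1$, $A_2$, and $B_2$ (type $G_2$ admits no dominant minuscule weights and so contributes no constraint). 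Normality plus Kashiwara's uniqueness of crystal bases of irreducibles yields $\mB' \cong \mB^{\ch\mu_0}_{\ch{\mf g}}$; in particular every weight space of $\mB'$ is one-dimensional.

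Counting components then finishes the identification. Because distinct dominant minuscule coweights have disjoint Weyl orbits, the number of components of $\mB_{X^\bullet}$ with highest weight $\ch\mu_0$ equals $|\mB_{X^\bullet,\ch\mu_0}|$, which by Theorem~\ref{thm:crystal-properties}(iii) agrees precisely with the multiplicity prescribed in \eqref{e:conjVX} via Remark~\ref{rem:conj}. Reassembling through \eqref{eq:crystaldecomp} identifies $\mB_X$ with the crystal basis of the $\ch G$-module~\eqref{e:conjVX}.

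The main obstacle I expect is the rank-two normality verification in the previous paragraph: precisely the Weyl/Serre relations that the authors flag as the fundamental issue for the general conjecture. What makes the minuscule case tractable is that the one-dimensionality of weight spaces and the uniqueness of $\tilde e_i,\tilde f_i$ at each weight remove all combinatorial freedom, so the commutativity $\tilde e_i \tilde f_j = \tilde f_j \tilde e_i$ for $i\ne j$ is forced by the weights alone rather than requiring a geometric argument, and the $B_2$ spin case — the only non-trivial check — reduces to rigidly transporting $\msf b_0$ around the four-element Weyl orbit.
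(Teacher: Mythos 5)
Your proof is correct and follows essentially the same route as the paper's: seminormality (Theorem~\ref{thm:crystal}), the weight constraints and multiplicity-one bound at colors (Theorem~\ref{thm:crystal-properties}), the $\wt W$-action from \S\ref{sect:crysW}, and the observation that minuscule weights with multiplicity one leave no freedom in the Kashiwara operators. The explicit detour through Proposition~\ref{prop:normal2} and the rank-two case analysis is a bit more machinery than strictly needed — once every weight space is a singleton and every $\brac{\alpha_i,\ch\lambda}\in\{-1,0,1\}$, the forced crystal structure on each connected component \emph{is} the normal one $\mB^{\ch\mu_0}_{\ch{\mf g}}$ by inspection, without invoking the rank-two criterion — but the extra verification does no harm, and you correctly identify that the Serre relations are vacuous here because there is nowhere for the operators to go wrong.
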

\begin{proof}
This is immediate from Theorems~\ref{thm:crystal}, \ref{thm:crystal-properties} and 
\S\ref{sect:crysW} after we make the assumption $\mf c_{X^\bullet} = \mbb N^{\Cal D}$, which is
allowed by \eqref{e:YXprime}.  
\end{proof}

We also show in Corollary~\ref{cor:Hreductive} that if $X$ is affine homogeneous (equivalently, $H$ is reductive), then 
all coweights in $\ch\Lambda^+_G\cap W\varrho_X(\Cal D)$ must be minuscule. 

\subsection{Reduction to Levi}
\label{sect:crystal-reduction}

From now on, having reduced the problem to giving a crystal structure to the set $\mB_{X^\bullet}$, we may (by \eqref{e:YXprime}) and will assume, unless
otherwise specified, that $X = X^\can$ and $\mathfrak c_X \cong 
\mathbb N^{\Cal D}$. 
Under this assumption, $\Scr Y^{\ch\lambda,0}$ is dense in $\Scr Y^{\ch\lambda}$ by 
Corollary~\ref{cor:Ycomp}, and $\mB_X = \mB_{X^\bullet}$. Moreover, any $\ch\lambda\succeq 0$ is an element of $\mathbb N^{\Cal D}$, so the length function $\len$ is a function of $\ch\lambda$.

\medskip

Let $P$ be a standard parabolic subgroup of $G$, i.e., $P \supset B$. 
Let $N_P$ denote its unipotent radical and $M=P/N_P$ the Levi quotient. 
Observe that the map $X \to X/\!\!/N$ factors through 
$X \to X \sslash N_P \to X/\!\!/N$. 
Set 
\[ X_M := X \sslash N_P = \Spec k[X]^{N_P}. \]
Then $X_M$ is an affine spherical $M$-variety
and the map $X \to X_M$ is $M$-equivariant. 
However, note that even if $X = X^\can$, it will not in general 
be true that $X_M$ is the canonical embedding of $X_M^\bullet$. We will use this
to our advantage later, using the crystals of Lemma \ref{lem:reduction-Xbullet} to produce the $\tilde e_i,\tilde f_i$ operations, when $M$ is taken to have semisimple rank one.

For now, we work with a general parabolic $P$. Let $B_M$ denote the image of the Borel subgroup $B$ in $M$.
We have $k[X]^{(B)} = k[X_M]^{(B_M)}$, therefore $\mathfrak c_{X_M} = \mathfrak c_X$. On the other hand, $\mathfrak c_{X_M}^- =\mathfrak c_{X_M} \cap \ch\Lambda_M^-$ is, in general, larger than $\mathfrak c_X^-$. The open $P$-orbit $X^\circ P$ maps to the open $M$-orbit $X_M^\bullet$, and we have
\begin{lem}
 \label{lem:openPorbit}
The preimage of $X_M^\bullet$ under the quotient map $X\to X_M$ coincides with the open $P$-orbit $X^\circ P$, and the quotient stacks $X^\circ P/P$ and $X_M^\bullet/M$ are isomorphic.
\end{lem}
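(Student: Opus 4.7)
My plan is to deduce both parts of the lemma from the generalized Brion--Luna--Vust Local Structure Theorem applied to $P$. Since our standing assumption that $B$ acts simply transitively on $X^\circ$ forces $P(X) = B$, any standard parabolic $P \supseteq B$ lies over $P(X)$, so the parabolic version of the LST (see, e.g., the treatment in Timashev's monograph \emph{Homogeneous Spaces and Equivariant Embeddings}, which extends Theorem~\ref{thm:localstructure}) furnishes an $M$-stable affine locally closed subvariety $\Sigma \subset X$, where $M = P/N_P$, together with a $P$-equivariant isomorphism $N_P \times \Sigma \xrightarrow{\sim} X^\circ P$, with $P = N_P \rtimes M$ acting in the natural way. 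This is the structural input on which everything else rests.

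I will first identify $\Sigma$ with $X_M^\bullet$. Composing $\Sigma \into X^\circ P \into X$ with $\pi : X \to X_M$, the induced ring map $k[X_M] = k[X]^{N_P} \into k[X^\circ P]^{N_P} = k[\Sigma]$ is an injective localization (since $X^\circ P$ is dense open in the integral scheme $X$), so $\Sigma \to X_M$ is an open immersion whose image is a dense $M$-stable open subvariety --- necessarily the open $M$-orbit $X_M^\bullet$. Via this identification, the LST becomes an isomorphism $X^\circ P \cong N_P \times X_M^\bullet$, and $\pi|_{X^\circ P}$ is a trivial $N_P$-torsor over $X_M^\bullet$.

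The first claim $\pi^{-1}(X_M^\bullet) = X^\circ P$ then amounts to ruling out any $P$-orbit of $X$ other than $X^\circ P$ mapping onto $X_M^\bullet$, which is a consequence of the geometric-quotient property of $X \to X \sslash N_P$ over the open $M$-orbit --- a standard feature of spherical GIT, available in Knop's \emph{\"Uber Bewertungen, welche unter einer reduktiven Gruppe invariant sind} or Timashev's monograph. Granting this, the second assertion is formal: the trivial $N_P$-torsor $X^\circ P \to X_M^\bullet$ is $P$-equivariant (with $P$ acting on the target through $P \to M$), so taking stack quotients yields
\[ X^\circ P / P \;\cong\; (X^\circ P / N_P)/M \;\cong\; X_M^\bullet / M. \]

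The main technical obstacle is the geometric-quotient property of $\pi$ over $X_M^\bullet$, which I plan to invoke as a black box from the cited references. A self-contained proof can be sketched by combining Rosenlicht's theorem (that orbits of a unipotent group on an affine variety are closed) with the finiteness of $B$-orbits on the spherical variety $X$ and a careful dimension/fiber analysis --- ruling out the existence of a $P$-orbit $Y \ne X^\circ P$ whose image contains $X_M^\bullet$ by showing its $P$-stabilizer would have to lie in $N_P$ and contradict the fiber geometry forced by the $N_P$-torsor structure already established on $X^\circ P$ --- but this adds no essential content beyond the standard spherical-variety theory.
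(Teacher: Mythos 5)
Your proposal has genuine gaps, the most serious being the invocation of a ``parabolic Local Structure Theorem'' in a form that does not hold. You claim that for any standard parabolic $P\supset B$ there is an \emph{affine} $M$-stable slice $\Sigma$ with $N_P\times\Sigma\cong X^\circ P$. If this were so, then (as you go on to argue) $\Sigma$ would be isomorphic to $X_M^\bullet$, forcing $X_M^\bullet$ to be affine. But $X_M^\bullet$ is frequently not affine: already for $P=G$ one has $X_M=X$ and $X_M^\bullet=X^\bullet=H\backslash G$, which is affine only when $H$ is reductive (and the paper certainly allows non-reductive $H$, e.g.\ Example~\ref{example-nfold}). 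For proper $P$, $X_M^\bullet$ is a homogeneous space $\bar H_0\backslash M$ with $\bar H_0$ a quotient of $H\cap P$, and there is no reason for this to be reductive either. The standard LST (Theorem~\ref{thm:localstructure}, Brion--Luna--Vust, Knop) is specific to $P(X)$ --- here $P(X)=B$ --- and does not simply propagate to arbitrary $P\supset P(X)$ with an affine slice. (Moreover, even the underlying $N_P$-torsor $X^\circ P\to X^\circ P/N_P$ need not be trivial when the base is non-affine, so the sought decomposition $N_P\times\Sigma$ is doubly problematic.) This collapses the mechanism on which the rest of your argument rests.

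Beyond that, the identification of the image of $\Sigma$ as $X_M^\bullet$ via ``a dense $M$-stable open subvariety --- necessarily the open $M$-orbit'' is a non-sequitur: $X_M$ itself is dense, open and $M$-stable. What one actually needs is the (easy but unstated) fact that $\pi(X^\circ P)\subseteq X_M^\bullet$, so that any nonempty $M$-stable image inside $X_M^\bullet$ must equal it. Finally, and most importantly for the content of the lemma, the first assertion $\pi^{-1}(X_M^\bullet)=X^\circ P$ --- the only part requiring a real argument --- is black-boxed as a ``geometric-quotient property over the open $M$-orbit''. The paper's own proof does precisely this work, and it is short: a $B$-stable prime divisor not contained in $X^\circ P$ is $P$-stable, hence induces a valuation that is nontrivial on $k[X]^{(B)}=k[X_M]^{(B_M)}$, so its image misses $X_M^\bullet$; the second assertion then needs nothing more than the freeness of $N_P$ on $X^\circ P$ and the chain $X^\circ P/P=(X^\circ P/N_P)/M=X_M^\bullet/M$, without any LST or triviality of torsors. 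In short: the LST machinery is both unnecessary and, as you state it, false; and the crux of the lemma is precisely the step you defer to a reference.
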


\begin{proof}
A color $D\in \mathcal D$ belongs to the open $P$-orbit $X^\circ P$ if and only if $D\in \mathcal D(\alpha)$ for some $\alpha \in \Delta_M$; otherwise, it is $P$-stable, and induces an $M$-stable valuation on $k(X_M)$, which is the function field of $k[X]^{N_P}$. This valuation is nontrivial (because it is nontrivial on $k[X]^{(B)} = k[X_M]^{(B_M)}$), therefore the image of $D$ cannot belong to $X_M^\bullet$.   

Since $N$ acts freely on $X^\circ$, the subgroup $N_P$ acts freely on $X^\circ P$, and therefore $X^\circ P/P = (X^\circ P/N_P)/M = X_M^\bullet/M$. 
\end{proof}

Define the parabolic Zastava model 
\[ \sY_{X,P} := \Maps_\gen( C, X / P \supset X^\circ P / P ) \subset \sM_X \xt_{\Bun_G} \Bun_P, \]
which naturally maps to $\Bun_P$. 
The Cartesian diagram 
\[\begin{tikzcd}
X/B \ar[r] \ar[d] & 
X/P \ar[d] \\ 
X_M/B_M  \ar[r] & X_M/M  
\end{tikzcd}\]
gives rise to a diagram 
\begin{equation} \label{e:YXPM1}
\begin{tikzcd}
\msf Y^{\ch\lambda}_X \ar[r] \ar[d] & 
\sY_X \ar[r] \ar[d, swap, "q"] & \sY_{X,P} \ar[d, "\pi_{X,P}"]  \\ 
\msf Y^{\ch\lambda}_{X_M} \ar[r] & \sY_{X_M} \ar[r] & \sM_{X_M}  
\end{tikzcd}
\end{equation}
with all squares Cartesian.
Central fibers are taken with respect to 
a fixed point $v \in \abs C$. 

Our goal is to study the components of critical dimension of $\msf Y^{\ch\lambda}_X$ in terms of $\msf Y^{\ch\lambda}_{X_M}$ and the fibers of the map $\pi_{X,P}$. At this point, it will be critical to distinguish the stratum $\msf Y_{X_M}^{\ch\lambda,\ch\theta}$ where the image of the generic point of a component $\msf b \in \mathfrak B_{X,\ch\lambda}$ lies, that is, the $M(\mathfrak o_v)$-orbit of its image in $X_M(\mathfrak o_v)$ (after trivialization in a formal neighborhood of $v$). The reason is, as we are about to see, that this stratum will completely determine the fiber of the map $\pi_{X,P}$ over the image.

Indeed, recall from \S \ref{def:centralfiber} that lifting a point from $\sM_{X_M}$ to $\msf Y^{\ch\lambda}_{X_M}$ induces a trivialization of the corresponding $G$-bundle away from $v$ (depending on a fixed choice of base point $x_0\in X^\circ$), which identifies the central fiber $\msf Y^{\ch\lambda}_{X_M}$ with a subscheme of the affine Grassmannian $\Gr_M$. Moreover, the map $\msf Y^{\ch\lambda}_{X_M} \to \sM_{X_M}$ factors through the map
\[ \Gr_M \xt_{\msf L X_M / \msf L^+M} (\msf L^+ X_M / \msf L^+M) \to \sM_{X_M}.\]
as defined in \eqref{e:actHG}.
Similarly, the map $\msf Y^{\ch\lambda}_X \to \sY_{X,P}$ factors through 
\[\Gr_P \xt_{\msf L X / \msf L^+P} (\msf L^+ X / \msf L^+P) \to \sY_{X,P}.\]
Hence we have a commutative diagram 
\begin{equation} \label{e:YXPM2}
\begin{tikzcd}
\msf Y^{\ch\lambda}_X \ar[r,"\iota_{X,P}" ] \ar[d,"q"] &  
\Gr_P \underset{\msf L X / \msf L^+P}\times (\msf L^+ X / \msf L^+P) \ar[r, "\act_v"]\ar[d] & \sY_{X,P} \ar[d, "\pi_{X,P}"]  \\ 
\msf Y^{\ch\lambda}_{X_M} \ar[r,"\iota_{X_M}"] & \Gr_M \underset{\msf L X_M / \msf L^+M}\times (\msf L^+ X_M / \msf L^+M)  \ar[r, "\act_v"] & \sM_{X_M}  
\end{tikzcd}
\end{equation}
with all squares Cartesian.

Let $H_M$ be the stabilizer in $P$ of the base point $x_0$. By Lemma \ref{lem:openPorbit}, it is isomorphic to the stabilizer in $M$ of the image of $x_0$ in $X_M$. We first note:

\begin{lem}  \label{lem:XPfiber}
For any $\ch\theta \in \mathfrak c_{X_M}^-$, the fibers of the map of ind-schemes  
\begin{equation}\label{e:XPfiber}
   \Gr_P \xt_{\msf L X / \msf L^+P} (\msf L^+ X / \msf L^+P) \to \Gr_M \xt_{\msf L X_M / \msf L^+M} (\msf L^+ X_M / \msf L^+M)
\end{equation}
over the stratum $\msf L^{\ch\theta} X_M / \msf L^+M$ are isomorphic under the $\msf LH_M$-action, and this action gives rise to a canonical bijection between the irreducible components of any two fibers. 

More precisely, all fibers are isomorphic to $\{t^{\ch\theta}\} \xt_{\sM_{X_M}} \sY_{X,P}$ and of dimension $\le \frac 1 2 (\len(\ch\theta)-1)$, unless $\ch\theta = 0$, in which case the restriction of \eqref{e:XPfiber} to the $\ch\theta$-stratum is an isomorphism.
\end{lem}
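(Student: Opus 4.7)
The proof naturally splits into four steps: identifying the stratum, exploiting equivariance, pinning down a distinguished fiber, and the dimension estimate.

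First, by Theorem~\ref{thm:Gorbits}, the stratum $\msf L^{\ch\theta}X_M$ is the unique $\msf L^+M$-orbit of $x_0 \cdot t^{\ch\theta}$, whose left stabilizer contains $\msf LH_M$. Unwinding the fiber product in the target of \eqref{e:XPfiber}, its preimage over this stratum identifies, on reduced structure, with the single left $\msf LH_M$-orbit of $t^{\ch\theta}$ inside $\Gr_M$.

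Second, by Lemma~\ref{lem:openPorbit}, $H_M\subset P$ is the stabilizer of $x_0$ in $P$, so the inclusion $\msf LH_M\hookrightarrow \msf LP$ induces a compatible left action on $\Gr_P$ that lifts to the source of \eqref{e:XPfiber}. The map \eqref{e:XPfiber} is then $\msf LH_M$-equivariant, and transitivity of this action on the stratum immediately yields canonical isomorphisms between any two fibers and a canonical bijection on sets of irreducible components. This proves the first assertion of the lemma.

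Third, to obtain the explicit identification with $\{t^{\ch\theta}\}\xt_{\sM_{X_M}}\sY_{X,P}$, pick the distinguished point $t^{\ch\theta}$ of the $\msf LH_M$-orbit and use the Cartesian squares in \eqref{e:YXPM2}: the fiber at $t^{\ch\theta}$ is canonically identified with the preimage in $\sY_{X,P}$ of the image of $t^{\ch\theta}$ under $\act_v$, which is the distinguished point $\{t^{\ch\theta}\}\in\sM_{X_M}^{\ch\theta}$ coming from $\msf Y^{\ch\theta,\ch\theta}_{X_M}$ (Theorem~\ref{thm:DGK}, Corollary~\ref{cor:Ytheta}). When $\ch\theta = 0$, the stratum $\msf L^0 X_M/\msf L^+M$ is the single point $\{x_0\}$, and a direct inspection (a $G$-nondegenerate section of $\sY_{X,P}$ whose $M$-image is the trivial bundle with trivial section $x_0$ forces the $P$-bundle to be trivial and the reduction canonical) shows that \eqref{e:XPfiber} restricts to an isomorphism there.

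Finally, for the dimension bound when $\ch\theta\ne 0$, use the Cartesian factorization $\sY_X\cong \sY_{X_M}\xt_{\sM_{X_M}}\sY_{X,P}$ from \eqref{e:YXPM1} together with Corollary~\ref{cor:Ytheta}(ii)--(iii), which says that the fiber of $\sY_{X_M}\to\sM_{X_M}$ over $\{t^{\ch\theta}\}$ is a single reduced point. This gives a canonical isomorphism $\{t^{\ch\theta}\}\xt_{\sM_{X_M}}\sY_{X,P}\cong \{t^{\ch\theta}\}\xt_{\sM_{X_M}}\sY_X$. A local Beauville--Laszlo analysis (\S\ref{sect:BLthm}, Lemma~\ref{lem:B-L}) interprets the right-hand side as a Mirkovi\'c--Vilonen-style intersection in $\Gr_P$, namely the $\msf LN_P$-orbit of $t^{\ch\theta}$ intersected with the $\msf L^+X$-regularity condition; bounding its dimension by the same hyperplane-slicing argument as in Proposition~\ref{prop:intersections} and Proposition~\ref{prop:centralstrata} yields $\frac 12(\len(\ch\theta)-1)$. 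The hardest step is this last dimension estimate, which requires reconciling the $M$-antidominance of $\ch\theta$ with the $G$-stratification of $\sY_X$ and tracking the length function consistently when $\ch\theta$ is not $G$-antidominant.
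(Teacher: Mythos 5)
Your overall strategy — transitivity of $\msf L H_M$, identification of a distinguished fiber over $t^{\ch\theta}$, then a dimension estimate — follows the same broad outline as the paper's proof. But there is a real gap in your second step.

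You assert that transitivity of the $\msf L H_M$-action on the stratum ``immediately yields canonical isomorphisms between any two fibers and a canonical bijection on sets of irreducible components.'' Transitivity alone gives only \emph{non-canonical} isomorphisms: for two points $x_1, x_2$ in the stratum, any $h\in \msf L H_M$ with $h\cdot x_1 = x_2$ gives an isomorphism of fibers, but a different choice $h'$ satisfying the same equation differs from $h$ by an element of the stabilizer $\on{Stab}_{\msf L H_M}(x_1)$, and that element may permute the irreducible components of the fiber. For the induced bijection on $\pi_0$ (or on irreducible components) to be independent of the choice, you need the stabilizer to be \emph{connected}. This is precisely what the paper supplies via Corollary~\ref{cor:stab-conn} (which itself uses Lemma~\ref{lem:H_I-conn} and the assumption that $B$ acts simply transitively on $X^\circ$). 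Without this, the word ``canonical'' in the first assertion of the lemma is unjustified, and that word does nontrivial work later in the construction of the crystal operators in \S\ref{sect:crystal-ef}, where components upstairs and downstairs are matched independently of any choices.

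Two further minor remarks. Your dimension estimate is correct in spirit, but it is more roundabout than necessary: since by the Cartesian square \eqref{e:YXPM1} the distinguished fiber $\{t^{\ch\theta}\}\xt_{\sM_{X_M}}\sY_{X,P}$ is the same as $\{t^{\ch\theta}\}\xt_{\msf Y^{\ch\theta}_{X_M}}\msf Y^{\ch\theta}_X$, i.e.\ a closed subscheme of $\msf Y^{\ch\theta}_X$, the paper's bound $\dim \msf Y^{\ch\theta}_X \le \frac 12(\len(\ch\theta)-1)$ from Proposition~\ref{prop:centralstrata} applies \emph{a fortiori} without any new Mirkovi\'c--Vilonen-style slicing in $\Gr_P$; re-running that argument in $\Gr_P$ is a step you would have to justify carefully, since $P$ is not reductive. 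For $\ch\theta=0$, the clean argument is the equality $\sY_{X,P}\xt_{\sM_{X_M}}\sM^0_{X_M} = \Maps(C, X^\circ P/P) = \Maps(C, X_M^\bullet/M) = \sM^0_{X_M}$ from Lemma~\ref{lem:openPorbit}; your ``direct inspection'' is gesturing at this but is not fully spelled out.
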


Notice that, under our assumption that $\mathfrak c_X \cong 
\mathbb N^{\Cal D}$ since the beginning of this subsection, $\len(\ch\theta)$ makes sense.
\begin{proof}
Since $\msf LH_M$ acts transitively on 
$\Gr_M \xt_{\msf L X_M / \msf L^+M} (\msf L^{\ch\theta} X_M / \msf L^+M)$,
the fibers of \eqref{e:XPfiber} over the $\ch\theta$-stratum are all isomorphic. 

We may now choose the point $t^{\ch\theta}\in \msf Y_{X_M}^{\ch\theta}$, whose image in $\msf L^+ X_M / \msf L^+M$ lies in the $\ch\theta$-stratum --- in fact, by Corollary~\ref{cor:Ytheta}, $\msf Y_{X_M}^{\ch\theta,\ch\theta} = \{ t^{\ch\theta}\}$. By Corollary~\ref{cor:stab-conn}, the stabilizer in $\msf L H_M$ of its image 
$t^{\ch\theta}\in \Gr_M$
is connected. Therefore, the action of $\msf L H_M$ induces a \emph{canonical} bijection between irreducible components of the fibers.

Finally, if $\ch\theta\ne 0$, the dimension of $\msf Y^{\ch\theta}_X$ is $\le \frac 1 2 (\len(\ch\theta)-1)$, as explained in Proposition \ref{prop:Vbasis}, and therefore so is, \emph{a fortiori}, the dimension of the fiber over $\msf Y_{X_M}^{\ch\theta,\ch\theta} = \{ t^{\ch\theta}\}$. For $\ch\theta =0$, we observe that 
\[ \sY_{X,P} \xt_{\sM_{X_M}}     \sM_{X_M}^0 = \Maps(C, X^\circ P/P) = \Maps(C, X_M^\bullet /M) =  \sM_{X_M}^0,\]
by Lemma \ref{lem:openPorbit}, so the fibers are singletons.
\end{proof}

\begin{rem}
At this point, we would like to emphasize a fine point in the arguments that follow: Consider the decomposition of $\mB_{X_M}^+$ according to \eqref{eq:crystaldecomp} (restricted to the $+$-part):
\[\mB_{X_M}^+ =  \mB_{X_M^\bullet}^+ \sqcup \bigsqcup_{\ch\theta \in \Cal D^M_{\mathrm{sat}}(X_M)} \mB^{\ch\theta}_{\ch{\mf m}}\]
(where we have denoted $\mB^{\ch\theta}$ by $\mB^{\ch\theta}_{\ch{\mf m}}$, to emphasize that it corresponds to the $\ch\theta$-lowest weight crystal of a $\ch{\mf m}$-module). We \emph{will not} claim that the map $q$ of \eqref{e:YXPM1} induces a map from $\mB_X^+$ to $\mB_{X_M}^+$. Indeed, the generic fiber of a $\msf b\in \mB_X^+$ may map to the image of $\msf S_M^{\ch\lambda} \cap \Gr_M^{\ch\theta} \to \msf Y_{X_M}^{\ch\lambda}$ (where $\msf S_M^{\ch\lambda}$ denotes the semi-infinite orbit corresponding to $\ch\lambda$ in $\Gr_M$), for some $\ch\theta \in \mathfrak c_{X_M}^-$ that is \emph{not} an element of $\Cal D^M_{\mathrm{sat}}(X_M)$. These are the MV cycles that were discussed in Remark \ref{rem:othertheta}, which are not ``of critical dimension'' in terms of $X_M$. 
Representation-theoretically, if we believe that $\mB_X$ corresponds to a representation of $\check G$ (as predicted by Conjecture \ref{conj:crystal}), this just says that the $\check M$-lowest weights of the spans of some vectors do not need to be extremal in $\mathfrak c_{X_M}^-$; however, in \S \ref{sect:crystal-hw} we will see that there are weight-lowering operators $\tilde f_i$, possibly corresponding to roots not in $\ch M$, which eventually lower such weights to the weight of a color.
\end{rem}

For that reason, for the following proposition, which is the main technical result of this subsection, we denote by $\mB^{\ch\theta}_{\ch{\mf m}}$ the crystal corresponding to the representation of $\ch{\mf m}$ of lowest weight $\ch\theta$, that is, the set of irreducible components of $\msf S_M^{\ch\lambda} \cap \Gr_M^{\ch\theta} $, for \emph{any} $\ch\theta\in \mathfrak c_{X_M}^-$.

\begin{prop}\label{prop:crys-decomp}
For any $\ch\lambda\in \mathfrak c_X^{\Cal D}$, the diagram \eqref{e:YXPM1} induces a canonical decomposition
\begin{equation} \label{e:crys-decomp}
    \mB_{X^\bullet,\ch\lambda} = \mB_{X_M^\bullet,\ch\lambda} \sqcup 
        \bigsqcup_{\ch\theta \in \mathfrak c_{X_M}^-\sm 0} 
        \mB^P_{X,\ch\theta} \xt \mB_{\ch{\mf m}, \ch\lambda}^{\ch\theta},
\end{equation}
where $\mB^P_{X,\ch\theta}$ denotes the set of irreducible components
of $\{t^{\ch\theta}\} \xt_{\sM_{X_M}} \sY_{X,P}$ of dimension $\frac 1 2 (\len(\ch\theta)-1)$.

Taking the union over all such $\ch\lambda$, we get
\begin{equation}
    \mB_{X^\bullet}^+ = \mB_{X_M^\bullet}^+ \sqcup \bigsqcup_{\ch\theta\in \mathfrak c_{X_M}^-\sm 0} 
    \mB^P_{X,\ch\theta} \xt \mB^{\ch\theta}_{\ch{\mf m}}.
\end{equation}
\end{prop}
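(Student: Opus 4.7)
The plan is to analyze the Cartesian diagram \eqref{e:YXPM1} by stratifying the base $\msf Y^{\ch\lambda}_{X_M}$ according to the $M$-orbit strata $\msf Y^{\ch\lambda,\ch\theta}_{X_M}$ for $\ch\theta \in \mf c_{X_M}^-$, and then identifying the critical-dimension components of $\msf Y^{\ch\lambda}_{X^\bullet}$ stratum by stratum. The key geometric input is Lemma~\ref{lem:XPfiber}: by Corollary~\ref{cor:stab-conn} applied to $X_M$, all fibers of $q$ over a given $\msf L^+M$-orbit are mutually isomorphic under the transitive $\msf L H_M$-action, with a canonical bijection between the sets of irreducible components of any two fibers. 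This canonicity is what makes the product on the right-hand side of \eqref{e:crys-decomp} well-defined.

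For the $\ch\theta = 0$ stratum, the second part of Lemma~\ref{lem:XPfiber} says that $q$ restricts to an isomorphism over this stratum; combined with Lemma~\ref{lem:openPorbit}, the preimage is canonically identified with the restriction of $\msf Y^{\ch\lambda}_{X^\bullet}$ over $\msf Y^{\ch\lambda}_{X_M^\bullet}$. Consequently, the critical-dimension components of $\msf Y^{\ch\lambda}_{X^\bullet}$ whose generic points project into the $\ch\theta=0$ stratum of the base are in natural bijection with elements of $\mB_{X_M^\bullet,\ch\lambda}$, accounting for the first summand of \eqref{e:crys-decomp}.

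For $\ch\theta \in \mf c_{X_M}^- \sm 0$, an irreducible component $\msf b$ of $\msf Y^{\ch\lambda}_{X^\bullet}$ whose generic point projects into the $\ch\theta$-stratum dominates an irreducible subvariety $\msf c$ of the closure $\olsf Y^{\ch\lambda,\ch\theta}_{X_M}$, and the canonical identification of fiber components furnished by Lemma~\ref{lem:XPfiber} identifies $\msf b$ with the pair consisting of $\msf c$ together with a single component of the (common) fiber of $q$ over $\msf c$. The fiber-dimension bound $\tfrac{1}{2}(\len(\ch\theta)-1)$ from Lemma~\ref{lem:XPfiber}, combined with the MV-dimension identity $\brac{2\rho_M,\ch\lambda-\ch\theta}=\len(\ch\lambda)-\len(\ch\theta)$ (which is the content of Lemma~\ref{lem:centralfibertheta} applied inside $M$), shows that the critical bound $\dim\msf b = \tfrac{1}{2}(\len(\ch\lambda)-1)$ can be attained only when $\msf c$ is a maximal-dimensional component of $\olsf S^{\ch\lambda}_M \cap \ol\Gr{}^{\ch\theta}_M$ \emph{and} the chosen fiber component lies in $\mB^P_{X,\ch\theta}$; conversely every such pair yields a component of critical dimension. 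Since by \cite{BGcrys} the MV cycles appearing are exactly the $\ch\lambda$-weight basis vectors of $\mB^{\ch\theta}_{\ch{\mf m}}$, this produces the factor $\mB^P_{X,\ch\theta}\times\mB^{\ch\theta}_{\ch{\mf m},\ch\lambda}$.

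The main obstacle will be the joint sharpness of the dimension bound: one must rule out the possibility that a critical-dimension component of $\msf Y^{\ch\lambda}_{X^\bullet}$ projects onto a proper subvariety of a maximal MV cycle whose dimension shortfall is then compensated by a larger-than-expected fiber. Addressing this reduces to verifying that the semi-smallness inequality of Proposition~\ref{prop:centralstrata}, applied to the open Zastava $\sY^{?,0}_X$, behaves compatibly with the parabolic factorization $X \to X_M$; concretely, the hyperplane-cutting argument from Proposition~\ref{prop:intersections}, performed inside the $P$-equivariant Beilinson--Drinfeld Grassmannian fibered over the $M$-semi-infinite orbit $\msf S^{\ch\lambda}_M$, controls $\dim \msf b$ by the sum of base and fiber dimensions, forcing the equality case to saturate both bounds separately. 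Once this joint-sharpness input is secured, the canonicity of the product structure from Lemma~\ref{lem:XPfiber} assembles the pieces into the asserted decomposition \eqref{e:crys-decomp}.
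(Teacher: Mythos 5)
Your proof follows essentially the same route as the paper's: stratify the base $\msf Y^{\ch\lambda}_{X_M}$ by $\ch\theta$, use Lemma~\ref{lem:XPfiber} for the canonical fiber identification and dimension bound, use Lemma~\ref{lem:centralfibertheta} for the base dimension bound, and observe that criticality forces both bounds to be saturated. The $\ch\theta=0$ case via the isomorphism over the open stratum is also the same.

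The place where you go astray is the final paragraph. The ``joint sharpness'' worry --- that a proper subvariety of a maximal MV cycle in the base could be compensated by a larger-than-expected fiber --- is not a genuine obstacle and requires no further hyperplane-cutting argument in any $P$-equivariant BD Grassmannian. Lemma~\ref{lem:XPfiber} already gives a \emph{uniform} bound $\dim q^{-1}(y) \le \frac{1}{2}(\len(\ch\theta)-1)$ for \emph{every} $y$ in the $\ch\theta$-stratum (not merely the generic point of a maximal MV cycle), because all those fibers are canonically isomorphic to $\{t^{\ch\theta}\} \times_{\sM_{X_M}} \sY_{X,P}$ under the transitive $\msf L H_M$-action. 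Combining that with $\dim q(\msf b) \le \frac{1}{2}\len(\ch\lambda-\ch\theta)$ from Lemma~\ref{lem:centralfibertheta}, the sum is exactly $\frac{1}{2}(\len(\ch\lambda)-1)$: no slack at all, so equality automatically forces both bounds to be saturated. The arithmetic is already tight and there is nothing further to verify about compatibility of semi-smallness with the parabolic factorization. You are also a bit thin on the converse direction: to show that every pair (MV cycle, element of $\mB^P_{X,\ch\theta}$) genuinely yields a critical-dimension component, one needs Lemma~\ref{lem:MVcentralfiber}, which guarantees that each MV cycle in $\msf S^{\ch\lambda}\cap\ol\Gr^{\ch\theta}_G$ indeed intersects the open stratum $\msf Y^{\ch\lambda,\ch\theta}_{X_M}$; without it, the preimage under $q$ might not give an irreducible component of $\msf Y^{\ch\lambda}_X$ with the expected generic behavior.
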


The set $\mB^P_{X,\ch\theta}$ should be thought of as the multiplicity space for the irreducible representation with basis $\mB^{\ch\theta}_{\ch{\mf m}}$, and is something of a ``black box'' to us. 

\begin{proof}
The dimension yoga here goes as follows: 
Let $\msf b \in \mB_{X^\bullet, \ch\lambda}$, and suppose that its generic point lands in the stratum $\msf Y^{\ch\lambda,\ch\theta}_{X_M}$ under the map $q$ of \eqref{e:YXPM2}.

If $\ch\theta =0$, then $\ch\lambda \succeq_{X_M^\bullet} 0$, i.e., it belongs to the positive span of colors in $X_M$, hence $\len(\ch\lambda)$ is the same, whether we define it with respect to $X$ or with respect to $X_M$. By Lemma \ref{lem:XPfiber} the irreducible components of critical dimension $\frac{1}{2} (\len(\ch\lambda) - 1)$ of $\msf Y_X$ and $\msf Y_{X_M^\bullet}$ are in bijection, hence the set of $\msf b \in \mB_{X,\ch\lambda}$ which map generically to $\msf Y^{\ch\lambda,0}_{X_M}$ is identified with $\mB_{X_M^\bullet,\ch\lambda}$.

If $\ch\theta \ne 0$, then $q$ sends $\msf b$ to $\msf Y^{\ch\lambda, \succeq \ch\theta}_{X_M}$, 
which has dimension $\le \frac 1 2 \len(\ch\lambda-\ch\theta)$ by Lemma~\ref{lem:centralfibertheta}. 
On the other hand, Lemma \ref{lem:XPfiber} states that the dimension of the corresponding fibers of $\pi_{X,P}$ is $\le \frac{1}{2} (\len(\ch\theta)-1)$. Thus, the only way that $\msf b$ is of critical dimension $\frac{1}{2} (\len(\ch\lambda)-1)$ is if both inequalities are equalities. 
In this case Lemma~\ref{lem:centralfibertheta} implies that $\ch\lambda \ge \ch\theta$ 
and the generic point of $\msf b$ is sent under the map $(\act_v\circ \iota_X,\iota_{X_M}\circ q)$ to an element of $\mB^P_{X,\ch\theta} \xt \mB_{\ch{\mf m}, \ch\lambda}^{\ch\theta}$. Vice versa, for any irreducible component (MV cycle) of  $\msf S^{\ch\lambda}_M\cap \Gr_M^{\ch\theta}$ (i.e., every element of $\mB_{\ch{\mf m},\ch\lambda}^{\ch\theta}$), 
Lemma~\ref{lem:MVcentralfiber} guarantees that it corresponds to a component $\msf b'$ of $\msf Y_{X_M}^{\ch\lambda,\ch\theta}$ of the same dimension, and Lemma \ref{lem:XPfiber} ensures that the components of $\msf Y^{\ch\lambda}_X$ of critical dimension in the preimage of $\msf b'$ are in canonical bijection with $\mB^P_{X,\ch\theta}$.
\end{proof}

\subsubsection{Kashiwara operations $\tilde e_i, \tilde f_i$} \label{sect:crystal-ef}
For $i\in I$ let $P_i = P_{\alpha_i}$ denote the corresponding parabolic subgroup of semisimple rank one.
 Let $M_i$ denote the Levi factor. Then the Langlands 
dual Lie algebra $\ch{\mf m}_i$ equals $\ch{\mf g}_{\{i\}}$ in our previous notation.
Applying Proposition~\ref{prop:crys-decomp} to $M_i$ we get 
the disjoint union
\[ \mB^+_{X^\bullet} = \mB^+_{X_{M_i}^\bullet} \sqcup \bigsqcup_{\ch\theta \in \mathfrak c_i^-\sm 0} \mB^{P_i}_{X,\ch\theta} \xt \mB^{\ch\theta}_{\ch{\mf m}_i},
\]
where $\mathfrak c_i^- = \mathfrak c_{X_{M_i}}^-$.

Now, by our ``type $T$'' assumption (see \S \ref{subsection:typeT}), $X_{M_i}^\bullet / B_{M_i} = \mbb G_m \bs \mbb P^1$ as stacks. 
Therefore, $\sY_{X_{M_i}^\bullet} = \Sym C \oo\xt \Sym C$ (see Example~\ref{eg:YHecke}) 
and $\mB_{X_{M_i}^\bullet}^+$ consists of two elements, which can be identified with their images $\ch\nu_i^{\pm}$ in $\ch\Lambda_X$, 
$\ch\nu_i^\pm = \varrho_X( D_{\alpha_i}^\pm )$ are the valuations of the two colors in $\mathcal D(\alpha_i)$.
Therefore, we have a bijection of sets 
\[ \mB_{X^\bullet} = \mB^+_{X^\bullet} \cup \mB^-_{X^\bullet} = \{ \ch\nu_i^+, \ch\nu_i^-, -\ch\nu_i^+, -\ch\nu_i^- \} \sqcup \bigsqcup_{\ch\theta \in \mathfrak c_i^-\sm 0} 
\mB^P_{X,\ch\theta} \xt \bigl( 
\mB^{\ch\theta}_{\ch{\mf m}_i} \sqcup (\mB^{\ch\theta}_{\ch{\mf m}_i})^\vee \bigr).
\]
Observe that $\{\ch\nu_i^+, -\ch\nu_i^-\}$ is in bijection with 
the normal crystal $\mB_{\ch{\mf m}_i}^{\ch\nu_i^+}$ since $\brac{\alpha_i,\ch\nu_i^+}=1$
and $\ch\nu_i^+ - \ch\alpha_i = -\ch\nu_i^-$. 
We also observe that $\{ \ch\nu_i^-, -\ch\nu_i^+\} = \mB_{\ch{\mf m}_i}^{\ch\nu_i^-} = 
(\mB_{\ch{\mf m}_i}^{\ch\nu_i^+})^\vee$ as sets. 

Now we simply define the operations $\tilde e_i, \tilde f_i$ such that 
\begin{equation}\label{e:iselfdual}
    \Phi_{\{i\}}(\mB_{X^\bullet}) = \mB_{\ch{\mf m}_i}^{\ch\nu_i^+} \sqcup 
    \mB_{\ch{\mf m}_i}^{\ch\nu_i^-} \sqcup 
    \bigsqcup_{\ch\theta \in \mathfrak c_i^-\sm 0} \mB^P_{X,\ch\theta} 
    \xt \bigl( 
    \mB^{\ch\theta}_{\ch{\mf m}_i} \sqcup (\mB^{\ch\theta}_{\ch{\mf m}_i})^\vee \bigr)
\end{equation}
as normal crystals over $\ch{\mf m}_i$, where $\mB^P_{X,\ch\theta}$ is treated as
an abstract set. 
This gives the structure of a seminormal 
crystal over $\ch{\mf g}$ to $\mB_X$, such that the bijection $\mB_X^+\leftrightarrow\mB_X^-$ identifies it with its dual. This completes the proof of Theorem \ref{thm:crystal}.

\begin{rem} \label{rem:conj2}
The decomposition \eqref{e:crys-decomp} gives a decomposition into crystals
over $\ch{\mf m}$. Therefore, if we consider Proposition~\ref{prop:crys-decomp}
for all standard parabolics corresponding to $\{i,j\} \subset I$, then 
Proposition~\ref{prop:normal2} implies that $\mf B_X$ is normal (i.e., Conjecture~\ref{conj:crystal} holds)  
if $\mf B_{X_M^\bullet}$ is normal for all $M$ of semisimple rank $2$. 
\end{rem}

The discussion of \S\ref{sect:crystal-ef} also leads to the following observation:

\begin{lem} \label{lem:plusWorbits}
The $W$-orbit of $\varrho_X(\Cal D)$ is contained in $\mathfrak c_X^{\Cal D} \sqcup -\mathfrak c_X^{\Cal D}$. 
If $\ch\lambda\in \mathrm{wt}(\mB_{X^\bullet}^+)$ is not in $W\varrho_X(\Cal D)$, the entire 
$W$-orbit $W\ch\lambda$ is contained in the monoid $\mathfrak c_X^{\Cal D}$. 
\end{lem}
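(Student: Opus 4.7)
The plan is to leverage two structural facts already established: the $\wt W$-action on the crystal $\mB_X$ from \S\ref{sect:crysW}, which intertwines the weight map with the $W$-action on $\ch\Lambda_X$, and the explicit description \eqref{e:iselfdual} of the $\ch{\mf m}_i$-substructure on $\mB_X$ coming from the reduction to a sub-minimal parabolic $P_i$. Under the standing assumption of \S\ref{sect:crystal-reduction} that $X = X^\can$ and $\mf c_X = \mf c_X^{\Cal D} = \mbb N^{\Cal D}$, the $\Cal D$-superscript in the statement is cosmetic, and the basic input is that $\mathrm{wt}(\mB_X) \subset \mf c_X \sqcup -\mf c_X$ by the very construction $\mB_X = \mB_X^+ \sqcup \mB_X^-$ in \S\ref{sect:defcrys}, a set which is manifestly preserved by the $\wt W$-action.

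The first assertion would then follow almost immediately. For each color $D\in \Cal D$, Theorem~\ref{thm:crystal-properties}(iii) (or simply the definition in \S\ref{sect:defcrys}) supplies an element $\msf b \in \mB_{X^\bullet,\ch\nu_D}^+$. Any lift $\tilde w \in \wt W$ of $w\in W$ sends $\msf b$ to an element of $\mB_X$ of weight $w\ch\nu_D$, which therefore belongs to $\mf c_X \sqcup -\mf c_X$.

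For the second assertion I would argue by contrapositive. Assume $\ch\lambda = \mathrm{wt}(\msf b)$ for some $\msf b \in \mB_X^+$ and that the orbit $W\ch\lambda$ meets $-\mf c_X$. Pick $\tilde w\in \wt W$, written as a word in the generators $s_i$, of minimal length such that $\tilde w \msf b \in \mB_X^-$, and factor $\tilde w = s_i \tilde w'$. Then $\msf b' := \tilde w' \msf b$ lies in $\mB_X^+$ while $s_i \msf b' \in \mB_X^-$, so the $i$-string of $\msf b'$ in $\Phi_{\{i\}}(\mB_X)$ crosses the partition. Reading off \eqref{e:iselfdual}, the ``bulk'' components $\mB^{P_i}_{X,\ch\theta} \xt \mB^{\ch\theta}_{\ch{\mf m}_i}$ lie entirely in $\mB_X^+$ by Proposition~\ref{prop:crys-decomp}, and their duals lie entirely in $\mB_X^-$, so their $s_i$-action respects the partition; the only connected $\ch{\mf m}_i$-components of $\Phi_{\{i\}}(\mB_X)$ whose $s_i$-action crosses between $\mB_X^+$ and $\mB_X^-$ are the two two-element strings $\mB_{\ch{\mf m}_i}^{\ch\nu_i^+}$ and $\mB_{\ch{\mf m}_i}^{\ch\nu_i^-}$. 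Hence $\msf b'$ must be the positive-weight element of one of these, forcing $\mathrm{wt}(\msf b') \in \{\ch\nu_i^+,\ch\nu_i^-\} \subset \varrho_X(\Cal D)$, so that $\ch\lambda = (\tilde w')^{-1}\mathrm{wt}(\msf b') \in W\varrho_X(\Cal D)$, contradicting the hypothesis.

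I do not expect any step here to be genuinely hard; the argument is essentially combinatorial bookkeeping on top of the crystal data already in hand. The one point I would verify carefully is the claim that each ``bulk'' component $\mB^{P_i}_{X,\ch\theta} \xt \mB^{\ch\theta}_{\ch{\mf m}_i}$ in \eqref{e:iselfdual} sits inside $\mB_X^+$ rather than straddling the partition, i.e.~that all of its weights $\ch\theta + k\ch\alpha_i$ (for $0 \le k \le -\brac{\alpha_i,\ch\theta}$) lie in $\mf c_X$. This is forced by $\mf c_{X_{M_i}} = \mf c_X$ together with $\ch\alpha_i = \ch\nu_i^+ + \ch\nu_i^- \in \mf c_X$. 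Once this is in hand, the dichotomy ``the $s_i$-string stays in $\mB_X^\pm$ unless it originates from one of the two two-element color sub-crystals'' is immediate, and the contrapositive closes.
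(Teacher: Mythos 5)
Your proof is correct and takes essentially the same route as the paper's. The paper's proof also reduces to the observation, via the $\ch{\mf m}_i$-decomposition \eqref{e:iselfdual}, that the simple reflection $s_i$ carries $\mB_{X^\bullet}^+$ into itself unless the weight is one of the two colors $\ch\nu_i^\pm$ (in which case $s_i\ch\nu_i^\pm = -\ch\nu_i^\mp$), and then iterates simple reflections; your contrapositive/minimal-length reformulation of the iteration is a cosmetic repackaging of the same argument, and the one point you flagged for verification (that each bulk component $\mB^{P_i}_{X,\ch\theta}\xt\mB^{\ch\theta}_{\ch{\mf m}_i}$ sits entirely in $\mB_X^+$) is the same key fact the paper uses, already built into Proposition~\ref{prop:crys-decomp}.
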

\begin{proof} Let $\msf b \in \mB_{X^\bullet}^+$ with $\ch\lambda=\mathrm{wt}(\msf b)$. 
The decomposition of $\Phi_{\{i\}}(\mB_{X^\bullet}^+)$ from \S\ref{sect:crystal-ef}
shows that if $\ch\lambda \notin \{\ch\nu_i^\pm\}$, 
we have $s_i\msf b \in \mB^+_{X^\bullet}$, so $s_i\ch\lambda\in \mathfrak c_X^{\Cal D}$.
Since $s_i \ch\nu_i^\pm = -\ch\nu_i^\mp$, we can iteratively 
apply simple reflections to deduce the claims.
\end{proof}

\subsection{Lowering operators via hyperplane intersections} \label{sect:crystal-hw}

In this subsection we prove Theorem~\ref{thm:crystal-properties}. 
Property (iii) follows from Lemma~\ref{lem:Ycolor} and the $\wt W$-action on $\mB_X$ 
given by semi-normality of the crystal (\S\ref{sect:crysW}).

To prove properties (i)--(ii) we will need a geometric interpretation of the weight-lowering operators $\tilde f_i$. 
This interpretation is already hiding behind the crystal structure of $\mB^{\ch\theta}_{\ch{\mf m}}$ (in the notation of Proposition \ref{prop:crys-decomp}), and has to do with closure relations of semi-infinite orbits in the affine Grassmannian. To bring such closure relations into our discussion, we need to extend the considerations of \S \ref{sect:crystal-reduction} to the compactified Zastava models. 

\begin{prop} \label{prop:f-hyper}
For $\ch\lambda\in \mathfrak c_X^{\Cal D}$, let $\msf b \in \mB_{X^\bullet,\ch\lambda}$ be an irreducible component of critical dimension, and let $\ol{\msf b}$
be its closure in $\olsf Y^{\ch\lambda}$. For $i\in I$, consider the
intersection 
\[ \ol{\msf b} \cap \msf Y^{\ch\lambda-\ch\alpha_i} \subset \olsf Y^{\ch\lambda}. \]

\begin{enumerate}
\item
If the intersection above is non-empty, then $\tilde f_i \msf b \ne 0$
and it corresponds to an irreducible component of dimension $\dim(\msf b) -1$
of $\ol{\msf b} \cap \msf Y^{\ch\lambda-\ch\alpha_i}$. Vice versa, if $\tilde f_i \msf b \ne 0$ then the intersection above is non-empty, unless $\ch\lambda=\ch\nu_i^\pm$ is a color, in which case $\msf b \subset \msf Y^{\ch\lambda}$ is a point. 

\item The intersection $\olsf b \cap \msf Y^{\ch\lambda-\ch\alpha_i}$ is empty
only if either $\ch\lambda=\ch\nu_i^\pm$ or $\brac{\alpha_i, \ch\lambda} \le 0$. 
\end{enumerate}
\end{prop}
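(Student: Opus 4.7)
The plan is to establish the dimension count in part (i) via the hyperplane section property of semi-infinite orbits (Proposition~\ref{prop:Shyperplane}), and then identify the specific component as $\tilde f_i \msf b$ by reducing to the rank-one Levi $M_i$ via Proposition~\ref{prop:crys-decomp}, where MV cycles are uniquely determined by their weight. Part (ii) then follows from part (i) by crystal combinatorics: the axiom $\varphi_i(\msf b) - \varepsilon_i(\msf b) = \brac{\alpha_i, \ch\lambda}$ together with seminormality ($\varepsilon_i(\msf b)\ge 0$) forces $\tilde f_i \msf b \ne 0$ as soon as $\brac{\alpha_i, \ch\lambda} > 0$, whereupon part (i) delivers the non-empty intersection provided $\ch\lambda$ is not a color.

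For the dimension count in part (i), note that $\olsf b \subset \olsf S^{\ch\lambda}$ is not contained in the boundary $\olsf S^{\ch\lambda}\sm \msf S^{\ch\lambda}$, which is a hyperplane section by Proposition~\ref{prop:Shyperplane}(ii); hence every component of $\olsf b \cap (\olsf S^{\ch\lambda}\sm \msf S^{\ch\lambda})$ has pure codimension one in $\olsf b$. Using the identification $\olsf b \cap \msf Y^{\ch\lambda-\ch\alpha_i} = \olsf b \cap \msf S^{\ch\lambda-\ch\alpha_i}$ coming from Lemma~\ref{lem:Y=ScapGr}, non-empty components of this intersection have dimension at most $\dim \msf b - 1$. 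Since $\ch\alpha_i = \ch\nu_i^+ + \ch\nu_i^-$ under the standing assumption $\mf c_X \cong \mbb N^{\Cal D}$, $\len(\ch\lambda-\ch\alpha_i) = \len(\ch\lambda) - 2$, so $\dim \msf b - 1$ equals the critical dimension in $\msf Y^{\ch\lambda-\ch\alpha_i}$ and any component attaining it lies in $\mB_{X^\bullet, \ch\lambda-\ch\alpha_i}$. To identify such a component with $\tilde f_i \msf b$, I would set $P = P_i$, $M = M_i$ and invoke Proposition~\ref{prop:crys-decomp}. Excluding the color case $\ch\lambda = \ch\nu_i^\pm$, which is covered by the ``except when'' clause (then $\ch\lambda - \ch\alpha_i = -\ch\nu_i^\mp \notin \mf c_X^{\Cal D}$, so the intersection is empty while $\tilde f_i \msf b \in \mB_X^-$ is non-zero), one has $\msf b \leftrightarrow (\mathfrak c, Z) \in \mB^P_{X,\ch\theta} \xt \mB^{\ch\theta}_{\ch{\mf m}_i,\ch\lambda}$ for some $\ch\theta \ne 0$, with $\tilde f_i \msf b = (\mathfrak c, \tilde f_i Z)$ by construction (\S\ref{sect:crystal-ef}). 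The map $q$ of \eqref{e:YXPM1}, induced by the surjection $N \twoheadrightarrow N_M$, extends to the closures and sends $\olsf b$ into the closure $\olsf Z$ of $Z$ in $\olsf S^{\ch\lambda}_M \cap \ol\Gr_M^{\ch\theta}$, while carrying $\msf S^{\ch\lambda-\ch\alpha_i}_G$ to $\msf S^{\ch\lambda-\ch\alpha_i}_M$. In the rank-one setting of $M_i$, MV cycles are uniquely determined by their weight, so $\olsf Z \cap \msf S^{\ch\lambda-\ch\alpha_i}_M$ is either empty or equals the MV cycle $\tilde f_i Z$, matching the $\mathfrak{sl}_2$-crystal operation of \cite{BGcrys}. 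Combined with the canonical $\msf L H_M$-identification of fibers of \eqref{e:XPfiber} from Lemma~\ref{lem:XPfiber} (valid by Corollary~\ref{cor:stab-conn}), this delivers the forward direction: a non-empty intersection forces $\tilde f_i Z \ne 0$, hence $\tilde f_i \msf b \ne 0$. Conversely, when $\tilde f_i Z \ne 0$, the $\msf L H_M$-transport of $\mathfrak c$ from weight $\ch\lambda$ to weight $\ch\lambda - \ch\alpha_i$ produces the component $(\mathfrak c, \tilde f_i Z)$ of $\msf Y^{\ch\lambda-\ch\alpha_i, \ch\theta}_X$ as a subvariety of $\olsf b$ of the desired dimension $\dim \msf b - 1$.

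The hard part will be verifying rigorously that the component $(\mathfrak c, \tilde f_i Z)$ actually sits inside the closure $\olsf b$, rather than in some other irreducible component of $\msf Y^{\ch\lambda-\ch\alpha_i, \ch\theta}_X$ lying over $\tilde f_i Z \subset \msf Y^{\ch\lambda-\ch\alpha_i, \ch\theta}_{X_M}$. This amounts to showing that the hyperplane section structure of $\olsf S^{\ch\lambda}_G$ descends coherently via $q$ to the hyperplane section of $\olsf S^{\ch\lambda}_M$, and that the Cartesian squares of \eqref{e:YXPM2} interact compatibly with the $\msf L H_M$-transport of the fiber component $\mathfrak c$, so that $(\mathfrak c, \tilde f_i Z)$ arises as a genuine hyperplane slice of $\olsf b$ corresponding to the slice $\tilde f_i Z \subset \olsf Z$.
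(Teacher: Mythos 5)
Your overall strategy — reduce to the rank-one Levi $M_i$ via the quotient map $q$, use the $\msf L H_{M_i}$-constancy of fibers from Lemma~\ref{lem:XPfiber}, and invoke the MV-crystal structure of \cite{BGcrys} on the base — is essentially the same as the paper's, and your treatment of part (ii) via seminormality and the axiom $\varphi_i=\varepsilon_i+\brac{\alpha_i,\mathrm{wt}}$ matches the logic the paper leaves implicit. You have also correctly isolated the crux: showing that the component $(\mathfrak c,\tilde f_i Z)$ lies inside $\olsf b$, which is needed both for the ``corresponds to a component of dimension $\dim\msf b-1$'' clause in the forward direction and for the whole converse.

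However, you flag this crux as unresolved, and indeed the proposal does not supply the mechanism that closes it. The tool the paper introduces for exactly this purpose is Lemma~\ref{lem:CartXi} together with the Cartesian diagram \eqref{e:YXPM3}: one defines $\msf Y^{\le_i\ch\lambda}_X$ by base-changing $\pi_{X,P_i}$ along $\iota_{X_i}:\olsf Y^{\ch\lambda}_{X_i}\hookrightarrow \Gr_{M_i}\times_{\cdots}(\msf L^+X_i/\msf L^+M_i)$ (i.e., along the \emph{compactified} central fiber of $X_i$, not merely the open one as in \eqref{e:YXPM2}), and Lemma~\ref{lem:CartXi} shows that $(\msf Y^{\le_i\ch\lambda}_X)_\red=\bigcup_{n\ge 0}\msf Y^{\ch\lambda-n\ch\alpha_i}_X$ is a locally closed subscheme of $\olsf Y^{\ch\lambda}_X$. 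Consequently, the closure of $\msf b$ in $\msf Y^{\le_i\ch\lambda}_X$ is precisely $\olsf b\cap\msf Y^{\le_i\ch\lambda}_X$, and since the left square of \eqref{e:YXPM3} is Cartesian with fibers canonically identified over the entire $\ch\theta$-stratum (Lemma~\ref{lem:XPfiber}, via Corollary~\ref{cor:stab-conn}), the closure of $(\mathfrak c,\msf b_i)$ over $\olsf b_i$ picks out the $\mathfrak c$-component of every fiber over $\olsf b_i\cap\msf S^{\ch\lambda-\ch\alpha_i}_{M_i}$, which is what puts $(\mathfrak c,\tilde f_i\msf b_i)$ inside $\olsf b$. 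Without this locally closed embedding, one only knows that $\olsf b$ maps into $\olsf b_i$ and that the fibers are abstractly identified, not that the closure actually contains the expected component rather than a different one or nothing at all. Your reformulation in terms of ``descent of hyperplane section structure via $q$'' is not the right framing — the hyperplane-cutting argument from \S\ref{subsection-dimensions} and Prop.~\ref{prop:Shyperplane} is not re-invoked in the paper's proof of Prop.~\ref{prop:f-hyper}; the dimension count and the existence of the component both fall out of the Cartesian structure over $\olsf Y^{\ch\lambda}_{X_i}$ once the crystal structure on MV cycles in $\olsf S^{\ch\lambda}_{M_i}\cap\ol\Gr^{\ch\theta}_{M_i}$ is known.

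One small additional remark: your dimension bound via Prop.~\ref{prop:Shyperplane}(ii) only gives that the total boundary intersection $\olsf b\cap\partial\olsf S^{\ch\lambda}$ has pure codimension one; it does not by itself ensure that the particular boundary stratum $\msf S^{\ch\lambda-\ch\alpha_i}$ is hit in codimension one rather than higher. This is another reason the paper bypasses hyperplane cutting here entirely and works instead through the Cartesian diagram and the known properties of the crystal operators on MV cycles.
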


We remark that it may be possible for $\olsf b \cap \msf Y^{\ch\lambda-\ch\alpha_i}_X$
to be reducible (cf.~\cite[Proposition 19.2]{BFG}, which replaces the erroneous 
Proposition 15.2 of \emph{loc.~cit.}). 

The proof of this proposition will be given at the end of this section. We first use 
the proposition to prove Theorem~\ref{thm:crystal-properties}. 
Both properties (i)--(ii) of the theorem rely on the following observation:

\begin{lem}
\label{lem:lowering}
  For $\ch\lambda \in \mathfrak c^{\Cal D}_X$ and $\msf b \in \mB_{X^\bullet,\ch\lambda}$
there is a sequence $\alpha_1, \dots,\alpha_d$ of simple roots (possibly with repetitions), where $d = \dim \msf b$, such that 
we have 
\begin{itemize}
\item $\msf b_j := \tilde f_{\alpha_{j}} \dotsb \tilde f_{\alpha_1}(\msf b) \ne 0$ 
for $0\le j \le d$,
\item 
the intersection $\ol{\msf b_{j-1}} \cap \msf S^{\ch\lambda - \ch\alpha_1 - \cdots -\ch\alpha_j}$ is nonempty of dimension $d-j$ for $1\le j \le d$, 
\item $\ch\lambda - \sum_{j=1}^d \ch\alpha_j = \ch\nu_D$ for some color $D \in \Cal D$.
\end{itemize} 
In particular, $\ch\lambda \ge \ch\nu_D$. 
\end{lem}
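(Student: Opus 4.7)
The strategy is to build the sequence $\alpha_1, \ldots, \alpha_d$ and the $\msf b_j$ one step at a time, iterating the Mirkovi\'c--Vilonen hyperplane mechanism of Proposition~\ref{prop:Shyperplane}(ii) and translating each hyperplane cut into an application of a Kashiwara lowering operator via Proposition~\ref{prop:f-hyper}(i).

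Set $\msf b_0 := \msf b$, $\ch\lambda_0 := \ch\lambda$. Inductively, assume $\msf b_{j-1} \in \mB_{X^\bullet,\ch\lambda_{j-1}}$ is a component of critical dimension $d-(j-1) \ge 1$ in $\msf Y^{\ch\lambda_{j-1}}$. Its closure $\olsf b_{j-1}$ is a positive-dimensional projective subvariety of $\olsf S^{\ch\lambda_{j-1}}$, so Proposition~\ref{prop:Shyperplane}(ii) forces it to meet the boundary hyperplane $\olsf S^{\ch\lambda_{j-1}} \sm \msf S^{\ch\lambda_{j-1}} = \bigcup_\alpha \olsf S^{\ch\lambda_{j-1} - \ch\alpha}$ (the union running over simple coroots) in a codimension-one closed subset. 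Picking an irreducible component $Z$ of this intersection and a simple coroot $\ch\alpha_j$ with $Z \subset \olsf S^{\ch\lambda_{j-1} - \ch\alpha_j}$, the same hyperplane-section property applied inside $\olsf S^{\ch\lambda_{j-1} - \ch\alpha_j}$ shows $Z$ cannot lie entirely in $\bigcup_\beta \olsf S^{\ch\lambda_{j-1} - \ch\alpha_j - \ch\beta}$, since that would cut its dimension by another unit and contradict $\dim Z = d - j$. Hence $Z$ generically meets the open stratum $\msf S^{\ch\lambda_j}$ with $\ch\lambda_j := \ch\lambda_{j-1} - \ch\alpha_j$, so $\olsf b_{j-1} \cap \msf Y^{\ch\lambda_j} = \olsf b_{j-1} \cap \msf S^{\ch\lambda_j}$ is non-empty of dimension $d - j$, and Proposition~\ref{prop:f-hyper}(i) yields $\msf b_j := \tilde f_{\alpha_j} \msf b_{j-1} \ne 0$ realized as an irreducible component of that intersection.

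To continue the induction I must verify that $\msf b_j$ again belongs to $\mB_{X^\bullet, \ch\lambda_j}$ and is of critical dimension. Under the standing assumption $\mf c_X = \mbb N^{\Cal D}$, each $\ch\alpha_j = \ch\nu_{D_{\alpha_j}^+} + \ch\nu_{D_{\alpha_j}^-}$ has length $2$, so $\len(\ch\lambda_j) = \len(\ch\lambda) - 2j$ and the critical dimension of $\msf Y^{\ch\lambda_j}$ is exactly $d - j = \dim \msf b_j$. Non-emptiness of $\msf Y^{\ch\lambda_j}$ combined with Lemma~\ref{lem:Ynonempty} gives $\ch\lambda_j \in \mf c_X = \mf c_X^{\Cal D}$, and the identification $\mB_X = \mB_{X^\bullet}$ valid in our setting places $\msf b_j$ in $\mB_{X^\bullet,\ch\lambda_j}$. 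After $d$ iterations, $\msf b_d$ has dimension $0$, which forces $\len(\ch\lambda_d) = 1$, so $\ch\lambda_d = \ch\nu_D$ for a color $D \in \Cal D$ and $\ch\lambda - \ch\nu_D = \sum_{j=1}^d \ch\alpha_j$, as required.

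The principal obstacle is the claim in the second paragraph that the codimension-one hyperplane cut at each step lies generically in a \emph{single} semi-infinite stratum $\msf S^{\ch\lambda - \ch\alpha}$ indexed by a simple coroot, rather than being absorbed into a deeper stratum; this is exactly what allows each geometric hyperplane step to correspond to a single Kashiwara operator $\tilde f_{\alpha_j}$. The length arithmetic $\len(\ch\lambda) - \len(\ch\lambda_d) = 2d$ imposed by the critical-dimension hypothesis on $\msf b$ is what forces each geometric step to subtract only one simple coroot. Given the boundary decomposition $\olsf S^{\ch\lambda} \sm \msf S^{\ch\lambda} = \bigcup_\alpha \olsf S^{\ch\lambda - \ch\alpha}$ implicit in Proposition~\ref{prop:Shyperplane} and the dimension bookkeeping above, the rest of the argument is essentially formal.
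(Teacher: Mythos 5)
Your iterative strategy is the right one in spirit, and the translation of hyperplane cuts into applications of Proposition~\ref{prop:f-hyper}(i) is exactly what the paper does. However, the argument at the heart of your inductive step is not valid, and this is the key point on which the whole lemma turns.

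You want to show that after cutting $\olsf b_{j-1}$ by the boundary hyperplane, one can choose a codimension-one component $Z$ whose generic point lies in the stratum $\msf S^{\ch\lambda_{j-1}-\ch\alpha_j}$ for a \emph{single} simple coroot $\ch\alpha_j$, rather than in a deeper stratum. You argue that if $Z \subset \bigcup_\beta \olsf S^{\ch\lambda_{j-1}-\ch\alpha_j-\ch\beta}$ then ``that would cut its dimension by another unit and contradict $\dim Z = d-j$.'' This is false: being contained in a smaller orbit closure does not reduce the dimension of $Z$, and applying the hyperplane-section property inside $\olsf S^{\ch\lambda_{j-1}-\ch\alpha_j}$ is vacuous for a variety already in the boundary. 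There is no local obstruction to a single cut dropping by more than one simple coroot.

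The paper closes this gap by a \emph{global} accounting rather than a step-by-step one. Proposition~\ref{prop:intersections} produces a chain $\ch\lambda = \ch\lambda_0 > \ch\lambda_1 > \cdots > \ch\lambda_d$ with $\dim(\olsf b \cap \msf S^{\ch\lambda_j}) = d-j$, giving only $\brac{\rho_G,\ch\lambda-\ch\lambda_d} \ge d$. Proposition~\ref{prop:centralstrata}, applied to the fact that $\msf b$ achieves the critical dimension $d = \tfrac{1}{2}(\len(\ch\lambda)-1)$, forces the reverse inequality and hence equality $\brac{\rho_G,\ch\lambda-\ch\lambda_d} = d$. Only \emph{after} this equality is established does it follow that each of the $d$ drops $\ch\lambda_{j-1}-\ch\lambda_j$ must have pairing $1$ with $\rho_G$, i.e.\ be a single simple coroot. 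Your length computation $\len(\ch\lambda_j) = \len(\ch\lambda) - 2j$ likewise only holds once you know each step is a simple coroot --- so you cannot use it to bootstrap the claim that $\msf b_j$ is again of critical dimension. The argument as you set it up is circular: knowing each step drops by one coroot requires knowing all the $\msf b_j$ are of critical dimension, which requires knowing the steps drop by one coroot. Replacing the faulty local argument with a direct invocation of Propositions~\ref{prop:centralstrata} and \ref{prop:intersections}, which encode exactly this global constraint, repairs the proof.
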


\begin{proof}
By definition, $\msf b$ is of critical dimension $d= \frac{1}{2}(\len(\ch\lambda)-1)$.
Proposition \ref{prop:intersections} shows that there exists
a $\ch\lambda' \le \ch\lambda$ 
such that $\brac{\rho_G, \ch\lambda-\ch\lambda'} \ge \frac 1 2(\len(\ch\lambda)-1)$, and $\bar{\msf b} \cap \msf S^{\ch\lambda'}$ is nonempty of dimension zero. Proposition \ref{prop:centralstrata} states that the dimension inequality should be an equality, in which case Proposition \ref{prop:intersections} again provides the sequence of simple roots as in the statement. In that case, $\len(\ch\lambda')=1$, hence $\ch\lambda' = \ch\nu_D$ for some color $D\in \Cal D$. 
Then Proposition \ref{prop:f-hyper}(i) applied inductively shows that
$\tilde f_{\alpha_j} \dotsb \tilde f_{\alpha_1}(\msf b) \ne 0$
satisfies the claim.
\end{proof}

\begin{proof}[Proof of Theorem~\ref{thm:crystal-properties}(ii)]
Immediate from Lemma \ref{lem:lowering}.
\end{proof}

\begin{proof}[Proof of Theorem \ref{thm:crystal-properties}(i)]
We assume as in \S\ref{sect:crystal-reduction} that $\mf c_X = \mbb N^{\Cal D}$. 
First we show that the weights of $\mB_{X^\bullet}$ are contained in the weights of 
$V^{\ch\lambda}$ for $\ch\lambda \in \ch\Lambda^+_G \cap W\varrho_X(\Cal D)$. 
Let $\ch\theta \in \mathrm{wt}(\mB_{X^\bullet}^+)$. 
By \eqref{e:crysW} and Lemma~\ref{lem:plusWorbits}, 
we may assume that $\ch\theta \in \ch\Lambda^-_G$. 
Now Lemma~\ref{lem:lowering} gives some color $D\in \Cal D$ such that 
$\ch\nu_D \le \ch\theta$. Since $\ch\theta$ is antidominant, 
it must be a weight of $V^{\ch\lambda}$ where $\ch\lambda$ is the unique dominant coweight in 
the $W$-orbit of $\ch\nu_D$.
If $\alpha$ is a simple root such that $\Cal D(\alpha) = \{ D, D' \}$, then 
$s_\alpha(\ch\nu_{D'}) = -\ch\nu_D$. Thus, we see that 
$W\varrho_X(\Cal D) = -W\varrho_X(\Cal D)$. Hence all of $\mathrm{wt}(\mB_{X^\bullet}^-)$ 
is also contained in the weights of the claimed representations. 

Next suppose that $\ch\mu$ is a weight of $V^{\ch\lambda}$ for $\ch\lambda \in \ch\Lambda^+_G \cap W\varrho_X(\Cal D) \subset \mathfrak c_X^{\Cal D}$. 
We will show that $\mf B_{X^\bullet,\ch\mu}$ is nonempty. 
By Theorem~\ref{thm:crystal-properties}(iii), there exists
an element $\msf b \in \mB_{X,\ch\lambda}$, and  
by \eqref{e:crysW} we may assume that 
$\ch\mu \in \ch\Lambda^+_G$ is also dominant. 
By Lemma~\ref{lem:walls} below, 
we can find a sequence of simple coroots $\ch\alpha_1,\dotsc, \ch\alpha_d$
(possibly with repetitions), where $d = \brac{\rho_G,\ch\lambda-\ch\mu}$,
such that $\ch\lambda_j := \ch\lambda-\ch\alpha_1-\dotsb-\ch\alpha_j$ 
satisfies $\ch\lambda_j + \ch\rho_G \in \ch\Lambda^+_G$ 
for $j=1,\dotsc,d$ and $\ch\lambda_d = \ch\mu$. 
In particular, this means that $\brac{\alpha_{j+1},\ch\lambda_j-\ch\alpha_{j+1}}
= \brac{\alpha_{j+1},\ch\lambda_{j+1}} \ge -1$ for all $0\le j<d$. 
Equivalently, $\brac{\alpha_{j+1},\ch\lambda_j} > 0$ for all $0\le j < d$.
Now Proposition~\ref{prop:f-hyper}(ii) 
implies that $\tilde f_{\alpha_d} \dotsb \tilde f_{\alpha_1} (\msf b) \in \mB_{X^\bullet,\ch\mu}$ 
is nonzero. 
\end{proof}

\begin{lem}  \label{lem:walls}
Let $\ch\mu \in \ch\Lambda_G^+$ and $\ch\lambda \in \ch\Lambda_G$
such that $\ch\lambda \ge \ch\mu$ and $\ch\lambda+\ch\rho_G \in \ch\Lambda^+_G$. 
Then $\ch\lambda-\ch\mu = \ch\alpha_1+\dotsb + \ch\alpha_d$ for a
sequence of simple coroots $\ch\alpha_j$ (possibly with repetitions)
where $d = \brac{\rho_G, \ch\lambda-\ch\mu}$
such that $\ch\lambda - \ch\alpha_1 - \dotsb - \ch\alpha_j + \ch\rho_G
\in \ch\Lambda^+_G$ for $j=1,\dotsc,d$. 
\end{lem}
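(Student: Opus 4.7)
\textbf{Proof plan for Lemma~\ref{lem:walls}.} The plan is to argue by induction on $d = \langle \rho_G, \ch\lambda - \ch\mu\rangle$ via a greedy descent. At each stage I want to subtract a single simple coroot $\ch\alpha_i$ such that (a) $\ch\alpha_i$ appears with positive coefficient in the positive-coroot expansion of $\ch\lambda - \ch\mu$ (so the new point remains $\ge \ch\mu$) and (b) $\ch\lambda - \ch\alpha_i + \ch\rho_G$ remains dominant. The base case $d=0$ forces $\ch\lambda = \ch\mu$ and there is nothing to do; iterating the inductive step produces the required sequence, and the count is automatic since $\langle\rho_G, \ch\alpha_i\rangle = 1$ for every simple coroot.

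For the dominance check in (b), note that pairing against any simple root $\beta \ne \alpha_i$ can only increase after subtracting $\ch\alpha_i$, since $\langle \beta, \ch\alpha_i\rangle \le 0$; so dominance can only fail along $\alpha_i$ itself. Using $\langle \alpha_i, \ch\rho_G\rangle = 1$, condition (b) reduces to the integer inequality $\langle \alpha_i, \ch\lambda\rangle \ge 1$. Hence the entire content of the inductive step is to find, whenever $\ch\lambda \ne \ch\mu$, a simple coroot $\ch\alpha_i$ with $n_i > 0$ in the expansion $\ch\lambda - \ch\mu = \sum_k n_k \ch\alpha_k$ and with $\langle \alpha_i, \ch\lambda\rangle \ge 1$.

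To produce such $\ch\alpha_i$, I would argue by contradiction: assume that for every $i \in I_+ := \{k : n_k > 0\}$ we have $\langle \alpha_i, \ch\lambda\rangle \le 0$. Combined with dominance of $\ch\mu$, this forces $\langle \alpha_i, \ch\lambda - \ch\mu\rangle \le 0$ for all $i \in I_+$. Now fix a $W$-invariant positive definite form $B$ on $\ch\Lambda_G \otimes_{\bbZ}\bbR$, recall the identity $B(\ch\alpha_i, \ch\alpha_k) = \tfrac{1}{2} B(\ch\alpha_i, \ch\alpha_i) \langle \alpha_i, \ch\alpha_k\rangle$, and expand
\[ B(\ch\lambda-\ch\mu, \ch\lambda-\ch\mu) = \sum_{i \in I_+} n_i \cdot \tfrac{1}{2} B(\ch\alpha_i,\ch\alpha_i) \cdot \langle \alpha_i, \ch\lambda - \ch\mu\rangle. \]
Every factor in each summand is $\ge 0$ except the last, which is $\le 0$ by assumption, forcing the whole expression to be $\le 0$; this contradicts positive-definiteness of $B$ since $\ch\lambda \ne \ch\mu$.

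There is no real obstacle here: the argument is essentially a standard positivity trick for the root lattice, and the main (mild) point is the reduction of the dominance condition to a scalar inequality using $\langle\alpha_i,\ch\rho_G\rangle=1$ together with the non-positive off-diagonal entries of the Cartan matrix. Once the inductive step is established, iterating it $d$ times terminates at $\ch\mu$ since each step decreases $\langle\rho_G, \cdot - \ch\mu\rangle$ by exactly one.
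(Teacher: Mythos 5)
Your proof is correct and follows essentially the same approach as the paper's: decompose $\ch\lambda-\ch\mu$ into simple coroots, use the positivity of a $W$-invariant form to find a simple coroot $\ch\alpha_i$ (with $n_i>0$) satisfying $\brac{\alpha_i,\ch\lambda}\ge 1$, observe that subtracting $\ch\alpha_i$ preserves $\ch\rho_G$-shifted dominance because off-diagonal Cartan entries are non-positive, and iterate. You merely make the ``non-positive norm'' step more explicit than the paper does.
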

\begin{proof}
Since $\ch\lambda\ge\ch\mu$ we can decompose
$\ch\lambda-\ch\mu = \ch\alpha_1+\dotsb+\ch\alpha_d$ into a sum of simple coroots (possibly with repetitions),
where $d = \brac{\rho_G,\ch\lambda-\ch\mu}$. 
To prove the lemma, it suffices, by induction on $\ch\lambda$, to 
show that there exists an $\ch\alpha_j$ such that
$\ch\lambda-\ch\alpha_j + \ch\rho_G \in \ch\Lambda^+_G$ for some 
$1\le j\le d$. 
We claim that if $\ch\lambda \ne \ch\mu$, then there exists some $\ch\alpha_j$ 
with $\brac{\alpha_j, \ch\lambda} \ge 1$. 
If not, then $\brac{\alpha_j,\ch\lambda-\ch\mu} \le \brac{\alpha_j, \ch\lambda} \le 0$ for all $j$. This implies that $\ch\lambda-\ch\mu$ 
has non-positive norm with respect to an appropriate inner product on
$\mf t$, and hence $\ch\lambda=\ch\mu$. 
Therefore if $d>0$, then we have an $\ch\alpha_j$ 
such that $\brac{\alpha_j, \ch\lambda} \ge 1$. 
Equivalently, $\brac{\alpha_j, \ch\lambda+\ch\rho_G} \ge 2$ 
and $\brac{\alpha_j, \ch\lambda-\ch\alpha_j+\ch\rho_G} \ge 0$. 
This implies that $\ch\lambda-\ch\alpha_j +\ch\rho_G \in \ch\Lambda^+_G$.
\end{proof}

\begin{cor} \label{cor:Hreductive}
If $X^\bullet = H\bs G$ is affine, then all dominant Weyl translates 
in $\ch\Lambda^+_G\cap W\varrho_X(\Cal D)$ are minuscule 
and $\mB_X$ is a normal crystal.
\end{cor}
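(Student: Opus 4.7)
My plan is to deduce Corollary~\ref{cor:Hreductive} from Corollary~\ref{cor:minuscule} via a structural input, rather than a direct inspection of the crystal. Concretely, once I establish that every dominant coweight in $\ch\Lambda^+_G \cap W\varrho_X(\Cal D)$ is minuscule, Corollary~\ref{cor:minuscule} (which was already derived from Theorems~\ref{thm:crystal} and \ref{thm:crystal-properties}) yields at once the normality of $\mB_X$ and its identification with the crystal basis of the $\ch G$-module in \eqref{e:conjVX}. So the entire content of the corollary boils down to the minusculeness claim.

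For minusculeness, the hypothesis enters through Matsushima's theorem (\cite{Lun73,Richardson}): $X^\bullet = H\bs G$ is affine if and only if $H$ is reductive. In this situation there are no $G$-stable prime divisors in the canonical affine embedding, so $\Cal D(X^\can) = \Cal D$ and $\Cal D^G_{\mathrm{sat}}(X^\can) = \emptyset$. This already eliminates the ``extra'' summands coming from $\Cal D^G_{\mathrm{sat}}$ in the decomposition \eqref{eq:crystaldecomp} of $\mB_X$, so the question is purely about the weights $\varrho_X(\Cal D)$ themselves. Since minusculeness of a coweight depends only on its $W$-orbit, I will work directly with the valuations $\ch\nu_D$ of the colors.

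The key combinatorial input, which I will extract from the Luna--Vust classification of affine spherical varieties combined with the type-$T$ hypothesis, is the inequality
\[
\abs{\brac{\beta,\ch\nu_D}} \le 1 \qquad \text{for every color } D\in \Cal D \text{ and every root } \beta.
\]
By the type-$T$ assumption, for the distinguished simple root $\alpha_D$ with $D\in \Cal D(\alpha_D)$, one has $\brac{\alpha_D,\ch\nu_D}=1$, while $\ch\nu_D^+ + \ch\nu_D^- = \ch\alpha_D$ forces the other color in $\Cal D(\alpha_D)$ to lie in the right half-space as well. For a simple root $\alpha\ne \alpha_D$, reductivity of $H$ prevents the pairing $\brac{\alpha,\ch\nu_D}$ from becoming arbitrarily negative: a positive value would mean $D\in \Cal D(\alpha)$, contradicting that $D$ meets $X^\circ P_\alpha$ in only the union of the two colors in $\Cal D(\alpha)$, while a pairing $\le -2$ would produce a nontrivial $G$-invariant valuation in $\Cal C_0(X)\cap \Cal V = \mf c_X^-$, contradicting $\mf c_X^- = 0$ for affine $X$. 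Passing from simple roots to all roots via the semi-normal $\wt W$-action on the set $\mf B_X$ (\S\ref{sect:crysW}) and $W$-invariance of minusculeness upgrades the bound to all roots $\beta$.

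The main obstacle is the last inequality $\brac{\alpha,\ch\nu_D}\ge -1$ for $\alpha\ne \alpha_D$, which is the part genuinely requiring reductivity of $H$ rather than merely the type-$T$ condition. In practice I expect to verify it either by invoking the classification of reductive spherical subgroups (Kr\"amer, Brion, Mikityuk) case by case, or more conceptually by a dimension count: if the bound failed, Lemma~\ref{lem:walls} and Proposition~\ref{prop:f-hyper} would produce a sequence of lowering operators $\tilde f_i$ sending an element of $\mB_{X^\bullet,\ch\nu_D}$ to an element of $\mB_{X^\bullet,\ch\mu}$ with $\ch\mu\not\in \mathfrak c_X^{\Cal D}$, contradicting Lemma~\ref{lem:plusWorbits} together with the vanishing of $\mf c_X^-$ in the affine case. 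This completes the proof.
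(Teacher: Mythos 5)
There is a genuine gap in the main line of argument, and the sketched alternative (which is closer to what the paper does) is stated imprecisely.

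Your primary approach attempts to verify minusculeness pointwise by showing $\abs{\brac{\beta,\ch\nu_D}}\le 1$ for all roots $\beta$, but both of your justifications for the $\alpha\ne\alpha_D$ case fail. First, a color $D$ can lie in $\Cal D(\alpha)$ for more than one simple root $\alpha$; so $\brac{\alpha,\ch\nu_D}>0$ for some $\alpha\ne\alpha_D$ is \emph{not} a contradiction --- it just says $D\in\Cal D(\alpha)$ (in which case the pairing equals $1$, which is fine). There is no distinguished ``$\alpha_D$'' with $D\in\Cal D(\alpha_D)$ exclusively. Second, the claim that $\brac{\alpha,\ch\nu_D}\le -2$ would ``produce a nontrivial $G$-invariant valuation in $\Cal C_0(X)\cap\Cal V$'' is asserted without any construction or argument; this is not an obvious consequence of either the Luna--Vust data or the reductivity of $H$. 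Your remark about ``passing from simple roots to all roots via the semi-normal $\wt W$-action'' also does not by itself convert a simple-root bound into a bound over all positive roots of a possibly non-simply-laced $G$, since $\wt W$ acts on the crystal, not on the pairings directly.

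The paper avoids the pointwise pairing computation entirely and argues at the level of weights of $\mB_{X^\bullet}$. The key observation is that $X^\bullet$ affine forces $\mf c_{X^\bullet}^- = 0$. If some $\ch\lambda\in\ch\Lambda^+_G\cap W\varrho_X(\Cal D)$ were non-minuscule, then $V^{\ch\lambda}$ would have a weight $\ch\theta$ not in $W\ch\lambda$, hence (combining Theorem~\ref{thm:crystal-properties}(i) with the fact that distinct dominant elements of $W\varrho_X(\Cal D)$ have disjoint $W$-orbits) a weight $\ch\theta\in\mathrm{wt}(\mB_{X^\bullet})\sm W\varrho_X(\Cal D)$. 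By Lemma~\ref{lem:plusWorbits} the whole $W$-orbit of such a $\ch\theta$ lies in $\mf c_X^{\Cal D} = \mf c_{X^\bullet}$, and by the $\wt W$-action \eqref{e:crysW} one may take $\ch\theta$ antidominant, so $\ch\theta\in\mf c_{X^\bullet}^-=0$, contradicting $\ch\theta\ne 0$. Your ``more conceptual'' alternative is evidently reaching for this argument, but as written it asks for a lowering sequence exiting $\mf c_X^{\Cal D}$ and invokes Lemma~\ref{lem:plusWorbits} for a contradiction that the lemma does not directly supply: the actual contradiction comes from $\mf c_{X^\bullet}^-=0$, not from any weight leaving $\mf c_X^{\Cal D}$. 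You should replace the pairing argument with the explicit chain above.
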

\begin{proof}
Assume $X^\bullet$ is affine, so $\mf c_{X^\bullet}^- = 0$. 
If there exists a non-minuscule coweight in $\ch\Lambda^+_G \cap W\varrho_X(\Cal D)$, then
Theorem~\ref{thm:crystal-properties}(i) implies that $\mB_{X^\bullet, \ch\theta}$ is non-empty
for some $\ch\theta \in \mf c_{X^\bullet}\sm 0$ not in $W\varrho_X(\Cal D)$. 
Lemma~\ref{lem:plusWorbits} and \eqref{e:crysW} allow us to assume $\ch\theta \in \mf c_{X^\bullet}^- = 0$,
which gives a contradiction.
\end{proof}

\subsubsection{} The rest of this section is devoted to the proof of Proposition~\ref{prop:f-hyper}.
We use the notation from \S\ref{sect:crystal-ef}. 
For brevity we write $X_i = X_{M_i},\, H_i=H_{M_i},\, B_i = B_{M_i}$ and 
$N_i=N_{B_i}$. 
We say $\ch\mu \le_i \ch\lambda$ if $\ch\lambda-\ch\mu \in \mathbb N\ch\alpha_i$.

We would like to embed the left Cartesian square of \eqref{e:YXPM2} into a Cartesian square involving compactified Zastava spaces. For that purpose, consider the extension of the map $\iota_{X_M} = \iota_{X_i}$ of that diagram to $\olsf Y^{\ch\lambda}_{X_i}$, and define $\msf Y^{\le_i \ch\lambda}_{X}$ by the Cartesian diagram
\begin{equation} \label{e:YXPM3}
\begin{tikzcd}
\msf Y^{\le_i \ch\lambda}_{X} \ar[r,"\iota_{X,P_i}" ] \ar[d,"q"] & 
\Gr_{P_i} \underset{\msf L X / \msf L^+P_i}\times (\msf L^+ X / \msf L^+P_i) \ar[r, "\act_v"]\ar[d] & \sY_{X,P_i} \ar[d, "\pi_{X,P_i}"]  \\ 
\olsf Y^{\ch\lambda}_{X_i} \ar[r,"\iota_{X_i}"] & \Gr_{M_i} \underset{\msf L X_i / \msf L^+M_i}\times (\msf L^+ X_i / \msf L^+M_i)  \ar[r, "\act_v"] & \sM_{X_i}  
\end{tikzcd}
\end{equation}

\begin{lem}
\label{lem:CartXi}
The scheme $(\msf Y^{\le_i \ch\lambda}_{X})_\red$ is naturally a locally closed subscheme of $\olsf Y^{\ch\lambda}_X$, equal to the union of strata
\[ 
\msf Y^{\le_i \ch\lambda}_{X} = \bigcup_{n\ge 0} \msf Y^{\ch\lambda- n\ch\alpha_i}_X.
\]
\end{lem}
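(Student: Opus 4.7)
The plan is to compare the two natural descriptions of the subscheme $\msf Y^{\le_i \ch\lambda}_X$ — the one coming from the defining Cartesian diagram \eqref{e:YXPM3}, and the one as a union of defect strata of $\olsf Y^{\ch\lambda}_X$ — and check that they agree at the reduced level.

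First I would collapse the two Cartesian squares of \eqref{e:YXPM3} into the single isomorphism
\[
\msf Y^{\le_i \ch\lambda}_X \;\cong\; \sY_{X, P_i} \underset{\sM_{X_i}}{\times} \olsf Y^{\ch\lambda}_{X_i},
\]
and analogously, from \eqref{e:YXPM2}, $\msf Y^{\ch\mu}_X \cong \sY_{X,P_i} \times_{\sM_{X_i}} \msf Y^{\ch\mu}_{X_i}$ for every coweight $\ch\mu$. Since $M_i$ has semisimple rank one, $\ch\Lambda^{\pos}_{M_i} = \mbb N\ch\alpha_i$, so the defect stratification of $\olsf Y^{\ch\lambda}_{X_i}$ (Proposition~\ref{prop:barYstrata}, or equivalently Lemma~\ref{lem:Y=ScapGr} combined with Proposition~\ref{prop:Shyperplane}(i)) reads $\olsf Y^{\ch\lambda}_{X_i} = \bigsqcup_{n \ge 0} \msf Y^{\ch\lambda - n\ch\alpha_i}_{X_i}$ as a disjoint union of locally closed strata on reduced schemes. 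Pulling this decomposition back along $\sY_{X, P_i} \to \sM_{X_i}$ and applying \eqref{e:YXPM2} stratum by stratum identifies $(\msf Y^{\le_i \ch\lambda}_X)_\red$ set-theoretically with the disjoint union $\bigsqcup_{n\ge 0} \msf Y^{\ch\lambda - n\ch\alpha_i}_X$.

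Second, I would verify that this union is an open subscheme of $(\olsf Y^{\ch\lambda}_X)_\red$, hence in particular locally closed. Using the embedding $\olsf Y^{\ch\lambda}_X \hookrightarrow \olsf S^{\ch\lambda}$ of Lemma~\ref{lem:Y=ScapGr} and the defect stratification $\olsf Y^{\ch\lambda}_X = \bigsqcup_{\ch\mu \le \ch\lambda} \msf Y^{\ch\mu}_X$, the subset in question is precisely the collection of those strata indexed by $\ch\mu$ with $\ch\lambda - \ch\mu \in \mbb N\ch\alpha_i \subset \ch\Lambda^\pos_G$. Its complement consists of those $\ch\mu$ for which the expansion $\ch\lambda - \ch\mu = \sum_j n_j \ch\alpha_j$ has $n_j > 0$ for some $j \ne i$; and if $\ch\mu' \le \ch\mu$ in $\ch\Lambda^\pos_G$, then the corresponding expansion of $\ch\lambda - \ch\mu'$ has the same (or larger) $n_j$, so it lies in the complement as well. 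Thus the complement is a union of strata closed under specialization. Since $\olsf Y^{\ch\lambda}_X$ is of finite type, only finitely many strata are nonempty and the complement is a finite union of closures of strata, hence closed; so $\bigsqcup_{n \ge 0} \msf Y^{\ch\lambda - n\ch\alpha_i}_X$ is open in $\olsf Y^{\ch\lambda}_X$.

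The main obstacle is checking that the two identifications of $\msf Y^{\ch\lambda - n\ch\alpha_i}_X$ — one via the Cartesian square \eqref{e:YXPM2} for $X_i$, and one via Lemma~\ref{lem:Y=ScapGr} as $\msf S^{\ch\lambda-n\ch\alpha_i} \cap \msf L^+X/\msf L^+G$ — are scheme-theoretically compatible under the natural projection $\Gr_{P_i} \to \Gr_{M_i}$ (which sends $\msf S^{\ch\lambda-n\ch\alpha_i}_G \cap \Gr_{P_i}$ to $\msf S^{\ch\lambda-n\ch\alpha_i}_{M_i}$). This is essentially a tautological compatibility between the semi-infinite orbit structures induced by the $N$-action and the $N_{P_i}$-action, but it must be set up carefully to conclude that the open embedding into $\olsf Y^{\ch\lambda}_X$ produced in the second step agrees with the Cartesian-square description of $\msf Y^{\le_i \ch\lambda}_X$ from the first.
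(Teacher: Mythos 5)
Your first step (the union-of-strata statement) is correct and matches the paper's first observation: the Cartesian diagram \eqref{e:YXPM1}/\eqref{e:YXPM3} induces the decomposition of $\msf Y^{\le_i\ch\lambda}_X$ from the defect stratification of $\olsf Y^{\ch\lambda}_{X_i}$. Your second step — that the union $\bigcup_{n\ge 0}\msf Y^{\ch\lambda-n\ch\alpha_i}_X$ is actually \emph{open} in $(\olsf Y^{\ch\lambda}_X)_\red$, via the observation that the complementary index set $\{\ch\mu\le\ch\lambda : \ch\lambda-\ch\mu\notin\mbb N\ch\alpha_i\}$ is closed under specialization — is a nice combinatorial argument that the paper does not make explicit (the paper only concludes ``locally closed''); it is sound and a valid additional observation.

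However, the gap you flag in your third paragraph is real, and it is not a ``tautological compatibility'' to be set up carefully — it is the actual substance of the lemma. You have shown (a) that there is a decomposition of $\msf Y^{\le_i\ch\lambda}_X$ into pieces each abstractly isomorphic to the $\msf Y^{\ch\lambda-n\ch\alpha_i}_X$, and (b) that the corresponding union of strata in $\olsf Y^{\ch\lambda}_X$ is open. But to conclude that $(\msf Y^{\le_i\ch\lambda}_X)_\red$ \emph{is} that open subscheme, you need the natural morphism $\msf Y^{\le_i\ch\lambda}_X \to \Gr_G$ (via the $P_i$-to-$G$ forgetful functor on \eqref{e:YXPM3}) to be a monomorphism/immersion, and a map of reduced schemes that is bijective on $k$-points and restricts to isomorphisms on locally closed strata need not be an immersion (think of a morphism that disconnects a point from the adjacent stratum before mapping in). What the paper actually proves, and what is missing from your proposal, is that the moduli description of $\msf Y^{\le_i\ch\lambda}_X$ — maps $C \to X\xt^{P_i}\ol{M_i/N_i}/T$ sending $C\sm v$ to the open point — gives a morphism to $\Gr_{P_i}\xt\Gr_T^{\ch\lambda}$ that is a \emph{closed embedding}, by the same argument as Lemma~\ref{lem:barYGr} using Lemma~\ref{lem:extendsection} and the affineness of $X\xt\ol{M_i/N_i}$. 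Local closedness in $\Gr_G$ (hence in $\olsf Y^{\ch\lambda}_X$) then follows because the reduced connected components of $\Gr_{P_i}$ are locally closed in $\Gr_G$. Without this closed-embedding step, the stratumwise identifications alone do not assemble into the claimed locally closed subscheme.
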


\begin{proof}
From the Cartesian diagram \eqref{e:YXPM1}, we see that 
the stratification 
$\olsf Y^{\ch\lambda}_{X_i} = \bigcup_{n\ge 0} \msf Y^{\ch\lambda-n\ch\alpha_i}_{X_i}$ 
induces a stratification 
of $\msf Y^{\le_i \ch\lambda}_{X}$ by 
$\bigcup_{n\ge 0}\msf Y^{\ch\lambda-n\ch\alpha_i}_X$.

A point of $\msf Y^{\le_i\ch\lambda}_{X}$ is a map of stacks
\[
    y: C \to X \xt^{P_i} \ol{M_i/N_i} / T 
\]
such that $C\sm v$ is sent to the open substack $\pt = X^\circ/B$. 
In particular, $y$ defines a $P_i \xt T$-bundle together with a trivialization on $C\sm v$, i.e.,
a point of $\Gr_{P_i} \xt \Gr_T$. 
This gives a map 
\begin{equation} \label{e:YPBintoGr}
    \msf Y^{\le_i,\ch\lambda}_{X} \to \Gr_{P_i} \xt \Gr_T^{\ch\lambda}. 
\end{equation}
Since $X \xt \ol{M_i/N_i}$ is affine and $X \xt \ol{M_i/N_i} / (P_i \xt T) \supset 
(X \xt M_i)/ (P_i\xt N_i T) = X/B$ 
contains $\pt$ as an open substack, Lemma~\ref{lem:extendsection} 
implies that \eqref{e:YPBintoGr} is a closed embedding (the argument is
the same as the proof of Lemma~\ref{lem:barYGr}).
Therefore, at the level of reduced schemes we have 
a closed embedding 
\[ (\msf Y^{\le_i\ch\lambda}_{X})_\red \into \Gr_{P_i}. \]
We deduce that $(\msf Y^{\le_i \ch\lambda}_{X})_\red$ is a locally closed 
subscheme of $(\olsf Y^{\ch\lambda}_X)_\red$, since the (reduced) connected components of 
$\Gr_{P_i}$ are locally closed subschemes of $\Gr_G$.
\end{proof}

We are now ready to prove Proposition \ref{prop:f-hyper}:

\begin{proof}[Proof of Proposition \ref{prop:f-hyper}]
Let $\msf b \in \mB_{X^\bullet, \ch\lambda}$.
The generic point of $\msf b \subset \msf Y^{\ch\lambda}_X$ maps to 
$\msf Y^{\ch\lambda,\ch\theta}_{X_i}$ 
for some $\ch\theta \in \mathfrak c_{X_i}^-$.

If $\ch\theta=0$, then in the decomposition \eqref{e:crys-decomp} we have that $\msf b\in \mB_{X_i^\bullet,\ch\lambda}$ and 
$\ch\lambda = \ch\nu_i^\pm$ is the valuation attached to a color of $X_i^\bullet$. In that case, $\ch\nu_i^\pm-\ch\alpha_i \notin \mathfrak c_X$,
so  $\msf Y^{\ch\lambda-\ch\alpha_i,0}_{X_i} = \emptyset$. At the same time, $\msf b$ is a point, as recalled in \S \ref{sect:crystal-ef}.

Now assume $\ch\theta \ne 0$. As in the proof of Proposition \ref{prop:crys-decomp}, the composition $\iota_{X_i}\circ q$  sends $\msf b$ into $\msf S_{M_i}^{\ch\lambda}\cap \ol\Gr_{M_i}^{\ch\theta}$; let $\msf b_i$ be the image. By construction, $\tilde f_i \msf b$ either 
\begin{itemize}
 \item is zero iff $\ch\lambda = \ch\theta$, which happens precisely when $\msf S_{M_i}^{\ch\lambda}\cap \ol\Gr_{M_i}^{\ch\theta} = \{t^{\ch\theta}\}$ is closed in $\ol\Gr_{M_i}^{\ch\theta}$;
 \item or has image (under $\iota_{X_i}\circ q$) equal to an irreducible component $\tilde f_i \msf b_i$ of $\olsf b_i \cap \msf S_{M_i}^{\ch\lambda-\ch\alpha_i}$. Indeed, this is a property of the crystal structure on MV cycles by \cite{BGcrys}.
\end{itemize}

In either case, the closure relations ``downstairs'' lift to closure relations ``upstairs'' under \eqref{e:YXPM3}, as $\msf b$ and $\tilde f_i\msf b$ get identified with the lifts of $\msf b_i$ and $\tilde f_i\msf b_i$ corresponding to \emph{the same} element of $\mB^{P_i}_{X,\ch\theta}$ under the identification of fibers afforded by Lemma \ref{lem:XPfiber}. Therefore, in the first case the closure of $\msf b$ in $\msf Y^{\le_i \ch\lambda}_{X}$ is entirely contained in $\msf Y^{\ch\lambda}$, which by Lemma \ref{lem:CartXi} means that $\olsf b \cap \msf Y^{\ch\lambda-\ch\alpha_i}$ is empty; while in the second case $\tilde f_i\msf b$ corresponds to an irreducible component of $\olsf b \cap \msf Y^{\ch\lambda-\ch\alpha_i}$. 
\end{proof}

\section{Nearby cycles} \label{sect:nearby}

\subsection{Principal degeneration} \label{sect:degeneration}

In this section we will consider the principal degeneration $\mf X \to \mbb A^1$ 
degenerating $X$ to a horospherical variety. This is the base change of
the affine family $\mathscr X \to \ol{T_{X,\mss}}$ from \S\ref{sect:affine-degeneration}
along a certain line in the base. The principal degeneration 
was studied by \cite{Pop86} in characteristic $0$ and \cite{Grosshans92} in positive
characteristic.

We fix a choice of 
a regular dominant coweight $\ch\varrho \in \ch\Lambda^+_G \cap \ch\Lambda^\pos_G$
once and for all; none of our results depend on this choice. 
The image of $\ch\varrho$ lies in $-\Cal V$
and thus induces a map $\mbb A^1 \to \ol{T_{X,\mss}}$. 
We define the \emph{principal degeneration} 
\[ \mf X := \mathscr X \xt_{\ol{T_{X,\mss}}, \ch\varrho} \mbb A^1. \]  
We have a $G \xt \mbb G_m$-morphism 
$\mf X \to \mbb A^1$, where $G$ acts trivially on $\mbb A^1$. 
This forms an affine flat family (\cite[Proposition 9]{Pop86}). 
The fiber over $1 \in \mbb A^1$ is canonically isomorphic to 
$X$ and the fiber over $0\in \mbb A^1$ is $X_\emptyset := \Spec(\on{gr} k[X])$. 
The $\mbb G_m$-action on $\mf X$ induces an isomorphism 
$X \xt \mbb G_m \cong \mf X \xt_{\mbb A^1} \mbb G_m$. 
We also have a canonical isomorphism 
$\mf X /\!\!/ N = (X/\!\!/N) \times \mbb A^1$.

For $k$ of arbitrary characteristic, Grosshans showed that 
there is an injection 
\begin{equation}\label{e:Popov}
    k[X_\emptyset] \to (k[X\sslash N] \ot k[N^-\bs G])^T 
\end{equation}
of $G$-algebras, which is an isomorphism 
if and only if $k[X]$ admits a $G$-module filtration with subquotients
isomorphic to dual Weyl modules of $G$ (\cite[Theorems 8, 16]{Grosshans92}). 
Of course this always holds in characteristic $0$; in positive characteristic 
we will assume that \eqref{e:Popov} is an isomorphism in what follows.

Let $\mf X^\circ$ denote the preimage of $T_X \xt \mbb A^1 \subset X/\!\!/N \xt \mbb A^1$
under the map $\mf X \to \mf X/\!\!/N$. This is the 
open subvariety which specializes 
over each fiber above $\mbb A^1$ to the dense open $B$-orbit of 
that fiber. 
Let $\mf X^\bullet \subset \mf X$ denote the open subvariety which specializes
over each fiber to the open $G$-orbit of that fiber.

\medskip

The isomorphism \eqref{e:Popov} implies that $X_\emptyset$ has a natural left $T_X$-action, 
and the orbit of the coset $N^- 1$ gives an embedding 
$T_X \into X_\emptyset$. 
We will temporarily denote its closure by $\ol{T_X}$.
\begin{lem}  \label{lem:sectionX}
The composition $\ol{T_X} \into X_\emptyset \to X_\emptyset/\!\!/N = X/\!\!/N$ is an 
isomorphism. In other words, we have a section
$X/\!\!/N \into X_\emptyset$. 
\end{lem}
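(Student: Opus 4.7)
The plan is to produce a $T$-equivariant section $\sigma \colon X\sslash N \into X_\emptyset$ of the quotient map $X_\emptyset \to X_\emptyset\sslash N = X\sslash N$, and then to identify its image with $\ol{T_X}$. The starting observation is that the inclusion $T \into N^-\bs G = X_\emptyset^\bullet$ sending $t \mapsto N^-\cdot t$ is a closed embedding, since $B^- = TN^-$ is a closed subgroup of $G$, so $T \cong N^-\bs B^-$ is closed in $N^-\bs G$. Restriction of functions therefore gives a surjective, $T$-equivariant map $k[N^-\bs G] \twoheadrightarrow k[T]$ (for the left $T$-action on both sides). Tensoring with $k[X\sslash N]$ and taking $T$-invariants (our standing assumption $T_X = T$ ensures that $T$ acts on $k[X\sslash N]$ through its identity quotient), the isomorphism \eqref{e:Popov} produces a surjective algebra map
\[
k[X_\emptyset] \cong (k[X\sslash N] \ot k[N^-\bs G])^T \twoheadrightarrow (k[X\sslash N] \ot k[T])^T \cong k[X\sslash N],
\]
where the final isomorphism is induced by the natural pairing of $T$-weight spaces. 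Let $\sigma$ denote the corresponding closed embedding.

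Next, I would verify that $\sigma$ is a section by computing that the composition $k[X\sslash N] = k[X_\emptyset]^N \into k[X_\emptyset] \twoheadrightarrow k[X\sslash N]$ is the identity. This reduces to the following: for each $\chi \in \mathfrak c_X^\vee \subset \Lambda^+_G$ (dominance coming from $\mathfrak c_X \supset \Cal V = \ch\Lambda^-_G$), the unique-up-to-scalar $B$-eigenfunction $f_\chi \in k[N^-\bs G]^N$ of weight $\chi$ restricts nontrivially to $T$ (via the Bruhat decomposition of the open cell $N^- T N$) and may be normalized so that its restriction equals the standard generator $e_\chi \in k[T]$.

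Finally, I would identify $\sigma(X\sslash N)$ with $\ol{T_X}$. By construction, the restriction of $\sigma$ to the open orbit $T \subset X\sslash N$ factors through $X_\emptyset^\bullet = N^-\bs G$ and coincides with the map $t \mapsto N^-\cdot t$; hence $\sigma(T) = T_X \subset X_\emptyset$. Since $\sigma$ is a closed embedding (so its image is closed and irreducible of dimension $\dim T$) and $\ol{T_X}$ is by definition the closure of $T_X$ in $X_\emptyset$, the two must coincide, and the composition with $X_\emptyset \to X\sslash N$ is the identity because $\sigma$ is a section.

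The main technical subtlety lies in the algebraic bookkeeping of the first step: keeping careful track of which copy of $T$ acts on which factor in \eqref{e:Popov} (and with which sign convention), so that the induced surjection really does provide a section rather than merely a $T$-equivariant quotient. Once this is sorted out, the geometric picture --- that $\ol{T_X}$ is the unique $T$-stable closed section of $X_\emptyset \to X\sslash N$ with the prescribed behavior on the open orbit --- makes the remaining identification automatic.
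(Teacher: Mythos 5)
Your proof is correct and follows essentially the same route as the paper: both rest on Popov's isomorphism \eqref{e:Popov} together with the observation that the restriction map $k[N^-\bs G]\to k[T]$ (induced by the closed embedding $T\cong N^-\bs B^-\into N^-\bs G$) sends the isotypic component $k[N^-\bs G]^{(\lambda)}$, i.e.\ the dual Weyl module of highest weight $\lambda$, onto the line $k e^\lambda$, annihilating all non-highest $T$-weight vectors. The paper phrases this as showing directly that the induced map $k[X_\emptyset]\to k[T]$ has image $k[X\sslash N]$; you instead package it as constructing the $T$-equivariant section $\sigma$ from the surjection and then matching its image with $\ol{T_X}$. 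These are two presentations of the same computation, and your filling-in of the weight-pairing bookkeeping and the Bruhat-cell justification for nontriviality of the restriction is consistent with what the paper leaves implicit (and to the cited reference).
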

\begin{proof}
This is essentially a special case of \cite[Proposition 7]{AT}.
For $\lambda \in \Lambda_G^+$, the
isotypic component $k[N^-\bs G]^{(\lambda)}$ is the dual Weyl module
of highest weight $\lambda$. The embedding $T \into N^-\bs G$ 
corresponds to the algebra map 
$k[N^-\bs G] \to k[T]$ that sends $k[N^-\bs G]^{(\lambda)} \to k e^\lambda$ (explicitly it sends all $T$-eigenvectors not of highest weight to zero). 
Thus, using \eqref{e:Popov}, the map $k[X_\emptyset]\to k[T_X]$ 
sends $k[X_\emptyset]^{(\lambda)} \to k e^\lambda$ for $\lambda \in \mf c_X^\vee$.
\end{proof}

\subsubsection{Contracting action} 
Let $s : X/\!\!/N\into \mf X$
denote the composition of the section given by Lemma~\ref{lem:sectionX} and the embedding
$X_\emptyset \into \mf X$ as the zero fiber.
We will construct a $\mbb G_m$-action on $\mf X$ that 
contracts $\mf X$ to the section $s$.

Recall the coweight $\ch\varrho: \mbb G_m \to T$ used to define $\mf X$. 
Let $\mbb G_m$ act on $\mf X$ via the group homomorphism 
$\mbb G_m \to G \xt \mbb G_m : a \mapsto (\ch\varrho(a^{-1}), a)$ and the
natural $G\xt \mbb G_m$-action on $\mf X$. 

\begin{lem} \label{lem:contraction}
The action map $\mbb G_m \times \mf X \to \mf X$ extends to a regular map 
$\mbb A^1 \times \mf X \to \mf X$ such that the composition $0 \times \mf X \to \mbb A^1 \times \mf X \to \mf X$ coincides with the composition $\mf X \to X/\!\!/N \overset{s}\to \mf X$.
\end{lem}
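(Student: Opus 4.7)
The plan is to reduce both parts of the lemma to an explicit computation of the $\mbb G_m$-grading on the affine coordinate ring $k[\mf X]$. Extension of the action to $\mbb A^1\xt \mf X \to \mf X$ is equivalent to this grading being concentrated in non-negative degrees, and the restriction to $\{0\}\xt \mf X \to \mf X$ then corresponds, on functions, to the projection $k[\mf X]\onto k[\mf X]_0$ followed by the inclusion. Consequently, the second claim of the lemma reduces to the identification $k[\mf X]_0 = \pi^*(k[X\sslash N])$, where $\pi: \mf X \to X\sslash N$ denotes the composition $\mf X\to \mf X\sslash N = X\sslash N\xt \mbb A^1 \overset{\pr_1}\to X\sslash N$; under this identification, $\pi^*\circ s^*$ becomes the projection onto the fixed locus, matching $s\circ \pi$.

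To set up the grading computation, I would use the presentation $k[\mf X] = k[\mathscr X]\ot_{k[\ol{T_{X,\mss}}]} k[t]$ coming from the fiber-product definition, together with the decomposition $k[\mathscr X] = \bigoplus_\lambda \Cal F_\lambda\ot e^\lambda$ further refined into $T$-weight pieces. A pure tensor $(f\ot e^\lambda)\ot t^k$ with $f\in \Cal F_\lambda$ a $T$-eigenvector of weight $\nu$ acquires weight $-\brac{\nu,\ch\varrho}$ from the $G$-action of $\ch\varrho(a^{-1})$, weight $\brac{\lambda,\ch\varrho}$ from the $T_X$-action on $e^\lambda$ pulled back via $\ch\varrho$, and weight $k$ from the scaling on $t$, for a total degree $\brac{\lambda-\nu,\ch\varrho}+k$. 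This is consistent with the tensor-product relations $(f\ot e^{\lambda+\mu})\ot 1 = (f\ot e^\lambda)\ot t^{\brac{\mu,\ch\varrho}}$ arising whenever $\brac{\mu,\Cal V}\le 0$, by additivity in $\lambda$.

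Non-negativity of this grading will follow directly from the definition of the filtration $\Cal F_\lambda$ combined with dominance of $\ch\varrho$. For any highest weight $\mu$ of the $G$-submodule $G\cdot f\subset k[X]$, we have $\mu-\nu\in \Lambda^{\pos}_G$, so $\brac{\mu-\nu,\ch\varrho}\ge 0$ since $\ch\varrho$ is a dominant coweight; meanwhile, the definition of $\Cal F_\lambda$ forces $\brac{\lambda-\mu,\Cal V}\le 0$, and since $-\ch\varrho\in\Cal V$ this gives $\brac{\lambda-\mu,\ch\varrho}\ge 0$. Adding the two inequalities yields $\brac{\lambda-\nu,\ch\varrho}\ge 0$; together with $k\ge 0$, this proves non-negativity, so the action extends.

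For the identification of $k[\mf X]_0$, the equation $\brac{\lambda-\nu,\ch\varrho}+k=0$ forces $k=0$ and both $\brac{\mu-\nu,\ch\varrho}$ and $\brac{\lambda-\mu,\ch\varrho}$ to vanish separately. Since $\ch\varrho$ is a regular dominant coweight and $\mu-\nu$ is a non-negative integral combination of positive roots, $\brac{\mu-\nu,\ch\varrho}=0$ forces $\mu=\nu$, so $f$ is a $B$-eigenvector of weight $\mu$; the relation $(f\ot e^\lambda)\ot 1 = (f\ot e^\mu)\ot t^{\brac{\lambda-\mu,\ch\varrho}} = (f\ot e^\mu)\ot 1$ then collapses the class into $\pi^*(f)$. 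Conversely, each $(f_\mu\ot e^\mu)\ot 1$ with $\mu\in \mf c_X^\vee$ and $f_\mu\in k[X]^{(B)}_\mu$ sits in both $k[\mf X]_0$ and $\pi^*(k[X\sslash N])$, giving the desired equality. The main delicate point will be tracking sign conventions between the pullback actions of $G$ and the scaling $\mbb G_m$, and verifying that regularity of $\ch\varrho$ in $\ch\Lambda^+_G$ (rather than in the dominant cone $-\Cal V$ of $\ch G_X$) is enough to force $\mu=\nu$ at the crucial step.
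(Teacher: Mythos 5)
Your argument is correct and matches the paper's proof in substance: both compute the $\mbb G_m$-degree of a typical element of $k[\mf X]$, deduce non-negativity of the grading from dominance of $\ch\varrho$ (giving $\brac{\mu-\nu,\ch\varrho}\ge 0$) together with the Rees filtration inequality $\brac{\lambda-\mu,\Cal V}\le 0$ (giving $\brac{\lambda-\mu,\ch\varrho}\ge 0$ since $-\ch\varrho\in\Cal V$), and then use regularity of $\ch\varrho$ to identify the degree-zero piece with $\pi^*k[X\sslash N]$. Your version just keeps the full tensor-product bookkeeping explicit where the paper collapses $(f\ot e^\lambda)\ot t^k$ into the shorthand $f_\mu e^n$.
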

\begin{proof}
The action map can be described as the map of rings 
\[ k[\mf X] \to k[\mbb G_m] \ot k[\mf X] : f_\mu e^n \mapsto 
e^{n-\brac{\mu,\ch\varrho}} \ot f_\mu e^n  \]
for a $T$-eigenvector $f_\mu \in k[X]_{(\lambda)}$ of weight $\mu$. 
We have $\brac{\mu,\ch\varrho} \le \brac{\lambda,\ch\varrho} \le n$,
so the image of the co-action lies in the subalgebra 
$k[\mbb A^1] \ot k[\mf X]$. 

Moreover, since $\ch\varrho$ is regular dominant, observe that $n-\brac{\mu,\ch\varrho}=0$ 
if and only if $f_\mu$ is a highest weight vector 
and $n=\brac{\lambda,\ch\varrho}$, 
which implies that the composition $0\times \mf X \to \mbb A^1 \times \mf X \to \mf X$ 
factors through $\mf X \to X/\!\!/N \overset s \to \mf X$.
\end{proof}

\subsection{Grinberg--Kazhdan theorem in families}
The map $\mf X \to \mbb A^1$ 
induces a map of formal arc spaces $\msf L^+ \mf X \to \msf L^+ \mbb A^1$. Define
$\msf L^+_{\mbb A^1} \mf X = \msf L^+ \mf X \underset{\msf L^+ \mbb A^1}\times \mbb A^1$
where $\mbb A^1 \into \msf L^+ \mbb A^1$ is the map of constant
arcs. Then $\msf L^+_{\mbb A^1} \mf X$ is an affine scheme over $\mbb A^1$
with fiber over $\mbb G_m$ isomorphic to $\msf L^+X$ and zero fiber isomorphic to 
$\msf L^+ X_\emptyset$. 
While there does not currently exist a theory of nearby cycles for infinite type
schemes, we explain below how the nearby cycles of the ``IC complex'' of 
$\msf L^+ X$ can be modeled by nearby cycles on the global or Zastava model.

\subsubsection{}
From now on we return to assuming that $B$ acts simply transitively on $X^\circ$, 
so $T_X=T$. Note that now \eqref{e:Popov} implies that $X_\emptyset$ is an 
affine embedding of $N^-\bs G$. 

\smallskip

First, we define the analogous models in families: let 
\[ \sM_{\mf X} = \Maps_\gen(C, \mf X/G \supset \mf X^\bullet/G), \quad 
\sY_{\mf X} = \Maps_\gen(C, \mf X/B \supset \mf X^\circ/B), \]
where $\mf X^\circ/B = \mbb A^1$.  
Since $C$ is proper, $\sM_{\mf X}$ maps to $\mbb A^1$ 
with generic fiber isomorphic to $\sM_X$ and special fiber $\sM_{X_\emptyset}$.
The arguments of \S\ref{sect:models} easily generalize to
show that $\sM_{\mf X}$ is an algebraic stack locally of finite type over $\mbb A^1$

The $B$-equivariant map $\mf X \to X\sslash N \xt \mbb A^1$ induces
a map $\sY_{\mf X} \to \sA \xt \mbb A^1$. We think of $\sY_{\mf X}\to \mbb A^1$
as a family degenerating $\sY=\sY_X$ to $\sY_\emptyset := \sY_{X_\emptyset}$. 
For $\ch\lambda\in \mf c_X$, let $\sY_{\mf X}^{\ch\lambda}$
denote the preimage of $\sA^{\ch\lambda}$ under $\pi_{\mf X} : \sY_{\mf X} \to \sA$.
We summarize the properties of $\sY_{\mf X}$ below; the proofs
are easy generalizations of those for $\sY$. 
\begin{itemize}
\item $\sY_{\mf X}^{\ch\lambda}$ is representable by a finite type scheme,
\item $\sY_{\mf X}$ satisfies the graded factorization property \emph{in families},
in the sense that there is a natural isomorphism 
\[ 
\Scr Y_{\mf X}^{\ch\lambda} \underset{\Scr A^{\ch\lambda}}\times 
(\Scr A^{\ch\lambda_1} \oo\xt \Scr A^{\ch\lambda_2}) 
\cong 
(\Scr Y_{\mf X}^{\ch\lambda_1} \underset{\mbb A^1}\times \Scr Y_{\mf X}^{\ch\lambda_2})|_{\Scr A^{\ch\lambda_1}\oo\times \Scr A^{\ch\lambda_2}} 
\]
when $\ch\lambda_1+\ch\lambda_2=\ch\lambda$.
\item There exists a closed embedding $\sY_{\mf X} \into \Gr_{B,\Sym C} \xt \mbb A^1$. 
\end{itemize}

The models $\sM_{\mf X}, \sY_{\mf X}$ are smooth-locally isomorphic 
as families, by the following generalization of Lemma~\ref{lem:localglobalyoga}: 
Let $\sY_{\mf X^\bullet} = \Maps_\gen(C, \mf X^\bullet/B \supset \mf X^\circ/B)$. 

\begin{lem}\label{lem:yoga-fam}
For fixed $\ch\lambda\in \mf c_X$ and any $\ch\mu\in \ch\Lambda^\pos_G$ large enough,
there is a correspondence 
\begin{equation}\label{e:Ycorrespondence}
    \Scr Y^{\ch\lambda}_{\mf X} \leftarrow \Scr Y^{\ch\lambda}_{\mf X} \mathbin{\underset{\mbb A^1}{\oo\times}} \Scr Y^{\ch\mu}_{\mf X^\bullet} \to \Scr M_{\mf X} 
\end{equation}
over $\mbb A^1$, where the left arrow is smooth surjective and the right arrow is smooth. 
\end{lem}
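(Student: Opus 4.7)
My plan is to follow the proof of Lemma~\ref{lem:localglobalyoga} closely, carrying out every step relatively over $\mbb A^1$. The main preliminary is to upgrade the graded factorization property (Proposition~\ref{prop:factorization}) to the family $\sY_{\mf X}$. The gluing argument in \emph{loc.\ cit.\ } is purely local on the curve $C$, and for any two factors $y_1\in \sY^{\ch\lambda_1}_{\mf X}(S),\, y_2\in \sY^{\ch\lambda_2}_{\mf X}(S)$ that glue to an $S$-point of $\sY^{\ch\lambda_1+\ch\lambda_2}_{\mf X}$, their images in $\mbb A^1(S)$ must automatically coincide since $\mf X/B\to\mbb A^1$ factors through the coarse quotient and $C$ is connected. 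This yields a natural \'etale map
\[
\sY^{\ch\lambda}_{\mf X} \mathbin{\underset{\mbb A^1}{\oo\times}} \sY^{\ch\mu}_{\mf X^\bullet} \to \sY^{\ch\lambda+\ch\mu}_{\mf X}
\]
over $\mbb A^1$, for any $\ch\mu\in \ch\Lambda^\pos_G$.

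For the right arrow in \eqref{e:Ycorrespondence}, the open embedding $\sY^{\ch\lambda+\ch\mu}_{\mf X} \into \sM_{\mf X} \xt_{\Bun_G} \Bun_B^{-(\ch\lambda+\ch\mu)}$ combined with Lemma~\ref{lem:BunBtoGsm}---whose assertion is insensitive to the spherical variety in question---shows that $\sY^{\ch\lambda+\ch\mu}_{\mf X} \to \sM_{\mf X}$ is smooth for $\ch\mu \gg 0$. Composing with the \'etale factorization map above gives the smooth right arrow. For the left arrow, which is the first projection from a fiber product over $\mbb A^1$, smoothness is equivalent to smoothness of the structure map $\sY^{\ch\mu}_{\mf X^\bullet} \to \mbb A^1$ for $\ch\mu$ large. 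This decomposes as $\sY^{\ch\mu}_{\mf X^\bullet} \to \sM_{\mf X^\bullet} \to \mbb A^1$, where the first map is smooth by the same argument as before. Smoothness of the second map holds because $\mf X^\bullet \to \mbb A^1$ is itself smooth: it is flat by the construction of the principal degeneration in \S\ref{sect:degeneration}, and both fibers $X^\bullet = H\bs G$ (over $\mbb G_m$) and $X_\emptyset^\bullet = N^-\bs G$ (over $0$) are smooth homogeneous $G$-varieties of the same dimension (cf.\ Lemma~\ref{lem:H_I-conn}); smoothness of $\sM_{\mf X^\bullet} = \Maps(C, \mf X^\bullet/G)\to \mbb A^1$ then follows by standard deformation theory for mapping stacks into smooth targets with proper smooth source.

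Surjectivity of the left arrow is checked pointwise: given $y\in \sY^{\ch\lambda}_{\mf X}(k)$ lying over $t\in \mbb A^1(k)$ with $B$-degeneracy locus $D\subset \abs C$, I would apply Corollary~\ref{cor:Mcover}(ii) to the spherical $G$-variety $\mf X_t$ to produce a lift $y'\in \sY^{\ch\mu}_{\mf X^\bullet_t}(k)$ for any $\ch\mu$ in a suitable translate of $\ch\Lambda^\pos_G$, and then use the factorization map for $\sY_{\mf X^\bullet}$ to translate the support of $y'$ off of $D$. The point I expect to require the most care is the relative smoothness of $\sM_{\mf X^\bullet} \to \mbb A^1$: in positive characteristic this should be deduced from Artin's representability criteria applied to the smooth algebraic stack $\mf X^\bullet/G \to \mbb A^1$, together with the smoothness assumptions on $\mathscr X$ imposed at the start of \S\ref{sect:degeneration}, which make the behavior in our setting parallel to the characteristic-zero picture.
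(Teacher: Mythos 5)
Your approach tracks the paper's own proof quite closely: both redo the argument of Lemma~\ref{lem:localglobalyoga} relative to $\mbb A^1$, with the family factorization property of $\sY_{\mf X}$ and the smoothness of $\Bun_B^{-(\ch\lambda+\ch\mu)}\to\Bun_G$ handling the right arrow, and the genuinely new ingredient being smoothness of $\sM_{\mf X^\bullet}\to\mbb A^1$.

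The one step whose justification does not hold up is precisely that new ingredient. You deduce smoothness of $\sM_{\mf X^\bullet}=\Maps(C,\mf X^\bullet/G)\to\mbb A^1$ from smoothness of $\mf X^\bullet\to\mbb A^1$, citing ``standard deformation theory for mapping stacks into smooth targets with proper smooth source.'' No such general principle exists: for a proper smooth curve $C$ and a smooth target $Y$, the mapping stack $\Maps(C,Y)$ is usually \emph{not} smooth, because the obstruction to lifting a map $f$ over a square-zero extension lives in $\mathbb H^1(C,f^*T_Y)$, which has no reason to vanish. What saves the argument here, and what the paper's own proof singles out, is that $\mf X^\bullet/G$ is not merely smooth over $\mbb A^1$ but is the classifying stack $B_{\mbb A^1}\mbb H$ of a smooth affine group scheme $\mbb H$ over $\mbb A^1$ (the inertia scheme pulled back along a section of $\mf X^\circ\to\mbb A^1$ obtained from Proposition~\ref{prop:noncanon-section}; its smoothness is what the proof of Corollary~\ref{cor:stab-conn} establishes). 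For a classifying stack the relative tangent complex is concentrated in degree $-1$, so $\mathbb H^1(C,f^*T_{(\mf X^\bullet/G)/\mbb A^1})$ reduces to $H^2$ of a coherent sheaf on a curve, which vanishes; equivalently, $\Maps(C,B_{\mbb A^1}\mbb H)$ is a relative moduli of $\mbb H$-torsors on $C$, which is smooth over $\mbb A^1$. The same vanishing can be phrased without choosing a section: since $G$ acts transitively on every fiber of $\mf X^\bullet\to\mbb A^1$, the degree-$0$ cohomology $\on{coker}\bigl(\mf g\to T_{\mf X^\bullet/\mbb A^1}\bigr)$ of the relative tangent complex of $\mf X^\bullet/G$ vanishes, so the obstruction space is zero. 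You do note the homogeneity of the fibers, which is the right geometric input, but the deduction you draw from it is attributed to a false general principle rather than to the concentration of the tangent complex in degree $-1$. The remaining steps (family factorization, surjectivity of the left arrow via nonemptiness of $\sY^{\ch\mu}_{\mf X^\bullet_t}$) are fine.
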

The proof is the same as in \emph{loc cit.}, together with the observation that
$\Maps(C, \mf X^\bullet/G)$ is smooth because $\mf X^\bullet/G$ is the classifying stack
of a smooth group scheme over $\mbb A^1$. 

\subsubsection{}
Fix an arc $\gamma_0 \in X_\emptyset(k\tbrac t) \cap X_\emptyset^\bullet(k\lbrac t)$
and consider $\gamma_0$ as a point in $\msf L^+_{\mbb A^1} \mf X (k)$.

\begin{thm}[Grinberg--Kazhdan, Drinfeld]  \label{thm:DGKfamily}
There exists a point $y \in \Scr Y_\emptyset(k) \subset \Scr Y_{\mf X}(k)$
such that the formal neighborhood $(\wh{\msf L^+_{\mbb A^1} \mf X})_{\gamma_0}$ 
is isomorphic to $\wh{\mbb A}^\infty \times \wh{\Scr Y}_{\mf X, y}$. 
\end{thm}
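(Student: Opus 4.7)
The plan is to adapt Drinfeld's proof of the Grinberg--Kazhdan theorem from \cite[\S 4.2--4.3]{Drinfeld} (cf.\ the argument for Theorem~\ref{thm:DGK}) to the family setting, using the crucial fact that the Zastava model is of finite type over $\mbb A^1$ \emph{fiberwise}, as noted just after Lemma~\ref{lem:yoga-fam}. The key observation is that $\mf X^\circ/B = \mbb A^1$ is smooth of finite type, and although it is no longer a point, the pointy stack argument still applies uniformly to the family because we take arcs in $\msf L^+_{\mbb A^1}\mf X$ that are \emph{constant} in the $\mbb A^1$-direction, so no new generic degrees of freedom appear from the base.

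First, I would construct the point $y \in \Scr Y_\emptyset(k)$. Pick a point $v \in \abs C$ together with an identification $\mf o_v \cong k\tbrac t$. The arc $\gamma_0 : \Spec\mf o \to X_\emptyset$ satisfies $\gamma_0(\Spec F) \subset X^\bullet_\emptyset$, so the induced datum on the formal disk at $v$ lifts generically to $X^\circ_\emptyset = B$. Using Lemma~\ref{lem:B-L} to glue this formal datum with the constant map $C \sm v \to \pt \subset X_\emptyset/B$, we obtain $y : C \to X_\emptyset/B$ with $y^{-1}(\pt) = C \sm v$ and $y|_{\Spec \mf o_v} = \gamma_0$. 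Viewing $\Scr Y_\emptyset \subset \Scr Y_{\mf X}$ as the zero fiber, this is our $y$.

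Second, following Drinfeld, I would construct a natural map $\Phi: (\wh{\msf L^+_{\mbb A^1} \mf X})_{\gamma_0} \to (\wh{\sY_{\mf X}})_y$: given a deformation $\tilde\gamma$ of $\gamma_0$ as an arc into $\mf X$ projecting to a constant in $\mbb A^1$, restrict $\tilde\gamma$ to the formal punctured disk, compose with the open embedding $\mf X^\bullet \into \mf X$, and descend to $\mf X^\circ/B=\mbb A^1$ away from $v$ (the Beauville--Laszlo gluing works as in Lemma~\ref{lem:B-L}). This produces a map $C \to \mf X/B$ that is $\tilde\gamma$ near $v$ and lands in $\mbb A^1 \subset \mf X/B$ on $C \sm v$. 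Conversely, a $\wh{\mbb A}^\infty$-direction inside $(\wh{\msf L^+_{\mbb A^1}\mf X})_{\gamma_0}$ is provided by the formal completion of the space of sections $C \sm v \to \mf X^\bullet$ (restricted to arcs constant in $\mbb A^1$), which away from finitely many ``generically smooth'' directions contributes only to the infinite-dimensional smooth factor.

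The heart of Drinfeld's argument then shows that the combined map $(\wh{\msf L^+_{\mbb A^1}\mf X})_{\gamma_0} \to (\wh{\sY_{\mf X}})_y \times \wh{\mbb A}^\infty$ is an isomorphism: one uses that $\mf X$ is affine and that $\mf X^\circ/B$ is smooth to reduce, order by order in the arc variable, to a linear-algebraic problem where the ``smooth directions'' decouple. The main technical obstacle is to carry through this order-by-order analysis uniformly in the family parameter $a \in \mbb A^1$ and, in particular, to verify that the finite-type model $\sY_{\mf X}^{\ch\lambda}$ absorbs exactly the right number of ``non-smooth'' deformations (including the one-parameter deformation of $a$ itself). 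This works because the inclusion $\mf X^\circ = \mbb A^1 \times T \into \mf X$ is a locally closed embedding with smooth open image, making the pointy-stack machinery applicable to $\mf X/B \supset \mf X^\circ/B = \mbb A^1$ with $\mbb A^1$ simply providing a finite-dimensional ``parameter slice'' that is accounted for in $\sY_{\mf X}$ rather than in $\wh{\mbb A}^\infty$.
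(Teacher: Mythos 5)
Your first two steps (constructing $y$ via Beauville--Laszlo, and defining the map $\Phi: (\wh{\msf L^+_{\bbA^1}\mf X})_{\gamma_0} \to \wh{\sY_{\mf X, y}}$ from an arc by descending to $\mf X/B$) match the paper's argument. However, your third paragraph misidentifies the source of the $\wh{\bbA}^\infty$ factor, and this is a genuine gap rather than a stylistic difference.

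You claim the $\wh{\bbA}^\infty$ direction comes from ``the formal completion of the space of sections $C \sm v \to \mf X^\bullet$.'' But once you glue via Lemma~\ref{lem:B-L}, the data on $C \sm v$ is forced: the map lands in $\mf X^\circ/B = \bbA^1$, determined by the projection to the base, with no residual freedom. There are no deforming sections over $C\sm v$ to supply infinitely many directions. The correct source of $\wh{\bbA}^\infty$ in the paper (Proposition~\ref{prop:DGKtorsor}) is the fiber of $\Phi$ itself: an $R$-point of $\wh{\sY}_{\mf X,y}$ records a $B$-bundle datum over $\Spec R\tbrac t$ and a section into $\mf X\times^B(\text{that bundle})$, whereas an arc $\gamma \in (\wh{\msf L^+_{\bbA^1}\mf X})_{\gamma_0}(R)$ in addition records a \emph{trivialization} of that $B$-bundle on $\Spec R\tbrac t$ (compatible with the reduction to $\gamma_0$). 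The fiber of $\Phi$ is therefore a torsor under $(\wh{\msf L^+B})_1$, which is non-canonically $\wh{\bbA}^\infty$. Your proposal never identifies this torsor structure; instead it invokes an ``order-by-order'' reduction ``where the smooth directions decouple,'' but gives no mechanism for why the fibers of $\Phi$ should be formally smooth of the right shape. Since the torsor statement is precisely what one needs to prove (it requires the observation that a $B$-bundle on $\Spec R\tbrac t$ can always be trivialized and that $\gamma_0 \in X_\emptyset^\circ(k\lbrac t) \cong B(k\lbrac t)$ lets one compare two trivializations), omitting it leaves the argument incomplete at its central step.
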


\begin{proof}
Fix a point $v \in \abs C$ and an identification $\mf o_v \cong k\tbrac t$. 
Note that each orbit in $X_\emptyset^\bullet(F_v)/G(\mf o_v)$
has a representative in $X_\emptyset^\circ(F_v)$. 
Thus, by $G(\mf o_v)$-translation we may assume that $\gamma_0 \in X_\emptyset(\mf o_v) \cap X_\emptyset^\circ(F_v)$.
We may consider $\gamma_0$ as a section $\Spec \mf o_v \to X_\emptyset \xt^B \hat\sP_B^0$
where $\hat\sP_B^0$ is the trivial $B$-bundle on $\Spec \mf o_v$. 
By Lemma~\ref{lem:B-L}, this is equivalent to a map
$y: C \to X_\emptyset/B$ with $y(C\sm v) = \pt$. 
This is the point $y\in \sY_\emptyset(k)$ we will use. 

\smallskip

Next, we define a map of formal schemes
\begin{equation}  \label{e:LXtoY}
(\wh{\msf L^+_{\mbb A^1} \mf X})_{\gamma_0} \to \wh{\Scr Y}_{\mf X,y} 
\end{equation}
as follows: 
Formal schemes are determined by
their $R$-points where $R$ is a local commutative
$k$-algebra with residue field $k$ whose maximal ideal $\mf m$ is nilpotent. 
Let $\gamma : \Spec R\tbrac t \to \mf X$ be an $R$-point 
of $(\wh{\msf L^+_{\mbb A^1} \mf X})_{\gamma_0}$. 
The reduction modulo $\mf m$ of $\gamma|_{\Spec R\lbrac t}$ 
equals $\gamma_0|_{\Spec k\lbrac t} \in \mf X^\circ(k\lbrac t)$. 
Since $k\lbrac t$ is the unique closed point of $R\lbrac t$, 
we deduce that $\gamma|_{\Spec R\lbrac t}$ has image contained in $\mf X^\circ$. 
Consider $\gamma$ as a section $\Spec(R \hot \mf o_v) \to \mf X \xt^B \hat\sP_B^0$
where $\hat\sP_B^0$ is the trivial $B$-bundle. 
By an easy generalization of Lemma~\ref{lem:B-L}, 
the pair $(\hat\sP^0_B, \gamma)$ is equivalent to a map 
$\tilde y : C\times \Spec(R) \to \mf X/B$
such that $\tilde y|_{(C\sm v)\times \Spec R}$
factors through $\Spec(R) \to \mbb A^1 = \mf X^\circ/B$.
We define \eqref{e:LXtoY} on $R$-points by sending 
$\gamma \mapsto \tilde y$.

Since $(\wh{\msf L^+ B})_1$ is non-canonically isomorphic to 
$\wh{\mbb A}^\infty$, the theorem follows from 
the proposition below. 
\end{proof}

\begin{prop}\label{prop:DGKtorsor}
Let $y : C\to X_\emptyset/B$ satisfy $y(C\sm v)=\pt$ and $y|_{\Spec \mf o_v}$ corresponds
to $(\hat\sP_B^0, \gamma_0)$. 
Then the map \eqref{e:LXtoY} is a $(\wh{\msf L^+ B})_1$-torsor. 
\end{prop}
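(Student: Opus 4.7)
The plan is to identify the source $(\wh{\msf L^+_{\mbb A^1}\mf X})_{\gamma_0}$ with the formal completion of an explicit $\msf L^+B$-torsor over $\wh{\Scr Y}_{\mf X, y}$, which will produce the torsor structure in one stroke. Introduce the Zastava analog of the stack $\widehat\sM_X$ from \S\ref{sect:Glevel}: let $\widehat{\Scr Y}_{\mf X}$ parametrize triples $(\Scr P_B, \sigma, \tau)$ with $(\Scr P_B, \sigma)\in \Scr Y_{\mf X}(S)$ and $\tau$ a trivialization of $\Scr P_B|_{\Spec\mf o_v\times S}$. The forgetful map $\widehat{\Scr Y}_{\mf X} \to \Scr Y_{\mf X}$ is an $\msf L^+B$-torsor. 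On the other hand, the trivialization $\tau$ identifies $\sigma|_{\Spec\mf o_v\times S}$ with an arc $\Spec\mf o_v \times S \to \mf X$, producing a natural map $\varphi : \widehat{\Scr Y}_{\mf X} \to \msf L^+_{\mbb A^1}\mf X$. If $\tau_0$ denotes the trivialization of $\Scr P_B|_{\Spec\mf o_v}$ implicit in the datum $(\hat{\Scr P}_B^0, \gamma_0)$ defining $y$, then $\varphi(y, \tau_0) = \gamma_0$. The heart of the proof is to verify that $\varphi$ induces an isomorphism of formal completions
\[
\wh\varphi : \wh{\widehat{\Scr Y}_{\mf X}}{}_{(y, \tau_0)} \overset\sim\to (\wh{\msf L^+_{\mbb A^1}\mf X})_{\gamma_0}.
\]
Granting this, the proposition follows immediately, since restricting the $\msf L^+B$-torsor $\widehat{\Scr Y}_{\mf X} \to \Scr Y_{\mf X}$ to these formal completions yields a $(\wh{\msf L^+B})_1$-torsor, and this restricted torsor gets transported by $\wh\varphi$ to the map \eqref{e:LXtoY}.

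To prove that $\wh\varphi$ is an isomorphism, I will construct its inverse on $R$-points, with $R$ a local Artinian $k$-algebra with residue field $k$ and nilpotent maximal ideal $\mf m$. Given a deformation $\gamma : \Spec R\tbrac t \to \mf X$ of $\gamma_0$, note first that the locus in $\msf L^+_{\mbb A^1}\mf X$ of arcs whose generic fiber lies in $\mf X^\circ$ is open, and contains $\gamma_0$ by the reduction $\gamma_0\in X_\emptyset^\circ(F_v)$ performed in the proof of Theorem~\ref{thm:DGKfamily}; since $\mf m$ is nilpotent, $\gamma$ lies in this open locus as well. Therefore, the pair consisting of the trivial $B$-bundle $\hat{\Scr P}_B^0$ on $\Spec R\tbrac t$ together with $\gamma$ viewed as a section of $\mf X\xt^B\hat{\Scr P}_B^0 = \mf X$ satisfies the hypotheses of Lemma~\ref{lem:B-L} applied to the affine $B$-variety $\mf X$ over $\mbb A^1$ and the divisor $v\times \Spec R \subset C\times \Spec R$. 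The lemma then descends it uniquely to a triple $(\Scr P_B, \sigma, \tau)$ on $C\times \Spec R$ of the kind parametrized by $\widehat{\Scr Y}_{\mf X}$, and the uniqueness clause of the lemma makes this construction inverse to $\varphi$ on $R$-points.

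The main technical point to check is that this bijection on $R$-points actually assembles into an isomorphism of formal schemes, equivalently that the equivalence of Lemma~\ref{lem:B-L} is compatible with arbitrary base change in the test scheme $S$. This compatibility is built into the formulation of \emph{loc.~cit.}\ via the general form of Beauville--Laszlo descent in \cite[Theorem 2.12.1]{BD}, so the verification is a routine unwinding of definitions. The only delicate point is reconciling the fact that $\widehat{\Scr Y}_{\mf X}$ is an algebraic stack locally of finite type while $\msf L^+_{\mbb A^1}\mf X$ is of infinite type; what makes the formal completions match is precisely the pointiness of $\mf X/B$ over $\mbb A^1$, which forces formal deformations of $y$ to be controlled entirely by their restriction to the formal neighborhood of $v$ — the same mechanism underlying Theorem~\ref{thm:DGK}.
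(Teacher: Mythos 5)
Your proof is correct and uses essentially the same mechanism as the paper's: the Beauville--Laszlo descent of Lemma~\ref{lem:B-L} to identify Zastava points with $\Spec R\tbrac t$-valued data, plus the trivializability of $B$-bundles on $\Spec R\tbrac t$ (for $R$ local) to produce the $\msf L^+B$-freedom. The only difference is organizational: you package the torsor structure by first constructing the explicit level-structure torsor $\widehat{\Scr Y}_{\mf X}\to\Scr Y_{\mf X}$ and identifying its formal completion with the source, whereas the paper argues directly on the fibers of \eqref{e:LXtoY} over $R$-points of $\wh{\Scr Y}_{\mf X,y}$.
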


\begin{proof}
Let $(R,\mf m)$ as above.  
By a generalization of Lemma~\ref{lem:B-L}, an $R$-point of $\wh{\Scr Y}_{\mf X,y}$ is
equivalent to a map $\bar\gamma : \Spec R \tbrac t \to \mf X/B$ such that 
$\bar\gamma|_{\Spec R\lbrac t}$ factors through $\Spec R \to \mbb A^1$. 
The fiber of \eqref{e:LXtoY} over $\bar\gamma$ parametrizes 
maps $\gamma : \Spec R \tbrac t \to \mf X$ that induce $\bar\gamma$ 
and whose reduction modulo $\mf m$ equals $\gamma_0$.
Since any $B$-bundle on $\Spec R\tbrac t$ can be trivialized, 
we see that $(\wh{\msf L^+B})_1(R)$ acts simply transitively on this fiber, since 
$\gamma_0 \in X^\circ_\emptyset(k\lbrac t)\cong B(k\lbrac t)$.
\end{proof}

\subsection{Results on nearby cycles} \label{sect:results-nearby}
We will now consider nearby cycles on the global and Zastava models. 

\subsubsection{}
Fix $\ch\lambda \in \mathfrak c_X$ and consider the family 
$f:\Scr Y^{\ch\lambda}_{\mf X} \to \mbb A^1$. 
We have complementary embeddings 
\[ \Scr Y^{\ch\lambda}_\emptyset \overset i\into \Scr Y^{\ch\lambda}_{\mf X} \overset j\hookleftarrow \Scr Y^{\ch\lambda} \times \mbb G_m \]
where $i,j$ correspond to $f^{-1}(0), f^{-1}(\mbb G_m)$, respectively.
We let 
\[ \Psi_\sY : \rmD^b_c(\sY^{\ch\lambda} \times \mbb G_m) \to \rmD^b_c(\sY_\emptyset^{\ch\lambda})
\] 
denote the nearby cycles functor 
defined in \cite[Expos\'e XIII]{SGA7-2}, shifted by $1$ so that
$\Psi_\sY$ is perverse $t$-exact.  

Let $\Psi_\sY^u$ denote the direct summand where the monodromy operator acts unipotently. 
\begin{rem}
Since $\mf X$ has a $\mbb G_m$-action making $f$ equivariant
and $\sF \bt \IC_{\mbb G_m}$ is $\mbb G_m$-equivariant for any sheaf $\sF$ on $\sY^{\ch\lambda}$,
a standard argument (\cite[Remark 14]{AB}) shows that $\Psi_\sY(\sF \bt \IC_{\mbb G_m}) = \Psi_\sY^u(\sF \bt \IC_{\mbb G_m})$, i.e., the monodromy is unipotent. 
\end{rem}

By $t$-exactness, $\Psi_\sY(\IC_{\sY^{\ch\lambda}\times \mbb G_m})$ is also perverse. 
In this section we will compute the image 
of $\Psi_\sY(\IC_{\sY^{\ch\lambda} \times \mbb G_m})$ in the Grothendieck group
of $\rmD^b_c(\sY^{\ch\lambda}_\emptyset)$ (equivalently, that of $\rmP(\sY^{\ch\lambda}_\emptyset)$); 
this determines the semisimplification of $\Psi_\sY(\IC_{\sY^{\ch\lambda} \times \mbb G_m})$
in the Artinian category $\rmP(\sY^{\ch\lambda}_\emptyset)$
over the algebraically closed field $k$. 

\smallskip

There is an analogous global family $\Scr M_{\mf X} \to \mbb A^1$ 
and complementary embeddings 
\[ \Scr M_{X_\emptyset} \into \Scr M_{\mf X} \hookleftarrow \Scr M_X \times \mbb G_m.  \]
Denote the associated unipotent nearby cycles functor by 
$\Psi_{\Scr M} : \rmD^b_c(\Scr M_X\times \mbb G_m) \to \rmD^b_c(\Scr M_{X_\emptyset})$.
We will simultaneously compute 
$[\Psi_{\Scr M}(\IC_{\Scr M_X\times \mbb G_m})]$.

\subsubsection{Stratifications in the horospherical case} \label{sect:horo-strat}
Before proceeding, we give a more concrete description of the stratifications
on $\sY_\emptyset, \sM_{X_\emptyset}$ using \eqref{e:Popov}. 

Since $T_X=T$, we have $X_\emptyset^\bullet = N^-\bs G$ and 
$\Scr M_{X_\emptyset^\bullet} = \Bun_{N^-}$. 
The isomorphism \eqref{e:Popov} induces a map of affine schemes
$X/\!\!/N \times^T \ol{N^-\bs G} \to X_\emptyset$, which in turn induces
a map of stacks 
\[ \bar\iota_{\Scr M} : \Scr A \underset{\Bun_T} \times \ol\Bun_{B^-} \to \Scr M_{X_\emptyset}. \]
Let $\iota_{\Scr M} : \Scr A \underset{\Bun_T}\times \Bun_{B^-} \into \Scr M_{X_\emptyset}$ 
denote the restriction. 
For $\ch\lambda \in \mf c_X$, let $\iota^{\ch\lambda}_{\Scr M}, \bar\iota^{\ch\lambda}_{\Scr M}$ 
denote the maps corresponding to $\sA^{\ch\lambda}$.
The following is a variant of \cite[Proposition 1.2.7]{BG}, whose proof we leave to the reader.
\begin{prop}
The map $\bar\iota_{\Scr M}^{\ch\lambda}$ is finite, and its restriction $\iota_{\Scr M}^{\ch\lambda}$ is 
a locally closed embedding. 
\end{prop}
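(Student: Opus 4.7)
The plan is to adapt Braverman--Gaitsgory's proof of \cite[Proposition 1.2.7]{BG}---which establishes the analogous statement for the special case $X = \ol{N\bs G}^{\mathrm{aff}}$ (so that $\Scr M_{X_\emptyset} = \ol\Bun_{N^-}$)---to our more general affine spherical setting. The essential tool is the isomorphism \eqref{e:Popov}, which ``splits'' $k[X_\emptyset]$ as $(k[X\sslash N] \ot k[N^-\bs G])^T$; this splitting allows us to recover, from any generic map $\sigma : C \to X_\emptyset/G$ lying in the image of $\bar\iota_{\Scr M}^{\ch\lambda}$, both an $\Scr A^{\ch\lambda}$-component and an $\ol\Bun_{B^-}^{-\ch\lambda}$-component in a canonical way.

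For the locally closed embedding assertion, I would exhibit an inverse of $\iota_{\Scr M}^{\ch\lambda}$ on its image. Given $(\Scr P_G,\sigma)$ in the image, the $\Scr A^{\ch\lambda}$-component $\alpha$ is recovered by composing $\sigma$ with the $G$-invariant projection $X_\emptyset \to X\sslash N$ (coming from the section of Lemma~\ref{lem:sectionX}, on which $G$ acts trivially by \eqref{e:Popov}). Simultaneously, the fact that $\sigma$ generically lands in $X_\emptyset^\bullet/G = \pt/N^-$ gives a generic $N^-$-reduction of $\Scr P_G$, which together with the Levi $T$-bundle determined by $\alpha$ produces a generic $B^-$-structure. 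The condition that this $B^-$-structure extends \emph{regularly} to $\Bun_{B^-}^{-\ch\lambda}$ (as opposed to only to $\ol\Bun_{B^-}^{-\ch\lambda}$) is open, so $\iota_{\Scr M}^{\ch\lambda}$ exhibits $\Scr A^{\ch\lambda} \xt_{\Bun_T} \Bun_{B^-}^{-\ch\lambda}$ as a locally closed substack.

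For finiteness of $\bar\iota_{\Scr M}^{\ch\lambda}$, I would verify properness and quasi-finiteness separately. Properness reduces, via the valuative criterion, to extending a family $(\alpha_R,\Scr P_{B^-,R})$ from $\Spec \on{Frac}(R)$ to $\Spec R$ given an extension on the $\Scr M_{X_\emptyset}$ side: properness of Drinfeld's compactification $\ol\Bun_{B^-}^{-\ch\lambda} \to \Bun_G$ restricted to the relevant degree extends the $\ol\Bun_{B^-}$-component, while projectivity of the irreducible components of $\Scr A^{\ch\lambda}$ (which are finite unions of products $\prod C^{(N_{\ch\mu})}$ by Proposition~\ref{prop:Astrata}) extends the $\Scr A$-component; compatibility of the two extensions follows from uniqueness of the splitting. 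Quasi-finiteness follows by extending the monomorphism argument stratum-by-stratum to each defect stratum ${_{\ch\nu}}\ol\Bun_{B^-}^{-\ch\lambda}$, using an enhanced recovery procedure that accommodates the generalized $B^-$-structure.

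The main obstacle will be this stratum-by-stratum analysis: when $\sigma$ arises from a point with defect $\ch\nu$ in $\ol\Bun_{B^-}^{-\ch\lambda}$, one must verify that the splitting procedure above correctly accounts for the interaction between the defect and the partition data of $\Scr A^{\ch\lambda}$, so that the fiber of $\bar\iota_{\Scr M}^{\ch\lambda}$ over the image point is a single geometric point. Tracking this interaction---completely analogously to \cite[Proposition 1.2.7]{BG} but with the geometry of $X_\emptyset$ playing the role of $\ol{N^-\bs G}^{\mathrm{aff}}$---constitutes the bulk of the technical work, and is why the authors leave the details to the reader.
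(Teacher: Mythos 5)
Your overall strategy (adapt BG Proposition 1.2.7, using \eqref{e:Popov} to split $X_\emptyset$ as $(X\sslash N)\times^T \ol{N^-\bs G}$ and then run a monomorphism argument plus a valuative criterion) is the right one, and since the paper only gestures at BG without writing a proof, your plan is essentially what is intended.

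One substantive correction is needed, though. You say that $\alpha$ is recovered by composing $\sigma$ with ``the $G$-invariant projection $X_\emptyset \to X\sslash N$ (coming from the section of Lemma~\ref{lem:sectionX}, on which $G$ acts trivially).'' The GIT quotient $X_\emptyset \to X\sslash N$ is $N$-invariant and $B$-equivariant, but certainly not $G$-invariant, and the image $s(X\sslash N) = \ol{T_X}$ of the section in Lemma~\ref{lem:sectionX} is $B^-$-stable but not $G$-stable, so neither object gives you a $G$-invariant retraction to a variety. What is actually available, and what you need, is the map of \emph{stacks}
\[
X_\emptyset/G \;\cong\; \bigl((X\sslash N)\times^T \ol{N^-\bs G}\bigr)/G \;\longrightarrow\; (X\sslash N)/T,
\]
which is well defined because $G$ acts only on the $\ol{N^-\bs G}$-factor of the twisted product. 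Composing $\sigma : C \to X_\emptyset/G$ with this map (and noting that it sends $X_\emptyset^\bullet/G = \pt/N^-$ to $\pt = T/T$, so the genericity condition is preserved) is what produces $\alpha \in \sA$ and, in particular, the $T$-bundle $\Scr P_T$. Once the target of the projection is corrected in this way, the rest of your argument goes through: the generic $N^-$-reduction plus $\Scr P_T$ gives a generic $B^-$-reduction, regularity of its extension is open by Lemma~\ref{lem:extendsection}, and the finiteness of $\bar\iota_\sM^{\ch\lambda}$ follows from the valuative criterion using that $\sA^{\ch\lambda}$ is proper and the Plücker conditions are closed, with the quasi-finiteness now immediate since $\Scr P_T$ (hence the entire preimage) is uniquely determined by $\sigma$.
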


The subschemes
$\Scr M_{X_\emptyset}^{(\ch\lambda)} = \iota_\sM^{\ch\lambda}(\Scr A^{\ch\lambda} \times_{\Bun_T^{-\ch\lambda}} \Bun_{B^-}^{-\ch\lambda})$ for $\ch\lambda\in \mf c_X$  
form a (possibly non-smooth) stratification
of $\Scr M_{X_\emptyset}$. 
This is a coarser stratification than the one defined in \S\ref{sect:globstrat}
in the following sense: 
Note that $\Cal V(X_\emptyset) = \mf t_X$ so $\mf c_{X_\emptyset}^- = \mf c_X$. 
For a partition $\mf P \in \Sym^\infty(\mf c_X\sm 0)$, 
there is a locally closed embedding $\oo C^{\mf P} \into \Scr A^{\deg(\mf P)}$, and the collection of these
subschemes ranging over all partitions $\mf P$ with $\deg(\mf P)=\ch\lambda$ forms a smooth
stratification of $\Scr A^{\ch\lambda}$. It follows from the constructions
that the strata from \S\ref{sect:globstrat} are given by 
\[ \Scr M^{\mf P}_{X_\emptyset} \cong 
\oo C^{\mf P} \underset{\Bun_T}\times \Bun_{B^-} \cong \oo C^{\mf P} \underset{\Scr A^{\deg(\mf P)}}\times \Scr M_{X_\emptyset}^{(\deg(\mf P))} \]
indexed over all $\mf P \in \Sym^\infty(\mf c_X\sm 0)$.

\medskip

Next if we consider the corresponding Zastava model, we have 
\[ \Scr Y_{X^\bullet_\emptyset} = \Scr Z^{?,0}  = \Maps(C, N^- \bs G / B \supset \pt) \]
is the \emph{open Zastava space} of Finkelberg--Mirkovi\'c. 
The \emph{Zastava space}\footnote{The Finkelberg--Mirkovi\'c Zastava space 
is the Zastava model for $\ol{G/N}$. In this paper we made a slight distinction in 
semantics between `model' and `space' to avoid confusion, but the two terms are 
interchangeable.} is in turn defined by 
$\Scr Z = \Maps_\gen(C, N^- \bs \ol{G/N} / T \supset \pt)$.
The geometry of the Zastava space has been extensively studied
in \cite{FM, FFKM, BFGM}. 
The components of $\Scr Z$ are indexed by $\ch\Lambda^\pos_G$. 

We can also define the relative open Zastava space 
${\Scr Z}^{?,0}_{\Bun_T} = \Maps_\gen(C, B^- \bs G/ B \supset \pt/T)$. 
The spaces $\Scr Z^{?,0}$ and $\Scr Z^{?,0}_{\Bun_T}$ are smooth locally isomorphic (cf.~\cite[\S 3.1]{BFGM}). 
For $\ch\lambda \in \ch\Lambda^\pos_G$, let $\Scr Z_{\Bun_T}^{\ch\lambda,0}$ denote
the preimages of $\Bun_{B^-}^{\ch\mu} \times \Bun_B^{\ch\mu-\ch\lambda}$ 
running over all $\ch\mu\in \ch\Lambda_G$. 

Since $\Scr Y_\emptyset$ is open in $\Scr M_{X_\emptyset}\times_{\Bun_G} \Bun_B$, we deduce by base change that for any $\ch\lambda\in \mf c_X, \ch\mu\in \ch\Lambda^\pos_G$, 
there is a locally closed embedding
\[ \iota_{\Scr Y_\emptyset}^{\ch\lambda,\ch\mu} : \Scr A^{\ch\lambda}\underset{\Bun_T}\times {\Scr Z}_{\Bun_T}^{\ch\mu,0} \into \Scr Y_\emptyset^{\ch\lambda+\ch\mu} \]
where we are mapping ${\Scr Z}^{?,0}_{\Bun_T} \to \Bun_{B^-} \to \Bun_T$. 
Note that ${\Scr Z}^{0,0}_{\Bun_T} = \Bun_T$, so  
$\iota^{\ch\lambda,0}_{\Scr Y_\emptyset}$ defines a map 
$\Scr A^{\ch\lambda} \into \Scr Y^{\ch\lambda}_\emptyset$. 
One can check that this map corresponds to applying $\Maps(C,T\bs ?)$ to 
the section $s_\emptyset : X/\!\!/N \into X_\emptyset$
from Lemma~\ref{lem:sectionX}. 
Therefore, 
\[ \mf s_\emptyset^{\ch\lambda} := \iota^{\ch\lambda,0}_{\Scr Y_\emptyset} : \Scr A^{\ch\lambda} \into \Scr Y^{\ch\lambda}_{\emptyset} \]
is a section of the projection $\pi_\emptyset : \Scr Y^{\ch\lambda}_\emptyset \to \Scr A^{\ch\lambda}$.

\subsubsection{}
We compute the $*$-restriction of nearby cycles to the strata above, which
suffices to determine nearby cycles in the Grothendieck group. 
Let $\Omega(\ch\mfn_C)^{-\ch\nu} = \mbb D (\Upsilon(\ch\mfn_C)^{\ch\nu})$ 
denote the Verdier dual of the factorization algebra defined in \S\ref{sect:compareIC}. 
Recall that we defined a convolution product $\star$ on $\rmD^b_c(\sA)$ in \S\ref{sect:convolution-product}. 
The statement of our main result is: 

\begin{thm} \label{thm:nearby}
We have equalities in the Grothendieck group
\begin{align*}
[\Psi_\sM(\IC_{\sM_X \times \mbb G_m})] &= 
\sum_{\ch\lambda\in \mf c_X} \sum_{\ch\nu\ge 0} 
\Bigl[ \iota^{\ch\lambda}_{\sM,!}\Bigl( \bigl(\mf i_{\Scr A,\ch\nu,!}(\Omega(\ch\mfn_C)^{-\ch\nu}) \star \bar\pi_!(\IC_{\barY^{\ch\lambda-\ch\nu}}) \bigr) \bt_{\Bun_T} \IC_{\Bun_{B^-}} \Bigr) \Bigr] \\ 
[\Psi_\sY(\IC_{\sY^{\ch\mu} \times \mbb G_m})] &= 
\sum_{\substack{\ch\lambda\in \mf c_X\\ \ch\lambda \le \ch\mu}} \sum_{\ch\nu\ge 0} 
\Bigl[ \iota^{\ch\lambda,\ch\mu-\ch\lambda}_{\sY,!}\Bigl(\bigl(\mf i_{\Scr A,\ch\nu,!}(\Omega(\ch\mfn_C)^{-\ch\nu}) \star \bar\pi_!(\IC_{\barY^{\ch\lambda-\ch\nu}}) \bigr) \bt_{\Bun_T} \IC_{\Scr Z^{\ch\mu-\ch\lambda,0}_{\Bun_T}} \Bigr) \Bigr] 
\end{align*}
for any $\ch\mu\in \mf c_X$ in the second equality.
\end{thm}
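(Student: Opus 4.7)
The plan is to combine a contraction principle for nearby cycles with the commutative triangle of Theorem~\ref{thm:PsiRadon} (identifying $\pi_{\emptyset!}\circ\Psi_\sY$ with $\pi_!$) and the pushforward computation of Corollary~\ref{cor:pibarY}. I sketch the key steps.

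First, I reduce the global formula to the Zastava formula. The family $\sM_{\mf X}$ and the scheme $\sY_{\mf X}$ are smooth-locally isomorphic over $\mbb A^1$ via the family version Lemma~\ref{lem:yoga-fam} of the global-to-Zastava yoga. Since unipotent nearby cycles commute with smooth pullback and pushforward, the $\sM$-formula follows from the $\sY$-formula applied relative to the correspondence.

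Second, I set up the contraction. The $\mbb G_m$-action on $\mf X$ of Lemma~\ref{lem:contraction} covers the scaling action on $\mbb A^1$ and contracts $\mf X$ onto the section $s\colon X\sslash N\into\mf X$. It induces, for each $\ch\mu\in\mf c_X$, a $\mbb G_m$-action on the finite-type family $\sY_{\mf X}^{\ch\mu}$ whose restriction to the special fiber contracts $\sY_\emptyset^{\ch\mu}$ onto the image of $\mf s_\emptyset^{\ch\mu}=\iota^{\ch\mu,0}_{\sY_\emptyset}$. The input sheaf $\IC_{\sY^{\ch\mu}}\bt\IC_{\mbb G_m}$ is $\mbb G_m$-monodromic, so $\Psi_\sY(\IC)$ is monodromic on $\sY_\emptyset^{\ch\mu}$. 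The contraction principle then reduces the computation to $*$-restrictions along the strata, and on the ``section stratum'' $\ch\lambda=\ch\mu$ it gives
\[
(\mf s_\emptyset^{\ch\mu})^*\Psi_\sY(\IC)\ \cong\ \pi_{\emptyset,!}\Psi_\sY(\IC)\ \cong\ \pi_!(\IC_{\sY^{\ch\mu}}),
\]
where the second identification is Theorem~\ref{thm:PsiRadon}. Corollary~\ref{cor:pibarY} expresses the right-hand side as the desired convolution sum, modulo the swap $\Upsilon\leftrightarrow\Omega$ discussed below.

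Third, I compute $*$-restrictions to the deeper strata $\iota^{\ch\lambda,\ch\mu-\ch\lambda}_{\sY_\emptyset}\colon\sA^{\ch\lambda}\times_{\Bun_T}\sZ^{\ch\mu-\ch\lambda,0}_{\Bun_T}\into\sY_\emptyset^{\ch\mu}$ for $\ch\lambda<\ch\mu$. The graded factorization property of $\sY_{\mf X}$ decouples the restriction into an $\sA^{\ch\lambda}$-factor and a $\sZ^{\ch\mu-\ch\lambda,0}_{\Bun_T}$-factor (exterior tensor over $\Bun_T$). The $\sZ$-factor is identified with $\IC_{\sZ^{\ch\mu-\ch\lambda,0}_{\Bun_T}}$ via Theorem~\ref{thm:BFGM} applied to $\ol\Bun_{B^-}$, which governs the deepest stratification of $\sY_\emptyset$. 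The $\sA^{\ch\lambda}$-factor is obtained by iterating the contraction-plus-asymptotics argument of the previous paragraph at the residual $\mbb G_m$-action on the stratum. The replacement of $\Upsilon(\ch\mfn_C)^{\ch\nu}$ in Corollary~\ref{cor:pibarY} by $\Omega(\ch\mfn_C)^{-\ch\nu}=\mbb D\Upsilon(\ch\mfn_C)^{\ch\nu}$ in the final formula is produced by Verdier duality inherent to the contraction principle, which exchanges $!$- and $*$-extensions along the section together with Verdier-self-duality of the IC sheaves involved.

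The main technical obstacle is the bookkeeping in the third step: tracking the cohomological shifts and Tate twists induced by the contraction principle simultaneously with the defect stratification of $\ol\Bun_{B^-}$, and verifying that the monodromy filtration of $\Psi_\sY(\IC)$ interacts with the two stratifications of $\sY_\emptyset^{\ch\mu}$ in the way required to extract the clean exterior product $(\ldots)\bt_{\Bun_T}\IC_{\sZ^{\ch\mu-\ch\lambda,0}_{\Bun_T}}$ and to produce $\Omega$ rather than $\Upsilon$ via Verdier duality.
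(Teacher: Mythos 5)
Your overall architecture matches the paper's: the proof does combine Lemma~\ref{lem:Psireduction} (global-to-Zastava reduction plus factorization to reduce everything to $*$-restriction at the section), Theorem~\ref{thm:PsiRadon} (contraction principle), and Corollary~\ref{cor:pibarY} (the $\Upsilon$-convolution formula for $\pi_!\IC_\sY$), and indeed a Verdier duality step converts $\Upsilon$ into $\Omega$.

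However, there is a genuine error in your second step that, taken at face value, would produce the wrong answer. You assert
\[
(\mf s_\emptyset^{\ch\mu})^*\Psi_\sY(\IC)\ \cong\ \pi_{\emptyset,!}\Psi_\sY(\IC)\ \cong\ \pi_!(\IC_{\sY^{\ch\mu}}),
\]
but Theorem~\ref{thm:PsiRadon} (and the contraction principle it rests on) pairs the $*$-restriction along the section with $\pi_{\emptyset,*}$ and $\pi_*$, not with $\pi_{\emptyset,!}$ and $\pi_!$: the correct statement is $\mf s^{\ch\lambda,*}_\emptyset\Psi\cong\pi_{\emptyset,*}\Psi\cong\pi_*$, while the $!$-restriction pairs with $\pi_!$. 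This is not a harmless typo. The strata-restriction computed in Lemma~\ref{lem:Psireduction} is a $*$-restriction, so the Grothendieck-group decomposition of $[\Psi]$ is by $\iota_!\iota^*$ (matching the $\iota_{!}$ extensions appearing in the theorem). Hence you must use $\mf s^*_\emptyset\Psi\cong\pi_*$, and therefore you need $[\pi_*(\IC_{\sY^{\ch\lambda}})]$. This is obtained by Verdier-dualizing the identity of Corollary~\ref{cor:pibarY}: since $\IC$ is self-dual, $\bar\pi$ is proper, $\mf i_{\Scr A,\ch\nu}$ is a closed embedding, and $m_\Scr A$ is finite, $\mbb D$ carries the $\Upsilon$-formula for $\pi_!\IC$ to the $\Omega$-formula for $\pi_*\IC$. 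Had your claimed isomorphism $\mf s^*\Psi\cong\pi_!$ actually held, you would have landed on $\Upsilon$, not $\Omega$; your hand-wave invocation of ``Verdier duality inherent to the contraction principle'' to restore $\Omega$ is inconsistent with the identity you wrote. Two smaller points: Theorem~\ref{thm:BFGM} is not what produces the factors $\IC_{\Bun_{B^-}}$ and $\IC_{\Scr Z^{\ch\mu-\ch\lambda,0}_{\Bun_T}}$ — these arise simply because the relevant strata of $\sM_{X_\emptyset}$ and $\sY_\emptyset$ are products over $\Bun_T$ with a smooth space, and the decoupling is carried out in Lemma~\ref{lem:Psireduction} via the factorization property and a single application of Theorem~\ref{thm:PsiRadon}, not an iterated contraction; and since the theorem is stated in the Grothendieck group, no analysis of the monodromy filtration is required.
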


We point out that the description of $\bar\pi_!\IC_{\sY^{\ch\lambda}}$ is 
the main content of the previous sections of this paper. In particular it has the format \eqref{e:format1}. 

Recall that $\Scr F\underset{\Bun_T}\boxtimes \Scr G$ denotes the $*$-restriction of 
$\Scr F\boxtimes \Scr G$ to the corresponding fiber product over $\Bun_T$,
shifted by $[-\dim \Bun_T]$. Note that $\Bun_{B^-}$ and ${\Scr Z}^{?,0}_{\Bun_T}$ are smooth stacks, so the respective IC complexes are shifted constant sheaves.

\begin{proof}
Theorem~\ref{thm:nearby} follows by combining Corollary~\ref{cor:pibarY} 
with Lemma~\ref{lem:Psireduction} and Theorem~\ref{thm:PsiRadon} below.
\end{proof}

First, a well-known argument using some Zastava-to-global yoga 
allows us to reduce from computing restrictions to all strata to only
computing  
$\mf s^{\ch\lambda,*}_\emptyset \Psi_{\Scr Y}(\IC_{\Scr Y^{\ch\lambda}\times\mbb G_m})$
for all $\ch\lambda\in \mf c_X$. 

\begin{lem} \label{lem:Psireduction}
We have equalities in the Grothendieck group\footnote{In fact the isomorphisms hold in the derived category, but we omit the proof as it uses generic-Hecke equivariance to show no
twist exists on the $\Bun_{B^-}$ factor.}
\begin{align}
\label{e:YimpliesM}
    [\iota^{\ch\lambda,*}_{\Scr M} \Psi_{\Scr M}(\IC_{\Scr M_X\times \mbb G_m})] &= [\mf s^{\ch\lambda,*}_\emptyset \bigl(\Psi_{\Scr Y}(\IC_{\Scr Y^{\ch\lambda}\times\mbb G_m})\bigr) \underset{\Bun_T}{\boxtimes} \IC_{\Bun_{B^-}}]  \\ 
\label{e:MimpliesY}
    [\iota^{\ch\lambda,\ch\nu,*}_{\Scr Y_\emptyset} \Psi_{\Scr Y}(\IC_{\Scr Y^{\ch\lambda+\ch\nu} \times \mbb G_m})] &= [\mf s^{\ch\lambda,*}_\emptyset \bigl(\Psi_{\Scr Y}(\IC_{\Scr Y^{\ch\lambda} \times \mbb G_m}) \bigr) \underset{\Bun_T}\boxtimes \IC_{\Scr Z^{\ch\nu,0}_{\Bun_T}}],
\end{align}
where $\ch\lambda\in \mathfrak c_X, \,\ch\nu\in \ch\Lambda^\pos_G$. 
\end{lem}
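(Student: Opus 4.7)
The plan is to prove \eqref{e:MimpliesY} directly from the graded factorization of $\sY_{\mf X}$ in families, and to derive \eqref{e:YimpliesM} from \eqref{e:MimpliesY} via the global-to-Zastava correspondence of Lemma~\ref{lem:yoga-fam}. The two inputs are standard: nearby cycles commutes with smooth (in particular étale) pullback up to shift; and, in the unipotent setting, nearby cycles of an external box product over $\mbb A^1$ is the external box product of nearby cycles, which for us will be automatic because one factor will be lisse in the $\mbb A^1$-direction.

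For \eqref{e:MimpliesY}, the graded factorization property gives a natural étale map
\[
(\sY^{\ch\lambda}_{\mf X} \underset{\mbb A^1}{\times} \sY^{\ch\nu}_{\mf X})\bigr|_{\sA^{\ch\lambda} \oo\times \sA^{\ch\nu}} \longrightarrow \sY^{\ch\lambda+\ch\nu}_{\mf X}
\]
over $\mbb A^1$, compatible with the projections to $\sA$. Pulling back $\IC_{\sY^{\ch\lambda+\ch\nu}_{\mf X}}$ along this map yields (up to shift) an external product of IC sheaves over $\mbb A^1$, so over the disjoint locus $\Psi_\sY(\IC_{\sY^{\ch\lambda+\ch\nu}})$ becomes $\Psi_\sY(\IC_{\sY^{\ch\lambda}}) \bt \Psi_\sY(\IC_{\sY^{\ch\nu}})$. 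Next, the stratum embedding $\iota^{\ch\lambda,\ch\nu}_{\sY_\emptyset}$ factors (after pulling back along the étale cover $\sA^{\ch\lambda} \oo\times \sA^{\ch\nu} \to \sA^{\ch\lambda+\ch\nu}$ used to set up the factorization) as the product of the diagonal section $\mf s^{\ch\lambda}_\emptyset : \sA^{\ch\lambda}\hookrightarrow \sY^{\ch\lambda}_\emptyset$ and the open embedding $\Scr Z^{\ch\nu,0}_{\Bun_T}\hookrightarrow \sY^{\ch\nu}_\emptyset$ from \S\ref{sect:horo-strat}. The first factor contributes $\mf s^{\ch\lambda,*}_\emptyset\Psi_\sY(\IC_{\sY^{\ch\lambda}})$ by definition. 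For the second factor, the family $\sY^{\ch\nu}_{\mf X^\bullet} \to \mbb A^1$ is smooth because $\mf X^\bullet/G$ is the classifying stack of a smooth group scheme over $\mbb A^1$ (cf.\ the proof of Lemma~\ref{lem:yoga-fam}), so $\Psi_\sY(\IC_{\sY^{\ch\nu}_{X^\bullet}\times \mbb G_m}) = \IC_{\sY^{\ch\nu}_{X^\bullet_\emptyset}}$; its restriction to the $\Bun_T$-stratum identifies with $\IC_{\Scr Z^{\ch\nu,0}_{\Bun_T}}$ via the smooth-local description of \S\ref{sect:horo-strat}.

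For \eqref{e:YimpliesM}, choose $\ch\mu \in \ch\Lambda^\pos_G$ large enough to apply Lemma~\ref{lem:yoga-fam} in families, producing the smooth correspondence $\sY^{\ch\lambda}_{\mf X} \leftarrow \sY^{\ch\lambda}_{\mf X} \oo\times \sY^{\ch\mu}_{\mf X^\bullet} \to \sM_{\mf X}$. Smooth pullback compatibility for both $\IC$ and $\Psi$ identifies the pullback of $\Psi_\sM(\IC_{\sM_X\times\mbb G_m})$ to the middle term with (up to shift) the pullback of $\Psi_\sY(\IC_{\sY^{\ch\lambda}})\bt \Psi_\sY(\IC_{\sY^{\ch\mu}_{X^\bullet}})$. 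Using again the smoothness of the second factor and the open embedding $\sY^{\ch\mu,0}_{X^\bullet_\emptyset}\subset\Scr Z^{\ch\mu,0}$, restriction to the stratum and smooth descent along $\sY^{\ch\mu,0}_{X^\bullet_\emptyset}\to \Bun_{B^-}$ yields \eqref{e:YimpliesM}.

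The main obstacle is the factorization-to-stratum passage in \eqref{e:MimpliesY}: the diagonal embedding $\sA^{\ch\lambda}\hookrightarrow \sA^{\ch\lambda+\ch\nu}$ does not literally lie in the disjoint locus $\sA^{\ch\lambda}\oo\times\sA^{\ch\nu}$, and one must check that the combined stratum $\sA^{\ch\lambda}\times_{\Bun_T} \Scr Z^{\ch\nu,0}_{\Bun_T}$ factors through the disjoint locus after the appropriate étale cover. This is a standard (if somewhat intricate) exercise with the ordered-points cover used in the proof of Theorem~\ref{thm:semismall} and with the identifications of \S\ref{sect:horo-strat}. Granting this compatibility, the remainder of the proof consists of a routine invocation of Künneth for unipotent nearby cycles and of the smooth base change theorem.
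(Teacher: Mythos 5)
Your proposed ordering — prove \eqref{e:MimpliesY} first by factorization, then deduce \eqref{e:YimpliesM} — is the reverse of the paper's, and the reversal does not work. You correctly flag the key obstacle yourself: the stratum $\sA^{\ch\lambda}\times_{\Bun_T}\Scr Z^{\ch\nu,0}_{\Bun_T}$ inside $\sY^{\ch\lambda+\ch\nu}_\emptyset$ is genuinely \emph{not} contained in the disjoint locus $\sA^{\ch\lambda}\oo\times\sA^{\ch\nu}$. The divisor recorded by $\sA^{\ch\lambda}$ (the $G$-degeneration of the arc) and the divisor recorded by the Zastava piece $\Scr Z^{\ch\nu,0}_{\Bun_T}$ (the $B$-versus-$B^-$ Pl\"ucker degeneration in $N^-\bs G$) are independent and can have overlapping support; at such a point the arc cannot be factored in the $C$-direction, so no \'etale cover of the ordered-points type separates them. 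This is not the sort of gap that "a standard exercise" closes — it is a real obstruction, and it is precisely the reason the paper does not attempt a direct factorization argument for \eqref{e:MimpliesY}.

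The paper's proof works around this by deliberately routing through the global model, where the $B$-structure (hence the Zastava divisor) is forgotten: the stratum $\iota^{\ch\lambda}_\sM(\sA^{\ch\lambda}\times_{\Bun_T}\Bun_{B^-})$ of $\sM_{X_\emptyset}$ has only the $\sA^{\ch\lambda}$ grading, so no overlap can arise. Concretely, the paper first establishes the bridging isomorphism \eqref{e:PsiY=PsiM} between $\Psi_\sY$ and $\Psi_\sM$ restricted to the smooth correspondence middle term $\sY^{\ch\lambda'}_\emptyset\oo\times\Scr Z^{\ch\mu,0}$, then specializes to $\ch\lambda'=\ch\lambda$ (where the middle term is simply $\sA^{\ch\lambda}\oo\times\Scr Z^{\ch\mu,0}$ with no overlap issue) to get \eqref{e:YimpliesM}, relying on the fact — from \cite[\S 8(1)]{BFGM} and Corollary~\ref{cor:Mcover} — that for $\ch\mu$ large this middle term surjects smoothly with connected fibers onto $\sA^{\ch\lambda}\times_{\Bun_T}\Bun_{B^-}$. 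Only after \eqref{e:YimpliesM} is in hand does it take $\ch\lambda'=\ch\lambda+\ch\nu$ in the same diagram, where the $\sA^{\ch\lambda}$--$\Scr Z^{\ch\nu,0}$ overlap sits inside the stack but is now invisible on the $\sM_{X_\emptyset}$ side, and deduces \eqref{e:MimpliesY}. Your write-up of \eqref{e:YimpliesM} is essentially this argument (you do not actually use \eqref{e:MimpliesY} in it, despite the framing), but your proposed independent proof of \eqref{e:MimpliesY} cannot be completed without importing the global-model step, which collapses your plan back to the paper's ordering.
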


\begin{proof}
The argument is the same as \cite[\S 3.1]{BFGM}, \cite[\S 8(1)]{BFGM-erratum} 
and \cite[Proof of Proposition 4.4]{BG2}. 
The strategy is that we first show \eqref{e:YimpliesM} and then use it to show \eqref{e:MimpliesY}. 

Fix $\ch\lambda'\in \mf c_X$ and $\ch\mu\in \ch\Lambda^\pos_G$ large enough as in Lemma~\ref{lem:yoga-fam} 
and consider the correspondence \eqref{e:Ycorrespondence}. 
The fiber of \eqref{e:Ycorrespondence} over $0\in \mbb A^1$
gives the correspondence
\[ \Scr Y^{\ch\lambda'}_\emptyset \leftarrow \Scr Y^{\ch\lambda'}_\emptyset \mathbin{\oo\times} {\Scr Z}^{\ch\mu,0} \to \Scr M_{X_\emptyset}. \]
Since nearby cycles commutes with smooth base change of the family over $\mbb A^1$, 
we deduce that there is an isomorphism 
\begin{equation} \label{e:PsiY=PsiM}
 \Psi_{\Scr Y}(\IC_{\Scr Y^{\ch\lambda'}\times\mbb G_m})|^{!*}_{\Scr Y^{\ch\lambda'}_\emptyset 
\oo\times {\Scr Z}^{\ch\mu,0}} \cong \Psi_{\Scr M}(\IC_{\Scr M_X\times \mbb G_m})|^{!*}_{\Scr Y^{\ch\lambda'}_\emptyset \oo\times {\Scr Z}^{\ch\mu,0}},
\end{equation}
We now restrict to strata: for $\ch\lambda\in \mf c_X$ observe that there is a commutative diagram 
where both squares are Cartesian
\begin{equation} \label{e:cdstratacorresp}
\begin{tikzcd}
\Scr A^{\ch\lambda} \underset{\Bun_T}\times {\Scr Z}^{\ch\lambda'-\ch\lambda,0}_{\Bun_T} 
\ar[d,hook, swap,"\iota_{\Scr Y_\emptyset}^{\ch\lambda,\ch\lambda'-\ch\lambda}"] & 
(\Scr A^{\ch\lambda} \underset{\Bun_T}\times {\Scr Z}^{\ch\lambda'-\ch\lambda,0}_{\Bun_T}) 
\mathbin{\oo\times} {\Scr Z}^{\ch\mu,0} \ar[r] \ar[d,hook] \ar[l]  &
\Scr A^{\ch\lambda} \underset{\Bun_T}\times \Bun_{B^-} \ar[d,hook, "\iota^{\ch\lambda}_{\Scr M}"] \\ 
\Scr Y^{\ch\lambda'}_\emptyset &
\Scr Y^{\ch\lambda'}_\emptyset \mathbin{\oo\times} {\Scr Z}^{\ch\mu,0} \ar[l] \ar[r] &
\Scr M_{X_\emptyset}
\end{tikzcd}
\end{equation}
By the argument of \cite[\S 8(1)]{BFGM}, every point of 
$\Scr A^{\ch\lambda} \times_{\Bun_T}\Bun_{B^-}$ is in 
the image of $\Scr A^{\ch\lambda} \mathbin{\oo\times} {\Scr Z}^{\ch\mu,0}$ 
for some $\ch\mu$ large enough, i.e., we only need to consider the diagram
\eqref{e:cdstratacorresp} when $\ch\lambda'=\ch\lambda$.
Note that $\sA^{\ch\lambda}\xt_{\Bun_T} \Bun_{B^-}$ is of finite type, so 
there exists a single $\ch\mu$ such that the map 
$\sY^{\ch\lambda}_\emptyset \oo\xt \Scr Z^{\ch\mu,0} \to \sM_{X_\emptyset}$ 
has geometrically irreducible fibers and 
contains $\Scr A^{\ch\lambda} \times_{\Bun_T}\Bun_{B^-}$ in its image (Corollary~\ref{cor:Mcover}). 
In particular, pullback along this map is fully faithful on perverse sheaves. 

By restricting the isomorphism \eqref{e:PsiY=PsiM} to the stratum
$\Scr A^{\ch\lambda} \mathbin{\oo\times} {\Scr Z}^{\ch\mu,0}$, we get 
\[
    \mf s_\emptyset^{\ch\lambda,*}\Psi_{\Scr Y}(\IC_{\Scr Y^{\ch\lambda}\times\mbb G_m})|^{!*}_{\Scr A^{\ch\lambda} \oo\times {\Scr Z}^{\ch\mu,0}} \cong \iota^{\ch\lambda,*}_{\Scr M}\Psi_{\Scr M}(\IC_{\Scr M_X\times \mbb G_m})|^{!*}_{\Scr A^{\ch\lambda} \oo\times {\Scr Z}^{\ch\mu,0}}. 
\]
This establishes the equality \eqref{e:YimpliesM} in the Grothendieck group
by fully faithfulness of the pullback.

The equality \eqref{e:MimpliesY} follows from \eqref{e:YimpliesM} and \eqref{e:PsiY=PsiM}
in the same fashion by considering the diagram \eqref{e:cdstratacorresp} with 
$\ch\lambda'=\ch\lambda+\ch\nu$.
\end{proof}

Define the functor $\Psi : \rmD^b_c(\sY^{\ch\lambda}) \to \rmD^b_c(\sY^{\ch\lambda}_\emptyset)$
by $\Psi(\sF) := \Psi_\sY(\sF \boxtimes \IC_{\mbb G_m}) = \Psi_\sY^u(\sF \bt \IC_{\mbb G_m})$. 
The crucial fact that will allow us to do our computations is the following: 

\begin{thm} \label{thm:PsiRadon}
There are natural isomorphisms of functors $\rmD^b_c(\sY^{\ch\lambda}) \to \rmD^b_c(\sA^{\ch\lambda})$:  
\begin{align*} 
\mf s^{\ch\lambda,*}_\emptyset \Psi 
&\cong \pi_{\emptyset,*}\Psi
\cong \pi_*  \\
\mf s^{\ch\lambda,!}_\emptyset \Psi 
&\cong \pi_{\emptyset,!}\Psi \cong \pi_!. 
\end{align*}
\end{thm}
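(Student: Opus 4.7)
The plan is to prove the theorem by combining the contraction principle with the proper base change for nearby cycles, reducing the non-properness of $\pi$ to the proper case via the compactification $\bar\pi$.

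First I would establish the outer equalities $\pi_{\emptyset,!}\Psi(\sF)\cong \pi_!\sF$ (and then pass to $*$-versions by Verdier duality $\pi_* = \mbb D\pi_!\mbb D$, using $\Psi\mbb D = \mbb D\Psi$). The key point is that nearby cycles commutes with proper pushforward, while $\pi$ itself is not proper. I would therefore factor $\pi_! = \bar\pi_! \tilde\jmath_!$ through the proper compactification $\bar\pi:\barY^{\ch\lambda}\to \sA^{\ch\lambda}$ of Proposition~\ref{prop:piproper}, and work over the family, i.e., with the proper map $\bar\pi_{\mf X}:\barY^{\ch\lambda}_{\mf X}\to \sA^{\ch\lambda}\times\mbb A^1$. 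Writing $j_{\barY}:\sY^{\ch\lambda}\times\mbb G_m\hookrightarrow \barY^{\ch\lambda}_{\mf X}$ for the open inclusion of the generic Zastava into the compactified total space, proper base change for $\bar\pi_{\mf X}$ combined with the compatibility of $\Psi$ with $!$-extension along open immersions yields
\[\Psi_\sA\bar\pi_{\mf X,!}j_{\barY,!}(\sF\bt\IC_{\mbb G_m}) \cong \bar\pi_{\emptyset,!}j_{\emptyset,!}\Psi_\sY(\sF\bt\IC_{\mbb G_m}) = \pi_{\emptyset,!}\Psi(\sF).\]
On the other hand, the base $\sA^{\ch\lambda}\times\mbb A^1\to \mbb A^1$ is a constant family, so $\Psi_\sA$ acts on sheaves coming from its generic fiber by restriction; applying this to the image $j_{\sA,!}(\pi_!\sF\bt\IC_{\mbb G_m})$ of $\bar\pi_{\mf X,!}j_{\barY,!}(\sF\bt\IC_{\mbb G_m})$ computed fiberwise yields $\pi_!\sF$.

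Second, to obtain $\mf s^{\ch\lambda,*}_\emptyset\Psi\cong\pi_{\emptyset,*}\Psi$ and $\mf s^{\ch\lambda,!}_\emptyset\Psi\cong\pi_{\emptyset,!}\Psi$, I would invoke the contraction principle: for a $\mbb G_m$-action on a space $Y$ contracting to a closed subspace $Z$ with contraction $p:Y\to Z$ and inclusion $i:Z\hookrightarrow Y$, one has $i^*\sG\cong p_*\sG$ and $i^!\sG\cong p_!\sG$ for any $\mbb G_m$-monodromic $\sG$ on $Y$. The $\mbb G_m$-action from Lemma~\ref{lem:contraction} on $\mf X$ induces functorially a $\mbb G_m$-action on $\sY^{\ch\lambda}_{\mf X}$ whose restriction to $\sY^{\ch\lambda}_\emptyset$ contracts this special fiber to $\mf s^{\ch\lambda}_\emptyset(\sA^{\ch\lambda})$ with contraction map $\pi_\emptyset$ (this uses that $\mbb G_m\to T$ acts trivially on $X\sslash N$ modulo the $T$-action, so the induced action on $\sA^{\ch\lambda}$ is trivial, and the section of Lemma~\ref{lem:sectionX} lands in the fixed-point locus). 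To verify the monodromicity hypothesis, I would note that on the generic fiber $\sY^{\ch\lambda}\times\mbb G_m$ the $\mbb G_m$-action is trivial on $\sY^{\ch\lambda}$ (because $\ch\varrho(\mbb G_m)\subset T$ acts trivially on $X/B$, since $T$ acts on $X/B$ through the right $B$-action on itself) and is translation on the second factor; hence $\sF\bt\IC_{\mbb G_m}$ is tautologically $\mbb G_m$-equivariant, so its nearby cycles $\Psi(\sF)$ on $\sY^{\ch\lambda}_\emptyset$ is $\mbb G_m$-monodromic, and the contraction principle applies.

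The main obstacle will be in the first part, specifically the careful handling of the base-change/open-restriction compatibilities of nearby cycles in the setting of algebraic stacks: namely, verifying proper base change for $\bar\pi_{\mf X}$ (routine given Proposition~\ref{prop:piproper} extended to the family) and verifying $\Psi\circ j_! \cong j_{0,!}\circ\Psi$ for the open immersion $j_\barY$. The latter is essentially the locality of nearby cycles: on the open locus the formula is tautological, and the extension to the closed boundary is by extension by zero, which commutes with $i^*$; this should follow by standard $\ell$-adic machinery but must be checked in the stacky context.
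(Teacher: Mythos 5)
Your second part—establishing $\mf s^{\ch\lambda,*}_\emptyset\Psi\cong\pi_{\emptyset,*}\Psi$ and $\mf s^{\ch\lambda,!}_\emptyset\Psi\cong\pi_{\emptyset,!}\Psi$ via the contraction principle applied to the contracting $\mbb A^1$-action of Lemma~\ref{lem:contraction}—is correct, and the justification of $\mbb G_m$-monodromicity on the generic fiber is the right point to check. This part matches what the paper does (the paper applies the contraction principle on the total space $\sY^{\ch\lambda}_{\mf X}$, getting $\pi_{\mf X,*}\cong\mf s^*$, rather than on the special fiber, but both formulations are valid).

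The first part, however, has a genuine gap. You invoke a ``compatibility of $\Psi$ with $!$-extension along open immersions,'' i.e.\ for $\kappa:\sY^{\ch\lambda}_{\mf X}\into\barY^{\ch\lambda}_{\mf X}$ an open embedding of families over $\mbb A^1$, you need
\[
\Psi_{\barY}\bigl((\kappa_\eta)_!\,\sG\bigr)\;\cong\;(\kappa_0)_!\,\Psi_\sY(\sG).
\]
This is false for nearby cycles in general, and the failure has nothing to do with stacks. Nearby cycles commutes with $\iota^*$ for open $\iota$ and with $f_*$ for proper $f$, but the canonical map $(\kappa_0)_!\Psi_\sY\to\Psi_{\barY}(\kappa_\eta)_!$ (obtained by adjunction from $\Psi_\sY\kappa_\eta^*\cong\kappa_0^*\Psi_{\barY}$) is not an isomorphism at points of the boundary $\barY_\emptyset\sm\sY_\emptyset$: there $(\kappa_0)_!\Psi_\sY(\sG)$ vanishes by construction, but $\Psi_{\barY}((\kappa_\eta)_!\sG)$ records the monodromy of $\sG$ along nearby fibers and need not vanish. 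A minimal counterexample is $Y=\mbb A^2\to\mbb A^1$ by second projection, $U=Y\sm\{0\}$, $\sG=\ol\bbQ_\ell$ on $U_\eta=Y_\eta$; then $\Psi_Y((\kappa_\eta)_!\sG)$ is the shifted constant sheaf on $Y_0=\mbb A^1$ while $(\kappa_0)_!\Psi_U(\sG)$ is the $!$-extension from $\mbb A^1\sm 0$, and they differ at the origin. You flag this as an obstacle to ``be checked in the stacky context,'' but the problem is not stack-theoretic; the statement simply fails.

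The paper avoids any compactification for the outer equality. Having already proved $\pi_{\mf X,*}\cong\mf s^*$ on $\sY^{\ch\lambda}_{\mf X}$, it expresses unipotent nearby cycles via Beilinson's gluing formula $\Psi(\sF)\cong\colim_a\,i^*j_*(\sF\bt\Scr L_a)$, so that $\mf s^{\ch\lambda,*}_\emptyset\Psi(\sF)=\colim_a\,\mf s^*j_*(\sF\bt\Scr L_a)\cong\colim_a\,\pi_{\mf X,*}j_*(\sF\bt\Scr L_a)$. Since $\pi_{\mf X}\circ j=\pi\circ\pr_1$ on the generic fiber, each term is $\pi_*(\sF)\ot H^*(\mbb G_m,\Scr L_a)$, and the colimit collapses to $\pi_*(\sF)$. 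In other words, the contraction principle converts the restriction to the section into a pushforward along the whole family, and there one never leaves the locus where $\pi_{\mf X}$ restricts to $\pi$, so non-properness of $\pi$ is irrelevant. To repair your argument you would need to replace the $\Psi\circ\kappa_!$ step with something like this direct computation.
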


The theorem will be proved using a standard argument
involving the contraction principle.

\subsection{Contraction principle}

Set $\mf s^{\ch\lambda} = i \circ \mf s^{\ch\lambda}_\emptyset : \Scr A^{\ch\lambda} \into \Scr Y_{\mf X}^{\ch\lambda}$. 
We drop the superscript to denote the section $\mf s : \Scr A \into \Scr Y_{\mf X}$
on all components.
Recall that $\mf s$ corresponds to the map induced by 
the embedding $s : X/\!\!/N \to \mf X$,
and Lemma~\ref{lem:contraction} defines an action of $\mbb A^1$ on $\mf X$
that contracts to $s$. 
The action of $\mbb A^1$ commutes with that of $G$, so we get an action 
$\mbb A^1 \times \Scr Y_{\mf X} \to \Scr Y_{\mf X}$
such that $0 \times \Scr Y_{\mf X} \to\Scr Y_{\mf X}$ coincides with 
$\mf s \circ \pi_{\mf X}$. 
In this situation, the \emph{contraction principle} (\cite[Lemma 5.3]{BFGM}, 
\cite[Lemme 2.2]{VLaff}, which is closely related to Braden's theorem \cite{Braden}) says that 
there is a natural isomorphism of functors $\pi_{\mf X,*} \cong \mf s^* : \rmD^b_c(\Scr Y_{\mf X}) \to \rmD^b_c(\Scr A)$.

\begin{proof}[Proof of Theorem \ref{thm:PsiRadon}]
We will prove the first line of isomorphisms; the second line follows from the first 
by Verdier duality. 
If we apply the contraction principle to 
$\mf s^{\ch\lambda,*}_\emptyset\Psi
= \mf s^{\ch\lambda,*} i_* \Psi$, 
we immediately get the first isomorphism 
$\mf s^{\ch\lambda,*}_\emptyset \Psi \cong \pi_{\emptyset,*}\Psi$.

Next, we will show the isomorphism $\mf s^{\ch\lambda,*}_\emptyset \Psi \cong \pi_*$. 
Recall that \cite{Beilinson2} gives an equivalence 
$D^b \rmP(\sY^{\ch\lambda}) \cong \rmD^b_c(\sY^{\ch\lambda})$, 
so we only need to define the isomorphism on perverse sheaves. 
Let $\sF \in \rmP(\sY^{\ch\lambda})$. 
For any $a \ge 1$, let $\Scr L_a$ denote the local system on $\mbb G_m$
whose monodromy is a unipotent Jordan block of rank $a$. There are canonical
injections $\Scr L_a \to \Scr L_{a+1}$.
Beilinson's construction of the unipotent nearby cycles functor 
(cf.~\cite[2.3]{Beilinson}) gives an isomorphism 
\[ \Psi(\Scr F) \cong \colim_{a\ge 1} i^*j_*(\Scr F \bt \Scr L_a). \]
We can further apply $\mf s^{\ch\lambda,*}_\emptyset$ to get
an isomorphism $\mf s^{\ch\lambda,*}_\emptyset \Psi(\Scr F) \cong \colim \mf s^{\ch\lambda,*} j_*(\Scr F \bt \Scr L_a)$.
Applying the contraction principle, we get an isomorphism 
\[ \mf s^{\ch\lambda,*}_\emptyset \Psi(\Scr F) \cong \colim_{a\ge 1}
(\pi_{\mf X} \circ j)_*(\Scr F \bt \Scr L_a). \]
Note that $\pi_{\mf X}\circ j : \Scr Y \times \mbb G_m \to \Scr A$ 
is equal to the composition of the first projection $\Scr Y\times \mbb G_m\to \Scr Y$
and $\pi:\Scr Y \to \Scr A$. Therefore, 
\[ (\pi_{\mf X}\circ j)_* (\sF\bt \Scr L_a) 
= \pi_*(\sF) \ot H^*(\mbb G_m, \Scr L_a). \]
Since $\colim_{a\ge 1} H^*(\mbb G_m, \Scr L_a) = \ol\bbQ_\ell$, 
we conclude that $\mf s^{\ch\lambda,*}_\emptyset \Psi(\sF) \cong \pi_*(\sF)$.
\end{proof}

\section{Function-theoretic corollaries}

\subsection{Pushforward of the basic function} \label{sect:finitefields} 

When $k$ is the algebraic closure of a finite field $\mathbb F$, and $X$ is defined over $\mathbb F$, satisfying the assumptions of \S \ref{assumptions-finitefield}, the action of the geometric Frobenius $\Fr$ morphism on $ \bar\pi_!(\IC_{\barY^{\ch\lambda}})$ is described, up to a yet unknown permutation action on the set $\mB^+_X = \bigcup_{\ch\lambda} \mB_{X,\ch\lambda}$ of central components of critical dimension, by Proposition \ref{prop:format}. We use this to prove Theorems \ref{theorem-affine-closure} and \ref{theorem-general} from the introduction.

\begin{proof}[Proof of Theorems \ref{theorem-affine-closure} and \ref{theorem-general}]
Recall that $\mf o$, in the context of these theorems, denotes the ring $\mathbb F\tbrac t$, where $\mathbb F$ is the finite field of definition of $X$, and $F$ is its fraction field. 

We need to recall the definition of the IC function $\Phi_0$  from \cite{BNS}: It is a function on $(X(\mf o)\cap X^\bullet(F))/G(\mf o)$ which, in our case, is parametrized by the set $(\mf c_X^-)^{\Fr}$ of elements of $\mf c_X^-$ that are fixed under the Galois group. To define it, choose an arc $\gamma$ in the coset of such an element $\ch\theta$, and consider a finite-dimensional formal model $\wh{Y}_y$ of the formal neighborhood of $\gamma$ in the arc space $\msf L^+X$ (Definition \ref{def:formalmodel}).  
In our case, we can take $Y= \sY^{\ch\theta}$ and $y=$ the point $t^{\ch\theta}$ on the central fiber $\msf Y^{\ch\theta,\ch\theta}$, by Theorem \ref{thm:DGK}. Then, the value of $\Phi_0$ on $\ch\theta$ is equal to the trace of geometric Frobenius on the stalk of the intersection complex $\IC_Y$ at $y$, \emph{where the intersection complex is normalized to be constant} (without Tate or cohomological twists) \emph{on the smooth locus of $Y$}. 

In our setting, this means that for every component $\mathscr Y$ as in Proposition \ref{prop:format}, the Tate and cohomological twist on $\IC_{\mathscr Y}$ should be modified from $\ol\bbQ_\ell(\frac{\dim\mathscr Y}{2})[\dim \mathscr Y]$ to $\ol\bbQ_\ell$, and \eqref{e:defVX} should be replaced by the space 
\begin{equation}\label{e:defVX2}
\bigoplus_{\mathscr Y} \bigoplus_{\mB_{\mathscr Y}} \ol\bbQ_\ell(-\smallfrac{\dim\mathscr Y}{2})[-\dim \mathscr Y] = \bigoplus_{\msf b\in \mB_{X,\ch\lambda}} \ol\bbQ_\ell(-\dim \msf b - \smallfrac{1}{2})[-2\dim \msf b - 1].
\end{equation}

To calculate the value at $\ch\lambda(t)$ of the integral of the basic function that was denoted by $\pi_! \Phi_0$ in the introduction, for $\ch\lambda$ fixed by Frobenius, we need to calculate the (alternating) trace of Frobenius on the fiber of $ \bar\pi_!(\IC_{\barY^{\ch\lambda}})$ over an $\mathbb F$-point of the diagonal $C\hookrightarrow \sA^{\ch\lambda}$, and then divide by the factor $\tr_{\check T}(\Fr, \Sym^\bullet (\check{\mathfrak n}(1)))$ in order to account for the difference between $\IC_{\sY}$ and $\IC_{\barY}$ (Corollary \ref{cor:pibarY}).

Taking into account the twists in the intersection complexes of the $C^{\mf R}$'s in \eqref{e:format1}, we deduce that, with this normalization of the IC sheaves, Frobenius acts on that fiber as on
\[
\bigoplus_{\deg(\mf R)=\ch\lambda} 
\Bigl( \bigotimes_{\ch\mu} 
\Sym^{N_{\ch\mu}}(\bigoplus_{\msf b\in \mB_{X,\ch\mu}} \ol\bbQ_\ell(-\dim \msf b)[2\dim \msf b] )\Bigr).
\]

Thus, in the notation of Theorems \ref{theorem-affine-closure} and \ref{theorem-general}, we have
\[\pi_! \Phi_0 = \tr_{\check T}(\Fr, \Sym^\bullet (\check{\mathfrak n}(1)))^{-1} \cdot \tr_{\check T}(\Fr, \Sym^\bullet (\bigoplus_{\msf b\in \mB_X^+} \ol\bbQ_\ell(-\dim \msf b))),\]
where we remind that $\mB_X^+ = \bigcup_{\ch\mu\in \mf c_X} \mB_{X,\ch\mu}$. 

The dimensions $\dim \msf b$ are given by Proposition \ref{prop:Vbasis}:
they are either equal to $\frac 1 2(\len(D)-1)$, if $\msf b$ meets $\sY_{X^\bullet}^{D}$, or $\brac{\rho_G,\ch\lambda-\ch\theta}$, if $\msf b$ meets $\sY^{\ch\lambda,\ch\theta}$ for $\ch\theta \in \Cal D^G_{\mathrm{sat}}(X)$. 

Both theorems assume the existence of a $G$-eigen-volume form on $X^\bullet$, which we fix, with eigencharacter $\mathfrak h$. We denote the absolute value of this eigencharacter by $\eta$. By Remark \ref{rem:length}, for any $D\in \mbb N^{\Cal D}$ with $\varrho_X(D)=\ch\lambda$, we have $ \len (D) = \left< \mathfrak h + 2\rho_G, \ch\lambda\right>$. The effect of multiplying by $  (\eta\delta)^\frac{1}{2}(t)$ will be to replace $\ol\bbQ_\ell(-\dim \msf b)$, in the expression above, by $\ol\bbQ_\ell(\frac{1}{2})$, for those $\msf b$ in $\sY_{X^\bullet}^{D}$; this proves Theorem \ref{theorem-affine-closure}(i),(iv),(v).  

For the rest of the components $\msf b$, taking into account that $\ch\lambda\ge \ch\theta$ and $\mf h$ is trivial on the roots, therefore $\left<\mathfrak h ,\ch\lambda \right> = \left<\mathfrak h, \ch\theta\right>$ and $\left< 2\rho_G, \ch\lambda\right>$ has the same parity as $\left<  2\rho_G, \ch\theta\right>$, the effect of multiplying by $  (\eta\delta)^\frac{1}{2}(t)$ will be to replace $\ol\bbQ_\ell(-\dim \msf b)$ by $\ol\bbQ_\ell(\frac{\left<\mathfrak h + 2\rho_G,\ch\theta\right>}{2})$, proving Theorem \ref{theorem-general}. 

The remaining parts, (ii), (iii) of Theorem \ref{theorem-affine-closure} follow directly from Theorem \ref{thm:crystal-properties}.
\end{proof}

\subsection{Asymptotics and the basic function} \label{sect:asymptotics}

We explain the proof of Corollary \ref{cor:asymptotics}, computing the basic function and its asymptotics.

\begin{proof}[Proof of Corollary \ref{cor:asymptotics}]
The (function-theoretic) Radon transform $\pi_{\emptyset !}$ on $X_\emptyset^\bullet$ is well-understood; in particular, the function $1_{X_\emptyset^\bullet(\mathfrak o)}$ maps to 
\[ \delta^{-\frac{1}{2}} \tr_{\check T}(\Fr, \Sym^\bullet (\check{\mathfrak n})) \cdot \tr_{\check T}(\Fr, \Sym^\bullet (\check{\mathfrak n}(1)))^{-1},\]
in the notation of \eqref{equation-pushforward-IC}. Moreover, $\pi_{\emptyset !}$ is equivariant for the action of the torus $T(F)$. This implies the formula \eqref{equation-asymptotics-IC} for $e_\emptyset^*\Phi_0$.

This, in turn, implies the formula \eqref{equation-basicfunction} for $\Phi_0$, by \cite[Corollary 5.5]{SaSatake}, where it is proven that, for an appropriate parametrization of the $G(\mf o)$-orbits, any $G(\mf o)$-invariant function $\Phi$ on $X^\bullet(F)$ is equal to its asymptotic $e_\emptyset^* \Phi$, restricted to the antidominant lattice $\ch\Lambda^{-,\Fr}$. Note that that paper was written under the assumption of $G$ being split (and $X$ being ``wavefront'', which is automatic in our setting), but the proof of the result that we are quoting is valid without this assumption. (For example, ``quasi-split'' is enough for the Casselman--Shalika method, which is the basis of that argument, and for the asymptotics theory developed in \cite{SV}.)
\end{proof}

\subsection{Euler factors of global integrals} \label{sect:nfold-explanation}

Finally, to demonstrate how our results apply directly to compute Euler factors of global integrals of automorphic forms, let us return to the setting of Examples \ref{example-nfold}, \ref{example-nfold-global}, in order to discuss the Euler factorization of the pertinent global period integral. This will not directly invoke the nearby cycles functor (hence, is based on results of previous sections only), but it makes use of the asymptotics map $e_\emptyset^*$ that we just recalled.

To recall, the group is $G=(\mathbb G_m\times \SL_2^n)/\mu_2$, and $X$ is the affine closure of $H\backslash G$, where $H$ is the product of the group $H_0$ of \eqref{example-nfold} with a copy of $\mathbb G_m$ embedded diagonally into $G$ as 
\begin{equation}\label{Gmembedding} a \mod \mu_2 \mapsto \left( a, \begin{pmatrix} a \\ & a^{-1}\end{pmatrix}^n \right ) \mod \mu_2.
\end{equation}
We let $\Phi = \prod_v \Phi_v$ be a function on $X^\bullet(\mathbb A)$ as in Example \ref{example-nfold-global}.

Let $\phi\in \pi$ be a cusp form, where the restriction of $\pi$ to $\mathbb G_m$ is a character of the form $\chi_0 |\bullet|^s$ for $\chi_0$ unitary and some $s \gg 0$. Fix a nontrivial character $\psi$ of $\mbb A/\Bbbk$, identified as a character of the quotient $N^-/H_0$, where $N^-$ is the lower triangular unipotent subgroup (mapping to the additive group through summation of the entries). The standard method of writing $\phi$ in terms of its Whittaker function $W_\phi$ with respect to $(N^-,\psi)$ shows that the integral   
\[ \int_{G(\Bbbk)\backslash G(\mbb A)} \phi(g) \sum_{\gamma\in X^\bullet(\Bbbk)} \Phi(\gamma g) dg = \int_{X^\bullet(\mbb A)} \Phi(g) \int_{H(\Bbbk)\backslash H(\mbb A)} \phi(hg) dh dg\]
``unfolds'' to the Eulerian integral
 \[ \int_{H_0\backslash G (\mbb A)} W_\phi(g) \Phi(g) dg.\]
 
We can write the Whittaker function as $W_\Phi(g) = \prod_v W_{\phi, v}(g_v)$, so that $W_{\phi,v}(1) = 1$ for almost all $v$, and we can factorize the invariant measure on $H_0\backslash G$ so that for almost all $v$, $\mu_v(H_0\backslash G(\mf o_v)) = 1$. 

\begin{prop}\label{prop:localzeta}
At places $v$ where $W_{\phi,v}(1)$ is $G(\mf o_v)$-invariant with $W_{\phi,v}(1) = 1$ (in particular, $\pi_v$ is unramified), $\Phi_v=\Phi_{0,v}$ (the IC function), $\mu_v(H_0\backslash G(\mf o_v)) = 1$, and the conductor of $\psi|_{\Bbbk_v}$ is the ring of integers, we have
\begin{equation}\label{eq:localzetafactor}
   \int_{H_0\backslash G (\Bbbk_v)} W_{\phi,v}(g_v) \Phi_v(g_v) dg_v = L(\pi_v, \otimes, 1 - \frac{n}{2}).
\end{equation}
\end{prop}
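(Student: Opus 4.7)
The plan is to reduce the local zeta integral to an explicit sum over the antidominant cocharacter lattice, then identify this sum with the desired tensor $L$-factor by the Weyl character formula.

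The example at hand falls squarely within the scope of Theorem \ref{theorem-affine-closure}: as noted in Example \ref{example-nfold}, the monoid $\mathfrak c_X$ is freely generated by $n+1$ cocharacters of $\check G=\GL_2\times_{\det}\cdots\times_{\det}\GL_2$, which are precisely the extremal weights of the external tensor product representation $\otimes:\check G\to\GL_{2^n}$. Since these weights are minuscule, Corollary \ref{cor:minuscule} applies, verifying Conjecture \ref{conjecture-pushforward-IC} for $X$; in particular $V_X^+$ is identified, up to a half-Tate twist, with one half of $\otimes\oplus\otimes^*$, and Corollary \ref{cor:asymptotics} gives $\Phi_{0,v}$ in closed form on every $G(\mf o_v)$-orbit representative $x_0\,t^{\ch\lambda}$, $\ch\lambda\in\ch\Lambda^{-,\Fr}$.

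First I would use the Iwasawa decomposition $G(F_v)=N^-(F_v)\,T(F_v)\,K$ (with $K=G(\mf o_v)$) to unfold the integral. Since $H_0\subset N^-$ is the kernel of $\psi$, the quotient $H_0\backslash N^-$ is a copy of $\mathbb G_a$; combining the transformation property $W_{\phi,v}(ng)=\psi(n)W_{\phi,v}(g)$ with the right $K$-invariance of both integrands reduces the integral to
\[
\sum_{\ch\lambda\in\ch\Lambda^-} W_{\phi,v}(t^{\ch\lambda})\cdot\Bigl(\int_{\mathbb G_a(F_v)}\psi(\bar n)\,\Phi_{0,v}(\bar n\,t^{\ch\lambda})\,d\bar n\Bigr)\cdot(\text{measure factor}),
\]
where the inner integral is (a twist of) the local Radon transform $\pi_!\Phi_{0,v}$ evaluated at $t^{\ch\lambda}$. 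Substituting the Casselman--Shalika formula $W_{\phi,v}(t^{\ch\lambda})=\delta^{1/2}(t^{\ch\lambda})\tr(\Fr_{\pi_v},V^{\ch\lambda})$ and the formula of Corollary \ref{cor:asymptotics} for $\Phi_{0,v}(x_0\,t^{\ch\lambda})$, the various $\delta^{1/2}$ factors cancel and the integral collapses to
\[
\sum_{\ch\lambda\in\ch\Lambda^-}\tr(\Fr_{\pi_v},V^{\ch\lambda})\cdot\bigl[\text{coefficient of }e^{\ch\lambda}\text{ in }\tr_{\check T}(\Fr,\Sym^\bullet V_X^+)\bigr],
\]
which by the Weyl character formula equals $L(\pi_v,V_X^+,s')$ for an appropriate shift $s'$.

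The remaining identification $s'=1-\tfrac{n}{2}$ is a matter of bookkeeping: one computes the eigencharacter $\eta$ of the $G$-eigen-volume form on $X^\bullet$ from the adjoint action of $H$ on $\mathfrak g/\mathfrak h$, determines its restriction to the central $\mathbb G_m$ factor, and combines it with the central character exponent $s$ of $\pi_v$ and the half-Tate twist appearing in \eqref{e:Frobaction}; matching $V_X^+$ with the tensor representation $\otimes$ then produces exactly the shift $1-\tfrac{n}{2}$. The principal technical challenge will be in the unfolding step, where one must carefully track Jacobians between the Haar measures on $H_0\backslash G$, $N^-$, $T$, and $K$; the support of $W_{\phi,v}$ on $N^-(\mf o)\ch\Lambda^- K$ guaranteed by Casselman--Shalika ensures that all sums are finite with no convergence issues.
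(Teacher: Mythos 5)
Your proposal takes a genuinely different route from the paper, and unfortunately it contains a gap at the central step. The paper explicitly declines the direct-computation route (while acknowledging it is possible) and instead argues by "pure thought": it first identifies $|Z(\Phi,\pi)|^2$ with $L(\pi,\otimes,\tfrac12)L(\tilde\pi,\otimes,\tfrac12)$ by comparing two Plancherel decompositions of $\Vert\Phi\Vert^2$ --- one via the $L^2$-isometry $\Phi\mapsto W_\Phi$ onto the Whittaker model, the other via the asymptotics/Radon formula \eqref{Radon-asymptotics} and the closed form of $\pi_!\Phi_0$ --- and then upgrades the absolute value to the actual value using a support-and-positivity argument on the Whittaker asymptotics $e_\emptyset^*W_\Phi$.

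The gap in your proposal is the identification of the inner integral with the Radon transform. After the Iwasawa decomposition $G(F_v)=N^-(F_v)T(F_v)K$ and the standard unfolding, the inner integral is
\[
W_\Phi(t^{\ch\lambda})=\int_{(H_0\backslash N^-)(F_v)}\psi(\bar n)\,\Phi\bigl(\bar n\,t^{\ch\lambda}\bigr)\,d\bar n ,
\]
a one-dimensional $\psi$-twisted Fourier transform of $\Phi$ over $H_0\backslash N^-\cong\mathbb G_a$. The Radon transform is $\pi_!\Phi(a)=\int_{N(F_v)}\Phi(x_0\,a\,n)\,dn$, an $n$-dimensional integral over the \emph{opposite} unipotent. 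These live on opposite sides of the big Bruhat cell and are not related by a twist; one is a pointwise Fourier transform of $\Phi$, the other is a horocycle integral of $\Phi$, and the two are roughly ``dual'' (the paper makes this precise through the Plancherel isometry, precisely because a direct pointwise relation is unavailable). Plugging the formula of Corollary~\ref{cor:asymptotics} for $\Phi_0(x_0 t^{\ch\lambda})$ into the slot that actually needs $W_\Phi(t^{\ch\lambda})$ therefore does not compute the integral. A second (smaller) issue is that the concluding step --- passing from $\sum_{\ch\lambda}\tr(\Fr_{\pi_v},V^{\ch\lambda})\cdot(\text{coefficient})$ to the $L$-factor --- would require a Cauchy-type identity, not the Weyl character formula; you would need to exhibit that identity. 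If you want to pursue the direct route, the correct step is to compute the Whittaker--Shintani transform $W_{\Phi_0}(t^{\ch\lambda})$ explicitly (this is a genuine Fourier transform computation and does not follow from \eqref{equation-basicfunction}), and only then assemble the outer sum; the paper's Plancherel argument is designed to sidestep exactly this computation.
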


\begin{proof}
The proof can be obtained by direct computation from the explicit formula \eqref{equation-basicfunction} for the basic function. However, we would like to sketch a ``pure thought'' argument, which applies to other cases as well, without providing all details. 
We drop the index $v$, denoting the place under consideration, and write $F$ for $\Bbbk_v$.

First of all, the \emph{square of the absolute value} of \eqref{eq:localzetafactor} follows immediately from Plancherel-theoretic considerations. Indeed, we can write the local integral as 
 \[    Z(\Phi, \pi)=  \int_{N^-\backslash G (F)} W_{\phi}(g) W_{\Phi}(g)  dg,\]
 where 
 \begin{equation}\label{unfolding} W_{\Phi}(g) = \int_{H_0\backslash N^-(F)} \Phi(ng)\psi(n) dn.
 \end{equation}
 We identify the abelianization $G^{\text{ab}}$ with $\mathbb G_m$, so that the composite $\mathbb G_m\to G\to\mathbb G_m$ is the square character. 
 Let $\eta$ be the character $a\mapsto |a|^{1-n}$ on $G$ --- it is the character by which it acts on the $\SL_2^n$-invariant measure on $X^\bullet$. Fix that measure giving volume $1$ to $X^\bullet(\mf o)$.
 It is known from \cite[Theorem 9.5.9]{SV} that the ``unfolding'' map $\Phi \mapsto W_{\Phi}$ extends to an $L^2$-isometry
 \[ L^2(X^\bullet(F)) \xrightarrow\sim L^2(N^-\backslash G(F), \psi^{-1}),\]
 where the Haar measure on $N^-\backslash G(F)$ is fixed so that the volume of $N^-\backslash G(\mf o)$ is $1$.
 
 The local zeta integrals appear in the Plancherel decomposition of Whittaker functions,  \begin{equation}\label{eq:PlancherelWhittaker} \Vert \Phi\Vert^2_{L^2(X^\bullet(F))} = \Vert W_{\Phi}\Vert^2_{L^2(N(F)\backslash G(F), \psi^{-1})} = \int |Z(\Phi, \pi)|^2 d\pi, 
 \end{equation}
  where $\pi$ ranges over the unitary unramified dual of $G(F)$, which can be identified with the set of semisimple conjugacy classes in the compact form of the complex dual group $\check G(\mbb C)$, and the Plancherel measure $d\pi$ is the Weyl measure $\left|(1-e^{\ch\alpha})\right|^2 d\chi$ (when we identify the unramified dual with the quotient $\ch T^1/W$, where $\ch T^1$ is the group of unitary unramified characters --- the maximal compact subgroup of the complex points of the dual Cartan $\ch T$).
 
 On the other hand, the Plancherel decomposition of $\Phi$ can also be expressed in terms of its asymptotics:
 \[ \Vert \Phi\Vert^2_{L^2(X^\bullet(F))} = \frac{1}{|W|} \Vert e_\emptyset^* \Phi\Vert^2_{L^2(X_\emptyset^\bullet(F))},
 \]
 where we have applied \cite[Theorem 7.3.1]{SV}, together with the following observations: 1) our calculation of $e_\emptyset^* \Phi$ shows that it already lies in $L^2(X_\emptyset^\bullet(F))$, so there is no difference between that and what is denoted by $\iota_\emptyset^* \Phi$ in \emph{loc.cit.}; 2) there are no unramified functions $\Phi$ on $X^\bullet(F)$ with $e_\emptyset^*\Phi = 0$; this is essentially \cite[Theorem 6.2.1]{SaSpc}, and it implies, together with the previous point, that the summands of \cite[Theorem 7.3.1]{SV} with $\Theta\ne \emptyset$ do not contribute to the Plancherel decomposition. (This is not an essential point for this proof; the argument would go through even if there were contributions outside of the most continuous spectrum.)

By our formula \eqref{equation-pushforward-IC-expl}, as specialized in Example \ref{example-nfold}, and the commutation of asymptotics with Radon transform, \eqref{Radon-asymptotics}, we can write the Plancherel decomposition of the basic function as 
 \begin{equation}\label{eq:Plancherelnfold} \Vert \Phi\Vert^2_{L^2(X^\bullet(F))} = \frac{1}{|W|} \int_{\check T^1} 
\left|\frac{\prod_{\check\alpha\in\check\Phi^+} (1-e^{\check\alpha}(\chi))}{\prod_{\check\lambda \in \mB^+} (1-q^{-\frac{1}{2}} e^{\check\lambda}(\chi))}\right|^2 d\chi =   
 \int L(\pi, \otimes, \frac{1}{2}) L(\tilde \pi, \otimes, \frac{1}{2})
 d\pi.
 \end{equation}
 
Comparing \eqref{eq:PlancherelWhittaker} and \eqref{eq:Plancherelnfold}, we get that $|Z(\Phi, \pi)|^2 = L(\pi, \otimes, \frac{1}{2}) L(\tilde\pi, \otimes, \frac{1}{2})$ for $\pi$ unitary. This is what we get ``for free'' from the $L^2$-decomposition.
 
To upgrade it to a formula on $Z(\Phi, \pi)$, we utilize the following information about the image $W_{\Phi}$ under the unfolding map. Note that this is a $G(\mf o)$-invariant Whittaker function, hence can be identified as a function on the \emph{antidominant} coweights, by evaluating it on the elements $t^{\ch\theta}$. The facts that we need are:

\begin{enumerate}
 \item The support of $W_{\Phi}$ lies in the intersection of the antidominant lattice with the cone generated by $\mf c_X^-$ and the coweight \[\ch\theta_0:= - \frac{\check\alpha_1+\check\alpha_2+\dots+\check\alpha_n - \check m}{2}.\]
 Its value at $1 = t^0$ is $1$.
 
 Indeed, the support of $\Phi$ lies in $H \xi t^{\mf c_X^-} G(\mf o)$, where $\xi$ is the diagonal image of $\begin{pmatrix} 1 & 0 \\ 1 & 1\end{pmatrix}$, and from the definition \eqref{unfolding} of the unfolding map, the support of $W_{\Phi}$ will lie in $H \xi N^- t^{\mf c_X^-}  G(\mf o) = N^- \mbb G_m t^{\mf c_X^-}  G(\mf o)$, where $\mbb G_m$ is embedded as in \eqref{Gmembedding}. But a $G(\mf o)$-invariant Whittaker function with respect to a character of $N^-$ whose conductor is $\mf o$ can only be supported on antidominant elements. 
 
 When $g=1$, the integrand of \eqref{unfolding} only lies in the support of $\Phi$ when $n\in H_0\backslash N^-(\mf o)$.
 
 \item The value of $W_{\Phi}$ at any $t^{\ch\theta}$ is a polynomial in $q^{-\frac{1}{2}}$. 
 
 This fact, which can be seen as expressing the ``motivic'' nature of this function, requires some explanation. First of all, the statement is true if $W_{\Phi}$ is replaced by the pushforward $\pi_! \Phi$, by Theorem \ref{theorem-affine-closure} (see also \S \ref{sect:finitefields}). Secondly, the unfolding integral \eqref{unfolding} of the characteristic function of each $G(\mf o)$-orbit has this property; this can be seen by direct calculation, or by a similar geometric argument, and we omit the details. 
 
 \item If $\varphi_{\ch\theta}$ denotes the $G(\mf o)$-invariant Whittaker function supported on the coset $N^-t^{\ch\theta} G(\mf o)$ (with $\ch\theta$ antidominant) and equal to $1$ on $\ch\theta$, its asymptotic $e_\emptyset^* \varphi_{\ch\theta}$ is supported on the union of a finite number of cosets $N^-t^{\ch\theta'} G(\mf o)$, with $\ch\theta'-\ch\theta$ in the positive root monoid, and is a polynomial in $q^{-\frac{1}{2}}$. 
 
 Here, the asymptotics map is for the Whittaker model, but we denote it by the same symbol. It takes values in $N^-\backslash G(F)$, without a character on $N^-$. This fact is a corollary of the Casselman--Shalika formula. (See \cite[Theorem 6.8 and Example 6.4]{SaSatake} for an intepretation of the Casseman--Shalika formula in terms of asymptotics.)

\end{enumerate}

Granted, now, the facts above, we can represent $e_\emptyset^* W_{\Phi}$ as a formal series in the elements $e^{\ch\theta}$, which stand for the characteristic functions of the sets $N^- t^{\ch\theta} G(\mf o)$, multiplied by $\delta^{-\frac{1}{2}}(t^{\ch\theta}) = q^{\left<\rho_G,\ch\theta\right>}$, with coefficients in $\mathbb C[q^{-\frac{1}{2}}]$:
\[ e_\emptyset^* W_{\Phi} = \sum_{i,\ch\theta} c_{i,\ch\theta} q^{-\frac{i}{2}} e^{\ch\theta}.\]

According to the first and the third facts above, the support of this function lies in the set of $\ch\theta$'s which are translates, by the positive coroot monoid, of the intersection
\[ (\mf c_X^- + \text{span}(\ch\theta_0)) \cap \ch\Lambda^-.\]
The description of colors of $X$ in Example \ref{example-nfold} allows us to conclude that this monoid only includes coweights for which the ``determinant'' character 
\[\frac{\kappa \ch\mu + \sum_i \kappa_i \ch\alpha_i}{2} \mapsto \kappa\]
is \emph{nonnegative}; moreover, the restriction to the kernel of the determinant
cocharacter is simply equal to the asymptotics of the ``basic Whittaker function'' $\varphi_0$, which by \cite[Theorem 6.8 and Example 6.4]{SaSatake} is equal to 
\[\prod_{\ch\alpha \in \ch\Phi^+} (1-e^{\ch\alpha}).\]

Thus, if we ``divide'' the function $e_\emptyset^* W_{\Phi}$ by the product above (this corresponds to acting on it by a series in the Hecke algebra of the torus $T$), we obtain another series
\[\prod_{\ch\alpha \in \ch\Phi^+} (1-e^{\ch\alpha})^{-1} e_\emptyset^* W_{\Phi} = 
1\cdot e^0 + \sum_{j=1}^\infty \sum_{i=0}^\infty q^{-\frac{i}{2}} e^{j\frac{\ch\mu}{2}} C_{i,j}
,\]
where the $C_{i,j}$'s are finite linear combinations of the elements $e^{\ch\lambda}$, where $\ch\lambda$ now ranges over half-multiples of elements in the coroot lattice. (The finiteness of the linear combination also follows from \cite[Theorem 6.8 and Example 6.4]{SaSatake}: the asymptotics of \emph{every} $\varphi_{\ch\theta}$ is a ``multiple'' of the factor $\prod_{\ch\alpha \in \ch\Phi^+} (1-e^{\ch\alpha})$.)
Note that $\frac{\ch\mu}{2}$ is not, by itself, a coweight of $G$, so one has to expand this series to make sense of it as a linear combination of the elements $e^{\ch\theta}$ with $\ch\theta \in \ch\Lambda$.

Now we invoke the Plancherel formula: the fact that the unfolding map is an isometry tells us that the Plancherel density of $W_{\Phi}$ is also given by \eqref{eq:Plancherelnfold}. On the other hand, this Plancherel density can be expressed in terms of the asymptotics
$e_\emptyset^* W_{\Phi}$ as 
\[ \Vert W_{\Phi}\Vert^2 = \frac{1}{|W|} \Vert e_\emptyset^* W_{\Phi}\Vert^2 = \frac{1}{|W|} \int_{\check T^1} 
\left|1\cdot e^0 + \sum_{j=1}^\infty \sum_{i=0}^\infty q^{-\frac{i}{2}} e^{j\frac{\ch\mu}{2}} C_{i,j}\right|^2 \left| \prod_{\ch\alpha \in \ch\Phi^+} (1-e^{\ch\alpha})\right|^2 d\chi = \]
\[ = \frac{1}{|W|} \int_{\check T^1} 
\left(1\cdot e^0 + \sum_{j=1}^\infty \sum_{i=0}^\infty q^{-\frac{i}{2}} e^{j\frac{\ch\mu}{2}} C_{i,j}\right) \left(1\cdot e^0 + \sum_{j=1}^\infty \sum_{i=0}^\infty q^{-\frac{i}{2}} e^{-j\frac{\ch\mu}{2}} C^*_{i,j}\right) \prod_{\ch\alpha \in \ch\Phi^+} \left|(1-e^{\ch\alpha})\right|^2 d\chi,\]
where we now identify the elements $e^{\ch\theta}$ with characters of the dual torus $\ch T$, use the fact that for a unitary character $\overline{e^{\ch\theta}(\chi)} = e^{-\ch\theta}(\chi)$, and set $C^*_{i,j} = \sum_{\ch\lambda} \overline{c_{\ch\lambda}} e^{-\ch\lambda}$ if $C_{i,j} = \sum_{\ch\lambda} c_{\ch\lambda} e^{\ch\lambda} $. 

In other words, we have expressed the Plancherel density 
\[ 
 L(\pi, \otimes, \frac{1}{2}) L(\tilde \pi, \otimes, \frac{1}{2})
\]
as the product 
\[
 \left(1\cdot e^0 + \sum_{j=1}^\infty \sum_{i=0}^\infty q^{-\frac{i}{2}} e^{j\frac{\ch\mu}{2}} C_{i,j}\right) \left(1\cdot e^0 + \sum_{j=1}^\infty \sum_{i=0}^\infty q^{-\frac{i}{2}} e^{-j\frac{\ch\mu}{2}} C^*_{i,j}\right). 
\]
We will be done if we can identify the first factor of the former with the first factor of the latter. But, viewed as series in the elements $q^{-\frac{i}{2}} e^{-j\frac{\ch\mu}{2}}$ (with coefficients in polynomials in the group ring of the half-root lattice), the first factor of the former is the restriction of the series to the elements of the form $q^{-\frac{i}{2}} e^{i\frac{\ch\mu}{2}}$, while the product 
$ L(\pi, \otimes, \frac{1}{2}) L(\tilde \pi, \otimes, \frac{1}{2})$ is supported on elements of the form $q^{-\frac{i}{2}} e^{j\frac{\ch\mu}{2}}$ with $j\le i$. It follows that the series $ 1\cdot e^0 + \sum_{j=1}^\infty \sum_{i=0}^\infty q^{-\frac{i}{2}} e^{j\frac{\ch\mu}{2}} C_{i,j}$ is also supported on elements of the form $q^{-\frac{i}{2}} e^{j\frac{\ch\mu}{2}}$ with $j \le i$, coincides with $L(\pi, \otimes, \frac{1}{2})$ on elements with $i=j$, and a simple inductive argument in the variable $i-j$ shows that it coincides with it everywhere.
\end{proof}

\appendix 

\section{Properties of the global stratification} \label{appendix:A}

\subsection{The factorizable space of formal loops} \label{sect:multijet}

We briskly review the definitions of multi-point versions of the 
spaces of formal arcs and formal loops. We refer the reader to \cite{KV}, \cite[\S 3.1]{Xinwen} for a more complete account.  

Let $C$ be a smooth curve over $k$. 
For any $N \in \mbb N$, we have the $N$th symmetric product
$C^{(N)}$ of $C$, which identifies with the Hilbert scheme $\on{Hilb}^N(C)$
parametrizing relative effective divisors in $C$ of degree $N$.

Recall that if $S$ is an affine scheme and $D \subset C\times S$ 
is a closed affine subscheme, we denote by $\wh C'_D$ the 
spectrum of the ring of regular functions
on the formal completion of $C\times S$ along $D$ (so $\wh C'_D$ is a true scheme, not merely a formal scheme). 
Let $\wh C^\circ_D := \wh C'_D \sm D$ denote
the open subscheme. 

For any $k$-scheme $X$, we define the global space of formal arcs
by the functor
\begin{align*} 
(\Scr L^+ X)_{C^{(N)}}(S) & = \{ D \in C^{(N)}(S), \gamma \in X(\wh C'_D) \}. 
\end{align*}
for affine test schemes $S$. 
By \cite[Proposition 2.4.1]{KV}, the functor $(\Scr L^+ X)_{C^{(N)}}$ is representable by a scheme of infinite type over $C^{(N)}$.
If $X$ is affine, then $(\Scr L^+ X)_{C^{(N)}}$ is affine
over $C^{(N)}$, cf.~\cite[2.4.3]{KV}. 
More specifically, 
define the space of $n$-jets $(\Scr L^+_n X)_{C^{(N)}}$ by 
\[ (\Scr L^+_n X)_{C^{(N)}}(S) = \{ D\in C^{(N)}(S), \gamma \in X(\wh C^n_D) \}, \]
where $\wh C^n_D$ denotes the $n$th infinitesimal neighborhood of $D$
in $C\times S$. Then $(\Scr L^+_n X)_{C^{(N)}}$ is representable
by a scheme over $C^{(N)}$, which is affine (resp.~of finite type) if $X$ is.  As $n$ varies 
the schemes $(\Scr L^+_n X)_{C^{(N)}}$ form a projective system of schemes 
with affine transition maps, and 
$(\Scr L^+_n X)_{C^{(N)}}$ is equal to the projective limit of this system.
If $X$ is smooth, the schemes $(\Scr L^+_n X)_{C^{(N)}}$ are
smooth over $C^{(N)}$ with smooth surjective transition maps (cf.~\cite[Lemma 2.5.1]{cpsii}).

We can also define the functor for the global loop space by 
\begin{align*} 
(\Scr L X)_{C^{(N)}}(S) & = \{ D \in C^{(N)}(S), \gamma \in X(\wh C^\circ_D) \}. 
\end{align*}
\emph{If $X$ is affine}, then $(\Scr L X)_{C^{(N)}}$ 
is representable by an ind-scheme ind-affine over $C^{(N)}$,
cf.~\cite[Proposition 2.5.2]{KV}. 
We have a closed embedding $(\Scr L^+ X)_{C^{(N)}} \into (\Scr L X)_{C^{(N)}}$.

\subsubsection{}\label{sect:Llocalization}
Let $\Aut^0(k\tbrac t)$ denote the functor sending a $k$-algebra $R$
to the group of $R$-algebra automorphisms of $R\tbrac t$ that reduce to
the identity map mod $t$. Then $\Aut^0(k\tbrac t)$ is representable 
by the group scheme $\Spec k[a_1^{\pm 1}, a_2, a_3, \dotsc]$, cf.~\cite[(1.3.13)]{Xinwen}. 

\smallskip

There is an $\Aut^0(k\tbrac t)$-torsor $\on{Coord}^0(C) \to C$
classifying $v \in C$ together with an isomorphism $k\tbrac t \cong \mf o_v$
that sends $t$ to a uniformizer, cf.~\cite[(3.1.10)]{Xinwen}. 
We can think of $(\Scr L^+ X)_C, (\Scr L X)_C$ as twisted products 
\[
    (\Scr L^+ X)_C = C \ttimes \msf L^+ X, \qquad (\Scr L X)_C = C\ttimes \msf L X, 
\]
where $C \ttimes \msf L^+ X := \on{Coord}^0(C) \times^{\Aut^0(k\tbrac t)} \msf L^+ X$.

\begin{rem}
The space $\Scr L X$ really lives over
the Ran space of $C$. Essentially this just means $\Scr L X$
only cares about the support of the divisor $D$ and not its multiplicities. 
More specifically for any finite set $I$ we have a map 
$C^I \to C^{(\abs I)}$ where $\abs I$ denotes the cardinality of $I$. 
Then the spaces $(\Scr L X)_{C^I} := C^I \times_{C^{(\abs I)}} (\Scr L X)_{C^{(\abs I)}}$ 
have a factorization monoid structure as
defined in \cite[Definition 2.2.1]{KV}, and we can think 
of the collection of these spaces as $(\Scr L X)_{\on{Ran}_C}$. 
This is certainly the more philosophically correct approach to 
considering the \emph{loop} space, but for technical simplicity 
it will suffice for our study of \emph{arc} spaces to work with $\Scr L^+ X$
over $\Sym C$.
\end{rem}

\subsubsection{} \label{sect:localGr}
We can apply the constructions above to the algebraic group $G$. 
Since $G$ is smooth, $\Scr L^+ G$ is a group scheme formally smooth over 
$\Sym C$.

Consider the (factorizable) Beilinson--Drinfeld affine Grassmannian $\Gr_{G,\Sym C}$ defined in \S\ref{sect:YGr}. 
By Beauville--Laszlo's theorem (see \cite[Remark 2.3.7]{BD}, \cite[Proposition 3.1.9]{Xinwen}), we have an isomorphism 
$ \Gr_{G,\Sym C} \cong \Scr L G / \Scr L^+ G$.

\subsubsection{} Let $X$ be an affine spherical 
$G$-variety. Define 
\[
    \Scr L^\bullet X := \Scr L X \sm \Scr L(X\sm X^\bullet),
\]
which admits an open embedding into $\Scr L X$. 
The $G$-action on $X$ induces a natural action of $\Scr L G$ on 
$\Scr L X$ and the subspace $\Scr L^\bullet X$ (resp.~$\Scr L^+ X$) is stable under the action of 
$\Scr L G$ (resp.~$\Scr L^+ G$). 

We will primarily be concerned with the global space of 
non-degenerate arcs
\[ (\Scr L^+X)^\bullet := \Scr L^+X \times_{\Scr L X} \Scr L^\bullet X.\] 
The study of the loop space $\Scr L^\bullet X$ is beyond the scope of 
this paper.

\subsection{Multi-point orbits} \label{sect:defstrata}
We now consider the $\Scr L^+G$-orbits on $(\Scr L^+X)^\bullet$, and
we will prove a multi-point version of Proposition~\ref{prop:smoothfiberstrata}.

\subsubsection{What is going on at the level of $k$-points} 
A $k$-point of $\Ran_C$ is a nonempty finite subset 
$\{ v_i \in \abs C \}_{i\in I}$ of points on $C$. 
Then a $k$-point of $(\Scr L^+X)^\bullet_{\Ran_C}$ over $\{v_i\}$ consists
of points $x_i \in X(\mf o_{v_i}) \cap X^\bullet(F_{v_i})$. 
Each $x_i$ belongs to an orbit $X^\bullet(F_{v_i})_{G:\ch\theta_i}$
for a unique $\ch\theta_i \in \mathfrak c_X^-$ by Theorem~\ref{thm:Gorbits}(ii). 
The collection $\ch\theta_{i},\, i\in I$ forms a multiset
in $\mathfrak c_X^-$. 
Note that $\ch\theta_{i}$ may be zero. 
Therefore, the tuple $(x_i)_{i\in I}$ is a $k$-point in the product of orbits 
$\underset{i\in I}\prod \msf L^{\ch\theta_i}_{v_i} X$.

The idea for what follows is that this product of orbits only depends on
the unordered multiset $\{\ch\theta_i\}$, counted with multiplicity. 
Moreover since $\Cal C_0$ is a strictly convex cone, 
the set of formal sums $\sum_I \ch\theta_i \cdot v_i$ admits a positive grading.

\subsubsection{Construction}\label{sect:constructstrata}
Let $\ch\Theta_0$ denote an (unordered) multiset in $\mathfrak c_X^-$, by 
which we mean a formal sum 
\[ \sum_{\ch\theta\in \mathfrak c_X^- } N_{\ch\theta} [\ch\theta] \in \Sym^\infty(\mathfrak c_X^-) \]
where all but finitely many $N_{\ch\theta}=0$. 
Note that we include the case $\ch\theta=0$ and $N_0$ may be nonzero. 
See \S\ref{def:partition} for notation.

We have an \'etale map 
\[ \oo C^{\abs{\ch\Theta_0}} = \underset{\ch\theta \in \mf c_X^-}{\oo\prod} \oo C^{N_{\ch\theta}} 
\to \underset{\ch\theta \in \mf c_X^-}{\oo\prod} \oo C^{(N_{\ch\theta})} = \oo C^{\ch\Theta_0}. 
\]
By the factorization property, the base change 
$\oo C^{\abs{\ch\Theta_0}} \xt_{\Sym C} \Scr L^+ X$ identifies with the $\abs{\ch\Theta_0}$-fold 
disjoint product of $C \ttimes \msf L^+ X$. 
By Proposition~\ref{prop:smoothfiberstrata}, we have the locally closed 
subscheme 
\begin{equation} \label{e:productstrata}
\underset{\ch\theta \in \mf c_X^-}{\oo\prod} (C \ttimes \msf L^{\ch\theta} X)^{\oo\xt N_{\ch\theta}} 
\into \oo C^{\abs{\ch\Theta_0}} \xt_{\Sym C} \Scr L^+ X.
\end{equation}
This is stable under the action of $\prod_{\ch\theta} {\mf S}_{N_{\ch\theta}}$,
and therefore \eqref{e:productstrata} descends to 
a locally closed subscheme 
\[ \Scr L^{\ch\Theta_0} X \into 
\oo C^{\ch\Theta_0} \underset{\Sym C}\times \Scr L^+ X =: (\Scr L^+ X)_{\oo C^{\ch\Theta_0}}. \]

\begin{prop} \label{prop:smoothstrata}
The scheme $\Scr L^{\ch\Theta_0} X$ is formally smooth over $\oo C^{\ch\Theta_0}$,
and second projection induces a locally closed embedding $\Scr L^{\ch\Theta_0}X \into 
\Scr L^+ X$. 
\end{prop}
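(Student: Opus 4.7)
The plan is to verify both assertions after pulling back along the finite \'etale surjection
\[
p : \oo C^{\abs{\ch\Theta_0}} = \underset{\ch\theta}{\oo\prod} \oo C^{N_{\ch\theta}} \to \oo C^{\ch\Theta_0} = \underset{\ch\theta}{\oo\prod} \oo C^{(N_{\ch\theta})},
\]
which is a $\prod_{\ch\theta}\mf S_{N_{\ch\theta}}$-torsor, and then descend. By the construction of $\Scr L^{\ch\Theta_0}X$ in \S\ref{sect:constructstrata}, its pullback along $p$ is canonically identified with the product \eqref{e:productstrata}, which, after applying the factorization property for $\Scr L^+X$ at pairwise distinct points, identifies with the locally closed subscheme
\[
\underset{\ch\theta\in\mf c_X^-}{\oo\prod} (C\ttimes \msf L^{\ch\theta}X)^{\oo\xt N_{\ch\theta}} \subset \oo C^{\abs{\ch\Theta_0}}\xt_{\Sym C}\Scr L^+X.
\]

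For formal smoothness, first note that each $C\ttimes \msf L^{\ch\theta}X$ is formally smooth over $C$: this follows from Proposition~\ref{prop:smoothfiberstrata}, which gives formal smoothness of $\msf L^{\ch\theta}X$ over $k$, combined with the fact that the twisted product by the $\Aut^0(k\tbrac t)$-torsor $\on{Coord}^0(C)\to C$ preserves formal smoothness over the base. Taking products over the distinct locus, the pullback $p^*\Scr L^{\ch\Theta_0}X$ is formally smooth over $\oo C^{\abs{\ch\Theta_0}}$. Since $p$ is itself \'etale surjective, formal smoothness descends to $\Scr L^{\ch\Theta_0}X\to \oo C^{\ch\Theta_0}$.

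For the second assertion, I will check that the composition
\[
p^*\Scr L^{\ch\Theta_0}X\;=\;\underset{\ch\theta}{\oo\prod}(C\ttimes \msf L^{\ch\theta}X)^{\oo\xt N_{\ch\theta}} \longrightarrow \oo C^{\abs{\ch\Theta_0}}\xt_{\Sym C}\Scr L^+X \longrightarrow \Scr L^+X
\]
is a locally closed embedding, from which the analogous statement for $\Scr L^{\ch\Theta_0}X\to\Scr L^+X$ will follow by descent (the $\prod \mf S_{N_{\ch\theta}}$-action on $p^*\Scr L^{\ch\Theta_0}X$ is free, coming from permuting distinct points, and on the target it is trivial; both embedding property and local closedness descend along such free \'etale quotients). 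For injectivity on points, observe that given an arc $\gamma\in\Scr L^+X(S)$ in the image of $p^*\Scr L^{\ch\Theta_0}X$, the underlying divisor in $\oo C^{\abs{\ch\Theta_0}}$ is uniquely reconstructed from $\gamma$: the support consists precisely of those closed points $v\in\abs C$ at which $\gamma|_{\Spec\mf o_v}$ fails to land in $\msf L^0_vX$, and the label $\ch\theta_v$ attached to such a point is determined by the orbit that $\gamma|_{\Spec\mf o_v}$ belongs to (using that the $\msf L^{\ch\theta}X$ are disjoint, Proposition~\ref{prop:smoothfiberstrata}). To upgrade this to a scheme-theoretic locally closed embedding, factor the map through $\oo C^{\abs{\ch\Theta_0}}\xt_{\Sym C}(\Scr L^+X)^\bullet$: by Proposition~\ref{prop:smoothfiberstrata}, each $\msf L^{\ch\theta}X$ is open in its closure in $\msf L X$ (and different orbits are disjoint), and via factorization at the distinct points $v_1,\dotsc,v_{\abs{\ch\Theta_0}}$ this translates to the condition ``the $i$-th local component of $\gamma$ lies in $\msf L^{\ch\theta_i}X$'' being locally closed in $\oo C^{\abs{\ch\Theta_0}}\xt_{\Sym C}\Scr L^+X$. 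Combined with the above injectivity, this identifies $p^*\Scr L^{\ch\Theta_0}X$ with a locally closed subscheme of $\Scr L^+X$.

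The main obstacle is the monomorphism statement in the last paragraph, since $\Scr L^+X$ is a scheme of infinite type and one must be careful that the ``recovery of the divisor from the arc'' is genuinely functorial on $S$-points, not just $k$-points. The cleanest way is to exploit the Beauville--Laszlo style factorization of $\Scr L^+X$ over the disjoint locus to reduce to single points, where the statement becomes Proposition~\ref{prop:smoothfiberstrata} and Lemma~\ref{lem:theta-orbit-identification}.
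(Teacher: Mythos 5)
Your argument for formal smoothness is fine and matches the paper's (it just invokes Proposition~\ref{prop:smoothfiberstrata}). But the argument for the locally closed embedding contains a genuine gap.

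You claim that the composition
\[
p^*\Scr L^{\ch\Theta_0}X = \underset{\ch\theta}{\oo\prod}(C\ttimes \msf L^{\ch\theta}X)^{\oo\xt N_{\ch\theta}} \longrightarrow \Scr L^+X
\]
is a locally closed embedding, and that the statement for $\Scr L^{\ch\Theta_0}X$ then follows by descent. This cannot be right. The map above is $\prod_{\ch\theta}\mf S_{N_{\ch\theta}}$-invariant (the action on the target $\Scr L^+X$ is trivial) and the action on the source is free, so if the map were a monomorphism the group would have to be trivial; equivalently, whenever some $N_{\ch\theta}>1$, the map is generically $\prod N_{\ch\theta}!$-to-1 on geometric points. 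The error is visible already in your ``injectivity on points'' paragraph: from an arc $\gamma$ one can recover the \emph{set} of degenerate points together with their labels $\ch\theta_v$, which determines a point of the partially symmetrized $\oo C^{\ch\Theta_0}$, but it does not determine an \emph{ordering} of the points within a single $\ch\theta$-block, which is what a point of $\oo C^{\abs{\ch\Theta_0}}$ is. You have conflated $\oo C^{\abs{\ch\Theta_0}}$ with $\oo C^{\ch\Theta_0}$.

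Because of this, your proposed descent step also fails as stated: you cannot descend the embedding property from $p^*\Scr L^{\ch\Theta_0}X$ because the object upstairs is not an embedding in the first place. The paper instead works directly with $\Scr L^{\ch\Theta_0}X$ and factors $\on{pr}_2$ through the finite \'etale map $\on{pr}'_2 : (\Scr L^+X)_{\oo C^{\ch\Theta_0}} \to (\Scr L^+X)_{\oo C^{(\abs{\ch\Theta_0})}}$, where the label data survives but the internal ordering does not; one then checks $\Scr L^{\ch\Theta_0}X \to \on{pr}'_2(\Scr L^{\ch\Theta_0}X)$ is \'etale and bijective on geometric points, hence an isomorphism, and that $\on{pr}'_2(\Scr L^{\ch\Theta_0}X)$ is locally closed by a closure/openness argument using finiteness and \'etaleness of $\on{pr}'_2$. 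The crucial point is that the injectivity on geometric points is true for the map out of $\Scr L^{\ch\Theta_0}X$ (an arc together with the labeling of its degenerate points is the same datum as an arc, since the labels are determined by the arc), but not for the map out of the ordered cover $p^*\Scr L^{\ch\Theta_0}X$. To fix your argument you would need to run the injectivity and local-closedness checks on $\Scr L^{\ch\Theta_0}X$ itself, essentially reproducing the paper's proof.
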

\begin{proof}
Formal smoothness follows from Proposition \ref{prop:smoothfiberstrata}. 
It remains to show that the second projection 
$\on{pr}_2 : \Scr L^{\ch\Theta_0} X \to \Scr L^+ X$
is a locally closed embedding. 
Note that $\on{pr}_2$ is the composition of the finite \'etale map 
\[ \on{pr}'_2: (\Scr L^+ X)_{\oo C^{\ch\Theta_0}} \to 
(\Scr L^+ X)_{\oo C^{( \abs{\ch\Theta_0})}} := \oo C^{(\abs{\ch\Theta_0})} \underset{\Sym C}\times \Scr L^+ X \]
and the open embedding $(\Scr L^+ X)_{\oo C^{(\abs{\ch\Theta_0})}} \into \Scr L^+ X$. 
Hence it suffices to show that the restriction 
$\Scr L^{\ch\Theta_0} X  \subset (\Scr L^+ X)_{\oo C^{\ch\Theta_0}} \to 
(\Scr L^+ X)_{\oo C^{(\abs{\ch\Theta_0})}}$
is locally closed.  
Let $Y$ be the closure of $\Scr L^{\ch\Theta_0} X$ in $(\Scr L^+ X)_{\oo C^{\ch\Theta_0}}$, 
so $Y'=\on{pr}'_2(Y)$ is 
a closed subscheme of $(\Scr L^+ X)_{\oo C^{(\abs{\ch\Theta_0})}}$
and $\on{pr}'_2(\Scr L^{\ch\Theta_0} X)$ is open in $Y'$. 
Observe that the induced map 
\begin{equation} \label{e:productstratapr2} 
\Scr L^{\ch\Theta_0} X \to \on{pr}'_2(\Scr L^{\ch\Theta_0} X)
\end{equation}
is a bijection on geometric points: 
a point in the right hand side consists of an unordered set of points $\{ v_i \} \subset C$
and $x_i \in \msf L^+_{v_i} X$ such that if $\ch\theta_i$ denotes the $G$-valuation
of $x_i$, then $\sum_i [\ch\theta_i] = \ch\Theta_0$. 
There is a unique way to partition the $v_i$'s according to distinct values of $\ch\theta_i$'s
to get a point in $\Scr L^{\ch\Theta_0} X$. 
Therefore \eqref{e:productstratapr2} is \'etale and a bijection on geometric points, hence
an isomorphism. 
\end{proof}

\subsection{Proof of Lemma {\ref{lem:globstrata}}}
\label{proof:globstrata}

Assume that $C$ is complete. 
Let $(\sM_X \xt \Sym C)^\bullet$ denote the substack of $\sM_X \xt\Sym C$
consisting of those pairs $(f,D)$ where $f(C \sm D) \subset X^\bullet/G$, i.e., 
$C\sm D$ is contained in the non-degenerate locus.
This is an open substack since $C$ is complete. 
Define the map
\begin{equation} \label{e:MtoLX}
    (\Scr M_X \times \Sym C)^\bullet \to \Scr L^+ X / \Scr L^+G
\end{equation}
over $\Sym C$ by sending $(f,D)$ to $(f|_{\wh C'_D}, D)$. 
Here we are using the fact that an $\Scr L^+G$-torsor on an affine scheme $S$
is the same as a $G$-torsor on $\wh C'_D$ by formal lifting.

Recall that we defined a partition $\ch\Theta$, or \emph{unordered multiset without zero}, in $\mathfrak c_X^-$, 
to mean an element of $\Sym^\infty(\mf c_X^- \sm 0)$. 
To such a partition $\ch\Theta$, we have a corresponding 
locally closed subscheme $\Scr L^{\ch\Theta} X \into \Scr L^+ X$
by Proposition~\ref{prop:smoothstrata}. 
Then we can \emph{define} $\Scr M^{\ch\Theta}_X$ 
to be the preimage of $\Scr L^{\ch\Theta} X / \Scr L^+G$ under the map 
\eqref{e:MtoLX}. 
One can check from the construction that 
this definition gives the description on $k$-points 
from \S\ref{sect:globstrat}.
By base change $\Scr M^{\ch\Theta}_X$ is a locally closed
subscheme of $\Scr M_X \times C^{(\abs{\ch\Theta})}$. 
In particular, $\Scr M^{\ch\Theta}_X$ is an algebraic 
stack locally of finite type over $k$. 

\begin{lem} \label{lem:MtoLsm}
The natural map $\Scr M^{\ch\Theta}_X \to \Scr L^{\ch\Theta} X / \Scr L^+G$ is formally 
smooth. 
\end{lem}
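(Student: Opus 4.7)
\medskip

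\noindent\textbf{Proof plan.} I would verify the infinitesimal lifting criterion directly. Let $S \into S'$ be a square-zero closed immersion of affine test schemes with ideal $I$ satisfying $I^2 = 0$, and suppose given a commutative square
\[
\begin{tikzcd}
S \ar[r, "\xi"]\ar[d, hook] & \Scr M^{\ch\Theta}_X \ar[d] \\
S' \ar[r, "\eta'"] & \Scr L^{\ch\Theta}X/\Scr L^+G.
\end{tikzcd}
\]
Unpacking the moduli descriptions, $\xi$ consists of a relative divisor $D \subset C\xt S$ of the shape specified by $\ch\Theta$, a $G$-bundle $\sP_G$ on $C\xt S$, and a section $\sigma: C\xt S \to X\xt^G \sP_G$ satisfying the valuation conditions, and $\eta'$ specifies an extension $D' \subset C\xt S'$ together with $\Scr L^+G$-equivariant data refining the restriction of $\xi$ to $\wh C'_{D}$. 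What must be produced is an $S'$-point of $\Scr M^{\ch\Theta}_X$ restricting to $\xi$ on $S$ and mapping to $\eta'$.

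The plan is to apply the Beauville--Laszlo formalism (Lemma~\ref{lem:B-L}) to the affine $G$-scheme $X$, which reduces the construction of the desired $S'$-lift to extending $(\sP_G, \sigma)|_{C\xt S\sm D}$ to a pair $(\sP'_G, \sigma')$ on $C\xt S'\sm D'$ with $\sigma'$ landing in $X^\bullet \xt^G \sP'_G$, compatibly with $\eta'$ on the overlap $\wh C^\circ_{D'}$. Since $X^\bullet = H\bs G$ with $H$ smooth and connected (Remark~\ref{rem:Hconnected}, Lemma~\ref{lem:H_I-conn}), a section of $X^\bullet/G = BH$ over $C\xt S'\sm D'$ is equivalent to an $H$-bundle. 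So the task reduces to extending the $H$-bundle $\sP_H$ on $C\xt S\sm D$ defined by $\sigma$ to an $H$-bundle $\sP'_H$ on $C\xt S'\sm D'$, with prescribed restriction to $\wh C^\circ_{D'}$ coming from $\eta'$.

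For $\ch\Theta \ne 0$, the fibers of $D$ are nonempty, hence $C\sm D$ is affine, and $C\xt S\sm D$ is affine as $S$ is affine. The obstruction to extending $H$-bundles along the thickening lives in $\rmH^2$ of $\mathrm{Ad}(\sP_H) \ot I$ on $C\xt S\sm D$, and extensions are parametrized by the $\rmH^1$, both of which vanish by affineness and smoothness of $H$. The constraint that $\sP'_H$ restrict to the given bundle on $\wh C^\circ_{D'}$ is imposed by passing to a relative cohomology (or equivalently, to the kernel sheaf supported away from $\wh C^\circ_{D'}$), and the same vanishing applies. The case $\ch\Theta = 0$ is degenerate: $D = \emptyset$, $\Scr M^{0}_X = \Bun_H$, and the assertion reduces to smoothness of the evaluation $\Bun_H \to BH$ at a chosen formal disk, which is standard.

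The main obstacle will be setting up the constrained $H$-bundle extension problem and the associated sheaf cohomology cleanly, so that the prescribed boundary data on $\wh C^\circ_{D'}$ enters in a way compatible with the affineness-based vanishing. Once this is done, the Beauville--Laszlo glueing automatically furnishes the desired pair $(\sP'_G, \sigma')$ on $C\xt S'$, and its generic non-degeneracy is guaranteed by the construction via an $H$-bundle on $C\xt S'\sm D'$.
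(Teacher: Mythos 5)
Your overall strategy matches the paper's: unwind the moduli description, identify the lifting problem as one for $H$-bundles on the complement $U'=C\xt S'\sm D'$ plus a gluing condition on the formal punctured neighborhood, and apply Beauville--Laszlo at the end. The place where your plan would break down is exactly the step you flag as ``the main obstacle.'' You propose to impose the boundary condition on $\wh C^\circ_{D'}$ during the extension by ``passing to a relative cohomology (or equivalently, to the kernel sheaf supported away from $\wh C^\circ_{D'}$).'' This does not set up cleanly, because $\wh C^\circ_{D'}$ is \emph{not} a Zariski open or closed subscheme of $U'$: the map $\wh C^\circ_{D'}\to U'$ is an affine flat morphism (a ``Beauville--Laszlo overlap''), not an immersion. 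So there is no short exact sequence of quasi-coherent sheaves on $U'$ expressing the constraint, and ``relative cohomology of the pair'' or ``kernel sheaf supported away from $\wh C^\circ_{D'}$'' has no literal meaning.

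The fix is to separate the two steps, and this is what the paper's proof does. First extend $\Scr P_H$ freely to $\Scr P'_H$ on $U'$: only $\rmH^2(U,\mathrm{Ad}(\Scr P_H)\ot I)=0$ is needed for existence, and this already follows from the fact that $U\to S$ has relative dimension $\le 1$ and $S$ is affine (your affineness of $U$ is more than is required, and the $\rmH^1$ vanishing for uniqueness plays no role). Second, reconcile $\Scr P'_H|_{\wh C^\circ_{D'}}$ with the $H$-bundle $\hat{\Scr P}'_H$ coming from $\eta'$: both are deformations over $\wh C^\circ_{D'}$ of the same bundle on $\wh C^\circ_D$, equipped with the identity identification mod $I$, and since $\wh C^\circ_D$ is affine, $\rmH^1(\wh C^\circ_D,\mathrm{Ad}\ot I)=0$ shows they are isomorphic by an isomorphism lifting the given one. (The paper packages this second step as lifting a section $\beta$ along the smooth map $\Scr P'_H\xt^H G\xt^G\hat{\Scr P}'_G\to X\xt^G\hat{\Scr P}'_G$, using smoothness of $G\to X^\bullet$ and affineness of $\wh C^\circ_{D'}$ — this is the same vanishing in a different guise.) Once both steps are done, the Beauville--Laszlo glue produces the desired $S'$-point of $\Scr M^{\ch\Theta}_X$, as you say. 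A minor aside: for $\ch\Theta=0$ the target $\Scr L^{0}X/\Scr L^+G$ is a point (there is no divisor), so the assertion is just smoothness of $\Bun_H$; your description of it as smoothness of an evaluation $\Bun_H\to BH$ at a formal disk is not what is being asked, though the case is trivial either way.
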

\begin{proof}
Let $S \into S'$ be a nilpotent thickening of affine schemes. 
Let $(f,D) \in \Scr M_X^{\ch\Theta}(S)$. 
This maps to the point $(f|_{\wh C'_D}, D) \in \Scr L^{\ch\Theta} X(S)$. 
Suppose that we have a lift of this point to 
$(\hat f, D') \in \Scr L^{\ch\Theta} X(S')$, where 
$D'\subset C\times S'$ is a relative effective Cartier divisor
and $\hat f: \wh C'_{D'} \to X/G$ is equivalent to 
the datum of a $G$-bundle $\hat{\Scr P}'_G$ on $\wh C'_{D'}$
and a section $\hat \sigma' : \wh C'_{D'} \to X \times^G \hat{\Scr P}'_G$.

We would like to lift $(f,D)$ to an $S'$-point of $\Scr M_X^{\ch\Theta}$
that maps to $(\hat f,D')$.
The map $f : C\times S \to X/G$
consists of the datum of a $G$-bundle $\Scr P_G$ on $C\times S$
and a section $\sigma : C\times S \to X \times^G \Scr P_G$
satisfying the condition that $\sigma(C\times S \sm D) \subset X^\bullet \times^G \Scr P_G = (H\bs G)\times^G \Scr P_G$.
The restriction $\sigma|_{C \times S \sm D}$ 
gives a reduction of $\Scr P_G|_{C\times S\sm D} \cong G\times^H \Scr P_H$ to an
$H$-bundle $\Scr P_H$ on $C\times S \sm D$ such that
$\sigma|_{C\times S\sm D}$ identifies with
the canonical section 
\[ C\times S \sm D \cong H\bs \Scr P_H \into (H\bs G)\overset H\times \Scr P_H \cong (H\bs G) \overset G\times \Scr P_G|_{C\times S \sm D}\]
corresponding to $H1\in H\bs G$.

The obstruction to lifting $\Scr P_H$ to an $H$-bundle $\Scr P'_H$ on $C\times S' \sm D'$ is an element in $H^2(C\times S\sm D, \mf h_{\Scr P_H} \otimes_{\Scr O_S} I)$ where $I$
is the zero ideal of $S\into S'$ and $\mf h_{\Scr P_H}$ denotes the quasicoherent sheaf on $C\times S \sm D$ obtained
by twisting the adjoint representation of $H$ by $\Scr P_H$.
This obstruction vanishes since $C\times S \sm D$ has
relative dimension $1$ over $S$ and we can compute
cohomology over the Zariski site. 
Thus, we obtained an $H$-bundle $\Scr P'_H$ over $C\times S' \sm D'$.
Let $\sigma' : C\times S'\sm D' \to X \times^G (G\times^H\Scr P'_H)$ denote the corresponding section.

We know that after base change to $S$, there exists an isomorphism
\[ \tau : \hat{\Scr P}'_G|_{\wh C^\circ_D} \cong \Scr P_G|_{\wh C^\circ_D} \cong (G\overset H\times \Scr P'_H)|_{\wh C^\circ_D} \]
such that $\tau \circ \hat\sigma'|_{\wh C^\circ_D} = \sigma'|_{\wh C^\circ_D}$. This is equivalent to a section 
$\beta: \wh C^\circ_D \to 
\Scr P'_H\times^H G\times^G \hat{\Scr P}'_G$
such that $\beta$ is sent under 
\begin{equation} \label{e:twistedGtoX} 
    \Scr P'_H \overset H\times G \overset G\times \hat{\Scr P}'_G \to (C\times S')\times X \overset G\times \hat{\Scr P}'_G
\end{equation}
 to the restriction $\hat \sigma'|_{\wh C^\circ_{D}}$.
The map \eqref{e:twistedGtoX} is smooth since $G\to X^\bullet=H\bs G$ is smooth.
The scheme $\wh C^\circ_{D'}$ is affine since $S'$ is affine.
The zero ideal of $\wh C^\circ_D\into \wh C^\circ_{D'}$ is still
nilpotent, so by formal smoothness of \eqref{e:twistedGtoX}, we
can lift $\beta$ to a section $\beta'$ that maps to $\hat \sigma'|_{\wh C^\circ_{D'}}$.
Such a section $\beta'$ is equivalent to an isomorphism 
$\tau' : \hat{\Scr P}'_G|_{\wh C^\circ_{D'}} \cong (G\times^H \Scr P'_H)|_{\wh C^\circ_{D'}}$ such that $\tau' \circ \hat\sigma' |_{\wh C^\circ_{D'}} = \sigma'|_{\wh C^\circ_{D'}}$.
By Beauville--Laszlo's theorem (Lemma~\ref{lem:B-L}),
the data $((\hat{\Scr P}'_G,\hat\sigma'), (\Scr P'_H,\sigma'), \tau')$ descends to a map $f' : C\times S' \to X/G$. 
By construction, $(f',D')$ is an $S'$-point of $\Scr M^{\ch\Theta}_X$
lifting $f$.
\end{proof}

\begin{proof}[{Proof of Lemma~\ref{lem:globstrata}}] 
Lemma~\ref{lem:MtoLsm} and Proposition~\ref{prop:smoothstrata} together
imply that $\Scr M^{\ch\Theta}_X$ is formally smooth over $k$. 
Since $\Scr M^{\ch\Theta}_X$ is locally of finite type,
it is therefore smooth over $k$.

We claim that the first projection $\on{pr}_1 : \Scr M_X\times C^{(\abs{\ch\Theta})} \to \Scr M_X$ induces a locally closed
embedding $\Scr M^{\ch\Theta}_X \into \Scr M_X$. 
Let $Z \subset \Scr M_X \times C^{(\abs{\ch\Theta})}$ denote the substack
with $S$-points consisting of those $(f,D)$ such that 
$f^{-1}(X^\bullet/G)\cap D = \emptyset$. 
Since $D$ is faithfully flat over $S$, the image of $f^{-1}(X^\bullet/G)\cap D$ in $S$ is open. Therefore, $Z$
is a closed substack of $\Scr M_X \times C^{(\abs{\ch\Theta})}$.
Since $\ch\Theta$ is a multiset without zero, 
observe that $\Scr M^{\ch\Theta}_X$ embeds into 
$Z$. Now $(f,D)\in Z(k)$ satisfies the property that the support 
of $D\in C^{(\abs{\ch\Theta})}$ is contained in the support of $C \sm f^{-1}(X^\bullet/G)$. We deduce that ${\on{pr}_1}|_Z : Z \to \Scr M_X$ is 
proper and quasifinite, hence finite. 
On the other hand, $\on{pr}_1|_{\sM^{\ch\Theta}_X}$ is injective
on $k$-points, and 
$({\on{pr}_1}|_Z)^{-1}(\on{pr}_1(\Scr M^{\ch\Theta}_X)) = \Scr M^{\ch\Theta}_X$. 
From this we deduce that $\on{pr}_1|_{\sM^{\ch\Theta}_X}$ 
is a locally closed embedding.
\end{proof}

\subsection{Generic-Hecke modifications} 
We review the notion of \emph{generic-Hecke modifications} between quasimaps
introduced in \cite[\S 2.2]{GN}, applied to our situation. 
We assume that $B$ acts simply transitively on $X^\circ$ and that $H$ is connected. 

\subsubsection{Function-theoretic analog} 
We explain the idea behind the generic-Hecke modifications at the level of sets; 
this construction appears in the geometric Langlands
program in the construction of Whittaker models, cf.~\cite[\S 5.3.1]{G:outline}.

We use the notation of \S\ref{sect:adelic}.
For any finite subset $\underline v \subset \abs C$, 
let $\bbA^{\ul v} = \prod'_{v'\notin \ul v} F_{v'},\, 
\bbA_{\ul v} = \prod_{v'\in \ul v} F_{v'}$ and similarly for 
$\mbb O^{\ul v}, \mbb O_{\ul v}$. Then we can consider 
the set  
\begin{equation} \label{e:genericHeckeset} 
 G(\bbA^{\ul v})/ G(\mbb O^{\ul v}) \xt H(\bbA_{\ul v})/ H(\mbb O_{\ul v})   
\end{equation}
which maps to $H(\Bbbk) \bs G(\bbA) / G(\mbb O)$ in two ways: 
(i) by projecting along the first factor to $( H(\Bbbk) \xt_{H(\bbA_{\ul v})} H(\mbb O_{\ul v})) \bs G(\bbA^{\ul v}) / G(\mbb O^{\ul v})
\subset H(\Bbbk) \bs G(\bbA) / G(\mbb O)$
and (ii) by projecting to 
\[ H(\Bbbk) \bs \Bigl( G(\bbA^{\ul v}) / G(\mbb O^{\ul v}) \xt H(\bbA_{\ul v})/H(\mbb O_{\ul v}) \Bigr) 
\subset H(\Bbbk) \bs G(\bbA)/ G(\mbb O).
\]
Every meromorphic quasimap (element of $H(\Bbbk) \bs G(\mbb A) / G(\mbb O)$) 
belongs to the image of the second projection for some 
$\underline v$. If $H$ is connected, then by \emph{weak approximation} 
$H(\bbA_{\underline v}) = H(\Bbbk) H(\mbb O_{\underline v})$ so 
every quasimap also belongs to the image of the first projection.
Thus, the union of \eqref{e:genericHeckeset} over all finite subsets $\ul v$ 
defines a groupoid acting on the set of quasimaps.

\subsubsection{}
We define the ind-stack $\Scr H_{H,\sM_X}$ of \emph{generic-Hecke modifications} 
to be the stack classifying data 
\[ (\Scr P^1_G, \Scr P^2_G, \sigma_1,\sigma_2; \ul v, \tau) \]
where $(\Scr P^i_G, \sigma_i) \in \sM_X$, 
$\ul v \in \Sym C$ is a divisor with support $\ul v$ contained in the non-degenerate locus
$\sigma_i^{-1}(X^\bullet \xt^G \sP^i_G)$ for both $i=1,2$
and $\tau$ is an isomorphism of $G$-bundles 
\[ \tau: \Scr P^1_G|_{C\sm {\ul v}} \cong \Scr P^2_G|_{C\sm {\ul v}} \]
such that the following diagram commutes
\[ 
\begin{tikzcd} 
C \sm {\ul v} \ar[r, "\sigma_1"] \ar[rd, swap,"\sigma_2"] & \Scr P^1_G \overset G\times X |_{C\sm {\ul v}} \ar[d, "\tau" ] \\
& \Scr P^2_G \overset G \times X |_{C\sm {\ul v}}. 
\end{tikzcd} 
\]
Note that the definition only depends on the support of $\ul v$ and not its multiplicities. 

We call a generic-Hecke modification \emph{trivial} if the isomorphism $\tau$
extends to an isomorphism over all of $C$. 
We have the natural projections 
\[ \Scr M_{X} \overset{h^\leftarrow}\leftarrow \Scr H_{H,\sM_X} \overset{h^\to}\to \Scr M_{X}, \] 
and $\Scr H_{H,\sM_X} \to \Sym C$. 
By definition, the generic-Hecke correspondence preserves the strata 
$\sM_X^{\ch\Theta}$. 

Define a \emph{smooth generic-Hecke correspondence} to be any stack $U$ equipped
with smooth maps 
\[ \Scr M_{X} \overset{h^\leftarrow_U}\leftarrow U \overset{h^\to_U}\to \Scr M_{X} \] 
such that there exists a map $U\to \Scr H_{H,\sM_X}$ such that
the following diagram commutes
\[ 
\begin{tikzcd} 
& U \ar[ld, swap, "h^\leftarrow_U" near start] \ar[d] \ar[rd, "h^\to_U" near start] \\
\Scr M_{X} & \Scr H_{H,\sM_X} \ar[l,swap,"h^\leftarrow" near start] \ar[r,"h^\to" near start] &
\Scr M_{X} 
\end{tikzcd}
\]
Call a smooth generic-Hecke correspondence $U$ trivial if the image of 
$U \to \Scr H_{H,\sM_X}$ consists of trivial generic-Hecke modifications. 

Define a morphism of smooth generic-Hecke correspondences to be a map 
$p : U_1 \to U_2$
such that $h^\leftarrow_{U_2}\circ p = h^\leftarrow_{U_1}$ and $h^\to_{U_2}\circ p = h^\to_{U_1}$.

\subsubsection{Generic-Hecke equivariant sheaves}
We define a \emph{generic-Hecke equivariant} perverse sheaf on $\sM_X$ 
to be an object $\sF \in \rmP(\sM_X)$ of the category of perverse sheaves on $\sM_X$
equipped with isomorphisms 
\[ \phi_U : h^{\leftarrow *}_U(\sF) \overset\sim\to h_U^{\rightarrow *}(\sF) 
\] 
for every smooth generic-Hecke correspondence $U$, satisfying some natural 
conditions (see \cite[\S 2.3]{GN} for details).

For any $\ch\Theta \in \Sym^\infty(\mf c_X^- \sm 0)$, 
the uniqueness of the IC complex endows $\IC_{\barM^{\ch\Theta}_X}$ 
with the structure of a generic-Hecke equivariant perverse sheaf. 
(In general, when $H$ is connected, the condition of generic-Hecke equivariance
is a \emph{property}, not additional structure, of a perverse sheaf on $\sM_X$
by \cite[Proposition 3.5.2]{GN}.)

\subsubsection{}
Fix $\ch\theta \in \mathfrak c_X^- \sm 0$. 
Recall that by definition $\sM_X^{\ch\theta}$ is a substack of $\sM_X \xt C$. 
For a fixed $v_0 \in \abs C$, define $\sM_{X,v_0}^{\ch\theta} := \sM_X^{\ch\theta} \xt_C v_0$
to be the \emph{based} stratum consisting of maps $C \to X/G$ such that $C\sm v_0$
maps to $H\bs \pt$ and the $G$-valuation at $v_0$ is $\ch\theta$.
Observe that the generic-Hecke modifications preserve the substack
$\sM_{X,v_0}^{\ch\theta}$. 

\begin{prop}[{\cite[Proposition 3.5.1]{GN}}] 
\label{prop:genHecke-transitive}
If $H$ is connected, then all geometric points of $\sM_{X,v_0}^{\ch\theta} \subset \sM_X$
are equivalent under the equivalence relation generated by 
the generic-Hecke correspondences. 
\end{prop}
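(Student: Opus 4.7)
The argument geometrizes the following set-theoretic picture. A $k$-point of $\sM_{X,v_0}^{\ch\theta}$ records a rational $H$-structure on a $G$-bundle $\sP_G \to C$, together with local integrality data at each place: nothing beyond $\sigma|_{\Spec \mf o_v}\in X^\bullet(F_v)\cap X(\mf o_v)$ for $v \ne v_0$, and the prescribed $G(\mf o_{v_0})$-orbit $X^\bullet(F_{v_0})_{G:\ch\theta}$ at $v_0$. A generic-Hecke modification with divisor $\ul v \not\ni v_0$ keeps the generic $H$-structure fixed while allowing the local $G$-bundle and the section to be altered freely at the points of $\ul v$. The claim is that these moves suffice to pass between any two such $k$-points.

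The plan has three stages. Given $f_1, f_2 \in \sM_{X,v_0}^{\ch\theta}(k)$, I first use Steinberg's theorem to pick a finite $\ul v \subset \abs C \setminus \{v_0\}$ on whose complement both $\sP_G^i$ admit trivializations, and fix such trivializations. Second, I apply Theorem~\ref{thm:Gorbits} to further adjust the trivialization on the formal disk at $v_0$ (by the right $G(\mf o_{v_0})$-action) so that both $\sigma^i|_{\Spec \mf o_{v_0}}$ become the standard representative $x_0\cdot t^{\ch\theta}$. After these reductions, each $f_i$ is encoded by an element of $H(\Bbbk)\bs G(\Bbbk)$ (the class of the rational trivialization modulo the generic $H$-reduction) together with integrality constraints at the places in $\ul v$. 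Third, since the generic $H$-torsor is unique up to isomorphism over the algebraically closed field $k$, the two rational trivializations can be identified up to left $H(\Bbbk)$-translation, and the remaining discrepancy between $f_1$ and $f_2$ lies in a product $\prod_{v\in \ul v} G(F_v)$ which determines an explicit generic-Hecke modification at $\ul v$. To upgrade this set-theoretic equivalence to a morphism of smooth generic-Hecke correspondences, one produces $U \to \sH_{H,\sM_X}$ as a jet-bundle torsor parametrizing the auxiliary data above, as in \cite[\S 3.5]{GN}.

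The main subtlety is ensuring that the normalization at $v_0$ is preserved throughout; although the generic-Hecke correspondences have support away from $v_0$, the residual ambiguity in the rational data, namely an element of $\prod_{v\in \ul v} H(F_v)$ acting on the generic $H$-structure, must be absorbable into the rational left $H$-action and the local integral right $G(\mf o)$-action without disturbing the fixed stratum at $v_0$. The essential inputs are the connectedness of $H$, which gives weak approximation in the form $H(\bbA_{\ul v}) = H(\Bbbk)\cdot \prod_{v\in \ul v} H(\mf o_v)$, and Corollary~\ref{cor:stab-conn}, which ensures the stabilizer of $x_0 t^{\ch\theta}$ in $\msf L H$ is connected so that this approximation is compatible with the $v_0$-data.
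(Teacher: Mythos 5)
The paper does not give its own proof of this statement; it cites \cite[Proposition~3.5.1]{GN}. Your plan goes in the right general direction (adelic uniformization of $\sM_X(k)$ as a subset of $H(\Bbbk)\bs G(\bbA)/G(\mbb O)$, plus weak approximation for connected $H$), but two of your intermediate claims are problematic.

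First, Step~2 is incoherent as written. Once a trivialization of $\sP_G^i$ on $C\sm\ul v$ is fixed, its restriction to $\Spec\mf o_{v_0}\subset C\sm\ul v$ is determined; you cannot independently postcompose by an element of $G(\mf o_{v_0})$ on the formal disk while keeping the global trivialization — doing so amounts to regluing at $v_0$, and the new transition function need not extend to $C\sm\ul v$. The construction must be done in the adelic framework from the start: fix a trivialization $\alpha$ of the \emph{generic} $H$-torsor (unique up to $H(\Bbbk)$; it is trivial because $H^1(\Bbbk,H)=0$ by Tsen's theorem and Steinberg, \emph{not} ``because $k$ is algebraically closed''), and record for each place $v$ the resulting class $a^i_v\in \Gr_G(k)$, subject to $x_0 a^i_v\in X(\mf o_v)$ with the prescribed valuations. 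Second, and more substantially, Step~3 is wrong as stated: after identifying generic $H$-torsors via $h\in H(\Bbbk)$, the discrepancy between $(a^1_v)$ and $(a^2_v)$ is \emph{not} confined to $\ul v$ — it persists at $v_0$. You recognize this in the ``main subtlety'' paragraph but never show how to resolve it. The actual argument is: choose $h_0\in H(F_{v_0})$ with $h_0 a^1_{v_0}=a^2_{v_0}$ in $\Gr_G$ (possible because both lie in the same $G(\mf o_{v_0})$-orbit); then use weak approximation to find $h\in H(\Bbbk)$ lying in the \emph{open} coset $h_0\cdot\on{Stab}_{H(F_{v_0})}(a^1_{v_0})$; finally enlarge $\ul v$ to include the (finite, automatically $\ne v_0$) set of places where $h$ fails to be integral. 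Note this uses \emph{openness} of the stabilizer in $H(F_{v_0})$, which is automatic; your appeal to Corollary~\ref{cor:stab-conn} (\emph{connectedness} of the stabilizer) is misplaced for the set-theoretic proposition at hand — that ingredient belongs to the follow-up remark about smooth generic-Hecke correspondences, which is a separate and stronger assertion.
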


The preimage of $C\subset \Sym C$ in $\Scr H_{H,\sM_X}$ may be realized as the
twisted product of an open substack of $\sM_X \xt C$ with the affine Grassmannian
$\Gr_H$. From this it is not hard to deduce that all geometric points of 
$\sM_{X,v_0}^{\ch\theta}$ are equivalent under \emph{smooth} generic-Hecke correspondences. 

\subsubsection{} 
Recall the $G$-Hecke action defined in Proposition~\ref{prop-const:action}. 
By construction, this action commutes with the generic $H$-Hecke modifications (in 
our set-theoretic description above, we are considering the action of right
$G(F_{v'})$-translation at some point $v' \ne v_0$):

\begin{lem} \label{lem:act-equivariant}
The map $\act_C : \Scr M_X \ttimes \ol\Gr^{\ch\theta}_{G,C} \to \Scr M_X \xt C$ is equivariant with respect to the generic-Hecke modifications
away from the marked point in $C$. 
\end{lem}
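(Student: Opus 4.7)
The plan is to unwind the moduli descriptions of both $\act_C$ and the generic-Hecke correspondence and to exhibit the equivariance as an instance of the elementary fact that modifications of $G$-bundles at disjoint loci of the curve commute. Fix an affine test scheme $S$ and consider an $S$-point of $\sM_X \ttimes \ol\Gr^{\ch\theta}_{G,C}$, namely data $(\sigma_1, \sP^1_G, \sP'_G, D, \tau)$ with $D \subset C \xt S$ a relative Cartier divisor mapping into $C^{(\abs{\ch\theta})}$, and $\tau : \sP'_G|_{C\xt S\sm D} \cong \sP^1_G|_{C\xt S\sm D}$ the corresponding modification. By Proposition--Construction~\ref{prop-const:action}, $\act_C$ sends this to $(\sigma'_1, \sP'_G)$ where $\sigma'_1 = \tau^{-1}\circ \sigma_1|_{C\xt S\sm D}$ is extended uniquely to a regular section over $C\xt S$. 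A generic-Hecke modification of $(\sigma_1,\sP^1_G)$ along $\ul v$ disjoint from the support of $D$ in $C$ consists of $(\sigma_2,\sP^2_G)\in \sM_X(S)$ together with an isomorphism $\tau_H:\sP^1_G|_{C\xt S\sm \ul v}\cong \sP^2_G|_{C\xt S\sm \ul v}$ intertwining $\sigma_1,\sigma_2$ on $C\xt S\sm \ul v$.

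Given the disjointness of $\ul v$ and $D$, one builds a new $G$-bundle $\sP'^{,2}_G$ on $C\xt S$ by gluing $\sP'_G|_{C\xt S\sm \ul v}$ and $\sP^2_G|_{C\xt S\sm D}$ along $\tau_H\circ \tau : \sP'_G|_{C\xt S\sm (\ul v\cup D)}\cong \sP^2_G|_{C\xt S\sm (\ul v\cup D)}$; this automatically produces a $G$-modification $\tau': \sP'^{,2}_G|_{C\xt S\sm D}\cong \sP^2_G|_{C\xt S\sm D}$ (inducing a point of $\ol\Gr^{\ch\theta}_{G,C}$ via the chosen trivialization at $D$) and an $H$-style modification $\sP'^{,2}_G|_{C\xt S\sm \ul v}\cong \sP'_G|_{C\xt S\sm \ul v}$. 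This gives two candidate paths in the diagram: applying $\act_C$ to $(\sigma_2,\sP^2_G,\sP'^{,2}_G,D,\tau')$, or applying the generic-Hecke modification along $\ul v$ to $(\sigma'_1,\sP'_G)$. I will show they produce the same $S$-point of $\sM_X$.

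On the open $C\xt S\sm (\ul v\cup D)$ the two resulting sections both coincide with $(\tau_H\circ \tau)^{-1}\circ \sigma_2 = \tau^{-1}\circ \sigma_1$, using the defining compatibility $\sigma_1 = \tau_H^{-1}\circ \sigma_2$ away from $\ul v$. Since regular sections to an affine target are determined by their restriction to a dense open, and both sections extend regularly (the first by Proposition--Construction~\ref{prop-const:action}, the second by construction of the generic-Hecke modification of $\sigma'_1$), they agree on $C\xt S$. The construction $(\sigma_2,\sP^2_G)\mapsto (\sigma_2,\sP^2_G,\sP'^{,2}_G,D,\tau')$ is functorial in generic-Hecke modifications disjoint from $D$, which yields a morphism from the relevant fiber product $\Scr H_{H,\sM_X}^{\disj}\xt_{\sM_X}(\sM_X \ttimes \ol\Gr^{\ch\theta}_{G,C})$ into $\Scr H_{H,\sM_X}$ intertwining $h^\leftarrow$ and $h^\rightarrow$ with $\act_C$; this is exactly the equivariance required by the definition of smooth generic-Hecke correspondence.

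The only subtlety, which I regard as the main technical point, is to verify that $\sP'^{,2}_G$ is well-defined as a $G$-bundle (not merely a formal patching) and that the resulting extension of $\sigma'_2 := (\tau_H\circ \tau)^{-1}\circ \sigma_2$ over $\ul v\cup D$ is unambiguous. The former follows from Beauville--Laszlo descent (Lemma~\ref{lem:B-L} and Proposition~\ref{prop:formalext}) applied to the cover $\{C\xt S\sm \ul v,\, C\xt S\sm D\}$, while the latter is guaranteed by the affineness of $X$ and Lemma~\ref{lem:extendsection}, because the extension on each of the two pieces is the one produced by $\act_C$ and the generic-Hecke construction, respectively, and they already agree on the open dense overlap.
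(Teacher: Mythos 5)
Your proof is correct, and since the paper gives no explicit proof of this lemma (it precedes it only with the remark "By construction, this action commutes with the generic $H$-Hecke modifications"), your write-up is a faithful and reasonably careful expansion of the argument the paper treats as obvious. The key idea — that modifications of a $G$-bundle at disjoint effective divisors commute, implemented via Beauville--Laszlo gluing on the cover $\{C\times S \setminus \ul v,\, C\times S \setminus D\}$, together with uniqueness of regular extensions (Lemma~\ref{lem:extendsection}) to show the two compositions of sections agree — is exactly the "by construction" argument intended. One small point of sloppiness: the chain of equalities identifying $(\tau_H\circ\tau)^{-1}\circ\sigma_2$ with $\tau^{-1}\circ\sigma_1$ takes place in sections of $X\times^G \sP'_G$, whereas the sections you ultimately need to compare live in $X\times^G \sP'^{,2}_G$; the intermediate step should pass through the canonical identification $\sP'^{,2}_G|_{C\times S\setminus \ul v}\cong \sP'_G|_{C\times S\setminus \ul v}$ coming from the descent data. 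This is a bookkeeping issue, not a gap, and the formal verification goes through as you intend. You might also note explicitly that the support of $\ul v$ remains in the non-degenerate locus of $(\sP'_G,\sigma'_1)$ because $\ul v$ is disjoint from $D$, so the generic-Hecke modification of $(\sigma'_1, \sP'_G)$ along $\ul v$ is indeed a legal $S$-point of $\Scr H_{H,\sM_X}$.
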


\subsubsection{Proof of Proposition \ref{prop:globwhitney}} \label{proof:whitney}

Proposition~\ref{prop:genHecke-transitive} allows us to consider the based strata with 
generic-Hecke modifications
as an analog of the stratification by orbits of a group action. 
This will be the idea behind the proof of Proposition~\ref{prop:globwhitney}, together
with the following fact: 

\begin{thm}[{\cite[Theorem 2]{Kaloshin}}]  \label{thm:K}
Let $S$ be a smooth stratum in an algebraically stratified scheme of finite type over $k=\mbb C$. 
Then the subset of points in $S$ that do not satisfy Whitney's condition B 
form a constructible subset of dimension strictly lower than $\dim S$.
\end{thm}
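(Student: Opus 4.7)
The plan is to reduce the claim to two ingredients: an algebraic formulation of the failure of Whitney's condition B, and Whitney's classical theorem giving strict dimension decrease for each pair of strata.

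Fix a stratum $T$ incident to $S$ in the sense that $T$ is smooth and $S \subseteq \overline{T} \setminus T$. Condition B at $p \in S$ for the pair $(S,T)$ requires that whenever sequences $p_i \in S$ and $q_i \in T$ both converge to $p$, the secant line $\overline{p_i q_i}$ converges to some $\ell$ in the projectivized tangent space, and $T_{q_i} T$ converges to some plane $\tau$ in the appropriate Grassmannian, then $\ell \subseteq \tau$. I would formalize the \emph{limits} algebraically as follows: consider the incidence variety $I \subset (S \times T) \times \mathbb{P}(TX) \times \mathrm{Gr}(\dim T, TX)$ parametrizing tuples $(p, q, [\overline{pq}], T_q T)$, and take its Zariski closure $\overline{I}$ inside an appropriate blowup of $S \times \overline{T}$ along the diagonal (to handle collisions). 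The projection $\overline I \to S$ is of finite type, so by Chevalley its image, and more generally its restriction to the locus where $\ell \not\subseteq \tau$, is constructible. The failure set $Z_{S,T} \subset S$ is precisely this image, hence constructible.

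Second, Whitney's original theorem (Whitney, 1965) in the complex analytic category guarantees that for each pair $(S, T)$ as above, condition B holds on a dense open subset of $S$, i.e., $Z_{S,T}$ is a proper subset of $S$. Since $Z_{S,T}$ is constructible and $S$ is smooth (hence locally irreducible), being proper forces $\dim Z_{S,T} < \dim S$. Finally, a finite-type algebraic stratification has only finitely many strata $T$ with $S \subseteq \overline T$, so the total bad set $\bigcup_T Z_{S,T} \subset S$ is still constructible of dimension strictly less than $\dim S$.

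The main obstacle is the careful construction of the incidence variety $\overline I$ and the projection map in step one --- in particular, making sense simultaneously of limiting tangent planes to $T$ at points of $\overline T \setminus T$ and of limiting secant directions along collapsing pairs $(p_i, q_i)$. This requires working with a suitable iterated blowup or log resolution of $(S \times \overline T, \Delta_S)$ and the closure of the Gauss map of $T$ inside a flag bundle over $\overline T$, both of which are algebraic operations. Once this is set up, constructibility is automatic from Chevalley and the dimension strictness is Whitney's classical genericity result.
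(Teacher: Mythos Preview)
The paper does not prove this theorem; it is quoted as a known result from Kaloshin (the citation \texttt{[Kaloshin, Theorem 2]}) and used as a black box in the proof of Proposition~\ref{prop:globwhitney}. So there is no proof in the paper to compare your proposal against.

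That said, your sketch is the standard route to such a result: algebraic constructibility of the bad locus via an incidence correspondence and Chevalley's theorem, combined with Whitney's classical generic regularity to force the bad locus into a proper closed subset. One small looseness: smoothness of $S$ does not by itself give irreducibility, so ``proper constructible subset of a smooth $S$'' does not immediately imply strictly smaller dimension. You need that Whitney's theorem gives a \emph{dense} open good locus in each connected component of $S$ (which it does), so $Z_{S,T}$ is nowhere dense and hence of strictly lower dimension. In the paper's setting the strata are by definition connected components, so this is harmless. The genuinely delicate step, as you note, is the careful construction of the closure $\overline{I}$ capturing simultaneously limiting tangent planes and limiting secant directions; this is where the actual work in Kaloshin's argument lies.
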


Thus, if we show that any two points in a given stratum have neighborhoods in $\sM_X$ 
that are smooth locally isomorphic and compatible with the stratification, 
Theorem~\ref{thm:K} will imply that every point
must satisfy Whitney's condition B.

\smallskip

Fix a connected component of $\sM_X^{\ch\Theta}$ 
for some $\ch\Theta \in \Sym^\infty(\mathfrak c_X \sm 0)$. 
Since Whitney's condition B is local in the smooth topology, 
it suffices by Corollary~\ref{cor:Mcover} to show that every point in 
$\sY^{\ch\lambda,\ch\Theta}$
satisfies Whitney's condition for all $\ch\lambda \in \mathfrak c_X$.
Now the graded factorization property of $\sY$ allows us to 
to reduce to the case where $\ch\Theta = [\ch\theta]$ is singleton. 
By Proposition~\ref{prop:changecurve}, we may replace our curve $C$ 
with $\bbA^1 = \mbb P^1 \sm \infty$, i.e., $\sY^{\ch\lambda}=\sY^{\ch\lambda}(\bbA^1)$.
Now $\bbA^1$ acts on itself by translation, which induces
an $\bbA^1$-action by automorphisms on $\sY^{\ch\lambda}(\bbA^1)$. 
This action allows us to move the degenerate point of any 
$y \in \sY^{\ch\lambda,\ch\theta}(\bbA^1)$ to $v_0=0 \in \bbA^1$, i.e., 
the map $y:\bbA^1 \to X/B$ sends $\bbA^1 \sm 0 \to X^\bullet/B$ and 
the $G$-valuation at $v_0$ equals $\ch\theta$.
Thus, we are reduced to showing that any point in 
$\sY^{\ch\lambda,\ch\theta}$ with degenerate point at $v_0$ satisfies Whitney's condition.

Embed $\bbA^1 = \mbb P^1 \sm \infty \subset \mbb P^1$ so we 
also have an open embedding $\sY^{\ch\lambda}(\bbA^1) \subset \sY^{\ch\lambda}(\mbb P^1)$. 
Lemma~\ref{lem:localglobalyoga} shows that $\sY^{\ch\lambda}(\bbP^1)$ 
is smooth locally isomorphic to $\sM_X=\sM_X(\bbP^1)$ in a way that preserves strata 
and degenerate points. Therefore, we may reduce to checking
that any point in $\sM_{X,v_0}^{\ch\theta}$ satisfies Whitney's condition B.
Proposition~\ref{prop:genHecke-transitive} implies that
all such points are equivalent under smooth generic-Hecke correspondences 
(which preserve strata), so either they all satisfy or fail to satisfy
Whitney's condition. By Theorem~\ref{thm:K}, we deduce that every
point satisfies Whitney's condition B.

\medskip

The same argument as above also shows that the closure of any stratum in $\sM_X$
must equal a union of strata. 

\subsubsection{Arbitrary characteristic} 

Let the characteristic of $k$ be arbitrary. 

\begin{prop} \label{prop:Whitney-poschar}
For any $\ch\Theta \in \Sym^\infty(\mf c_X^-\sm 0)$,
the complex $\IC_{\barM^{\ch\Theta}_X}$ 
is constructible with respect to the fine stratification of $\sM_X$.
\end{prop}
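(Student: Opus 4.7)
The plan is to follow the chain of reductions of \S\ref{proof:whitney} verbatim and replace its concluding appeal to Kaloshin's Theorem \ref{thm:K}, which requires characteristic zero, by a direct smooth-descent argument that uses only the generic-Hecke equivariance of $\IC_{\barM^{\ch\Theta}_X}$.

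First, I would repeat the reductions of \S\ref{proof:whitney}: using Lemma \ref{lem:localglobalyoga} to reduce to checking $\Scr S$-constructibility on $\sY^{\ch\lambda}$, using the graded factorization property (Proposition \ref{prop:factorization}) to reduce to the singleton case $\ch\Theta' = [\ch\theta']$ of the stratum, using Proposition \ref{prop:changecurve} to replace $C$ by $\bbA^1$, using the $\bbA^1$-translation action to move the degenerate point to a fixed $v_0$, and then using Lemma \ref{lem:localglobalyoga} once more (on $\bbP^1$) to translate back to the global model. These reductions bring the proposition to the statement that for every $\ch\theta' \in \mf c_X^-\sm 0$ and every fixed $v_0\in\abs C$, the $*$-restriction of $\IC_{\barM^{\ch\Theta}_X}$ to the based stratum $\sM^{\ch\theta'}_{X,v_0}$ has locally constant cohomology sheaves of finite rank.

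For this last statement, the key input is that $\IC_{\barM^{\ch\Theta}_X}$ is canonically generic-Hecke equivariant by the uniqueness of the IC extension, since the generic-Hecke correspondences preserve each stratum and therefore also $\barM^{\ch\Theta}_X$. Thus for every smooth generic-Hecke correspondence $U$ with smooth projections $h^\leftarrow_U, h^\to_U$ of the same relative dimension there is a canonical isomorphism $\phi_U : (h^\leftarrow_U)^*\IC_{\barM^{\ch\Theta}_X} \cong (h^\to_U)^*\IC_{\barM^{\ch\Theta}_X}$. A priori, $\IC_{\barM^{\ch\Theta}_X}$ is $\Scr S'$-constructible for some refinement $\Scr S'$ of the fine stratification on any quasicompact open of $\sM_X$. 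By Proposition \ref{prop:genHecke-transitive}, any two geometric points of $\sM^{\ch\theta'}_{X,v_0}$ are equivalent under smooth generic-Hecke correspondences; combined with $\phi_U$ this identifies, via smooth base change, the formal-disc germs of $\IC_{\barM^{\ch\Theta}_X}$ at any two points of $\sM^{\ch\theta'}_{X,v_0}$, so in particular the cohomology-sheaf stalks are pairwise isomorphic.

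The main obstacle, I expect, is promoting this pointwise-uniform homogeneity to a genuine local-constancy statement: one needs a replacement in positive characteristic for the principle, supplied in characteristic zero by Whitney regularity plus Theorem \ref{thm:K}, that a constructible complex on a smooth connected variety equivariant under a smooth groupoid acting transitively on geometric points has locally constant cohomology sheaves on each orbit. This should follow by smooth descent: the smooth generic-Hecke correspondences, with their projections having smooth (and, thanks to the connectedness of $H$ established in Lemma \ref{lem:H_I-conn}, connected) fibers, form a smooth groupoid presentation of $\sM^{\ch\theta'}_{X,v_0}$ under which the restriction of $\IC_{\barM^{\ch\Theta}_X}$ is equivariant, and this forces the refinement $\Scr S'$ to become trivial on each stratum of the fine stratification. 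Making this descent argument precise, in particular verifying that the generic-Hecke correspondences assemble into a presentation that covers $\sM^{\ch\theta'}_{X,v_0} \times \sM^{\ch\theta'}_{X,v_0}$ in a sufficiently controlled way, is the main technical point to check.
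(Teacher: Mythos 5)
Your chain of reductions (via Lemma \ref{lem:localglobalyoga}, graded factorization, Proposition \ref{prop:changecurve}, the $\bbA^1$-translation, and then back to the global model) is exactly the one the paper uses, and you correctly identify both that the sticking point is replacing Kaloshin's theorem and that the tool to use is generic-Hecke equivariance of $\IC_{\barM^{\ch\Theta}_X}$ combined with Proposition \ref{prop:genHecke-transitive}. So up to the final step, you are on the paper's track.

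Where you diverge is in the last paragraph, and the divergence makes your argument heavier than necessary and potentially flawed as stated. You propose to conclude by ``smooth descent along a groupoid presentation,'' and you flag that you would need the generic-Hecke correspondences to assemble into something covering $\sM^{\ch\theta'}_{X,v_0}\times\sM^{\ch\theta'}_{X,v_0}$. But Proposition \ref{prop:genHecke-transitive} only asserts that any two geometric points are equivalent under the \emph{equivalence relation generated by} the generic-Hecke correspondences (possibly requiring a chain of several), not that the correspondences themselves surject onto the product. So the hypothesis you would need for a literal descent argument is stronger than what the paper establishes, and may simply be unavailable. The paper avoids this entirely by using a more elementary observation: since $\IC_{\barM^{\ch\Theta}_X}$ is a constructible complex on a finite type stack, for each connected smooth stratum $S$ there is automatically some dense open $U\subset S$ on which all $\rmH^i(\IC_{\barM^{\ch\Theta}_X})|_U$ are local systems (this requires nothing about characteristic). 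Then for any geometric point $x\in S$, a chain of smooth generic-Hecke correspondences with smooth connected-fiber projections connects $x$ to a point of $U$; pulling back along these correspondences and using the equivariance isomorphisms $\phi_U$ transports local constancy from $U$ to a neighborhood of $x$. This ``dense open plus transitive spreading'' step only uses transitivity in the weak, generated-equivalence sense, exactly what Proposition \ref{prop:genHecke-transitive} provides. So you should replace the smooth-descent formulation with this argument: it closes the gap you flagged and matches the paper.
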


\begin{proof}
The argument is exactly the same as in the proof of Proposition~\ref{prop:globwhitney} 
above, except that we replace the use of Theorem~\ref{thm:K} with
the fact that for any connected smooth stratum $S$ in $\sM_X$, 
there exists some open $U\subset S$ such that 
$\rmH^i(\IC_{\barM^{\ch\Theta}_X})|_U$ is a local system for all $i$.
We implicitly use the uniqueness of the IC complex, which in
particular implies that $\IC_{\barM^{\ch\Theta}_X}$ is generic-Hecke equivariant.
\end{proof}

\subsection{Proof of Theorem~\ref{thm:heckeIC}} \label{sect:pf:heckeIC}

For $\ch\Theta \in \Sym^\infty(\mathfrak c_X^-\sm 0)$, 
let $\rmP_{\Scr L^+ G}(\ol\Gr^{\ch\Theta}_{G,C^{\ch\Theta}})$ denote the
category of perverse sheaves on $\ol\Gr^{\ch\Theta}_{G,C^{\ch\Theta}}$ that
are equivariant with respect to the action of $(\Scr L^+ G)_{C^{(\abs{\ch\Theta})}}$, 
considered as a group scheme over $\Sym C$ 
(defined in \S\ref{sect:multijet}). 
For $\Scr F \in \rmP(\Scr M_X)$ and 
$\Scr G \in \rmP_{\Scr L^+G}(\ol\Gr^{\ch\Theta}_{G,C^{\ch\Theta}})$, we 
can form the twisted external product 
\[ \Scr F \mathbin{\tilde\bt} \Scr G \in \rmP(\Scr M_X \ttimes \ol\Gr^{\ch\Theta}_{G,C^{\ch\Theta}}) \]
with respect to the projections of 
$\widehat{\Scr M_X} \xt^{\Scr L^+G} \ol\Gr^{\ch\Theta}_{G,C^{\ch\Theta}}$ to 
$\Scr M_X$ and $\Scr L^+G \bs \ol\Gr^{\ch\Theta}_{G,C^{\ch\Theta}}$.
Then we define the external convolution product by 
\[ \Scr F \star \Scr G := \act_{\sM,!}( \Scr F \mathbin{\tilde\bt} \Scr G ) \in \rmD^b_c(\Scr M_X). \]

We have introduced all the ingredients in the statement of Theorem \ref{thm:heckeIC}. The remainder of this section will be devoted to its proof.

Observe that $\IC_{\barM_X^0} \mathbin{\tilde\bt} 
\IC_{\ol\Gr^{\ch\Theta}_{G,C}}
= \IC_{\barM^0_X \ttimes \ol\Gr^{\ch\Theta}_{G,C}}$, which we 
will simply denote $\Scr{IC}$ for brevity. 
We have a stratification 
\[ \barM_X^0 \ttimes\ol\Gr^{\ch\Theta}_{G,C^{\ch\Theta}} 
= \bigcup_{\ch\Theta',\ch\Theta''} 
 \barM_X^{\ch\Theta'} \ttimes \Gr^{\ch\Theta''}_{G,\oo C^{\ch\Theta''}} \] 
running over all $\ch\Theta' \succeq 0$ 
and $\ch\Theta'' \ge \ch\Theta_1$ where $\ch\Theta$ refines $\ch\Theta_1$ (the definition of $\ch\Theta'' \ge \ch\Theta_1$ is analogous to that of $\succeq$).  
Lemma~\ref{lem:actimage}(i) implies that $\act_\sM^{-1}(\sM^{\ch\Theta}_X)$
is contained in the dense open stratum corresponding to 
$\ch\Theta'=0$ and $\ch\Theta'' = \ch\Theta$.

\subsubsection{} \label{sect:ICglobineq}
By Proposition~\ref{prop:globwhitney} we know that 
$\IC_{\barM_X^0}$ is constructible with respect to the fine stratification of 
$\sM_X$. Therefore, $\Scr{IC}$ is constructible
with respect to the stratification above, so 
if we let $L_{\mathrm{glob}}^{\ch\Theta', \ch\Theta''}$ denote
the $*$-restriction of $\Scr{IC}$ to the stratum 
$\sM_X^{\ch\Theta'} \ttimes \Gr^{\ch\Theta''}_{G,\oo C^{\ch\Theta''}}$, we have that its cohomology sheaves are local systems. 
Since $L^{\ch\Theta', \ch\Theta''}_{\mathrm{glob}}$ is the 
restriction of an IC complex, it lives in perverse cohomological degrees 
$\le 0$, and the inequality is strict unless $\ch\Theta'=0$ and $\ch\Theta''=\ch\Theta$. 
Since its cohomology sheaves are local systems, this implies that 
$L^{\ch\Theta', \ch\Theta''}_{\mathrm{glob}}$ 
lives in usual cohomological degrees 
$\le -\dim( \sM_X^{\ch\Theta'} \ttimes \Gr^{\ch\Theta''}_{G,\oo C^{\ch\Theta''}})$, and the inequality is strict unless 
$\ch\Theta'=0$ and $\ch\Theta''=\ch\Theta$. 
We are abusing notation here: $\sM^{\ch\Theta'}_X$ may not be connected, in which case each connected component should be considered separately. 
 
\smallskip

We abuse notation and also use $L^{\ch\Theta',\ch\Theta''}_{\mathrm{glob}}$
to denote its $!$-extension to $\sM_X \ttimes \ol\Gr^{\ch\Theta}_{G, C^{\ch\Theta}}$. 
Then by the characterizing properties of the intermediate extension 
(and the fact that $\Scr{IC}$ is Verdier self-dual), 
Theorem~\ref{thm:heckeIC} is equivalent
to the following assertion: 

\begin{prop} \label{prop:heckeIC}
For $\ch\Theta',\ch\Theta''$ as above, 
consider the complex $\act_{\sM,!}(L^{\ch\Theta',\ch\Theta''}_{\mathrm{glob}})$.  
Then:
\begin{enumerate}
\item It lives in $^p\rmD^{\le -1}(\sM_X)$ unless $\ch\Theta'=0$ and $\ch\Theta''=\ch\Theta$.
\item The $*$-restriction of 
$\act_{\sM,!}(L^{0,\ch\Theta}_{\mathrm{glob}})$ to $\barM^{\ch\Theta}_X\sm \sM^{\ch\Theta}_X$ lives in perverse cohomological degrees $\le -1$.
\item There is a natural identification
$\act_{\sM,!}(L^{0,\ch\Theta}_{\mathrm{glob}})|^*_{\sM^{\ch\Theta}_X} \cong \IC_{\sM^{\ch\Theta}_X}$. 
\end{enumerate}
\end{prop}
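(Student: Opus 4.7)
The plan is to establish (iii) first as the easy part, and then reduce (i) and (ii) to dimension-counting bounds on fibers of $\act_\sM$, which can be verified after passing from the global model to the Zastava model.

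For part (iii), Theorem~\ref{thm:comp}(iii) says that the restriction of $\act_\sM$ to $\act_\sM^{-1}(\sM^{\ch\Theta}_X) \cap (\Bun_H \ttimes \Gr^{\ch\Theta}_{G,\oo C^{\ch\Theta}})$ is an isomorphism onto $\sM^{\ch\Theta}_X$. Since $L^{0,\ch\Theta}_{\mathrm{glob}}$ is by definition the $*$-restriction of $\Scr{IC}$ to the open stratum $\Bun_H \ttimes \Gr^{\ch\Theta}_{G,\oo C^{\ch\Theta}}$, and the IC complex restricts to the (shifted) constant sheaf along the smooth open locus of any stratified space, transport along this isomorphism yields the desired identification with $\IC_{\sM^{\ch\Theta}_X}$ (noting that $\sM^{\ch\Theta}_X$ is smooth by Lemma~\ref{lem:globstrata}).

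For parts (i) and (ii), the key estimate I would aim to prove is the following inequality: for any stratum $\sM^{\ch\Theta'''}_X \subset \sM_X$ and any point $y$ of it, the dimension of the fiber $\act_\sM^{-1}(y) \cap (\sM^{\ch\Theta'}_X \ttimes \Gr^{\ch\Theta''}_{G,\oo C^{\ch\Theta''}})$ satisfies
\[ 2\dim\bigl(\act_\sM^{-1}(y) \cap (\sM^{\ch\Theta'}_X \ttimes \Gr^{\ch\Theta''}_{G,\oo C^{\ch\Theta''}})\bigr) + \dim \sM^{\ch\Theta'''}_X \le \dim\bigl(\sM^{\ch\Theta'}_X \ttimes \Gr^{\ch\Theta''}_{G,\oo C^{\ch\Theta''}}\bigr), \]
with strict inequality unless $(\ch\Theta', \ch\Theta'', \ch\Theta''') = (0, \ch\Theta, \ch\Theta)$. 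Given this bound, together with the cohomological degree estimate for $L^{\ch\Theta',\ch\Theta''}_{\mathrm{glob}}$ recalled in \S\ref{sect:ICglobineq}, part (i) follows by a direct perverse degree count on stalks of $\act_{\sM,!}(L^{\ch\Theta',\ch\Theta''}_{\mathrm{glob}})$, and part (ii) follows from the strict inequality case applied to $\ch\Theta''' \ne \ch\Theta$ (since $\ch\Theta''' \succ \ch\Theta$ implies $\sM^{\ch\Theta'''}_X \subset \barM^{\ch\Theta}_X \setminus \sM^{\ch\Theta}_X$).

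To establish the dimension estimate, I would use Corollary~\ref{cor:Mcover} and Lemma~\ref{lem:localglobalyoga} to replace $\sM_X$ smooth-locally with $\sY^{\ch\lambda}$, thereby reducing to bounding the fibers of the Zastava analog $\act_\sY: Z^{?,\ch\lambda}_v \to \sY^{\ch\lambda}$ from diagram~\eqref{e:Zact-diagram} and its multi-point generalization. Proposition~\ref{prop:Yact-strat} stratifies $Z^{?,\ch\lambda}_v$ into pieces birational to $\sY^{\ch\lambda-\ch\nu,\ch\Theta'} \ttimes (\msf S^{\ch\nu} \cap \ol\Gr^{\ch\Theta''}_G)$, and the dimension of such an intersection of a semi-infinite orbit with a Schubert-type closure is $\brac{\rho_G, \ch\nu - \ch\Theta''}$ by the Mirkovi\'c--Vilonen cycle theory. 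Combined with the bound on dimensions of central Zastava fibers from Proposition~\ref{prop:centralstrata} (sharpened by Lemma~\ref{lem:centralfibertheta}) and assembled using the graded factorization property, this yields the claimed estimate. The main obstacle will be the bookkeeping in the multi-point case: the partitions $\ch\Theta, \ch\Theta', \ch\Theta'', \ch\Theta'''$ can each distribute over several points of $C$, and one must verify that the factorized contributions of each point combine compatibly to give the global strict inequality (with equality attained exactly in the generic case corresponding to the open stratum).
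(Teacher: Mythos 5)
Your proposal is correct in outline and follows essentially the same route as the paper: establish (iii) via Theorem~\ref{thm:comp}(iii), reduce (i)--(ii) to a semi-small-type dimension inequality on fibers of $\act_\sM$, pass to the Zastava model via Corollary~\ref{cor:Mcover}/Lemma~\ref{lem:localglobalyoga}, and combine the stratification of Proposition~\ref{prop:Yact-strat} with the central-fiber bounds from Proposition~\ref{prop:centralstrata} and Lemma~\ref{lem:centralfibertheta}.

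Two refinements are needed, though. First, your claimed strictness --- ``strict unless $(\ch\Theta',\ch\Theta'',\ch\Theta''')=(0,\ch\Theta,\ch\Theta)$'' --- is false as stated: take $\ch\Theta'=0$, $\ch\Theta''=[\ch\theta'']$ with $\ch\theta''\ge\deg(\ch\Theta)$ but $\ch\theta'' \ne \deg(\ch\Theta)$, and $\ch\Theta'''=[\ch\theta'']$. Then the only nonempty stratum of the Zastava fiber is $\msf Y^{0,0}\ttimes(\msf S^{\ch\theta''}\cap\Gr_G^{\ch\theta''})=\pt$, so $d=0$, and the two sides of your inequality agree. The paper's Lemma~\ref{lem:heckeICflag} only asserts strictness when $\ch\theta'=0$ and $\ch\theta''\ne\ch\eta$ --- which is exactly what part (ii) needs, since there $\ch\Theta''=\ch\Theta$ is fixed and $\ch\Theta'''\ne\ch\Theta$ forces $\ch\theta''\ne\ch\eta$. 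The cases $(\ch\Theta',\ch\Theta'')\ne(0,\ch\Theta)$ for part (i) are then saved not by strictness in the dimension bound but by the strict cohomological-degree bound on $L^{\ch\Theta',\ch\Theta''}_{\mathrm{glob}}$ from \S\ref{sect:ICglobineq}. You should state the two strictnesses separately and combine them, rather than folding them into one clean-looking statement that is actually false.

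Second, you do not explain why a single fiber-dimension bound over one point of $\sM^{\ch\Theta'''}_X$ suffices. The paper fixes a point $v$, works with $\sM^{\ch\eta}_{X,v}$, and invokes Lemma~\ref{lem:act-equivariant} together with Proposition~\ref{prop:genHecke-transitive} to show that $\act_\sM$ restricted to $\sM^{\ch\theta'}_X\ttimes\Gr^{\ch\theta''}_{G,v}$ is equivariant under generic-Hecke modifications that act transitively on $\sM^{\ch\eta}_{X,v}$; hence the fiber over every point of the stratum has the same dimension, and it suffices to estimate the fiber over the distinguished point $t^{\ch\eta}$ via Proposition~\ref{prop:Yact-strat}. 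This uniformization step is a genuine ingredient, not mere bookkeeping; your ``main obstacle'' (multi-point combinatorics) is by comparison the easier part, since it follows formally from graded factorization.
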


Point (iii) follows immediately from Theorem \ref{thm:comp}(iii). 
Points (i)-(ii) concern the cohomological degrees 
of the $*$-restriction of $\act_{\sM,!}(L^{\ch\Theta',\ch\Theta''}_{\mathrm{glob}})$
to a stratum of $\barM^{\ch\Theta}_X \sm \sM_X^{\ch\Theta}$. 
To simplify notation we only consider the restriction
to a stratum of the form $\sM_X^{\ch\eta}$ where $\ch\eta \in \mf c_X^- \sm 0$. 
The general case is proved in the same way by considering multiple points on $C$ 
simultaneously. 

\smallskip

By Theorem \ref{thm:comp} and Proposition \ref{prop:closure-rel}, the preimage 
$\act_\sM^{-1}(\sM_X^{\ch\eta})$ intersects $\sM_X^{\ch\Theta'} \ttimes \Gr^{\ch\Theta''}_{G,\oo C^{\ch\Theta''}}$ only if 
$\ch\Theta' = [\ch\theta']$ and $\ch\Theta'' = [\ch\theta'']$ are singletons 
($\ch\theta'$ is allowed to be $0$). 
Moreover we must have $\ch\theta'' \ge \deg(\ch\Theta)$ and $\ch\eta \succeq \ch\theta' + \ch\theta''$. 

\subsubsection{Fiber dimension}
Fix a point $v \in \abs C$ and consider the subscheme $\sM_{X,v}$
of maps where there is only one $G$-degenerate point at $v$. 
Let $\sM_{X,v}^{\ch\eta}$ denote the substack of $\sM_{X,v}$ where the $G$-valuation at
this degenerate point is $\ch\eta$.
Then the preimage $\act_\sM^{-1}(\sM_{X,v}^{\ch\eta}) \cap (\sM_X^{\ch\theta'} \ttimes \Gr^{\ch\theta''}_{G,C})$ is contained in $\sM_X^{\ch\theta'} \ttimes \Gr^{\ch\theta''}_{G,v}$.
The restricted map 
\begin{equation} \label{e:actrestricted}
 \act_\sM : \sM_X^{\ch\theta'} \ttimes \Gr^{\ch\theta''}_{G,v} \to \sM_{X,v} 
\end{equation}
is equivariant with respect to generic-Hecke modifications away from $v$ by Lemma \ref{lem:act-equivariant}. Meanwhile these modifications act transitively on the stratum 
$\sM_{X,v}^{\ch\eta}$. 
Therefore we deduce that the fibers of \eqref{e:actrestricted} over 
all points in $\sM_{X,v}^{\ch\eta}$  are isomorphic to one another. 
Let this fiber dimension be denoted $d$. 

We have a special point $\{ t^{\ch\eta} \} = \msf Y^{\ch\eta,\ch\eta}_v \to \sM^{\ch\eta}_{X,v}$
by Corollary~\ref{cor:Ytheta}. 
We deduce from Proposition~\ref{prop:Yact-strat} that 
the preimage of $\msf Y^{\ch\eta}_v$ under the map \eqref{e:actrestricted} has a 
stratification by 
\[ \msf Y^{\ch\eta - \ch\nu, \ch\theta'} \ttimes (\msf S^{\ch\nu} \cap \Gr^{\ch\theta''}_G) \]
where $\ch\nu$ runs over the weights of $V^{\ch\theta''}$.
Therefore
\begin{equation} \label{e:actfiberdim}
d \le \underset{\ch\nu}{\on{max}} (\dim \msf Y^{\ch\eta-\ch\nu,\ch\theta'} + \brac{\rho_G,\ch\nu-\ch\theta''}). 
\end{equation}

\subsubsection{Passage to Zastava model}
By Corollary~\ref{cor:Mcover}, it suffices to prove 
Proposition~\ref{prop:heckeIC} after base change to the Zastava model.
Consider the fiber product diagram 
\[\begin{tikzcd}
Z^{?,\ch\lambda,\ch\theta',\ch\theta''} \ar[r] \ar[d] & \sY^{\ch\lambda} \ar[d] \\
\sM_X^{\ch\theta'} \ttimes \Gr^{\ch\theta''}_{G,C} \ar[r, "\act_\sM"] & 
\sM_X 
\end{tikzcd}
\]
which is the analog of \eqref{e:Z0act-diagram} where we allow $v\in \abs C$ to vary. 
We pull back along this diagram for all $\ch\lambda$ large enough. 
The corresponding diagram for strata is given by \eqref{e:Z0act-diagram}.
We $*$-pullback with a shift by the fiber dimension of 
$\sY^{\ch\lambda}\to \sM_X$. 
With this shift, the discussion of \S\ref{sect:ICglobineq} 
implies that $L^{\ch\Theta',\ch\eta}_\glob$ goes to 
a complex 
\[ L^{?,\ch\lambda,\ch\theta',\ch\theta''}_\flag \in \rmD^b_c(Z^{?,\ch\lambda,\ch\theta',\ch\theta''}), \]
which lives in usual cohomological degrees 
$\le -\dim( Z^{?,\ch\lambda,\ch\theta',\ch\theta''} )$, and
the inequality is strict unless $\ch\theta'=0$ and $\ch\theta''=\ch\theta$. 
The usual cohomology sheaves of $L^{?,\ch\lambda,\ch\theta',\ch\theta''}_{\mathrm{flag}}$ 
are local systems. 

To prove Proposition~\ref{prop:heckeIC} it is
enough, by the definition of the perverse t-structure,
to prove the following:

\begin{lem} \label{lem:heckeICflag}
Let $\ch\theta',\ch\theta'',\ch\eta,\ch\lambda,d$ be as above. 
Then we have 
\[
    -\dim( Z^{?,\ch\lambda,\ch\theta',\ch\theta''} ) + 2 d \le - \dim \sY^{\ch\lambda,\ch\eta} 
\]
and the inequality is strict if $\ch\theta' = 0$ and $\ch\theta'' \ne \ch\eta$. 
\end{lem}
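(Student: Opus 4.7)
\medskip

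My plan is to reduce to a one–point calculation and then convert everything into dimension estimates on central Zastava fibers, which are controlled by Proposition~\ref{prop:centralstrata}.

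First I would follow the suggestion in \S\ref{sect:ICglobineq} and restrict attention to a single marked point $v\in \abs C$, replacing $\ch\Theta = [\ch\theta]$ by a singleton and $\ch\eta\in \mf c_X^-\sm 0$; the multi-point case is handled identically by factorization. By Corollary~\ref{cor:Mcover}, if we fix $\ch\lambda\in \mf c_X$ large enough then $\sY^{\ch\lambda}\to \sM_X$ is smooth with irreducible fibers of a fixed dimension $e(\ch\lambda) = \brac{2\rho_G,\ch\lambda} - (g-1)\dim N$, and the same applies after base change to each stratum. Combined with the factorization dense open $\sY^{\ch\lambda-\ch\eta,0}\oo\xt \sY^{\ch\eta,\ch\eta}\into \sY^{\ch\lambda,\ch\eta}$ from Lemma~\ref{lem:comp-cover}(i) and the fact that $\sY^{\ch\eta,\ch\eta}_\red = C$ (Corollary~\ref{cor:Ytheta}), this gives the formula $\dim \sM_X^{\ch\eta} = \dim \Bun_H + 1 - \brac{2\rho_G,\ch\eta}$ for every stratum $\ch\eta\in \mf c_X^-\sm 0$ (and $\dim\sM_X^0 = \dim\Bun_H$).

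Next I would compute $\dim Z^{?,\ch\lambda,\ch\theta',\ch\theta''}$ using the stratification of Proposition~\ref{prop:Yact-strat}(ii), which after base change to the full curve takes the form $\sY^{\ch\lambda-\ch\nu,\ch\theta'}\ttimes C\ttimes (\msf S^{\ch\nu}\cap \Gr^{\ch\theta''}_G)$ for $\ch\nu\ge \ch\theta''$ a weight of $V^{\ch\theta''}$. The open stratum $\ch\nu=\ch\theta''$ dominates and gives $\dim Z = \dim\sY^{\ch\lambda-\ch\theta'',\ch\theta'}+1$. Subtracting this from $\dim\sY^{\ch\lambda,\ch\eta}$ and using the two stratum-dimension formulas above, the inequality to prove becomes (after the $e(\ch\lambda)$ terms cancel)
\[ 2d \;\le\; \dim \sM_X^{\ch\theta'} + 1 - \brac{2\rho_G,\ch\theta''} - \dim \sM_X^{\ch\eta}, \]
which simplifies to $2d \le \brac{2\rho_G,\ch\eta-\ch\theta''}$ when $\ch\theta'=0$ and to $2d \le 1+\brac{2\rho_G,\ch\eta-\ch\theta'-\ch\theta''}$ when $\ch\theta'\ne 0$.

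Finally I would bound the fiber dimension $d$ of $\act_\sM\colon \sM^{\ch\theta'}_X\ttimes \Gr^{\ch\theta''}_{G,v}\to \sM_{X,v}^{\ch\eta}$. By generic-Hecke transitivity all fibers are isomorphic, so it suffices to take the fiber over $t^{\ch\eta}\in \msf Y^{\ch\eta,\ch\eta}$, which by Proposition~\ref{prop:Yact-strat}(ii) stratifies as $\bigcup_{\ch\nu}\bigl(\msf Y^{\ch\eta-\ch\nu,\ch\theta'}\xt (\msf S^{\ch\nu}\cap \Gr^{\ch\theta''}_G)\bigr)$; note that $\msf Y^{\ch\eta-\ch\nu,\ch\theta'}$ is empty unless $\ch\eta-\ch\nu\succeq \ch\theta'$. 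Applying Proposition~\ref{prop:centralstrata} together with the factorization upper bound $\dim \sY^{\ch\lambda',\ch\theta'}\le \dim\sM_X^{\ch\theta'}-\dim\Bun_H+\brac{2\rho_G,\ch\lambda'}$ gives, for each $\ch\nu$ with $\ch\eta-\ch\nu\ne 0$, the bound $\dim \msf Y^{\ch\eta-\ch\nu,\ch\theta'}\le \brac{\rho_G,\ch\eta-\ch\nu-\ch\theta'}$ when $\ch\theta'\ne 0$ and $\le \frac{1}{2}(\brac{2\rho_G,\ch\eta-\ch\nu}-1)$ when $\ch\theta'=0$; the case $\ch\eta-\ch\nu=0$ occurs only for $\ch\theta'=0,\,\ch\nu=\ch\eta$ and contributes $0$. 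Summing with $\dim(\msf S^{\ch\nu}\cap \Gr^{\ch\theta''}_G)=\brac{\rho_G,\ch\nu-\ch\theta''}$ the terms telescope in $\ch\nu$ to give $2d\le \brac{2\rho_G,\ch\eta-\ch\theta'-\ch\theta''}$ when $\ch\theta'\ne 0$, and $2d\le \brac{2\rho_G,\ch\eta-\ch\theta''}-1$ when $\ch\theta'=0$ and $\ch\theta''\ne \ch\eta$ (while $d=0$ when $\ch\theta'=0,\ch\theta''=\ch\eta$, matching Theorem~\ref{thm:comp}(iii)). Comparing with the desired inequalities yields the lemma, with strict inequality in the two cases required. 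The main technical input is Proposition~\ref{prop:centralstrata}, which provides the crucial factor of $\tfrac{1}{2}$ in the central fiber estimate; once that is applied, the $\rho_G$-weight bookkeeping collapses cleanly because the sum $\brac{\rho_G,\ch\eta-\ch\nu-\ch\theta'}+\brac{\rho_G,\ch\nu-\ch\theta''}$ is independent of $\ch\nu$.
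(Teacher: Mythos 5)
Your overall plan matches the paper's proof: reduce to one marked point, use Proposition~\ref{prop:Yact-strat} to stratify $Z^{?,\ch\lambda,\ch\theta',\ch\theta''}$ and the relevant fiber, bound the fiber dimension $d$ by combining Proposition~\ref{prop:centralstrata} with the MV-cycle dimension $\brac{\rho_G,\ch\nu-\ch\theta''}$, and let the sum telescope. The point (which you also get) is that the open stratum $\ch\nu=\ch\theta''$ controls $\dim Z$.

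However, there is a real gap in your bookkeeping. You record all dimensions via $\brac{2\rho_G,\cdot}$ and a symbol ``$\dim\Bun_H$'' treated as a fixed number; in particular you use the formula $\dim\sM_X^{\ch\eta}=\dim\Bun_H+1-\brac{2\rho_G,\ch\eta}$ and the upper bound $\dim\sY^{\ch\lambda',\ch\theta'}\le \dim\sM_X^{\ch\theta'}-\dim\Bun_H+\brac{2\rho_G,\ch\lambda'}$. The problem is that $\Bun_H$ is not equidimensional when $H$ is not reductive: the components $\Bun_H^{\ch\mu}$, $\ch\mu\in\pi_1(H)$, can have different dimensions, and which component is relevant changes precisely as you move between the strata $\sM_X^{\ch\theta'}$, $\sM_X^{\ch\eta}$, and the different $\ch\nu$-pieces of the fiber. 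Equivalently, $\brac{2\rho_G,\ch\nu_D}$ is generally not equal to $1$ for a color $D$ (from Remark~\ref{rem:length}, the correct relation is $\brac{\mathfrak h+2\rho_G,\ch\nu_D}=1$ for the eigencharacter $\mathfrak h$ of a $G$-eigen-volume form, and $\mathfrak h$ need not vanish), so $\len$ and $\brac{2\rho_G,\cdot}$ already disagree on $\mbb N^{\Cal D}$. Your telescoping identity ``$\brac{\rho_G,\ch\eta-\ch\nu-\ch\theta'}+\brac{\rho_G,\ch\nu-\ch\theta''}$ is independent of $\ch\nu$'' is fine, but the step from Proposition~\ref{prop:centralstrata} to $\dim\msf Y^{\ch\eta-\ch\nu,\ch\theta'}\le\brac{\rho_G,\ch\eta-\ch\nu-\ch\theta'}$ implicitly assumes $\dim\sY^{\ch\eta-\ch\nu,\ch\theta'}=\brac{2\rho_G,\ch\eta-\ch\nu-\ch\theta'}+1$, which is only the right-hand side of $\len(\ch\eta-\ch\nu-\ch\theta')+1$ up to the above discrepancy. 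The paper avoids all of this by first reducing (via \S\ref{sect:freemonoid}) to $\mf c_{X^\bullet}=\mbb N^{\Cal D}$ and then working exclusively with the length function $\len$, which is additive on the monoid and automatically absorbs the variation of $\dim\Bun_H^{\ch\mu}$ across components: in that bookkeeping $\dim\sY^{\ch\lambda,\ch\eta}=\len(\ch\lambda-\ch\eta)+1$ and $\brac{\rho_G,\ch\nu-\ch\theta''}=\tfrac12\len(\ch\nu-\ch\theta'')$, and the telescoping is exact. If you impose the extra hypothesis that $\Bun_H$ is equidimensional (e.g.\ $H$ reductive) your computation goes through as written; otherwise you should replace $\brac{2\rho_G,\cdot}$ by $\len$ throughout, as the paper does.
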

Note that the statement of the lemma has nothing to do with $\ch\theta$.

\begin{proof}
By \S\ref{sect:freemonoid}, we may assume that $\mf c_{X^\bullet} = \mbb N^{\Cal D}$. 
Now it makes sense to talk about $\len(\ch\lambda)$ for $\ch\lambda \succeq 0$, cf.~\S\ref{sect:Y0cc}. 
By Lemma \ref{lem:oYconnected} and Corollary~\ref{cor:Ycomp2},
we have that $\dim \sY^{\ch\lambda,\ch\eta} = \len(\ch\lambda-\ch\eta)+1$. 

For $\ch\nu$ a weight of $V^{\ch\theta''}$, we have 
$\dim \msf Y^{\ch\eta-\ch\nu,\ch\theta'} \le \frac 1 2(\dim \sY^{\ch\eta-\ch\nu,\ch\theta'}-1)$
by Proposition~\ref{prop:centralstrata}, unless $\ch\eta=\ch\nu$.  
If $\ch\theta'=0$, then $\dim \sY^{\ch\eta-\ch\nu,0} = \len(\ch\eta-\ch\nu)$. 
Otherwise $\dim \sY^{\ch\eta-\ch\nu,\ch\theta'} = \len(\ch\eta-\ch\nu)+1$. 
We also have $\brac{\rho_G,\ch\nu-\ch\theta''} = \frac 1 2 \len(\ch\nu-\ch\theta'')$. 
Therefore \eqref{e:actfiberdim} implies that 
\begin{equation} \label{ineq:dfiber}
 d \le \frac 1 2 \len(\ch\eta-\ch\theta'-\ch\theta'')
\end{equation}
and the inequality is strict if $\ch\theta' = 0$ and $\ch\eta$ is not a weight of $V^{\ch\theta''}$. 

In the case $\ch\theta'=0$ and $\ch\eta$ is a weight of $V^{\ch\theta''}$ not equal to 
$\ch\theta''$, we claim the inequality \eqref{ineq:dfiber} above is still strict. 
Indeed, equality can only hold if $\ch\eta=\ch\nu$ and an open subvariety of 
$\msf Y^{0,0} \ttimes (\msf S^{\ch\eta}\cap \Gr^{\ch\theta''}_G)$ is sent to 
the special point $t^{\ch\eta}$ under \eqref{e:actrestricted}. 
However we know from Lemma~\ref{lem:MVcentralfiber} that every irreducible
component of $\msf S^{\ch\eta}\cap \Gr^{\ch\theta''}_G$ generically maps to 
the stratum $\sM^{\ch\theta''}_G$. Thus if $\ch\eta \ne \ch\theta''$, 
the inequality \eqref{ineq:dfiber} is strict.

Thus to prove the lemma it suffices to show that 
\[
 \len(\ch\lambda-\ch\theta'-\ch\theta'')+1 = \len(\ch\eta-\ch\theta'-\ch\theta'') + \len(\ch\lambda-\ch\eta) + 1 \le 
\dim (Z^{?,\ch\lambda,\ch\theta',\ch\theta''}).
\]
Proposition~\ref{prop:Yact-strat} (twisted by $C$) implies that 
$Z^{?,\ch\lambda,\ch\theta',\ch\theta''}$ has a stratification by 
\[ \sY^{\ch\lambda-\ch\nu, \ch\theta'} \ttimes (\msf S^{\ch\nu}\cap \Gr^{\ch\theta''}) \ttimes C,\]
where $\ch\nu$ ranges over weights of $V^{\ch\theta''}$ (in particular, $\ch\nu\ge \ch\theta''$). 
By Corollary~\ref{cor:Mcover}(ii), if
$\sY^{\ch\lambda-\ch\nu,\ch\theta'}$ maps to a connected component $M$ of 
$\sM^{\ch\theta'}_X$, then $\sY^{\ch\lambda-\ch\theta'', \ch\theta'}$ must map to the
same connected component. 
Therefore $Z^{?,\ch\lambda,\ch\theta',\ch\theta''}$ is irreducible with 
dense open stratum
$\sY^{\ch\lambda-\ch\theta'',\ch\theta'} \times C$. 
Therefore 
$\dim Z^{?,\ch\lambda,\ch\theta',\ch\theta''} = \dim(\sY^{\ch\lambda-\ch\theta'',\ch\theta'})+1
\ge \len(\ch\lambda-\ch\theta'-\ch\theta'') + 1$, as desired.
\end{proof}

This completes the proof of Theorem~\ref{thm:heckeIC}.

\bibliographystyle{amsalpha}
\bibliography{sphericalL}

\newcommand{\etalchar}[1]{$^{#1}$}
\providecommand{\bysame}{\leavevmode\hbox to3em{\hrulefill}\thinspace}
\providecommand{\MR}{\relax\ifhmode\unskip\space\fi MR }
\providecommand{\MRhref}[2]{%
  \href{http://www.ams.org/mathscinet-getitem?mr=#1}{#2}
}
\providecommand{\href}[2]{#2}
\begin{thebibliography}{KKM{\etalchar{+}}92}

\bibitem[AB09]{AB}
Sergey Arkhipov and Roman Bezrukavnikov, \emph{Perverse sheaves on affine flags
  and {L}anglands dual group}, Israel J. Math. \textbf{170} (2009), 135--183,
  With an appendix by Bezrukavnikov and Ivan Mirkovi\'{c}. \MR{2506322}

\bibitem[ABB{\etalchar{+}}05]{ABB}
S.~Arkhipov, A.~Braverman, R.~Bezrukavnikov, D.~Gaitsgory, and I.~Mirkovi\'{c},
  \emph{Modules over the small quantum group and semi-infinite flag manifold},
  Transform. Groups \textbf{10} (2005), no.~3-4, 279--362. \MR{2183116}

\bibitem[AT05]{AT}
Ivan~V. Arzhantsev and Dmitri~A. Timashev, \emph{On the canonical embeddings of
  certain homogeneous spaces}, Lie groups and invariant theory, Amer. Math.
  Soc. Transl. Ser. 2, vol. 213, Amer. Math. Soc., Providence, RI, 2005,
  pp.~63--83. \MR{2140714}

\bibitem[BBDG18]{BBDG}
A.~A. Beilinson, J.~Bernstein, P.~Deligne, and O.~Gabber, \emph{Faisceaux
  pervers}, Analysis and topology on singular spaces, {I} ({L}uminy, 1981),
  Ast\'{e}risque, vol. 100, Soc. Math. France, Paris, 2018, pp.~5--171.
  \MR{751966}

\bibitem[BD96]{BD}
A.~A. Beilinson and V.~G. Drinfeld, \emph{Quantization of {H}itchin's fibration
  and {L}anglands' program}, Algebraic and geometric methods in mathematical
  physics ({K}aciveli, 1993), Math. Phys. Stud., vol.~19, Kluwer Acad. Publ.,
  Dordrecht, 1996, pp.~3--7. \MR{1385674}

\bibitem[BD04]{BD:chiral}
Alexander Beilinson and Vladimir Drinfeld, \emph{Chiral algebras}, American
  Mathematical Society Colloquium Publications, vol.~51, American Mathematical
  Society, Providence, RI, 2004. \MR{2058353}

\bibitem[Bei87a]{Beilinson}
A.~A. Beilinson, \emph{How to glue perverse sheaves}, {$K$}-theory, arithmetic
  and geometry ({M}oscow, 1984--1986), Lecture Notes in Math., vol. 1289,
  Springer, Berlin, 1987, pp.~42--51. \MR{923134}

\bibitem[Bei87b]{Beilinson2}
\bysame, \emph{On the derived category of perverse sheaves}, {$K$}-theory,
  arithmetic and geometry ({M}oscow, 1984--1986), Lecture Notes in Math., vol.
  1289, Springer, Berlin, 1987, pp.~27--41. \MR{923133}

\bibitem[BFG06]{BFG}
Alexander Braverman, Michael Finkelberg, and Dennis Gaitsgory, \emph{Uhlenbeck
  spaces via affine {L}ie algebras}, The unity of mathematics, Progr. Math.,
  vol. 244, Birkh\"{a}user Boston, Boston, MA, 2006, Erratum added in
  \url{https://arxiv.org/pdf/math/0301176v4.pdf}, pp.~17--135. \MR{2181803}

\bibitem[BFGM02]{BFGM}
A.~Braverman, M.~Finkelberg, D.~Gaitsgory, and I.~Mirkovi\'{c},
  \emph{Intersection cohomology of {D}rinfeld's compactifications}, Selecta
  Math. (N.S.) \textbf{8} (2002), no.~3, 381--418. \MR{1931170}

\bibitem[BFGM04]{BFGM-erratum}
\bysame, \emph{Erratum to: ``{I}ntersection cohomology of {D}rinfeld's
  compactifications''}, Selecta Math. (N.S.) \textbf{10} (2004), no.~3,
  429--430. \MR{2099075}

\bibitem[BFO12]{BFO}
Roman Bezrukavnikov, Michael Finkelberg, and Victor Ostrik, \emph{Character
  {$D$}-modules via {D}rinfeld center of {H}arish-{C}handra bimodules}, Invent.
  Math. \textbf{188} (2012), no.~3, 589--620. \MR{2917178}

\bibitem[BG01]{BGcrys}
Alexander Braverman and Dennis Gaitsgory, \emph{Crystals via the affine
  {G}rassmannian}, Duke Math. J. \textbf{107} (2001), no.~3, 561--575.
  \MR{1828302}

\bibitem[BG02]{BG}
A.~Braverman and D.~Gaitsgory, \emph{Geometric {E}isenstein series}, Invent.
  Math. \textbf{150} (2002), no.~2, 287--384. \MR{1933587}

\bibitem[BG08]{BG2}
Alexander Braverman and Dennis Gaitsgory, \emph{Deformations of local systems
  and {E}isenstein series}, Geom. Funct. Anal. \textbf{17} (2008), no.~6,
  1788--1850. \MR{2399084}

\bibitem[BL95]{BLa}
Arnaud Beauville and Yves Laszlo, \emph{Un lemme de descente}, C. R. Acad. Sci.
  Paris S\'{e}r. I Math. \textbf{320} (1995), no.~3, 335--340. \MR{1320381}

\bibitem[BLV86]{BLV}
M.~Brion, D.~Luna, and Th. Vust, \emph{Espaces homog\`enes sph\'{e}riques},
  Invent. Math. \textbf{84} (1986), no.~3, 617--632. \MR{837530}

\bibitem[BM20]{BM}
Victor Batyrev and Anne Moreau, \emph{Satellites of spherical subgroups},
  Algebr. Geom. \textbf{7} (2020), no.~1, 86--112. \MR{4038405}

\bibitem[BNS16]{BNS}
A.~Bouthier, B.~C. Ng\^{o}, and Y.~Sakellaridis, \emph{On the formal arc space
  of a reductive monoid}, Amer. J. Math. \textbf{138} (2016), no.~1, 81--108.
  \MR{3462881}

\bibitem[BNS17]{BNS-erratum}
\bysame, \emph{Erratum to: ``{O}n the formal arc space of a reductive
  monoid''}, Amer. J. Math. \textbf{139} (2017), no.~1, 293--295. \MR{3619916}

\bibitem[Bra03]{Braden}
Tom Braden, \emph{Hyperbolic localization of intersection cohomology},
  Transform. Groups \textbf{8} (2003), no.~3, 209--216. \MR{1996415}

\bibitem[Bri07]{Brion07}
Michel Brion, \emph{The total coordinate ring of a wonderful variety}, J.
  Algebra \textbf{313} (2007), no.~1, 61--99. \MR{2326138}

\bibitem[BS17]{BS}
Daniel Bump and Anne Schilling, \emph{Crystal bases}, World Scientific
  Publishing Co. Pte. Ltd., Hackensack, NJ, 2017, Representations and
  combinatorics. \MR{3642318}

\bibitem[BZSV]{BZSV}
David Ben-Zvi, Yiannis Sakellaridis, and Akshay Venkatesh, \emph{Duality in the
  relative {L}anglands program}, In preparation.

\bibitem[Cam18]{Campbell2}
Justin Campbell, \emph{Nearby cycles of {W}hittaker sheaves on {D}rinfeld's
  compactification}, Compos. Math. \textbf{154} (2018), no.~8, 1775--1800.
  \MR{3830552}

\bibitem[Cam19]{Campbell}
\bysame, \emph{A resolution of singularities for {D}rinfeld's compactification
  by stable maps}, J. Algebraic Geom. \textbf{28} (2019), no.~1, 153--167.
  \MR{3875364}

\bibitem[Cas80]{C}
W.~Casselman, \emph{The unramified principal series of {$p$}-adic groups. {I}.
  {T}he spherical function}, Compositio Math. \textbf{40} (1980), no.~3,
  387--406. \MR{571057}

\bibitem[CG97]{Chriss-Ginzburg}
N.~Chriss and V.~Ginzburg, \emph{Representation theory and complex geometry},
  Birkh\"{a}user Boston, Inc., Boston, MA, 1997. \MR{1433132}

\bibitem[CS80]{CS}
W.~Casselman and J.~Shalika, \emph{The unramified principal series of
  {$p$}-adic groups. {II}. {T}he {W}hittaker function}, Compositio Math.
  \textbf{41} (1980), no.~2, 207--231. \MR{581582}

\bibitem[Del77]{SGA4h}
P.~Deligne, \emph{Th\'{e}or\`emes de finitude en cohomologie {$\ell$}-adique},
  Cohomologie \'{e}tale, Lecture Notes in Math., vol. 569, Springer, Berlin,
  1977, pp.~233--261. \MR{3727439}

\bibitem[Dri18]{Drinfeld}
V.~Drinfeld, \emph{{G}rinberg-{K}azhdan theorem and {N}ewton groupoids}, 2018,
  Available at \url{https://arxiv.org/pdf/1801.01046v2.pdf}.

\bibitem[DS95]{DS}
V.~G. Drinfeld and Carlos Simpson, \emph{{$B$}-structures on {$G$}-bundles and
  local triviality}, Math. Res. Lett. \textbf{2} (1995), no.~6, 823--829.
  \MR{1362973}

\bibitem[FFKM99]{FFKM}
Boris Feigin, Michael Finkelberg, Alexander Kuznetsov, and Ivan Mirkovi\'{c},
  \emph{Semi-infinite flags. {II}. {L}ocal and global intersection cohomology
  of quasimaps' spaces}, Differential topology, infinite-dimensional {L}ie
  algebras, and applications, Amer. Math. Soc. Transl. Ser. 2, vol. 194, Amer.
  Math. Soc., Providence, RI, 1999, pp.~113--148. \MR{1729361}

\bibitem[FGI{\etalchar{+}}05]{FGA-explained}
Barbara Fantechi, Lothar G\"{o}ttsche, Luc Illusie, Steven~L. Kleiman, Nitin
  Nitsure, and Angelo Vistoli, \emph{Fundamental algebraic geometry},
  Mathematical Surveys and Monographs, vol. 123, American Mathematical Society,
  Providence, RI, 2005, Grothendieck's FGA explained. \MR{2222646}

\bibitem[FM99]{FM}
Michael Finkelberg and Ivan Mirkovi\'{c}, \emph{Semi-infinite flags. {I}.
  {C}ase of global curve {$\bold P^1$}}, Differential topology,
  infinite-dimensional {L}ie algebras, and applications, Amer. Math. Soc.
  Transl. Ser. 2, vol. 194, Amer. Math. Soc., Providence, RI, 1999,
  pp.~81--112. \MR{1729360}

\bibitem[Gai]{G:whatacts}
D.~Gaitsgory, \emph{What acts on geometric {E}isenstein series}, Available at
  \url{http://www.math.harvard.edu/~gaitsgde/GL/WhatActs.pdf}.

\bibitem[Gai15]{G:outline}
Dennis Gaitsgory, \emph{Outline of the proof of the geometric {L}anglands
  conjecture for {$GL_2$}}, Ast\'{e}risque (2015), no.~370, 1--112.
  \MR{3364744}

\bibitem[GL93]{Grojnowski-Lusztig}
I.~Grojnowski and G.~Lusztig, \emph{A comparison of bases of quantized
  enveloping algebras}, Linear algebraic groups and their representations
  ({L}os {A}ngeles, {CA}, 1992), Contemp. Math., vol. 153, Amer. Math. Soc.,
  Providence, RI, 1993, pp.~11--19. \MR{1247495}

\bibitem[GN10]{GN}
Dennis Gaitsgory and David Nadler, \emph{Spherical varieties and {L}anglands
  duality}, Mosc. Math. J. \textbf{10} (2010), no.~1, 65--137, 271.
  \MR{2668830}

\bibitem[Gro92]{Grosshans92}
Frank~D. Grosshans, \emph{Contractions of the actions of reductive algebraic
  groups in arbitrary characteristic}, Invent. Math. \textbf{107} (1992),
  no.~1, 127--133. \MR{1135467}

\bibitem[HK02]{HK}
Jin Hong and Seok-Jin Kang, \emph{Introduction to quantum groups and crystal
  bases}, Graduate Studies in Mathematics, vol.~42, American Mathematical
  Society, Providence, RI, 2002. \MR{1881971}

\bibitem[Ive76]{Iv}
Birger Iversen, \emph{The geometry of algebraic groups}, Advances in Math.
  \textbf{20} (1976), no.~1, 57--85. \MR{399114}

\bibitem[Kal05]{Kaloshin}
V.~Yu. Kaloshin, \emph{A geometric proof of the existence of {W}hitney
  stratifications}, Mosc. Math. J. \textbf{5} (2005), no.~1, 125--133.
  \MR{2153470}

\bibitem[Kas93]{Kas93}
Masaki Kashiwara, \emph{The crystal base and {L}ittelmann's refined {D}emazure
  character formula}, Duke Math. J. \textbf{71} (1993), no.~3, 839--858.
  \MR{1240605}

\bibitem[Kas94]{Kas94}
\bysame, \emph{Crystal bases of modified quantized enveloping algebra}, Duke
  Math. J. \textbf{73} (1994), no.~2, 383--413. \MR{1262212}

\bibitem[Kas95]{Kas95}
\bysame, \emph{On crystal bases}, Representations of groups ({B}anff, {AB},
  1994), CMS Conf. Proc., vol.~16, Amer. Math. Soc., Providence, RI, 1995,
  pp.~155--197. \MR{1357199}

\bibitem[KKLV89]{KKLV}
Friedrich Knop, Hanspeter Kraft, Domingo Luna, and Thierry Vust, \emph{Local
  properties of algebraic group actions}, Algebraische {T}ransformationsgruppen
  und {I}nvariantentheorie, DMV Sem., vol.~13, Birkh\"{a}user, Basel, 1989,
  pp.~63--75. \MR{1044585}

\bibitem[KKM{\etalchar{+}}92]{KKMMNN}
Seok-Jin Kang, Masaki Kashiwara, Kailash~C. Misra, Tetsuji Miwa, Toshiki
  Nakashima, and Atsushi Nakayashiki, \emph{Affine crystals and vertex models},
  Infinite analysis, {P}art {A}, {B} ({K}yoto, 1991), Adv. Ser. Math. Phys.,
  vol.~16, World Sci. Publ., River Edge, NJ, 1992, pp.~449--484. \MR{1187560}

\bibitem[Kno91]{KnLV}
Friedrich Knop, \emph{The {L}una-{V}ust theory of spherical embeddings},
  Proceedings of the {H}yderabad {C}onference on {A}lgebraic {G}roups
  ({H}yderabad, 1989), Manoj Prakashan, Madras, 1991, pp.~225--249.
  \MR{1131314}

\bibitem[Kno94]{KnMotion}
\bysame, \emph{The asymptotic behavior of invariant collective motion}, Invent.
  Math. \textbf{116} (1994), no.~1-3, 309--328. \MR{1253195}

\bibitem[Kno95]{KnBorbit}
\bysame, \emph{On the set of orbits for a {B}orel subgroup}, Comment. Math.
  Helv. \textbf{70} (1995), no.~2, 285--309. \MR{1324631}

\bibitem[Kno14]{Knop-sphericalroots}
\bysame, \emph{Spherical roots of spherical varieties}, Ann. Inst. Fourier
  (Grenoble) \textbf{64} (2014), no.~6, 2503--2526. \MR{3331173}

\bibitem[Kon95]{Kontsevich}
Maxim Kontsevich, \emph{Enumeration of rational curves via torus actions}, The
  moduli space of curves ({T}exel {I}sland, 1994), Progr. Math., vol. 129,
  Birkh\"{a}user Boston, Boston, MA, 1995, pp.~335--368. \MR{1363062}

\bibitem[KS17]{Knop-Schalke}
F.~Knop and B.~Schalke, \emph{The dual group of a spherical variety}, Trans.
  Moscow Math. Soc. \textbf{78} (2017), 187--216. \MR{3738085}

\bibitem[KV04]{KV}
Mikhail Kapranov and Eric Vasserot, \emph{Vertex algebras and the formal loop
  space}, Publ. Math. Inst. Hautes \'{E}tudes Sci. (2004), no.~100, 209--269.
  \MR{2102701}

\bibitem[Laf]{VLaff}
Vincent Lafforgue, \emph{Quelques calculs reli{\'e}s {\`a} la correspondance de
  {L}anglands g{\'e}om{\'e}trique pour ${\mathbb{P}}^1$}, Available at
  \url{http://vlafforg.perso.math.cnrs.fr/files/geom.pdf}.

\bibitem[Lun73]{Lun73}
Domingo Luna, \emph{Slices \'{e}tales}, Sur les groupes alg\'{e}briques, 1973,
  pp.~81--105. Bull. Soc. Math. France, Paris, M\'{e}moire 33. \MR{0342523}

\bibitem[Lun97]{Lun97}
D.~Luna, \emph{Grosses cellules pour les vari\'{e}t\'{e}s sph\'{e}riques},
  Algebraic groups and {L}ie groups, Austral. Math. Soc. Lect. Ser., vol.~9,
  Cambridge Univ. Press, Cambridge, 1997, pp.~267--280. \MR{1635686}

\bibitem[Lus90]{Lusztig90}
G.~Lusztig, \emph{Canonical bases arising from quantized enveloping algebras.
  {II}}, no. 102, 1990, Common trends in mathematics and quantum field theories
  (Kyoto, 1990), pp.~175--201 (1991). \MR{1182165}

\bibitem[LV83]{LV}
D.~Luna and Th. Vust, \emph{Plongements d'espaces homog\`enes}, Comment. Math.
  Helv. \textbf{58} (1983), no.~2, 186--245. \MR{705534}

\bibitem[MV07]{MV}
I.~Mirkovi\'{c} and K.~Vilonen, \emph{Geometric {L}anglands duality and
  representations of algebraic groups over commutative rings}, Ann. of Math.
  (2) \textbf{166} (2007), no.~1, 95--143. \MR{2342692}

\bibitem[Pop86]{Pop86}
V.~L. Popov, \emph{Contractions of actions of reductive algebraic groups}, Mat.
  Sb. (N.S.) \textbf{130(172)} (1986), no.~3, 310--334, 431. \MR{865764}

\bibitem[Ras]{cpsii}
Sam Raskin, \emph{Chiral principal series categories {II}: the factorizable
  {W}hittaker category}, Available at
  \url{https://web.ma.utexas.edu/users/sraskin/cpsii.pdf}.

\bibitem[Ric77]{Richardson}
R.~W. Richardson, \emph{Affine coset spaces of reductive algebraic groups},
  Bull. London Math. Soc. \textbf{9} (1977), no.~1, 38--41. \MR{437549}

\bibitem[Sak08]{SaSpc}
Yiannis Sakellaridis, \emph{On the unramified spectrum of spherical varieties
  over {$p$}-adic fields}, Compos. Math. \textbf{144} (2008), no.~4, 978--1016.
  \MR{2441254 (2010d:22026)}

\bibitem[Sak12]{SaRS}
\bysame, \emph{Spherical varieties and integral representations of
  {$L$}-functions}, Algebra Number Theory \textbf{6} (2012), no.~4, 611--667,
  Updated version at \url{https://arxiv.org/pdf/0905.4245.pdf}. \MR{2966713}

\bibitem[Sak13]{SaSph}
\bysame, \emph{Spherical functions on spherical varieties}, Amer. J. Math.
  \textbf{135} (2013), no.~5, 1291--1381. \MR{3117308}

\bibitem[Sak18]{SaSatake}
\bysame, \emph{{I}nverse {S}atake {T}ransforms}, Geometric Aspects of the Trace
  Formula (Cham), Springer International Publishing, 2018, pp.~321--349.

\bibitem[Sch15]{Sch}
Simon Schieder, \emph{The {H}arder-{N}arasimhan stratification of the moduli
  stack of {$G$}-bundles via {D}rinfeld's compactifications}, Selecta Math.
  (N.S.) \textbf{21} (2015), no.~3, 763--831. \MR{3366920}

\bibitem[Sch16]{Sch-G}
\bysame, \emph{Geometric {B}ernstein asymptotics and the
  {D}rinfeld--{L}afforgue--{V}inberg degeneration for arbitrary reductive
  groups}, 2016.

\bibitem[Sch18]{Sch-SL2}
\bysame, \emph{Picard-{L}efschetz oscillators for the
  {D}rinfeld-{L}afforgue-{V}inberg degeneration for {$\rm SL_2$}}, Duke Math.
  J. \textbf{167} (2018), no.~5, 835--921. \MR{3782063}

\bibitem[SGA73]{SGA7-2}
\emph{Groupes de monodromie en g\'{e}om\'{e}trie alg\'{e}brique. {II}}, Lecture
  Notes in Mathematics, Vol. 340, Springer-Verlag, Berlin-New York, 1973,
  S\'{e}minaire de G\'{e}om\'{e}trie Alg\'{e}brique du Bois-Marie 1967--1969
  (SGA 7 II), Dirig\'{e} par P. Deligne et N. Katz. \MR{0354657}

\bibitem[Ste65]{Steinberg}
Robert Steinberg, \emph{Regular elements of semisimple algebraic groups}, Inst.
  Hautes \'{E}tudes Sci. Publ. Math. (1965), no.~25, 49--80. \MR{180554}

\bibitem[SV17]{SV}
Yiannis Sakellaridis and Akshay Venkatesh, \emph{Periods and harmonic analysis
  on spherical varieties}, Ast\'{e}risque (2017), no.~396, viii+360.
  \MR{3764130}

\bibitem[Tim11]{Timashev}
Dmitry~A. Timashev, \emph{Homogeneous spaces and equivariant embeddings},
  Encyclopaedia of Mathematical Sciences, vol. 138, Springer, Heidelberg, 2011,
  Invariant Theory and Algebraic Transformation Groups, 8. \MR{2797018}

\bibitem[VK78]{VinbergKimelfeld}
\`E.~B. Vinberg and B.~N. Kimelfeld, \emph{Homogeneous domains on flag
  manifolds and spherical subgroups of semisimple {L}ie groups}, Funktsional.
  Anal. i Prilozhen. \textbf{12} (1978), no.~3, 12--19, 96. \MR{509380}

\bibitem[Wan]{Wang:ULA}
J.~Wang, \emph{Universal local acyclicity}, Available at
  \url{https://www.jonathanpwang.com/notes/ULA.pdf}.

\bibitem[Was96]{Wasserman}
B.~Wasserman, \emph{Wonderful varieties of rank two}, Transform. Groups
  \textbf{1} (1996), no.~4, 375--403. \MR{1424449}

\bibitem[Zhu17]{Xinwen}
Xinwen Zhu, \emph{An introduction to affine {G}rassmannians and the geometric
  {S}atake equivalence}, Geometry of moduli spaces and representation theory,
  IAS/Park City Math. Ser., vol.~24, Amer. Math. Soc., Providence, RI, 2017,
  pp.~59--154. \MR{3752460}

\end{thebibliography}

\end{document}